\definecolor{maus}{rgb}{0.2,0.2,0.2}
\newtheorem{thm}{Theorem}[section]
\newtheorem{prop}[thm]{Proposition}
\newtheorem{lem}[thm]{Lemma}
\newtheorem{cor}[thm]{Corollary}
\newtheorem*{tauthomotopythm*}{Theorem \ref{t:atoral homotopy}}
\newtheorem*{colinstabthm*}{Theorem \ref{t:colin stab}}
\newtheorem*{thm*}{Theorem}
\theoremstyle{definition}
\newtheorem{defn}[thm]{Definition}
\newtheorem{ex}[thm]{Example}
\newtheorem{rem}[thm]{Remark}
\newtheorem*{ex*}{Example}
\newcommand{\thmref}[1]{Theorem~\ref{#1}}
\newcommand{\lemref}[1]{Lemma~\ref{#1}}
\newcommand{\propref}[1]{Proposition~\ref{#1}}
\newcommand{\defref}[1]{Definition~\ref{#1}}
\newcommand{\exref}[1]{Example~\ref{#1}}
\newcommand{\remref}[1]{Remark~\ref{#1}}
\newcommand{\corref}[1]{Corollary~\ref{#1}}
\newcommand{\figref}[1]{Figure~\ref{#1}}
\newcommand{\secref}[1]{Section~\ref{#1}}
\newcommand{\lra}{\longrightarrow}
\newcommand{\TT}{\mathcal{ T}}
\newcommand{\GG}{\mathcal{ G}}
\newcommand{\FF}{\mathcal{ F}}
\newcommand{\II}{\mathcal{ I}}
\newcommand{\LL}{\mathcal{ L}}
\newcommand{\Q}{\mathbb{ Q}}
\newcommand{\R}{\mathbb{ R}}
\newcommand{\HH}{\mathbb{ H}}
\newcommand{\Z}{\mathbb{ Z}}
\newcommand{\N}{\mathbb{N}}
\newcommand{\mlabel}[1]{
                        \label{#1}}
\newcommand{\eing}[1]{\big|_{#1}}
\newcommand{\ww}{\wedge}
\newcommand{\eps}{\varepsilon}
\newcommand{\id}{\mathrm{id}}
\newcommand{\lmt}{\longmapsto}
\DeclareMathAccent{\ring}{\mathalpha}{operators}{"17}
\author{T. Vogel}
\title{Uniqueness of the contact structure approximating a foliation}
\address{T.~Vogel, Max-Planck-Institut f{\"u}r Mathematik, Vivatsgasse 7, 53111 Bonn, Germany}
\email{tvogel@mpim-bonn.mpg.de}
\date{\today} 
\subjclass[2010]{53D10, 57R30}
\begin{document}

\color{maus}



\begin{abstract}
According to a theorem of Eliashberg and Thurston a $C^2$-foliation on a closed $3$-manifold can be $C^0$-approximated by  contact structures unless all leaves of the foliation are spheres. Examples on the $3$-torus show that every neighbourhood of a foliation can contain non-diffeomorphic contact structures.  

In this paper we show uniqueness up to isotopy of the contact structure in a small neighbourhood of the foliation when the foliation has no torus leaf and is not a foliation without holonomy on parabolic torus bundles over the circle. This allows us to associate invariants from contact topology to foliations. As an application we show that the space of taut foliations in a given homotopy class of plane fields is not connected in general. 
\end{abstract}

\maketitle

\tableofcontents

\section{Introduction and results}

The purpose of this paper is to determine which foliations on closed $3$-manifolds have the property that all positive contact structures in a sufficiently small neighbourhood of the foliation are isotopic (for definitions and basic results see \secref{s:def}). According to the following theorem of Y.~Eliashberg and W.~Thurston \cite{confol} most foliations can be approximated by contact structures: 
\begin{thm}[Eliashberg, Thurston]   \mlabel{t:elth}
Let $\FF$ be an oriented $C^2$-foliation by surfaces on a closed oriented $3$-manifold. If $\FF$ is not isomorphic to the foliations by spheres on $S^2\times S^1$, then every $C^0$-neighbourhood of $\FF$ contains a positive contact structure.  
\end{thm}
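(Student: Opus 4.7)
The natural framework is to present both the foliation $\FF$ and the approximating contact structures as kernels of $1$-forms: $\FF$ is defined by some $\al$ satisfying $\al \ww d\al = 0$, a positive contact structure by $\al \ww d\al > 0$, and the interpolating notion is that of a \emph{confoliation}, where $\al \ww d\al \geq 0$ pointwise. My plan is to perturb $\al$ through confoliations to a genuine contact form that is $C^0$-close to the original, in two stages: first create a nonempty \emph{contact region} where $\al \ww d\al > 0$ strictly, and then propagate contactness to the entire manifold.

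For the first stage I would search for a closed curve $\cc$ transverse to $\FF$ along which the linear holonomy of $\FF$ is nontrivial. In a tubular neighbourhood of $\cc$ one can modify $\al$ by a Lutz-type twisting of the leaves around $\cc$, producing a confoliation whose contact region is a nonempty open set while changing the plane field only $C^0$-slightly (by making the tubular neighbourhood thin and the twist small). Such a curve $\cc$ exists unless the foliation is entirely holonomy-free; in that case Sacksteder's theorem for $C^2$-foliations, together with Reeb stability and the hypothesis that $\FF$ is not the sphere foliation, reduces the situation to taut fibrations and suspensions, where the monodromy itself provides the shearing needed to introduce a contact region by a direct construction on the fibration.

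For the second stage, the key propagation principle states that a confoliation whose contact region is nonempty can be made contact everywhere by a further $C^0$-small perturbation, provided every leaf of the integrable part eventually meets this region along the characteristic line field $\ker(d\al|_\xi)$. One spreads the positive sign of $\al \ww d\al$ using recurrence of this line field and a sequence of localised perturbations supported near orbits that visit the existing contact region. The main obstacle is the first stage rather than the second: one must exclude that $\FF$ admits no transverse loop with nontrivial holonomy, and the only situation in which this truly occurs, via Reeb stability, is when every leaf is a sphere (spheres force product neighbourhoods and hence trivial holonomy), which is precisely the case excluded from the statement. The $C^0$-control is preserved throughout because all perturbations are supported in tubular neighbourhoods whose thickness may be chosen arbitrarily small.
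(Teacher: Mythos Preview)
The paper does not prove this theorem; it is quoted from \cite{confol} and used as background. So there is no in-paper proof to compare against. That said, your sketch contains genuine errors relative to the actual Eliashberg--Thurston argument, and these are worth flagging.

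First, you speak of ``a closed curve $\gamma$ transverse to $\FF$ along which the linear holonomy of $\FF$ is nontrivial.'' Holonomy is defined along curves \emph{tangent} to leaves, not transverse to them; a transverse curve has no holonomy in this sense. The contact perturbation in \cite{confol} is carried out in a neighbourhood of a closed curve $\gamma$ lying \emph{in a leaf} and having attractive (or at least nontrivial linear) holonomy --- this is exactly what Sacksteder's theorem supplies for exceptional minimal sets and what the paper calls a Sacksteder curve. The construction is not a Lutz twist (Lutz twists insert full $2\pi$-rotations along transverse knots and produce overtwisted discs); it is a much milder tilting of the plane field near $\gamma$ that exploits the contracting holonomy to make $\alpha\wedge d\alpha>0$ there.

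Second, your case analysis is off. The dichotomy is not ``holonomy-free versus has a transverse loop with holonomy''; it is organised by the minimal sets of $\FF$. Closed leaves (of genus $\ge 1$), exceptional minimal sets (via Sacksteder), and minimal foliations with holonomy (via Ghys, \thmref{t:minimal sack} in the paper) all supply tangent curves with linear holonomy. The genuinely delicate case is a foliation without holonomy, which one first $C^0$-approximates by a fibration (cf.\ the paper's \secref{s:without holonomy}) and then perturbs directly; Reeb stability only enters to exclude the $S^2\times S^1$ product, not to force holonomy elsewhere.

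Finally, the propagation step does not use ``recurrence of $\ker(d\alpha|_\xi)$''; for a foliation this line field is not even defined (since $d\alpha|_\xi\equiv 0$). The mechanism is that once the contact region $H(\xi)$ meets every minimal set, every other point is connected to $H(\xi)$ by a path tangent to the confoliation (this is the notion of \emph{transitive} confoliation in \defref{d:confol stuff}), and one then spreads contactness along such Legendrian paths.
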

It can be shown quite easily \cite{confol}  that the foliation by the first factor on $M=S^2\times S^1$ cannot be approximated by a contact structure, i.e. there is a $C^0$-neighbourhood of the foliation which does not contain a contact structure. 

\thmref{t:elth} provides a first link between foliations and contact structures. Before the appearance of \cite{confol} these fields developed independently. The approximation theorem allows one to obtain potentially interesting contact structures from construction of foliations. For example, the work of D.~Gabai \cite{gabai} on constructions of foliations from sutured manifold decompositions provides a rich source of interesting contact structures. Via this construction there is a connection between sutured manifolds and gauge theory \cite{km}. The most prominent application of this circle of ideas is in the proof of the Property-P-conjecture by P.~Kronheimer and T.~Mrowka \cite{km2}.  

In view of \thmref{t:elth} it is natural to ask to what extent the foliation determines the contact structures up to isotopy in sufficiently small neighbourhoods. The following well-known example shows that the isotopy type of a contact structure in a small neighbourhood of a foliation is not completely determined by the foliation. 

\begin{ex} \mlabel{ex:T3}
Let $\FF$ be the foliation of $T^2\times S^1=\R^3/\Z^3$ by tori corresponding to the first factor. Then for $0\neq\eps\to 0$ and $k\neq0$ the contact planes $\xi_k$ defined by the $1$-forms
$$
\alpha_{k,\eps} := dt+\eps(\cos(2\pi kt)dx_1-\sin(2\pi kt)dx_2)
$$
converge to $T\FF$. Different $\eps$ yield isotopic contact structures which we therefore denote by $\xi_k$. According to Y.~Kanda \cite{ka} the contact structure $\xi_k$ is isotopic to $\xi_l$ if and only if $k=l$. They are distinguished by their Giroux torsion (we review the definition of this invariant in \defref{d:torsion}). 
\end{ex}
However, the question whether or not torus leaves are the only source of ambiguity was raised by V.~Colin as Question 5.9 in \cite{colin habil}. Also, in the paper \cite{hkm} by K.~Honda, W.~Kazez and G.~Mati{\'c} these authors suggest\footnote{second paragraph on page 306 of \cite{hkm}} that \smallskip
\begin{center}
{\sl `` \ldots contact topology may ultimately be a \\discrete version of foliation theory.''}
\end{center}
\smallskip
One piece of evidence for this is the following theorem from \cite{hkm}. \begin{thm}[Honda-Kazez-Mati{\'c}, \cite{hkm}] 
Let $\psi$ be an orientation preserving pseudo-Anosov diffeomorphism of a hyperbolic surface and $M_\psi$ the surface fibration over the circle with monodromy $\psi$. 

There is a unique tight contact structure $\xi$ on $M_\psi$ such that $\langle e(\xi),[\Sigma]\rangle = 2-2g$. In particular, there is a $C^0$-neighbourhood $U$ of the foliation by fibers of $M_\psi\lra S^1$ in the space of plane fields so that $\xi\simeq\xi'$ for all pairs of positive contact structures $\xi,\xi'\in U$. 
\end{thm}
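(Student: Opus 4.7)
My plan is to break the statement into three parts: existence of $\xi$, its uniqueness among tight contact structures with the prescribed Euler class, and the $C^0$-neighborhood assertion. For existence I would apply \thmref{t:elth} to the fiber foliation $\FF$ of $M_\psi$: since the fibers have genus $g\geq 2$, no leaf is a sphere and the hypothesis applies. The fibration over $S^1$ makes $\FF$ taut, so the Eliashberg-Thurston construction provides an approximating positive contact structure $\xi$ together with a (weak) symplectic semi-filling, whence $\xi$ is tight. As $\xi$ is homotopic to $T\FF$ through plane fields in a $C^0$-small neighborhood, $\langle e(\xi),[\Sigma]\rangle = \langle e(T\FF),[\Sigma]\rangle = \chi(\Sigma) = 2-2g$.

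For uniqueness, let $\xi'$ be any tight contact structure on $M_\psi$ with $\langle e(\xi'),[\Sigma]\rangle = 2-2g$. After a $C^\infty$-small perturbation, the fiber $\Sigma$ may be assumed convex with dividing set $\Gamma$. Combining Giroux's formula $\chi(R_+)-\chi(R_-) = 2-2g$ with $\chi(R_+)+\chi(R_-) = \chi(\Sigma) = 2-2g$ yields $\chi(R_-)=0$, so $R_-$ is a disjoint union of annuli and $\Gamma$ consists of pairs of parallel essential curves; tightness together with Giroux's criterion forbids any homotopically trivial component. Cut $M_\psi$ along $\Sigma$ to obtain $\Sigma\times[0,1]$ equipped with a tight contact structure whose boundary dividing sets are $\Gamma$ and $\psi(\Gamma)$. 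The pseudo-Anosov hypothesis now enters decisively: $\psi$ has no invariant isotopy class of essential multicurves, so $\Gamma$ and $\psi(\Gamma)$ are not isotopic unless $\Gamma=\emptyset$. Using Honda's classification of tight contact structures on $\Sigma\times I$ with prescribed boundary data and Giroux's correspondence with open books, one shows that the gluing compatibility forces $\Gamma = \emptyset$ (in the appropriate limiting sense, via a bypass-trivial model) and pins down a unique isotopy class, matching the $\xi$ constructed above.

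For the $C^0$-neighborhood statement, choose $U$ small enough that every $\xi'\in U$ is $C^0$-close to $T\FF$; then $\xi'$ is homotopic to $T\FF$ as a plane field, hence $\langle e(\xi'),[\Sigma]\rangle = 2-2g$. Moreover, since $\Sigma$ is nearly tangent to $\xi'$, a convex perturbation of $\Sigma$ has dividing set controlled by the Euler class computation above. The crucial point—and in my view the main technical obstacle—is to promote this local control to \emph{tightness} of $\xi'$ on all of $M_\psi$: the plane-field proximity to $T\FF$ should force every fiber to admit a convex perturbation whose dividing set is essential, ruling out overtwisted disks near $\Sigma$, and a covering-by-fibers argument then propagates this globally. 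Once tightness is established, the uniqueness step yields $\xi'\simeq \xi$, and any two elements of $U$ are mutually isotopic. The hardest work is therefore the combination of (i) the pseudo-Anosov rigidity inside $\Sigma\times I$, where the absence of periodic multicurves must be leveraged against Honda's bypass calculus, and (ii) the uniform tightness in the $C^0$-neighborhood, which requires a quantitative strengthening of Eliashberg-Thurston beyond the pointwise approximation theorem.
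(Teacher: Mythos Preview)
This theorem is quoted from \cite{hkm} and is not proved in the present paper. The only clause the author is implicitly supplying is the ``In particular'' sentence, and that follows at once from the HKM uniqueness statement combined with two facts already on hand: (a) every contact structure sufficiently $C^0$-close to a taut foliation is universally tight (\thmref{t:taut tight}, due to Eliashberg--Thurston), and (b) the Euler class is a homotopy invariant of plane fields, so any $\xi'$ in a small $C^0$-ball around $T\FF$ automatically satisfies $\langle e(\xi'),[\Sigma]\rangle = 2-2g$. Your concern (ii)---establishing tightness uniformly over $U$---is therefore not an obstacle at all: it is exactly \thmref{t:taut tight}, and no ``quantitative strengthening of Eliashberg--Thurston'' is required.

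Your sketch of the uniqueness direction contains a genuine error. The dividing set $\Gamma$ of a closed convex surface of genus $g\ge 2$ in a tight contact manifold is never empty, so ``forcing $\Gamma=\emptyset$'' cannot be the mechanism, limiting sense or otherwise. In the actual argument of \cite{hkm} one reduces $\Gamma$ via bypasses to a single pair of parallel non-separating curves and then applies the classification recorded here as \thmref{t:Sigma class} to the cut-open product $\Sigma\times[0,1]$; the pseudo-Anosov hypothesis enters to rule out periodic configurations of dividing curves and basic slices under the monodromy. Your outline points roughly in this direction, but the step as written is incorrect and would have to be replaced by that reduction.
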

 
The following theorem answers Colin's question affirmatively up to a small set of exceptions. It can be viewed as confirmation of the above remark from \cite{hkm}.
\begin{thm} \mlabel{t:unique}
Let $\FF$ be a coorientable $C^2$-foliation (or a $C^2$-con\-foliation) on a closed oriented $3$-manifold satisfying the following conditions:
\begin{itemize}
\item[(i)] $\FF$ has no torus leaf,
\item[(ii)] $\FF$ is not a foliation by planes,
\item[(iii)] $\FF$ is not a foliation by cylinders. 
\end{itemize}
Then there is a $C^0$-neighbourhood $U$ of $\FF$ in the space of plane fields and a contact structure $\xi$ in $U$ such that every positive contact structure in $U$ is isotopic to $\xi$.
\end{thm}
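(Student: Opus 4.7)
The existence of some contact structure $\xi \in U$ for $U$ small enough is immediate from \thmref{t:elth}.  The content is isotopy uniqueness, so given any two positive contact structures $\xi_0, \xi_1 \in U$ my goal is to construct an ambient isotopy of $M$ taking $\xi_1$ onto $\xi_0$.  A naive attempt to connect them by a path of confoliations fails because of the Giroux torsion phenomenon illustrated in \exref{ex:T3}; hypothesis (i) is exactly what is needed to rule this obstruction out, since any extra torsion would have to live near a convex torus that is $C^0$-close to a torus leaf of $\FF$.  The overall strategy is therefore to cut $M$ along a well-chosen collection of surfaces close to leaves of $\FF$, show that on each piece the two contact structures are isotopic rel boundary, and then glue.

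The first step is to choose the cutting surfaces.  If $\FF$ has a compact leaf $L$, then by (i) it has genus $\geq 2$; a small $C^2$-perturbation produces a surface $\Sigma$ which is simultaneously convex with respect to $\xi_0$ and $\xi_1$, and whose characteristic foliation is $C^0$-close (for $U$ small) to the intrinsic linear foliation induced by $L$.  From this closeness I read off that the dividing sets of $\Sigma$ with respect to $\xi_0$ and $\xi_1$ have the same isotopy class; because $\Sigma$ is not a torus, the classification of tight contact structures with fixed dividing set on a thickened $\Sigma$ reduces the local comparison problem to a finite combinatorial check.  If $\FF$ has no compact leaves one instead builds branched surfaces / transverse two-complexes carrying $\FF$, and runs the same argument on pieces of a Haefliger-type decomposition, using hypotheses (ii) and (iii) to exclude the pathological cases in which such a complex cannot be found without producing cylinder or plane leaves (the parabolic $T^2$-bundle and $T^3$-like exceptional geometries).

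Next I would establish the local uniqueness statement: on each piece $M_i$ cut out above, any two positive contact structures that are both $C^0$-close to $\FF\eing{M_i}$ and have the same convex boundary data are isotopic rel $\del M_i$.  This is where I expect to use Colin-type gluing / classification for tight contact structures on sutured manifolds with prescribed dividing sets, together with the key observation that hypothesis (i) forbids any convex torus inside $M_i$ from carrying non-trivial Giroux torsion for a contact structure in $U$: such a torus would persist under a small isotopy and would $C^0$-approximate a torus leaf of $\FF$, contradicting (i).  The pieces thus carry unique tight contact structures compatible with the boundary data.

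Finally I would glue: the isotopies on each $M_i$ can be chosen to agree on $\del M_i$ (again using the flexibility of convex surface theory to homogenise collar neighbourhoods) and therefore assemble into a global isotopy from $\xi_1$ to $\xi_0$.  I expect the hardest technical step to be the construction of the cutting surfaces in the case where $\FF$ has no compact leaf at all and the dynamics of $\FF$ are wild — for example in the presence of an exceptional minimal set — because then the ``surfaces close to leaves'' must be built from branched surfaces and one must carefully verify both that they can be made simultaneously convex and that their characteristic foliations force the dividing sets to match.  Hypotheses (ii) and (iii) are tailored precisely to discard the cases where this construction fails or where the Giroux-torsion rigidity argument breaks down.
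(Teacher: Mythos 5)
The broad geography of your plan --- localise near (perturbations of) compact leaves, compare convex surface data, invoke a classification of tight contact structures on thickened surfaces, and glue --- does overlap with parts of the paper, but several of the crucial steps are asserted rather than achieved and at least two of them are false as stated.

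First, your claim that $C^0$-closeness of $\xi_0,\xi_1$ to $\FF$ lets you ``read off that the dividing sets of $\Sigma$ with respect to $\xi_0$ and $\xi_1$ have the same isotopy class'' is not correct. Two contact structures in the same small $C^0$-neighbourhood of $\FF$ will in general induce dividing sets on a nearby convex surface with different numbers of components, and even non-isotopic configurations; closeness to $\FF$ only forces the dividing curves to pair up into annuli containing sinks (by the extremal Euler class condition and \corref{c:extremal}). Reducing both dividing sets to a single non-separating pair, without introducing anything new, is exactly what \lemref{l:no passing}, \lemref{l:remove separating}, \lemref{l:remove components} and the pre-Lagrangian extension lemma (\lemref{l:completing sheets}) are for, and that reduction is one of the genuinely hard points of the argument; you cannot wave it away with $C^0$-closeness.

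Second, ``fixed dividing set on a thickened $\Sigma$ reduces the local comparison to a finite combinatorial check'' overstates what the classification gives you. By \thmref{t:Sigma class} there are four or five isotopy classes of tight contact structures on $\Sigma\times[-1,1]$ with the same non-separating two-component dividing set on each boundary component. You still have to compute the relative Euler class $\widetilde e(\xi)$ and, when it vanishes, decide which basic slice embeds (\propref{p:vanishing rel Euler}). The paper extracts this data from the \emph{orientation} of the attractive closed leaf on the level surfaces and the side on which pre-Lagrangian annuli lie (equations \eqref{e:which basic}--\eqref{e:which basic2}); nothing in your sketch produces this information.

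Third, your plan says almost nothing about what happens away from the minimal sets. It is not enough to observe that the contact structures are tight there: you still must produce an \emph{isotopy} matching $\xi_0$ and $\xi_1$ on the complement, compatible with whatever boundary data you have set up. The paper does this via Colin's stability scheme: an adapted polyhedral decomposition (\defref{d:adapted}), the attachment of ribbons reaching into $H(\xi)$ or into tubular neighbourhoods of Sacksteder curves/stable annuli, and filling polyhedra by \thmref{t:fill ball} respectively \lemref{l:hor fill ball}. This machinery is not a technicality; it is what transports both contact structures to a common reference outside the minimal sets without leaving the given $C^0$-neighbourhood. Your ``branched surfaces / Haefliger-type decomposition'' for the no-compact-leaf case is too vague to replace it, and it misses the two distinct mechanisms the paper actually uses: Sacksteder curves with linear holonomy for exceptional minimal sets, and Dirichlet-quality Diophantine approximation by fibrations (\thmref{t:Diophant}) for minimal foliations without holonomy, the latter being needed precisely to rule out sheets that intersect every fibre.

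Finally, the intuition ``hypothesis (i) forbids any convex torus inside $M_i$ from carrying Giroux torsion because such a torus would $C^0$-approximate a torus leaf'' is not an argument. What actually has to be excluded is that a \emph{sheet} of the movie connects the two boundary components of a tubular neighbourhood of a closed leaf; that is the content of \lemref{l:no passing}, which rests on \lemref{l:Sigma ator} and is exactly where genus $\ge 2$ (not merely ``not a torus'') is used. For torus leaves no analogue of \lemref{l:Sigma ator} holds, which is why the theorem fails there and why the paper has a separate \thmref{t:unique2}.

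In short: the high-level picture is recognisable, but the proposal assumes the two hardest ingredients (reduction of dividing sets, and pinning down the tight contact structure on $\Sigma\times I$ among the HKM list) and omits the mechanism (ribbons and adapted polyhedra) that makes the global gluing work inside a fixed $C^0$-neighbourhood.
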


According to a theorem of H.~Rosenberg \cite{ros-planes}, $C^2$-foliations by planes exist only on the $3$-torus. Later G.~Hector \cite{hector} proved that foliations by cylinders exist only on parabolic $T^2$-bundles over the circle. This shows that the foliations in (ii), (iii) of \thmref{t:unique} are very special. Thus torus leaves are essentially the only source of non-uniqueness of the isotopy classes of contact structure which are sufficiently close to a given confoliation. (Note that by the Reeb stability theorem all coorientable foliations on $3$-manifolds with spherical leaves are equivalent to the product foliation on $S^2\times S^1$. Therefore spherical leaves play no particular role in \thmref{t:unique}.) Foliations which satisfy the assumptions of \thmref{t:unique} will be called {\em atoral} and the isotopy class of positive contact structures in the neighbourhood of \thmref{t:unique2} {\em approximates} $\FF$. 

When a foliation has torus leaves, then every neighbourhood contains non-isotopic contact structures distinguished by their  Giroux torsion. If the torus leaves satisfy a certain stability condition, then the Giroux torsion is the only source of ambiguity of the contact structures in small neighbourhoods of $\FF$. In order to state the corresponding theorem  we need the following definition.
\begin{defn}
Two contact structures $\xi',\xi''$ are stably equivalent with respect to a finite collection of pairwise disjoint embedded tori if the following conditions hold:
\begin{itemize}
\item[(i)]  It is possible to isotope the tori and to choose a contact form $\alpha$ such that the restriction of $\alpha$ to the isotoped tori is closed (such tori are called {\em pre-Lagrangian}).
\item[(ii)] $\xi'$ and $\xi''$ become isotopic after a contact structure $$
(T^2\times[0,1],\ker(\cos(2\pi k(t+t_0))dx_1-\sin(2\pi k(t+t_0)) dx_2))
$$
with suitable parameters $k>0$ and $t_0\in\R$ is inserted along the pre-Lagrangian tori. 
\end{itemize}
\end{defn}

\begin{thm} \mlabel{t:unique2}
If the foliation (or confoliation) $\FF$ satisfies only the weaker hypothesis
\begin{itemize}
\item[(i')] All torus leaves of $\xi$ have attractive holonomy
\end{itemize}
and (ii), (iii) of \thmref{t:unique}, then there is a $C^0$-neighbourhood $U$ of $\FF$ such that any two contact structures $\xi', \xi''$ in $U$ are stably equivalent with respect to the torus leaves of $\FF$.
\end{thm}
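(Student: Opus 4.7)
The overall plan is to reduce the statement to \thmref{t:unique} by isolating the ambiguity near each torus leaf into a Giroux torsion insertion. Write $T_1,\ldots,T_n$ for the torus leaves of $\FF$; attractive holonomy means that the nearby leaves on each side spiral onto $T_i$. Given two contact structures $\xi',\xi''\in U$, I would produce (after a $C^0$-small isotopy of the $T_i$ and of the contact structures) a common family of pre-Lagrangian tori along which $\xi',\xi''$ agree up to insertion of standard Giroux torsion layers, with an isotopy on the complement coming from \thmref{t:unique}.

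The first step is to show that if $\xi$ is $C^0$-close to $\FF$ then each $T_i$ can be made pre-Lagrangian for $\xi$ by a small isotopy. Attractive holonomy yields a normal form for $\FF$ on a collar $T_i\times[-\delta,\delta]$ in which all nearby leaves are tangent to a fixed linear foliation up to terms that decay as one approaches $T_i$. Consequently a $C^0$-small contact perturbation induces a characteristic foliation on $T_i$ that is $C^0$-close to a linear foliation. Standard flexibility for characteristic foliations on a torus (in the spirit of Giroux and Honda) then allows one to isotope $T_i$ to a pre-Lagrangian torus, and this can be done simultaneously for $\xi'$ and $\xi''$. A second application of the normal form gives a thin collar $N_i=T_i\times[-\delta,\delta]$ in which $\xi'$ and $\xi''$ are each, up to isotopy fixing the boundary, of the form appearing in clause (ii) of the stable equivalence definition, with parameters $(k_i',t_{0,i}')$ respectively $(k_i'',t_{0,i}'')$. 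Thus on each $N_i$ the two structures differ precisely by the insertion of a Giroux torsion layer.

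On the complement $M_0=M\setminus\bigcup_i\mathrm{int}(N_i)$ the restriction of $\FF$ is a coorientable confoliation with torus boundary, and its interior contains no torus leaves (by construction), no plane leaves (by (ii)), and no cylindrical leaves (by (iii)). I would then invoke the relative version of \thmref{t:unique} for atoral confoliations on a manifold with pre-Lagrangian torus boundary: after removing the torsion layers identified in the previous step, $\xi'|_{M_0}$ and $\xi''|_{M_0}$ are $C^0$-close to $\FF|_{M_0}$ and agree near $\partial M_0$, hence are isotopic rel boundary. Concatenating this isotopy with the local model isotopies on the $N_i$ yields the stable equivalence claimed.

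The main obstacle I foresee is the first step: producing the pre-Lagrangian isotopy and the collar normal form. Attractive holonomy is exactly the ingredient that prevents pathological characteristic foliations from appearing on $T_i$ under small contact perturbations, but converting this qualitative statement into a quantitative normal form on a collar requires careful control of both the $C^0$-size of the isotopy and the size of the collar, uniform in $\xi\in U$. A secondary issue is justifying the relative form of \thmref{t:unique} used in the third paragraph; I expect that its proof adapts in the presence of pre-Lagrangian torus boundary once one verifies that the atorality hypotheses descend to $M_0$, but the boundary behavior has to be tracked through the argument.
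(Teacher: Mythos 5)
Your overall strategy---isolate the torus neighbourhoods, find pre-Lagrangian tori, and compare contact structures separately near and away from the $T_i$---is close in spirit to the paper's proof, but two of your three steps have gaps that are substantial, and the paper sidesteps them by organizing the argument differently.

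The first genuine gap is in your second step. You assert that after an isotopy fixing the boundary, $\xi'|_{N_i}$ and $\xi''|_{N_i}$ are each equal to a standard torsion layer $\ker\bigl(\cos(2\pi k(t+t_0))dx_1-\sin(2\pi k(t+t_0))dx_2\bigr)$. This is too strong. A tight contact structure on $T^2\times[-1,1]$ that is close to a product foliation, has no negative singularities, and contains a pre-Lagrangian torus in its interior is \emph{not} a torsion layer in general: its boundary characteristic foliations can be arbitrary (subject to having no $2$-dimensional Reeb component), and the structure depends on them. What \emph{is} true is that two such contact structures with the same boundary data become isotopic after a torsion layer is inserted---but this is precisely Giroux's classification result, Theorem \ref{t:tight on TxI}, which your proposal never invokes. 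That classification (and the verification of its hypothesis (iii), the existence of a pre-Lagrangian torus in the interior) is the key technical input that converts ``both structures are tight near $T_i$'' into ``they differ by a torsion layer,'' and your argument cannot avoid it. The attractive holonomy is used exactly here: it guarantees that on each annulus $\gamma'\times[-1,1]$ transverse to $T_i$ the characteristic foliation points inward along the boundary, forcing a closed leaf in the interior, and the union of those closed leaves is the pre-Lagrangian torus required by (iii). Your first paragraph gestures at this but does not tie the attractivity to a concrete obstruction; without it, as the paper's Example \ref{ex:unstable torus} and the discussion following \figref{b:un-stable} show, sheets can connect the two sides of $\widehat{N}(T)$ and the conclusion fails.

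The second gap is your reliance on a ``relative version of \thmref{t:unique}'' for atoral confoliations on a manifold with pre-Lagrangian torus boundary. You flag this as a secondary issue, but it is the larger one: no such relative theorem is stated or proved in the paper, and establishing it would require redoing the polyhedral decomposition, ribbon, and filling arguments with boundary control, which is not a routine adaptation. The paper avoids the need for it by never cutting $M$ along the $T_i$. Instead it runs the matching argument (polyhedra adapted to $\FF$ and $\II$, a full collection of ribbons terminating in $N(T)$, Lemma \ref{l:hor fill ball}) on the \emph{closed} manifold $M$, arranging that the two contact structures coincide on all polyhedra not meeting $N(T)$; only then does it restrict attention to $\widehat{N}(T)\simeq T^2\times[-1,1]$, verify tightness there via Lemma \ref{l:tight extension}, and apply Theorem \ref{t:tight on TxI}. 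This ordering is what makes the complement argument work without a boundary theory. If you want to pursue your decomposition, you would have to supply the relative uniqueness theorem yourself, including explaining how the Sacksteder-curve and ribbon machinery interacts with a boundary component on which the characteristic foliation is prescribed.
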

As explained in \secref{s:tori} condition (i') can be generalized somewhat further. We explain the details  in \secref{s:fixing closed tori}. Examples show that foliations with torus leaves violating (i') do not satisfy the conclusion in the theorem above. However, the examples known to the author in which this happens are rather special. In view of potential applications  of \thmref{t:unique2}The characterization of those foliations with torus leaves which violate (i') but still satisfy the conclusion of \thmref{t:unique2} is an interesting open problem (for example \propref{p:tight approx}). 

\thmref{t:unique} allows us to associate invariants from contact topology (for example the contact invariant from Heegard-Floer theory) to atoral foliations. Combining \thmref{t:elth} and \thmref{t:unique} we obtain
\begin{tauthomotopythm*}
Let $\FF_t, t\in [0,1]$, be a $C^0$-continuous family of atoral $C^2$-foliations. Then the positive contact structures $\xi_0$ respectively $\xi_1$ approximating $\FF_0$ respectively $\FF_1$ are isotopic. 
\end{tauthomotopythm*}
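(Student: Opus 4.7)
The plan is to promote the pointwise uniqueness statement of \thmref{t:unique} to an isotopy between the endpoint approximations by a compactness argument along the path $\{\FF_t\}_{t\in [0,1]}$. For each $t\in [0,1]$, \thmref{t:unique} furnishes a $C^0$-open neighbourhood $U_t$ of $\FF_t$ in the space of plane fields together with a positive contact structure $\zeta_t\in U_t$ such that every positive contact structure in $U_t$ is isotopic to $\zeta_t$; by the definition of ``approximating'' one has $\zeta_0\simeq\xi_0$ and $\zeta_1\simeq\xi_1$. Since $s\mapsto \FF_s$ is $C^0$-continuous, the preimage $W_t:=\{s\in [0,1]:\FF_s\in U_t\}$ is an open neighbourhood of $t$ in $[0,1]$. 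Compactness of $[0,1]$ yields a finite subcover which, after reordering and possibly refining, may be written as $W_{t_0},\ldots,W_{t_N}$ with $0=t_0<t_1<\cdots<t_N=1$ and $W_{t_i}\cap W_{t_{i+1}}\neq\emptyset$ for every $i$.

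Next I would bridge each pair of consecutive approximations through a common contact structure. Pick $s_i\in W_{t_i}\cap W_{t_{i+1}}$, so that $\FF_{s_i}$ lies in the open set $U_{t_i}\cap U_{t_{i+1}}$, which is itself a $C^0$-neighbourhood of $\FF_{s_i}$. Since $\FF_{s_i}$ is atoral it admits a contact approximation; in particular it is not a foliation by spheres, so \thmref{t:elth} produces a positive contact structure $\eta_i\in U_{t_i}\cap U_{t_{i+1}}$. The uniqueness clauses of \thmref{t:unique} applied in $U_{t_i}$ and in $U_{t_{i+1}}$ then yield $\eta_i\simeq \zeta_{t_i}$ and $\eta_i\simeq \zeta_{t_{i+1}}$, whence $\zeta_{t_i}\simeq \zeta_{t_{i+1}}$. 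Concatenating for $i=0,\ldots,N-1$ gives $\zeta_{t_0}\simeq \zeta_{t_N}$, and combining with $\xi_0\simeq \zeta_{t_0}$ and $\xi_1\simeq \zeta_{t_N}$ yields $\xi_0\simeq \xi_1$, as required.

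The main obstacle, albeit a mild one, is that the local uniqueness provided by \thmref{t:unique} is only phrased at a fixed foliation; to propagate the isotopy along the path one must produce contact structures that simultaneously lie in two prescribed neighbourhoods of two different foliations. This is precisely the role played by \thmref{t:elth} above: it produces the bridging $\eta_i$ inside the intersection $U_{t_i}\cap U_{t_{i+1}}$, which is exactly what allows the discrete uniqueness statements to be glued into a single global isotopy class along the family $\FF_t$.
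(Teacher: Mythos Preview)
Your argument is correct and is essentially identical to the paper's own proof: both use \thmref{t:unique} to produce uniqueness neighbourhoods $U_t$ along the path, extract a finite chain by compactness of $[0,1]$, and bridge consecutive neighbourhoods via \thmref{t:elth} applied to a foliation $\FF_{s_i}$ lying in the overlap. Your write-up is slightly more detailed (you make explicit why Eliashberg--Thurston applies and how the chain of isotopies is assembled), but there is no substantive difference.
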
 
This provides an obstruction for finding a path of atoral foliations connecting two atoral foliations. This is of interest since the work of H.~Eynard \cite{eynard} shows that two atoral foliations are homotopic through foliations as soon as the two foliations are homotopic as plane fields. The foliations in the homotopy constructed by H.~Eynard contain Reeb components and  therefore violate the hypothesis of \thmref{t:unique}. In many interesting cases the class of atoral foliation on a manifold coincides with the class of taut foliations. In \exref{ex:2311} we show that the Brieskorn homology sphere $\Sigma(2,3,11)$ has a taut foliation $\FF$ such that $\FF$ is not homotopic to the foliation $\overline{\FF}$ (this is $\FF$ with the opposite coorientation) through foliations without Reeb components although $\FF$ and $\overline{\FF}$ are homotopic as oriented plane fields.  Other applications of \thmref{t:unique} and \thmref{t:unique2} can be found in \secref{s:further}. 

Finally, let us note that when it is possible to prove a parametric version of \thmref{t:unique} without too much additional difficulty, then we will do so. The parametric versions are \thmref{t:transitive} and \thmref{t:sack contract}, they cover foliations with holonomy which do not have closed leaves.   

\subsection{Some ideas for the proof of the uniqueness result}

The main  tools used in this paper stem from \cite{col} by V.~Colin, \cite{confol} by Y.~Eliashberg and W.~Thurston, \cite{gi-bif} by E.~Giroux and \cite{hkm} by K.~Honda, W.~Kazez and G.~Mati{\'c}. 

Just like the proof of \thmref{t:elth} the proof of the uniqueness theorem deals with minimal sets and the rest of the manifold in separate steps. These steps are treated in different order in the proofs of \thmref{t:elth} and \thmref{t:unique}.  First, we fix a pair of neighbourhoods $\widehat{N}\supset N$ of the set of closed leaves and particular curves with linear holonomy (Sacksteder curves, cf. \secref{s:sack intro}) and we choose a $C^0$-neighbourhood of $\FF$ such that the restriction of every contact structure in the $C^0$-neighbourhood to $\widehat{N}$ is tight. Given two contact structures $\xi,\xi'$ in an even smaller neighbourhood of $\FF$ we first deform $\xi$ so that the resulting contact structure $\widehat{\xi}$ coincides with $\xi'$ outside of $N$   such that the contact structures remains tight on $\widehat{N}$ throughout the deformation. Then we use classification results for tight contact structures in order to show that $\widehat{\xi}$ and $\xi'$ are isotopic on $\widehat{N}$.  A somewhat different procedure has to be used when $\FF$ is a foliation without holonomy. 

The first step follows the structure of the proof of the following theorem of V.~Colin.  
\begin{colinstabthm*}[V.~Colin, \cite{col}]
Let $\xi$ be a contact structure on the closed $3$-manifold $M$. Then there is a $C^0$-neighbourhood of $\xi$ in the space of smooth plane fields so that every contact structure in $U$ is isotopic to $\xi$. 
\end{colinstabthm*}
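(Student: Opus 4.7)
The plan is to recognise $\xi$ up to isotopy through finite combinatorial data attached to a contact cell decomposition, and to show that this data is $C^0$-stable.

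First I would fix a contact triangulation $\Delta$ of $M$ adapted to $\xi$: starting from an arbitrary fine smooth triangulation, apply the Legendrian realisation principle to put the $1$-skeleton $\Delta^{(1)}$ into Legendrian position with respect to $\xi$, then $C^\infty$-perturb each $2$-face to be convex. By subdividing if necessary, one can arrange that each $2$-face $F$ has Thurston--Bennequin invariant $-1$ (so that its dividing set $\Gamma_F$ is a single boundary-parallel arc) and that each $3$-cell $B$ is a Darboux ball with the standard equatorial dividing curve on $\partial B$. Eliashberg's uniqueness theorem for tight contact structures on the ball with prescribed convex boundary then implies that the isotopy class of $\xi$ is determined by this combinatorial package.

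Given a contact structure $\xi'$ sufficiently $C^0$-close to $\xi$, the second step is to reproduce the same data for $\xi'$ by a $C^0$-small ambient isotopy. The edges of $\Delta^{(1)}$ are Legendrian for $\xi$ and can be rendered Legendrian for $\xi'$ by a small transverse push. Each $2$-face is then $C^\infty$-perturbed back into convex position with respect to $\xi'$; by $C^\infty$-stability of convex surfaces and their dividing sets, the resulting dividing set is isotopic to $\Gamma_F$. After a further isotopy supported in a collar of the $2$-skeleton one can even upgrade this to equality, not merely isotopy, of the characteristic foliations on $\partial B$ for every $3$-cell $B$.

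With matching boundary data, a second appeal to Eliashberg's uniqueness produces, for each cell $B$, an isotopy from $\xi'|_B$ to $\xi|_B$ fixing $\partial B$ pointwise, provided $\xi'|_B$ is tight. Gluing these local isotopies yields the desired global isotopy. The principal obstacle is precisely this local tightness: it is secured by choosing $\Delta$ fine enough and the $C^0$-neighbourhood $U$ of $\xi$ small enough that no overtwisted disk for any $\xi' \in U$ can be contained in the closure of a single $3$-cell. The underlying compactness argument is that, once $U$ is fixed, an overtwisted disk has a definite geometric size with respect to any auxiliary Riemannian metric on $M$, so sufficiently small cells cannot host one.
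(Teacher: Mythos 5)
Your plan follows the same broad strategy as Colin's proof --- triangulate finely, reduce to data on the $2$-skeleton, and invoke Eliashberg's uniqueness theorem for tight contact structures on the ball --- but routed through the convex-surface/dividing-set formalism of contact cell decompositions rather than through the adapted polyhedral decompositions with direct characteristic-foliation control that the paper (following Colin) uses. That substitution is legitimate in principle. The difficulty is that it leaves the load-bearing step exposed: everything in your second and third paragraphs hinges on knowing that $\xi'|_B$ is tight for every $3$-cell $B$, and your argument for this at the end does not hold up.

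The claim that ``once $U$ is fixed, an overtwisted disk has a definite geometric size'' cannot be taken for granted. For a single fixed contact structure it is true (a small enough metric ball is a Darboux ball), but you need a lower bound on the radius that is \emph{uniform} over all $\xi'$ in the $C^0$-neighbourhood $U$, and since $C^0$-closeness gives no control on the derivatives of $\xi'$, this uniformity is precisely the nontrivial content. Your phrasing also has a circular flavour: you want to shrink the cells depending on $U$ and claim a disk-size lower bound depending on $U$, without pinning either down. The standard way to obtain the required uniformity --- and the way the paper does it --- is through Darboux domains (\defref{d:dd}): domains on which the plane field is a complete connection over $\R^2$. Such domains are tight by \thmref{t:tight connection}, and crucially, being contained in a Darboux domain is a $C^0$-open condition on the plane field (\lemref{l:dd stable}), because it is phrased in terms of the flow of the Legendrian vector field $\partial_x$, which depends $C^0$-continuously on $\xi'$. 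Arranging each $3$-cell to lie in a Darboux domain for $\xi$ and taking $U$ small enough that the same domain persists for every $\xi'\in U$ supplies the local tightness you need; without some such input the proof is incomplete. A related but smaller point: ``$C^\infty$-stability of convex surfaces and their dividing sets'' does not apply across a $C^0$-neighbourhood of contact structures. After re-convexifying a $2$-face $F$ for $\xi'$ you get \emph{some} dividing set, and the fact that it is a single boundary-parallel arc follows again from local tightness together with $\mathrm{tb}(\partial F)=-1$, not from any stability of the characteristic foliation itself.
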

Since we start with a confoliation and not with a contact structure several modifications are needed. As in the proof of \thmref{t:colin stab} one starts with a polyhedral decomposition of $M$ and the main modification of the proof of \thmref{t:colin stab} concerns extensions of the polyhedra which lead to controlled modifications of the characteristic foliation on the boundary. 

The contact structures $\xi,\widehat{\xi},\xi'$ are transverse to a rank $1$-foliation on the tubular neighbourhood $\widehat{N}$. This can be used to show that the restrictions of $\widehat{\xi},\xi'$ to $\widehat{N}$ are tight. We then want to appeal to classification results for tight contact structures. In the case when a connected component of $\widehat{N}$ is a solid torus and the characteristic foliation on the boundary has exactly two non-degenerate closed leaves the contact structure is uniquely determined up to isotopy. If a connected component of $\widehat{N}$ is the tubular neighbourhood of a closed leaf of $\FF$, then the contact structure on $\widehat{N}\simeq \Sigma\times[-1,1]$ is not uniquely determined by the properties of $\widehat{\xi}, \xi'$  we have mentioned so far.  

If $\Sigma$ is a closed leaf then the Euler class $e(\FF)$  of $\FF$ satisfies the extremal condition 
\begin{equation} \label{e:extr intro}
\langle e(\FF),[\Sigma] \rangle = \pm (2-2g)
\end{equation}
where $g$ is the genus of $\Sigma$ and $g\ge 2$ since in the presence of a spherical leaf there is nothing to prove and the case of torus leaves is excluded. We will assume in the following that \eqref{e:extr intro} holds with the plus sign on the right hand side. (In the opposite case one has to interchange positive/negative singularities and attractive/repulsive closed leaves). 

Following ideas in \cite{gi-bif} we show that tight contact structures on $\Sigma\times[-1,1]$ can be distinguished using sheets of the movie of characteristic foliations on $\Sigma_t:=\Sigma\times\{t\}$ which contain attractive closed leaves of the characteristic foliation on $\Sigma_{\pm 1}$. A sheet $A$ is an embedded submanifold in $M$ such that the characteristic foliation $A(\xi)$ is a non-singular foliation by circles, more details can be found in \secref{s:movies main}. The sheets we will consider are formed by closed leaves of the characteristic foliations $\Sigma_t(\xi)$ and of simple closed curves formed by positive elliptic singularities and stable leaves of positive hyperbolic singularities. (If $\xi$ is sufficiently close to $\FF$, then $\Sigma_t(\xi)$ has no negative singularities.) 

When the genus of $\Sigma$ is larger than $1$, we show in \secref{s:higher} using the pre-Lagrangian extension lemma from \secref{s:prelag extend} that a tight contact structure on $\widehat{N}$ is uniquely determined by its restriction to $\widehat{N}\setminus N$ when it is sufficiently close to $\FF$.  Then it follows that $\widehat{\xi}$ is isotopic to $\xi'$ and therefore $\xi$ is isotopic to $\xi'$. 

If $\Sigma$ is a closed surface of genus $\ge 2$ we rely on classification results for tight contact structures from  \cite{hkm}.  If $\Sigma$ is a torus, then we can use the more complete classification of tight contact structures on $T^2\times [-1,1]$ in the form given in \cite{gi-bif} to obtain \thmref{t:unique2}

One of the most important points in the proof of these theorems is to ensure that there is no sheet connecting the two boundary components of $\widehat{N}$. This is done by choosing the neighbourhood of $\FF$ in the space of plane fields properly. In particular, all plane fields are transverse to the foliation on $\widehat{N}\simeq \Sigma\times[-1,1]$ induced by the second factor.  

Because of the position of the contact plane field with respect to the parts of sheets consisting of attractive closed leaves of the characteristic foliation on level surfaces $\Sigma_t$ restrictions on the $C^0$-distance between the contact structures and  $\FF$ lead to restrictions on the position of sheets.  This is illustrated in \figref{b:un-stable}.

The figures on the left hand side of \figref{b:un-stable} show the intersection of $\FF$ with an annulus which is transverse to the line field $\widehat{N}(\FF)$ when $\Sigma$ is a stable (upper part of \figref{b:un-stable}) or an unstable (lower part) torus leaf. In each case the right hand side shows  the intersection of the same annulus with a sheet of a contact structure which could arise when $\FF$ is approximated by a contact structure $\xi$. The thickened arcs correspond to those parts of sheets where $\Sigma_t\cap A$ is an attractive closed leaf of $\Sigma_t(\xi)$ and the straight segments correspond to the contact planes.  

\begin{figure}[htb]
\begin{center}
\includegraphics[scale=0.8]{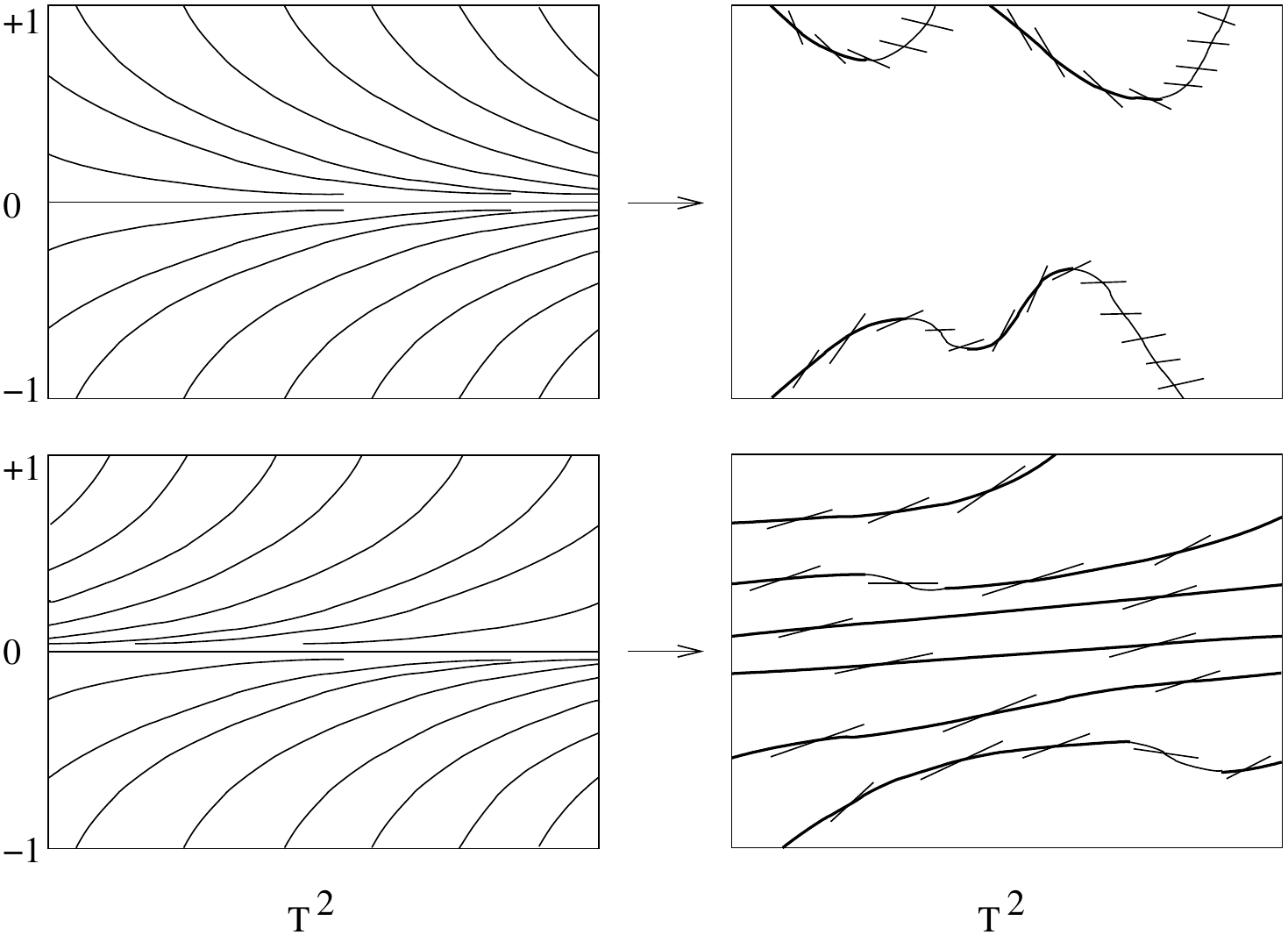}
\end{center}
\caption{Stable/unstable torus leaves and sheets of nearby contact structures.}\label{b:un-stable}
\end{figure}

A difference between the case of torus leaves and the case of surfaces of higher genus is that in the case of surfaces of higher genus  an embedding of an annulus connecting the two boundary components of $\widehat{N}\simeq \Sigma\times[-1,1]$  is determined up to isotopy by the boundary curves while this is not true if the leaf is a torus. If a torus leaf is stable then one can  still choose the neighbourhood $U$ of $\FF$ such that there are no sheets connecting the two boundary components of $\partial\widehat{N}$. If the torus is not stable, then it may happen that no such neighbourhood exists.

As we have already mentioned foliations without holonomy have to be treated in a different fashion. 
Recall from \cite{confol} that foliations without holonomy can be $C^0$-approximat\-ed by fibrations.   The most delicate part of the proof of \thmref{t:unique} for foliations without holonomy is to find a fibration which approximates the foliation well enough so that one can exclude the appearance of sheets which intersect every fiber of the fibration for contact structures close to $\FF$. These approximations are  constructed in \secref{s:facts without} using a theorem of Dirichlet about Diophantine approximations of real numbers.

\subsection{Organization of the paper}

This paper consists of nine sections. The author hopes that the results of this paper are relevant for people interested in contact structures or foliations. In order to make it more accessible we have included most of the relevant definitions and basic theorems in \secref{s:prelim}. However we are very brief and we only prove statements which we did not find in the literature. Also, some results (like Sacksteder's theorem) from the theory of foliations are stated in the section where they are used. In \secref{s:movies main} we review Giroux's theory of movies of contact structures from \cite{gi-bif} (some of this material can be found in \cite{gei}). Again most results we prove are modifications of theorems in \cite{gi-bif} or results which are probably well-known but which we did not find in the literature in the required form. An exception is the pre-Lagrangian extension lemma in \secref{s:prelag extend} which is a new result.

The proofs of \thmref{t:unique} and \thmref{t:unique2} are contained in the Sections 4--8. The author hopes that by dealing with increasingly more difficult situations in separated sections the proofs become more transparent than a proof covering all possible types of minimal sets at once. 
\begin{itemize}
\item \secref{s:transitive} deals with the case of transitive confoliations and its main purpose is to extend the proof of \thmref{t:colin stab} using ribbons. This technique will be used in all subsequent cases, except in the case of foliations without holonomy. 
\item \secref{s:sack} contains a proof of the uniqueness theorem for confoliations which are not foliations without holonomy and have no closed leaves.  In this section we also show how the subsequent proofs for foliations carry over to the confoliated case. 
\item \secref{s:tori} contains a proof of \thmref{t:unique2} when there are no closed leaves of higher genus. 
\item \secref{s:higher} completes the proofs of \thmref{t:unique} and \thmref{t:unique2} for (con-)foliations which are not foliations without holonomy. 
\item \secref{s:without holonomy} contains the proof of \thmref{t:unique} when $\FF$ is a foliation without holonomy. We also  discuss which torus bundles satisfy the conclusion of \thmref{t:unique2}.
\end{itemize}
Finally, \secref{s:appex} contains a discussion of applications of the uniqueness result and examples where the approximating contact structure is not well defined.  In particular, we show that neighbourhoods of  foliations by planes and foliations by cylinders contain many non-isotopic contact structure with vanishing Giroux torsion.

{\thanks Acknowledgments:} This work has benefited from conversations with V.~Colin, Y.~Eliashberg and, in particular, with J.~Bowden. This project was started while the author visited the MSRI, Berkeley, and parts of the paper were written while visiting the Simons Center for Geometry and Physics in Stony Brook. It is a pleasure for me to thank the organizers of the respective programs, and Y.~Eliashberg in particular.

\section{Preliminaries} \mlabel{s:prelim}
 
The sections \ref{s:def}--\ref{s:tight} contain basic definitions from contact topology and the theory of foliations in order to make this text more accessible. In \secref{s:tight class} we review the relevant classification results for tight contact structures.

\subsection{Contact structures, foliations and confoliations} \mlabel{s:def}

In this paper $M$ will always denote a closed connected oriented $3$-manifold. We fix an auxiliary Riemannian metric on $M$. In this section we give some standard definitions and fix some conventions used throughout this paper. We start with the definition of a foliation. Usually,  foliations are defined in terms of a foliated atlas. For our purposes the following definition is more convenient.
\begin{defn} \mlabel{d:fol def}
A $C^k$-smooth, $k\ge 1$, {\em foliation} $\FF$ on $M$ is a plane field such that if $\alpha$ is a $C^k$-smooth $1$-form defined on an open set $U$ given by a $1$-form $\alpha$ with $\ker(\alpha)\eing{U}=\FF$, then $\alpha\ww d\alpha\equiv 0$. 
\end{defn}
By the theorem of Frobenius and Prop. 1.0.2. of \cite{pet} this is equivalent to the standard definition of a foliation of codimension $1$ when $k\ge1$. When $k=0$ it is not even true in general that a foliation defined by an atlas corresponds to a subbundle of the tangent bundle of $M$. But since we will only be interested in $C^2$-foliations, we do not have to discuss this (more information can be found in \cite{cc}). 

Given a foliation $\FF$ there is a collection of immersed hypersurfaces which are everywhere tangent to $\FF$, a maximal connected hypersurface with this property is a {\em leaf} of $\FF$. We will often confuse the collection of leaves with the corresponding plane field.

\begin{defn} \mlabel{d:cont def}
A positive {\em contact structure} on a $3$-manifold is a $C^1$-plane field $\xi$ such that every $1$-form $\alpha$ defined on an open set $V$ with $\ker(\alpha)=\xi\eing{V}$ satisfies $\alpha\ww d\alpha>0$. Negative contact structures are defined by requiring $\alpha\ww d\alpha<0$. A positive {\em confoliation} $\xi$ is a $C^1$-smooth plane field on $M$ such that $\alpha\ww d\alpha\ge 0$ for every $1$-form defining $\xi$ on an open set. 
\end{defn}
Note that if $\alpha\ww d\alpha>0$ holds somewhere, then the same is true for every other $1$-form defining the same distribution. All plane fields in this paper will be oriented subbundles of $TM$, so we can assign an Euler class to each foliation, contact structure or confoliation. 
We consider two plane fields as different when they coincide but have opposite orientations. If $\xi$ is an oriented plane field,  then $\overline{\xi}$ denotes the same plane field with its orientation reversed.

The condition that $\xi$ is a positive confoliation has the following geometric interpretations: 
\begin{enumerate}
\item Fix a vector field $X$ tangent to $\xi$ and a disc $D$ transverse to the flow lines of $X$. The disc is oriented such that its orientation followed by the orientation of $X$ is the orientation of $M$. We denote the flow of $X$ by $\varphi_t$. Then the line field $TD\cap \varphi_{-t*}(\xi)$ rotates clockwise with positive speed as $t$ increases.
\item Let $\xi$ be a positive confoliation on $D^2\times \R$  which is transverse to the second factor and complete as a connection. Then the parallel transport $h: \R\lra\R$ along $\partial D^2$ satisfies
\begin{equation} \label{e:decreasing} 
h(x)\le x \textrm{ for all } x\in\R.
\end{equation}
\end{enumerate}
\thmref{t:elth}  and the second interpretation implies that the closure of the space of positive contact structures in the space of $C^1$-plane fields with respect to the $C^0$-distance is exactly the space of positive  confoliations. 

The following terminology is borrowed from contact topology.
\begin{defn} \mlabel{d:legendrian}
A piecewise smooth curve $\gamma$ in $M$ is {\em Legendrian} if it is tangent to $\xi$ where $\xi$ is any smooth plane field on $M$. 
\end{defn}
 
\begin{defn} \mlabel{d:confol stuff}
If $\xi$ is a confoliation, then the open set
$$
H(\xi)=\{x\in M \,|\, \xi \textrm{ is a pos. contact structure on a neighbourhood of }x\}
$$ 
is the {\em contact region} of $\xi$. We say that $\xi$ is {\em transitive} if for every point of $M$ there is a Legendrian curve which connects $x$ and $H(\xi)$. The {\em fully foliated set} of a confoliation consists of those points which are not connected to $H(\xi)$ by Legendrian curve.  
\end{defn}
The fully foliated set of a confoliation is a closed subset of $M$ containing immersed hypersurfaces everywhere tangent to $\xi$. We will refer to these hypersurfaces as leaves. The theorems from foliation theory which we shall use later carry over to fully foliated sets of confoliations. 

\begin{defn} \mlabel{d:min set}
Let $\FF$ be a foliation on $M$. A subset $X\subset M$ is called {\em minimal} if 
\begin{itemize}
\item[(i)] $X$ is closed,
\item[(ii)] $X$ is a non-empty union of leaves of $\FF$, and
\item[(iii)] $X$ contains no proper subset satisfying (i) and (ii).
\end{itemize}
\end{defn}
If $M$ is compact and carries a foliation, then there are minimal sets and the topological closure of a leaf contains a minimal set. Moreover, every minimal set $X$ of a foliation belongs to one of following three categories: In order to describe them we fix a point $p\in X$ and short interval $I$ transverse to $\FF$ containing $p$. 
\begin{enumerate}
\item $X$ is a closed leaf. Then $X\cap I$ is a discrete set.
\item $X=M$, then every leaf is dense and $X\cap I=I$. 
\item $X$ is an exceptional minimal set, i.e. $X\cap I$ is a Cantor set (so no point of $X\cap I$ is isolated and $X\cap I$ is nowhere dense).  
\end{enumerate}
This is true for foliations of codimension one regardless of the smoothness of the foliation \cite{cc}. It also holds for the fully foliated set of a confoliation. 

If $L$ is an integral surface of a confoliation $\xi$ (i.e. a surface tangent to $\xi$) and $\gamma : S^1 \lra L$ a smooth map, then the holonomy along $\xi$ is defined as follows: Fix an immersed annulus 
$$
\varphi : S^1\times (-\delta,\delta)\lra M
$$ 
transverse to $\xi$ such that $\varphi(z,0)=\gamma(z)$. The characteristic foliation on this annulus has a closed leaf, namely $\gamma(S^1)$ and the Poincar{\'e} return map $h_\varphi$ defined by parallel transport along the oriented curve $\gamma$ is well defined on a neighbourhood of $0$ in $(-\delta,\delta)$. The conjugacy class $h_\gamma$ of the germ of $h_\varphi$ depends only on $\gamma$, it is independent from $\varphi$. In particular, it makes sense to speak of attractive and repulsive holonomy (i.e. $|h_\gamma(x)|<|x|$ etc.) or of fixed points on both sides of $\gamma$ in the annulus. 

\begin{defn} \mlabel{d:holonomy}
The conjugacy class of this germ is the {\em holonomy} of $\xi$ along $\gamma$ and $\xi$ has {\em non-trivial linear holonomy} along $\gamma$ if $h_\gamma'(0)\neq 1$. A foliation $\FF$ is a {\em foliation without holonomy} if  $h_\gamma$ is the germ of the identity for all closed curves $\gamma$ tangent to $\FF$.  
\end{defn}

If $\xi$ is a foliation, then the holonomy depends only on the free homotopy class of $\gamma$ in the integral surface. If $\xi$ is not a foliation, then we have at least the following lemma. 
\begin{lem} \mlabel{l:confol holonomy}
Let $L$ be an integral surface of a confoliation. If $h_\gamma$ has non-trivial linear holonomy, then the same is true for all curves which are freely homotopic to $\gamma$. Also, if  $\Sigma$ is a closed surface and $\gamma$ is a non-separating simple closed curve with attractive holonomy, then every curve isotopic to $\gamma$ also has attractive holonomy. 
\end{lem}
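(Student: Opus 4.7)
My plan is to reduce both assertions to a single preliminary fact: for every null-homotopic loop $\delta$ in an integral surface of $\xi$, the linear part of the holonomy satisfies $h_\delta'(0)=1$. This follows at once from the confoliation inequality in interpretation~(2) preceding \eqref{e:decreasing}. Indeed, restricting the bound $h_\delta(x)\le x$ to $x>0$ yields $h_\delta'(0)\le 1$, while the same bound restricted to $x<0$ forces $|h_\delta(x)|\ge|x|$ and hence $h_\delta'(0)\ge 1$.

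For the first assertion, a free homotopy from $\gamma_0$ to $\gamma_1$ inside $L$ can be realised by an immersed cylinder $C\subset L$. I cut $C$ along an embedded arc $\tau$ joining its two boundary circles, obtaining an immersed disc whose oriented boundary decomposes as the concatenation $\gamma_0\cdot\tau\cdot\overline{\gamma_1}\cdot\overline{\tau}$. The holonomy around this null-homotopic loop is therefore the commutator
\[
c \;=\; T^{-1}\circ h_{\gamma_1}^{-1}\circ T\circ h_{\gamma_0},
\]
where $T$ denotes parallel transport of $\xi$ along $\tau$. The preliminary step yields $c'(0)=1$, and the chain rule then forces $h_{\gamma_0}'(0)=h_{\gamma_1}'(0)$, which is exactly the assertion that non-trivial linear holonomy is a property of the free homotopy class.

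For the second assertion, let $\gamma,\gamma'$ be isotopic non-separating simple closed curves in the closed integral surface $\Sigma$. Because $\Sigma$ is closed and both curves are non-separating, they cobound an embedded annulus $A\subset\Sigma$. Cutting $A$ along a transverse arc makes the argument above apply verbatim and produces
\[
h_{\gamma'} \;=\; T\circ h_\gamma\circ c^{-1}\circ T^{-1}
\]
with $c(x)\le x$ throughout and $c'(0)=1$. When $h_\gamma$ is linearly attractive, i.e.\ $h_\gamma'(0)<1$, the first assertion already gives $h_{\gamma'}'(0)<1$ and the conclusion is immediate. The main obstacle I anticipate lies in the flat attractive case, where $h_\gamma'(0)=1$ but $|h_\gamma(x)|<|x|$ for $x\neq 0$ small: here the strict inequalities $h_\gamma(x)<x$ for $x>0$ and $h_\gamma(x)>x$ for $x<0$ must be propagated through the composition with the monotone germ $c^{-1}$ satisfying $c^{-1}(x)\ge x$, and the two signs of $x$ require separate bookkeeping. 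I expect that combining the one-sided form of the confoliation bound with an orientation-aware analysis of the cobounding annulus $A$ will close this case and complete the proof.
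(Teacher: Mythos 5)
Your preliminary reduction---that $h_\delta'(0)=1$ for every null-homotopic Legendrian loop $\delta$ in an integral surface---is correct, and the commutator argument extracting the first assertion from it is sound. The paper only cites Eliashberg--Thurston for that part, and your argument is essentially the standard one.

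For the second assertion there are two genuine gaps, the second of which you partly diagnose yourself but underestimate.

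First, the phrase ``they cobound an embedded annulus $A\subset\Sigma$'' silently assumes $\gamma$ and $\gamma'$ are disjoint. The hypothesis is only that $\gamma'$ is isotopic to $\gamma$, and for a confoliation you cannot isotope $\gamma'$ off $\gamma$ for free: the holonomy germ of a curve is not an isotopy invariant---only its linear part is, which is exactly the first assertion and exactly what fails to help in the flat case. The paper's proof instead passes to a covering of $\Sigma$, which exists because $\gamma$ is non-separating, in which lifts $\widetilde\gamma,\widetilde\gamma'$ remain closed, become disjoint, and cobound an honest annulus $\widetilde A$ with prescribed oriented boundary $-\widetilde\gamma\cup\widetilde\gamma'$; since the lifted holonomies coincide with the downstairs ones, the estimate descends.

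Second, and more seriously, the flat-attractive case is not a matter of ``separate bookkeeping.'' Cutting a single annulus along a single arc and applying \eqref{e:decreasing} to the resulting disc yields exactly one inequality,
\[
h_{\gamma'}\ \le\ h_\lambda\circ h_\gamma\circ h_\lambda^{-1},
\]
and when $h_\gamma$ is merely (flatly) attractive the right-hand side lies strictly below the identity on $x>0$ but strictly \emph{above} it on $x<0$. So this single estimate establishes $|h_{\gamma'}(x)|<|x|$ only on one half-line; for the other half-line you need the complementary bound $h_{\gamma'}\ge(\text{conjugate of }h_\gamma)$, which cannot be extracted from the same disc. Inside $\Sigma$ the only region with oriented boundary $\gamma\cup-\gamma'$ is $\Sigma\setminus A$, which has positive genus, and cutting it open to a disc introduces handle-commutator holonomies whose higher-order terms you cannot control, so the argument does not close there either. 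Supplying this missing half is precisely what the paper's ``analogous argument for $x<0$'' does (again using the covering to furnish the right geometric configuration); no amount of orientation-tracking on the single annulus $A$ can substitute for that additional input.
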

\begin{proof}
The first statement is proved in \cite{confol}. The second statement follows almost immediately from \eqref{e:decreasing}: Let $\gamma,\gamma'$ be isotopic such that $\gamma$ has attractive holonomy. Consider a covering  of $\Sigma$ such that lifts of $\gamma$ remain closed but $\gamma,\gamma'$ have disjoint lifts $\widetilde{\gamma},\widetilde{\gamma}'$ and the annulus $\widetilde{A}$ between the two lifts has $(-\widetilde{\gamma}\cup\widetilde{\gamma}')$ as its oriented boundary. Such a covering exists because $\gamma$ is non-separating in $\Sigma$. 

Pick an embedded arc $\lambda\subset\widetilde{A}$  connecting  $\widetilde{\gamma}(0),\widetilde{\gamma}'(0)$. Then  \eqref{e:decreasing} applied to the disc bounding the concatenation of $\lambda,\widetilde{\gamma}',(-\lambda),(-\widetilde{\gamma})$ and the pulled back confoliation on a tubular neighbourhood $\Sigma\times(-\delta,\delta)$ of $\Sigma$ such that the second factor is transverse to $\xi$ 
implies 
$$
h_{\gamma'}(x)\le h_{\lambda}\circ h_{\gamma}\circ h_\lambda^{-1}(x)
$$
since $h_\gamma=h_{\widetilde{\gamma}}$ and $h_{\gamma'}=h_{\widetilde{\gamma}'}$. Also, we use the obvious definition of the holonomy $h_\lambda$ along arcs. This implies the claim for $x>0$. An analogous argument proves the claim for $x<0$. 
\end{proof}
It is easy to construct confoliations on neighbourhoods of surfaces such that the second conclusion of the lemma does not hold  for a separating curve (when attractive is defined by strict inequalities). 

Recall the Reeb stability theorem. It can by found e.g. in \cite{cc}. The last part in the statement below is a consequence of the usual Reeb stability theorem (cf. Proposition 1.3.7 of \cite{confol}).  
\begin{thm}[Reeb stability] \mlabel{t:Reeb}
Let $\FF$ be a foliation and $\Sigma$ a leaf of $\FF$ diffeomorphic to a sphere. Then $\FF$ is diffeomorphic to the foliation by the first factor on $S^2\times S^1$.  Every confoliation transverse to the fibers of $S^2\times S^1\lra S^2$ is diffeomorphic to a foliation by spheres. 
\end{thm}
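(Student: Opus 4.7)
The plan is to handle the two assertions in turn, invoking the classical $3$-dimensional Reeb stability mechanism for the first and then reducing the confoliation case to it via a two-sided holonomy argument that exploits the simple connectedness of $S^2$.

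For the first assertion I would carry out the standard open-closed argument on the subset $U\subset M$ of points lying on leaves diffeomorphic to $S^2$. Because $\pi_1(\Sigma)=0$, the holonomy of $\FF$ along every loop in $\Sigma$ is trivial, so a short transversal to $\Sigma$ would be foliated near its central point by leaves $C^1$-close to $\Sigma$, each of which must itself be a sphere; this produces a saturated tubular neighbourhood $\Sigma\times(-\eps,\eps)$ and shows that $U$ is open and non-empty. I would then obtain closedness of $U$ by running the same stability argument in a flow box around any limit leaf $L$ of a sequence of spherical leaves: the approximating leaves appear there as graphs over $L$, which forces $L$ itself to be compact and a sphere. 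Connectedness of $M$ gives $U=M$, so the leaf space $M/\FF$ is a compact connected $1$-manifold and $M\to M/\FF$ is an $S^2$-bundle over $S^1$; orientability of $M$ pins this down to the trivial bundle, and $\FF$ becomes the foliation by the first factor on $S^2\times S^1$.

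For the second assertion my plan is to show that any positive confoliation $\xi$ on $S^2\times S^1$ transverse to the $S^1$-fibers is automatically a foliation by spheres, after which the first assertion applies. Viewing $\xi$ as a connection on the circle bundle $S^2\times S^1\to S^2$, which is complete because the fibers are compact, I would fix a loop $\gamma\subset S^2$ and use that $\gamma$ bounds two discs $D_+$ and $D_-$ with $\partial D_+=\gamma$ and $\partial D_-=-\gamma$. Lifting each $S^1$-fiber to $\R$ over a neighbourhood of $D_\pm$, I would apply \eqref{e:decreasing} on each half: from $D_+$ the holonomy $h\colon\R\to\R$ along $\gamma$ would satisfy $h(x)\le x$, while from $D_-$ the holonomy along $-\gamma$, which is $h^{-1}$, would satisfy $h^{-1}(x)\le x$, giving $h(x)\ge x$. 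Hence $h=\id$, so $\xi$ has trivial holonomy around every loop in $S^2$; the connection is therefore flat, $\xi$ is integrable, and each leaf covers $S^2$ with trivial monodromy and so is a sphere. The main obstacle is the sign bookkeeping in the two-sided use of \eqref{e:decreasing}: one must be careful that reversing the side of $\gamma$ from which the disc is bounded actually reverses the direction of the inequality rather than reproducing it. Once this is in place, the first assertion finishes the proof.
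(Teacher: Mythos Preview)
The paper does not actually prove this theorem: it introduces it with ``Recall the Reeb stability theorem. It can be found e.g.\ in \cite{cc}. The last part in the statement below is a consequence of the usual Reeb stability theorem (cf.\ Proposition 1.3.7 of \cite{confol})'' and then states it without proof. So there is no ``paper's own proof'' to compare against.

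Your argument is essentially the standard one found in those references. The open--closed argument for the first assertion is the classical proof of Reeb stability. For the second assertion, your two-disc holonomy argument is exactly the mechanism behind Proposition~1.3.7 of \cite{confol}: since every loop $\gamma$ in $S^2$ bounds discs on both sides, \eqref{e:decreasing} applied to each disc forces $h_\gamma=\id$, hence the connection is flat and $\xi$ is integrable with spherical leaves. Your flagged concern about sign bookkeeping is the right thing to watch, and your resolution is correct: the two discs induce opposite orientations on $\gamma$, so one gives $h(x)\le x$ and the other gives $h^{-1}(x)\le x$. One small point you might make explicit is why triviality of the holonomy forces $\alpha\wedge d\alpha\equiv 0$: if the confoliation were genuinely contact near some point, the holonomy around a small disc there would be strictly decreasing, contradicting $h=\id$.
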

This theorem holds with minimal smoothness assumptions on the foliation and is also true for confoliations. Since the product foliation on $S^2\times S^1$ cannot be approximated by contact structures spherical  leaves do not play any role in the uniqueness problem.

\subsection{Gray's theorem, surfaces in contact manifolds, convexity} \mlabel{s:charfol}

In the proof of \thmref{t:unique} will use Gray's theorem:
\begin{thm} \mlabel{t:gray}
Let $\xi_t$ be a smooth family of smooth contact structures on a closed manifold $M$. Then there is an isotopy $\psi_t$ of $M$ so that $\psi_{t*}(\xi_t)=\xi_0$ for all $t$.
\end{thm}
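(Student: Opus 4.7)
The plan is to apply the Moser trick. Pick a smooth family of defining $1$-forms $\alpha_t$ with $\ker\alpha_t=\xi_t$; this is possible globally because the $\xi_t$ are (implicitly) cooriented, and otherwise one passes to a double cover and observes that the vector field produced below descends. I would look for a time-dependent vector field $X_t$ whose flow $\phi_t$ satisfies
\[
\phi_t^*\alpha_t = \lambda_t\,\alpha_0
\]
for some smooth positive family $\lambda_t$ with $\lambda_0\equiv 1$. Then $\phi_t^*\xi_t=\xi_0$, so $\psi_t:=\phi_t^{-1}$ meets the conclusion $\psi_{t*}(\xi_t)=\xi_0$.

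Differentiating the defining equation in $t$ and pulling back by $\phi_t^{-1}$ yields, via Cartan's formula, the infinitesimal identity
\[
L_{X_t}\alpha_t + \dot\alpha_t = \mu_t\,\alpha_t,
\]
where $X_t=\dot\phi_t\circ\phi_t^{-1}$ and $\mu_t=(\dot\lambda_t/\lambda_t)\circ\phi_t^{-1}$. I would further require $X_t$ to be tangent to $\xi_t$, so that $\iota_{X_t}\alpha_t=0$ and the equation simplifies to
\[
\iota_{X_t}\,d\alpha_t = \mu_t\,\alpha_t - \dot\alpha_t.
\]

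The key point is the choice of $\mu_t$. Let $R_t$ denote the Reeb vector field, characterized by $\alpha_t(R_t)=1$ and $\iota_{R_t}d\alpha_t=0$. Antisymmetry of $d\alpha_t$ forces $(\iota_{X_t}d\alpha_t)(R_t)=0$, so we must set $\mu_t:=\dot\alpha_t(R_t)$; with this choice the right-hand side $\mu_t\alpha_t-\dot\alpha_t$ annihilates $R_t$ and is therefore determined by its restriction to $\xi_t$. Now the contact condition means exactly that $d\alpha_t|_{\xi_t}$ is a non-degenerate $2$-form on the rank-$2$ bundle $\xi_t$, so the bundle map
\[
\xi_t \longrightarrow \xi_t^{*}, \qquad X \longmapsto \iota_X d\alpha_t|_{\xi_t},
\]
is an isomorphism. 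This yields a unique smooth $X_t\in\xi_t$ solving the equation.

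Because $M$ is closed, $X_t$ integrates to a smooth isotopy $\phi_t$ on $[0,1]$, and $\psi_t=\phi_t^{-1}$ finishes the argument. The one genuine subtlety is the Reeb-direction compatibility that allows us to solve for $X_t$ inside $\xi_t$; everything else is differentiation and standard ODE integration on a compact manifold. The argument is purely local in $t$ and parametric, so it applies verbatim to any smooth family.
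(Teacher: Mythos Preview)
Your proof is correct and is precisely the standard Moser-trick argument. The paper does not give its own proof of this theorem; it simply remarks that the proof is based on Moser's method and refers to \cite{mcduff}, so your write-up is exactly what the paper has in mind.
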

The proof of this theorem is based on Moser's method which is described e.g. in \cite{mcduff}. \thmref{t:gray} holds in the relative case (i.e. if $\xi_t$ is constant on some domain, the resulting isotopy is then the identity on that domain) and it also works with parameters. By Gray's theorem, in order to prove \thmref{t:unique}, it suffices to find a neighbourhood of $\xi$ so that for every pair of contact structures in that neighbourhood there is a family of contact structures interpolating between them. 

The Moser method is omnipresent in all results producing contact isotopies, e.g. the theory of convex surfaces outlined below.

\subsubsection{Characteristic foliations and their singular points} \mlabel{s:sing points}

Let $\Sigma$ be an oriented surface embedded in a contact manifold. If $\Sigma$ has boundary, then the boundary will be assumed to be Legendrian.  

\begin{defn} \mlabel{d:char fol}
The {\em characteristic foliation} $\Sigma(\xi)$ on $\Sigma$ is determined by the singular line field $\xi\cap T\Sigma$ on $\Sigma$, the singularities are points where $\xi(x)=T_x\Sigma$. A singularity is {\em positive} if $\xi_x=T_x\Sigma$ as oriented vector spaces, otherwise the singularity is {\em negative}. If $\Sigma(\xi)$ is one-dimensional at $x\in\Sigma$, then $\Sigma(\xi)(x)$ is oriented so that this orientation followed by the coorientation of $\xi$ coincides with the orientation of the surface.  
An isolated singular point of the characteristic foliation is {\em elliptic} respectively {\em hyperbolic} if its index is $+1$ respectively $-1$. 
\end{defn}

The fact that $\xi$ is a contact structure has strong consequences for the characteristic foliation on a small neighbourhood of the singularities of $\Sigma(\xi)$. Recall from \cite{gi-conv} that the divergence of a singular point of $\Sigma(\xi)$ never vanishes, its sign is well defined and coincides with the sign of the singularity. As the next lemma shows, this is the only property which distinguishes characteristic foliations of contact structures from general singular foliations:

\begin{lem} \mlabel{l:char fol det}
Let $\GG$ be a singular foliation on $\Sigma$ such that there is a defining form $\alpha$ with $d\alpha\neq 0$ at all singular points. Then there is a contact structure $\xi$ on $\Sigma\times(-1,1)$ such that $\Sigma_0(\xi)=\GG$. This contact structure is unique up to an isotopy on a small neighbourhood of $\Sigma_0$ and the isotopy is tangent to the characteristic foliation on $\Sigma_0$. 
\end{lem}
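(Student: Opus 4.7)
My plan has two parts: existence of $\xi$ on $\Sigma \times (-1,1)$ with $\Sigma_0(\xi) = \GG$, and uniqueness up to an isotopy tangent to $\GG$ on $\Sigma_0$.

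\textbf{Existence.} I would use a Giroux-style ansatz. Extend $\alpha$ to $\Sigma \times (-1,1)$ by pullback along the projection $\pr\colon \Sigma \times (-1,1) \to \Sigma$ and consider
$$\beta = \pr^*\alpha + u\, dt$$
for a function $u\colon \Sigma \to \R$ to be chosen. A direct computation, using $\alpha \wedge d\alpha = 0$ since $\alpha$ lives on a $2$-manifold, gives
$$\beta \wedge d\beta = \bigl(u\, d\alpha - du \wedge \alpha\bigr) \wedge dt,$$
so the contact condition is equivalent to positivity of the $2$-form $u\, d\alpha - du \wedge \alpha$ on $\Sigma$. At a singular point $p$ of $\GG$ we have $\alpha(p) = 0$ and the condition reduces to $u(p)\, d\alpha(p) > 0$, which is solvable by sign choice since $d\alpha(p) \neq 0$ by hypothesis: take $u(p) > 0$ where $d\alpha(p)$ is a positive area element and $u(p) < 0$ where it is negative. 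Away from the singular set $\alpha$ is nonvanishing, so in a flow-box $\alpha = g\, dy$ with $g > 0$ the inequality becomes $\partial_x(u/g) < 0$, which is an open condition easily arranged; patch the local models with a partition of unity. Since $\beta|_{T\Sigma_0} = \alpha$, the characteristic foliation of $\ker \beta$ on $\Sigma_0$ equals $\GG$.

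\textbf{Uniqueness.} Let $\xi_0, \xi_1$ be two contact structures near $\Sigma_0$ with $\Sigma_0(\xi_i) = \GG$, and pick defining forms $\beta_i$. Both $\beta_i|_{T\Sigma_0}$ have kernel $\GG$ and the same coorientation, so after rescaling $\beta_i$ by positive functions we may assume $\beta_i|_{T\Sigma_0} = \alpha$. The linear homotopy $\beta_s = (1-s)\beta_0 + s\beta_1$ then satisfies $\beta_s|_{T\Sigma_0} = \alpha$ for every $s$, and by openness of the contact condition remains a contact form for all $s \in [0,1]$ on a possibly smaller neighborhood of $\Sigma_0$. Now apply Moser's method: the vector field $X_s \in \ker \beta_s$ determined by $\iota_{X_s}\, d\beta_s = \mu_s \beta_s - \dot\beta_s$ (with $\mu_s$ pinned down by evaluating on the Reeb field of $\beta_s$) integrates to an isotopy $\psi_s$ with $\psi_s^*\beta_s$ conformally equivalent to $\beta_0$. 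Since $\dot\beta_s$ annihilates $T\Sigma_0$ along $\Sigma_0$, the vector field $X_s|_{\Sigma_0}$ lies in $\ker \beta_s \cap T\Sigma_0 = \ker \alpha = \GG$, so $\psi_s$ preserves $\Sigma_0$ and moves its points along $\GG$.

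The delicate point will be the last step of uniqueness: the normalization $\beta_i|_{T\Sigma_0} = \alpha$ gives only that $\dot\beta_s$ annihilates $T\Sigma_0$ at $\Sigma_0$, not that $\dot\beta_s \equiv 0$ there, and one must show this suffices to force $X_s$ to be genuinely tangent to $\Sigma_0$. If needed, one can achieve the stronger condition $\beta_0 = \beta_1$ as $1$-forms along $\Sigma_0$ by a preliminary shear of the $t$-coordinate fixing $\Sigma_0$ pointwise; after this the Moser vector field is automatically tangent to $\Sigma_0$ and to $\GG$, and the isotopy has the desired properties.
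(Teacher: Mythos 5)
Your existence argument has a genuine gap, and it is the central one. The ansatz $\beta = \pr^*\alpha + u\,dt$ with $u$ a function only on $\Sigma$ yields the contact condition $u\,d\alpha - du\wedge\alpha > 0$, which is precisely the condition that $u$ be a dividing function for $\GG$, i.e.\ that $\Sigma_0$ be a convex surface (compare \lemref{l:why dividing set} and \lemref{l:convex ext} in the paper). Not every singular foliation covered by the lemma is divided. Take $\Sigma = T^2$ and $\GG$ the nonsingular foliation by closed linear curves defined by $\alpha = dx_1$: the hypothesis on singular points is vacuously satisfied, and a contact extension does exist, namely $\ker\bigl(\cos(2\pi t)\,dx_1 - \sin(2\pi t)\,dx_2\bigr)$. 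But your condition reduces to $\partial_{x_2} u > 0$ on $T^2$, which is impossible on a closed surface. The local patching by partition of unity is fine (the condition is affine in $u$), but it cannot overcome this global obstruction: along any closed leaf $\gamma$ of $\GG$, your flow-box inequality $\partial_x(u/g) < 0$ forces $u/g$ to decrease strictly all the way around $\gamma$, a contradiction. So the existence claim fails for every $\GG$ containing a closed leaf. The correct ansatz must let the $\Sigma$-component of the contact form rotate with $t$: write $\beta = \lambda_t + u_t\,dt$ with $\lambda_0 = \alpha$, so that the contact condition at $t=0$ (cf.\ \eqref{e:contact cond}) becomes $u_0\,d\alpha + \alpha\wedge(du_0 - \dot\lambda_0) > 0$. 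The new term $-\alpha\wedge\dot\lambda_0$ supplies the missing freedom: take $u_0$ compactly supported near the singular set with the sign dictated by $d\alpha$, and away from a smaller neighbourhood of the singular set choose $\dot\lambda_0$ so that $\alpha\wedge(-\dot\lambda_0) > 0$; the two regimes glue because the expression is affine in $(u_0,\dot\lambda_0)$.

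Your uniqueness argument is essentially the right strategy (linear interpolation plus Moser), and you correctly identify the normalization step as delicate, but the proposed fix is not enough. Along $\Sigma_0$ one has $\beta_i|_{(p,0)} = \alpha_p + c_i(p)\,dt$, and a shear $\psi(p,t)=(p,f(p,t))$ with $f(\cdot,0)\equiv 0$ gives $\psi^*\beta_1|_{(p,0)} = \alpha_p + f_t(p,0)\,c_1(p)\,dt$. This matches $\beta_0$ only where $c_1\neq 0$ and $c_0/c_1 > 0$. At singular points the $c_i$ are forced to be nonzero of the same sign, so the shear works there; at regular points $c_1$ can vanish while $c_0$ does not, and the $t$-shear cannot help. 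You must also flow along $\Sigma_0$: a diffeomorphism with $\psi_*(\partial_t)|_{(p,0)} = V(p) + a(p)\,\partial_t$ requires $\alpha_p(V(p)) + a(p)c_1(p) = c_0(p)$, which is solvable, but the smoothness of $V$ near a singular point is a nontrivial division problem (and the singular points allowed by $d\alpha \neq 0$ need not have a nondegenerate linearization). This can be handled, but it needs an argument, not the one-line shear you suggest.
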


The following lemma shows that the dynamical properties of the characteristic foliation are quite restricted near isolated singular points. A part of this lemma can be found in \cite{gi-bif}, cf. p. 629.

\begin{lem} \mlabel{l:char fol sing}
Let $p\in\Sigma$ be an isolated singular point of the characteristic foliation $\Sigma(\xi)$. Then the index of $p$ equals $-1,0$ respectively $+1$ and the characteristic foliation on a neighbourhood of $p$ is topologically conjugate to neighbourhoods of a hyperbolic, simply degenerate respectively elliptic singularities.
\end{lem}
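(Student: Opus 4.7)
My strategy is to extract from the contact condition a local non-degeneracy of the vector field representing $\Sigma(\xi)$ at $p$, and then invoke classical planar dynamical systems. Choose coordinates $(x,y,z)$ near $p$ with $\Sigma=\{z=0\}$, $p=0$, and write $\alpha=f\,dz+g_1\,dx+g_2\,dy$. Since $p$ is a singular point of $\Sigma(\xi)$, $g_1(p)=g_2(p)=0$, and evaluating $\alpha\wedge d\alpha$ at $p$ yields
\[
\alpha\wedge d\alpha|_p \;=\; f(p)\bigl(\partial_xg_2-\partial_yg_1\bigr)(p)\,dx\wedge dy\wedge dz.
\]
The contact condition thus forces $f(p)\neq 0$ and $(\partial_xg_2-\partial_yg_1)(p)\neq 0$. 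Representing $\Sigma(\xi)$ by a vector field $X$ through $i_X(dx\wedge dy)=\alpha|_\Sigma$, this says exactly that $\mathrm{div}\,X(p)=\mathrm{tr}\,D_pX\neq 0$, so the linearization at $p$ has rank at least one. This is the non-degeneracy statement recalled in \cite{gi-conv,gi-bif}.

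From this the index claim follows by a classical argument: an isolated zero of a smooth planar vector field whose linearization has rank $\geq 1$ has Poincar\'e index in $\{-1,0,+1\}$, because a contracting or expanding linear direction rules out the accumulation of elliptic and hyperbolic sectors that would be needed, in the Bendixson sector formula $i=1+\tfrac12(e-h)$, to produce a larger index. For the topological conjugacy I would split into cases according to $D_pX$. If $\det D_pX\neq 0$ the linear part is dynamically hyperbolic and Hartman--Grobman gives a local topological conjugacy to the linearization; this is a node or focus when $\det D_pX>0$ (index $+1$, elliptic model) and a saddle when $\det D_pX<0$ (index $-1$, hyperbolic model).

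The remaining case is $\det D_pX=0$ with $\mathrm{tr}\,D_pX\neq 0$, where exactly one eigenvalue vanishes. Here I would invoke the Shoshitaishvili reduction principle: there is a one-dimensional center manifold $W^c\ni p$ such that $X$ is locally topologically conjugate to the product of $X|_{W^c}$ with the hyperbolic linear field in the transverse direction. Isolatedness of $p$ forces the germ of $X|_{W^c}$ at the origin to be of finite order, so up to topological conjugacy it equals $\pm s^k\partial_s$ for some $k\geq 2$; the three resulting products (even $k$ versus odd $k$ with either sign, combined with sink or source in the transverse direction) match the simply degenerate saddle--node model of index $0$ together with the elliptic and hyperbolic types already covered. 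The main obstacle is precisely this degenerate case, where the contact condition supplies no information beyond $\mathrm{tr}\,D_pX\neq 0$: one must bring in center-manifold methods, use isolatedness of $p$ to exclude flat behaviour on $W^c$, and identify the resulting local picture with the ``simply degenerate'' normal form of \cite{gi-bif}. The other two cases are routine consequences of Hartman--Grobman once $\mathrm{div}\,X(p)\neq 0$ has been extracted.
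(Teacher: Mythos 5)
Your proposal is essentially correct and follows the same route as the paper: extract non-vanishing divergence at $p$ from the contact condition, classify according to the linearization $D_pX$, and invoke the center manifold theorem in the degenerate case to reduce to the behaviour of the restricted flow on $W^c$. The one inaccuracy is the claim that isolatedness of $p$ forces the germ of $X\eing{W^c}$ to have finite order — a flat germ such as $e^{-1/s^2}\partial_s$ has an isolated zero as well — but this does not affect the conclusion, since an isolated zero of a one-dimensional vector field is always topologically conjugate to $\pm s^k\partial_s$ regardless of flatness, so the three product types (node, saddle, saddle-node) still emerge as claimed.
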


\begin{proof}
We assume that $p$ is positive. Choose local coordinates $x_1,x_2$ on $\Sigma$ around $p$ with $x_1(p)=x_2(p)=0$ and a $1$-form defining $\alpha$ on a small neighbourhood of $p$ so that there is a vector field $V$ on $\Sigma$ near $p$ such that 
$$
i_V\left(d\alpha\eing{\Sigma}\right)=\alpha\eing{\Sigma}.
$$
Since $\alpha$ is a contact form, $d\alpha$ is an area form on $\Sigma$ near $p$ and the vector field $V$ is well defined near $p$. In terms of  $x_1,x_2$ 
$$
V(x_1,x_2) = \left(\begin{array}{cc} a_{11}(x_1,x_2) & a_{12}(x_1,x_2) \\ a_{21}(x_1,x_2) & a_{22}(x_1,x_2) \end{array}\right)\left(\begin{array}{l}x_1 \\ x_2 \end{array}\right) + o(\|(x_1,x_2)\|)
$$
for smooth functions $a_{ij}(x_1,x_2)$ with $a_{11}(0,0)+a_{22}(0,0)>0$ because the divergence of $V$ is positive at $p$.
Hence the eigenvalues of $A= \left((a_{ij}(p))_{i,j}\right)$ are either both real and at least one of them is positive or both eigenvalues are complex with positive real part. 

Unless $0$ is an eigenvalue of $A$ the singularity is non-degenerate and the index depends only on the sign of $\mathrm{det}(A)$. If $0$ is an eigenvalue of $A$, then $p$ is degenerate and by the center manifold theorem (eg. Theorem 3.2.1 in \cite{guck}) there is a $1$-dimensional unstable manifold (uniquely defined and of class $C^r$) tangent to the eigenspace of the non-vanishing eigenvalue and  a  center manifold $Z$ (not necessarily unique and only of class $C^{r-1}$) tangent to the kernel of $A$. Both submanifolds are invariant under the flow of $V$. 

The index of the singularity is now completely determined by the nature of the isolated zero at $p$ of the restriction of $V$ to $Z$. If $p$ is an attractive respectively repelling singularity of $V\eing{Z}$ then the index of $p$ is $-1$ respectively $1$. If the singularity is attractive on one side while it is repelling on the other, then the index of $p$ is $0$.    

This also shows that an isolated singularity with index $\pm 1$ of the characteristic foliation of a contact structure is topologically equivalent to a non-degenerate singularity with the same index. If the index is $0$, then the unstable manifold of the singularity decomposes a neighbourhood of $p$ into two parts, one half space is filled with integral curves of $V$ whose $\alpha$-limit set is $p$ while the other half looks like the corresponding half space of a hyperbolic singularity and the center manifold is unique  on that side. 
\end{proof}

Let $p$ be a singularity of $\Sigma(\xi)$ and $U\subset\Sigma$ a neighbourhood of $p$ such that $d\alpha\eing{U}$ is an area form. Assume that $p$ is a positive singularity of index $-1$ or $0$. In other words $p$ has a stable leaf. Choose a point $x\in U$ on a stable leaf and $y\in U$ a point on the strong unstable manifold of $p$. We fix half-open intervals $\sigma_x$ respectively $\sigma_y$ containing $x$ respectively $y$ such that there are leaves of the characteristic foliation in $U$ connecting points in the interior of $\sigma_x$ to $\sigma_y$. The positive divergence of $p$ has consequences for map $\varphi$ from $\sigma_x$ to $\sigma_y$ defined by following the leaves of the characteristic foliation:

\begin{lem} \mlabel{l:very rep}
For all $K$ there is a neighbourhood of $x$ in $\sigma_x$ such that 
$$
\varphi'(q)>K
$$
for all $q\neq x$ in that neighbourhood. 
\end{lem}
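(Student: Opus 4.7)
The plan is to derive an explicit formula of the form $\varphi'(u) = e^{T(u)}\cdot C(u)$, where $T(u)$ denotes the transit time of the characteristic-foliation trajectory from $q(u)\in\sigma_x$ to $\sigma_y$ and $C(u)$ is bounded above and below by positive constants as $u\to 0$. The key mechanism is that there is a natural vector field directing $\Sigma(\xi)$ whose flow dilates the relevant area form at the constant exponential rate $1$, and this dilation is independent of the type (hyperbolic or simply degenerate) of the singularity.

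More precisely, choose a $1$-form $\alpha$ defining $\xi$ near $p$ and let $V$ be the vector field on $U$ characterised by $i_V(d\alpha|_{\Sigma})=\alpha|_{\Sigma}$, exactly as in the proof of \lemref{l:char fol sing}; its orbits are the leaves of $\Sigma(\xi)$. Writing $\omega=d\alpha|_{\Sigma}$ (an area form on a small enough $U$ because $d\alpha|_{\Sigma}$ is non-zero at $p$), one has
$$
L_V\omega \;=\; d(i_V\omega)+i_V\,d\omega \;=\; d(\alpha|_{\Sigma}) \;=\; \omega,
$$
since $d\omega=0$ for dimensional reasons. Consequently the flow $\psi_t$ of $V$ satisfies $\psi_t^*\omega = e^t\omega$, i.e.\ area is dilated at the uniform rate $1$.

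Next I would parametrise $\sigma_x$ by $u\mapsto q(u)$ with $q(0)=x$ and $\sigma_y$ by $v\mapsto r(v)$, and define $T(u)$ by $\psi_{T(u)}(q(u))=r(\varphi(u))$. Differentiating this identity in $u$ and pairing with $\omega(\,\cdot\,,V(r(\varphi(u))))$ kills the $T'(u)V$ contribution on the left; using the $V$-invariance of $V$ under its flow together with $\psi_{T(u)}^*\omega=e^{T(u)}\omega$, the equation rearranges to
$$
\varphi'(u) \;=\; e^{T(u)}\cdot\frac{\omega\bigl(q'(u),V(q(u))\bigr)}{\omega\bigl(r'(\varphi(u)),V(r(\varphi(u)))\bigr)}.
$$
Both $\sigma_x$ and $\sigma_y$ are transverse to $V$, and neither $x$ nor $y$ is a zero of $V$, so this quotient is positive, continuous in $u$, and has a non-zero limit as $u\to 0$; in particular it is bounded above and below by positive constants on a small half-neighbourhood of $0$ in $\sigma_x$.

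The final step is to argue that $T(u)\to\infty$ as $u\to 0$. Since $x$ lies on a stable leaf of $p$, the forward orbit $\psi_t(x)$ accumulates only at $p$ and so stays at positive distance from the compact arc $\sigma_y$; continuous dependence of the flow on initial conditions then yields that for every prescribed $T_0>0$ and every sufficiently small $u$ the orbit $\psi_{[0,T_0]}(q(u))$ stays uniformly close to $\psi_{[0,T_0]}(x)$ and therefore cannot cross $\sigma_y$ in time $\le T_0$. Hence $T(u)>T_0$ for $u$ small, forcing $\varphi'(u)\to\infty$. The only point that requires a word of care is the simply degenerate case (index $0$), in which the convergence of $\psi_t(x)$ to $p$ along a centre manifold is merely polynomial; but the formula for $\varphi'(u)$ uses only the divergence of the transit time and not its rate, so that case is handled on exactly the same footing as the hyperbolic one. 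This is the main obstacle, and the Lie-derivative identity $L_V\omega=\omega$ is what makes both cases uniform.
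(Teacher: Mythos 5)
Your proof is correct and takes essentially the same approach as the paper: you identify the Liouville vector field directing the characteristic foliation, observe the exponential dilation ($\psi_t^*\omega = e^t\omega$, the $d$ of the paper's $\psi_t^*\beta = e^t\beta$), and combine it with the divergence of the transit time. Your write-up supplies the details that the paper compresses into ``this implies the claim,'' including the explicit formula $\varphi'(u)=e^{T(u)}C(u)$ and the observation that nothing in the argument distinguishes the hyperbolic from the simply degenerate case.
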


\begin{proof}
Let $X$ be the vector field satisfying $i_Xd\beta=\beta$ where $\alpha=dt+\beta$. Then the time-$t$-flow $\psi_t$ expands $\beta$ exponentially in $t$, i.e. $\psi_t^*\beta=e^t\beta$. As $q$ approaches $x$ the time the flow takes to move $q$ to $\varphi(q)$ goes to infinity because $p$ is a singularity. This implies the claim.
\end{proof}

\subsubsection{Convexity} \mlabel{s:convex}

In this section, we review the notion of convexity. The material presented here was developed by E.~Giroux in \cite{gi-conv}. Since the notion of convexity is standard in contact topology by now we will be very brief. 

\begin{defn} \mlabel{d:convex}
Let $\Sigma\subset (M,\xi)$ be an oriented surface in a contact manifold such that $\partial \Sigma$ is Legendrian. Then $\Sigma$ is {\em convex} if there is a contact vector field transverse to $\Sigma$. 
\end{defn}
Building on \cite{peix} Giroux showed that convexity can be achieved by  $C^\infty$-small perturbations of $\Sigma$ when this surface is closed. If $\Sigma$ has Legendrian boundary, then according to \cite{honda1} the same statement holds (at least for $C^0$-small perturbations fixing the boundary) if the twisting number of $\xi$ along $\partial \Sigma$ is not positive.   When $\Sigma$ convex, then a lot of information about the contact structure near $\Sigma$ is contained the {\em dividing set} $\Gamma$. In order to define it we fix a contact vector field $X$ transverse to $\Sigma$. Then 
$$
\Gamma = \{x\in \Sigma\,|\,X\in\xi_x\}.
$$
It turns out that $\Gamma$ is always a submanifold transverse to $\Sigma(\xi)$ whose isotopy type does not depend on the choice of $X$. Moreover, whether or not a surface in convex can be determined using only the characteristic foliation on $\Sigma$.

\begin{lem}[\cite{gi-conv}] \mlabel{l:why dividing set}
$\Sigma$ is convex if and only if there is a decomposition of $\Sigma$ into two subsurfaces $\Sigma^+,\Sigma^-$ with boundary  such that the boundary $\partial \Sigma^+=\partial \Sigma^-$  which is not part of $\partial \Sigma$ is transverse to $\Sigma(\xi)$  and there are defining forms for the singular foliation $\alpha_+$ on $\Sigma^+$ and $\alpha_-$ on $\Sigma^-$ such that $d\alpha_+>0$ and $d\alpha_-<0$.  

In this case the dividing set is isotopic to the closure of the parts of $\partial\Sigma_{\pm}$ which are not contained in $\partial\Sigma$. 
\end{lem}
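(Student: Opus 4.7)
The plan is to reduce both directions to a direct calculation in a tubular neighbourhood of $\Sigma$, in which a contact form is put into the normal form $\alpha = u\,dt + \beta$ with $u \in C^{\infty}(\Sigma)$ and $\beta \in \Omega^{1}(\Sigma)$ both independent of the normal coordinate $t$. Since $\beta \wedge d\beta = 0$ for dimensional reasons, the contact condition reduces to
$$
\alpha\wedge d\alpha = (u\,d\beta - du\wedge\beta)\wedge dt,
$$
so $\alpha$ is a contact form if and only if $u\,d\beta - du\wedge\beta > 0$ on $\Sigma$.

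For the forward direction, suppose $X$ is a contact vector field transverse to $\Sigma$. I would use the flow of $X$ to identify a neighbourhood of $\Sigma$ with $\Sigma \times (-\varepsilon, \varepsilon)$ so that $X = \partial_{t}$. Any defining form $\alpha_{0}$ of $\xi$ satisfies $\mathcal{L}_{X}\alpha_{0} = \mu\alpha_{0}$ for some function $\mu$, and solving $Xf = -f\mu$ along flow lines with $f = 1$ on $\Sigma$ yields a positive function $f$ for which $\alpha = f\alpha_{0}$ is $X$-invariant, hence of the normal form above. Setting $\Gamma = \{u = 0\}$, $\Sigma^{\pm} = \overline{\{\pm u > 0\}}$, $\alpha_{+} = \beta/u$ on $\Sigma^{+}$, and $\alpha_{-} = -\beta/u$ on $\Sigma^{-}$, a direct computation gives $d\alpha_{\pm} = \pm(u\,d\beta - du\wedge\beta)/u^{2}$, which have the required signs. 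At points of $\Gamma$ the positivity specializes to $-du\wedge\beta > 0$, forcing $du$ and $\beta$ to be pointwise linearly independent; hence $\Gamma$ is a smooth $1$-submanifold with $T\Gamma = \ker du$ transverse to $\Sigma(\xi) = \ker\beta$. The dividing-set identification is immediate: $\{x \in \Sigma : X(x) \in \xi_{x}\} = \{\alpha(X) = 0\} = \{u = 0\} = \Gamma$.

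For the backward direction, given the decomposition I would construct a model contact structure $\xi_{0} = \ker(u\,dt + \beta)$ on $\Sigma \times (-\varepsilon, \varepsilon)$ whose characteristic foliation on $\Sigma \times \{0\}$ matches $\Sigma(\xi)$. By Lemma \ref{l:char fol det} the model $\xi_{0}$ is then isotopic to $\xi$ near $\Sigma$ via an isotopy tangent to $\Sigma(\xi)$ along $\Sigma$ (hence fixing $\Sigma$ pointwise), and pushing $\partial_{t}$ forward by this isotopy yields a contact vector field for $\xi$ transverse to $\Sigma$. To build the model I pick any smooth function $u$ with $\pm u > 0$ on $\Sigma^{\pm}$ and $du \ne 0$ along $\Gamma$; the remaining task is to produce a smooth $1$-form $\beta$ on $\Sigma$ that defines the characteristic foliation (nowhere zero on $\Gamma$) and satisfies $u\,d\beta - du\wedge\beta > 0$ everywhere.

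The main obstacle is precisely this global construction of $\beta$. Off a thin collar of $\Gamma$ the natural choice works: $\beta = u\alpha_{+}$ on $\Sigma^{+}$ and $\beta = -u\alpha_{-}$ on $\Sigma^{-}$ give, by a one-line calculation, $u\,d\beta - du\wedge\beta = u^{2}\,d\alpha_{+} > 0$ and $-u^{2}\,d\alpha_{-} > 0$ respectively. Both formulas vanish on $\Gamma$, however, so they fail to define the foliation there. I would repair this in a collar $\Gamma\times(-\delta,\delta)\subset\Sigma$: because $\Gamma$ avoids the singularities of $\Sigma(\xi)$ and meets $\Sigma(\xi)$ transversely by hypothesis, there is a nowhere-vanishing smooth $1$-form $\gamma$ on the collar with $\ker\gamma = \Sigma(\xi)$; after fixing the sign of $\gamma$ so that $-du\wedge\gamma > 0$ along $\Gamma$ and rescaling $\gamma$ by positive functions so that it extends $\alpha_{+}$ from $\Sigma^{+}$ and $-\alpha_{-}$ from $\Sigma^{-}$ compatibly, I define
$$
\beta = \chi(u)\cdot(\pm u\alpha_{\pm}) + \bigl(1-\chi(u)\bigr)\gamma \quad \text{on } \Sigma^{\pm},
$$
where $\chi$ is a cutoff equal to $1$ for $|u|\ge\delta/2$ and to $0$ for $|u|\le\delta/4$. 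This $\beta$ is smooth, agrees with $\pm u\alpha_{\pm}$ outside the collar and with $\gamma$ near $\Gamma$, and is everywhere a nonzero (or singular at true singularities only) defining form for $\Sigma(\xi)$. Shrinking $\delta$ preserves the positivity of $u\,d\beta - du\wedge\beta$ throughout the transition region, since near $\Gamma$ the dominant term is $-du\wedge\gamma$, which is positive by the sign choice on $\gamma$, while outside the collar one recovers the positive quantities $\pm u^{2}\,d\alpha_{\pm}$.
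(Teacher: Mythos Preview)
The paper does not give its own proof of this lemma; it is quoted from Giroux's paper \cite{gi-conv} as background and stated without argument. So there is nothing in the present paper to compare against, and your write-up is essentially a reconstruction of Giroux's original proof.

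Your forward direction is the standard argument and is correct as written. Your backward direction has the right architecture --- build a $t$-invariant model $\ker(u\,dt+\beta)$ with the prescribed characteristic foliation and then invoke \lemref{l:char fol det} --- and this is also how Giroux proceeds. The one place where your exposition is thin is the transition region of the gluing: you assert that ``shrinking $\delta$ preserves the positivity'' because $-du\wedge\gamma$ dominates, but $\chi'(u)$ scales like $1/\delta$, so the cross-term $u\,\chi'(u)\,du\wedge(\pm u\alpha_{\pm}-\gamma)$ is \emph{not} small. The argument still goes through, because $u\,\chi'(u)\ge 0$ on both sides (check the sign of $\chi'$ separately on $\Sigma^{+}$ and $\Sigma^{-}$) and $du\wedge(\pm u\alpha_{\pm}-\gamma)\to -du\wedge\gamma>0$ as $\delta\to 0$; but you should say this explicitly rather than leave it to the reader. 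With that clarification the proof is complete and matches the standard one.
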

In other words, the dividing set of a convex surface separates the surface into two domains such that the characteristic foliation on each part is tangent to a Liouville vector field associated to an exact area form. 

Given a closed convex surface one can compute the evaluation of the Euler class on that surface as follows
\begin{equation} \label{e:euler}
\langle e(\xi), [\Sigma]\rangle = \chi(\Sigma^+)-\chi(\Sigma^-),
\end{equation}
a fact which is attributed to Kanda \cite{ka}. The following lemma shows that characteristic foliations on convex surfaces can be manipulated effectively. 

\begin{defn}
Let $\Sigma$ be a compact oriented surface and $\Gamma$ a collection of pairwise disjoint simple closed curves and arcs which are transverse to the boundary separating $\Sigma$ into two surfaces with boundary $\Sigma^+,\Sigma^-$. A singular foliation $\GG$ on $\Sigma$ is {\em adapted} to $\Gamma$ (or $\GG$ is {\em divided} by $\Gamma$) if
\begin{itemize}
\item[(i)] $\GG$ is transverse to $\Gamma$, the boundary of $\Sigma$ consists of leaves and singularities of $\GG$ and 
\item[(ii)] there are defining forms $\alpha_\pm$ for $\GG$ on $\Sigma^\pm$ such that $d\alpha_+>0$ and $d\alpha_-<0$. 
\end{itemize}
\end{defn}

\begin{lem}[\cite{gi-conv}] \mlabel{l:giroux flex}
Let $\Sigma\subset(M,\xi)$ be a convex surface, $X$ a transverse contact vector field, $\Gamma$ the associated dividing set and $\GG$ a singular foliation adapted to $\Gamma$. Then there is an isotopy  $\varphi_s: \Sigma\lra M, s\in[0,1],$ such that $\varphi_0$ is the inclusion and $\varphi_{1*}(\GG)$ is the characteristic foliation on $\varphi_1(\Sigma)$. Moreover, $\varphi_s(\Sigma)$ is transverse to $X$ for all $s$ and the characteristic foliation on $\varphi_s(\Sigma)$ is divided by 
$$
x\in \left\{\varphi_s(\Sigma)\cap \left\{x\in M \big| X\in\xi(x)\right\}\right\}
$$
\end{lem}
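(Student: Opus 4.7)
The plan is to trivialize a neighborhood of $\Sigma$ using the flow of $X$, put the contact form in $X$-invariant normal form, replace the horizontal component by a $1$-form defining $\GG$ through a family of $X$-invariant contact forms, and apply an $X$-equivariant version of Gray's theorem to transport $\Sigma$.

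First I would use the flow of $X$ to identify a tubular neighborhood $U$ of $\Sigma$ with $\Sigma\times(-\delta,\delta)$ in such a way that $X$ becomes $\partial_t$. Since $X$ is a contact vector field, $\mathcal{L}_X\alpha=g\alpha$ for some function $g$, and rescaling $\alpha$ by a positive function obtained by integrating $g$ along $X$-orbits makes $\alpha$ strictly $X$-invariant. The form then becomes
\[
\alpha=u(x)\,dt+\beta(x),\qquad u\in C^\infty(\Sigma),\ \beta\in\Omega^1(\Sigma),
\]
and the contact condition is the pointwise positivity of $u\,d\beta-du\wedge\beta$ on $\Sigma$. The dividing set is $\Gamma=\{u=0\}$, the characteristic foliation on $\Sigma\times\{0\}$ is $\ker\beta$, and any surface in $U$ transverse to $X$ meets the locus $\{X\in\xi\}=\Gamma\times(-\delta,\delta)$ exactly along what will play the role of its dividing set.

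The key step is to construct, from the adapted defining forms $\alpha_\pm$ supplied by the hypothesis, a smooth $1$-form $\beta'$ on $\Sigma$ that defines $\GG$ with its coorientation and such that $\alpha'=u\,dt+\beta'$ is still a positive contact form. Away from a collar of $\Gamma$ the choice $\beta'=u\,\alpha_\pm$ works, since a direct computation gives $u\,d\beta'-du\wedge\beta'=u^2\,d\alpha_\pm>0$; but this candidate vanishes on $\Gamma$ and must be modified there, because a defining form for $\GG$ cannot vanish on $\Gamma$ (as $\GG$ is transverse to $\Gamma$). To modify, I would choose on a collar of $\Gamma$ a $\GG$-defining form $\eta$ satisfying $-du\wedge\eta>0$ (the sign being available by transversality of $\GG$ to $\Gamma$ together with the prescribed coorientation), and patch the two descriptions by a cutoff depending only on the coordinate transverse to $\Gamma$, arranging $\eta$ to agree with $u\,\alpha_\pm$ to first order on $\Gamma$. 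This patching is the main technical obstacle; once accomplished the rest is formal.

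With $\beta'$ in hand, set $\beta_s=(1-s)\beta+s\beta'$ and $\alpha_s=u\,dt+\beta_s$. The resulting family of $X$-invariant $1$-forms is contact throughout, because away from $\Gamma$ the condition is a fixed sign of $d(\beta_s/u)$ on $\Sigma^\pm$ (convex in $\beta_s$), and on $\Gamma$ it reduces to $-du\wedge\beta_s>0$ (also convex). Since the family $\alpha_s$ is $X$-invariant in $s$, the Moser/Gray vector field is $X$-invariant and the resulting isotopy $\Psi_s$ of $U$ commutes with the flow of $X$. Set $\varphi_s:=\Psi_s^{-1}|_\Sigma$; then $\varphi_0$ is the inclusion, and because $\Psi_s$ preserves $X$-orbits, $\varphi_s(\Sigma)$ is transverse to $X$ for every $s$. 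The characteristic foliation on $\varphi_s(\Sigma)$ identifies via $\varphi_s$ with $\ker\beta_s$, which at $s=1$ is $\GG$; and transversality to $X$ together with \lemref{l:why dividing set} guarantees that $\varphi_s(\Sigma)\cap\{X\in\xi\}=\varphi_s(\Gamma)$ divides the characteristic foliation for every $s$.
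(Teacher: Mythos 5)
The paper itself gives no proof of Lemma~\ref{l:giroux flex}: it is cited directly from Giroux's \emph{Convexit\'e en topologie de contact}, with Lemma~\ref{l:convex families} stated afterwards as a stronger relative. Your outline follows Giroux's original argument in its overall structure (flow box for $X$, $X$-invariant normal form $u\,dt+\beta$, interpolation of the $\Sigma$-component through contact forms, $X$-equivariant Gray/Moser), and the last two paragraphs --- the convexity of the contact condition in $\beta_s$, the $X$-equivariance of the Moser field, and the conclusion via Lemma~\ref{l:why dividing set} --- are correct as stated.

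However, there are two genuine gaps in the construction of $\beta'$, which you yourself flag as ``the main technical obstacle'' and then do not resolve. First, the candidate $\beta'=u\,\alpha_\pm$ is wrong on $\Sigma^-$: with the sign convention of Definition~\ref{d:adapted} and Lemma~\ref{l:why dividing set} one has $d\alpha_->0$ reversed, i.e.\ $d\alpha_-<0$ on $\Sigma^-$, so $u\,d\beta'-du\wedge\beta'=u^2\,d\alpha_-<0$ there. The correct ansatz is $\beta'=|u|\,\alpha_\pm$, equivalently $\beta'=u\,\alpha_+$ on $\Sigma^+$ and $\beta'=-u\,\alpha_-$ on $\Sigma^-$; this is also what is forced by the observation that $\pm\beta'/u$ must be a positive multiple of $\alpha_\pm$. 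Second, and more seriously, the patching near $\Gamma$ as described cannot be carried out: you propose to arrange that the collar form $\eta$ ``agrees with $u\,\alpha_\pm$ to first order on $\Gamma$,'' but $u\,\alpha_\pm$ vanishes identically on $\Gamma$ while $\eta$, being a defining form of a nonsingular foliation transverse to $\Gamma$, is nowhere zero there; there is no order of agreement to arrange. With $\lambda$ a cutoff in the transverse coordinate, the error term $u\,d\lambda\wedge(\,u\alpha_\pm-\eta\,)$ in the contact condition is $O(u)$ against a positive quantity that is itself only $O(u)$ on a neighbourhood of $\Gamma$ (namely $-du\wedge\beta'_\lambda$ at $u=0$ contributes a bounded positive area form, but the two ``good'' terms multiplying $(1-\lambda)$ and $\lambda$ do not dominate the cross term uniformly without further normalization). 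The standard way around this --- and what Giroux actually does --- is to first isotope $\Sigma$ through surfaces transverse to $X$ so that $\GG$ and $\Sigma(\xi)$ literally coincide on a collar $A$ of $\Gamma$ (this uses only that both are nonsingular, transverse to $\Gamma$, and coherently cooriented there), then take $\beta'=\beta$ on $A$ and normalize the Liouville primitives $\alpha_\pm$ so that $|u|\,\alpha_\pm=\beta$ near $\partial A$, after which the two descriptions glue with no cutoff at all. As written, your proof does not establish that a valid $\beta'$ exists, which is the crux of the lemma.
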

A somewhat stronger version of this statement is \lemref{l:convex families}. An immediate consequence of this lemma is the Legendrian realization principle.
\begin{lem}[\cite{honda1}] \mlabel{l:LeRP}
Let $\Sigma\subset(M,\xi)$ be a convex surface, $\Gamma$ a dividing set and $C\subset \Sigma$ a simple closed curve transverse to $\Gamma$ such that every connected component of $\overline{\Sigma\setminus C}$ meets $\Gamma$. Then there is an isotopy as in \lemref{l:giroux flex} such that $\varphi_1(C)$ is a Legendrian curve in $\varphi_1(\Sigma)$.
\end{lem}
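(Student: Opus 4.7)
The plan is to apply Giroux's flexibility lemma (\lemref{l:giroux flex}): if I can construct a singular foliation $\GG$ on $\Sigma$ that is adapted to $\Gamma$ and has $C$ as a non-singular closed leaf, then the isotopy produced by the lemma turns $\GG$ into the characteristic foliation on $\varphi_1(\Sigma)$, and hence $\varphi_1(C)$ is a leaf of this characteristic foliation and therefore Legendrian. All the remaining conclusions (transversality to $X$, dividing set unchanged up to isotopy) then come for free from \lemref{l:giroux flex}.

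\textbf{Construction of $\GG$.} First I work in a tubular neighbourhood $A\cong C\times(-\eps,\eps)$ of $C$ in $\Sigma$, chosen so narrow that $\Gamma\cap A$ consists of short arcs transverse to both factors (this is possible since $C$ is transverse to $\Gamma$). On $A$ I define $\GG$ by the line field $\ker(dr)$ where $r$ is the second coordinate, so that $C=C\times\{0\}$ is a closed leaf. On $A\cap\Sigma^{\pm}$ I pick defining $1$-forms of the form $\alpha_{\pm}=f_{\pm}(r)\,dr+g_{\pm}(r)\,d\theta$ with $g_{\pm}$ chosen so that $d\alpha_\pm=g_\pm'(r)\,dr\wedge d\theta$ has the desired sign on the respective side of $\Gamma$.

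\textbf{Extension across $\Sigma^{\pm}$.} Let $R$ be a connected component of $\overline{\Sigma^+\setminus A}$. By the hypothesis every component of $\overline{\Sigma\setminus C}$ meets $\Gamma$, and since $A$ is a thin collar of $C$, the boundary of $R$ contains at least one arc of $\partial \Sigma^+\cap\Gamma$ (not only arcs in $C$). On such a surface with boundary partitioned into "$\Gamma$-arcs" and "collar arcs", one can construct an exact area form whose Liouville vector field points outward along the $\Gamma$-arcs and is tangent (transversally, as a half-collar) to the prescribed foliation on the collar arcs; this is a standard Moser-type extension and uses essentially that the $\Gamma$-part of $\partial R$ is non-empty, so Stokes' theorem is not obstructed. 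The analogous construction on the components of $\overline{\Sigma^-\setminus A}$ yields $\alpha_-$ with $d\alpha_-<0$. Gluing $\alpha_\pm$ to the forms already chosen on $A\cap\Sigma^{\pm}$ by a partition of unity (adjusted so that the sign of $d\alpha_\pm$ is preserved) produces a singular foliation $\GG$ on $\Sigma$ that is adapted to $\Gamma$ and has $C$ as a closed leaf.

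\textbf{Conclusion.} Applying \lemref{l:giroux flex} to $\GG$ yields an isotopy $\varphi_s:\Sigma\to M$ with $\varphi_0=\id$, $\varphi_s(\Sigma)\pitchfork X$ for every $s$, and $\varphi_{1*}(\GG)$ equal to the characteristic foliation on $\varphi_1(\Sigma)$. In particular $\varphi_1(C)$ is a leaf of this characteristic foliation, hence Legendrian.

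\textbf{Main obstacle.} The delicate step is the extension of the adapted foliation from the collar $A$ to the rest of $\Sigma^\pm$. This is exactly where the hypothesis that every component of $\overline{\Sigma\setminus C}$ meets $\Gamma$ enters: without it, one would be forced to build an exact area form on a compact surface whose entire boundary is Legendrian for $\alpha_\pm$, which Stokes' theorem forbids. Once this topological condition is in place, the construction is a routine interpolation, and the rest of the argument is a clean application of Giroux's flexibility lemma.
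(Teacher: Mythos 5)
The overall strategy is right, and it is what the paper has in mind when it calls Lemma~\ref{l:LeRP} ``an immediate consequence'' of Lemma~\ref{l:giroux flex}: construct a singular foliation $\GG$ on $\Sigma$ adapted to $\Gamma$ with $C$ Legendrian, then push it onto the characteristic foliation by the flexibility lemma. Your use of the non-isolating hypothesis to build Liouville data on each component of $\Sigma^\pm\setminus A$ is also the correct place where that hypothesis enters. The paper itself gives no proof (it cites [honda1]), so the comparison is with the standard argument.

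There is, however, a real gap in the local model. You try to make $C$ a \emph{non-singular} closed leaf of $\GG$, and this is impossible as soon as $C\cap\Gamma\neq\emptyset$. The oriented characteristic foliation of a convex surface crosses $\Gamma$ always in the same transverse direction (from $\Sigma^+$ to $\Sigma^-$, say). A closed curve transverse to $\Gamma$ alternately enters and leaves $\Sigma^+$, so it must cross $\Gamma$ in \emph{both} transverse directions. Hence a non-singular closed leaf of any adapted foliation is automatically disjoint from $\Gamma$; when $C$ meets $\Gamma$ the realized $C$ must pass through singularities. The standard fix (and what is actually in [honda1]) is to place a positive elliptic singularity in the interior of each arc of $C\cap\Sigma^+$ and a negative one in each arc of $C\cap\Sigma^-$, so that each arc is the union of two separatrices flowing into or out of that singularity; $C$ is then a union of Legendrian leaves and singular points, and the Legendrian conclusion still follows from Lemma~\ref{l:giroux flex}. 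Your local model in $A$ reflects this error: $\ker(dr)$ would make every parallel circle a leaf, which is incompatible with transversality to $\Gamma\cap A$, and is also incompatible with the $1$-forms $\alpha_\pm=f_\pm(r)\,dr+g_\pm(r)\,d\theta$ (those define $\ker(dr)$ only when $g_\pm\equiv 0$, which kills $d\alpha_\pm$). Once you replace the annulus model by the correct singular model along $C$, the extension step you describe over the complementary regions $R$ goes through essentially as you wrote it.
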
 

A basic tool for controlled modifications of the dividing set on a given convex surface used in particular in the work of K.~Honda and J.~Etnyre is the attachment of bypasses. 
\begin{defn}[\cite{honda1}] \mlabel{d:bypass}
A {\em bypass} for the convex surface $\Sigma\subset(M,\xi)$ is an oriented half-disc $D$ which is embedded (except that the two corners of $D$ may coincide) with the following properties. 
\begin{itemize}
\item[(i)] $\partial D=\gamma_1\cup\gamma_2$ is the union of two smooth Legendrian arcs such that $\gamma_1$ is contained in $\Sigma$ and intersects the dividing set $\Gamma$ of $\Sigma$ transversely in exactly three (or two) points, two of these intersection points are the endpoints $\gamma_1$ (or the two endpoints of $\gamma_1$ coincide). The bypass is called {\em singular} if the two endpoints of $\gamma_1$ coincide. 
\item[(ii)] The interior of $\gamma_2$ is disjoint from $\Sigma$.
\item[(iii)] All singular points of $D(\xi)$ along $\gamma_2$ are positive. Apart from $\gamma_1\cap\gamma_2$   there is exactly one more singularity of $D(\xi)$ on $\gamma_1$. It is negative elliptic. 
\end{itemize}
\end{defn}
One boundary component $\Sigma'$ of a neighbourhood of $D\cup\Sigma$ can be chosen such that $\Sigma'$ is convex, diffeomorphic to $\Sigma$ and the dividing set on $\Sigma'$ is obtained from the dividing set on $\Sigma$ by the operation shown in Figure~\ref{b:switch} on p.~\pageref{b:switch}. In that figure the dividing set is dashed and $\gamma_1$ is the diagonal arc in the left-most figure. For more information about bypasses and their applications we refer the reader to \cite{gs,honda1,honda2,hkm} and the references therein.

\subsubsection{Basins of attractive orbits}

The following terminology is from \cite{el20}. It will be used in the proof of the pre-Lagrangian extension lemma in \secref{s:prelag extend}.  
\begin{defn}\mlabel{d:legpoly}
A {\em Legendrian polygon} $(Q,V,\alpha)$ on an oriented surface in a contact manifold $(M,\xi)$ is a smooth immersion 
$$
\alpha : Q \setminus V \lra \Sigma
$$
such that $Q$ is an oriented surface with piecewise smooth boundary, $V$ is a finite set contained in $\partial Q$, $\alpha$ is an orientation preserving embedding on the interior of $Q$, and each segment of $\partial Q\setminus V$ is mapped to a Legendrian arc. Smooth pieces of $\partial Q$ are mapped to smooth Legendrian curves of $F(\xi)$. 

For $v\in V$ the two segments of $\partial Q$ get mapped to two Legendrian curves with the same $\alpha$-/$\omega$-limit set $\gamma_v$ and $\gamma_v$ is not a singularity of $F(\xi)$. The elements of $V$ are called {\em virtual vertices}.

The preimage of a singularity is a {\em pseudovertex} respectively {\em corner} if the singularity has index $0$ or $-1$ and a neighbourhood (in $\partial Q$) of the preimage is mapped to stable leaves of the singularity respectively to a stable leaf and an unstable leaf. 
\end{defn}

Let $\beta\subset\Sigma$ be a non-degenerate attractive closed orbit of $\Sigma(\xi)$. The following definitions (except the notions upper/lower) and lemmas also apply when $\beta$ is a piecewise smooth closed curve consisting of negative singularities and unstable leaves of negative singularities. Fix a closed curve $\sigma$ transverse to $\Sigma(\xi)$ in a small tubular neighbourhood of $\beta$ such that the $\omega$-limit set of every leaf of $\Sigma(\xi)$ intersecting $\sigma$ is contained in $\beta$. 
\begin{defn}
Let $B_\beta$ be the union of all leaves of $\Sigma_t(\xi)$ which intersect $\sigma$. We will call this set the {\em basin} of $\beta$. If $\sigma$ lies on the side of $\beta$ in $U$ determined by the coorientation of $\xi$ respectively on the opposite side we speak of the {\em upper} respectively {\em lower} basin. 
\end{defn}
The proof of the following lemma is completely analogous to the proof of Lemma 3.2 in \cite{rigflex}. The assumptions made before the corresponding Lemma 3.4 in \cite{rigflex} about the non-degeneracy of singular points are not necessary  but they were made in order to facilitate the presentation (see also \lemref{l:char fol sing}).

\begin{lem} \mlabel{l:legpoly exist} Let $B_\beta$ be a basin of an attractive closed leaf of $\Sigma(\xi)$. 
There is a Legendrian polygon $(Q,V,\alpha)$ on $\Sigma$ with $Q=[0,1]\times S^1$   such that 
\begin{itemize}
\item[(i)] $\alpha(\{0\}\times S^1)=\beta$ and
\item[(ii)] $\alpha(Q\setminus V)\cup\bigcup_{v\in V}\gamma_v=\overline{B}_\beta$.  
\end{itemize}
\end{lem}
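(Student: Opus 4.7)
The plan is to construct $\alpha$ by flowing $\sigma$ backward along the characteristic foliation $\Sigma(\xi)$ and ``cutting $\overline{B}_\beta$ open'' along the stable separatrices of the singularities on its boundary, mimicking the construction of Lemma~3.2 in \cite{rigflex}. Topologically this realises $\overline{B}_\beta$ as the image of the annulus $Q=[0,1]\times S^1$ branched at finitely many points.

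To set up, I would choose a continuous vector field $X$ on $\overline{B}_\beta$ tangent to $\Sigma(\xi)$, non-vanishing away from the singular set, and normalised so that its flow $\phi_t$ is periodic of period $1$ on $\beta$; this fixes a parametrisation $\beta:\R/\Z\to\Sigma$. Since $\beta$ is non-degenerate and attractive, a one-sided collar of $\beta$ inside $B_\beta$ is foliated by smooth arcs each limiting to $\beta$, so this collar is identified with $[0,1/2]\times S^1$ sending $\{0\}\times S^1$ to $\beta$ and $\{1/2\}\times S^1$ to an isotopic copy of $\sigma$. I let $\alpha$ on $[0,1/2]\times S^1$ be this identification.

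I would then extend $\alpha$ to $[1/2,1]\times S^1$ by flowing $\sigma$ backward under $\phi_t$. For each $\theta\in S^1$ the backward orbit $t\mapsto\phi_{-t}(\sigma(\theta))$ is defined for $t\in[0,T_\theta)$ with $T_\theta\in(0,\infty]$; we have $T_\theta<\infty$ exactly when the orbit reaches a singularity, in which case $\sigma(\theta)$ lies on a stable separatrix of a singularity of index $-1$ or $0$ by \lemref{l:char fol sing} (positive elliptic singularities have no stable direction). The set $\Theta=\{\theta:T_\theta<\infty\}$ is finite because $\overline{B}_\beta$ is compact and contains only finitely many singularities of index $\le 0$, each with finitely many stable separatrices, and each separatrix meets $\sigma$ in at most one point. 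Setting $\alpha(s,\theta)=\phi_{-\tau(s,\theta)}(\sigma(\theta))$ for $s\in[1/2,1]$ via a smooth rescaling $\tau(s,\theta)$ with $\tau(1/2,\theta)=0$ and $\tau(1,\theta)=T_\theta$ makes $\alpha$ an embedding on the interior of $Q$, since backward orbits through distinct points of $\sigma$ are disjoint. At a critical angle $\theta_i\in\Theta$ the point $\alpha(1,\theta_i)$ is a singular point, which becomes a pseudovertex (index~$0$) or a corner (index~$-1$) in the sense of \defref{d:legpoly}, while between consecutive critical angles the arc of $\{1\}\times S^1$ is mapped to a single Legendrian stable separatrix. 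The virtual vertex set $V$ consists of those points of $\{1\}\times S^1$ where two adjacent boundary arcs share a common non-singular $\alpha$- or $\omega$-limit, which happens precisely when the relevant separatrix loops back to $\beta$ or to another attractive closed orbit on the basin boundary.

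The main obstacle I expect is verifying property~(ii), namely that $\alpha(Q\setminus V)\cup\bigcup_{v\in V}\gamma_v=\overline{B}_\beta$. This reduces to showing that every leaf of $\Sigma(\xi)$ meeting $\overline{B}_\beta$ appears either in the interior of a backward-flow strip, or as the image of one of the distinguished boundary arcs, or as one of the limit sets $\gamma_v$. The argument proceeds by a Poincar{\'e}--Bendixson-type classification of $\alpha$-limit sets inside $\overline{B}_\beta$, combined with a finite case analysis of how boundary separatrices can close up onto closed orbits; the requisite bookkeeping is carried out exactly as in the analogous situation of \cite{rigflex}.
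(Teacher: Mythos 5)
The paper itself gives no proof of this lemma beyond the remark that it is ``completely analogous to the proof of Lemma 3.2 in \cite{rigflex}'', which is also the model you invoke, so at the level of strategy your route is the intended one. As a self-contained reconstruction, however, your sketch contains a sign error and a genuine gap. The sign error: if the backward orbit through $\sigma(\theta)$ converges to a singularity $p$, then $\sigma(\theta)$ lies on an \emph{unstable} leaf of $p$ (its $\alpha$-limit set is $\{p\}$ while its $\omega$-limit set lies in $\beta$), not on a stable one. Consequently your reason for discarding elliptic singularities (``no stable direction'') appeals to the wrong direction: a positive elliptic singularity has a full neighbourhood of unstable directions, so this step of the finiteness argument must be reworked. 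Relatedly, $T_\theta$ is never finite: a smooth vector field's orbit does not reach a zero in finite time, so the correct dichotomy is whether the $\alpha$-limit set of the backward orbit is a singleton.

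The genuine gap is the definition of $\alpha$ on the outer boundary $\{1\}\times S^1$. With $T_\theta=\infty$, your formula $\alpha(s,\theta)=\phi_{-\tau(s,\theta)}(\sigma(\theta))$ defines $\alpha$ only on $[0,1)\times S^1$, and for non-critical $\theta$ the limit $\lim_{s\to 1}\alpha(s,\theta)$ simply does not exist, since the $\alpha$-limit set of the backward orbit is then an entire separatrix, a cycle of separatrices and singularities, or a repulsive closed orbit --- never a single point. The assertion that the arc of $\{1\}\times S^1$ between two consecutive critical angles ``is mapped to a single Legendrian stable separatrix'' is therefore not a consequence of the flow parametrisation; one has to construct a separate parametrisation of $\{1\}\times S^1\setminus V$, match it to the interior flow parametrisation, identify the virtual vertices with the non-singular accumulation sets, and only then can~(ii) be checked. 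Both of these steps --- the boundary parametrisation and the Poincar{\'e}--Bendixon-type classification of limit sets in $\overline{B}_\beta$ --- are precisely the content of the lemma, and you explicitly hand them off to \cite{rigflex} rather than carrying them out, so what you have written does not yet amount to a proof.
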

We say that the Legendrian polygon {\em covers} the basin.

\subsection{Properties of contact structures and foliations} \mlabel{s:tight}

In this section we summarize definitions concerning geometric properties of foliations, contact structures and confoliations.
\begin{defn}\mlabel{d:taut fol}
A foliation $\FF$ is {\em taut} if for every leaf $L$ of $\FF$ there is a closed curve transverse to $\FF$ which intersects $L$. A {\em Reeb component} is a foliation on $S^1\times D^2$ such that the boundary is a leaf. An oriented foliation of $S^1\times[0,1]$ by lines is a two-dimensional Reeb component if the boundary curves are leaves which are oriented in opposite directions. A foliation is {\em minimal} if every leaf is dense.  \end{defn}
In \figref{b:equiv} in \secref{s:tori} one can see a pair of two-dimensional Reeb components. 

The following definition of tight confoliations is an extension of the usual definition of tightness for contact structures as introduced in \cite{confol}.
\begin{defn} \mlabel{d:tight}
An {\em overtwisted disc} in a confoliated manifold is an embedded closed disc $D$ so that $\partial D$ is Legendrian and all singularities of $D(\xi)$ on $\partial D$ have the same sign. A contact structure is {\em tight} if there is no overtwisted disc, otherwise it is {\em overtwisted}. A contact structure is {\em universally tight} if the pull back of $\xi$ to the universal covering $\widetilde{M}$ of $M$ is tight. 

A confoliation is called {\em tight} if for every overtwisted disc $D$ there is an integral $D'$ of $\xi$ with the following properties
\begin{itemize}
\item[(i)] $D'$ is a disc, $\partial D'=\partial D$, and
\item[(ii)] $\langle e(\xi),[D\cup D']\rangle = 0$ where $e(\xi)\in H^2(M;\Z)$ is the Euler class of $\xi$ and $D, D'$ are oriented in such a way that their union is also oriented. 
\end{itemize} 
\end{defn}

This definition interpolates between tight contact structures (in this situation there are no integral discs $D'$) and foliations without Reeb-components. Contact structures (and foliations without Reeb components) satisfy the Thurston-Bennequin inequalities. In order to state them let $\Sigma$ be an oriented compact embedded surface (whose boundary is positively transverse to the plane field $\xi$). Then
\begin{align} \label{e:tb}
\begin{split}
\langle e(\xi),[\Sigma]\rangle = 0 & \textrm{ if } \Sigma\simeq S^2 \\
|\langle e(\xi),[\Sigma]\rangle| \le -\chi(\Sigma) & \textrm{ if } \Sigma\not\simeq S^2 \textrm{ is closed} \\
-\langle e(\xi),[\Sigma]\rangle \le -\chi(\Sigma) & \textrm{ if }\partial\Sigma\neq\emptyset
\end{split}
\end{align}
where the left  hand side of the least inequality is the obstruction for the extension of the trivialization of $\xi$ along the boundary of $\Sigma$ given by $\Sigma(\xi)$ to the interior. As shown in \cite{rigflex} the Thurston-Bennequin inequalities for tight confoliations do not hold in general while they are always satisfied for s-tight contact structures.

\begin{defn} \mlabel{d:otstar} Let $\xi$ be a confoliation on $M$. 
An {\em overtwisted star}  on a compact surface $\Sigma$ is a Legendrian polygon $\alpha : D^2\setminus V \lra \Sigma\subset M$ where $V\subset\partial D$ is a finite set of points and $F$ is a closed embedded surface so that
\begin{itemize} 
\item $\alpha(\partial D\setminus V)$ is a union of Legendrian arcs so that for all $v\in V$ the image of the two arcs approaching $v$ on $\partial D$ have the same $\omega$-limit set $\gamma_v$ when the arcs are oriented towards $v$, and $\gamma_v\cap H(\xi)=\emptyset$.   
\item All singularities of $\Sigma(\xi)$ on $\alpha(\partial D\setminus V)$ have the same sign, and this sign is opposite to all singularities in $\alpha(\ring{D})$.
\end{itemize}
A tight confoliation without overtwisted stars is {\em s-tight}. 
\end{defn}
Tight contact structures are considered to be much more interesting than overtwisted contact structures because of the following classification result of Y.~Eliashberg \cite{elot}. 
\begin{thm} \mlabel{t:ot class} 
For $\xi_0$ a contact structure on a closed manifold $M$ with an overtwisted disc $D$ let $\mathrm{Cont}(M,D,\xi_0)$ be the space of contact structures which have $D$ as an overtwisted disc and are homotopic as plane fields to $\xi_0$ relative to $D$. 

The space $\mathrm{Cont}(M,D,\xi_0)$ is weakly contractible. In particular, two overtwisted contact structures are isotopic if and only if they are homotopic as plane fields. 
\end{thm}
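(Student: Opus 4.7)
The plan is to prove weak contractibility by establishing a parametric, relative h-principle with respect to the overtwisted disc $D$. Let $\mathrm{Dist}(M,D,\xi_0)$ denote the space of oriented plane fields on $M$ that agree with $\xi_0$ on a fixed neighbourhood of $D$ and are homotopic to $\xi_0$ through such plane fields; this space is weakly contractible by standard obstruction theory. The goal is therefore to show that the inclusion $\mathrm{Cont}(M,D,\xi_0)\hookrightarrow\mathrm{Dist}(M,D,\xi_0)$ is a weak homotopy equivalence. Concretely, a class in $\pi_k(\mathrm{Cont}(M,D,\xi_0))$ is represented by a family $\{\xi_s\}_{s\in S^k}$ of contact structures that bounds a family $\{\eta_s\}_{s\in D^{k+1}}$ of plane fields in $\mathrm{Dist}(M,D,\xi_0)$, and we must deform $\{\eta_s\}$ rel $\partial D^{k+1}$ into a family of contact structures.

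The first step is to set up a universal local model near $D$. I would fix a closed tubular neighbourhood $U$ of $D$ (an \emph{overtwisted ball}) with the property that any two contact structures in $\mathrm{Cont}(M,D,\xi_0)$ coinciding outside $U$ are contact-isotopic rel $\partial U$, with a controlled, universally chosen characteristic foliation on $\partial U$. After a parametric application of \lemref{l:giroux flex} and Gray's \thmref{t:gray} (in its parametric relative form), I can assume that every plane field in the family $\{\eta_s\}$ restricts to $\xi_0$ on $U$ and induces the same fixed characteristic foliation on $\partial U$. Thus the problem is reduced to the following extension question on $W:=M\setminus\mathring U$: given a family of plane fields on $W$, contact near $\partial W$ and contact for every $s\in\partial D^{k+1}$, homotope it rel these data to a family of contact structures on $W$.

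The extension is carried out inductively over a handle decomposition of $W$ built from a sufficiently fine triangulation. Over the $0$- and $1$-handles the problem is that of extending a contact structure over a $3$-ball with prescribed boundary characteristic foliation; the obstruction to doing so tightly can be non-trivial, but it is killed by connecting the handle to $D$ through a thin Legendrian tube and performing a controlled overtwisted connected sum along this tube with a standard overtwisted model. This stabilization converts the extension into an overtwisted one, for which the key local result (Eliashberg's normal form theorem for overtwisted contact structures on a ball with prescribed convex boundary) asserts that the space of such fillings is weakly contractible. Patching these local contractions across handles using partitions of unity adapted to the handle decomposition, and invoking Gray's theorem parametrically to reabsorb the resulting deformations, yields the desired family of contact structures on all of $W$.

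The main obstacle, as in Eliashberg's original argument, is the parametric form of the ball-filling theorem: for $k=0$ one needs only an isotopy, but for higher $k$ one needs weak contractibility of the space of overtwisted fillings of a ball with prescribed convex boundary and prescribed interior overtwisted disc. I would establish this by induction on the number of handles and on $k$, using that the difference of two such fillings can, after a preliminary isotopy, be localised in a smaller ball containing a new overtwisted disc, reducing the parametric statement to its lower-dimensional version. Once this parametric local model is in place, the second sentence of the theorem is immediate: applying $\pi_0$-triviality to any two overtwisted contact structures that are homotopic as plane fields shows that they are isotopic, once one has first arranged (by a purely homotopical modification of the plane field inside a small ball) that both contain the same overtwisted disc $D$.
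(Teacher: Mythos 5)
This theorem is cited from Eliashberg \cite{elot} and is not proved in the paper, so there is no internal argument to compare against; I am judging your sketch against Eliashberg's original proof. Your outline does capture the right global shape: localize the overtwisted disc, reduce to an extension problem on $M\setminus U$ over a handle decomposition, and use the disc $D$ as a ``sink'' so that local extension problems can be stabilized into the overtwisted world where a local normal form is available. The ``in particular'' sentence, which is what the paper actually uses (in the proof of \lemref{l:parallel}), falls out of the $\pi_0$-statement exactly as you say.

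There are, however, two genuine gaps. The claim that $\mathrm{Dist}(M,D,\xi_0)$ is weakly contractible ``by standard obstruction theory'' is unjustified: plane fields on a closed oriented $3$-manifold are sections of an $S^2$-bundle, and a connected component of a relative mapping space into $S^2$ has nontrivial higher homotopy in general. Eliashberg's theorem is, properly stated, that the inclusion $\mathrm{Cont}(M,D,\xi_0)\hookrightarrow\mathrm{Dist}(M,D,\xi_0)$ is a weak homotopy equivalence, and this weaker statement already yields the classification consequence; you should frame your argument that way rather than route through contractibility of the target. More seriously, the parametric filling of a $3$-ball with prescribed convex boundary and an interior overtwisted disc is the technical heart of the whole theorem, and you dispatch it with ``I would establish this by induction on the number of handles and on $k$.'' That step is precisely where Eliashberg's hard work lives: normalizing the boundary characteristic foliation, comparing parametrized families of fillings, and showing that stabilization by an overtwisted disc makes the comparison trivial in a way compatible with the double induction over handles and over the dimension of the parameter family. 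As written, your sketch reproduces the strategy of Eliashberg's proof but outsources its content.
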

Moreover, there are interesting  analogies between taut foliations respectively foliations with Reeb components and symplectically fillable contact structures respectively overtwisted contact structures. 

It is easy to show that tightness implies s-tightness in the context of the following theorem. 
\begin{thm}[Eliashberg, Thurston \cite{confol}] \mlabel{t:tight connection}
Let $\xi$ be a confoliation on $\R^3$ which is transverse to the fibers of the projection $\R^3\lra\R^2$ and complete as a connection of this bundle. Then $\xi$ is tight (and s-tight). 
\end{thm}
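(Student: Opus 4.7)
The strategy is to exploit the fact that $\xi$ defines a complete connection on the fibration $\R^3\to\R^2$, combined with the monotonicity property \eqref{e:decreasing} of parallel transport for positive confoliations. I fix a defining $1$-form $\alpha$ with $\alpha(\partial_t)=1$, so that $\alpha=dt+\beta$ with $\beta$ involving only $dx_1$ and $dx_2$; the confoliation condition $\alpha\wedge d\alpha\ge 0$ then encodes the pointwise non-negativity of the curvature of this connection.

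To prove tightness, suppose $D\subset\R^3$ is an overtwisted disc. Since $\partial D$ is Legendrian and $\xi$ is transverse to $\partial_t$, the projection $\gamma:=\pi(\partial D)\subset\R^2$ is a smoothly immersed closed curve of which $\partial D$ is precisely the horizontal lift at some starting height $t_0$; in particular the monodromy at that height satisfies $h_\gamma(t_0)=t_0$. Because $\R^2$ is simply connected, the parameterization $\partial D^2\to\R^2$ extends to a (possibly singular) filling disc $\sigma\colon\overline{D^2}\to\R^2$, and by completeness one can horizontally lift $\sigma$ radially from the chosen base point of $\partial D$ to produce a candidate integral cap $D'\to\R^3$ with $\partial D'=\partial D$. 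The core point is path-independence of the lift: by \eqref{e:decreasing} together with the equality $h_\gamma(t_0)=t_0$ and pointwise non-negativity of the curvature, the integrated curvature on the image of $\sigma$ at height $t_0$ must vanish, which in turn forces the curvature form at height $t_0$ to vanish identically on that region and hence the monodromy along every null-homotopic sub-loop to be trivial. Once $D'$ is constructed, the Euler class condition (ii) in \defref{d:tight} holds automatically since $H^2(\R^3;\Z)=0$, so $\xi$ is tight.

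For s-tightness, I treat overtwisted stars by the same lifting strategy. A putative overtwisted star $\alpha\colon D^2\setminus V\to\Sigma\subset\R^3$ has Legendrian boundary arcs whose $\omega$-limit curves $\gamma_v$ are disjoint from the contact region $H(\xi)$, so each $\gamma_v$ lies in the fully foliated set of $\xi$ and is a closed leaf of an integrable part of the plane field. Projecting the polygon to $\R^2$ and horizontally lifting the resulting filled region produces an integral polygon extending the Legendrian boundary, and the signs of the characteristic-foliation singularities on this integral surface are forced by the confoliation condition to be compatible with those on the boundary — contradicting the opposite-sign hypothesis on interior versus boundary singularities demanded by \defref{d:otstar}.

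The main technical obstacle is verifying the well-definedness of the horizontal lift when $\gamma$ is merely immersed and $\sigma$ has self-intersections: the monotonicity \eqref{e:decreasing} must be applied to every null-homotopic sub-loop in the image, not only to $\gamma$. This is exactly where the interplay of the sharp inequality $h\le\mathrm{id}$ with the equality case provided by the closed horizontal lift $\partial D$ becomes essential — the combination rigidifies the connection to be flat over the entire filled region at height $t_0$, after which the horizontal lift is unambiguous and smooth.
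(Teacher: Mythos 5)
The paper does not in fact prove this theorem: it is quoted from Eliashberg--Thurston \cite{confol}, so there is no proof here to compare against. Judged on its own, your argument contains a genuine gap at its core.

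Your central claim is that the equality $h_\gamma(t_0)=t_0$, combined with the monotonicity \eqref{e:decreasing} applied to sub-loops of $\gamma$, forces the connection to be flat on the filled region, from which the integral cap $D'$ would be obtained by horizontal lifting. This implication is false for non-embedded $\gamma$, and $\gamma=\pi(\partial D)$ is only immersed, not embedded. Consider the model case of a figure-eight: $\gamma$ decomposes into two simple sub-loops $\gamma_1,\gamma_2$ which, when parametrized compatibly with $\gamma$, are traversed with \emph{opposite} orientations, so that $h_\gamma=h_{\gamma_2}^{-1}\circ h_{\gamma_1}$ (up to base-point bookkeeping). The confoliation inequality \eqref{e:decreasing} gives $h_{\gamma_1}\le\mathrm{id}$ and $h_{\gamma_2}\le\mathrm{id}$, hence $h_{\gamma_2}^{-1}\ge\mathrm{id}$; these bounds cancel rather than reinforce. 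Taking $h_{\gamma_1}=h_{\gamma_2}$ strictly decreasing yields $h_\gamma=\mathrm{id}$ while the connection is genuinely contact over the entire filled region. You flag the immersed case as the ``main technical obstacle'' but then assert without justification that the equality case ``rigidifies the connection to be flat''; the figure-eight shows that nothing is rigidified. The lesson is that the monodromy of $\gamma$ alone does not carry enough information. One must use the overtwisted disc $D$ itself, not only $\partial D$: the hypothesis that all singularities of $D(\xi)$ on $\partial D$ have the same sign constrains the rotation of $\xi$ along $\partial D$ relative to the framing induced by $D$, and it is this Bennequin-type twisting — not the existence of a fixed point of $h_\gamma$ — that the monotone turning of $\xi$ around the fibers excludes.

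There are two secondary problems even if the flatness claim were somehow salvaged. First, you argue for flatness ``at height $t_0$'', but the candidate cap $D'$ is the horizontal lift of $\sigma$ and lives at \emph{varying} heights; vanishing of curvature along a single fiber-level does not make $D'$ an integral surface, and closing up the lift requires the holonomy of every concentric sub-loop to fix the radially-lifted height, which is more than $h_\gamma(t_0)=t_0$. Second, the s-tightness paragraph inherits the same gaps: the ``horizontal lift of the filled polygon'' is only integral if the connection is flat over its image, which is precisely what has not been shown.
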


\begin{ex} \mlabel{ex:tight connection}
The $1$-form $dz+f(x,y,z)dy$ defines a contact structure respectively a confoliation if $\frac{\partial f}{\partial x}>0$ respectively $\frac{\partial f}{\partial x}\ge 0$. A simple case when $\xi$ is a complete connection of the bundle 
\begin{align*}
\R^3\ &\lra \R^2 \\
(x,y,z) & \lmt (x,y)
\end{align*} 
is when $f$ is an affine or bounded function. 
\end{ex}
Usually, tightness of a contact structure is shown by either embedding the contact manifold into a contact structure which is already known to be tight, or one uses symplectic fillings or gluing theorems (eg. from \cite{col-glue}).

\begin{defn}
Let $M$ be a closed oriented manifold and $\xi$ a confoliation. A symplectic manifold $(X,\omega)$ is a {\em weak symplectic filling} of $(M,\xi)$ if
\begin{itemize}
\item[(i)] $M=\partial X$ as oriented manifolds (where $X$ is oriented by $\omega\ww\omega$ and the outward normal first convention is used to orient the boundary) and
\item[(ii)] $\omega\eing{\xi}$ is a symplectic vector bundle. 
\end{itemize}
\end{defn}
The following theorem is due to M.~Gromov (for the case when $\xi$ is a confoliation see \cite{rigflex}). 
\begin{thm} \mlabel{t:symp fill}
If a contact manifold $(M,\xi)$ admits a weak symplectic filling, then it is tight. 
\end{thm}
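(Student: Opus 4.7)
The plan is to argue by contradiction using Gromov's method of $J$-holomorphic discs. Suppose $(X,\omega)$ is a weak symplectic filling of $(M,\xi)$ while $\xi$ admits an overtwisted disc $D_0 \subset M$; I aim to produce a one-parameter family of $J$-holomorphic discs that is forced to be non-compact, contradicting Gromov compactness.

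First, I would normalize $D_0$: after a small perturbation inside $M$, arrange that $\partial D_0$ is Legendrian and that all singularities of $D_0(\xi)$ on the boundary are negative while the interior contains exactly one singularity, a positive elliptic point $p_0$. Any additional interior singularities can be removed in pairs of opposite sign by a standard cancellation, so this normal form is achievable for any overtwisted disc. Second, using the weak filling hypothesis $\omega|_\xi>0$, I would attach a collar $M\times[0,\varepsilon)$ to $X$ and deform $\omega$ to $\omega+d(t\alpha)$ for a contact form $\alpha$ defining $\xi$; a direct computation shows this remains symplectic on the collar. On the enlarged manifold I would then choose an almost complex structure $J$ tamed by $\omega$, preserving $\xi$ along $M$, sending the outward normal to the Reeb direction of $\alpha$, and standard near $p_0$ so that a Bishop family of small embedded $J$-holomorphic discs with boundary on $D_0$ emanates from the elliptic point.

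Third, I would consider the moduli space $\mathcal{M}$ of embedded $J$-holomorphic discs $u:(D^2,\partial D^2)\to(X,D_0)$ in the homotopy class of the Bishop discs. Standard transversality makes $\mathcal{M}$ a one-dimensional manifold with one open end given by the shrinking Bishop family at $p_0$. The symplectic area $\int u^*\omega$ is uniformly bounded, since $\omega$ is exact on the collar once we subtract a boundary term, and the Legendrian boundary $\partial u\subset D_0$ is contained in a compact set on which a primitive can be fixed. I would then invoke Gromov compactness: the possible degenerations of a sequence in $\mathcal{M}$ are bubbling of a closed $J$-holomorphic sphere or of a $J$-holomorphic disc with boundary on $D_0$, convergence of $\partial u$ to $\partial D_0$, or collapse to a point. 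Collapse occurs only at $p_0$. Convergence of the boundary to $\partial D_0$ is ruled out by the uniform sign of singularities of $D_0(\xi)$ on $\partial D_0$: such a disc would force an interior tangency of its boundary to $\xi$, contradicting positivity of intersection of $J$-holomorphic curves with $\xi$-compatible surfaces. Bubbling is excluded by the a priori area bound together with the fact that discs with boundary on $D_0$ in the relevant relative homotopy class are index-zero, and closed bubbles are ruled out by monotonicity and minimality of area.

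With all degenerations excluded, $\mathcal{M}$ would be a compact one-manifold with a single boundary component at $p_0$, which is impossible; this contradiction shows that no overtwisted disc exists, so $\xi$ is tight. The main obstacle is the technical handling of the weak filling versus strong filling case: for a strong filling the Liouville vector field near $\partial X$ yields exactness of $\omega$ and the required area estimates directly, whereas for a weak filling only $\omega|_\xi>0$ is available, and one must carefully deform $\omega$ on a collar so that the Stokes-type bound on disc areas survives while keeping $J$ simultaneously tame and compatible with the normalized overtwisted disc. Making this deformation precise, and verifying that the compactification argument is unaffected, is where the real work of the proof lies; this is carried out in detail in \cite{confol} and, for the confoliated setting, in \cite{rigflex}.
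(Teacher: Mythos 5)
The paper does not prove this theorem at all: it is stated as a citation, attributed to Gromov, with the confoliated extension deferred to \cite{rigflex}. There is therefore no internal proof to compare your attempt against. Your sketch is the standard Gromov--Eliashberg filling-by-holomorphic-discs argument: normalize the overtwisted disc so that its interior carries a single positive elliptic singularity and all boundary singularities have the opposite sign, start a Bishop family at the elliptic point with respect to a tamed almost complex structure, establish an a priori area bound, and derive a contradiction from the fact that a one-dimensional moduli space cannot have exactly one boundary component. This is indeed the approach carried out in detail in the references the paper cites, so your proof is the ``right'' one in the sense of matching the literature the author points to.

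A couple of technical remarks on the sketch itself. In the weak filling case you do not actually need to deform $\omega$ on a collar to obtain the area bound; since $\omega$ is closed and all discs in a connected component of the moduli space are homotopic relative to $D_0$, their $\omega$-areas agree, and the Bishop discs have small area, so the bound is automatic. Your collar deformation route also works but imports more machinery than is strictly necessary and introduces its own taming issues. Also, when you rule out boundary degeneration by saying that the uniform sign of the boundary singularities ``would force an interior tangency of its boundary to $\xi$'', you are gesturing at the right phenomenon, but the standard argument is slightly different: one uses that the holomorphic disc boundaries are transverse to the characteristic foliation of $D_0$ with a fixed sign (positivity of intersections of $J$-curves with the $J$-convex hypersurface $M$), so they cannot be trapped against a circle of singularities that all act as sinks of the characteristic foliation. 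Spelling that out carefully, together with your honest acknowledgment of the weak-versus-strong-filling subtleties, is exactly the content of the cited proofs.
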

This criterion is used in \cite{confol} to show the following result about contact structure approximating taut foliations.
\begin{thm} \mlabel{t:taut tight}
Every contact structure which is sufficiently $C^0$-close to a taut foliation is universally tight. 
\end{thm}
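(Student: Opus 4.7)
The plan is to upgrade \thmref{t:symp fill} by constructing a weak symplectic filling out of a closed $2$-form supplied by tautness, and then lifting the construction to the universal cover. The first ingredient is Sullivan's characterisation: $\FF$ is taut if and only if $M$ admits a closed $2$-form $\omega$ with $\omega|_{T\FF}>0$ at every point. Positivity on an oriented $2$-plane is an open condition, so any oriented plane field $\xi$ sufficiently $C^0$-close to $T\FF$ still satisfies $\omega|_\xi>0$.

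Construct a weak filling of $(M,\xi)$ as follows. Take $W=M\times[0,1]$ oriented so that $M\times\{1\}$ inherits the orientation of $M$, let $\alpha$ be a contact form for $\xi$, and set
$$
\Omega=\omega+\varepsilon\,d(t\alpha)=\omega+\varepsilon\,dt\wedge\alpha+\varepsilon t\,d\alpha.
$$
In $\Omega\wedge\Omega$ the term $2\varepsilon\,\omega\wedge dt\wedge\alpha$ is a positive multiple of the volume on $W$, because $\omega\wedge\alpha$ is a positive volume on $M$ (using $\omega|_\xi>0$ and $\alpha|_\xi=0$); the cross term $\omega\wedge d\alpha$ vanishes for dimensional reasons on the $3$-manifold $M$; and $2\varepsilon^2 t\,dt\wedge\alpha\wedge d\alpha$ is also positive since $\alpha\wedge d\alpha>0$. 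Thus $\Omega$ is symplectic for every $\varepsilon>0$ small enough. At $M\times\{1\}$ the form restricts to $\omega+\varepsilon\,d\alpha$, positive on $\xi$ because both summands are. Hence $(W,\Omega)$ is a weak filling and $\xi$ is tight by \thmref{t:symp fill}.

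For universal tightness, pull back the whole construction to the universal cover to get $\widetilde W=\widetilde M\times[0,1]$ with lifted symplectic form $\widetilde\Omega$, weakly taming $\widetilde\xi$ on the boundary. One then wants to reproduce inside $\widetilde W$ the pseudoholomorphic-disc argument underlying \thmref{t:symp fill}: a hypothetical overtwisted disc for $\widetilde\xi$ would seed a Bishop family of $J$-holomorphic discs, and compactness of the moduli space together with the symplectic area pairing would yield a contradiction.

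The main obstacle is precisely this last step: $\widetilde W$ is non-compact, so one has to justify compactness of the relevant moduli of $J$-holomorphic discs. The justification uses that, under the hypotheses, the pull-back $\widetilde\omega$ is exact; this holds when $\widetilde M\cong\R^3$, which is the case for taut foliations on irreducible aspherical closed $3$-manifolds (by Novikov's theorem together with geometrisation, since a taut foliation on a closed $3$-manifold other than $S^2\times S^1$ forces irreducibility). Exactness of $\widetilde\Omega$ bounds the symplectic area of any $J$-holomorphic disc in terms of its boundary, so Gromov compactness applies in $\widetilde W$ and the Bishop family argument then proceeds as in the compact case.
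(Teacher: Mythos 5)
The paper does not prove this theorem itself; it quotes it from \cite{confol}, and the argument there is exactly the one you sketch (Sullivan's closed $2$-form, the product symplectic form $\omega+\varepsilon\,d(t\alpha)$ on $M\times I$, Gromov's filling criterion, lift to the universal cover). So your route is the intended one, but two genuine gaps remain.

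First, $W=M\times[0,1]$ has \emph{two} boundary components, so $(W,\Omega)$ is not a weak filling of $(M,\xi)$ in the sense of the paper's definition, which requires $M=\partial X$; you compute $\Omega|_{M\times\{1\}}$ but never address $M\times\{0\}$, and then conclude ``hence $(W,\Omega)$ is a weak filling.'' What you actually have is a weak \emph{semi}-filling: $M\times\{0\}$, with the boundary orientation $\overline{M}$, carries the same plane field $\xi$ with reversed coorientation, and $\Omega|_{M\times\{0\}}=\omega$ is non-degenerate on it. Gromov--Eliashberg does cover semi-fillings (and this is what \cite{confol} uses), but \thmref{t:symp fill} as stated in the paper does not, so you need to either invoke the semi-filling version explicitly or explain why the extra boundary component is harmless.

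Second -- and this is the real content of universal tightness -- exactness of $\widetilde\Omega$ gives an area bound via Stokes because the boundary curves of the Bishop family stay in a compact neighbourhood of the putative overtwisted disc, but an area bound does \emph{not} by itself prevent the discs from escaping to spatial infinity in the non-compact total space $\widetilde M\times[0,1]$. Before Gromov compactness applies one needs a separate confinement estimate (a monotonicity lemma, or $J$-convexity of a large compact region containing the overtwisted disc). Your sketch flags ``compactness of the moduli'' as the obstacle but then treats exactness as resolving it, which it does not. On a minor point, geometrisation is overkill for $\widetilde M\cong\R^3$: tautness and $M\neq S^2\times S^1$ exclude spherical leaves by Reeb stability, Novikov then gives $\pi_2(M)=0$, and Palmeira's theorem finishes.
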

In general a symplectically fillable contact structure does not have to be universally tight but at least there is a very efficient criterion to decide whether or not there is a universally tight neighbourhood of a convex surface. 
\begin{lem}[Giroux's criterion] \mlabel{l:Giroux crit}
Let $\Sigma$ be a convex surface in a contact manifold $(M,\xi)$ and $\Gamma$ its dividing set. If $\Sigma\simeq S^2$, then we require that $\Gamma$ is connected, otherwise we ask that no component of $\Gamma$ bounds a  disc in $\Sigma$.

Then $\Sigma$ has a neighbourhood so that the restriction of $\xi$ to that neighbourhood is universally tight. 
\end{lem}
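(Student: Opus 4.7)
My plan is to exhibit a weak symplectic filling of a tubular neighborhood of $\Sigma$ whose existence is preserved under passage to covers, so that \thmref{t:symp fill} delivers universal tightness. The hypothesis on $\Gamma$ will enter precisely at the step where the filling is built.

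\emph{Step 1 (Normalization).} Using \lemref{l:giroux flex} I can replace the characteristic foliation $\Sigma(\xi)$, up to an isotopy of $\Sigma$ through surfaces transverse to a contact vector field $X$, by any singular foliation adapted to $\Gamma$. Fix an area form $\omega$ on $\Sigma$ and choose primitives $\beta_{+}$ on $\Sigma^{+}$ and $\beta_{-}$ on $\Sigma^{-}$ with $d\beta_{\pm}=\pm\omega$, arranged so that in a collar of $\Gamma$ both forms look like $\pm f\,d\theta$ in coordinates in which $\Gamma=\{f=0\}$. By \lemref{l:char fol det} the contact structure on a tubular neighborhood of $\Sigma$ is then, up to isotopy, the one defined by a form of the shape $\alpha=u(t,\cdot)\,dt+\beta_{\mathrm{sgn}(t)}$ on $\Sigma\times(-\eps,\eps)$, where $u$ changes sign transversely along $\Gamma\times\{0\}$.

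\emph{Step 2 (Symplectic filling).} On $W:=\Sigma\times(-\eps,\eps)\times(0,1]_{s}$ the $2$-form $\omega_{W}:=d(s\alpha)$ is symplectic, since $\alpha\wedge d\alpha>0$ where $\alpha$ is contact and the remaining points are handled by the explicit form of $\alpha$ coming from $\beta_{\pm}$. Moreover $s\partial_{s}$ is a Liouville vector field transverse to the upper boundary, showing that $W$ weakly dominates $\xi$ on $\Sigma\times(-\eps,\eps)\times\{1\}$. To close $W$ off into a compact weak filling of a whole neighborhood of $\Sigma$, cap off the lower boundary $\{s=0\}$ by attaching handles modeled on each component of $\Sigma^{+}$ and $\Sigma^{-}$ using the Liouville forms $\beta_{\pm}$. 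This is where the hypothesis enters: the assumption that no component of $\Gamma$ bounds a disc in $\Sigma$ (respectively that $\Gamma$ is a single circle when $\Sigma\simeq S^{2}$) forces every component of $\Sigma^{\pm}$ either to have non-positive Euler characteristic, or to be a disc whose mate on the other side of $\Gamma$ is the complementary disc in $S^{2}$; in both situations the Liouville pieces $(\Sigma^{\pm},\beta_{\pm})$ can be extended to a genuine symplectic cap (in the $S^{2}$ case the resulting filling is the standard symplectic ball and the neighborhood is modeled on a neighborhood of the unit sphere in standard tight $\R^{3}$). Were a component $C$ of $\Gamma$ to bound a disc $D\subset\Sigma^{+}$, the corresponding piece of $\Sigma^{+}$ would be a disc on which $\beta_{+}$ is a contact form on $\partial D$ of positive Bennequin number, obstructing the cap and indeed producing an overtwisted disc after a small pushoff.

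\emph{Step 3 (Passage to covers).} If $\pi\colon\widetilde{M}\to M$ is any covering and $(W,\omega_{W})$ weakly fills a neighborhood $N$ of $\Sigma$ in $M$, then $(\pi^{*}W,\pi^{*}\omega_{W})$ is a weak filling of $\pi^{-1}(N)$, so every cover of the neighborhood is weakly fillable. By \thmref{t:symp fill} each such cover is tight, which is exactly universal tightness of the neighborhood. The main obstacle in the argument is Step 2, namely the construction of the cap; once this is in place the rest is automatic. The role of the disc hypothesis on $\Gamma$ is precisely to rule out the local obstruction to filling, and it is not difficult to check that in the excluded cases a disc-bounding component of $\Gamma$ can be pushed into the collar to produce a genuine overtwisted disc, so the hypothesis is sharp.
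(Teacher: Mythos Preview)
The paper does not prove this lemma; it is quoted as Giroux's criterion and used as a black box. So there is nothing to compare against directly, and your proposal has to stand on its own.

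It does not. The core problem is that \thmref{t:symp fill} concerns \emph{closed} contact $3$-manifolds: a weak symplectic filling $(X,\omega)$ is by definition a compact symplectic $4$-manifold with $\partial X=M$. A tubular neighbourhood $N\simeq\Sigma\times(-\eps,\eps)$ is open (or, if you close the interval, has nonempty boundary), so it cannot occur as $\partial X$ for any compact $X$. Your Step~2 never produces a compact $4$-manifold whose boundary is the neighbourhood: the symplectisation $\Sigma\times(-\eps,\eps)\times(0,1]$ has $s=0$ missing, and ``capping off the lower boundary by attaching handles modelled on $\Sigma^{\pm}$'' does not make sense here --- the $s\to 0$ end is a degenerate copy of the whole $3$-dimensional neighbourhood, not a union of $2$-dimensional pieces to which Liouville handles could be glued. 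So there is no object to which \thmref{t:symp fill} applies.

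Step~3 has a separate gap. Even when a genuine weak filling exists, pulling it back along an \emph{infinite} cover yields a noncompact symplectic manifold, and Gromov's theorem does not apply; weak fillability is not in general preserved under infinite covers. Since the universal cover of $\Sigma\times(-\eps,\eps)$ is infinite whenever $g(\Sigma)\ge 1$, this is exactly the case you need.

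The standard arguments for Giroux's criterion proceed differently: one uses \lemref{l:giroux flex} to put the characteristic foliation into a specific model form and then either embeds the $\R$-invariant neighbourhood explicitly into a known universally tight contact manifold, or analyses the lifted contact structure on the universal cover $\widetilde{\Sigma}\times\R$ directly (reducing, in the hyperbolic case, to a planar model covered by \thmref{t:tight connection}). If you want to salvage a filling-style argument, the right statement is not \thmref{t:symp fill} but rather the existence of a closed $2$-form on $\Sigma\times\R$ dominating $\xi$; this is where the hypothesis on $\Gamma$ enters, and it does survive passage to covers --- but that is a different theorem from the one you invoked.
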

This lemma applies to the case when $\Sigma$ is a sphere in a contact manifold such that $\Sigma(\xi)$ has exactly two singular points and all leaves of $\Sigma(\xi)$ connect the two singular points. Such a sphere is automatically convex. The following corollary of Giroux's criterion can be found in \cite{hkm}.

\begin{cor} \mlabel{c:extremal}
Let $\xi$ be a tight oriented contact structure near an oriented closed surface $\Sigma$ with positive genus such that $\langle e(\xi),[\Sigma] \rangle = \pm \chi(\Sigma)$ and $\Sigma$ is convex. Then $\Sigma^-$ or $\Sigma^+$ is a nonempty union of annuli.
\end{cor}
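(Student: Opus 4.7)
The plan is to combine the Euler-class formula \eqref{e:euler} with the standard form of Giroux's criterion, plus a little Euler-characteristic bookkeeping. From the splitting $\Sigma=\Sigma^+\cup\Sigma^-$ one has
$$\chi(\Sigma^+)+\chi(\Sigma^-)=\chi(\Sigma) \quad\text{and}\quad \chi(\Sigma^+)-\chi(\Sigma^-)=\langle e(\xi),[\Sigma]\rangle=\pm\chi(\Sigma),$$
which immediately force $\chi(\Sigma^-)=0$ or $\chi(\Sigma^+)=0$ according to the sign appearing in the extremal hypothesis. The two cases are interchanged by reversing the coorientation of $\xi$, so I would assume without loss of generality that $\chi(\Sigma^-)=0$.

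Next I would analyse the components of $\Sigma^-$. Each is a compact oriented surface with non-empty boundary, hence its Euler characteristic is at most $1$, with equality precisely for the disc. A disc component would exhibit a circle of the dividing set $\Gamma$ bounding a disc in $\Sigma$; since $\Sigma$ has positive genus, the standard strengthening of Giroux's criterion \lemref{l:Giroux crit} would then force the contact structure to be overtwisted on every neighbourhood of $\Sigma$, contradicting tightness. Thus every component of $\Sigma^-$ satisfies $\chi\le 0$, and summing to zero forces each component to have $\chi=0$. Since $\Sigma$ is oriented, Möbius bands are ruled out and only annuli remain.

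It remains to exclude $\Sigma^-=\emptyset$. In that case $\Gamma=\emptyset$ and \lemref{l:why dividing set} would provide a global defining $1$-form $\alpha_+$ for $\Sigma(\xi)$ on the closed surface $\Sigma=\Sigma^+$ with $d\alpha_+>0$ everywhere, contradicting Stokes' theorem. The only non-formal ingredient in the whole argument is the overtwisted-disc half of Giroux's criterion used to eliminate disc components; everything else is linear algebra on Euler characteristics and a one-line application of Stokes.
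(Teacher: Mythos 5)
Your proof is correct and follows essentially the same route as the paper: Giroux's criterion rules out disc components of $\Sigma^{\pm}$ (so every component has $\chi\le 0$), and the two Euler-characteristic identities then force one of $\Sigma^{\pm}$ to consist of annuli. The one place you are more careful than the paper is in ruling out $\Sigma^-=\emptyset$ via Stokes' theorem; the paper leaves this implicit in ``the claim is now an easy consequence,'' so your explicit argument is a small but welcome completion, not a different method.
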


\begin{proof} 
No component of the dividing set of $\Sigma$ bounds a smooth disc. Hence all components of $\Sigma^+$ and $\Sigma^-$ have non-positive Euler characteristic. The claim is now an easy consequence of $\langle e(\xi),[\Sigma]\rangle=\chi(\Sigma^+)-\chi(\Sigma^-)$ and $\chi(\Sigma)=\chi(\Sigma^+)+\chi(\Sigma^-)$. 
\end{proof}
According to the Thurston-Bennequin inequalities \eqref{e:tb} the situation considered in the corollary corresponds to the maximal possible absolute value of the evaluation of the Euler class on a closed surface in a tight contact manifold or a foliation without Reeb components.  A contact structure $\xi$ on $\Sigma\times[0,1]$ will be called {\em extremal} if $|\langle e(\xi),[\Sigma]\rangle|=-\chi(\Sigma)$ where $\Sigma$ is an oriented closed surface. 

There is another invariant associated to contact structures which can distinguish diffeomorphism classes of tight contact structures.
\begin{defn} \mlabel{d:torsion}
Let $(M,\xi)$ be a contact manifold. For every positive integer $n$ we consider the contact structures 
$$
\xi_n=\ker(\cos(2\pi nt)dx_1-\sin(2\pi nt)dx_2)
$$ 
on $T^2\times[0,1]$. The {\em Giroux torsion} of $(M,\xi)$ is 
$$
\sup\left\{ m\left|\begin{array}{l} m\in\mathbb{N}^+ \textrm{ and there is a contact embedding}   \\ (T^2\times[0,1],\xi_m)\lra (M,\xi)\textrm{ or } m=0\end{array}\right\}\right. .
$$
\end{defn}
In the previous definition one can specify the isotopy class of the embedding of $T=T\times\{0\}$. If such an embedding is specified (eg. by a torus leaf of a foliation) then we will sometimes refer to the Giroux torsion along $T$. The contact structures $\xi_k$ from \exref{ex:T3} have Giroux torsion $k-1$ and are hence distinguished by this invariant.

\subsection{Classification results for tight contact structures} \mlabel{s:tight class}

There are several classification results for tight contact structures up to isotopy relative to the boundary that we shall use. They concern $B^3, S^1\times D^2, T^2\times[0,1]$ and $\Sigma\times[0,1]$ where $\Sigma$ is a surface with genus $g\ge 2$. The following result is fundamental.

\begin{thm}[Eliashberg, \cite{el20, gi-bour}]\mlabel{t:fill ball}
The space of positive tight contact structures on $(B^3,\partial B^3)$ which induce a fixed characteristic foliation $\GG_0$ on $\partial D$ is weakly contractible. It is not empty if and only if $\GG_0$ admits a taming function. 
\end{thm}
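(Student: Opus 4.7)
The plan is to handle the two assertions separately. For the non-emptiness criterion, the backward implication (taming function implies existence) is constructive: a taming function $f$ on $\partial B^3$ yields an area form $\omega$ and a vector field $X$ directing $\GG_0$ whose divergence with respect to $\omega$ has the prescribed sign at each singularity. Using \lemref{l:char fol det} one obtains a contact germ on a collar of $\partial B^3$ inducing $\GG_0$. To extend this germ to a tight contact structure on the whole of $B^3$ I would embed the collar into the standard tight $(S^3,\xi_{\mathrm{std}})$ by Darboux-type charts built from $f$, and pull back $\xi_{\mathrm{std}}$; tightness is inherited. For the forward implication, given a tight filling one perturbs $\partial B^3$ to be convex (see \secref{s:convex}), reads off the dividing set, and uses divergences of a primitive of $d\alpha$ with respect to an area form adapted to $\Sigma^\pm$ to assemble a taming function; the assumption of tightness is what rules out the local configurations (overtwisted stars around singularities, see \defref{d:otstar}) that would obstruct this assembly.

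For weak contractibility, given a continuous family $\{\xi_s\}_{s\in S^k}$ of tight contact structures on $(B^3,\partial B^3)$ all inducing $\GG_0$, I would produce an explicit null-homotopy in three steps. First, using \lemref{l:char fol det} in its relative parametric form, isotope each $\xi_s$ so that every member of the family coincides on a fixed collar $U$ of $\partial B^3$ with a single reference germ determined by $\GG_0$. Second, sweep out $B^3\setminus\{0\}$ by a smooth family of spheres $\Sigma_\tau$ with $\Sigma_1=\partial B^3$ and $\Sigma_\tau$ shrinking to the centre as $\tau\to 0$; for each $s$, apply \lemref{l:giroux flex} (parametrically in $s$) to modify $\xi_s$ so that $\Sigma_\tau(\xi_s)$ depends only on $\tau$, and so that on a small ball around the centre every $\xi_s$ coincides with a fixed Darboux model. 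Third, construct the null-homotopy by linearly contracting the standardizing isotopies, reducing the parametric problem to the uniqueness (up to isotopy) of the Darboux germ at a point, which is classical.

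The hardest step will be the second one: constructing the movie of characteristic foliations $\Sigma_\tau(\xi_s)$ simultaneously over $(s,\tau)\in S^k\times(0,1]$ without introducing overtwisted discs at any intermediate parameter. For a single contact structure this is the content of Eliashberg's argument in \cite{el20}: the taming function on $\GG_0$ is propagated inward along the sweep-out, and it governs which elliptic–hyperbolic pairs may be eliminated and when. In the parametric setting one must perform these eliminations in continuous families, handling the births and deaths of singularities compatibly across $S^k$. The propagated taming function again makes this possible, while tightness of each $\xi_s$ is exactly what guarantees no obstruction to the required cancellations appears at interior times. Once this movie is in place, the contraction is essentially formal, and the isotopies constructed along the way can be made to fix the boundary because the normalization in step one was carried out relative to $\partial B^3$.
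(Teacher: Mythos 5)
The paper does not prove this theorem: it is a cited result of Eliashberg (and Giroux), and the text right after the statement says the parametric version ``is implicitly contained in Theorem 2.4.2 of \cite{el20} and stated in \cite{gi-bour}'' while also admitting that ``there are no proofs with parameters in these references.'' So there is no in-paper proof to compare with, and you should be aware you are proposing to fill a gap that the author himself flags. With that caveat, your sketch has two concrete problems in the non-parametric part. For existence, ``embed the collar into the standard tight $(S^3,\xi_{\mathrm{std}})$ by Darboux-type charts built from $f$'' is a restatement of the conclusion, not a construction: there is no a priori reason the germ determined by $\GG_0$ embeds into $(S^3,\xi_{\mathrm{std}})$; showing that a taming function yields such a filling is exactly what has to be proved, and Eliashberg does it by sweeping by $2$-spheres from the boundary inward and propagating the taming function, not by a reference embedding. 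For the forward direction, ``perturb $\partial B^3$ to be convex'' alters the characteristic foliation, but the taming function is required for the \emph{given} $\GG_0$; one needs an argument on $\GG_0$ itself, using tightness to exclude closed leaves and limit cycles that would obstruct a monotone function along leaves.

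For the weak contractibility, you correctly isolate the hard step but then essentially assert it works. The real issue, which you name but do not resolve, is that the births/deaths of singularities and closed orbits in the family $\Sigma_\tau(\xi_s)$ must be cancelled in a way that is continuous in $s$, and ``tightness of each $\xi_s$'' does not by itself force the elimination data to be chosen compatibly over $S^k$. This is precisely why the paper's author notes the absence of a written parametric argument. To make step~2 rigorous you would want to reduce, via \lemref{l:convex families}, \lemref{l:rel convex} and the parametric form of Gray's theorem, to a movie whose levels are convex outside a codimension-one stratum in $(s,\tau)$ and then use the rigidity of the bifurcations (\lemref{l:switch}, \lemref{l:gi-birth-death}) to control the family of eliminations; as written, the proposal leaves this continuity unaddressed, and that is where any honest proof of the parametric statement has to do real work.
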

For the definition of taming functions and their construction we refer to \cite{el20} and \cite{rigflex} (admittedly, there are no proofs with parameters in these references).  All we will need to know is that a taming function increases along leaves of the characteristic foliation on $\partial B^3$ and it exists when the contact structure is tight. However, note that by \lemref{l:char fol sing} there are taming functions on neighbourhoods of a degenerate isolated singularities of characteristic foliations.

Usually, \thmref{t:fill ball} is stated in a weaker form covering only the connectedness of the space of contact structures. The version given above is implicitly contained in Theorem 2.4.2 of \cite{el20} and stated in \cite{gi-bour}. 

The proof of \thmref{t:fill ball} also shows that for a family of characteristic foliations $\GG_s$ on $\partial D$ there is a family of contact structures $\xi_s$ on $B^3$ such that $(\partial B^3)(\xi_s)=\GG_s$ for all $s$ provided that there is a  family of taming functions for the foliation $\GG_s$. 

\subsubsection{Contact structures on solid and thickened tori} 

A lot of information about the classification of tight contact structures on the solid torus up to isotopy can be found in \cite{honda1, gi-bif}. We will only need the following simple case but we give a parametric version. 
 
\begin{thm} \mlabel{t:solid torus}
Let $\xi$ be a tight contact structure on $N=D^2\times S^1$ with convex boundary such that the dividing set has exactly two connected components and the intersection number each component with a meridional disc is $\pm 1$. 
 
Then the space of positive tight contact structures  on $N$ which coincide with $\xi$ near $\partial N$  is weakly contractible. 
\end{thm}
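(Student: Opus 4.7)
The plan is to reduce the classification on $N$ to the classification on a 3-ball via a convex meridional disc, and then to invoke \thmref{t:fill ball}. First I fix a smooth meridional circle $c\subset\partial N$. Since $c$ intersects each of the two components of $\Gamma_{\partial N}$ in exactly one point and $\partial N\setminus c$ is an annulus meeting both components of $\Gamma_{\partial N}$, the Legendrian realisation principle (\lemref{l:LeRP}) provides an isotopy of $\partial N$ after which $c$ becomes Legendrian with Thurston--Bennequin invariant $-\tfrac{1}{2}\#(c\cap\Gamma_{\partial N})=-1$. Because every contact structure in the family agrees with $\xi$ on a neighbourhood of $\partial N$, this preparatory step is independent of the parameter.

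For a given tight $\xi'$ in the family I would then extend $c$ to a smoothly embedded meridional disc $D\subset N$. As the twisting of $\xi'$ along the Legendrian $\partial D$ equals $-1\le 0$, Honda's convex-perturbation result \cite{honda1} produces a $C^0$-small perturbation of $D$ rel.\ $\partial D$ making $D$ convex. Tightness of $\xi'$ forbids closed components of the dividing set $\Gamma_D$ that bound subdiscs (any such component would enclose an overtwisted disc via Giroux's criterion, \lemref{l:Giroux crit}). Since $\#(\partial D\cap\Gamma_{\partial N})=2$, this forces $\Gamma_D$ to be a single properly embedded arc. Applying \lemref{l:giroux flex} I further isotope $D$, rel.\ $\partial D$, so that $D(\xi')$ equals a fixed model foliation $\GG_0$ divided by this arc. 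Cutting $N$ along $D$ then produces a 3-ball $B$ whose boundary carries a single fixed characteristic foliation $\GG$, built from $\partial N(\xi)$ and two copies of $\GG_0$, so \thmref{t:fill ball} gives weak contractibility of the space of tight contact structures on $B$ inducing $\GG$ on $\partial B$. Undoing the cut yields weak contractibility of the original space on $N$.

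For the parametric statement I would run the same construction in a family. Since $\partial N(\xi_s)$ is independent of $s$ the Legendrian realisation step is global; both Honda's perturbation and Giroux's flexibility lemma are established by relative Moser arguments of the type underlying \thmref{t:gray} and admit parametric versions, which together produce a smooth family $D_s$ of convex meridional discs with identical dividing arc and identical characteristic foliation $\GG_0$. The parametric form of \thmref{t:fill ball} discussed immediately after its statement then closes the argument. The main obstacle I expect is this last parametric assembly: one must combine the convex perturbation, the Legendrian realisation on $D$, and Giroux's flexibility into a single smoothly varying family $D_s$ carrying a constant characteristic foliation, and verify that the cut boundary data depend smoothly on $s$ so that Eliashberg's parametric ball theorem genuinely applies.
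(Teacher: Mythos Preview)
Your reduction to the ball via a convex meridional disc is the natural argument and is correct for path-connectedness. The gap is exactly where you flag it, and it is more serious than you suggest: Honda's convex perturbation is \emph{not} a Moser-type argument but a genericity statement, and it does not parametrise in the naive way. In a one-parameter family of contact structures a fixed disc is generically convex only away from isolated parameter values; this is precisely the phenomenon underlying Giroux's theory of movies in \secref{s:movies main}. So the sentence ``Honda's perturbation \ldots\ admit[s] parametric versions, which together produce a smooth family $D_s$ of convex meridional discs with identical dividing arc'' is not justified, and making it work would require an independent argument of roughly the same strength as the theorem itself.

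The paper sidesteps this entirely. Instead of perturbing a single disc to convexity for each $\xi_s$, it first uses \lemref{l:giroux flex} once on the fixed boundary to arrange that $\partial N(\xi)$ already contains two Legendrian meridians bounding discs $D,D'$. It then never asks for $D$ to be convex with respect to $\xi_s$: it takes the characteristic foliation $D(\xi_s)$ as it comes, notes that it admits a taming function (tightness), and splices it with the fixed foliation induced by $\xi$ on the rest of two auxiliary spheres $\Sigma_1,\Sigma_2$ (the splice is legitimate because transverse curves $\gamma_1,\gamma_2\subset\partial N$ separate $D$ from $D'$, so the taming-function condition decouples). The parametric form of \thmref{t:fill ball} on the balls bounded by $\Sigma_1,\Sigma_2$ then produces an intermediate family $\xi'_s$ with $D(\xi'_s)=D(\xi_s)$ but $D'(\xi'_s)$ constant. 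Two further applications of \thmref{t:fill ball}, to the balls bounded by $\partial N\cup D$ and $\partial N\cup D'$, finish the proof. The point is that only taming functions---guaranteed by tightness and stable in families---are needed, never convexity of a varying disc.
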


\begin{proof}
By \lemref{l:giroux flex} we may assume that $\partial N(\xi)$ has the following properties.
\begin{itemize}
\item[(i)] There are two canceling pairs of singularities, one of them is negative the other one is positive. There are no closed orbits and no connections between hyperbolic singularities. 
\item[(ii)] Both unstable leaves of the positive hyperbolic singularity are connected to the negative elliptic singularity and their union bounds a meridional disc $D$ in $N$. The union of both stable leaves of the negative hyperbolic singularities also bound a meridional disc $D'$ and $D'(\xi)$ is convex with respect to $\xi$.  
\end{itemize}
Let $S$ be a compact manifold and let $\xi_s, s\in S$, be a smooth family of tight contact structures on $N$ with $\xi_s=\xi$ near $\partial N$.  We will construct a family of contact structure $\xi'_s$ with $\xi_s'=\xi$ near $\partial N$ such that the characteristic foliation on $D'$ is constant while $D(\xi_s)=D(\xi'_s)$ for all $s\in S$. 

The contact structure $\xi$ naturally extends to a slightly thicker torus $N'$. We choose two smooth embedded spheres $\Sigma_1,\Sigma_2$ such that 
\begin{itemize}
\item $\Sigma_i$ contains a neighbourhood of $\gamma_i$ in $\partial N$ for $i=1,2$,
\item $\Sigma_1\cap \Sigma_2=D\cup D'$, and $\Sigma_i\setminus (D\cup D')$ does not meet the interior of $N$. 
\end{itemize}
The properties of $\xi$ near $\partial N$ imply that there are two curves $\gamma_1,\gamma_2$ on $\partial N$ transverse to $\partial N(\xi)$ separating $\partial D$ from $\partial D'$. Therefore the question whether or not a given singular foliation on $\Sigma_1$ admits a taming function depends only on the characteristic foliations on the discs $\Sigma_1\setminus\gamma_1$. Hence the singular foliations on $\Sigma_i$ given by $\xi$ on $\Sigma_i\setminus D$  and by $D(\xi_s)$ on $D$ admit taming functions and we obtain tight contact structures on the balls bounded by these spheres and they form a family tight contact structure $\xi_s'$ on $N$ such that $D(\xi'_s)=D(\xi)$. 

Since $\partial N\cup D$ bounds a ball \thmref{t:fill ball} implies that the families $\xi_s$ and $\xi'_s$ can be deformed into each other. Then \thmref{t:fill ball} applied to the ball bounded by $\partial N\cup D'$  and the family of contact structures $\xi_s'$ implies the claim of the lemma. 
\end{proof}

The theorem below contains the information from Theorem 4.4 of \cite{gi-bif} about the classification of tight contact structures on the thickened torus $T^2\times[-1,1]$ we are going to use. (The theorem is stated in a way which can be easily translated to the terminology developed in \cite{gi-bif}. This terminology is explained in \secref{s:movies} below.) 

\begin{thm}[Giroux, \cite{gi-bif}]  \mlabel{t:tight on TxI}
Let $\GG_{\pm 1}$ be two foliations on $T^2$ which have exactly $n_{\pm 1}>0$ non-degenerate attractive closed leaves such that there is no Reeb component of dimension $2$ and all closed leaves are non-degenerate. 

For a given integer $k\ge 0$ there is a contact structure $\xi$ with Giroux torsion $k$ on $T^2\times[-1,1]$, unique up to isotopy, such that 
\begin{itemize}
\item[(i)] $T_{\pm 1}(\xi)=\GG_{\pm 1}$,
\item[(ii)] for all $t\in[-1,1]$ all singularities of the characteristic foliation are positive, and 
\item[(iii)] there is an embedded torus $T'\subset T^2\times(-1,1)$ isotopic to $T_0$  such that $T'(\xi)$ is a foliation by closed leaves. 
\end{itemize}
\end{thm}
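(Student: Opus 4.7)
The plan is to prove this along the lines of the movie-of-foliations technology of \cite{gi-bif}, working separately on the existence and uniqueness parts, and using the pre-Lagrangian torus in (iii) to split the problem into standard pieces.

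For existence, I would build an explicit model. On a middle slab $T^2\times[-1/2,1/2]$ take the standard contact form $\cos(2\pi k(t+t_0))dx_1-\sin(2\pi k(t+t_0))dx_2$, whose characteristic foliations are linear foliations by closed leaves on every level and which has Giroux torsion at least $k$. On each outer slab $T^2\times[1/2,1]$ and $T^2\times[-1,-1/2]$ I need to interpolate between a linear foliation by closed leaves and the prescribed $\GG_{\pm 1}$ through a path of singular foliations carrying only positive singularities and no $2$-dim Reeb components; existence of such a path is a purely $2$-dimensional combinatorial statement that uses the hypothesis on $\GG_{\pm 1}$ (non-degenerate attractive closed leaves and no $2$-dimensional Reeb components ensure one can pass through a birth bifurcation of a positive elliptic/hyperbolic pair to create the linear model). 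Once the path of admissible foliations is fixed, \lemref{l:char fol det} and the parametric version of \thmref{t:fill ball} (or the equivalent statement for thickened surfaces via \lemref{l:giroux flex}) produce a corresponding family of contact structures. Choosing $t_0$ appropriately we can arrange that the resulting contact structure has Giroux torsion exactly $k$, not larger, since the outer slabs contain no additional pre-Lagrangian torus of the correct slope.

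For uniqueness, take two contact structures $\xi, \xi'$ satisfying (i)--(iii). First I would use (iii) on both to produce pre-Lagrangian tori $T', T''$ isotopic to $T_0$; applying Gray's theorem (\thmref{t:gray}) in its relative form I can arrange that $T'=T''=T_0$ as sets and that the characteristic foliation on $T_0$ is a fixed linear foliation for both. Next, working in each complementary slab $T^2\times[-1,0]$ and $T^2\times[0,1]$, I observe that the positivity condition (ii) is exactly the hypothesis under which the movie of characteristic foliations $T_t(\xi)$ is governed by the normal form theory of \cite{gi-bif}: the only generic bifurcations are birth/death of positive elliptic/hyperbolic pairs and retrogradient saddle connections, and closed orbit bifurcations are forced to be one-sided. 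Within this class one can write down a canonical movie, and any two movies of admissible foliations with the same endpoints and no extra pre-Lagrangian tori of the distinguished slope are homotopic through admissible movies. Invoking \lemref{l:giroux flex} with parameters converts this homotopy of movies into a homotopy of contact structures rel.~boundary, and Gray's theorem then yields the isotopy from $\xi$ to $\xi'$.

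The main obstacle, and the place where the hypothesis on Giroux torsion really enters, is showing that the torsion number $k$ uniquely pins down the number and relative arrangement of pre-Lagrangian tori of the distinguished slope that one encounters in the movie. Concretely, if one has two candidate contact structures and they both have torsion exactly $k$, one must rule out the existence of an extra ``full twist'' in the movie of one but not the other; this is the shearing/rotation-number calculation carried out in gi-bif, and it must be verified that the boundary data $\GG_{\pm 1}$ contribute the same base amount of shearing in both cases. Once the shearing invariants agree, the remaining flexibility is absorbed by \lemref{l:giroux flex} and the relative Gray stability, closing out the uniqueness argument.
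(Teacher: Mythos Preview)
The paper does not give an independent proof of this theorem; it is stated as a result of Giroux and the only argument supplied is a two-line bridge: conditions (i)--(iii) force the contact structure to be universally tight (this is what \secref{s:transverse contact}, in particular \lemref{l:tight extension}, is there for), and once tightness is in hand the statement is read off from Theorem~4.4 of \cite{gi-bif} via the sheet analysis summarized in \remref{r:sheet prop}. So the paper's route is: establish tightness, then quote Giroux's classification.

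Your proposal instead tries to reprove Giroux's classification directly from the movie machinery. The existence sketch is reasonable. The uniqueness argument, however, has a real gap precisely where you flag the ``main obstacle'': the assertion that two admissible movies with the same endpoints and the same torsion are homotopic through admissible movies \emph{is} the content of Giroux's Theorem~4.4, and you have not supplied the combinatorial/normal-form argument that proves it. Saying ``this is the shearing/rotation-number calculation carried out in \cite{gi-bif}'' is a citation, not a proof, and at that point you have arrived at the same place the paper does---invoking \cite{gi-bif}---only after more work. A second, smaller issue: \lemref{l:giroux flex} does not convert homotopies of movies into homotopies of contact structures; it realizes a \emph{single} adapted foliation on a convex surface. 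The tools you want for passing from movies to contact structures are \lemref{l:movie unique} and \lemref{l:convex families} (or \lemref{l:convex ext}).

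In short: your approach is not wrong in spirit, but it does not close the loop independently of \cite{gi-bif}, and the paper makes no pretense of doing so either---it simply cites the result and records the translation needed.
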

We will see later in \secref{s:transverse contact} that a contact structure satisfying the assumptions (i)-(iii) of the theorem is automatically universally tight. \thmref{t:tight on TxI} is then obtained from Theorem 4.4 in \cite{gi-bif} using \remref{r:sheet prop}. 

\subsubsection{Contact structures on $\Sigma\times[-1,1]$ with $g(\Sigma)\ge 2$} \mlabel{s:Sigma class}

Before we can state the main classification result about tight contact structures on $N=\Sigma\times[-1,1]$ with $|\langle e(\xi),[\Sigma] \rangle| =-\chi(\Sigma)$ and convex boundary, we need to recall the notion of the {\em relative Euler class } from \cite{hkm}. We will choose the orientation of $\xi$ such that $\langle e(\xi),[\Sigma]\rangle=\chi(\Sigma)$, i.e. $\Sigma_t^-$ is a non-empty union of disjoint annuli whenever $\Sigma_t$ is convex. The dividing set of $\Sigma_{\pm 1}$ will be denoted by $\Gamma_{\pm 1}$. 

Let $\beta\subset \Sigma_i,i=\pm1,$ be a closed curve. We say that $\beta$ is {\em primped} if the following conditions hold.
\begin{itemize}
\item $\beta$ is non-isolating in $\Sigma_i$, i.e. $\beta$ is transverse to $\Gamma_i$, and every boundary component of $\overline{\Sigma_i\setminus (\beta\cup\Gamma_i)}$ meets $\Gamma_i$.    
\item The intersection $\beta\cap\Sigma_i^-$ consists only of arcs each of which does not separate an annulus in $\Sigma_i^-$ into two connected components.
\end{itemize}
Clearly  every curve is isotopic to a primped curve. According to the Legendrian realization principle (\lemref{l:LeRP}) there is a $C^0$-small isotopy of $\Sigma_i$ through convex surfaces so that $\beta$ is a Legendrian curve on the isotoped surface. In order to define the relative Euler class $\widetilde{e}(\xi)$ on $[\beta\times I]\in H_2(N,\partial N,\Z)$ we proceed as follows: Isotope $\beta\times\{\pm 1\}$ to primped Legendrian curves in $\Sigma_{\pm 1}$. Then consider an annulus $A$ bounded by the two Legendrian curves (since $\Sigma$ is not $T^2$ this annulus is uniquely determined up to isotopy relative to the boundary).  After a small perturbation we may assume that the annulus is convex. In analogy to \eqref{e:euler} one defines
\begin{equation} \label{e:rel euler}
\langle\widetilde{e}(\xi),[\beta\times I] \rangle:=\chi(A^+)-\chi(A^-). 
\end{equation}
The following is shown in \cite{hkm}:

\begin{prop} \mlabel{p:rel euler}
The relative Euler class $\widetilde{e}(\xi) \in H^2(N,\partial N;\Z)$ is well defined and extends the Euler class $e(\xi)$ viewed as homomorphism $e(\xi) : H_2(N;\Z) \lra\Z$ to $H_2(N,\partial N;\Z)$.
\end{prop}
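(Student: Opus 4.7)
The plan is to reinterpret $\chi(A^+)-\chi(A^-)$ bundle-theoretically and then verify the requisite independences using the convex-surface toolkit from \secref{s:convex}. For a convex annulus $A$ with Legendrian boundary, the relative version of formula \eqref{e:euler} identifies $\chi(A^+)-\chi(A^-)$ with the signed count of singularities of $A(\xi)$, which equals the relative Euler number of the rank-$2$ bundle $\xi|_A$ framed along $\partial A$ by the Legendrian tangent section $T\partial A$. This integer depends only on the isomorphism type of the pair (bundle, boundary framing), so the whole calculation is a topological obstruction theory computation, and I only need to check that the pair itself is canonical.

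Next I would dispatch the two choices appearing in the definition. For the choice of convex annulus with fixed Legendrian boundary: since $g(\Sigma)\ge 2$, two such annuli are ambient isotopic rel boundary (this is the hypothesis flagged in the passage preceding \eqref{e:rel euler}), and any ambient isotopy can be realized through convex annuli after finitely many bypass attachments; each bypass replaces a bigon in the dividing set by an isotopic bigon, so $\chi(A^+)-\chi(A^-)$ is unchanged by inspection of the switch move in \figref{b:switch}. For the choice of primped Legendrian representative of $\beta\times\{\pm 1\}$: any two primped representatives on $\Sigma_i$ are connected by an isotopy through primped curves, and \lemref{l:LeRP} realizes this isotopy as a continuous family of Legendrian curves by deforming the characteristic foliations on $\Sigma_i$ via \lemref{l:giroux flex}. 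The trace of such a family is a convex rectangle $R\subset\Sigma_i\subset\partial N$; gluing $R$ onto the old convex annulus produces a new convex annulus with the new Legendrian boundary, and additivity of the relative Euler number under gluing together with the primped condition (which forbids any arc of $\beta\cap\Sigma_i^-$ from separating an annular component of $\Sigma_i^-$) forces $\chi(R^+)-\chi(R^-)=0$, so the answer is unchanged.

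Having established well-definedness of $\langle\widetilde{e}(\xi),[\beta\times I]\rangle$, I would extend linearly. For $[\beta]=[\beta_1]+[\beta_2]$ in $H_1(\Sigma;\Z)\cong H_2(N,\partial N;\Z)$, represent both summands by disjoint primped Legendrian curves, let $A_1,A_2$ be the corresponding convex annuli, and observe that the relative Euler number over $A_1\sqcup A_2$ is the sum of the individual contributions; a further bypass/convex-isotopy argument identifies this sum with the relative Euler number over a convex annulus representing $[\beta_1+\beta_2]$. This exhibits $\widetilde{e}(\xi)$ as a homomorphism on $H_2(N,\partial N;\Z)$, hence an element of $H^2(N,\partial N;\Z)$. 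Finally, for the extension claim: given $c\in H_2(N;\Z)$ represented by a convex closed surface $\Sigma'$, formula \eqref{e:euler} computes $\langle e(\xi),[\Sigma']\rangle=\chi((\Sigma')^+)-\chi((\Sigma')^-)$, and the bundle-theoretic interpretation of Step~1 gives the same integer when $[\Sigma']$ is viewed in $H_2(N,\partial N;\Z)$, since $\Sigma'$ has no boundary and the relative Euler number reduces to the absolute one.

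The main obstacle is the independence under change of primped Legendrian representative: one must verify that the rectangle $R$ swept out on $\Sigma_i$ by a primped isotopy contributes zero to the signed Euler-characteristic count, which is precisely where the non-separating condition in the definition of primped is used. This reduces to a combinatorial check that primped isotopies never create or destroy pairs of singularities whose indices fail to cancel along $\beta$, and this is the step that would require the most care.
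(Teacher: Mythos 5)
The paper does not prove this proposition but cites it to \cite{hkm}, so there is nothing in the text to compare against; I assess your argument on its own.

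Your opening move is the right one: $\chi(A^+)-\chi(A^-)$ is the relative Euler number of the bundle $\xi|_A$ with respect to the Legendrian framing $T\partial A$, and this is an isotopy invariant of the pair $(A,\partial A)$ rel $\partial A$. This observation by itself gives invariance under the choice of convex annulus, so the bypass discussion in the second paragraph is unnecessary. It is also not correct as stated: a bypass attachment does not ``replace a bigon of the dividing set by an isotopic bigon'' --- it reconnects three strands of $\Gamma_A$ near the arc of attachment and can change the topology of the dividing set nontrivially. That $\chi(A^+)-\chi(A^-)$ survives a bypass follows from your obstruction-theoretic observation (the pre- and post-bypass annuli cobound a product, hence are isotopic rel boundary), not from any local inspection of Figure~\ref{b:switch}, which depicts a retrograde saddle-saddle connection, not a dividing-set move of the sort you describe.

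The genuine gap is the independence of the primped Legendrian representative. First a small point: the trace of an isotopy of a closed curve $\beta\subset\Sigma_i$ is an annulus, not a rectangle. Second, and more seriously, you cannot assume that two primped representatives of the same isotopy class are connected by an isotopy through primped curves, nor that they meet $\Gamma_i$ in the same number of points: the definition only constrains how $\beta$ crosses the annuli of $\Sigma_i^-$, and a primped curve meeting $\Gamma_i$ in $2$ points can be isotopic to one meeting it in $6$. Consequently $\Gamma_i\cap R$ can contain boundary-parallel arcs, and it is not obvious that these contribute zero to $\chi(R^+)-\chi(R^-)$. The additivity of the relative Euler number under gluing along an interior Legendrian circle is fine, but the vanishing of the $R$-contribution is exactly what a proof must establish --- for instance by showing the boundary-parallel arcs occur in sign-cancelling pairs, or by reducing to a sequence of elementary modifications of primped curves each of which manifestly preserves the count. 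Your final paragraph names this as the delicate step and defers it to ``a combinatorial check''; that is where the content of the proposition lives, and as written the argument asserts rather than proves it. The extension to a homomorphism on all of $H_2(N,\partial N;\Z)$ in your third paragraph inherits the same issue, since representing a sum $[\beta_1]+[\beta_2]$ by a single primped curve and comparing with the disjoint union again requires controlling the count through an isotopy of Legendrian representatives.
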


Now we can state Theorem 1.1 of \cite{hkm}:
\begin{thm}[Honda, Kazez, Mati{\'c}, \cite{hkm}] \mlabel{t:Sigma class}
Let $\Sigma$ be a closed oriented surface of genus $g\ge 2$ and $\GG_{\pm 1}$ singular foliations on $\Sigma\times \{\pm 1\}$ so that $\GG_{\pm 1}$ is adapted to a dividing set $\Gamma_{\pm 1}$ consisting of exactly two non-separating closed curves bounding an annulus so that
$$
\chi(\Sigma_{-1}^+)-\chi(\Sigma_{-1}^-)=\chi(\Sigma_1^+)-\chi(\Sigma_1^-).
$$

If $\Gamma_{-1}$ and $\Gamma_1$ are not isotopic, then there are exactly four isotopy classes of tight contact structures $\xi$ on $N$ so that $\Sigma_{\pm 1}(\xi)=\GG_{\pm 1}$. They are distinguished by the relative Euler class $\widetilde{e}(\xi)$ which takes the values
$$
\mathrm{PD}(\widetilde{e}(\xi))=\pm\gamma_{-1}\pm\gamma_{+1}\in H_1(N,\Z)
$$
where $\gamma_{\pm 1}$ is a connected component of $\Gamma_{\pm 1}$. 

If $\Gamma_{-1}$ and $\Gamma_1$ are isotopic, then there are exactly five isotopy classes of tight contact structures on $N$ inducing the given characteristic foliation on $\partial N$. Three of these contact structures satisfy $\mathrm{PD}(\widetilde{e}(\xi))=0$ while the two remaining isotopy classes satisfy 
$$
\mathrm{PD}(\widetilde{e}(\xi))= \pm 2\gamma_{-1}=\pm 2\gamma_1 \in H_1(N,\Z).
$$ 
In all of the above cases the tight contact structures are universally tight.
\end{thm}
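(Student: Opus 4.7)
The strategy is to classify tight extensions by successively cutting $N=\Sigma\times[-1,1]$ along convex annuli until only a $3$-ball remains, and then appealing to Eliashberg's uniqueness result \thmref{t:fill ball}. The extremality hypothesis, which via \corref{c:extremal} forces $\Sigma_{\pm 1}^-$ to be the annulus bounded by $\Gamma_{\pm 1}$, is the source of the rigidity that makes such a classification possible.

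To begin, I would apply Giroux flexibility (\lemref{l:giroux flex}) to normalize $\GG_{\pm 1}$ in a convenient Morse--Smale form adapted to $\Gamma_{\pm 1}$, and select a non-separating simple closed curve $\beta\subset\Sigma$ that is primped with respect to both $\Gamma_{-1}$ and $\Gamma_1$. By the Legendrian realization principle (\lemref{l:LeRP}), after a $C^0$-small isotopy the curves $\beta\times\{\pm 1\}$ become Legendrian on convex perturbations of $\Sigma_{\pm 1}$, so they bound a vertical annulus $A_\beta\subset N$ which, after a further $C^\infty$-small perturbation, is convex. Since $\beta$ is non-isolating, tightness prohibits any closed dividing curve on $A_\beta$: such a curve would bound a disc in $A_\beta$ and hence, together with the contact planes, produce an overtwisted disc in $N$. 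Therefore the dividing set $\Gamma_{A_\beta}$ consists of properly embedded arcs whose endpoints on $\partial A_\beta$ are prescribed by $\beta\cap\Gamma_{\pm 1}$; a bypass reduction (\defref{d:bypass}) shows that up to isotopy rel boundary this arc system is determined by finitely many discrete choices, and \eqref{e:rel euler} expresses the contribution of $A_\beta$ to $\widetilde{e}(\xi)$ in terms of these choices.

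I would then iterate: extend $\beta$ to a maximal system of disjoint non-separating curves $\beta_1,\dots,\beta_g$, successively cut $N$ along the corresponding convex annuli, and eventually reduce to a $3$-ball whose boundary characteristic foliation is recovered from the data of the dividing arcs on each annulus together with $\GG_{\pm 1}$. By \thmref{t:fill ball} the tight extension to this ball is unique up to isotopy rel boundary, so the isotopy type of $\xi$ is completely determined by the dividing data on the annuli. Counting the configurations compatible with the given $\Gamma_{\pm 1}$ yields, generically, the four classes $\mathrm{PD}(\widetilde{e}(\xi))=\pm\gamma_{-1}\pm\gamma_{+1}$. The main obstacle is the case $\Gamma_{-1}$ isotopic to $\Gamma_1$, where an additional ``all boundary-parallel'' arc configuration on the cutting annuli becomes compatible with tightness and produces the fifth class with $\mathrm{PD}(\widetilde{e}(\xi))=0$; the hardest part is a careful bypass analysis showing that this configuration is genuinely distinct from the other two $\mathrm{PD}(\widetilde{e}(\xi))=0$ classes (and not related to them via bypass attachments), which amounts to showing that $\widetilde{e}(\xi)$ together with a discrete bypass-equivalence class is a complete invariant.

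Finally, universal tightness of each class follows from Giroux's criterion (\lemref{l:Giroux crit}): since no component of $\Gamma_{\pm 1}$ bounds a disc in $\Sigma$ (the components are non-separating on a surface of genus $\geq 2$), a neighbourhood of $\Sigma_{\pm 1}$ is universally tight, and the same reasoning applies inductively to the intermediate convex surfaces $\Sigma\times\{t\}$ arising in the decomposition.
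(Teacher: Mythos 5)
This theorem is attributed to Honda--Kazez--Mati\'c and is only \emph{cited} in the paper (as Theorem 1.1 of \cite{hkm}); the paper gives no proof of its own, so there is nothing internal to compare against. Your strategy --- convex surface theory, cutting along vertical annuli, bypass bookkeeping, Eliashberg's ball theorem, and the relative Euler class as invariant --- is indeed the method used in \cite{hkm}. That said, as written the sketch has three genuine gaps.

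First, your claim that tightness forbids \emph{any} closed dividing curve on $A_\beta$ because ``such a curve would bound a disc in $A_\beta$'' is false: a closed dividing curve on an annulus can be parallel to the core, and a core of $\beta\times[-1,1]$ does not bound a disc in $A_\beta$ or in $N$. Core-parallel dividing curves on cutting annuli are exactly what produce Giroux torsion and, if allowed, would give infinitely many tight isotopy classes. Ruling them out in this situation requires the extremality hypothesis (via the dividing sets $\Gamma_{\pm 1}$ being minimal and the imbalance principle on $A_\beta$), not a disc-bounding argument; this is a substantive step you have omitted. Second, cutting $\Sigma$ along $g$ pairwise disjoint non-separating curves does not yield a disc: the result is a planar surface with $2g$ boundary circles, so the inductive reduction ``to a $3$-ball'' needs additional cuts (along arcs and further curves after edge-rounding), and the bookkeeping of dividing arcs across these further cuts is where most of the work in \cite{hkm} lies. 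Third, Giroux's criterion applied to $\Sigma_{\pm 1}$ (and to the intermediate convex level surfaces produced by your cuts) gives universal tightness only on neighbourhoods of those surfaces, not on all of $N$; the universal tightness in the theorem is proved in \cite{hkm} by exhibiting the five contact structures via explicit constructions or embeddings into universally tight models (compare \lemref{l:tight extension} in the present paper), and you would need an argument of that kind rather than a local Giroux-criterion check.
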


We still need to explain how to distinguish tight contact structures with $\mathrm{PD}(\widetilde{e}(\xi))=0$. This is done by embedding properties. The following definition of a basic slice is not quite the same as Definition 5.12 in \cite{hkm} but the two definitions are equivalent by \thmref{t:Sigma class} (and the set-up used in \cite{hkm} to analyze what is called the Base-Case on p.~323 of \cite{hkm}). 

\begin{defn} \mlabel{d:basic slice}
A {\em basic slice} is a tight contact structure on $\Sigma\times[-1,1]$ such that
\begin{itemize}
\item[(i)] $\Sigma_{-1}$ and $\Sigma_1$ satisfy all assumptions of \thmref{t:Sigma class},
\item[(ii)] $\gamma_{-1}$ and $\gamma_1$ intersect exactly once, and
\item[(iii)] $\mathrm{PD}(\widetilde{e}(\xi))=\pm(\gamma_{-1}-\gamma_1)$ when $\gamma_{-1},\gamma_1$ are oriented so that $\gamma_1\cdot\gamma_{-1} =1$. 
\end{itemize} 
As in \cite{hkm} we denote a basic slice by $\llbracket \gamma_{-1},\gamma_1;\pm (\gamma_{-1}-\gamma_1) \rrbracket$ depending on the value of the relative Euler class.
\end{defn}
Note the definition of a basic slice is independent from the orientation of $\gamma_{-1},\gamma_1$  satisfying $\gamma_1\cdot\gamma_{-1}=1$. 

\begin{prop}[Honda, Kazez, Mati{\'c}, \cite{hkm}] \mlabel{p:vanishing rel Euler}
Let $\xi$ be a tight contact structure on $N$ such that $\Gamma_{-1}=\Gamma_1=2\gamma$. Then $\xi$ is isotopic to a vertically invariant contact structure if and only if there is no embedding of a basic slice $\llbracket \gamma,\gamma';\pm(\gamma-\gamma')\rrbracket$. There are two tight contact structures $\xi_+,\xi_-$ such that there are contact embeddings
\begin{align*}
\llbracket \gamma,\gamma';+(\gamma-\gamma') \rrbracket  & \lra (N,\xi_+) \\
\llbracket \gamma,\gamma';-(\gamma-\gamma') \rrbracket  & \lra (N,\xi_-)
\end{align*}
mapping the boundary component $\Sigma_0$ of the basic slice to $\Sigma_{-1}$ while there are no contact embeddings 
\begin{align*}
\llbracket \gamma,\gamma';-(\gamma-\gamma') \rrbracket  & \lra (N,\xi_+) \\
\llbracket \gamma,\gamma';+(\gamma-\gamma') \rrbracket  & \lra (N,\xi_-)
\end{align*}
with the same property. 
\end{prop}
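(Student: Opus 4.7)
The plan is to exhibit three non-isotopic tight contact structures on $N = \Sigma \times [-1,1]$ satisfying the prescribed boundary data with $\mathrm{PD}(\widetilde{e}) = 0$, detect them by their embedded basic slices, and then appeal to the exact count in \thmref{t:Sigma class} to conclude that these exhaust all possibilities.

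\textbf{The vertically invariant case.} Let $\xi_0$ denote the $t$-invariant contact structure on $N$ compatible with the boundary data. Suppose $S = \llbracket \gamma, \gamma'; \pm(\gamma-\gamma')\rrbracket \hookrightarrow (N,\xi_0)$ is an embedded basic slice. Its two convex boundary surfaces are incompressible surfaces of genus $g \geq 2$ in $N$, hence isotopic to horizontal slices; after isotoping them into horizontal position we obtain two convex parallel copies of $\Sigma$ in $\xi_0$, whose dividing sets are forced by $t$-invariance to be isotopic to $2\gamma$. But the dividing curves $\gamma, \gamma'$ of $S$ satisfy $\gamma \cdot \gamma' = 1$ and hence are non-isotopic, a contradiction. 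So $\xi_0$ embeds no basic slice.

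\textbf{Construction of $\xi_\pm$.} Fix $\gamma' \subset \Sigma$ with $\gamma \cdot \gamma' = 1$ and split $N = \Sigma \times [-1,0] \cup_{\Sigma \times \{0\}} \Sigma \times [0,1]$. For $\varepsilon \in \{+,-\}$ define $\xi_\varepsilon$ by placing $\llbracket \gamma, \gamma'; \varepsilon(\gamma-\gamma')\rrbracket$ on the first piece and $\llbracket \gamma', \gamma; \varepsilon(\gamma'-\gamma)\rrbracket$ on the second, matched along the common convex surface with dividing set $2\gamma'$. Each slice is universally tight by \thmref{t:Sigma class}, and the interface surface has dividing set $2\gamma'$ whose components are non-separating in $\Sigma$; hence \lemref{l:Giroux crit} together with standard gluing arguments implies $\xi_\varepsilon$ is tight. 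Additivity of $\widetilde{e}$ yields $\mathrm{PD}(\widetilde{e}(\xi_\varepsilon)) = \varepsilon(\gamma-\gamma') + \varepsilon(\gamma'-\gamma) = 0$, and $\xi_\varepsilon$ manifestly embeds the basic slice $\llbracket \gamma, \gamma'; \varepsilon(\gamma-\gamma')\rrbracket$ with the stated orientation of its base.

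\textbf{Sign rigidity and enumeration.} The main obstacle is to show that $\xi_+$ does \emph{not} embed a basic slice of sign $-$, the case of $\xi_-$ being symmetric. Suppose $S \subset (N,\xi_+)$ is such an embedding, say $\llbracket \gamma, \gamma''; -(\gamma-\gamma'')\rrbracket$. As in the first step, incompressibility forces the boundary of $S$ into a horizontal position; after isotopy, $S$ coincides with a sub-slab $\Sigma \times [a,b] \subset N$, and by the uniqueness clause of \thmref{t:Sigma class} the restriction $\xi_+|_{\Sigma \times [a,b]}$ is itself the basic slice of sign $-$. Now refine the decomposition of $\xi_+$ by inserting convex surfaces $\Sigma \times \{s\}$ for intermediate levels $s$ that include $a, b$ and subdivide each sub-slab into basic-slice pieces classified by \thmref{t:Sigma class}. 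Additivity of $\widetilde{e}$ applied across this refinement, combined with the prescribed values $\widetilde{e}(\xi_+|_{[-1,0]}) = \gamma-\gamma'$ and $\widetilde{e}(\xi_+|_{[0,1]}) = \gamma'-\gamma$, forces every atomic basic slice piece to have sign $+$; since a concatenation of positive basic slices cannot contain a sub-basic slice of sign $-$ (again by the uniqueness in \thmref{t:Sigma class} applied to $\Sigma\times[a,b]$), we obtain the required contradiction. Finally, \thmref{t:Sigma class} gives exactly three isotopy classes of tight contact structures on $N$ with $\mathrm{PD}(\widetilde{e})=0$; the three pairwise distinct candidates $\xi_0, \xi_+, \xi_-$ established above therefore exhaust them, and the embedding behaviour stated in the proposition is exactly as claimed.
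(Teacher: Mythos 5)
The paper does not actually prove Proposition \ref{p:vanishing rel Euler}; it is stated as a citation of Honda--Kazez--Mati\'c \cite{hkm} (there is no proof environment following it), so your proposal is an original argument rather than a reconstruction of anything in this text. On the level of strategy you have correctly identified what needs to be done: build three candidates with $\mathrm{PD}(\widetilde{e})=0$, distinguish them by their embedded basic slices, and invoke the count in Theorem \ref{t:Sigma class}. But the argument has a genuine gap exactly at the step you yourself flag as ``the main obstacle.''

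The claim that ``additivity of $\widetilde{e}$ \dots forces every atomic basic slice piece to have sign $+$'' does not follow from additivity. Equation \eqref{e:sum} only constrains the \emph{sum} $\sum_i \varepsilon_i(\gamma_{i-1}-\gamma_i)$ of the relative Euler classes of the sub-slabs, and in $H_1(N;\Z)$ that sum equalling $\gamma-\gamma'$ (or $0$) does not by itself pin down each $\varepsilon_i$: the contributions of alternating signs can cancel, and the intermediate curves $\gamma_i$ are not a priori constrained to sit between $\gamma$ and $\gamma'$ in any useful sense. To rule out a sub-slab of sign $-$ inside $\xi_+$ you need the stronger structural input that [hkm] establish separately --- essentially, that the positive basic slice contains no sub-basic slice of opposite sign (the ``base case'' analysis on p.~323 of [hkm], or an irreducibility/shuffling argument), not just the additivity in Proposition \ref{p:rel euler}. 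A secondary issue, also not merely cosmetic, is the move from ``embedded basic slice $S$'' to ``$\xi_+|_{\Sigma\times[a,b]}$ \emph{is} the negative basic slice'': ambient isotopy of $S$ into a horizontal sub-slab is a topological operation, and to conclude that the restriction of $\xi_+$ to the isotoped image is a basic slice of the same sign you need the isotopy to be realised through contact isotopy, which requires matching dividing sets at each stage; this is where one would again quote the uniqueness part of Theorem \ref{t:Sigma class}, but only after knowing what dividing sets can occur at intermediate levels of $\xi_+$. Similarly, in the vertically invariant case, the assertion that $t$-invariance ``forces'' every horizontal convex surface to have dividing set $2\gamma$ is true but nontrivial: it amounts to the absence of nontrivial bypasses in an $I$-invariant tight contact structure and deserves a citation or a short argument rather than being treated as obvious.
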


The relative Euler class behaves well when $\Sigma\times [-1,+1]$ is decomposed  along $\Sigma_0$ provided that the contact structure on $\Sigma\times[-1,1]$ is tight and $\Sigma_{\pm 1}, \Sigma_0$ are convex such that the dividing set consists of two connected non-separating curves. This is best expressed as follows 
\begin{equation} \label{e:sum}
PD\left(\widetilde{e}\left(\xi\eing{\Sigma\times[-1,0]}\right)\right)+PD\left(\widetilde{e}\left(\xi\eing{\Sigma\times[0,1]}\right)\right) = PD\left(\widetilde{e}\left(\xi\eing{\Sigma\times[-1,1]}\right)\right).
\end{equation}
This is part of Theorem 6.1 of \cite{hkm}.


\section{Movies and their properties} \mlabel{s:movies main}

In this section we explain some of the material in Giroux's work \cite{gi-bif} about families of characteristic foliations of positive contact structures on $\Sigma\times[-1,1]$ where $\Sigma$ is a closed oriented surface. Parts of this material can also be found in \cite{gei}. Our main omission is that we do not discuss the turnaround locus ({\em lieu de retournement} in \cite{gi-bif}).  The results from \secref{s:movies} and \secref{s:prop sheets} were used by E.~Giroux to obtain a classification of tight contact structures on torus bundles and lens spaces. Some of the results proved in \secref{s:movies} are probably folklore (like \lemref{l:compactify by degen}) but we did not find a good reference.  We will apply some of the techniques of Giroux to contact structures on $\Sigma\times[-1,1]$ with convex boundary: The main result from \secref{s:prelag extend} is new and will be used in combination with \thmref{t:Sigma class} in the proof of \thmref{t:unique}.

\subsection{Movies associated to contact structures} \mlabel{s:movies}

Let $\xi$ be a contact structure and $\Sigma\subset M$ an embedded oriented surface. According to the (strong) Thom transversality theorem we may assume that all points of the characteristic foliation $\Sigma(\xi)=T\Sigma\cap\xi$ on $\Sigma$, i.e. points $p\in\Sigma$ where $T_p\Sigma=\xi(p)$, are isolated. This remains true for all surfaces appearing in compact finite-dimensional families. 

$\Sigma\times[-1,1]$ has the product orientation, so the contact orientation and the coorientation of the contact structure induces an orientation of $\Sigma$. 

If $\xi$ is a contact structure on $\Sigma\times[-1,1]$, then we obtain a family of singular foliations on $\Sigma_t=\Sigma\times\{t\}$. This family will be referred to as {\em movie} of $\xi$. The contact condition has implications for the singular foliations appearing in the movie, cf. \lemref{l:gi-birth-death} and \lemref{l:switch} below. Recall also, that the contact condition implies that the divergence at positive respectively negative singular points of  $\Sigma_t(\xi)$ is positive respectively negative. Although it is not clear which families of singular foliations are movies associated to a positive contact structure, there is the following uniqueness result (Lemma 2.1 of  \cite{gi-bif}).

\begin{lem} \mlabel{l:movie unique}
Let $\xi_0$ and $\xi_1$ be two positive contact structures on $\Sigma\times[-1,1]$ such that $\Sigma_t(\xi_0)=\Sigma_t(\xi_1)$ for all $t\in[-1,1]$. Then $\xi_0$ and $\xi_1$ are isotopic.
\end{lem}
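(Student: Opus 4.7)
The plan is to produce an explicit smooth path of positive contact structures from $\xi_0$ to $\xi_1$ and then invoke Gray's theorem \thmref{t:gray}. The key observation is that once the two defining contact forms are rescaled to share a common ``horizontal'' component, the contact condition for the straight-line interpolation becomes affine in the parameter, so positivity at the endpoints automatically propagates to the whole path.

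First I would fix contact forms $\alpha_i$ with $\ker\alpha_i=\xi_i$ and the correct coorientation, and decompose
\begin{equation*}
\alpha_i = a_i\,dt + \beta_i,\qquad \beta_i(\partial_t)=0,
\end{equation*}
using the product structure of $\Sigma\times[-1,1]$. Since $\alpha_i|_{T\Sigma_t}=\beta_i|_{T\Sigma_t}$, the hypothesis $\Sigma_t(\xi_0)=\Sigma_t(\xi_1)$ together with the agreement of coorientations says that $\beta_0$ and $\beta_1$ define the same oriented singular foliation on every slice. A standard rescaling argument then produces a smooth positive function $g$ on $\Sigma\times[-1,1]$ with $\beta_0=g\,\beta_1$; the contact condition controls the local behaviour of $\beta_i$ near the isolated singularities of the characteristic foliation (compare \lemref{l:char fol det}), which is what guarantees smoothness of $g$ through those points. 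Replacing $\alpha_0$ by $\alpha_0/g$ leaves $\xi_0$ and its coorientation unchanged and reduces us to the case $\beta_0=\beta_1=:\beta$.

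With this normalisation, I would interpolate linearly: setting $c_s:=(1-s)a_0+s a_1$ and $\alpha_s:=c_s\,dt+\beta$ for $s\in[0,1]$, the expansion of $\alpha_s\wedge d\alpha_s$ has only three surviving summands, namely $c_s\,dt\wedge d\beta$, $\beta\wedge dc_s\wedge dt$, and $\beta\wedge d\beta$, each depending at most affinely on $s$ because $\beta$ is fixed. Consequently
\begin{equation*}
\alpha_s\wedge d\alpha_s = (1-s)\,\alpha_0\wedge d\alpha_0 + s\,\alpha_1\wedge d\alpha_1,
\end{equation*}
which is a convex combination of two positive volume forms and is therefore itself positive for every $s\in[0,1]$. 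Hence $\xi_s:=\ker\alpha_s$ is a smooth family of positive contact structures interpolating between $\xi_0$ and $\xi_1$, and Gray's theorem supplies the isotopy. To handle the compact-with-boundary case one extends the construction to a slightly larger slab $\Sigma\times[-1-\eps,1+\eps]$ in which both forms are arranged to coincide near $t=\pm(1+\eps)$, so that the relative version of \thmref{t:gray} returns an isotopy equal to the identity outside $\Sigma\times[-1,1]$. The only step that is not essentially a one-line computation is the opening normalisation $\beta_0=g\,\beta_1$ through the singular points of the characteristic foliations; this is where I expect the technical care to be required.
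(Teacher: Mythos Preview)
Your proof is correct and follows essentially the same route as the paper's own argument. The paper writes $\alpha=\lambda_t+u_t\,dt$, observes that the movie determines $\lambda_t$ up to a nowhere vanishing function (your rescaling to $\beta_0=\beta_1$), and then notes that the contact inequality \eqref{e:contact cond} is affine in $u_t$ for fixed $\lambda_t$, so the set of admissible $u_t$ is convex --- which is exactly your linear interpolation and the identity $\alpha_s\wedge d\alpha_s=(1-s)\,\alpha_0\wedge d\alpha_0+s\,\alpha_1\wedge d\alpha_1$ stated in coordinate-free language; Gray's theorem then finishes both arguments.
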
 
\begin{proof}
A $1$-form $\alpha=\lambda_t+u_tdt$ defines a positive contact structure if and only if 
\begin{equation} \label{e:contact cond}
u_t\,d\lambda_t+\lambda_t\ww(du_t-\dot{\lambda}_t)>0.
\end{equation}
The movie of a positive contact structure determines the family $\lambda_t$ up to multiplication with a nowhere vanishing function when we consider only $1$-forms $\lambda_t$ coming from a defining form of a contact structure. The set of functions $u_t$ satisfying \eqref{e:contact cond} for a given family of $1$-forms $\lambda_t$ is convex. Hence the lemma follows immediately from Gray's theorem. 
\end{proof}

There are a few situations when it is easy to show that a given family of singular foliations is the movie of a positive contact structure.

\begin{lem} \mlabel{l:convex ext}
Let $\GG_t, t\in[-1,1],$ be a family of singular foliations on $\Sigma$ such that there is a continuous family of curves $\Gamma_t$ dividing $\GG_t$, i.e. for every $t$ there is a smooth function $v_t$ on $\Sigma$ such that
\begin{equation} \label{e:dividing}
v_t\,d\lambda_t+\lambda_t\ww dv_t>0.
\end{equation}
Then there is a contact structure $\xi$ on $\Sigma\times[-1,1]$ such that $\Sigma_t(\xi)=\GG_t$. 
\end{lem}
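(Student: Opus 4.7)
The plan is to exhibit a contact form on $\Sigma\times[-1,1]$ of the shape $\alpha=\lambda_t+u_t\,dt$, where $\lambda_t$ is a smooth family of defining forms for $\GG_t$ (which exists because the $\GG_t$ vary continuously and are cooriented by the existence of a dividing set). As computed in the proof of \lemref{l:movie unique}, such an $\alpha$ defines a positive contact structure precisely when
\begin{equation*}
u_t\,d\lambda_t+\lambda_t\ww(du_t-\dot{\lambda}_t)>0
\end{equation*}
holds as an inequality of area forms on $\Sigma$ for every $t\in[-1,1]$. Since the characteristic foliation of $\ker(\lambda_t+u_t\,dt)$ on $\Sigma_t$ is cut out by $\lambda_t$, we will automatically have $\Sigma_t(\xi)=\GG_t$ once the contact condition is arranged.

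First, I would upgrade the pointwise-in-$t$ data $(v_t)$ to a smooth family in $t$. A standard partition-of-unity argument on the parameter interval allows us to patch local choices of $v_t$ into a single smooth family, and the open condition \eqref{e:dividing} survives small convex combinations, hence this smoothing can be arranged while keeping $v_t\,d\lambda_t+\lambda_t\ww dv_t>0$ at every $t$.

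Next, I would try the ansatz $u_t:=K v_t$ with $K>0$ a constant to be fixed later. Plugging in gives
\begin{equation*}
u_t\,d\lambda_t+\lambda_t\ww(du_t-\dot{\lambda}_t)
=K\bigl(v_t\,d\lambda_t+\lambda_t\ww dv_t\bigr)-\lambda_t\ww\dot{\lambda}_t.
\end{equation*}
Fix an auxiliary area form $\omega_\Sigma$ on $\Sigma$. Because $v_t\,d\lambda_t+\lambda_t\ww dv_t$ is a strictly positive area form depending continuously on $t$ and $\Sigma\times[-1,1]$ is compact, there is a constant $c>0$ with $v_t\,d\lambda_t+\lambda_t\ww dv_t\ge c\,\omega_\Sigma$ uniformly in $t$. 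By the same compactness argument the $2$-form $\lambda_t\ww\dot{\lambda}_t$ satisfies $-C\,\omega_\Sigma\le\lambda_t\ww\dot{\lambda}_t\le C\,\omega_\Sigma$ for some constant $C>0$. Choosing any $K>C/c$ forces the right hand side to be positive pointwise, so $\alpha=\lambda_t+Kv_t\,dt$ is a positive contact form with $\Sigma_t(\xi)=\GG_t$.

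The only real subtlety is the smoothing step for $v_t$; once that is in hand, the heart of the argument is the uniform positivity coming from compactness of $\Sigma\times[-1,1]$, and absorbing the bounded error term $\lambda_t\ww\dot{\lambda}_t$ by scaling up $v_t$. This is the same ``large constant'' trick used to pass from confoliations to contact structures in \cite{confol}.
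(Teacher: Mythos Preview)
Your proof is correct and follows essentially the same route as the paper's: smooth the family $v_t$ using convexity of the dividing condition \eqref{e:dividing} and compactness of the parameter interval, then take $\alpha=\lambda_t+Kv_t\,dt$ with $K$ large. The paper simply asserts that for $k$ sufficiently large the form $kv_t\,dt+\lambda_t$ is contact, whereas you spell out the estimate $K\bigl(v_t\,d\lambda_t+\lambda_t\ww dv_t\bigr)-\lambda_t\ww\dot{\lambda}_t>0$ explicitly; this is exactly the computation the paper has in mind.
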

\begin{proof} 
Notice that we made no assumption concerning the dependence of $v_t$ on $t$. But the set of functions $v_t$ satisfying \eqref{e:dividing} for a given family of $1$-forms $\lambda_t$ on $\Sigma$ is convex. By compactness of the interval we can assume that $v_t$ depends smoothly on $t$. Then for $k$ sufficiently large, the $1$-form 
$$
kv_tdt+\lambda_t 
$$
on $\Sigma\times[-1,1]$ is a contact form with the desired properties. 
\end{proof} 

The following lemma is the corresponding uniqueness result. 
\begin{lem}[Lemma 2.7 of \cite{gi-bif}] \mlabel{l:convex  families}
Let $\xi_0,\xi_1$ be two contact structures on $\Sigma\times[-1,1]$ which coincide near the boundary such that the characteristic foliations $\Sigma_t(\xi_0)$ and $\Sigma_t(\xi_1)$ are divided by $\Gamma_t$ where $\Gamma_t$ varies continuously with $t\in[-1,1]$.

Then $\xi_0$ and $\xi_1$ are isotopic relative to the boundary. 
\end{lem}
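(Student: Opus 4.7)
The strategy is a Moser-type argument: I will construct a smooth one-parameter family of contact structures $\xi_s$ on $\Sigma\times[-1,1]$, $s\in[0,1]$, joining $\xi_0$ and $\xi_1$, constant in $s$ near the boundary, and then apply the relative parametric version of \thmref{t:gray} to produce the required isotopy rel $\partial$. Write $\alpha_i=\lambda_t^i+u_t^i\,dt$ for defining $1$-forms of $\xi_i$ ($i=0,1$); since $\xi_0=\xi_1$ in some collar $C$ of $\partial(\Sigma\times[-1,1])$, we may rescale so that $\alpha_0\equiv\alpha_1$ on $C$.

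The key observation is that the convex combination $\lambda_t^s:=(1-s)\lambda_t^0+s\lambda_t^1$ still defines a singular foliation on $\Sigma_t$ divided by $\Gamma_t$ for every $s\in[0,1]$. Indeed, by the Liouville characterization of \lemref{l:why dividing set}, both $d\lambda_t^0$ and $d\lambda_t^1$ are positive area forms on $\Sigma_t^+$ and negative on $\Sigma_t^-$, a property inherited by $d\lambda_t^s$; likewise $\lambda_t^0|_{T\Gamma_t}$ and $\lambda_t^1|_{T\Gamma_t}$ are both nonzero and of the same sign (the characteristic foliation of a positive contact structure crosses $\Gamma_t$ in the direction dictated by the coorientation), so $\lambda_t^s|_{T\Gamma_t}$ remains nonvanishing. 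Consequently we can choose a \emph{single} smooth family of functions $v_t:\Sigma\to\R$ with $v_t^{-1}(0)=\Gamma_t$ and $\pm v_t>0$ on $\Sigma_t^{\pm}$ satisfying
\[
v_t\,d\lambda_t^s + \lambda_t^s\wedge dv_t > 0 \quad\text{for every }s\in[0,1],
\]
where near $\Gamma_t$ the transverse derivative of $v_t$ is chosen compatibly with the common sign of $\lambda_t^i|_{T\Gamma_t}$, and away from $\Gamma_t$ one simply scales $v_t$ up so that the first summand dominates.

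Pick a cutoff $\psi(t)\in[0,1]$ vanishing on $C$ and equal to $1$ away from $C$, and set
\[
\alpha_s := \lambda_t^s + \bigl[(1-s)u_t^0 + s\,u_t^1 + K\,s(1-s)\,\psi(t)\,v_t\bigr]\,dt.
\]
A pointwise expansion of the contact condition on $\Sigma_t$ yields
\[
(1-s)^2\Omega_0 + s^2\Omega_1 + s(1-s)\,R_{s,t} + K\,s(1-s)\,\psi(t)\bigl[v_t\,d\lambda_t^s + \lambda_t^s\wedge dv_t\bigr],
\]
where $\Omega_i>0$ is the contact form for $\alpha_i$ and $R_{s,t}$ collects uniformly bounded cross terms; on $C$ we have $\alpha_s\equiv\alpha_0$ and contactness is automatic. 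By compactness of $[0,1]\times\Sigma\times[-1,1]$ one single constant $K$ makes the positive $K$-summand dominate the cross terms, so $\alpha_s$ is a contact form for every $s$. The family $\xi_s:=\ker\alpha_s$ is then a smooth path of contact structures joining $\xi_0$ to $\xi_1$ and constant near the boundary, and the relative parametric Gray theorem furnishes the desired isotopy. The main obstacle is producing one $v_t$ that witnesses the dividing property simultaneously along the entire convex segment $\lambda_t^s$: this is precisely where the assumption that $\xi_0$ and $\xi_1$ share the \emph{same} continuous family of dividing curves $\Gamma_t$ enters essentially, and it is what allows one to bypass the more intricate bifurcation analysis of characteristic foliations from \cite{gi-bif}.
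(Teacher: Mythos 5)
Your strategy---interpolate the horizontal components $\lambda_t^s=(1-s)\lambda_t^0+s\lambda_t^1$, exploit that the taming inequality is linear in $\lambda$ once a taming function $v_t$ works for both, and close with the relative parametric Gray theorem---is precisely the paper's. The paper does it in a piecewise-linear way: it first deforms each $u_t^i\,dt$ to $kv_t\,dt$ with $\lambda_t^i$ frozen (using convexity in the $u$-variable), then deforms $\lambda_t^0\to\lambda_t^1$ with $kv_t\,dt$ frozen. Your one-step family $\alpha_s=\lambda_t^s+\bigl[(1-s)u_t^0+su_t^1+Ks(1-s)\psi(t)v_t\bigr]\,dt$ smears these two moves into one; the quadratic boost $Ks(1-s)\psi v_t$ plays exactly the role of the paper's intermediate level $kv_t\,dt+\lambda_t^i$. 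The dominance estimate is sound: the cross terms and the boost are both of order $s(1-s)$, so a single large $K$ wins where $\psi\equiv 1$, while on the collar where $\psi<1$ one has $\alpha_0\equiv\alpha_1$ and the cross term $R_{s,t}$ is itself $2\Omega_0>0$, so the quadratic form is $(a+b)^2\Omega_0=\Omega_0>0$ without help. This part of the argument is correct and is essentially a smoothed version of the paper's proof, not a new route.

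There is, however, a genuine gap in your justification of the \emph{common} taming function $v_t$. You assert, citing \lemref{l:why dividing set}, that $d\lambda_t^0$ and $d\lambda_t^1$ are positive area forms on $\Sigma_t^+$ and negative on $\Sigma_t^-$. That lemma only guarantees the \emph{existence} of piecewise-defined Liouville forms $\alpha_\pm$ with this property; the globally defined $\lambda_t^i$ coming from an arbitrary contact form $\alpha_i=\lambda_t^i+u_t^i\,dt$ need satisfy $\pm d\lambda_t^i>0$ only at the singular points of $\Sigma_t(\xi_i)$ (where $\lambda_t^i=0$), not on all of $\Sigma_t^\pm$. Away from the singularities the term $\lambda_t^i\wedge dv_t$ can have either sign and can overwhelm $v_t\,d\lambda_t^i$, so ``scale $v_t$ up away from $\Gamma_t$'' does not by itself produce the inequality for both $i$. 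The paper closes exactly this gap with a clean trick you should adopt: take taming functions $v_t^0,v_t^1$ for $\lambda_t^0,\lambda_t^1$ separately (these exist by the hypothesis, and can be chosen equal in a collar of $\partial$ where $\alpha_0\equiv\alpha_1$), then replace $\lambda_t^1$ by $h\lambda_t^1$ with $h=v_t^0/v_t^1>0$; one checks that $v_t^0\,d(h\lambda_t^1)+(h\lambda_t^1)\wedge dv_t^0=h^2\bigl(v_t^1\,d\lambda_t^1+\lambda_t^1\wedge dv_t^1\bigr)>0$, so $v_t:=v_t^0$ tames both. After this rescaling---which does not change $\xi_1$ and is the identity near $\partial$---the rest of your argument goes through verbatim.
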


\begin{proof}
The contact structures $\xi_i,i=0,1$, are defined by $1$-forms $u_t^idt+ \lambda_t^i$. Since $\Gamma_t$ divides the characteristic foliations $\Sigma_t(\xi_i)$ there is a family of functions $v^i_t$ for $i=0,1$ such that 
\begin{itemize}
\item $0$ is a regular value of $v_t$ and $\Gamma_t=v_t^{-1}(0)$, and
\item $v_t^id\lambda_t+\lambda_t^i\ww dv_t^i>0$.
\end{itemize}
Since we can multiply $\lambda_t^1$ with the nowhere vanishing function $h=v_t^0/v_t^1$, we may assume that $v_t^0=v_t^1=v_t$. Then for every positive constant $k$ the family of $1$-forms 
$$
(su_t^i+(1-s)kv_t)dt+\lambda_t^i,\quad s\in[0,1], i=0,1
$$
on $\Sigma\times[-1,1]$ is a family of contact forms. Moreover, for $k$ sufficiently large the $1$-forms
$$
kv_tdt+s\lambda_t^i+(1-s)\lambda_t^i, \quad s\in[0,1]
$$
on $\Sigma\times[-1,1]$  is a family of contact forms. The conclusion follows again from Gray's theorem.
\end{proof}
We will need the following slight modification of \lemref{l:convex families} which gives a relative version of the previous lemmas.

\begin{lem}\mlabel{l:rel convex}
Let $\xi,\xi'$ be contact structures on $\Sigma\times [-1,1]$ so that there is a family of compact subsurfaces $F_t\subset\Sigma_t$ such that
\begin{itemize}
\item[(i)] $\Sigma_t(\xi)=\Sigma_t(\xi')$ outside of $F_t$,
\item[(ii)] $\partial F_t$ is transverse to $\Sigma_t(\xi)$, and
\item[(iii)] there are contact forms $\alpha,\alpha'$ defining $\xi,\xi'$ such that $d\alpha\eing{F_t}>0$ and $d\alpha'\eing{F_t}>0$.
\end{itemize}
Then $\xi$ and $\xi'$ are isotopic. 

Moreover, given a contact structure $\xi$ defined by $\alpha$, a family of domains $F_t$ with properties (ii), (iii) and a smooth family $\lambda_t$ of $1$-forms such that $d\lambda_t>0$ on $F_t$ with $\alpha\eing{F_t}=\lambda_t$ on a neighbourhood of the boundary of 
$$
\bigcup_{-1\le t\le 1}F_t
$$
there is a contact structure $\xi'$ on $\Sigma_t\times[-1,1]$ which coincides with $\xi$ outside of $\cup_tF_t$ whose characteristic foliation is defined by $\lambda_t$ inside of $F_t$. 
\end{lem}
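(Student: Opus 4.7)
The proof should mirror the strategy of \lemref{l:convex families}, with the positivity of $d\alpha|_{F_t}$ and $d\alpha'|_{F_t}$ playing the role of a common dividing set. I write $\alpha = u_t dt + \lambda_t$ and $\alpha' = u'_t dt + \lambda'_t$ on $\Sigma\times[-1,1]$. Recall that the contact condition in these coordinates is
$$u_t\, d\lambda_t+\lambda_t\wedge(du_t-\dot{\lambda}_t)>0,$$
which is affine-linear in $u_t$ for fixed $\lambda_t$. This convexity, together with the fact that $d\lambda_t>0$ and $d\lambda'_t>0$ on $F_t$ (so $F_t$ is a ``Liouville'' domain with empty dividing set on each slice), will drive a Gray argument.

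The first step is to normalize $\alpha'$ so that $\lambda'_t=\lambda_t$ outside $F_t$. Since the characteristic foliations agree on the complement of $F_t$, there is a smooth positive function $f$ defined there with $\lambda'_t = f\lambda_t$. Extend $f$ to a smooth positive function on $\Sigma\times[-1,1]$ taking the value $1$ on a slight shrinkage of $F_t$, and replace $\alpha'$ by $f^{-1}\alpha'$; the contact structure $\xi'$ is unchanged, and after slightly shrinking $F_t$ by a collar the positivity of $d\lambda'_t|_{F_t}$ is preserved by openness.

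Next I would construct an interpolating family $\alpha^s = u^s_t dt + \lambda^s_t$ with
$$\lambda^s_t := (1-s)\lambda_t + s\lambda'_t,\qquad u^s_t := (1-s)u_t+s u'_t + K\phi(s)\chi_t,$$
where $\chi_t\ge 0$ is a smooth bump supported in $F_t$ (equal to $1$ on a slight further shrinkage), $\phi:[0,1]\to\R_{\ge 0}$ vanishes at the endpoints and is strictly positive in the interior, and $K>0$ is a large constant. Outside $F_t$ the bump vanishes and $\lambda^s_t$ is $s$-independent, so the contact condition for $\alpha^s$ reduces to the convex combination of the contact conditions for $\alpha$ and $\alpha'$, which is automatic. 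Inside $F_t$, $d\lambda^s_t=(1-s)d\lambda_t+s\,d\lambda'_t>0$; for $s$ bounded away from $\{0,1\}$ the term $K\phi(s)\chi_t\,d\lambda^s_t$ dominates all other contributions for $K$ sufficiently large, and near the endpoints the contact condition holds by continuity from the contact conditions at $s=0,1$. Applying \thmref{t:gray} to $\alpha^s$ then yields an isotopy between $\xi$ and $\xi'$.

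For the moreover statement I would extend the given family $\lambda_t$ to a smooth global $1$-form $\mu_t$ on $\Sigma$ by setting $\mu_t$ equal to the characteristic $1$-form of $\alpha$ outside $F_t$; the matching hypothesis on a neighbourhood of $\partial F_t$ guarantees smoothness. Set $\alpha' := u'_t dt + \mu_t$ where $u'_t$ equals $u_t$ outside $F_t$ and a sufficiently large positive constant on a shrinkage of $F_t$, smoothly interpolated across the collar. Outside $F_t$ we have $\alpha'=\alpha$, so the contact condition holds; inside $F_t$ the condition is dominated by $u'_t\, d\mu_t$ with $d\mu_t>0$ and $u'_t$ large. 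The main obstacle in both parts is this partition-of-unity balancing in the collar of $\partial F_t$: one must keep the ``large'' $u$-term big enough to force contactness on $F_t$ while not spoiling the pre-existing contact condition on the complement, and the openness of the contact condition near the endpoints $s=0,1$ (or near $\partial F_t$) is the precise tool that makes this gluing work.
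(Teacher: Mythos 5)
Your overall strategy---normalizing $\lambda'_t=\lambda_t$ outside $F_t$, exploiting that the contact condition is affine in $u_t$ for fixed $\lambda_t$, and overwhelming the interior of $F_t$ with a large $u$-contribution---is in the spirit of the paper's proof, which constructs $\xi'$ first (the moreover part) by gluing $\ker(\lambda_t+k\,dt)$ on $F\times[-1,1]$ to $\xi$ across a collar, and then cites the argument of Lemma~\ref{l:convex families} for the isotopy. But the way you dispose of the collar term has a genuine gap.

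The term you correctly identify as problematic, $K\phi(s)\,\lambda^s_t\wedge d\chi_t$ in the isotopy part (and its analogue $\mu_t\wedge du'_t$ in the moreover part), is \emph{not} controlled by ``openness of the contact condition near the endpoints or near $\partial F_t$''. Its size is $\sim K\phi(s)\,|d\chi_t|$, which scales with $K$ in exactly the same way as the helpful term $K\phi(s)\,\chi_t\,d\lambda^s_t$; you cannot simultaneously take $K$ large enough to dominate on the shrinkage and small enough to be a perturbation near $\partial F_t$, and thinning the transition band only makes $|d\chi_t|$ bigger. What actually makes the interpolation contact is a sign, which you never check: because $d\lambda_t|_{F_t}>0$ and $d\lambda'_t|_{F_t}>0$, the characteristic foliations of $\xi$, $\xi'$, and hence of each $\ker\alpha^s$ point \emph{out of} $F_t$ along $\partial F_t$, while $\chi_t$ decreases in that outward direction; in collar coordinates $(r,\theta)$ with $\lambda^s_t$ having a nonzero $d\theta$-component along $\partial F_t$ (transversality) and $d\chi_t=\chi'(r)\,dr$, a direct orientation check shows $\lambda^s_t\wedge d\chi_t\ge 0$ with respect to the surface orientation. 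So the ``bad'' term is in fact an additional \emph{positive} contribution; with the opposite sign of the collar coordinate the same formula would destroy the contact condition, so this check is not optional. The paper circumvents the computation with a geometric argument: on a collar $C$ so thin that every leaf of $C(\xi)$ runs from $\gamma_{\mathrm{in}}$ to $\gamma_{\mathrm{out}}$, the new structure $\ker(\lambda_t+k\,dt)$ is nearly horizontal along $\gamma_{\mathrm{in}}$ while $\xi$ makes an angle $\ge\nu$ with $\Sigma_t$ along $\gamma_{\mathrm{out}}$, and since a positive contact plane rotates monotonically along Legendrian arcs (here pointing outward by (ii), (iii)), the interpolation is realized by twisting along the leaves of $C(\xi)$ with the correct sense guaranteed automatically. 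Either making the sign of $\lambda^s_t\wedge d\chi_t$ explicit, or replacing your partition-of-unity formula with the paper's twisting construction in the collar, is what this step actually requires.
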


\begin{proof}
We begin with the existence part. We may assume that there is a domain with boundary $F\subset \Sigma$ so that $F_t=F\times\{t\}$ and the characteristic foliations of $\xi$ near $\partial F_t$ are independent of $t$. Fix a collar $C$ of $\partial F$. The part of $\partial C$ in the complement of $F$ will be denoted by $\gamma_{out}$ and $\gamma_{in}=\partial C\setminus\gamma_{out}$. 

Since $\partial F_t$ is transverse to $\Sigma_t(\xi)$ we may choose $C$ so thin that every leaf of $C(\xi)$ connects two boundary components of $C$ and $\alpha\eing{C_t\cap F_t}=\lambda_t$. Without loss of generality  assume that $C_t(\xi)$ is constant. By (ii),(iii) the characteristic foliations are pointing out of $F_t$ along the boundary.

The angle between $\xi$ and $F_t$ along $\gamma_{out}$ is bounded away from $0$ by a constant $\nu$. Since $d\lambda_t>0$ the $1$-form $\lambda_t+kdt$ defines a positive contact structure on $F\times[-1,1]$. If $k$ is sufficiently large, then the angle between $F_t$ and $\xi'=\ker(\lambda_t+kdt)$ is smaller than $\nu$ along $\gamma_{in}$. Since $C(\xi)$ is a product foliation, the contact structure $\xi'$ defined inside of $F\times[-1,1]$ can be extended to a contact structure on $\Sigma\times[-1,1]$ by twisting along the leaves of $C(\xi)$.

The proof that the resulting contact structure $\xi'$ on $\Sigma\times[-1,1]$ is isotopic to $\xi$ is analogous to the proof of \lemref{l:convex families}.   
\end{proof}

Before we proceed with manipulations of movies we state the result of an explicit computation we will use later: Assume that $\xi$ is a contact structure on a family of annuli $A_t, t\in[0,1], $ defined by  $\alpha=dt+\lambda_t$  such that the characteristic foliation is transverse to the boundary of $A_t$ for all $t$ and pointing outwards. The contact condition implies that $d_A\lambda_t+\dot{\lambda}_t\ww\lambda_t$ is an area form (here $d_A$ is the exterior differential on the annuli) and we can choose $\lambda_t$ such that 
\begin{align*}
d_A\lambda_t >  0  & \textrm{ on the repulsive side of a closed leaf, } \\
d_A\lambda_t >  0  & \textrm{ along non-degenerate repulsive leaves, and } \\ 
d_A\lambda_t = 0   & \textrm{ along degenerate closed leaves.} 
\end{align*}
Let $X$ be a vector field  on $A\times[0,1]$ without zeroes tangent to the characteristic foliation but pointing in the opposite direction and let $\varphi_s$ be the flow of $X$ defined for $s\ge 0$. Set $i_Xd\lambda_t = h_X\lambda_t$, $H(s):=\exp\left(\int_0^s\varphi^*_\sigma h_X d\sigma\right)$ and $i_X(d_A\lambda_t+\dot{\lambda}_t\ww\lambda_t)=f_X\lambda_t$.  By the contact condition, $f_X$ is bounded away from $0$ and $f_X<0$. Also $h_X\le 0$ on the repulsive side of closed orbits. Then
\begin{equation} \label{e:evol-alpha}
\varphi_s^*\alpha = H(s)\left(\left(1-\frac{\int_0^s\varphi_\sigma^*f_Xd\sigma}{H(s)}\right) \lambda_t+dt\right). 
\end{equation}  
This shows that the contact structure defined $\varphi_s^*\alpha$  converges to the tangent planes of the annulus along all leaves of the characteristic foliation which are not closed and attractive. This applies in particular to leaves entering $A_t$ through the boundary.  

\subsubsection{Elimination of singularities}

The following lemma is a translation of Lemma 2.15 of \cite{gi-bif}. It is a version of the elimination lemma from \cite{el20} which also controls the characteristic foliation not only on one surface $\Sigma$ but also on nearby surfaces.  

\begin{lem} \mlabel{l:elim}
Let $\xi$ be a contact structure on $\Sigma\times[-1,1]$ such that $\Sigma_0(\xi)$ has two singular points $e_0,h_0$ which have the same sign (we will assume it is negative) and are connected by a leaf of $\Sigma_0(\xi)$. Moreover, for $-1<t<1$, there are singularities $e_t,h_t$ of $\Sigma_t(\xi)$ such that $h_t$ is connected to $e_t$ by a continuous family of leaves $C_t$ of $\Sigma_t(\xi)$.  We fix $0<\delta<1$ and neighbourhood $U$ of $\cup_{|t|\le\delta}\overline{C}_t$, so that $e_t,h_t$ are the only singular points of $\Sigma_t(\xi)$ inside $\Sigma_t\cap U$.

\begin{itemize}
\item[(i)] There is an isotopy of $\Sigma\times[-1,1]$ with support inside $U$ such that the characteristic foliation of the isotoped contact structure has no singular points in $\Sigma_t\cap U$ for $|t|\le\delta$.
\item[(ii)] Given a family of points $a'_t\in \Sigma_t\setminus \overline{C}_t$  connected to $e_t$ by a leaf of $(\Sigma_t\cap U)(\xi)$ and another family $b_t'$ of points on the unstable leaf of $b_t$ which is opposite to $C_t$ the isotopy in (i) can be chosen such that for the isotoped contact structure the leaf of the characteristic foliation on $\Sigma_t$ through $b_t'$ passes arbitrarily close to $a'_t$.
\item[(iii)] Assume that for a given compact set $\Lambda\subset(-1,1)$ the restriction of the characteristic foliation on $\Sigma_t$  to  \begin{align*}
G_t := & \textrm{ union of all segments of leaves of }\Sigma_t(\xi) \\ &  \textrm{ with both endpoints in } U 
\end{align*}
can be defined by a $1$-form with nowhere vanishing exterior derivative. Then the isotopy from (i), (ii) can be chosen such that if  $\Sigma_t(\xi)$ is convex for $t\in\Lambda$ and the isotoped copy of $\Sigma_t$ is also convex.
\end{itemize} 
\end{lem}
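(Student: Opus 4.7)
The plan is to adapt the classical Eliashberg--Thurston elimination procedure parametrically, using \lemref{l:movie unique} to convert a modification of the family of characteristic foliations into an isotopy of the contact structure supported in $U$.

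First I would choose a smooth family of Darboux charts along the arcs $\overline{C}_t$, $|t|\le\delta$, in which the contact form is standard and each $\Sigma_t$ is a graph $z=f_t(x_1,x_2)$ over a common coordinate disc. In this model the singular points $e_t,h_t$ of $\Sigma_t(\xi)$ correspond to critical points of $f_t$, and $C_t$ to a gradient-like trajectory joining them. The standard cancellation lemma produces a compactly supported perturbation $g_t$, depending smoothly on $t$, such that $f_t+g_t$ has no critical points in a smaller neighbourhood of the arc. Multiplying by a cutoff $\chi(t)$ equal to $1$ on $[-\delta,\delta]$ and vanishing outside a slightly larger subinterval localises the modification to $U$. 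The resulting family of characteristic foliations is then realised by an isotopy of $\xi$ supported in $U$ by \lemref{l:movie unique}, giving part (i). For part (ii), note that the admissible perturbations $g_t$ form a $C^\infty$-open set; after cancellation the leaves in $U\cap\Sigma_t$ become a family of non-singular arcs joining the two components of $\partial(U\cap\Sigma_t)$, and $a'_t,b'_t$ lie on two such arcs. A further small adjustment of $g_t$ (essentially reshaping the transverse profile of the cancellation) moves the arc through $b_t'$ so that it exits arbitrarily close to $a_t'$; this is a genericity argument in the space of elimination profiles.

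For part (iii), I would use the additional hypothesis to extend the partial convex structure across $U$. By assumption the defining form $\beta_t$ for the characteristic foliation on $G_t$ satisfies $d\beta_t$ of constant nonzero sign (say positive), so $G_t$ sits inside the $\Sigma_t^+$ part of a convex decomposition of $\Sigma_t$. I would then choose the elimination profile so that on $U\cap\Sigma_t$ the new characteristic foliation is defined by a form $\tilde\beta_t$ with $d\tilde\beta_t>0$ that matches $\beta_t$ along the overlap. Because after cancellation the foliation on $U\cap\Sigma_t$ has no singularities, one has complete freedom in choosing its defining form up to a positive gauge factor, and this gauge can be arranged so that $d\tilde\beta_t>0$ everywhere and $\tilde\beta_t$ agrees with $\beta_t$ near $\partial G_t$. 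Gluing yields a family of defining forms with everywhere positive exterior derivative on an enlarged subsurface, which together with the unchanged part of the convex decomposition of $\Sigma_t$ exhibits a dividing set. A parametric application of \lemref{l:convex families} then delivers convexity of the isotoped $\Sigma_t$ for $t\in\Lambda$.

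The main obstacle I anticipate lies in this last step: choosing the parametric perturbation $g_t$ so that the elimination profile inside $U$ is simultaneously compatible with the given defining form $\beta_t$ on $G_t$ (uniformly in $t\in\Lambda$) and with the control on the leaf through $b_t'$ demanded in part (ii). Both conditions are $C^\infty$-open in the space of perturbations, so their intersection should be non-empty, but verifying this explicitly requires a careful analysis of the normal form at the canceling pair and of the matching across $\partial G_t$ -- in particular, one must rule out the possibility that the positivity of $d\tilde\beta_t$ forces the new arcs in $U$ into a direction incompatible with the requirement in (ii).
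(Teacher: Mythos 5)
The paper does not give its own proof of \lemref{l:elim}: it is stated as ``a translation of Lemma 2.15 of \cite{gi-bif}'' and the argument is simply cited. So you are reconstructing a proof from scratch, and the overall plan (local Darboux model along $C_t$, a parametric cancellation in the model, a cutoff in $t$, and a direct isotopy of $\Sigma\times[-1,1]$) is indeed the right kind of argument for this sort of family elimination lemma. Two points nevertheless need fixing.

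The citation of \lemref{l:movie unique} is wrong for what you want it to do. That lemma is a \emph{uniqueness} result: if two positive contact structures on $\Sigma\times[-1,1]$ induce \emph{literally the same} family of singular foliations $\Sigma_t(\cdot)$, then they are isotopic. You are in a different situation: you want to pass from the original movie to a \emph{modified} movie, so the hypothesis of \lemref{l:movie unique} is not met and the lemma does not apply. In your explicit model you do not need any abstract realization or uniqueness lemma at all: perturbing the graphs $z=f_t$ to $z=f_t+\chi(t)g_t$ while keeping $\xi$ fixed literally \emph{is} an isotopy of $\Sigma\times[-1,1]$ (the one that carries the original level surfaces to the perturbed ones), its support lies in $U$ once $g_t$ is supported there and small enough that the perturbed levels stay disjoint, and the pullback of $\xi$ under this isotopy is the desired contact structure. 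If instead you wanted to proceed non-constructively, modifying the singular foliations abstractly and then realizing them, the appropriate tool from this paper is \lemref{l:rel convex} (with the sign of $d\alpha$ reversed, since $e_t,h_t$ are negative), not \lemref{l:movie unique}.

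The identification of singular points of $\Sigma_t(\xi)$ with critical points of $f_t$ is also not accurate. If $\xi=\ker(dz-y\,dx)$ and $\Sigma_t=\{z=f_t(x,y)\}$, the singularities of the characteristic foliation are the points where $f_{t,x}=y$ and $f_{t,y}=0$, i.e.\ critical points of $f_t(x,y)-xy$, or, restricting to a Legendrian core $\{y=z=0\}$ containing $C_t$, critical points of $f_t(\cdot,0)$. One should therefore set up the cancellation for this one-variable function (or equivalently for the vector field directing $\Sigma_t(\xi)$), not for $f_t$ as a two-variable Morse function. The cancellation story then goes through as you intend. Two further matters that a full proof must address, and which you gloss over: first, the set $\bigcup_{|t|\le\delta}\overline C_t$ generally cannot be covered by a single Darboux chart, so the cancellation profile has to be assembled over finitely many charts with a partition of unity; second, in (iii) the relevant sign of the exterior derivative of the defining form near the negative pair $e_t,h_t$ is negative, so the matching condition with the hypothesised form on $G_t$ needs the sign conventions stated and verified on each component of $G_t$ separately. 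None of these obstructs the approach, but the argument as written does not yet carry the burden of proof in these places.
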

Pairs of singular points of $\Sigma(\xi)$ like $e_0,h_0$ in the above lemma will be referred to as a {\em canceling pair}. 
 
We will use a partial converse of this result which allows us to create canceling pairs of singular points. On a single surface it is possible to introduce a canceling pair of singularities without any restriction. But by \lemref{l:switch} we cannot arbitrarily prescribe the limit set of the (un-)stable leaf of the hyperbolic singularity which does not connect the elliptic singularity of the pair.   

\subsubsection{Closed leaves in movies} \mlabel{s:closed leaves}

In this section we discuss a result from E.~Giroux's paper \cite{gi-bif} about closed leaves of characteristic foliations. 

First, let $\gamma\subset \Sigma_t$ be a non-degenerate closed leaf of the characteristic foliation. Let $V\subset\Sigma_t$ be a tubular neighbourhood of $\gamma$ so that $\partial V$ is transverse to $\Sigma_t(\xi)$ and $\gamma$ is the unique closed leaf of $\Sigma_t(\xi)$ in $V$. Since $\gamma$ is non-degenerate, the characteristic foliation is pointing out of $V$ or into $V$ along both components of $\partial V$. Because the characteristic foliation depends smoothly on $t$ and there is a closed leaf in $\Sigma_{t'}$ contained in $V$ now viewed as a subset of $\Sigma_{t'}$ when $t'$ is sufficiently close to $t$. Since $\gamma$ is non-degenerate the neighbouring closed leaves are uniquely determined by the surface containing them and the union of these closed leaves is a smooth submanifold transverse to $\Sigma_t$. 

Now let $\gamma\subset\Sigma$ be a degenerate closed leaf and $\varphi$ the germ of the holonomy along $\gamma$ with respect to a fixed segment $\sigma$ through $\gamma$ transverse to the characteristic foliation. We assume that the degeneracy of $\gamma$ is finite, i.e. $\varphi^{(k)}(0)\neq 0$ for some $k\in\{1,2,\ldots\}$ and $\varphi^{(j)}(0)=0$ for $j=0,\ldots,k-1$. This is a $C^\infty$-generic property. 

We first discuss the case when $k$ is even. Depending on the sign of $\varphi^{(k)}(0)$ there are two possibilities.
\begin{defn}
We say that $\gamma$ is {\em positive} respectively {\em negative} if the holonomy of $\gamma$ is repulsive respectively attractive on the side of $\gamma$ given by the coorientation of $\xi$ while the behavior on the other side is opposite.  
\end{defn}
The following lemma can be found in \cite{gi-bif} combining Lemma 2.12 and the remark following it. 
\begin{lem} \mlabel{l:gi-birth-death}
Let $\gamma$ be a positive degenerate closed orbit of $\Sigma_\tau(\xi)$. Then for $t>\tau$ and $t$ close to $\tau$ there is a pair of non-degenerate closed leaves of $\Sigma_t(\xi)$ close to $\gamma$. One of these orbits is repulsive while the other one attractive. For $t<\tau$ there is no closed leaf of $\Sigma_t$ near $\gamma$. 

For negative degenerate orbits, the situation is opposite. 
\end{lem}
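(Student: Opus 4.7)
The plan is to reduce the existence of closed leaves of $\Sigma_t(\xi)$ near $\gamma$ to a bifurcation problem for a displacement function, where the contact condition supplies the transversality that drives the bifurcation.

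Trivialize a neighbourhood of $\gamma$ by coordinates $(s,x,t)\in (\R/\Z)\times(-\eps,\eps)\times(\tau-\delta,\tau+\delta)$ with $\Sigma_t=\{t=\mathrm{const}\}$ and $\gamma=\{x=0\}\subset\Sigma_\tau$, and write the contact form as $\alpha=dt+a_t(s,x)\,ds+b_t(s,x)\,dx$. Since $\gamma$ is a closed leaf of $\Sigma_\tau(\xi)$, one has $a_\tau(s,0)\equiv 0$; after reorienting we may assume $b_\tau>0$. Integrating $dx/ds=-a_t/b_t$ once around $s\in [0,1]$ defines a return map $\varphi_t$ on the transversal $\{s=0\}\cap\Sigma_t$, and closed leaves of $\Sigma_t(\xi)$ near $\gamma$ correspond to zeros of $F(x,t):=\varphi_t(x)-x$. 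Positivity of $\gamma$ together with finite even-order degeneracy gives $F(x,\tau)=cx^k+O(x^{k+1})$ with $c>0$ and $k$ even, and in particular $\varphi_\tau'(0)=1$.

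The crux is the claim that $\partial_tF(0,\tau)<0$. Variation of parameters for the ODE $dx/ds=-a_t/b_t$ expresses this derivative as a weighted integral of $\dot a_\tau(s,0)/b_\tau(s,0)$ against the weight $1/E(s)$, where $E(s)=\exp\!\bigl(-\int_0^s(\partial_xa_\tau/b_\tau)\,d\sigma\bigr)$ is the fundamental solution of the linearized holonomy; the degeneracy $\varphi_\tau'(0)=1$ is equivalent to $E(1)=1$. The pointwise contact condition $\alpha\ww d\alpha>0$ restricted to $\gamma$ reads $\partial_s b_\tau-\partial_x a_\tau+b_\tau\dot a_\tau>0$. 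Dividing this inequality by $b_\tau^2E$ and integrating over $s\in[0,1]$, an integration by parts using $E'/E=-\partial_x a_\tau/b_\tau$ together with the $s$-periodicity of $b_\tau$ and $E(0)=E(1)=1$ causes the $\partial_s b_\tau$ and $\partial_x a_\tau$ contributions to cancel, leaving $\int_0^1\dot a_\tau/(b_\tau E)\,ds>0$, which is exactly $-\partial_tF(0,\tau)>0$.

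Once this transversality is in hand, the rest is routine bifurcation theory. The Malgrange preparation theorem factors $F(x,t)=u(x,t)\bigl(x^k+\sum_{j<k}c_j(t)x^j\bigr)$ near $(0,\tau)$ with $u>0$, all $c_j(\tau)=0$, and $c_0(t)$ having sign opposite to $t-\tau$. The Weierstrass polynomial in $x$ has exactly two real roots near $0$ when $c_0(t)<0$ and none when $c_0(t)>0$, giving a pair of non-degenerate closed leaves of $\Sigma_t(\xi)$ for $t>\tau$ and none for $t<\tau$. Since $\varphi_t'(x)-1=kcx^{k-1}+o(x^{k-1})$ and $k-1$ is odd, the two roots lie on opposite sides of $0$ and produce one repulsive and one attractive closed leaf. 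The negative case is identical with $c<0$, which exchanges the roles of $t>\tau$ and $t<\tau$. The main obstacle is the sign computation in the third paragraph; everything else is standard.
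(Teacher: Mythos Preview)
Your sketch is correct and follows essentially the same route as Giroux's proof in \cite{gi-bif}, Lemma~2.12, which the present paper cites without reproducing. The heart of the matter is exactly your third paragraph: the contact condition along $\gamma$, integrated against the fundamental solution of the linearised holonomy, forces $\partial_tF(0,\tau)\neq 0$, after which the conclusion is a standard saddle--node type bifurcation (your Malgrange argument handles the higher even $k$ as well, once one notes that smoothness gives $c_j(t)=O(t-\tau)$ so that the dominant balance near the roots is $x^k+c_0(t)$).
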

Thus positive respectively negative degenerate closed leaves indicate the birth respectively death of a pair of closed leaves of the characteristic foliation. 

The case when $k$ is odd is simpler. Following the proof of Lemma 2.12 in \cite{gi-bif} we obtain:

\begin{lem}\mlabel{l:closed-leaves-surface}
If $\gamma\subset\Sigma_\tau$ has odd degeneracy, then the characteristic foliation on a surface $\Sigma_t$ sufficiently close to $\Sigma_\tau$ has a single closed leaf $\gamma_t$ near $\gamma$ and $\gamma_t$ is attractive or repulsive if and only if the same is true for $\gamma$ and $\gamma_t$ is non-degenerate for $t\neq\tau$. The union of the closed leaves $\gamma_t$ is a smooth embedded surface in $M$.  
\end{lem}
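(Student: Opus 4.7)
Set up adapted coordinates $(s, x, t)$ on a tubular neighborhood of $\gamma$ in $M$ so that $\gamma = \{x = 0,\ t = \tau\}$, $s$ parametrizes $\gamma$ modulo its length $L$, and each $\Sigma_t$ is a level set of $t$.  Choose the parametrization so that $\partial_t$ is transverse both to the surfaces $\Sigma_t$ and to $\xi$ along $\gamma$; this is possible because $\gamma$ is a regular point of $\Sigma_\tau(\xi)$.  Write a defining $1$-form for $\xi$ as $\alpha = A\,ds + B\,dx + C\,dt$.  Rescaling $\alpha$ lets us arrange $C \equiv 1$ on the neighborhood, and an $s$-dependent rescaling of the transverse coordinate allows us further to assume $B(s, 0, \tau) \equiv 1$.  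Under these normalizations the condition that $\gamma$ is a closed leaf of $\Sigma_\tau(\xi)$ reads $A(s, 0, \tau) \equiv 0$, and the hypothesis that $\gamma$ is degenerate reads $\int_0^L A_x(s, 0, \tau)\,ds = 0$.  Let $\varphi_t \colon (\R, 0) \to (\R, 0)$ denote the holonomy of $\Sigma_t(\xi)$ along the transverse arc $\{s = 0\} \cap \Sigma_t$, and set $\Phi(t, x) := \varphi_t(x) - x$.

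The crucial step is to use the contact condition to show $\partial_t \Phi(\tau, 0) \neq 0$.  Integrating the ODE $x'(s) = -A(s, x, t)/B(s, x, t)$ governing the characteristic foliation and linearizing along $x = 0$ at $t = \tau$ gives
\begin{equation*}
\partial_t \varphi_t(0)\big|_{t = \tau} \;=\; -\int_0^L A_t(s, 0, \tau)\,ds.
\end{equation*}
A direct computation of $\alpha \wedge d\alpha > 0$ in these coordinates yields the local contact inequality
\begin{equation*}
B A_t + B_s - A_x - A B_t \;>\; 0.
\end{equation*}
On $\gamma$ we have $A = 0$, $B = 1$ and $B_s = 0$, so this reduces to $A_t > A_x$ pointwise on $\gamma$.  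Integrating and using the degeneracy relation $\int_0^L A_x(s, 0, \tau)\,ds = 0$ gives $\int_0^L A_t(s, 0, \tau)\,ds > 0$, hence $\partial_t \Phi(\tau, 0) < 0$.  This non-vanishing is the principal obstacle in the proof: although $\Phi(\tau, \cdot)$ vanishes to odd order $k \ge 3$ at $x = 0$, positivity of the contact structure combined with the linearization condition for $\gamma$ forces strict monotonicity of $\varphi_t(0)$ in $t$.

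With $\partial_t \Phi(\tau, 0) \neq 0$, the implicit function theorem describes the zero set of $\Phi$ near $(\tau, 0)$ as the graph of a smooth function $t = t(x)$ with $t(0) = \tau$.  Matching Taylor expansions yields $t(x) - \tau = c' x^k + O(x^{k+1})$ with $c' \neq 0$; since $k$ is odd, $x \mapsto t(x)$ is a strictly monotone smooth bijection between neighborhoods of $0$ and $\tau$.  Inverting produces, for each $t$ near $\tau$, a unique closed leaf $\gamma_t$ of $\Sigma_t(\xi)$ near $\gamma$.  Smoothness of the surface $\bigcup_t \gamma_t \subset M$ then follows by flowing the smooth curve $\{\Phi = 0\}$ inside the transverse section along the characteristic foliation in the $s$-direction, producing a smooth embedded parametrization by $(\R/L\Z) \times (-\varepsilon, \varepsilon)$.

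Finally, for $t \neq \tau$ the corresponding fixed point $x(t) \neq 0$ satisfies $\partial_x \Phi(t, x(t)) \approx \frac{c}{(k-1)!} x(t)^{k-1} \neq 0$ (where $c = \Phi^{(k)}_x(\tau, 0)$), which dominates the $O(t - \tau)$ correction since $(k-1)/k < 1$.  Hence $\varphi_t'(x(t)) \neq 1$ and $\gamma_t$ is non-degenerate.  The sign of $c$ is preserved under the perturbation, and because $k$ is odd the function $\Phi(t, \cdot)$ has the same sign on both sides of $x(t)$ as $\Phi(\tau, \cdot)$ has on both sides of $0$; therefore $\gamma_t$ is attractive (respectively repulsive) if and only if $\gamma$ is.
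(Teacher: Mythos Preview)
Your approach is precisely the one the paper defers to (Giroux, Lemma~2.12 in \cite{gi-bif}): use the contact condition to force $\partial_t\Phi(\tau,0)\neq 0$, then read off existence, uniqueness, smoothness, and non-degeneracy from the implicit function theorem together with the odd-order Taylor expansion of $\Phi(\tau,\cdot)$. The overall argument and its conclusions are correct.

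There is one slip in the variation formula. Differentiating the flow of $x'=-A/B$ with initial value $x(0)=0$ in $t$ at $t=\tau$ yields the linear inhomogeneous problem $u'(s)=-A_x(s,0,\tau)\,u(s)-A_t(s,0,\tau)$, $u(0)=0$, hence
\[
\partial_t\varphi_t(0)\big|_{t=\tau}=-\int_0^L w(\sigma)\,A_t(\sigma,0,\tau)\,d\sigma,\qquad w(\sigma)=\exp\!\Big(-\!\int_\sigma^L A_x(r,0,\tau)\,dr\Big),
\]
not the unweighted integral you wrote; your normalizations only give $B\equiv 1$ on $\gamma$, not $A_x\equiv 0$ there. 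This does not damage the argument: since $w'=w\,A_x$ one has $\int_0^L w\,A_x\,d\sigma=w(L)-w(0)=1-1=0$ by degeneracy, so positivity of $w$ together with the pointwise inequality $A_t>A_x$ still gives $\int_0^L w\,A_t>0$ and thus $\partial_t\Phi(\tau,0)<0$. Alternatively, you may spend the degeneracy hypothesis on a further $s$-periodic rescaling of the transverse coordinate so that $A_x(s,0,\tau)\equiv 0$, after which your stated formula is literally correct.
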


So, to summarize this section, if $\gamma$ is a closed leaf $\Sigma_t(\xi)$, then the union of nearby closed leaves on nearby surfaces $\Sigma$ is a smooth submanifold.  

\subsubsection{Retrograde saddle-saddle connections} \mlabel{s:retrograde}

Let $\xi$ be a contact structure on $\Sigma\times[-1,1]$. 
\begin{defn} \mlabel{d:retrograde}
A stable leaf $\eta$ of a positive hyperbolic singularity of the characteristic foliation on $\Sigma_0$ which coincides with the unstable leaf of a negative hyperbolic singularity  is a {\em retrograde saddle-saddle connection}. 
\end{defn}
The fact that $\xi$ is a positive contact structure has consequences for the characteristic foliation on $\Sigma_t$ when $\Sigma_0(\xi)$ contains a retrograde saddle-saddle connection.  In the following lemma the words {\em over} and {\em under} refer to the coorientation of the leaves of the characteristic foliation.

\begin{lem}[Giroux \cite{gi-bif}] \mlabel{l:switch}
A retrograde saddle-saddle connection $\eta$  on $\Sigma_0$ implies that for $t<0$ respectively $t>0$ sufficiently close to $0$, the stable leaf $\eta$ of the positive hyperbolic singularity  passes under respectively over the unstable leaf of the negative hyperbolic singularity. 
\end{lem}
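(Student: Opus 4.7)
The plan is to reduce the statement to a first-order Melnikov-type computation along $\eta$, in which the sign of the splitting between the two perturbed separatrices is dictated by the positive contact condition together with the signs of divergence at the two hyperbolic singularities.

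First I would fix, on a tubular neighbourhood of $\eta$, a defining form $\alpha=u_t\,dt+\lambda_t$ for $\xi$ as in the proof of \lemref{l:movie unique}, so that $\lambda_t$ is a $1$-form on $\Sigma_t$ whose kernel is the characteristic foliation and the positive contact condition becomes $u_t\,d\lambda_t+\lambda_t\wedge(du_t-\dot\lambda_t)>0$. The positive hyperbolic singular point $p_t^+$ and the negative hyperbolic singular point $p_t^-$ depend smoothly on $t$, and the stable/unstable manifold theorem for a smooth family of vector fields provides separatrices $W^s_t(p_t^+)$ and $W^u_t(p_t^-)$ that depend smoothly on $t$ and both reduce to $\eta$ at $t=0$.

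Next I pick a regular point $q\in\eta$ and an oriented transversal $\tau\subset\Sigma_0$ to the characteristic foliation at $q$, oriented so that $(\tau,\dot\eta)$ matches the orientation of $\Sigma_0$. Extending to a smooth family $\tau_t\subset\Sigma_t$, I define signed scalars $r^+(t),r^-(t)$ recording where $W^s_t(p_t^+)$ and $W^u_t(p_t^-)$ meet $\tau_t$, and set $\Delta(t):=r^+(t)-r^-(t)$; then $\Delta(0)=0$ and the sign of $\Delta(t)$ encodes, via the coorientation of $\xi$ on $\Sigma_t$, exactly the ``over/under'' comparison in the statement. The goal becomes showing $\Delta'(0)\neq 0$ with the sign asserted by the lemma.

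The core of the proof is a Melnikov computation: splitting $\eta$ at $q$ into the incoming arc from $p^-$ and the outgoing arc into $p^+$, one uses the variation-of-constants formula for the linearised flow of the vector field $X_t$ tangent to $\Sigma_t(\xi)$ dual to $\lambda_t$. The contribution to $\Delta'(0)$ is an integral along $\eta$ of a term proportional to $\dot\lambda_0(X_0)$, weighted by an exponential factor built from the divergence of $X_t$ along the orbit. Convergence at both endpoints follows from the hyperbolicity: the divergence is positive at $p^+$ and negative at $p^-$ by \lemref{l:char fol sing}, so the exponential factor decays rapidly at both ends. The definite sign of the integrand is then extracted from the contact positivity: along $\eta$ one has $\lambda_0(X_0)=0$, so expanding the contact condition in a local frame $(X_0,Y_0)$ transverse to $\eta$ reduces the inequality to a sign constraint on $\dot\lambda_0(X_0)\cdot\lambda_0(Y_0)$. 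The main obstacle I foresee is the delicate bookkeeping at the saddles---verifying absolute convergence of the Melnikov integral against potential polynomial blow-up of the integrand near $p^\pm$ and matching algebraic signs with the ``over/under'' convention. Once this is controlled, $\Delta'(0)$ has a definite nonzero sign independent of the choices made, so $\Delta(t)$ takes opposite signs for $0<t$ and $t<0$ sufficiently small, giving both conclusions of the lemma simultaneously.
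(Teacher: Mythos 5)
The paper does not prove this lemma: it is quoted from Giroux's \cite{gi-bif} without argument, so there is nothing in the text to compare against directly. Taken on its own terms your Melnikov strategy is a correct and natural way to establish the statement, and it is close in spirit to what Giroux does.

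There is, however, one genuine gap in your outline. You assert that expanding the contact condition in a frame $(X_0,Y_0)$ ``reduces the inequality to a sign constraint on $\dot\lambda_0(X_0)\cdot\lambda_0(Y_0)$,'' i.e.\ that the Melnikov \emph{integrand} has a definite sign along $\eta$. This is false. Normalize $u_t\equiv 1$ and let $\omega$ be a positive area form on $\Sigma_0$ with $i_{X_0}\omega=\lambda_0$; then $\lambda_0(Y_0)=\omega(X_0,Y_0)$ and $d\lambda_0=D\,\omega$ where $D(s):=\operatorname{div}_\omega(X_0)(\eta(s))$. Along $\eta$, where $\lambda_0(X_0)=0$, the contact inequality \eqref{e:contact cond} evaluated on the positive frame $(X_0,Y_0)$ reads
\begin{equation*}
\omega(X_0,Y_0)\bigl(D(s)+\dot\lambda_0(X_0)\bigr)>0,\qquad\text{i.e.}\qquad\dot\lambda_0(X_0)>-D(s).
\end{equation*}
But $D(s)$ changes sign along $\eta$: it tends to a negative value at $h_-$ and to a positive value at $h_+$. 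So the lower bound on the integrand changes sign, and $\dot\lambda_0(X_0)$ may perfectly well change sign too; a pointwise sign argument does not close the proof.

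What rescues the computation is that the offending term $-D(s)$, once multiplied by your exponential weight, is a total derivative: $D(s)\,e^{-\int_0^sD\,d\sigma}=-\tfrac{d}{ds}e^{-\int_0^sD\,d\sigma}$, which integrates to zero over $(-\infty,\infty)$ because the hyperbolicity at both saddles makes $e^{-\int_0^sD\,d\sigma}\to 0$ at both ends. Since the contact inequality $\dot\lambda_0(X_0)>-D(s)$ is strict everywhere along $\eta$, one gets
\begin{equation*}
\int_{-\infty}^{\infty}e^{-\int_0^sD\,d\sigma}\,\dot\lambda_0(X_0)(\eta(s))\,ds\;>\;\int_{-\infty}^{\infty}e^{-\int_0^sD\,d\sigma}\bigl(-D(s)\bigr)\,ds\;=\;0,
\end{equation*}
so $\Delta'(0)$ is nonzero with a definite sign, and the rest of your outline goes through. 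Also note that your worry about polynomial blow-up near $p^\pm$ is unfounded: the integrand is smooth on the compact closure of $\eta$ and the exponential factor decays at both ends, so convergence was never the issue; the missing step is the cancellation above, not the estimate.
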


A retrograde saddle-saddle connection is depicted in Figure~\ref{b:switch}. If the surface $\Sigma_t$ is convex for $t\neq 0$ then a bypass attachment along the thickened arc in the leftmost figure has the same effect on the dividing set (represented by dashed curves) as the retrograde saddle-saddle connection.  

\begin{figure}[htb]
\begin{center}
\includegraphics[scale=0.7]{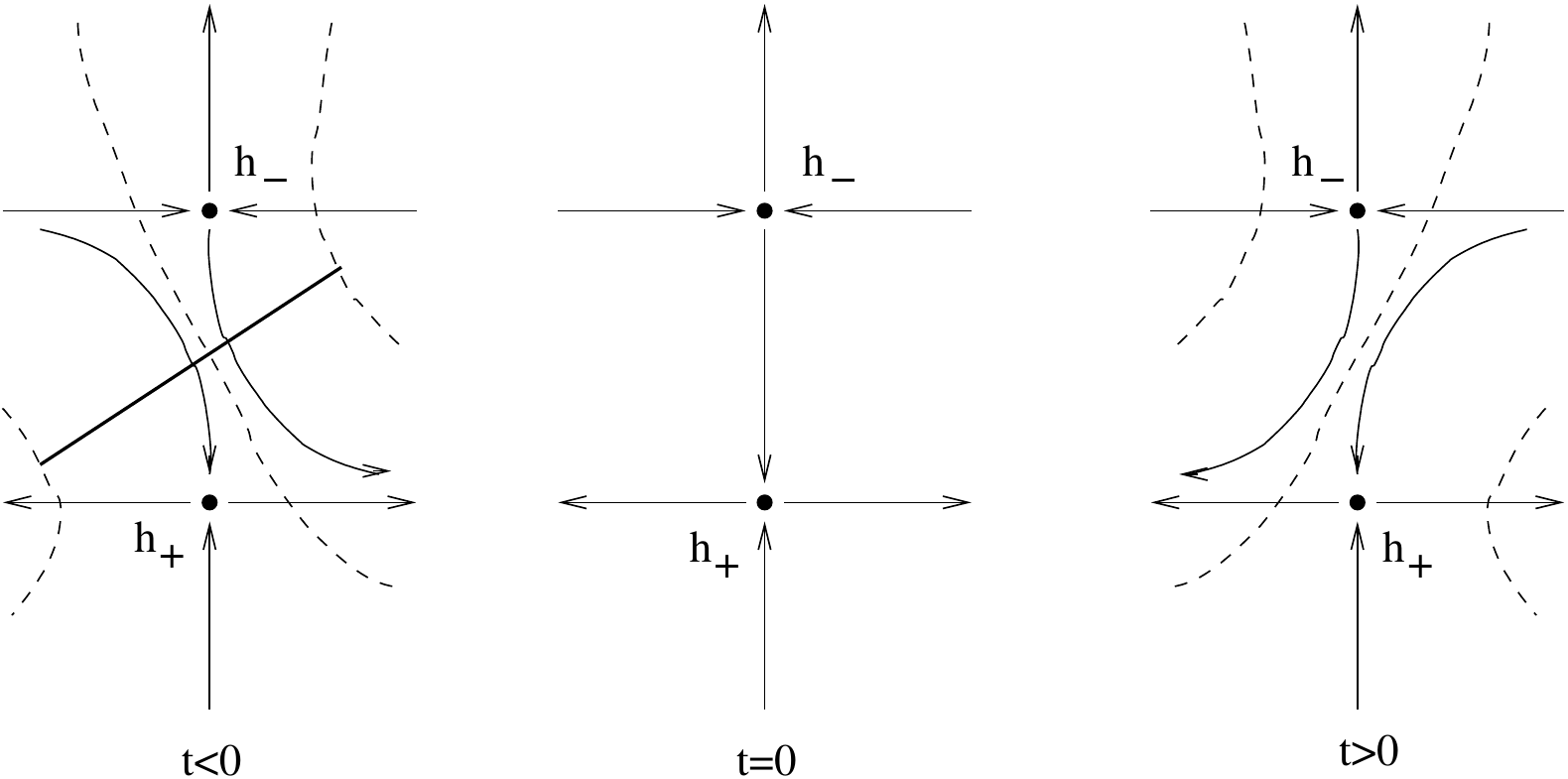}
\end{center}
\caption{Retrograde saddle-saddle connection}\label{b:switch}
\end{figure}

Later we will want to reduce the number of retrograde saddle-saddle connections. In some situations, the classification of tight contact structures on the ball can be used for this. 

\begin{lem} \mlabel{l:elim retrograde}
Let $\xi$ be a tight contact structure on $\Sigma\times[-1,1]$ so that the characteristic foliation on $\Sigma_0$ has a single  retrograde saddle-saddle-connection $\eta$ between a positive hyperbolic singularity $h_+$ and a negative hyperbolic singularity $h_-$.

Assume that both unstable leaves of $h_+$ connect to the same negative elliptic singularity $e_-$  and that $\Sigma_t$ is convex for all $t\neq 0$.  Then there is a contact structure $\xi'$ isotopic to $\xi$ with the following properties. 
\begin{itemize}
\item $\Sigma_t(\xi')$ is convex for all $t\in[-1,1]$. In particular, there are no retrograde saddle-saddle connections.
\item $\xi'$  coincides with $\xi$ outside of a tubular neighbourhood of the union of the unstable leaves of $\Sigma_0$.
\end{itemize}
\end{lem}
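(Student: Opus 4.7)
The plan is to confine the modification to a topological ball around the defect on $\Sigma_0$ and then invoke Eliashberg's classification of tight contact structures on $B^3$ (\thmref{t:fill ball}) to upgrade a local combinatorial construction into a relative isotopy. First I would let $\Lambda\subset\Sigma_0$ denote the union of the two unstable leaves of $h_+$ with $\{h_+,e_-\}$ together with a short initial segment of the retrograde connection $\eta$ issuing from $h_+$; this is a compact $1$-complex. Choose a regular neighbourhood $D\subset\Sigma_0$ of $\Lambda$ with $\partial D$ transverse to $\Sigma_0(\xi)$, and $\delta>0$ so small that $\Sigma_t$ is convex for $0<|t|\le\delta$ and no additional singularities or closed leaves of $\Sigma_t(\xi)$ enter $D\times\{t\}$ for $|t|\le\delta$. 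After rounding corners and appealing to \lemref{l:giroux flex}, arrange that $B:=D\times[-\delta,\delta]$ is a topological ball with convex boundary sphere, whose lateral piece $\partial D\times[-\delta,\delta]$ carries a vertically invariant characteristic foliation and dividing set.

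Next I would analyse the dividing sets $\Gamma_-,\Gamma_+\subset D$ of $\Sigma_{\pm\delta}(\xi)$. By \lemref{l:switch} they are related by a bypass attachment along an arc passing through $h_+$ as depicted in \figref{b:switch}. The hypothesis that \emph{both} unstable leaves of $h_+$ terminate at the same negative elliptic singularity $e_-$ forces the two corresponding endpoints of this attaching arc to lie on the same component of $\Gamma_\pm$, and cobound inside $D$ a half-disc meeting $\Gamma_\pm$ in a single arc. In other words the bypass is trivial, so $\Gamma_-$ is isotopic to $\Gamma_+$ in $D$ rel $\partial D$.

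Having identified $\Gamma_-$ with $\Gamma_+$, I would choose a continuous family of dividing sets $\Gamma_t\subset D$, $t\in[-\delta,\delta]$, interpolating between them and matching the fixed characteristic foliation near $\partial D$. By \lemref{l:convex ext} together with the relative statement of \lemref{l:rel convex}, this produces a contact structure $\xi'$ on $B$ which agrees with $\xi$ near $\partial B$ and whose movie consists of convex foliations $\Sigma_t(\xi')$ divided by $\Gamma_t$; in particular no retrograde saddle-saddle connection appears. Since $\Gamma_\pm$ do not bound discs in $D$ (the original $\xi$ being tight on a neighbourhood of $\overline{B}$), the same holds for every $\Gamma_t$, and \lemref{l:Giroux crit} guarantees that $\xi'$ is tight on $B$.

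Finally, $\xi|_B$ and $\xi'|_B$ are two tight contact structures on the ball $B$ with the same characteristic foliation on $\partial B$, which therefore admits a taming function. \thmref{t:fill ball} produces an isotopy between them relative to $\partial B$, and extending by the identity outside $B$ gives the claimed isotopy on $\Sigma\times[-1,1]$. I expect the main obstacle to be the second step, namely verifying that the eye configuration genuinely forces the relevant bypass to be trivial; once that local picture is pinned down, the remaining steps are essentially applications of results proved earlier in the section.
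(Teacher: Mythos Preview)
Your approach is sound and shares the key insight with the paper---that the hypothesis on the unstable leaves of $h_+$ makes the bypass associated to the retrograde connection trivial---but the execution differs. The paper does not localize to a ball and invoke \thmref{t:fill ball}; instead it cites Honda's treatment of trivial bypasses in \cite{honda-glue} and describes the replacement movie explicitly: take a tubular neighbourhood $V\subset\Sigma_0$ of the eye formed by the two unstable leaves of $h_+$ with $\partial V$ transverse to the characteristic foliation, arrange that the basin of $V$ is compact, and then \emph{rotate} the characteristic foliation inside $V_t$ as $t$ runs from $-\delta$ to $\delta$ so that no stable leaf of $h_+$ ever meets the point where the unstable leaf of $h_-$ enters $V_t$, completing a full twist by $t=\delta$. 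Since exactly one unstable leaf of a negative singularity enters $V$, this rotation avoids any saddle connection; the resulting singular foliations are all divided, and \lemref{l:convex ext} produces $\xi'$. Your ball-and-\thmref{t:fill ball} route has the advantage of being self-contained within the paper, while the paper's rotation construction is more explicit about the new movie.

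One imprecision to fix: your tightness check for $\xi'|_B$ via ``$\Gamma_\pm$ do not bound discs in $D$'' does not parse, since $D$ is a disc and the $\Gamma_t$ are arcs. A clean repair is to note that the interpolated $\Gamma_t$ extend (by $\xi$ outside $D$) to dividing sets on all of $\Sigma_t$ which are mutually isotopic and have no contractible component (tightness of $\xi$); then \lemref{l:convex families} identifies $\xi'$ with a $t$-invariant model, which is tight by \lemref{l:Giroux crit}, and your appeal to \thmref{t:fill ball} goes through.
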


This lemma is a consequence of the discussion of trivial bypasses in \cite{honda-glue}. Let us describe how the movie of the contact structure with a retrograde saddle-saddle-connection as in \lemref{l:elim retrograde} can be replaced by a movie which is convex at all levels.

Let $V\subset \Sigma_0$ be a tubular neighbourhood of the two unstable leaves of $h_+$ whose boundary is transverse to $\Sigma_0(\xi)$. After eventually adding pairs of canceling positive singularities and a small perturbation on the complement of a neighbourhood of the retrograde saddle-saddle connection we may assume that the basin of $V$ is compact. Then $\Sigma_t(\xi)$ and  $V_t$ have the same properties as $V$ for $t\in[-\delta,\delta]$ close to $0$.  Now replace the movie $\Sigma_t(\xi)$ by a family of singular foliations obtained by rotating the characteristic foliation on $\Sigma_t$ starting with $\Sigma_{-\delta}(\xi)$ so that as $t$ increases no stable leaf of  $h_+$ meets the point where the unstable leaf of $h_-$ enters $V_t$ and $V_\delta\subset\Sigma_{\delta}(\xi)$ makes a full twist. This is possible because there is exactly one unstable leaf of a negative singularity  entering $V$.  

The singular foliations obtained in this way all admit dividing sets. By \lemref{l:convex ext} the movie of singular foliations is the movie of a contact structure $\xi'$ on $\Sigma\times[-\delta,\delta]$ which coincides with $\xi$ on the boundary. 

The assumption that $e_-$ is a negative elliptic singularity which is not connected to any negative hyperbolic singularity can be replaced by the following assumption: Both unstable leaves of $h_+$ end on the same connected component $e_-$ of the graph formed by negative elliptic singularities, attractive closed leaves and negative hyperbolic singularities together with their unstable leaves and $e_-$ is a closed tree. Then the annulus $V$ above  has to contain the entire tree. 

\subsubsection{Sheets of movies} \mlabel{s:sheets}

Consider a contact structure on $\Sigma\times [-1,1]$  and the movie of characteristic foliations $\Sigma_t(\xi)$ with $t$ varying in $[-1,1]$.  
\begin{defn} \mlabel{d:sheet}
A {\em sheet} of the movie is a smooth embedded surface $A\subset\Sigma\times[-1,1]$ so that  every connected component of $A\cap\Sigma_t$ is a smooth Legendrian curve and all singularities of $\Sigma_t(\xi)$ on the curve have the same sign. 
\end{defn}
These surfaces play a very important role in \cite{gi-bif}. In that paper, a sheet is referred to as {\em feuille} while the collection of all sheets is called {\em feuillage}. By definition, $A$ is foliated by circles. Therefore $A$ is either a torus, an annulus, a Klein bottle, or a M{\"o}bius band. In this paper $A$ will always by orientable, so $A$ will be either a torus or an annulus. The following lemma is part of Lemma 3.17 in \cite{gi-bif}. It implies that a sheet $A$ is really foliated by closed Legendrian curves:

\begin{lem}  \mlabel{l:no sing on sheet}
Every sheet $A$  is a pre-Lagrangian surface, i.e. $A(\xi)$ is a non-singular foliation by closed Legendrian curves.
\end{lem}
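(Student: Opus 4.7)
The plan is to compare the characteristic foliation $A(\xi)=\ker(\alpha|_A)$ with the level-set foliation $\mathcal{L}$ of the height function $t|_A:A\to[-1,1]$, and show that the two must agree and be non-singular. First I would argue that the defining property of a sheet forces $A$ to be transverse to every slice $\Sigma_t$: if one had $T_pA=T_p\Sigma_t$ at some $p\in A$, the intersection $A\cap\Sigma_t$ could not be a clean $1$-submanifold near $p$ (it would either collapse to a point or acquire Morse-type singular behavior), contradicting the assumption that each component of $A\cap\Sigma_t$ be a smooth Legendrian curve. Consequently $t|_A$ is a submersion, $d(t|_A)$ is nowhere zero, and $\mathcal{L}$ is a non-singular smooth foliation of $A$ whose leaves are precisely the connected components of $A\cap\Sigma_t$; in particular, since these level sets are closed in the compact surface $A$, each leaf of $\mathcal{L}$ is a closed curve.

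Next I would show that $A(\xi)$ and $\mathcal{L}$ coincide at every non-singular point of $A(\xi)$. If $\xi_p\neq T_pA$, then $\xi\cap TA$ is one-dimensional at $p$; the tangent to the Legendrian curve $A\cap\Sigma_t$ at $p$ lies in both $TA$ and $\xi$, so by dimension it equals $\xi\cap TA$. Therefore the leaves of $\mathcal{L}$ and those of $A(\xi)$ share the same tangent direction on the open set where $A(\xi)$ is non-singular, and hence the two foliations agree there.

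The final step is to rule out singular points of $A(\xi)$ altogether. Suppose for contradiction that $p\in A$ satisfies $T_pA=\xi_p$. By \lemref{l:char fol sing}, the germ of $A(\xi)$ at $p$ is topologically conjugate to an elliptic, hyperbolic, or simply degenerate (index $0$) singularity. In none of these local models is the restriction of the foliation to a punctured neighborhood of $p$ conjugate to a trivial product foliation: elliptic singularities have infinitely many leaves accumulating onto $p$, hyperbolic singularities have four separatrices meeting $p$, and the index-$0$ model carries a distinguished unstable separatrix on one side together with an elliptic-type basin collapsing onto $p$ on the other. On the other hand, by the first two steps $A(\xi)$ must coincide with $\mathcal{L}$ on a punctured neighborhood of $p$, and $\mathcal{L}$ is a trivial product foliation there because $d(t|_A)\neq 0$ at $p$. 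This contradiction forces $A(\xi)$ to have no singular points, so $A(\xi)=\mathcal{L}$ globally, and its leaves are the closed Legendrian curves $A\cap\Sigma_t$, proving that $A$ is pre-Lagrangian.

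The main obstacle I anticipate is the third step: one must carefully invoke \lemref{l:char fol sing} and verify that none of the three admissible local topological models of a characteristic singularity can occur together with a trivial product foliation on a deleted neighborhood of the singularity. The elliptic and hyperbolic cases follow immediately from index considerations, but the degenerate index-$0$ case requires a short separate inspection of the hybrid (half-elliptic, half-hyperbolic) local picture produced in the proof of \lemref{l:char fol sing}.
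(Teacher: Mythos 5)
Your Step~1 has a genuine gap. You argue that each sheet is transverse to every slice $\Sigma_t$, because a tangency point would force $A\cap\Sigma_t$ to collapse to a point or develop a Morse crossing. That rules out only \emph{isolated} tangencies. If $A$ is tangent to $\Sigma_t$ along an entire closed curve, the intersection is still a smooth closed curve (compare $\{z=x^2\}\cap\{z=0\}$ meeting along the $y$-axis), and such ``fold'' tangencies really do occur on sheets: the paper states, immediately after this lemma, that ``$A$ is tangent to $\Sigma_t$ along a Legendrian curve $\beta\subset A$ if and only if $\beta$ is a degenerate closed leaf of $\Sigma_t(\xi)$.'' So $t|_A$ fails to be a submersion along these degenerate orbits, $\mathcal{L}$ has Bott-critical circles there, and Step~3 -- which explicitly invokes $d(t|_A)(p)\neq 0$ -- inherits the error.

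The strategy itself (compare $A(\xi)$ with the decomposition by the curves $C_t=A\cap\Sigma_t$ and rule out the local singular models) is sound, and your Step~2 is correct. What is missing is an argument that a singular point $p$ of $A(\xi)$ cannot \emph{also} be a tangency point of $A$ with $\Sigma_{t(p)}$; with that, the trivial-product picture you invoke in Step~3 holds exactly where it is needed. Alternatively one can argue directly from the decomposition: the $C_t$ are pairwise disjoint smooth closed curves, exactly one of which passes smoothly through $p$, and that is already incompatible with each local model of \lemref{l:char fol sing} (for instance, the unstable separatrices of a hyperbolic singularity would have to lie in $C_{t(p)}$ as well, making $C_{t(p)}$ singular at $p$). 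Note finally that the paper itself does not prove this lemma -- it cites Lemma~3.17 of \cite{gi-bif} -- so there is no internal proof to compare against, but the paper's surrounding discussion flatly contradicts the unconditional transversality claim in your Step~1.
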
    

Averaging the contact form using a flow which is tangent to $A(\xi)$ and periodic we obtain a contact form $\alpha$ whose restriction to $A$ is closed. Then $A\subset M$  is a Lagrangian submanifold of the symplectisation $(M\times\R,d(e^t\alpha))$ of $(M,\alpha)$
Moreover, $A$ is either tangent to $\Sigma_t$ along a given closed Legendrian curve $\beta\subset \Sigma_t\cap A$ or $A$ is everywhere transverse to $\Sigma_t$ along $\beta$. More precisely, $A$ is tangent to $\Sigma_t$ along a Legendrian curve $\beta\subset A$ if and only if $\beta$ is a degenerate closed leaf of $\Sigma_t(\xi)$. 

We consider the situation when $\xi$ is transverse to $\II$, the foliation given by the second factor of $\Sigma\times[-1,1]$. In this case the behavior of a sheet relative to the product decomposition $\Sigma\times[-1,1]$ is subject to restrictions which we now describe.

Let $\alpha$ be a contact form which is closed on $A$ and $\beta\subset\Sigma_t\cap A$ a non-degenerate attractive closed leaf. Because $\alpha$ is a contact form the $1$-form $(d\alpha)(\dot{\beta},\cdot)$ is non-vanishing, only its restriction to $A$ vanishes.  Since $\beta$ is attractive the $2$-form $d\alpha$ is a negative area form on $T\Sigma\eing{\beta}$. Using \lemref{l:no sing on sheet} it follows that the lines
\begin{equation} \label{e:order}
\II_p,\xi_p/\dot{\beta},T_pA/\dot{\beta},T_p\Sigma_t/\dot{\beta}
\end{equation}
appear in this order in the projective line $\mathbb{P}(T_pM/\dot{\beta})$.  Hence the slope of $\xi$ is steeper at $p\in\beta$ than the slope of $A$ in that point (we interpret the second factor in $\Sigma\times[-1,1]$ as height). This is shown in \figref{b:un-stable} where the parts of sheets consisting of attractive closed leaves of $\Sigma_t(\xi)$ are thickened. If $\beta$ is repulsive, then $\xi(p)$ is closer to $T_p\Sigma_t$ than $T_pA$. Moreover, if $A$ is tangent to $\II$ at a point $p\in\Sigma_t$, then the closed leaf $\gamma_p$ of $T_pA$ is a Legendrian curve on $\Sigma_t$ which is either closed and repelling or which contains some positive singularities and after elimination of these singularities we again obtain a closed repelling leaf. 

\begin{prop} \mlabel{p:sheets}
Let $\xi$ be a contact structure on $\Sigma\times[-1,1]$ which is transverse to and cooriented by the second factor and $\beta$ a closed attractive leaf of $\Sigma_t(\xi)$. 

Then $\beta$ is contained in a sheet $A(\beta)$ of the movie $\Sigma_t(\xi)$ consisting of non-degenerate attractive closed leaves of $\Sigma_t(\xi)$. The restriction of the projection $\mathrm{pr}: \Sigma\times[-1,1]\lra\Sigma$ to this sheet is a submersion. As $t$ increases the image of $\Sigma_t\cap A$ moves in the direction opposite to the direction determined by the coorientation of $\xi$. 
\end{prop}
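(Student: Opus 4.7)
The plan is to construct $A(\beta)$ as the maximal continuation of $\beta$ through closed attractive leaves, and then to read both the transversality and the direction claims directly off the ordering \eqref{e:order}. For the sheet itself, since $\beta$ is a non-degenerate attractive closed leaf of $\Sigma_t(\xi)$, the discussion preceding \lemref{l:gi-birth-death} together with \lemref{l:closed-leaves-surface} produces a smooth one-parameter family $\{\beta_s\}$ of closed leaves $\beta_s\subset\Sigma_s$ for $|s-t|<\delta$ whose union is an embedded surface transverse to $\II$. Since non-degenerate attractivity is an open condition on the linearized first return map, the $\beta_s$ remain non-degenerate and attractive after shrinking $\delta$. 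Concatenating such local extensions and passing to the connected component through $\beta$ yields a maximal embedded smooth surface $A(\beta)\subset\Sigma\times[-1,1]$ each of whose slices $A(\beta)\cap\Sigma_s$ is a non-degenerate attractive closed leaf of $\Sigma_s(\xi)$; each such slice is a smooth Legendrian circle free of singularities, so $A(\beta)$ is a sheet in the sense of \defref{d:sheet}.

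Transversality of $A(\beta)$ to $\II$ is then immediate from \eqref{e:order}, which exhibits four pairwise distinct points of the projective line $\mathbb{P}(T_pM/\dot\beta)$; in particular $T_pA/\dot\beta\neq\II_p$, and so $T_pA\cap\II_p=0$. Since $\ker d\,\pr=\II$ and $\dim A(\beta)=\dim\Sigma$, the restriction $\pr|_{A(\beta)}$ is then a submersion, and in fact a local diffeomorphism.

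For the direction of motion I would work in local coordinates $(x,y,s)$ near a point $p\in\beta$ adapted so that $\dot\beta=\partial_y$, $\II=\partial_s$, and $\Sigma_s$ carries its product orientation $\partial_x\wedge\partial_y$. The convention in \defref{d:char fol} then forces the coorientation of $\xi$, viewed as a direction inside $T_p\Sigma_s/\dot\beta$, to be $-\partial_x$. Because $\pr|_{A(\beta)}$ is a local diffeomorphism, $A(\beta)$ is locally the graph $s=h(x,y)$; the tangency $\dot\beta\subset T_pA$ yields $h_y(p)=0$, so the level curve $\{h=s\}$ moves, as $s$ increases, in the direction $\mathrm{sign}(h_x(p))\,\partial_x$. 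Reading the slopes of the four lines in \eqref{e:order} off the basis $(\partial_x,\partial_s)$ of $T_pM/\dot\beta$ gives $\infty$, $m_\xi$, $h_x$, $0$ respectively, and comparing with the coorientation convention forces $m_\xi>0$. The only way the cyclic order $(\II,\xi,A,\Sigma_s)$ can be realised by four slopes of this form on the projective line is $0<h_x<m_\xi$; hence $h_x(p)>0$ and the motion is in the direction $+\partial_x$, opposite to the coorientation $-\partial_x$. The main obstacle is this last cyclic-order bookkeeping, which must be done in coordinates compatible with all of the ambient orientations; once the continuation of $\beta$ is in place, the other two assertions fall out of \eqref{e:order} essentially tautologically.
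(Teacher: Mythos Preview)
Your proof is correct and follows essentially the same route as the paper. Both arguments obtain the sheet $A(\beta)$ from the implicit function theorem (you cite the preparatory lemmas that package this), and both read the submersion claim off \eqref{e:order}. The only stylistic difference is that the paper phrases the submersion step as ``$T_pA$ lies between $T_p\Sigma_t$ and $\xi_p$, and $\mathrm{pr}_*$ is an isomorphism on each of those, hence on $T_pA$'', whereas you argue directly that $T_pA\cap\II_p=0$; these are equivalent. Your explicit coordinate verification of the direction claim is more detailed than the paper's proof, which simply leaves it implicit in \eqref{e:order}.
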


\begin{proof}
Let $\beta$ be an attractive closed leaf of $\Sigma_t(\xi)$. Then this curve is part of a sheet $A(\beta)$ by the implicit function theorem and this also implies that $A(\beta)$ is transverse to the surfaces $\Sigma_t$. By \eqref{e:order} the tangent space of this submanifold lies  between the tangent space of $\Sigma_t$ and $\xi\eing{\beta}$ which does not contain $T\II\oplus T\beta$. Since $\mathrm{pr}_*$ is an isomorphism on $T\Sigma_t$ and $\xi\eing{\beta}$  it is an isomorphism on $TA$, too.  
\end{proof}

Hence sheets are transverse to $\Sigma_t$ away from degenerate closed leaves and they are transverse to $\II$ along attractive pieces. According to \lemref{l:gi-birth-death} the locus where sheets are not transverse to $\Sigma_t$ corresponds to degenerate closed leaves of $\Sigma_t(\xi)$. 

\begin{rem} \mlabel{r:sheet prop}
\propref{p:sheets} has important consequences for how sheets are embedded into $\Sigma\times[-1,1]$ when the contact structure is transverse to the foliation $\II$ given by the second factor. 

Let $\beta$ be a non-degenerate attractive closed curve in $\Sigma_{-1}$ (the following discussion for $\beta\subset\Sigma_{+1}$ is completely analogous). If one moves on the sheet $A(\beta)$ containing $\beta$, then the $t$-coordinate increases until either a degenerate closed orbit or $\Sigma_{+1}$  is reached. We will consider only the first case. Moreover, we assume that this degenerate leaf is of birth-death type since otherwise the sheet simply continues.  

By \lemref{l:gi-birth-death} the degenerate closed orbit is negative and after we cross the degenerate closed leaf the sheet consists of repulsive closed orbits of $\Sigma_t$ and the $t$-coordinate decreases as we move on $A$ away from $\beta$. 

Along the part of $A(\beta)$ which consists of either closed repulsive leaves of $\Sigma_t(\xi)$ or of a graph consisting of unstable leaves of positive hyperbolic singularities connected to positive elliptic singularities such that the graph is diffeomorphic to a circle the $t$-coordinate decreases until a degenerate closed orbit of birth-death type or $\Sigma_{-1}$ is reached unless  the sheet simply ends in a level surface $\Sigma_t$ (this happens for example if an elliptic singularity of the movie lying on $A$ forms a canceling pair with a hyperbolic singularity which does not lie on $A$ such that these two singularities merge on $\Sigma_t$). Now assume that a degenerate orbit in $\Sigma_{t'}$ is reached on $A(\beta)$ and the orientation of this orbit is opposite to the orientation of $\beta$. Then the $t$-coordinate increases as we move on the sheet $A(\beta)$ away from $\beta$, but the part of the sheet consisting of attractive closed leaves of $\Sigma_t(\xi)$ is now trapped inside a solid torus bounded by the sheet and an annulus in $\Sigma_{t'}$. Therefore the sheet $A(\beta)$ reaches the highest level  
$$
t(A(\beta))=\sup\{t\in[-1,1]\,|\,A(\beta)\cap \Sigma_t\neq\emptyset\}
$$
along an attractive closed leaf of $\Sigma_{t(A(\beta))}(\xi)$ which is attractive or degenerate and the orientation of this leaf coincides with the orientation of $\beta$ provided that the supremum above is actually attained. The next thing we show is that this is always the case.

The situation described here is depicted in \figref{b:sheet} (which contains some notations and a dashed line that will be explained later). The parts of $A(\beta)$ which consist of attractive leaves of $\Sigma_t(\xi)$ are thickened.  
\end{rem}

\begin{figure}[htb]
\begin{center}
\includegraphics[scale=0.8]{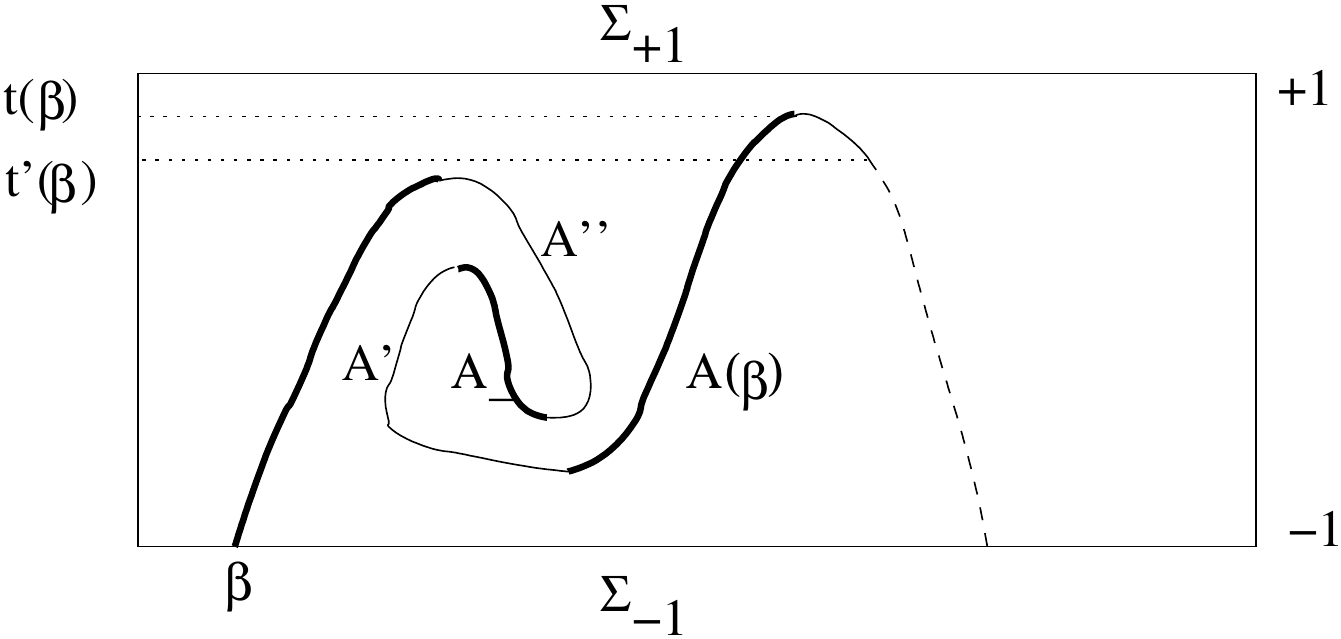}
\end{center}
\caption{Sheet in $\Sigma\times[-1,1]$ containing $\beta\subset\Sigma_{-1}$}\label{b:sheet}
\end{figure}

The following lemma shows that degenerate closed orbits are the {\em only} way in which attractive closed orbits of a movie in  a contact manifold appear or disappear (the lemma is wrong when $\xi$ is just a plane field) when $\xi$ is transverse to the second factor of $\Sigma\times[-1,1]$. This also provides a natural way to compactify sheets consisting of attractive closed leaves of $\Sigma_s(\xi)$. 

\begin{lem} \mlabel{l:compactify by degen}
Let $A$ be a sheet consisting of closed attractive leaves of $\Sigma_t(\xi)$ such that $\Sigma_t\cap A$ is not empty for $t\in[-1,b)$.

The annulus $A\subset\Sigma\times[-1,b)$ can be compactified by adding a closed leaf $\gamma_b$ of the characteristic foliation of $\Sigma_b$. The holonomy of $\gamma_b$ is attractive on the side determined by the coorientation of $\xi$. If $A\subset\Sigma\times(b,1]$, then the holonomy of $\gamma_b$ is attractive on the side opposite to the coorientation of $\xi$. 
\end{lem}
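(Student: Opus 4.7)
The plan is to take $t \to b^-$ and show that $\gamma_t := A \cap \Sigma_t$ converges to a degenerate closed leaf $\gamma_b \subset \Sigma_b$ of negative type, so that the sign of the holonomy can be read off from \lemref{l:gi-birth-death}. By hypothesis each $\gamma_t$, $t \in [-1,b)$, is a non-degenerate attractive closed Legendrian leaf of $\Sigma_t(\xi)$, and these depend smoothly on $t$ by the implicit function theorem applied to the Poincar\'e return map.

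First I would rule out a non-degenerate limit. If a sequence $\gamma_{t_n}$ with $t_n \to b^-$ converged in $C^1$ to a non-degenerate closed leaf $\gamma_b$ of $\Sigma_b(\xi)$, then by \lemref{l:closed-leaves-surface} the nearby closed leaves of $\Sigma_s(\xi)$ for $s$ in a neighbourhood of $b$ would form a smooth embedded surface extending $A$ past $\Sigma_b$, contradicting the hypothesis $A \cap \Sigma_t = \emptyset$ for $t \ge b$. So any closed leaf of $\Sigma_b(\xi)$ that appears as a limit of $\gamma_t$ must be degenerate.

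Next I would show that a limit actually exists. The sheet $A$ is pre-Lagrangian by \lemref{l:no sing on sheet}, and by \propref{p:sheets} the projection $\pr\colon A \to \Sigma$ is a submersion whose image moves monotonically in $\Sigma$ as $t$ increases. Compactness of $\Sigma$ then forces $\overline{A} \cap \Sigma_b$ to be a non-empty compact set tangent to $\Sigma_b(\xi)$. Attractivity of $\gamma_t$ prevents accumulation on a singularity of $\Sigma_b(\xi)$: by \lemref{l:very rep} the transition map along leaves approaching a positive singularity of index $-1$ or $0$ has derivative tending to $+\infty$, which is incompatible with the uniformly contracting Poincar\'e return map of $\gamma_t$. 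Hence $\gamma_t$ converges in $C^0$ to a closed curve $\gamma_b \subset \Sigma_b$ tangent to $\Sigma_b(\xi)$, and the saddle--node normal form for the return map (valid because $A$ is smooth and embedded and because we have already excluded the non-degenerate case) upgrades this to smooth convergence with $\gamma_b$ a smooth closed leaf of finite even degeneracy.

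By the first step $\gamma_b$ is degenerate. Now \lemref{l:gi-birth-death} classifies such leaves: a positive degenerate orbit gives birth to an attractive/repulsive pair of non-degenerate closed leaves for $t > \tau$, while a negative one destroys such a pair, so that the pair exists only for $t < \tau$. Since our attractive $\gamma_t$ exists for $t \in [-1, b)$ and not immediately past $b$, the orbit $\gamma_b$ must be negative, which by the definition preceding \lemref{l:gi-birth-death} means its holonomy is attractive precisely on the side determined by the coorientation of $\xi$. The symmetric statement for $A \subset \Sigma \times (b, 1]$ follows by reversing the $t$-direction, which interchanges positive and negative degenerate orbits and the two sides. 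The main obstacle in this plan is the compactness step: one must exclude that $\gamma_t$ accumulates on a homoclinic graph of separatrices or on a singular point of $\Sigma_b(\xi)$, and for this the attractivity of $\gamma_t$ together with the contact-topological input from \lemref{l:very rep} and \propref{p:sheets} is essential.
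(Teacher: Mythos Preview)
Your overall strategy is right, and you correctly identify the hard step (showing the limit set is a single closed leaf) and the key tool (\lemref{l:very rep}). But the argument as written has genuine gaps in exactly that step. First, a minor point: your opening move, ruling out a non-degenerate attractive limit, is unnecessary and based on a strained reading of the hypothesis. The paper's own proof explicitly allows $\gamma_b$ to be a non-degenerate attractive closed leaf; in that case the holonomy is attractive on both sides, so the conclusion still holds. The lemma is about compactifying a given piece of sheet, not about what happens at a maximal level.

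The real issue is your convergence argument. You assert that ``$\gamma_t$ converges in $C^0$ to a closed curve $\gamma_b$'' after invoking monotonicity from \propref{p:sheets} and ruling out accumulation on a single singularity via \lemref{l:very rep}. This is not enough. The limit set $\mathcal{L}=\overline{A}\cap\Sigma_b$ is a priori just a closed saturated set, and you must separately exclude (i) non-trivially recurrent leaves, (ii) chains of several degenerate closed orbits connected by regular leaves, and (iii) cycles through hyperbolic singularities. You do not address (i) or (ii) at all; the paper handles both via an intersection-number argument: a transversal $\tau$ through a recurrent leaf or through a chain of positive degenerate orbits would be met by $\gamma_t$ with unbounded intersection number as $t\to b$, contradicting the constancy of $[\gamma_t]\in H_1(\Sigma)$. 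For (iii), citing \lemref{l:very rep} is the right idea, but the actual argument needs the product of the transition derivatives along the cycle: if $\gamma_t$ were close to such a cycle, its Poincar\'e return derivative would be a product of factors each tending to $+\infty$, contradicting attractivity. Your one-sentence appeal to \lemref{l:very rep} does not make this explicit. Finally, the ``saddle--node normal form'' upgrade to smooth convergence is unexplained and not needed; once $\mathcal{L}$ is a single closed leaf the smoothness of the compactified sheet follows from the sheet theory in \secref{s:closed leaves}.
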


\begin{proof}
We consider the case $A\subset\Sigma\times[-1,b)$. The set $\LL=\overline{A}\cap \Sigma_b$ is non-empty, closed and saturated (i.e. a union of leaves of the characteristic foliation).

If $\LL$ contains a non-trivial recurrent leaf $\rho$, then we can find a closed curve $\tau$ on $\Sigma_b$ transverse to $\Sigma_b(\xi)$ through a given point of $\rho$. Because $\rho$ is recurrent this leaf intersects $\tau$ infinitely many times and all intersection points are transverse and have the same sign when $\tau$ is oriented. Then the intersection number of $\beta_t=A\cap \Sigma_t$ with $\tau$ is unbounded as $t$ approaches $b$. But this is absurd since the homology class of $\beta_t\subset \Sigma_t\simeq \Sigma$ is constant. Therefore $\LL$ does not contain a non-trivial recurrent leaf.

Now assume that $\LL$ contains a degenerate closed leaf as proper subset (there could be a chain of degenerate closed leaves  connected by leaves of the characteristic foliation). Then every degenerate closed leaf is positive because otherwise two closed leaves of $\Sigma_t(\xi)$ would intersect $A$ for $t<b$ by \lemref{l:gi-birth-death}.  \figref{b:posneg} depicts a configuration with one positive and one negative degenerate orbit. 

\begin{figure}[htb]
\begin{center}
\includegraphics[scale=0.8]{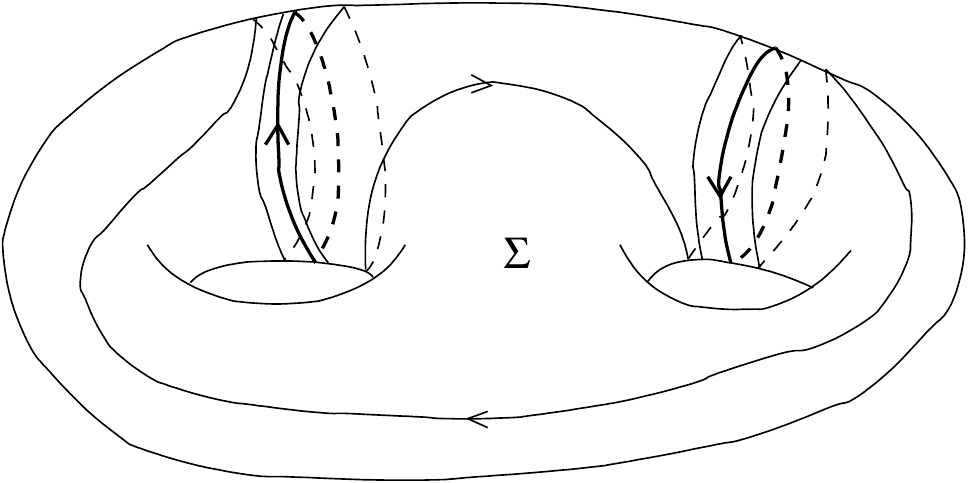}
\end{center}
\caption{Impossible limit configuration}\label{b:posneg}
\end{figure}

But when all degenerate closed leaves are positive then using the leaves of $\Sigma_b(\xi)$ which connect the degenerate closed orbits in $\LL$ one can construct a closed curve $\tau$  transverse to $\Sigma_b(\xi)$ which intersects the degenerate closed leaves. This leads to the same contradiction as above. Thus either there $\LL$ is closed attractive leaf, or it is degenerate, or $\LL$ contains a cycle. If $\LL$ is an attractive leaf  we compactify $A$ by adding it. Also, if $\LL$ is degenerate, then it has to be negative by \lemref{l:gi-birth-death} and serves as a natural compactification of $A$. 

In order to finish the proof we have to exclude the possibility that $\LL$ contains a cycle consisting of stable/unstable leaves of singularities of index $0$ or $-1$. For this recall that all singularities of $\Sigma_t(\xi)$ are positive. Therefore the holonomy of the characteristic foliation is strongly repelling (this is the property described in \lemref{l:very rep}) when one passes from a stable leaf to an unstable leaf of the characteristic foliation. We choose a finite collection short transversals $\tau_i,i\in\Z_m,$ of $\Sigma_b(\xi)$, one for each stable leaf in the cycle in cyclic order and intersecting $\LL$ exactly once. Let $x_i=\tau_i\cap\rho$. The holonomy of $\Sigma_b(\xi)$ determines diffeomorphisms of open sets in $\tau_i$ to open sets in $\tau_{i+1}$. These diffeomorphisms extend to homeomorphisms $\varphi_i$  when we add $x_i$ to the domain. By \lemref{l:very rep} $\varphi_i'(x_i)=\infty$ for all $i\in\Z_m$. 

Consider the attractive leaf $\beta_t=A\cap\Sigma_t$ for $t<b$ close to $b$ and the holonomy diffeomorphisms $\psi_t$ it induced on open sets if $\tau_i$ to open sets of $\tau_{i+1}$ and let $y_{t,i}:=\tau_i\cap\beta_t$. Since $\beta_t$ is attractive
$$
\psi'_{t,1}(y_1)\cdot\ldots\cdot\psi'_{t,m}(y_m)<1.
$$   
Therefore we may (after choosing sequences and subsequences) assume without loss of generality that $\psi_{t,1}'(y_1)<1$. But on the side of $\rho$ where the holonomy of $\Sigma_b(\xi)$ is defined $\psi_{t,1}$ converges uniformly to $\varphi_{1}$. This contradicts the fact that $\varphi_{1}$ is very repelling.   

So $\LL$ is either an attractive closed leaf or a negative degenerate orbit of $\Sigma_b(\xi)$. 
\end{proof}

According to \lemref{l:gi-birth-death} one can extend the sheet beyond $\overline{A}\cap\Sigma_b$. Whenever there are conditions which ensure that there is a closed repulsive leaf or union of stable leaves of singular points of the characteristic foliation such that this union is the boundary of the basin of  $A\cap\Sigma_t$, then we will assume that this circle is smooth and we extend the sheet we are considering as far as possible. A condition which often ensures that sheets can be extended easily is
\begin{itemize}
\item the contact structure is tight and 
\item the basin of $A\cap\Sigma_t$ is contained in an annulus bounded by attractive closed leaves of $\Sigma_t(\xi)$.
\end{itemize}

Finally we fix some terminology: We could say that a connected sheet is maximal, if it is not a proper subset of a connected sheet. The problem with this definition is that  leaves of characteristic foliations in a smooth sheet $A$ can contain singularities (all of the same sign). Therefore, a smooth Legendrian curve  in $A\cap\Sigma_t$ can be the limit of a family of non-smooth Legendrian curves (the non-smooth points are elliptic singularities of the characteristic foliation) which would naturally extend the sheet if they were smooth. However, the non-smoothness of the curves can be easily corrected using for example \lemref{l:rel convex}.
\begin{defn} \mlabel{d:maximal sheet}
A connected sheet $A$ is {\em maximal} if it is not a proper subset of a smooth connected sheet and no component of $\partial A$ is the limit of non-smooth Legendrian curves in $\Sigma_t$ such that all singularities have the same sign.   
\end{defn}

\subsubsection{Simplifying the dynamics of characteristic foliations in movies} \mlabel{ss:pb}

Let $\Sigma$ be a closed surface with positive genus $g\ge 1$. The purpose of this section is to describe how contact structures  on $\Sigma\times[-1,1]$ can be isotoped so that the characteristic foliations on $\Sigma_t$ have relatively simple dynamical properties when $\Sigma_t$ is not convex with respect to the isotoped contact structure.
\begin{defn} \mlabel{d:pb}
A surface $\Sigma$ in a contact manifold has the {\em Poincar{\'e}-Bendix\-on property} if $\Sigma(\xi)$ has no non-trivial recurrent orbits.
\end{defn}
If $\Sigma$ has the Poincar{\'e}-Bendixon property and $\Sigma(\xi)$ has only finitely many singularities, then according to \cite{nik} all limit sets of leaves of $\Sigma(\xi)$ are either
\begin{itemize}
\item closed leaves,
\item singular points, or
\item cycles formed by singularities and leaves connecting them. 
\end{itemize}
An embedded closed surface in a contact manifold has this property after a $C^\infty$-generic perturbation. The point of Lemma 2.10 of \cite{gi-bif} is to ensure this property for all those surfaces $\Sigma_t\subset\Sigma\times[-1,1]$ which are not convex. We are going to use the following simple refinement of that lemma.

\begin{lem} \mlabel{l:pb}
Let $\xi$ be a contact structure on $N=\Sigma\times[-1,1]$ such that the boundary surfaces are convex. Then there is an isotopy of $\xi$ relative to the boundary such that after the isotopy $\Sigma_t$ has the Poincar{\'e}-Bendixon property for all $t\in[-1,1]$ for which $\Sigma_t$ is not convex. 

If there is a sheet $A(\beta)$ such that one boundary component $\beta_+$ of $A(\beta)$ is contained in $\Sigma_1$ while the other boundary component $\beta_-$ is contained in $\Sigma_{-1}$ and $\beta_+,\beta_-$ are non-degenerate and both attractive or both repelling, then the isotopy can be chosen to preserve the sheet $A(\beta)$. 
\end{lem}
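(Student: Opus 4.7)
The first part is essentially Lemma 2.10 of \cite{gi-bif}: a parametric refinement of the Peixoto-type genericity statement that a $C^\infty$-small perturbation of an embedded surface in a contact manifold has Morse-Smale characteristic foliation, and in particular the Poincar{\'e}-Bendixon property. The plan is to recall enough of its structure to make the sheet-preserving refinement visible, and then to restrict the perturbation to the complement of a tubular neighbourhood of $A(\beta)$.

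Let $K \subset [-1,1]$ be the closed set of $t$ for which $\Sigma_t$ is convex; by hypothesis $\{\pm 1\} \subset K$. Over the open complement $V = [-1,1] \setminus K$ I would perturb the standard level embedding $\phi_t(x) = (x,t)$ to a new family $\phi'_t : \Sigma \lra N$ by a $C^\infty$-small ambient perturbation supported in the interior of $V$. A parametric transversality argument in the space of singular foliations on $\Sigma$, aimed at the bifurcation strata (non-transverse saddle connections, degenerate closed leaves), allows one to arrange that $\phi'_t(\Sigma)(\xi)$ has only hyperbolic singularities and only non-degenerate closed leaves outside an isolated set of birth/death parameters, and admits no non-trivial recurrent orbit for any $t \in V$. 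Assembling the perturbations into an ambient diffeomorphism $\Psi$ of $N$ equal to the identity near $\partial N$ and near each convex level, the pulled-back contact structure $\Psi^*\xi$ agrees with $\xi$ on these neighbourhoods and has the required movie. Gray's theorem (\thmref{t:gray}) applied to the family $\Psi_s^*\xi$, for an isotopy $\Psi_s$ from $\mathrm{id}$ to $\Psi$ fixed near $\partial N$, then yields an isotopy from $\xi$ to $\Psi^*\xi$ relative to the boundary.

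For the refinement preserving $A(\beta)$, recall that by hypothesis the boundary orbits $\beta_{\pm 1} \subset \Sigma_{\pm 1}$ are non-degenerate and of the same sign. By \remref{r:sheet prop} and the discussion of \secref{s:sheets}, $A(\beta)$ is then transverse to every level $\Sigma_t$ and meets it in a single non-degenerate closed leaf $\beta_t$ of $\Sigma_t(\xi)$. I would choose a closed tubular neighbourhood $W$ of $A(\beta)$ in $N$ so thin that $W \cap \Sigma_t$ is an annulus carrying only the non-degenerate closed leaf $\beta_t$ together with leaves spiralling into or out of it---dynamics which is already Poincar{\'e}-Bendixon. Carrying out the perturbation of the previous paragraph with support in $N \setminus W$, the same transversality argument applies to the dynamics on $\Sigma_t \setminus (W \cap \Sigma_t)$: the boundary of this sub-annulus is already transverse to the characteristic foliation, and transversality is preserved under small perturbations, so the perturbed dynamics on the exterior glues along $\partial(W \cap \Sigma_t)$ to the unperturbed dynamics inside $W$ to yield Poincar{\'e}-Bendixon dynamics on all of $\Sigma_t$. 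Because $\Psi$ is the identity on a neighbourhood of $A(\beta)$, the sheet is preserved throughout the isotopy.

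The main obstacle is the parametric transversality step. In a generic $1$-parameter family one cannot avoid every bifurcation of the characteristic foliation, but the bifurcations one does encounter---saddle-node births of closed leaves as in \lemref{l:gi-birth-death}, retrograde saddle-saddle connections as in \secref{s:retrograde}, and degenerate closed leaves---each preserve the absence of non-trivial recurrence at the bifurcation instant itself. Verifying this while simultaneously keeping the perturbation parametric, supported in $N \setminus W$, and trivial near every convex level is the essential content of the argument; once the sheet $A(\beta)$ is excised, this follows the pattern of the proof of Lemma 2.10 of \cite{gi-bif} with only cosmetic modifications.
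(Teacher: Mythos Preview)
Your proposal and the paper take genuinely different routes, and the discrepancy matters because the step you defer to \cite{gi-bif} is not what that reference actually does.

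Giroux's Lemma 2.10 is \emph{not} a Peixoto-type parametric transversality argument. On a surface of genus $g\ge 2$, generic one-parameter families of vector fields need not have the Poincar\'e--Bendixon property at every parameter: non-trivially recurrent dynamics is not a codimension-$\ge 2$ phenomenon one can avoid by jiggling. What Giroux does---and what the paper reproduces---is a topological trick: one fixes a graph $F\subset\Sigma$ (disjoint from a tubular neighbourhood $P$ of the sheet) such that both $F\cup P$ and its complement are \emph{planar}, realizes $F$ as a Legendrian graph on convex levels near the boundary via \lemref{l:LeRP} and \lemref{l:convex families}, and then stretches the product structure vertically so that every non-convex level inherits this decomposition into two planar subsurfaces with boundary transverse to the characteristic foliation. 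The classical Poincar\'e--Bendixon theorem on the plane then forbids non-trivial recurrence on each piece, hence on all of $\Sigma_t$. No genericity enters.

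So your final paragraph, which identifies the parametric transversality step as ``the essential content'' and then asserts it ``follows the pattern of the proof of Lemma 2.10 of \cite{gi-bif} with only cosmetic modifications,'' is where the argument breaks: the pattern you invoke is the planar-decomposition argument, not the transversality argument you sketched, and the transversality argument on its own does not obviously deliver the conclusion. Your sheet-preserving refinement (supporting the perturbation away from a tube around $A(\beta)$) is the right idea and matches how the paper handles it---$P$ plays exactly the role of your $W$---but it needs to sit on top of the correct underlying mechanism.
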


\begin{proof}
The proof follows Giroux's proof of Lemma 2.10 in \cite{gi-bif} closely. We summarize the required changes and the main idea. Let us first recall that the Poincar{\'e}-Bendixon theorem (cf. p. 154 of \cite{hartman}) states that a singular foliation on the plane or the sphere has no non-trivial recurrent orbits.

For concreteness we assume that $\beta_+$ and $\beta_-$ are both attractive. Then there is a family of annuli $P_t\subset \Sigma_t$ containing $\Sigma_t\cap A(\beta)$ such that  $\partial P_t$ is transverse to $\Sigma_t(\xi)$. We chose the identification $N\simeq \Sigma\times [-1,1]$ such that $P_t=P\subset\Sigma$ is constant. Now fix a graph $F$ so that 
\begin{itemize}
\item[(i)] $F\cup P$ is planar,
\item[(ii)] the complement $\Sigma\setminus F\cup P$ is also planar, and
\item[(iii)] $F$ is non-isolating in $\Sigma_{+1}$ and in $\Sigma_{-1}$. \end{itemize}

Then $F$ can be realized as Legendrian graph consisting Legendrian curves, negative elliptic and positive hyperbolic singularities. There is a positive number $\delta$ such that all surfaces $\Sigma_t$ with $t\in[-1,-1+3\delta]\cup[1-3\delta,1]$ are convex. Using \lemref{l:convex families} and the usual proof of the Legendrian realization principle (\lemref{l:LeRP}) we can now isotope $\xi$ near the boundary of $N$ so that 
\begin{itemize}
\item the isotopy is supported in $|t-1|\le 3\delta$, 
\item the characteristic foliation on $\Sigma_t$ is constant for $t\in[-1+\delta,-1+2\delta]$ and for $t\in[1-2\delta,1-\delta]$, and
\item $F$ is a Legendrian curve of the characteristic foliation on $\Sigma_t$ for $t\in[-1+\delta,-1+2\delta]$ and for $t\in[1-2\delta,1-\delta]$.\end{itemize}
where all surfaces $\Sigma_t$ are convex while keeping $\xi$ constant on $\partial N$ such that for suitable small real numbers $\delta_{\pm}>0$, the graph $F$ is realized as a Legendrian graph in $\Sigma_{1-\delta_+}$ and $\Sigma_{-1+\delta_-}$. Clearly, this can be done without changing anything near $A(\beta)$. 

A thickening of $F$ combined with $P$ is a planar subsurface $F_{in}$ of $\Sigma$ whose complement is also planar. In addition, choosing the thickening appropriately, we may assume that the characteristic foliation on $\Sigma_{1-\delta_+}$ and $\Sigma_{-1+\delta_-}$ is transverse to $\partial F^{in}$. Let $F^{out}$ be the complement of $F^{in}$ with a collar of the boundary removed. The collar is chosen such that following leaves of the characteristic foliation on the collar one gets a retraction of the collar onto $\partial F^{out}$. The characteristic foliations point out of $F^{out}$ and into $F^{in}$ for $t\in[-1+\delta,-1+2\delta]\cup[1-2\delta,1-\delta]$.

Now choose a strictly monotone function $g: [0,1] \lra [1-2\delta,1]$ so that $g=\id$ on $[1-\delta,1]$. Pick an isotopy $\phi_\tau$ of $N$ which translates along leaves of $\II$ such that
\begin{align*}
\phi_1(F^{in}_t) & = F^{in}_{g(t)} & \textrm{for } & t\ge 0 \\
\phi_1(F^{out}_t) & = F^{out}_{-g(-t)} & \textrm{for } & t\le 0.
\end{align*} 
The contact structure $\widehat{\xi}=\phi^{-1}_{1*}(\xi)$ has the desired properties: For $t\in[-1+\delta,1-\delta]$ there are no non-trivial recurrent orbits in $\Sigma_t(\widehat{\xi})$ by the Poincar{\'e}-Bendixon theorem and $\Sigma_t$ is convex with respect to $\widehat{\xi}=\xi$ when $t\in[-1,-1+\delta]$ or $t\in[1-\delta,1]$.
\end{proof}
Of course, \lemref{l:pb} also holds in the presence of several sheets with the same properties as $A(\beta)$. The proof above implies that the resulting contact structure can be assumed to be $C^1$-generic with respect to the surfaces $\Sigma_t$, i.e. we can make genericity assumptions concerning for example the nature of connections between hyperbolic singularities.

\subsection{Manipulations and properties of sheets} \mlabel{s:prop sheets}

In this section we explain how to manipulate sheets and circumstances under which it is possible to find overtwisted discs from certain configurations of sheets. 

\subsubsection{Simplifying sheets}

The next lemma  is part of the proof of Proposition 3.22 of \cite{gi-bif}. It allows us to isotope $\xi$ so that the sheet contains fewer degenerate closed curves after the isotopy. For example, the part $A'\cup A_-\cup A'' $ of $A(\beta)$ in \figref{b:sheet} can  be replaced by collection of attractive closed leaves of $\Sigma_t(\xi)$.

\begin{lem} \mlabel{l:isotope sheets}
Let $A\subset(\Sigma\times[-1,1],\xi)$ be a sheet such that $A$ is the union of three sheets $A',A_-,A''$ with the following properties:
\begin{itemize}
\item[(i)] $\gamma'=A'\cap A_-$ and $\gamma''=A''\cap A_-$  are degenerate closed orbits with parallel orientations. 
\item[(ii)] $A_-\cap\Sigma_t$ is a smooth attractive Legendrian curve unless $t=t_{min}$ or $t=t_{max}$ with 
\begin{align*}
t_{min} & = \min\{t\in[-1,1]\,|\,A_-\cap\Sigma_t\neq\emptyset\} \\
t_{max} & = \max\{t\in[-1,1]\,|\,A_-\cap\Sigma_t\neq\emptyset\}. 
\end{align*}
\item[(iii)] For all $t\in (t_{min},t_{max})$ there is a compact annulus $S_t\subset\Sigma_t$ whose unoriented boundary consists of $A_-\cap\Sigma_t$ and $A'\cap\Sigma_t$ such that $S_t(\xi)$ intersects no other sheets of $\xi$. 
\end{itemize} 
Then there is a family of contact structures $\xi_s,s\in[0,1],$ with $\xi_0=\xi$ which is constant near $\partial A$ such that after the deformation, there is a new sheet $A_1$  which coincides with $A_0$ near the boundary such that $A_1\cap\Sigma_t$ is either empty or an attractive closed leaf. 
\end{lem}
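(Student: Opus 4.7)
The plan is to use the flexibility of the movie description provided by \lemref{l:rel convex} to perform the modification inside the solid region $\Omega=\bigcup_{t\in[t_{\min},t_{\max}]}S_t$ while keeping $\xi$ fixed outside. By the pre-Lagrangian property of $A_-$ (\lemref{l:no sing on sheet}) together with the averaging procedure described after it, pick a contact form $\alpha$ defining $\xi$ on a neighborhood of $\overline{\Omega}$ whose restriction to $A_-$ is closed. Hypothesis (iii) forces the characteristic foliation on each $S_t$ to have only two closed leaves, namely the attractive leaf $A_-\cap\Sigma_t$ and the leaf on the opposite boundary component $A'\cap\Sigma_t$, with every other leaf of $S_t(\xi)$ running between them. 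In particular $S_t(\xi)$ admits a defining form $\lambda_t$ with $d\lambda_t$ of constant sign on each side of the closed leaves and nowhere zero on $S_t$.

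The core step is to build a smooth homotopy $\lambda_{s,t}$, $s\in[0,1]$, of defining forms for singular foliations on $S_t$ with $\lambda_{0,t}=\lambda_t$ such that $\lambda_{s,t}$ agrees with $\lambda_{0,t}$ near $\partial S_t$ for all $s,t$, the sign of $d\lambda_{s,t}$ is preserved on each region of $S_t$ bounded by the moving closed leaf (so the family is compatible with the second part of \lemref{l:rel convex}), and at $s=1$ the only closed leaf of $\ker\lambda_{1,t}$ on $S_t$ lies on the $A'$-boundary component. Geometrically, one sweeps the attractive leaf $A_-\cap\Sigma_t$ across $S_t$ until it collides with $A'\cap\Sigma_t$ and is canceled against it. At the endpoints $t=t_{\min}$ and $t=t_{\max}$ the curves $A_-\cap\Sigma_t$ degenerate to $\gamma'$ and $\gamma''$; the parallel-orientation hypothesis in (i) is essential, as it ensures the sweeping homotopy extends smoothly across $t_{\min}$ and $t_{\max}$ and matches $\xi$ on $\Sigma_{t_{\min}-\delta}$ and $\Sigma_{t_{\max}+\delta}$, with neither $\gamma'$ nor $\gamma''$ surviving as a degenerate closed orbit of the new movie.

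Applying the second half of \lemref{l:rel convex} to the family $\{\lambda_{s,t}\}$ yields a family of contact structures $\xi_s$ on $\Sigma\times[-1,1]$ which coincide with $\xi$ outside $\Omega$ and whose characteristic foliation on each $S_t$ is $\ker\lambda_{s,t}$. Gray's theorem (\thmref{t:gray}) converts this into an ambient isotopy. The new sheet $A_1$ is then the maximal sheet of the $\xi_1$-movie containing $A'\setminus\overline{\Omega}$ and $A''\setminus\overline{\Omega}$; by construction of $\lambda_{1,t}$, the intersection $A_1\cap\Sigma_t$ is either empty or a single attractive closed leaf for every $t\in[-1,1]$, and $A_1$ coincides with $A_0=A$ near $\partial A$.

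The principal obstacle lies in the construction of the homotopy $\lambda_{s,t}$: one must translate the attractive leaf $A_-\cap\Sigma_t$ across $S_t$ until it meets the opposite boundary leaf while keeping the sign of $d\lambda_{s,t}$ correct throughout, and simultaneously arrange that the degenerate orbits $\gamma'$ and $\gamma''$ disappear at the endpoints rather than being replaced by some other singular configuration. The parallel-orientation hypothesis provides exactly the compatibility needed to perform these two cancellations coherently; without it, a Dehn-twist-type obstruction on the pre-Lagrangian torus $A'\cup A_-$ would prevent a globally smooth sweep across the interval $[t_{\min},t_{\max}]$.
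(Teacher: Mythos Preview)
Your overall strategy---modify the movie on the annuli $S_t$ so as to sweep the attractive leaf $A_-\cap\Sigma_t$ across to the $A'$-boundary and cancel the fold---is the right picture, and it is essentially what Giroux does in the proof of Proposition~3.22 of \cite{gi-bif} (the present paper does not reprove the lemma, it only cites that source). However, your appeal to \lemref{l:rel convex} does not go through. That lemma requires $\partial F_t$ to be \emph{transverse} to $\Sigma_t(\xi)$ and a defining form with $d\lambda_t>0$ on all of $F_t$. Neither holds for $F_t=S_t$: the boundary $\partial S_t$ consists of closed leaves of the characteristic foliation, and by Stokes $\int_{S_t} d\lambda_t=0$ for any $\lambda_t$ defining $S_t(\xi)$ (both boundary integrals vanish since $\lambda_t$ pulls back to zero on the closed leaves), so $d\lambda_t$ cannot have one sign. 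Your own sentence ``$d\lambda_t$ of constant sign on each side of the closed leaves and nowhere zero on $S_t$'' is already inconsistent for a smooth form: if the sign flips across the attractive leaf, continuity forces $d\lambda_t=0$ there. So the engine of your argument is missing.

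What actually works is closer to \lemref{l:convex families}. Enlarge $S_t$ slightly to an annulus with boundary transverse to $\Sigma_t(\xi)$; the characteristic foliation on it is divided by a single curve $\Gamma_t$ parallel to the closed leaves, and $\Gamma_t$ varies continuously for $t\in(t_{\min},t_{\max})$. One then prescribes the target foliations (with the attractive leaf pushed toward the $A'$-side until it cancels) as a family adapted to the same moving $\Gamma_t$ and uses \lemref{l:convex families} to interpolate. The delicate point is at $t_{\min}$ and $t_{\max}$, where the annulus collapses and $\Gamma_t$ disappears; this is exactly where the parallel-orientation hypothesis (i) is used, as you correctly identify, to make the construction close up smoothly. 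Reworking your argument with \lemref{l:convex families} in place of \lemref{l:rel convex} repairs the gap.
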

The next lemma shows that a given degenerate closed leaf of birth-death type can be replaced by a retrograde saddle-saddle connection. However, without additional assumptions it is not possible to exclude the formation of degenerate closed leaves passing through a given neighbourhood of the original degenerate leaf.

\begin{lem} \mlabel{l:replace degen}
Let $\xi$ be a contact structure on $\Sigma\times[-1,1]$ and $\gamma\subset \Sigma_t$ with $t\in (-1,1)$ a degenerate closed orbit which is attractive on one side while it is repelling on the other side. Then there is a contact structure $\xi'$ which is isotopic to $\xi$, coincides with $\xi$ outside of an arbitrarily small neighbourhood of $\Sigma_t$ and $\gamma$ is replaced by a retrograde saddle-saddle connection such that there is no degenerate closed leaf of $\Sigma_t(\xi')$ in a neighbourhood of $\gamma$. 
\end{lem}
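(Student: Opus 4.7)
The plan is to modify $\xi$ inside a small tubular neighbourhood $\widehat N$ of $\gamma$ obtained by thickening an annular neighbourhood $A\subset\Sigma_t$ of $\gamma$ whose boundary is transverse to $\Sigma_t(\xi)$. Write $A=S^1\times(-\eps,\eps)$ with $\gamma=S^1\times\{0\}$, and using the product structure set $\widehat N=A\times(t-\delta,t+\delta)$. In these coordinates $\xi|_{\widehat N}=\ker(dt+\beta_{t'})$ for a family $\beta_{t'}$ of $1$-forms on $A$, and the contact condition is $d_A\beta_{t'}+\dot\beta_{t'}\wedge\beta_{t'}>0$. The movie on $A$ exhibits at $t'=t$ the birth--death bifurcation governed by \lemref{l:gi-birth-death}: for $t'$ on one side of $t$ there is a canceling pair of non-degenerate closed leaves near $\gamma$, while on the other side no closed leaf survives.

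The heart of the proof is to replace the family $\beta_{t'}$ by a new family $\beta'_{t'}$ which agrees with $\beta_{t'}$ near $\partial A$ and for $|t'-t|$ close to $\delta$, and whose associated movie $\GG_{t'}=\ker\beta'_{t'}$ satisfies: (a) at the critical time $t'=t$, $\gamma$ has been replaced by two hyperbolic singularities $h_+$ (positive) and $h_-$ (negative) joined by a single retrograde saddle--saddle connection; (b) no $\GG_{t'}$ has a degenerate closed leaf near $\gamma$. The construction proceeds by introducing, slightly before the critical time and by reversing \lemref{l:elim}, two canceling pairs of singularities on $A$: a positive elliptic--hyperbolic pair and a negative elliptic--hyperbolic one, placed on the repelling and attractive sides of $\gamma$ respectively. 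The hyperbolic members $h_\pm$ are then pushed toward each other as $t'\to t$ so that one of the stable leaves of $h_+$ becomes an unstable leaf of $h_-$ at precisely $t'=t$; by \lemref{l:switch} this configuration automatically reproduces, for $t'$ on either side of $t$, the same over/under switching of separatrices that the original birth--death bifurcation produced for the closed leaves. Beyond the critical time, eliminate the canceling pairs via \lemref{l:elim} to recover the original movie.

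To realise this combinatorial movie by a contact structure, apply the Moser argument underlying \lemref{l:movie unique}: once $\lambda_{t'}=\beta'_{t'}$ is fixed, the set of functions $u$ satisfying the pointwise contact condition \eqref{e:contact cond} is convex, and the explicit local models for a retrograde saddle--saddle connection and for canceling elliptic--hyperbolic pairs (both occurring as characteristic foliations of standard contact germs) ensure that this set is nonempty at every $t'$. A partition of unity together with the boundary match to the original $\xi$ yields a contact form on $\widehat N$ whose extension by $\xi$ outside is the desired $\xi'$. Finally, the family obtained by linearly interpolating the two defining $1$-forms on $\widehat N$ stays contact (again by convexity of the contact condition in the function $u$), so $\xi$ and $\xi'$ are isotopic by \thmref{t:gray}. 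The main obstacle is clause (b) of the construction: during the intermediate times between the creation of $h_+,h_-$ and the formation of the saddle--saddle connection at $t'=t$, one must prevent the appearance of spurious short closed leaves of $\GG_{t'}$ caught between the singularities. This is handled by choosing the local model so that the normal dynamics of $\GG_{t'}$ on the transverse direction in $A$ is strictly monotone away from $h_\pm$, exploiting non-degeneracy of the hyperbolic singularities and the transversal placement of their separatrices.
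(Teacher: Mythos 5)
Your plan is recognizably pointed in the right direction — replace the birth--death tangency of the sheet by a retrograde saddle--saddle connection, using two auxiliary canceling elliptic--hyperbolic pairs — but there are three genuine gaps, and the first is the one you yourself flag as "the main obstacle." You propose to create the two hyperbolic points at different times and then \emph{push them toward each other dynamically} so that a separatrix connection forms at $t'=t$, and you wave at monotonicity to rule out spurious short closed leaves sandwiched between them during the intermediate regime. This is precisely the step the paper avoids. The paper's proof never performs such a dynamical push: it \emph{directly prescribes} the singular foliation on the critical level $A_0$ — boundary behavior, the four singularities $e_\pm,h_\pm$, and the retrograde connection itself, including which separatrices go where — and then invokes \lemref{l:char fol det} to produce a contact germ on $A\times[-\delta,\delta]$ inducing exactly that foliation on $A_0$. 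The bifurcation on either side of $t=0$ is then determined by \lemref{l:switch}, and the auxiliary pairs are removed for $t\ne 0$ by \lemref{l:elim}. Because the critical-level foliation is given, not evolved toward, there is no intermediate window to control, and the question of spurious closed leaves never arises. The sheet structure of the resulting model is then pinned down by transversality to a rank-$1$ foliation and \thmref{t:tight connection} (tightness gives exactly three sheets), which is what guarantees that after elimination the model matches the original movie near $\partial(A\times[-\delta,\delta])$ with a single degenerate orbit at a single level.

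Two further points. First, your isotopy argument is incorrect as stated: linear interpolation of the two contact forms on $\widehat N$ does not stay contact when the family $\lambda_{t'}$ of restricted $1$-forms changes — the convexity in \eqref{e:contact cond} is convexity in $u_t$ for a \emph{fixed} $\lambda_t$, and here $\lambda_t$ is genuinely different (the characteristic foliations on $\Sigma_t$ differ). You need a movie-based isotopy along the lines of \lemref{l:convex families} or \lemref{l:rel convex}, using that $\xi$ and $\xi'$ agree near the boundary of the modification region and induce matching characteristic foliations there. Second, you implicitly assume the sheet containing $\gamma$ sits inside your product neighborhood $A\times(t-\delta,t+\delta)$ in the simple way the model requires; in general the sheet can wander, and the paper explicitly reduces to the model by first folding the sheet outside a small neighborhood of $\gamma$ (via \lemref{l:isotope sheets}) and unfolding afterward. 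That reduction step is missing from your proposal.
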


\begin{proof}
We consider the case when the degenerate closed orbit is negative. We use the following model contact structure on $A\times[-\delta,\delta], \delta>0$, where $A=S^1\times[-1,1]$ is an annulus.  On $A_0$ we fix the following singular foliation.
\begin{itemize}
\item[(i)] Both boundary components are parallel non-singular Legendrian curves, one of them is repelling and the other one is attractive. They are both non-degenerate.
\item[(ii)] There are four non-degenerate singular points $e_\pm,h_\pm$.  Here $e_+$ is a positive elliptic point (i.e. with positive divergence) etc.
\item[(iii)] There is a retrograde saddle-saddle connection starting at $h_-$ and ending at $h_+$ such that for $t>0$ the stable leaf which participates in the retrograde saddle-saddle connection comes from the boundary of the annulus. 
\item[(iv)] The remaining stable leaf of $h_+$ comes from $e_+$, the remaining unstable leaf of $h_-$ ends at $e_-$.
\item[(v)] One stable leaf of $h_-$ comes from the repulsive boundary component, the other from $e_+$. The unstable leaves of $h_+$ connect $h_+$ to $e_-$ and to the attractive boundary component. 
\end{itemize}
By \lemref{l:char fol det} this singular foliation is the characteristic foliation on $A_0$ of a contact structure on $A\times[-\delta,\delta]$. By \lemref{l:switch} the stable leaf of $h_+$ in $A_t(\xi)$ which participates in the retrograde connection comes from $e_+$ when $t<0$ and from the repulsive boundary for $t\neq 0$. The only non-convex level is $A_0$. 

By \lemref{l:elim} we can eliminate $h_+,e_+$ and $h_-,e_-$ in $A_0$ and since there is a unique leaf connecting the hyperbolic singularity $h_\pm$ to $e_\pm$ there is a unique way to eliminate the singularities. Outside of a neighbourhood of $0$ we obtain either a pair of parallel closed leaves in the interior of the annulus or all leaves of the characteristic foliation (except the boundary leaves) start at one boundary component and go to the other.  (Part (iii) of that lemma can be used to arrange that away from a neighbourhood of $A_0$, there are exactly two or zero closed leaves in the interior of the annulus.)

After the elimination, the contact structure is transverse to a rank $1$-foliation transverse to $A_s,s\in(-1,1)$. According to \thmref{t:tight connection} the contact structure is tight and there are only $3$ sheets: Two at the boundary and one in the interior of the annuli. After a deformation of the interior sheet there is exactly one negative degenerate orbit at exactly one level.

This proves the claim in the model case. In order to deal with the general case note that the degenerate leaf is part of a sheet (as explained in \secref{s:closed leaves}). After folding the sheets outside of a small neighbourhood of the degenerate closed leaf we want to eliminate, we apply the construction of the model case. After we undo the folding using  \lemref{l:isotope sheets}  we get the desired result. 
\end{proof} 

The above lemmas allows us to arrange that the set of instances when $\Sigma_t$ is not convex is discrete and the only cause of  non-convexity is the presence of retrograde saddle-saddle connections. 

\begin{lem} \mlabel{l:finite}
Let $\xi$ be a contact structure on $\Sigma\times[-1,1]$ with convex boundary and $A_1,\ldots,A_n$ a collection of sheets consisting of attractive closed leaves of $\Sigma_t$  connecting the two boundary components of $\Sigma\times[-1,1]$. 

There is a contact structure $\xi_{PB}$ isotopic to $\xi$ relative to the boundary and $A_1\cup\ldots\cup A_n$ such that for all $t\in[-1,1]$ when $\Sigma_t(\xi_{PB})$ is not convex there is a single retrograde saddle-saddle connection. The number of $t$ with $\Sigma_t$ not convex with respect to $\xi_{PB}$ is finite.  Moreover, the upper basin of $A_i\cap\Sigma_t$ is compact for all $t$ where $\Sigma_t(\xi_{PB})$ is not convex. 
\end{lem}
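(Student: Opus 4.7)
The plan is to apply the simplification results of \secref{s:movies main} in sequence, keeping track of what can be preserved at each step.

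First, I would invoke \lemref{l:pb} in the version that covers a collection of sheets (it is stated for a single sheet $A(\beta)$ but the proof adapts verbatim to $A_1\cup\cdots\cup A_n$ since the graph $F$ used in the proof needs only to be chosen non-isolating and to avoid thickenings of the $\Sigma_{\pm 1}\cap A_i$). This produces an isotopic contact structure with the Poincar{\'e}--Bendixon property on every non-convex $\Sigma_t$, while preserving each $A_i$. From this point on, every $\omega$-limit set of a leaf of $\Sigma_t(\xi)$ is a singular point, a closed orbit, or a cycle made of singularities and their stable/unstable leaves.

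Second, each $A_i$ might still contain configurations of the type shown in \figref{b:sheet}, with internal degenerate closed leaves and intermediate pieces of repulsive leaves. I would apply \lemref{l:isotope sheets} iteratively to each $A_i$ to replace each such configuration by a single attractive sheet; the lemma is supported near the relevant piece of the sheet and leaves the other $A_j$ untouched. Afterwards $A_i\cap\Sigma_t$ is either empty or a single non-degenerate attractive Legendrian circle.

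Third, I would deal with the remaining degenerate closed leaves of $\Sigma_t(\xi)$, which by the previous step do not lie on any $A_i$. By \secref{s:closed leaves} each such leaf is contained in a smooth surface of closed leaves whose tangency with the foliation $\II$ by $t$-levels is of birth-death type at isolated points (after a $C^\infty$-generic perturbation supported away from $A_1\cup\cdots\cup A_n$). By compactness of $[-1,1]$ there are only finitely many such parameters, and \lemref{l:replace degen} replaces each degenerate closed leaf by a retrograde saddle-saddle connection in an arbitrarily small neighbourhood, with the locality clause of that lemma guaranteeing that no new degenerate closed leaves appear nearby.

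The only remaining source of non-convexity is then a retrograde saddle-saddle connection, which is a codimension-one phenomenon in a $1$-parameter family of characteristic foliations. A final $C^\infty$-generic perturbation, supported away from $A_1\cup\cdots\cup A_n$ and preserving all features set up in the previous steps, arranges that at each $t$ there is at most one retrograde connection and that these occur at finitely many parameters. Call the result $\xi_{PB}$. For compactness of the upper basin of $\beta=A_i\cap\Sigma_t$ at a non-convex level: by the Poincar{\'e}--Bendixon property the boundary of the upper basin in $\Sigma_t$ is either a closed repulsive leaf or a finite graph of singular points connected by stable leaves (possibly incorporating the single retrograde connection present at $t$), so the closure of the basin is compact with boundary of finite combinatorial complexity.

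The main obstacle is Step 3: one has to carry out the elimination of degenerate closed leaves without destroying the Poincar{\'e}--Bendixon property from Step 1 or the simplified form of the sheets from Step 2. Since \lemref{l:replace degen} can be localized in an arbitrarily small neighbourhood of the degenerate orbit, and since finitely many such neighbourhoods can be chosen pairwise disjoint and disjoint from $A_1\cup\cdots\cup A_n$, the three successive simplifications can be made compatible.
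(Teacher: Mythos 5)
Your overall plan — first invoke \lemref{l:pb}, then simplify sheets and degenerate orbits, then perturb generically — follows the same outline as the paper, but you have a genuine gap at the step that is actually the crux of the lemma: the finiteness of non-convex levels.

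In your Step 3 you assert that after a $C^\infty$-generic perturbation each smooth surface of closed leaves has isolated birth-death tangencies with $\II$, and you conclude finiteness ``by compactness of $[-1,1]$.'' This does not follow. Isolated points in a compact interval need not be finite unless one excludes accumulation, and there is no a priori reason the family of surfaces of closed leaves cannot itself accumulate on a degenerate closed orbit somewhere. This is exactly the configuration that must be ruled out, and the paper does so by exploiting the planarity of the two regions constructed in the proof of \lemref{l:pb}: if a degenerate closed orbit $\eta$ at some non-convex level were a limit of closed orbits $\eta_i$ of $\Sigma_{t_i}(\xi_{PB})$, then, projecting along $\II$, some $\eta_i$ would have positive intersection number with $\eta$; but $\eta_i$ and $\eta$ both lie in the same planar region (they are transverse to its boundary), and two closed curves in a planar region have zero intersection number. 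It is this contradiction that makes the set of levels carrying a degenerate closed orbit discrete, and hence finite. Your argument omits this planarity input entirely and substitutes an unjustified genericity claim.

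A secondary gap: at the end you claim that the Poincar\'e--Bendixon property alone makes the upper basin of $A_i\cap\Sigma_t$ compact. It does not; the boundary of the basin may still contain cycles or closed orbits that are only $\omega$-limit sets of leaves of the characteristic foliation rather than genuine boundary curves, so that the covering Legendrian polygon has virtual vertices and is non-compact. The paper handles this by explicitly introducing canceling pairs of singularities along all closed leaves and cycles of $\Sigma_t(\xi_{PB})$, with signs chosen so as not to create new retrograde saddle-saddle connections; this converts the limit sets into honest boundary. You need that additional operation, not just the qualitative description of the possible limit sets.
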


\begin{proof}
We assume that $n=1$ and abbreviate $A_1=A$. 
First, we arrange that the characteristic foliation on $\Sigma_t$ has no non-trivial recurrent leaves at non-convex levels. 

This can be done using \lemref{l:pb} to $\Sigma\times[-1,1]$ with respect to the sheet $A$. Recall that in the proof of \lemref{l:pb} we arranged that for non-convex levels $\Sigma_t$ is decomposed into two planar regions such that $\Sigma_t(\xi_{PB}')$ is transverse to the boundary of the regions. Since the latter condition is open, we may impose that the movie $\Sigma_t(\xi_{PB}')$ is generic.

The planarity of the regions also implies that if $\eta$ is a degenerate closed orbit of $\Sigma_t(\xi_{PB}')$ for $t\in[-1,1]$ 
then there is no sequence of 
closed orbits $\eta_i$ of $\Sigma_{t_i}(\xi_{PB}')$ whose limit contains $\eta$: If such a sequence would exist, then there would be a closed orbit $\eta_i$ whose intersection number with $\eta$ is positive. But $\eta$ and $\eta'$ have to be contained in the same region from the proof of \lemref{l:pb} since they intersect and they are transverse to the boundary of the regions from the proof of \lemref{l:pb}. But two closed curves contained in a planar region have vanishing intersection number. Hence the set of levels $\Sigma_t$ with $t\in[-1,1]$  containing a degenerate closed orbit is discrete and hence finite. 

Using \lemref{l:replace degen} we isotope the contact structure $\xi_{PB}'$ on the union of neighbourhoods of degenerate closed orbits  such that after the isotopy we obtain a contact structure $\xi_{PB}$  there are no degenerate closed leaves in the movie $\Sigma_t(\xi_{PB})$ for $t\in[-1,1]$.  This amounts to introducing canceling pairs of singularities inside the regions from the proof of \lemref{l:pb}. Hence this operation does not affect the Poincar{\'e}-Bendixon property. 

So at all non-convex levels $\Sigma_t(\xi_{PB}), t\in [-1,1],$ has a retrograde saddle-saddle connection and by genericity we can arrange that each level contains at most one connection between saddle points.  Moreover, each retrograde saddle-saddle connection is isolated because of \lemref{l:switch} because the stable respectively unstable leaves of $h_-$ respectively $h_+$  of the singularities participating in such a connection are rigidly attached some stable limit set of the characteristic foliation for levels close to the level where the retrograde saddle-saddle connection occurs. Thus $\xi_{PB}$ has the desired properties except maybe the compactness of the basin. 

Let $t\in[-1,1]$ be such that $\Sigma_t$ is not convex with respect to $\xi_{PB}$. If the upper basin of $A\cap\Sigma_t$ is not compact, then we can introduce canceling pairs of singularities along all closed leaves and cycles of $\Sigma_{t}(\xi_{PB})$ (the signs of the singularities have to be chosen in such a way that we do not introduce retrograde saddle-saddle-connections when we place the singularities on cycles). Then the upper basin of $A\cap\Sigma_t$ is compact (also on nearby levels). 
\end{proof}

\subsubsection{Overtwisted discs from compressible sheets} 

By Giroux's criterion \lemref{l:Giroux crit} a convex oriented surface $\Sigma$ has a tight neighbourhood if and only if no component of the dividing set bounds a disc unless $\Sigma$ is a sphere. In this section we give a criterion for finding overtwisted discs from sheets with particular properties. The following lemma is essentially Lemma 3.34 from \cite{gi-bif}.

\begin{lem} \mlabel{l:overtwisted movie}
Let $\xi$ be a contact structure and $\Sigma\times[-1,1]$ such that there is a sheet $A$ with the following properties. 
\begin{itemize}
\item $A$ bounds a solid torus $S^1\times D^2$ in the interior of $\Sigma\times[-1,1]$. Let  
\begin{align*}
t_{min} & = \min\{t\in[-1,1]\,|\,A\cap\Sigma_t\neq\emptyset\} \\
t_{max} & = \max\{t\in[-1,1]\,|\,A\cap\Sigma_t\neq\emptyset\}. 
\end{align*}
\item For one level surface $\Sigma_t$ with $t_{min}<t<t_{max}$ there is a repulsive closed leaf $\alpha_t$ of $\Sigma_t(\xi)$ which is disjoint from the solid torus and isotopic to one of the curves $A\cap\Sigma_t$ so that the annulus bounded by $\alpha_t$ and the attractive leaf of $\Sigma_t(\xi)$ in $A\cap\Sigma_t$ contains no other closed leaf of $\Sigma_t(\xi)$. 
\end{itemize}
Then $\xi$ is overtwisted. 
\end{lem}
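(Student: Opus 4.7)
The plan is to construct an embedded disc $\Delta \subset \Sigma \times [-1,1]$ with Legendrian boundary $\partial \Delta = \alpha_t$ that coincides with a piece of $\Sigma_t$ in a neighborhood of $\alpha_t$. Because $\alpha_t$ is a non-degenerate closed leaf of $\Sigma_t(\xi)$, no singularity of $\Delta(\xi)$ lies on $\partial\Delta$, so the sign condition in \defref{d:tight} is vacuously satisfied and $\Delta$ is an overtwisted disc. Equivalently, a small convex perturbation of $\Sigma_t$ near $\alpha_t$ keeps $\alpha_t$ a non-degenerate closed leaf disjoint from the dividing set, giving $tb(\alpha_t,\Delta)=0$, which contradicts the Thurston--Bennequin inequality \eqref{e:tb} in a tight contact structure.

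To build $\Delta$, denote by $\beta\subset A\cap\Sigma_t$ the attractive closed Legendrian leaf of $\Sigma_t(\xi)$ singled out by the hypothesis. By \lemref{l:no sing on sheet} the sheet $A$ is a pre-Lagrangian torus and $\beta$ is a leaf of the smooth foliation of $A$ by parallel closed Legendrian curves. Combining the hypothesis that $A=\partial V$ bounds the solid torus $V$ with the structure of the sheet at its extremal levels $t_{min},t_{max}$ --- where $A$ is tangent to $\Sigma_{t_{min}}$ and $\Sigma_{t_{max}}$ along the degenerate closed leaves $\gamma_{min},\gamma_{max}$ and $V$ pinches down to these curves --- I would argue that the common isotopy class of the Legendrian leaves of $A$ is the meridian class of $V$. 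Hence $\beta$ bounds a meridional disc $D_V\subset V$, which after a small perturbation can be chosen with $\partial D_V=\beta$ Legendrian.

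I then set $\Delta:=C\cup D_V$, glued along $\beta$ and smoothed at the corner so that $\Delta$ still equals $C\subset\Sigma_t$ in a neighborhood of $\alpha_t$. The hypothesis that $C$ contains no other closed leaf of $\Sigma_t(\xi)$, combined with \lemref{l:elim} applied to cancel pairs of singularities in the interior of $C$, gives a Poincar\'e--Bendixson description of $C(\xi)$ with every orbit running from $\alpha_t$ to $\beta$; this is convenient for carrying out the smoothing step but not strictly required for the overtwisted-disc conclusion, which depends only on the local picture $\Delta=\Sigma_t$ near $\alpha_t$.

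The principal obstacle is the topological step above --- showing that the Legendrian leaves of $A$ represent the meridian class of $V$. It uses not only the bare fact that $A$ bounds a solid torus but also the hypothesis on $\alpha_t$ in an essential way: the curve $\alpha_t$ is disjoint from $V$ and cobounds with $\beta$ the annulus $C\subset\Sigma_t\setminus\operatorname{int}(V)$, which constrains the homotopy class of $\beta$ relative to the complement of $V$. A direct topological analysis of the critical circles $\gamma_{min},\gamma_{max}$ and of the embedding of $A=\partial V$ in $\Sigma\times[-1,1]$, or an Alexander-duality computation, should identify the Legendrian-leaf slope with the meridian and thereby close the argument.
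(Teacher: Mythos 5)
Your construction fails because the Legendrian leaves of $A$ do not bound meridional discs in $V$; the meridian claim on which the argument pivots is not merely unproved but false in exactly the cases where the lemma has content. The curve $\beta=A\cap\Sigma_t$ (the attractive component) is isotopic in $\Sigma_t$ to $\alpha_t$, and the inclusion $\Sigma_t\hookrightarrow\Sigma\times[-1,1]$ is a homotopy equivalence. Hence $\beta$ is null-homotopic in $\Sigma\times[-1,1]$ (and in particular in $V$) if and only if $\alpha_t$ is null-homotopic in $\Sigma_t$. If $\alpha_t$ is null-homotopic in $\Sigma_t$, then $\alpha_t$ already bounds a disc inside $\Sigma_t$ whose boundary carries no singularities, which is an overtwisted disc by \defref{d:tight}; the solid torus plays no role and your construction is superfluous. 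If $\alpha_t$ is essential in $\Sigma_t$ --- the interesting case, and the one invoked in the remark following the lemma (non-separating $A\cap\Sigma_t$) --- then $\beta$ is essential in $V$, so it has longitudinal slope on $A=\partial V$, bounds no disc $D_V$ inside $V$, and your $\Delta=C\cup D_V$ does not exist. One can also see the slope is longitudinal directly: for $t_{min}<t<t_{max}$ the cross-sections $V\cap\Sigma_t$ are properly embedded annuli with boundary $A\cap\Sigma_t$, degenerating to $\gamma_{min},\gamma_{max}$ at the endpoints. In the model $V=D^2\times S^1$ with $D^2=\{x^2+y^2\le 1\}$, the slices $\{x=c\}\cap V$ for $|c|<1$ are exactly such annuli, pinching to $\{(\pm 1,0)\}\times S^1$ at $c=\pm 1$, and their boundary circles are parallel to the core of $V$, not meridians.

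Note also that if the meridian picture were correct, the lemma would follow from the bare hypothesis that $A$ bounds a solid torus, with no condition on $\alpha_t$ at all. But the paper's remark immediately after the lemma records that this is false: on $T^2\times[0,1]$ a sheet bounding a solid torus can coexist with a tight contact structure. So any correct proof must use the second hypothesis essentially. The paper's own argument is therefore of a very different, ``vertical'' type: it creates a persisting canceling pair of negative singularities, then builds a piecewise Legendrian/transverse disc $D$ sweeping across all levels $\tau$ in an interval $J\supset[t_{min},t_{max}]$, with boundary made of a transverse arc of negative elliptic points $e_\tau$, an arc inside the sheet $A(\alpha_0)$ through $\alpha_t$, and two Legendrian arcs above $\Sigma_{t_{max}}$ and below $\Sigma_{t_{min}}$. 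The torus $A$ meets $D$ in a separating circle, all singularities of $D(\xi)$ outside that circle are positive, and the basin of the single negative singularity inside it furnishes the overtwisted disc. Your horizontal picture, a disc confined to $\Sigma_t$ plus a fill inside $V$, cannot reproduce this.
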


\begin{proof}
Without loss of generality we assume $\alpha_t$ is non-degenerate and $t=0$. Then $\alpha_0$ is part of a sheet $A(\alpha_0)$ which intersects nearby surfaces in repulsive curves close to $\alpha_0$. Using \lemref{l:elim} we can arrange that the annulus bounded by $\alpha_0$ and a connected component of $A(\alpha_0)\cap \Sigma_0$ does not contain a singular point of the characteristic foliation. Furthermore, we create a canceling pair of negative singularities $e,h$ on $\Sigma_t$ such all leaves which end at $e$ come from the repulsive closed curve $A\cap\Sigma_t$ except one unstable leaf of $h$. Just like the closed repulsive leaf on the other side of $A\cap\Sigma_t$ this configuration persists in nearby levels and after eventually using \lemref{l:isotope sheets} to $A$ we may assume that for all $t\in[t_{min},t_{max}]$ there is a canceling pair of negative singularities and a non-degenerate repulsive closed leaf parallel to $A$ and by \lemref{l:rel convex} we can replace these repulsive leaves by circles in $\Sigma_t$ consisting of positive singularities of the isotoped contact structure.  As in \cite{gi-bif} we achieve the following conditions. 
\begin{itemize}
\item $A\cap \Sigma_\tau$ is either empty, connected or has two connected components, and $A\cap\Sigma_\tau$ contains no singularities of the characteristic foliation except when $\tau=0$ and $A\cap\Sigma_0$ consists of two circles of singularities (one negative, the other positive). 
\item When $A\cap \Sigma_\tau$ consists of two connected components they have parallel orientations, so the two components do not bound a Reeb component.  
\end{itemize}

We will find on overtwisted disc in a surface consisting of one arc in $\Sigma_\tau$ with $\tau\in J$ where $J$ is a closed interval containing $[t_{min},t_{max}]$ in its interior. 
\begin{enumerate}
\item The first piece $\sigma_1$ of the boundary of a surface containing $D$ consists of the family of negative elliptic singular points $e_{\tau}$ of $\Sigma_\tau(\xi)$ which contains $e$ and $\tau$ is contained in a closed interval $J$ which is slightly larger that $[t_{min},t_{max}]$. When $\sigma_1$ is oriented from top to bottom it is positively transverse to $\xi$. 
\item The upper endpoint of $\sigma_1$ is connected to $A(\alpha_0)$ by a Legendrian arc in a surface sightly above $\Sigma_{t_{max}}$. Let $\lambda_4$ be one of the Legendrian curves connecting $e_{\tau}$ to $A'$. 

Similarly, the lower endpoint of $\sigma_1$ is connected to $A(\alpha_0)$ by a Legendrian curve $\lambda_2$ and we now use the orientation opposite to the orientation of $\lambda_2$ viewed as a leaf of the characteristic foliation.
\item For each point $e_\tau$ between the two endpoints of $\sigma_1$ we choose an arc in $\Sigma_\tau$ which connects $e_\tau$ to $A(\alpha_0)\cap\Sigma_\tau$ such that the part of the transverse to the characteristic foliation is connected (this part may be empty). If we orient all arcs such that the point to the negative elliptic singularity, then the arc is never tangent and anti-parallel to the characteristic foliation on $\Sigma_\tau$ except in $\Sigma_0$ where the arc is Legendrian.  
\item $\sigma_3$ is an arc in $A(\alpha_0)$ consisting of the endpoints of the arcs we have just picked. We orient $\sigma_3$ form bottom to top. Then $\sigma_3$ is positively transverse to $\xi$.  
\end{enumerate}
The concatenation  $\sigma$ of $\sigma_1,\lambda_2,\sigma_3,\lambda_4$ is a piecewise smooth curve whose smooth segments are positively transverse to $\xi$ or Legendrian and $\sigma$ bounds $D$. We orient $D$ so that $\sigma=\partial D$. Moreover, using the properties of $A$ and the characteristic foliations on $\Sigma_\tau$ it follows that all singularities of the characteristic foliation on $D$ are positive except one point in the interior of $D\cap A$. Moreover, $D\cap A$ is a circle in $D$ such that the characteristic foliation on $D$ points inwards. Since all singular points of the characteristic foliation on $D$ which do not lie in the disc bounded by $D\cap A$ are positive, the basin formed by all flow lines whose $\omega$-limit set is inside the disc bounded by $A\cap D$ is well defined and it yields an overtwisted disc.  
\end{proof}
The second condition of \lemref{l:overtwisted movie} can be achieved using the Legendrian realization principle (\lemref{l:LeRP}) if the curves $A\cap\Sigma_t$ are non-separating and the genus is at least $2$. If $\Sigma\simeq T^2$, then the results in \cite{gi-bif} show that the presence of a sheet bounding a solid torus without any further assumptions does not suffice to produce an overtwisted disc (the corresponding contact structures on $T^2\times[0,1]$ are tight, but virtually overtwisted).

\subsubsection{Transverse contact structures on $\Sigma\times[-1,1]$} \mlabel{s:transverse contact}

The purpose of this section is to prove that contact structures on $\Sigma\times[-1,1]$ which are transverse to the second factor are tight when the boundary does not have a neighbourhood with an obvious overtwisted disc. For this and for other purposes we give an efficient construction of contact structures transverse to the foliation $\II$ given by the second factor of $\Sigma\times[-1,1]$.  We shall assume that the genus of the underlying surface is at least two, the case of tori is simpler.  

The construction of contact structures transverse to $\II$ is explained in the following example which yields a contact structure on $\Sigma\times\R$  which is a complete connection of the $\R$-bundle because it is periodic with respect to a translation of the second factor.  
\begin{ex} \mlabel{ex:periodic cont}
Let $\Sigma$ be an oriented surface of genus $g\ge 2$. We fix two non-separating oriented disjoint closed curves  $\gamma',\gamma''$ and we choose four singular foliations $\FF_1,\ldots,\FF_4$ on $\Sigma$ such that all singularities have positive divergence as follows:
\begin{enumerate}
\item $\gamma'$ is a closed attractive leaf of $\FF_1$ with non-degenerate holonomy, $\gamma''$ is a curve with attractive holonomy on one side and repulsive holonomy on the other side such that the degenerate closed leaf marks the birth of a pair of parallel closed leaves on surfaces $\Sigma_t$ in $\Sigma\times(-\eps,\eps)$ which lie above $\Sigma_0$ (cf. \lemref{l:gi-birth-death}).  All leaves of the characteristic foliation whose $\alpha$-limit set  is $\gamma''$ accumulate on $\gamma'$ (except $\gamma''$ itself, of course) and except for the degenerate closed orbit $\FF_1$ is of Morse-Smale type. By \lemref{l:char fol det}, there is a contact structure $\xi_1$ on $\Sigma\times[-1,1]$ such that $\FF_1=\Sigma_0(\xi_1)$ and the only non-convex level is $\Sigma_0$. 
\item $\gamma''$ is a closed attractive leaf of $\FF_2$ with non-degenerate holonomy and $\gamma'$ is a closed degenerate leaf such that this marks the disappearance of a pair of closed leaves on $\Sigma_s$ of the contact structure $\xi_2$ on $\Sigma\times[-1,1]$ determined by $\FF_2=\Sigma_0(\xi_2)$. All leaves of $\FF_2$ which come from $\gamma'$ accumulate on $\gamma''$ (except $\gamma'$). So for $s<0$ there are two attractive closed leaves on $\Sigma_s$ while there is only one for $s>0$. 
\item  $\gamma''$ is an attractive closed leaf of $\FF_3$ while $\gamma''$ is a degenerate closed leaf, but in contrast to $\FF_2$ it now marks the birth of a pair of non-degenerate closed leaves.  Again all leaves whose $\alpha$-limit set is $\gamma'$ have $\gamma
''$ as their $\omega$-limit set (again except $\gamma'$). The corresponding contact structure on $\Sigma\times[-1,1]$ is called $\xi_3$, for $s<0$ there is only one attractive closed leaf on $\Sigma_s$  isotopic to $\gamma''$ while for $s>0$ there are two such leaves, one of them is isotopic to $\gamma'$ while the other is isotopic to $\gamma''$. 
\item  $\gamma'$ is a closed attractive leaf of $\FF_4$  and $\gamma''$ is a degenerate closed leaf which marks the cancellation of a pair of non-degenerate closed leaves of $\xi_4$. Again we require that all leaves of $\FF_4$ coming from $\gamma''$ to have $\gamma'$ as their $\omega$-limit set \end{enumerate}
For each contact structure $\xi_i, i=1,\ldots,4$, on $\Sigma\times I_i\simeq\Sigma\times[-1,1]$ the surface $\Sigma_t$ is convex except when $t=0$. In order to glue the two pieces such the resulting contact structure has no negative singularity a little bit of care is needed since the condition \eqref{e:order} concerning the the position of tangent space of sheets consisting of attractive closed leaves, the contact planes along these sheets, the tangent space of the surfaces and the vertical direction has to be satisfied. Isotoping the foliation $\FF_2$ such that the attractive closed leaves lie on the side of $\gamma'$ and $\gamma''$ which is opposite to the side determined by the coorientation of the leaves in the surface we can glue the two contact structures $\xi'_1$ and $\xi_2'$ which are restrictions of $\xi_1$ respectively the (isotoped) contact structure $\xi_2$ to $\Sigma\times I_1$ respectively $\Sigma\times I_2$ using \lemref{l:convex families}  such that the resulting contact structure $\xi_{12}$ on $\Sigma\times (I_{1}\cup I_{2})$ is transverse to the second factor. 

Similarly, one can now combine isotoped versions of $\xi_{12},\xi_3$ and $\xi_4$ to obtain a contact structure on $\Sigma\times\left(\cup_iI_i\right)\simeq\Sigma\times[0,1]$ transverse to the second factor such that the contact structure $\xi$ near $\Sigma_0$ coincides with the contact structure on $\Sigma_1$ when we use the second factor to identify these levels. 

In order to obtain a contact structure on $\Sigma\times\R$ which is transverse to the second factor and complete when viewed as a connection it suffices to glue infinitely many copies together. 
\end{ex}

\begin{lem} \mlabel{l:tight extension} 
Let $\Sigma$ be a closed surface of genus $g\ge 1$ and $\xi$ a contact structure on $\Sigma\times[-1,1]$ transverse to the fibers of the projection $\Sigma\times[-1,1]\lra\Sigma$ such that $\Sigma_{\pm 1}$ is convex and no component of the dividing set bounds a disc. Then $\xi$ is universally tight.
\end{lem}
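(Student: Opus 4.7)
The natural strategy is to pass to the universal cover and embed the lifted contact structure into a complete connection on $\R^3$, at which point \thmref{t:tight connection} will close the argument.

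First I would lift $\xi$ to a contact structure $\widetilde{\xi}$ on $\widetilde{\Sigma}\times[-1,1]$. Since $g\ge 1$ we have $\widetilde{\Sigma}\cong\R^2$, and the lift is still transverse to the vertical fibers $\widetilde{\Sigma}\times[-1,1]\ra\widetilde{\Sigma}$. The surfaces $\widetilde{\Sigma}\times\{\pm 1\}$ remain convex, with dividing set $\widetilde{\Gamma}_{\pm 1}$ obtained by lifting $\Gamma_{\pm 1}$. Because $\Sigma$ has positive genus and no component of $\Gamma_{\pm 1}$ bounds a disc in $\Sigma$, no component of $\widetilde{\Gamma}_{\pm 1}$ bounds a disc in $\R^2$ either; in fact each component of $\widetilde{\Gamma}_{\pm 1}$ is a properly embedded line in $\widetilde{\Sigma}$.

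Next I would use \lemref{l:giroux flex} to normalize the characteristic foliations on $\widetilde{\Sigma}_{\pm 1}$ (via an isotopy generated by a transverse contact vector field, hence compatible with transversality to the fibers) to a convenient Morse--Smale form adapted to $\widetilde{\Gamma}_{\pm 1}$: each dividing line should appear as a non-degenerate attractive/repelling Legendrian orbit depending on the sign of the adjacent region, and the characteristic foliation should be transverse to a chosen family of arcs. The point of this normalization is that near $\widetilde{\Sigma}\times\{\pm 1\}$ the contact structure will then be vertically invariant in the $[-1,1]$-direction.

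Using this collar structure, I would extend $\widetilde{\xi}$ across $\widetilde{\Sigma}\times\{\pm1\}$ to obtain a contact structure $\widehat{\xi}$ on all of $\widetilde{\Sigma}\times\R$. The extension is modeled on the construction in \exref{ex:periodic cont}: on each half-space $\widetilde{\Sigma}\times[1,\infty)$ (and similarly below $-1$) one builds a contact structure transverse to the vertical fibers by concatenating slabs in which, between two convex levels whose dividing sets agree with $\widetilde{\Gamma}_1$, the contact planes are chosen so as to satisfy \eqref{e:order} and to match the prescribed boundary data via \lemref{l:convex families}. Making the extension periodic under a suitable $\R$-translation forces the extended connection to be complete. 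The hypothesis that no dividing component bounds a disc is precisely what enables each slab in the extension to carry a transverse contact structure with no forced introduction of compressible sheets.

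Finally, $\widehat{\xi}$ is a confoliation (in fact a contact structure) on $\R^3$ transverse to the fibers of $\R^3\ra\R^2$ and complete as a connection, so \thmref{t:tight connection} asserts that $\widehat{\xi}$ is tight. Restricting to $\widetilde{\Sigma}\times[-1,1]$ gives tightness of $\widetilde{\xi}$, i.e.\ universal tightness of $\xi$.

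The main obstacle is the middle step: producing the extension $\widehat{\xi}$ while preserving transversality to the fibers and achieving completeness. Matching the vertically invariant collar of $\widetilde{\xi}$ with a model built as in \exref{ex:periodic cont} requires care because \eqref{e:order} constrains the position of the contact planes along the pieces of sheets consisting of attractive closed leaves, and one must verify that the non-disc-bounding hypothesis on $\widetilde{\Gamma}_{\pm 1}$ lets every component of the dividing set be realized as an attractive/repulsive closed Legendrian line in the extension. Once this geometric realization step is in place, periodicity of the construction and \thmref{t:tight connection} deliver the conclusion.
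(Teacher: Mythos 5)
Your overall strategy is the same as the paper's — embed into a complete connection transverse to a rank-one line field and invoke \thmref{t:tight connection} — but you perform the two key operations (pass to the universal cover, extend to a complete connection) in the opposite order, and this order matters. The paper first builds the extension $\widehat{\xi}$ on $\Sigma\times\R$ for the \emph{closed} surface $\Sigma$, and only at the very end invokes \thmref{t:tight connection} via the universal cover $\widetilde\Sigma\times\R\cong\R^3$; you lift to $\R^2\times[-1,1]$ first and then try to extend. The extension machinery you appeal to, however, is built for closed surfaces and does not transfer: \exref{ex:periodic cont} constructs the periodic layers from non-separating simple \emph{closed} curves $\gamma',\gamma''$ in a genus $\ge2$ surface, and the Legendrian realization principle (\lemref{l:LeRP}) is formulated for compact convex surfaces. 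On $\widetilde\Sigma\cong\R^2$ the dividing set consists of infinitely many properly embedded lines, the closed leaves of the characteristic foliation are gone, and there is no obvious analogue of the periodic layer construction. Your remark that ``Making the extension periodic under a suitable $\R$-translation forces the extended connection to be complete'' is also only reliable over a compact base: over $\R^2$, periodicity in the fiber direction does not by itself preclude a horizontal lift escaping in finite time. You flag the middle step as ``the main obstacle'' but do not resolve it — and it is exactly here that your argument diverges from a workable one.

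Separately, you skip a step that the paper needs even over the closed surface: before attaching the infinite stack of periodic layers, the dividing sets on $\Sigma_{\pm1}$ must be \emph{reduced} to a single pair of non-separating curves (iteratively eliminating components, with the separating components handled by first creating a non-separating repulsive curve via \lemref{l:LeRP} and folding). Without this normalization, the boundary data of $\Sigma\times[-1,1]$ does not match the boundary of the periodic layers of \exref{ex:periodic cont}, so the gluing cannot be performed. Your sketch tries to realize all components of the dividing set as Legendrian lines in the extension simultaneously, but the whole point of the paper's reduction step is to get rid of most of them.

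The fix is to reverse the order: carry out the reduction and the periodic extension on the closed $\Sigma$ to obtain a complete connection $\widehat{\xi}$ on $\Sigma\times\R$, observe that completeness passes to the pullback on the universal cover $\widetilde\Sigma\times\R\cong\R^3$, and only then apply \thmref{t:tight connection}.
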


\begin{proof}
Since $\xi$ is transverse to the foliation $\II$ defined by the second factor in $\Sigma\times[-1,1]$, it is automatically extremal, i.e. $|\langle e(\xi),[\Sigma]\rangle|=2g-2$,  and we coorient $\xi$ using the second factor. In particular, $\langle e(\xi),[\Sigma]\rangle = 2-2g$.  By assumption, no component of the dividing set  on $\Sigma_{\pm 1}$ bounds a disc. Hence the dividing curves on each of the surfaces $\Sigma_{\pm 1}$ come in pairs, each pair bounding an annulus containing a closed attractive leaf.

The idea of the proof is to embed $(\Sigma\times[-1,1],\xi)$ into $(\Sigma\times\R,\widehat{\xi})$ such that $\widehat{\xi}$ is transverse to the foliation $\widehat{\II}$ corresponding to the $\R$-factor and $\widehat{\xi}$ is a complete connection on $\Sigma\times\R\lra\Sigma$. \thmref{t:tight connection} then implies that $\widehat{\xi}$ is universally tight, and hence the same is true for $\xi$ (the embedding of $\Sigma\times[-1,1]$ maps $\Sigma_0$ to $\Sigma\times\{0\}\subset\Sigma\times\R$). 

We attach layers of contact structures obtained as in \exref{ex:periodic cont} in order to successively reduce the number of connected components of the dividing set and to arrange that in the end the only attractive closed curve is non-separating in 
$\Sigma$.  Using \lemref{l:convex families} we can modify the characteristic foliations such that at each step of the elimination no new attractive closed curves appear. Some care is needed when we want to eliminate a component which separates the surface into two pieces. In this situation one first introduces a non-separating closed repulsive curve using \lemref{l:LeRP}. Using the folding-procedure we obtain a contact structure with an attractive closed leaf isotopic to the repulsive curve.  

We end up with a contact structure on $\Sigma\times[-2,2]$ which is transverse to the second factor, has convex boundary and the characteristic foliation on the boundary has exactly one non-separating attractive curve. We then attach infinitely many layers  obtained in \exref{ex:periodic cont}.
\end{proof}

\begin{rem} \mlabel{r:disc bounding tight}
The condition that no component of the dividing set of $\Sigma_{\pm 1}$ bounds a disc clearly cannot be omitted. However, if there is one component $\gamma$ of the dividing set which bounds a disc, then we consider the case that $D_\gamma$ contains no other component of the dividing set. 

Then there is an attractive closed leaf $\beta$ bounding a larger disc $D_\beta$ containing $D_\gamma$ in its interior since the interior of $D_\gamma$ necessarily contains a singular point which is positive by transversality. Now consider the basin of all leaves of $\Sigma_{\pm 1}(\xi)$ which leave $D_\gamma$ through $\gamma$. The closure of the basin may not contain any singularities at all (since they would have the opposite sign as the singularities inside the disc). 

Therefore the basin has Legendrian boundary and is again a disc. The boundary is an attractive closed orbit. 
\end{rem} 

\subsection{Boundary elementary contact structures}

Let $\Sigma$ be  a closed oriented surface of positive genus $g$ and $\xi$ a contact structure on $N=\Sigma\times[0,1]$. For our purposes it suffices to consider only the case when $\partial N$ is convex. We require that the contact structure is extremal in the sense that 
\begin{equation} \label{e:max Euler}
\chi(\Sigma)=2-2g=\langle e(\xi),[\Sigma] \rangle 
\end{equation}
where $e(\xi)$ is the Euler class of $\xi$ viewed as an oriented vector bundle. The Thurston-Bennequin inequalities \eqref{e:tb} imply that the left hand side of \eqref{e:max Euler} cannot be bigger than the right hand side provided that $\xi$ is tight. 

From now on we assume that $\xi$ is tight. By \corref{c:extremal} the surface $\Sigma^-$ is then the union of annuli whenever $\Sigma$ is convex. Each such annulus contains a Legendrian curve which is the $\omega$-limit set of all leaves entering the annulus. Let $\beta$ denote such a curve, we will sometimes refer to such curves as {\em sinks}. 

The following definition is an adaptation of Definition 3.14 of \cite{gi-bif} for our situation. 
\begin{defn} \mlabel{d:elementary}
A contact structure is {\em boundary elementary} with respect to the product decomposition $\Sigma\times[0,1]$ of $N$ if for each annulus of $\Sigma_i^-, i=0,1,$ containing the sink $\beta$ there is an annulus $A _\beta$ which is foliated by Legendrian curves in $A(\beta)\cap \Sigma_t$ so that $\beta\subset \partial A(\beta)\subset \partial N$. 
\end{defn}

Compared to Giroux's definition in \cite{gi-bif} of elementary contact structures there are two differences:
\begin{itemize}
\item[(i)] If $\xi$ is elementary in the sense of \cite{gi-bif}, then this has consequences for all closed leaves of characteristic foliation on $(\partial N)(\xi)$. \defref{d:elementary} requires only the existence of some repulsive closed leaves of the characteristic foliation on  $(\partial N)$.
\item[(ii)] \defref{d:elementary} does not put restrictions on the characteristic foliation of all surfaces $\Sigma_t$ in the interior of $N$. 
\end{itemize}

Given a contact structure on $N$ we will need to isotope $\xi$ so that it becomes boundary elementary. This is relatively easy to achieve when $\Sigma=T$ is a torus because if $T_t(\xi)$ intersects a sheet $A$ in a homotopically non-trivial curve, then by the theorem of Poincar{\'e}-Bendixon $T_t(\xi)$ has no non-trivial recurrent leaf since the complement of $A\cap T$ is planar.  

\subsubsection{The pre-Lagrangian extension lemma}  \mlabel{s:prelag extend}

The following lemma will be the main tool for the extension of pre-Lagrangian surfaces.  
\begin{lem} \mlabel{l:parallel}
Let $\xi$ be a contact structure on $N=\Sigma\times[-1,1]$ such that $\Sigma_{\pm 1}$ is convex and $\partial \Sigma_t$ is an attractive Legendrian curve for all $t$. Assume that  $A$ is a sheet and the following conditions are satisfied. 
\begin{itemize}
\item[(i)] $\xi$ satisfies the extremal condition \eqref{e:max Euler}.
\item[(ii)] $A$ is transverse to $\Sigma_t$ for all $t\in[-1,1]$ and $A\cap\Sigma_t$ is a non-separating curve. All sheets which meet $\Sigma_{\pm 1}^-$ connect $\Sigma_{-1}$ and $\Sigma_{1}$. 
\item[(iii)] $\beta_t=A\cap\Sigma_t$ is either a closed attractive leaf or contains only negative singularities. 
\item[(iv)] The characteristic foliation on $\Sigma_{\pm 1}$ has two repulsive closed leaves $\beta_{\pm 1}'$ parallel to $A\cap\Sigma_{\pm1 }$  lying on the same side of $\beta_{\pm1}$. 
\item[(v)] The maximal sheet $A'_{\pm 1}$ containing $\beta_{\pm 1}'$ does not connect the two boundary components of $N$.
\end{itemize}
Then $\xi$ is isotopic to a contact structure $\widehat{\xi}$ such that the isotopy is the identity near the boundary and $A$ and there is a sheet $\widehat{A}$ connecting $\beta_{-1}'$ and $\beta_1'$. 
\end{lem}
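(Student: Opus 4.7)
The strategy is to analyze where the maximal sheets $A'_{\pm 1}$ go after leaving $\Sigma_{\pm 1}$, and then to merge them into a single pre-Lagrangian surface $\widehat{A}$ by using the simplification and isotopy results for sheets from this section. I would first apply \lemref{l:finite} to put $\xi$ into a form where the set of non-convex levels is finite, each non-convex level carries only a single retrograde saddle-saddle connection, all upper basins are compact, and the sheet $A$ is preserved; the Poincar\'e--Bendixson refinement of \lemref{l:pb} is arranged simultaneously.

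Next, I would locate the ends of $A'_{-1}$ and $A'_1$. By (iv), $\beta'_{\pm 1}$ is a non-degenerate repulsive closed leaf, and by (v) the maximal sheet $A'_{\pm 1}$ containing it does not cross to the other boundary. According to \remref{r:sheet prop} and \lemref{l:compactify by degen}, moving along $A'_{-1}$ away from $\beta'_{-1}$ the $t$-coordinate increases along the repulsive part, and, in order not to reach $\Sigma_1$, the sheet must terminate at a negative degenerate closed leaf of birth-death type (any cycle limit can be upgraded to a smooth closed leaf after a small perturbation). Past this degenerate leaf the continuation $\widetilde{A}'_{-1}$ consists of attractive closed leaves; by (ii) an attractive sheet meeting $\Sigma_{\pm 1}^-$ would connect both boundary components, which is possible only if it coincides with $A$, and (v) together with maximality exclude this. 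Hence $\widetilde{A}'_{-1}$ terminates at a second negative degenerate orbit in some level $\Sigma_{t_-}$ with $t_-\in(-1,1)$. The symmetric picture gives a level $t_+\in(-1,1)$ for $A'_1$.

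Because $\beta'_{-1}$ and $\beta'_1$ lie on the same side of $\beta_{\pm 1}$ (hypothesis (iv)) and $\beta_t=A\cap\Sigma_t$ is non-separating (hypothesis (ii)), the sheets $A'_{-1},\widetilde{A}'_{-1},\widetilde{A}'_1,A'_1$ all live in the same connected component of $N\setminus A$, and the family of annuli $S_t\subset\Sigma_t$ bounded on one side by $A\cap\Sigma_t$ and on the other by the appropriate curve of these sheets contains no further sheet of $\xi$. Indeed, an attractive intruder would have to connect both boundaries by (ii) and would therefore have to be $A$ itself, while a repulsive intruder would produce, via \lemref{l:compactify by degen}, an attractive continuation forbidden by the same argument; the non-separating/planarity consideration, combined with the Poincar\'e--Bendixson form, rules out accumulation of additional closed leaves. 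With this verified, I would apply \lemref{l:isotope sheets} to the chain consisting of $A'=A'_{-1}$, $A_-=\widetilde{A}'_{-1}\cup\widetilde{A}'_1$ (if necessary first joined by one preliminary application of the same lemma to the degenerate orbit where the two attractive continuations meet) and $A''=A'_1$, with the separating annuli $S_t$ as above. The lemma produces a contact structure isotopic to $\xi$ by an isotopy fixed near $\partial N$ and near $A$, in which the two degenerate caps at $t_\pm$ have been removed; the resulting smooth sheet $\widehat{A}$ connects $\beta'_{-1}$ to $\beta'_{1}$ and is pre-Lagrangian by \lemref{l:no sing on sheet}.

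The main obstacle is verifying the third hypothesis of \lemref{l:isotope sheets}, namely that the annular regions $S_t$ meet no other sheet. This is where all five hypotheses of the lemma are used together: (ii) to exclude any attractive intruder, (iv) and (v) to pin down which side of $A$ the sheets $A'_{\pm 1}$ live on and to force their termination in the interior, and (iii) together with the non-separating property to ensure that $\Sigma_t\setminus A$ is planar so that no recurrent or extra closed leaf can develop in the intermediate region after the Poincar\'e--Bendixson simplification.
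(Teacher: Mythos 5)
Your strategy---tracing $A'_{-1}$ and $A'_{1}$ into the interior and merging them via \lemref{l:isotope sheets}---has a genuine gap. These are a priori two distinct, disjoint maximal sheets, each of which by hypothesis (v) terminates in the interior without reaching the other boundary, and nothing forces their attractive continuations $\widetilde{A}'_{-1}$ and $\widetilde{A}'_{1}$ to meet. \lemref{l:isotope sheets} is an operation on three pieces $A',A_-,A''$ of one connected sheet, not a gluing device for two unrelated sheets; your aside about ``one preliminary application of the same lemma to the degenerate orbit where the two attractive continuations meet'' is precisely where the argument breaks, and no mechanism is supplied. The secondary ``no intruder'' argument is also incomplete: hypothesis (ii) only constrains sheets that actually meet $\Sigma_{\pm 1}^-$, so a closed torus sheet contained entirely in the interior of $N$ and lying between $A$ and $A'_{-1}$---exactly the configuration one most needs to exclude---is not ruled out by (ii).

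Moreover, most of the paper's proof (Cases A, B, B1, B2.1, B2.2) is devoted to a problem your proposal does not touch: at each non-convex level the retrograde saddle-saddle connection may interact with the basin of $\beta_t=A\cap\Sigma_t$, and one must isotope $\xi$ so that the negative hyperbolic singularity $h_-$ never sits in the closure of that basin. This requires a case analysis, an induction on genus (cutting along $A$ and invoking the pre-Lagrangian extension lemma recursively), and a tightness argument via \lemref{l:overtwisted movie} to kill one subcase. Once the basin of $\beta_t$ is arranged to be compact and free of negative singularities at non-convex levels, the new sheet $\widehat{A}$ is \emph{constructed from scratch} by applying \lemref{l:rel convex} in domains $F_t$ inside the basin, installing a parallel closed leaf at every level; the existing sheets $A'_{\pm 1}$ play no role in the construction beyond fixing the boundary data. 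The paper's route is therefore constructive rather than sheet-tracing, and that is what makes it work.
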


The proof of this lemma is rather lengthy and will be given in \secref{s:proof extend}. Our main application of \lemref{l:parallel} is the following result which we will refer to as pre-Lagrangian extension lemma. 

\begin{lem}  \mlabel{l:completing sheets} 
Let $\xi$ be an extremal contact structure on $N=\Sigma\times[-1,1]$ such that the boundary is convex, $\partial \Sigma_t$ is an attractive Legendrian curve, $\xi$ is transverse to the foliation $\II$ corresponding to the second factor and there is a pair of isotopic closed leaves $\beta,
\beta'$ of $\Sigma_{-1}(\xi)$ such that $\beta$ is attractive and $\beta'$ is repulsive and the following conditions are satisfied:
\begin{itemize}
\item[(i)] The maximal sheet containing $\beta$ does not connect the two boundary components of $\Sigma$. 
\item[(ii)] $\beta'$ is not part of a properly embedded sheet in $N$. 
\item[(iii)] $\beta'$ lies on the side of $\beta$ opposite to the coorientation of $\xi$. 
\item[(iv)] For all other attractive closed leaves $\alpha$ of $\Sigma_{-1}(\xi)$ or $\Sigma_{+1}(\xi)$ the maximal sheet containing $\alpha$ connects the two boundary components of $N$. 
\item[(v)] No sheet meets the interior of the annulus bounded by $-\beta\cup\beta'$. 
\end{itemize}
Then there is a contact structure $\widehat{\xi}$  isotopic to $\xi$ relative to the boundary such that the sheet $\widehat{A}(\beta)$ containing $\beta$ is properly embedded and $\partial\widehat{A}(\beta)=-\beta\cup\beta'$. 
\end{lem}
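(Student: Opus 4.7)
The plan is to deduce the lemma from the parallel extension result, Lemma~\ref{l:parallel}, after normalizing $\xi$ and analyzing the structure of the sheet $A(\beta)$. First, I would apply Lemma~\ref{l:finite} to arrange that $\Sigma_t(\xi)$ is convex at all but finitely many levels, each carrying a single retrograde saddle-saddle connection, while preserving $\partial N$ and the clean annulus between $\beta$ and $\beta'$ on $\Sigma_{-1}$ guaranteed by (v). Then I would analyze $A(\beta)$: by hypothesis (i), $A(\beta)$ does not reach $\Sigma_{+1}$, so by Remark~\ref{r:sheet prop} and Lemma~\ref{l:compactify by degen} the attractive part of $A(\beta)$ climbs from $\beta$ to a negative degenerate orbit at some peak level $\tau\in(-1,1)$ and then the repulsive part descends to an attractive closed curve $\gamma\subset\Sigma_{-1}$. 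Proposition~\ref{p:sheets} combined with (iii) shows that the projection of $A(\beta)$ to $\Sigma$ moves monotonically in the direction opposite to the coorientation of $\xi$; hypothesis (v) forces this projection to halt before crossing $\beta'$, so $\gamma$ is parallel to $\beta$ and lies strictly between $\beta$ and $\beta'$ in $\Sigma_{-1}$.

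Next, I would choose a convex level $\Sigma_{t_0}$ with $t_0\in(\tau,1)$, so that in the sub-slab $N_-=\Sigma\times[-1,t_0]$ the sheet $A(\beta)$ is a maximal \emph{arch} lying entirely in the interior with both boundary circles on $\Sigma_{-1}$. To bring Lemma~\ref{l:parallel} into play I would double $N_-$ across $\Sigma_{-1}$: glue $N_-$ to its mirror image along $\Sigma_{-1}$ to form a slab $\widetilde N=\Sigma\times[-(2+t_0),t_0]$, extending $\xi$ by its mirror across the gluing surface. In $\widetilde N$ the union of $A(\beta)$ with its mirror image becomes a sheet $A$ transverse to every level surface (by construction, and after a small perturbation via Lemma~\ref{l:isotope sheets} to straighten the seam at the peak $\tau$), whose slices are non-separating attractive closed curves. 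The curves $\beta'$ and its mirror image, on the two boundary components of $\widetilde N$, are repulsive closed leaves parallel to $A$; hypotheses (ii), (iii), and the mirror symmetry ensure that their maximal sheets do not connect the two boundary components of $\widetilde N$, while hypothesis (iv) together with the normalization gives the remaining condition~(ii) of Lemma~\ref{l:parallel}. Applying Lemma~\ref{l:parallel} produces an isotopy, which can be arranged to be $\Z/2$-equivariant with respect to the reflection, to a contact structure in which $\beta'$ and its mirror image are joined by a sheet. Quotienting by the symmetry and restricting to $N$ yields $\widehat\xi$ and the required sheet $\widehat A(\beta)$ with $\partial\widehat A(\beta)=-\beta\cup\beta'$.

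The hardest step will be the doubling construction: extending $\xi$ across the mirror collar so that $A(\beta)$ and its mirror image actually fit together as a smooth sheet transverse to the level surfaces (in particular, smoothing the seam at $\Sigma_{-1}$ where the attractive leaves $\beta$ and $\gamma$ sit), and arranging the isotopy produced by Lemma~\ref{l:parallel} to be equivariant rel boundary so that it descends to an isotopy on $N$ fixed near $\partial N$. A cleaner alternative that may be preferable would be to prove a variant of Lemma~\ref{l:parallel} directly in $N_-$, in which the parallel repulsive curve lies on the same boundary component $\Sigma_{-1}$ as both boundary circles of the arch-sheet $A(\beta)$; such a variant would avoid the doubling entirely but would essentially duplicate the internal combinatorial argument of Lemma~\ref{l:parallel}.
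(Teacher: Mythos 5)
Your main construction has a genuine gap. The proposed doubling of $N_-=\Sigma\times[-1,t_0]$ across $\Sigma_{-1}$ does not produce a positive contact structure on the enlarged slab: reflecting a positive contact structure in the $t$-direction yields a \emph{negative} contact structure, so the glued plane field changes sign across the seam and cannot be made contact there. Moreover, even granting the doubling, the union $A(\beta)\cup A(\beta)_{\mathrm{mirror}}$ is a closed surface sitting in the interior of $\widetilde N$ (it lives in $\Sigma\times[\tau_{\mathrm{mirror}},\tau]$), not a sheet reaching both boundary components of $\widetilde N$, so it does not satisfy hypothesis (ii) of Lemma~\ref{l:parallel}. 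A third problem: you assert that the repulsive branch of the maximal sheet descends to an \emph{attractive} closed curve $\gamma\subset\Sigma_{-1}$; a repulsive branch returning to $\Sigma_{-1}$ would meet $\Sigma_{-1}(\xi)$ in a repulsive curve, and in fact hypotheses (ii) and (v) rule out the repulsive branch of $A(\beta)$ reaching $\Sigma_{-1}$ at all, since the only places it could land between $\beta$ and $\beta'$ are forbidden. So $A(\beta)$ is not an ``arch'' in your sense.

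The step you flag as the hardest (and the cleaner alternative you gesture at) is where the paper's proof diverges from yours, and it is much shorter. By hypothesis (i), $t(\beta)=\max\{t\,|\,A(\beta)\cap\Sigma_t\neq\emptyset\}<1$, and by Lemmas~\ref{l:compactify by degen} and~\ref{l:gi-birth-death} the level $\Sigma_{t(\beta)}$ carries a negative degenerate closed orbit where a pair of closed leaves dies. Hence for $t$ slightly below $t(\beta)$ the characteristic foliation $\Sigma_t(\xi)$ contains \emph{both} the attractive curve $\beta_t=A(\beta)\cap\Sigma_t$ \emph{and} a parallel repulsive curve $\beta''_t$ lying on the side opposite the coorientation. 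Fixing such a level $t'(\beta)$ and truncating $N$ to $\Sigma\times[-1,t'(\beta)]$, the piece of $A(\beta)$ becomes a top-to-bottom sheet of this smaller slab, with $\beta'$ as the parallel repulsive curve at the bottom and $\beta''_{t'(\beta)}$ at the top. After an application of Lemma~\ref{l:isotope sheets} to the remaining sheets (to ensure the transversality condition in (ii) of Lemma~\ref{l:parallel}), the restriction of $\xi$ to $\Sigma\times[-1,t'(\beta)]$ satisfies the hypotheses of Lemma~\ref{l:parallel} directly. No doubling and no variant of Lemma~\ref{l:parallel} is required; the trick is to truncate at $t'(\beta)$ rather than at some $t_0>\tau$.
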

There is an analogous lemma when $\beta\subset\Sigma_1$. In that case $\beta'$ is supposed to lie on the side determined by the coorientation of $\xi$ (in requirement (iii) above).  The dashed line in \figref{b:sheet} on p.~\pageref{b:sheet} corresponds to an extension of the sheet $A(\beta)$ where $\beta\subset\Sigma_{-1}$. 
\begin{proof}
Since $A(\beta)$ does not connect the two boundary components of $N$ we have
$$
t(\beta)= \max\{ t\in[-1,1]\,|\,A(\beta)\cap\Sigma_t\neq\emptyset\} <1.
$$
By \lemref{l:compactify by degen} and \lemref{l:gi-birth-death} $A(\beta)\cap\Sigma_{t(\beta)}$ is a closed degenerate leaf and there is a level $t'(\beta)$ such that for $t'(\beta)\le t<t(\beta)$ the characteristic foliation on $\Sigma_t$ contains a closed attractive leaf $\beta_t=A(\beta)\cap\Sigma_t$ and a closed repulsive leaf $\beta''$ parallel to $\beta$ which lies on the side of $\beta_t$ opposite to the coorientation of the contact structure. (So the pairs $\beta,\beta'$ and $\beta_t,\beta''_t$ are isotopic.) Note that $\beta_t$ is isotopic to $\beta$ as oriented curve since there are no negative singularities. 

After applying \lemref{l:isotope sheets} to all sheets, we may assume that the restriction of $\xi$ to $\Sigma\times[-1,t'(\beta)]$ satisfies the hypothesis of \lemref{l:parallel}. Then we obtain the desired contact structure $\widehat{\xi}$.  
\end{proof}

\subsubsection{The proof of \lemref{l:parallel}} \mlabel{s:proof extend}

Before we start with the proof we fix some notation: For fixed $t$ let $\Gamma_+$ be the graph on $\Sigma_t$ formed by positive singularities and stable leaves of positive singularities of $\Sigma_t(\xi)$  together with repulsive closed leaves, $\Gamma_-$ is the graph formed by negative singularities and unstable leaves of negative  singularities and attractive closed leaves. Thus $\Gamma_+,\Gamma_-$ is {\em not} the dividing set of any surface (we want to use the same notation as in \cite{gi-bif}). 

By the extremal condition \eqref{e:max Euler} the connected components of $\Gamma_-$ are either non-closed trees or homeomorphic to one circle with finitely many (eventually non-closed) trees attached to the circle. In order to simplify the presentation we assume that every repulsive closed leaf appearing in a path in $\Gamma_+$ is replaced by a pair of positive canceling singularities such that both unstable leaves of the new positive hyperbolic singularity come from the new elliptic singularity and the path passes through the new elliptic singularity. 

In the proof below we assume that $\xi$ is tight. If that proof does not work for an overtwisted given contact structure, then this is because there is an overtwisted disc in the complement of the sheets $A(\beta)$ and $A(\alpha)$ for $\alpha$ an attractive closed leaf of $\Sigma_{-1}(\xi)$. Then the classification of overtwisted contact structures (\thmref{t:ot class}) implies all claims of the lemma.

\begin{proof}[Proof of \lemref{l:parallel}]
Before we begin with the construction note that by \lemref{l:isotope sheets} we may assume that every sheet connecting a component of $\Sigma_{-1}^-$ to a component of $\Sigma_1^-$ is transverse to $\Sigma_t$ for all $t$. Furthermore, when we refer to the basin of $\beta_t$ we mean the basin lying on the same side of $\beta_t$ as $\beta'_{\pm 1}$.

If $\Sigma_t(\xi)$ is convex for {\em all} $t\in[-1,1]$, then by \lemref{l:rel convex} or \lemref{l:convex families}  we can isotope $\xi$ without changing $A$ so that the characteristic foliation of the isotoped contact structure on $\Sigma_t$ has a  closed repulsive leaf parallel to $\beta_t=A \cap \Sigma_t$ for all $t\in[-1,1]$. The collection of these repulsive leaves then provides the desired sheet $A'$. However, asking that $\Sigma_t$ is convex for all $t\in[-1,1]$ is far too restrictive and it suffices to arrange that the basin of $A(\beta)\cap \Sigma_t$ is compact and does not contain a negative singularity. This will be achieved in two steps: In Step 1  we arrange that there are only finitely many non-convex levels and after that, in Step 2, we deal with each non-convex level individually.  In Step 3 we apply \lemref{l:rel convex} to construct the desired extension of $A(\beta)$. 

{\bf Step 1:} 
According to \lemref{l:finite} $\xi$ is isotopic to a contact structure  $\xi_{PB}$  (the subscript $PB$ refers to the Poincar{\'e}-Bendixon property) with the following properties.
\begin{itemize}
\item There are only finitely many levels $t\in [-1,1]$ where $\Sigma_t(\xi_{PB})$ is not convex. Moreover, at all these levels a single retrograde saddle-saddle connection is responsible for the non-convexity and $\Sigma_t(\xi_{PB})$ has no non-trivial recurrent leaves.
\item The   basin of $\beta_t$ is compact for all non-convex levels.
\end{itemize}
If $\Sigma_t(\xi_{PB})$ contains a retrograde saddle-saddle connection, then we denote the negative respectively the positive singularity participating in the retrograde connection by $h_-$ respectively $h_+$. These singularities persist on nearby surfaces and we will denote these singularities also by $h_\pm$. 

{\bf Step 2:} Let  $t$ be a non-convex level such that one of the singular points $h_+$ or $h_-$ of $\Sigma_t(\xi_{PB})$ is contained in the closure of the basin of $\beta_t$.  We will isotope $\xi_{PB}$ without creating new non-convex levels such that $\Sigma_t$ becomes convex  or the retrograde saddle-saddle connection does not interact with the basin of $\beta_t$. 

In the following we assume that all negative hyperbolic singularities of $\Sigma_{\tau}(\xi_{PB})$ except $h_-$  have been eliminated for $\tau$ close to $t$. Hence $\Sigma_{t}(\xi)$ has exactly two negative singularities (one of them is $h_-$, the other one is elliptic). 

{\bf Case A:} Both stable leaves of $h_-$ are contained in the boundary of the basin, i.e. $h_-$ is a pseudovertex of the basin of $\beta_\tau$ for $\tau\neq t$ sufficiently close to $t$ and isotopy type of dividing set of $\Sigma_\tau$ does not change when $\tau$ crosses $t$. 

If $h_-$ is a pseudovertex of the basin of $\beta_t$, then we apply \lemref{l:elim retrograde}: Let  $\nu',\nu''$ denote the unstable leaves of $h_+$. Let $\Gamma'_-$ respectively $\Gamma''_-$ be the connected component of $\Gamma_-$ which contains the $\omega$-limit set of $\nu'$ respectively $\nu''$. This configuration is shown in \figref{b:B}. 

\begin{figure}[htb]
\begin{center}
\includegraphics[scale=0.75]{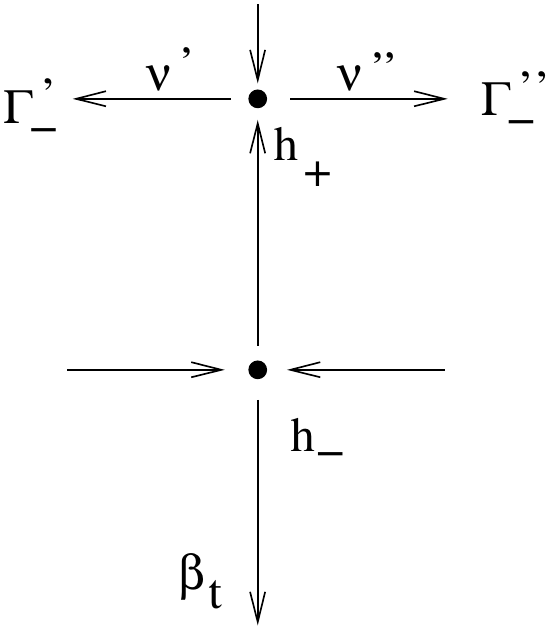}
\end{center}
\caption{Configuration in Case A}\label{b:B}
\end{figure}

Because $\xi_{PB}$ is extremal and tight, both $\Gamma'_-$ and $\Gamma''_-$ are trees since otherwise $\Sigma_\tau^-(\xi_{PB})$ would have components which are not diffeomorphic to annuli for $\tau$ close to $t$ by \lemref{l:switch}. For the same reason $\Gamma_-'=\Gamma_-''$ because otherwise the dividing set of $\Sigma_\tau^-(\xi_{PB})$ would contain a component bounding a disc for $\tau$ close to $t$. Hence we can indeed apply \lemref{l:elim retrograde} so that the number of non-convex levels is reduced by one. 

Now consider the case when the hyperbolic singularity $h_-$ is not a pseudovertex of the basin of $\beta_t$. Then the unstable leaf connecting $h_-$ to $\beta_\tau$ for $\tau\neq 0$  is the unstable leaf of $h_-$ which participates in the retrograde saddle-saddle connection when $\tau=t$. So $h_+$ is part of the basin of $\beta_t$ and both unstable leaves of $h_+$ accumulate on $\beta_t$ (from the side containing the basin under consideration). 

In this situation, both unstable leaves of $h_+$ accumulate on $\beta_t$ from the same side and the region bounded by the two unstable leaves of $h_+$ contains a positive elliptic singularity $e_+$ because $\xi_{PB}$ is tight.  Thus we can eliminate the singular points $e_+,h_+$ of $\Sigma_t(\xi_{PB})$ and on nearby surfaces. 

In this way we have reduced the number of non-convex levels by one. In particular, we did not loose the properties of the movie (like the Poincar{\'e}-Bendixon property, compactness of basins at non-convex levels or the discreteness of non-convex levels all of which are non-convex due to the presence of retrograde saddle-saddle connections). 

{\bf Case B:} Only one unstable leaf of $h_-$ is contained in the closure of the basin while the other one is not. Then $h_-$ is a corner of the basin and one unstable leaf of $h_-$ is connected to a positive hyperbolic singularity. Since generically there is at most one saddle-saddle connection $h_+$ is a pseudovertex of the basin of $\beta_t$. 

Let $\Gamma_+'$ be the connected component of $\Gamma_+$ containing $h_+$ and $\Gamma_-'$ the connected component of $\Gamma_-$ containing $h_-$, the unstable leaf of $h_-$ which does not participate in the retrograde saddle-saddle connection will be denoted by $\eta$. The unstable leaves of $h_+$ are denoted by $\nu,\nu'$ and $\nu$ accumulates on $\beta_t$ (cf. the left part of \figref{b:C1}). In order to obtain closed graphs we remove the retrograde saddle-saddle connection between $h_-$ and  $h_+$ from $\Gamma_\pm'$.  As before, $\Gamma_-'$ has to be a tree.

Assume that  $\nu$ accumulates on $\beta_t$ while the other unstable leaf $\nu'$ of $h_+$ does not accumulate on $\beta_t$ from the same side as $\nu$ (if that happens we are in the situation of the second part of Case A). There are two subcases depending on whether the $\omega$-limit set of $\nu'$ is contained in $\Gamma_-'$ or not.

{\bf Case B1:}  The easier case is when the $\omega$-limit set of $\nu'$ is not contained in $\Gamma_-'$. As in Case A the isotopy type of the dividing set of $\Sigma_\tau(\xi_{PB})$ does not change when $\tau$ passes $t$.  Therefore we can again apply \lemref{l:elim} to both singularities in $\Gamma'_-$ without losing the properties mentioned at the end of Case A. Thus we simply eliminated one non-convex level by removing the negative hyperbolic singularity participating in the retrograde connection. 

{\bf Case B2:} The much more intricate case is when the $\omega$-limit sets of $\eta$ and $\nu'$ are both contained in $\Gamma_-'$. In this situation, a pair of dividing curves appears respectively  disappears as $\tau$ crosses $t$ and the corresponding component of $\Sigma_\tau^-$ splits off respectively merges with the component of $\Sigma_\tau^-$ containing  $A\cap\Sigma_\tau$. 

{\bf Case B2.1:} There is a simple path $c$ in $\Gamma_+'$ with the following properties:
\begin{itemize}
\item[(i)] The unstable leaves of the positive hyperbolic singularities on $c$ lying on the same side of $c$ as the stable leaf of $h_+$ which is connected to $\Gamma_-'$ are also connected to $\Gamma_-'$. 
\item[(ii)] The $\omega$-limit set $\widehat{\beta}_t$ of the other unstable of $\widehat{h}_+$ is either different from $\beta_t$ or this unstable leaf accumulates on $\beta_t$ from the other side than the unstable leaf $\nu$ of $h_+$.  
\item[(iii)] $\widehat{h}_+$ is the only hyperbolic singularity on $c$ with this property. 
\end{itemize}
This configuration is schematically depicted in Figure~\ref{b:C1} where $c$ is the thickened curve. 
\begin{figure} 
\begin{center}
\includegraphics[scale=0.75]{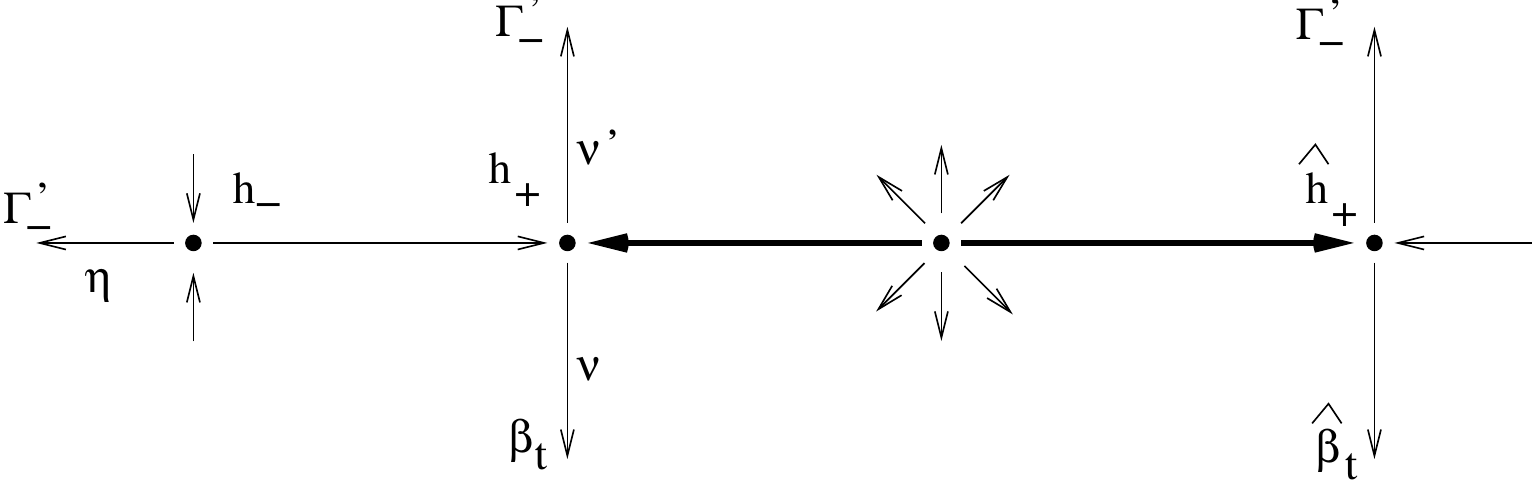}
\end{center}
\caption{Configuration in Case B2.1}\label{b:C1}
\end{figure}

Using \lemref{l:elim} (in particular part (ii) of that lemma) starting at the elliptic singularity closest to $\widehat{h}_+$ we can eliminate the hyperbolic/elliptic singularities along $c$ so that the retrograde connection between $h_-$ and the no-longer present $h_+$ 
is replaced by a retrograde saddle-saddle connection of $h_-$ with $\widehat{h}_+$.

The benefit for us is that the appearance/disappearance of the component of $\Sigma_\tau^-$ containing $h_-$ at  the non-convex level $\tau=t$  has no longer anything to do with the basin of $\beta_t$. Now the component of $\Sigma_\tau^-$ containing $h_-$ splits off from or merges with a component of $\Sigma_\tau^-$ different from the one containing $\Sigma_\tau^-\cap A$ or  this happens on the side of $\beta_\tau$ opposite to the side under consideration.  The construction does not affect the properties mentioned at the end of Case A.

{\bf Case B2.2:} If there is no path with the properties of $c$ in Case B2.1, then the basin of $\Gamma_+'$, i.e. the closure of leaves of the characteristic foliation whose $\alpha$-limit set is contained in $\Gamma_+'$, is a subsurface $S$ with two boundary components and corners. One boundary component is $\beta_t$ while the other boundary component contains $\Gamma_-'$ and $h_-$ is a corner. (Recall that by genericity we may assume that for all $\tau$, the characteristic foliation on $\Sigma_\tau$ has at most one saddle-saddle connection.) Also, if $\Gamma_-'$ would be contained in the interior of the closure of the basin of $\Gamma_+'$ then the current assumption (non-existence of a path like $c$ from Case B2.1) would imply that $\beta_t$ bounds a subsurface of $\Sigma_t$. But $\beta_t$ is non-separating. 

Let $\Sigma_\tau^-(h_-)$ be the connected component of $\Sigma_\tau^-$ which contains $h_-$ for $\tau$ close to $t$ for the relatively open subinterval $I(h_-)\subset (-1,1)$ where this region does not contain $\beta_\tau$. 

The boundary points of $I(h_-)$ correspond to non-convex levels and we eliminate {\em all} negative singularities from the characteristic foliation of $\xi_{PB}$ on these levels. According to \lemref{l:compactify by degen} the closure of $A'$ in $\Sigma\times[-1,1]$ is obtained by adding degenerate orbits to $A'$. (In order to obtain a smooth sheet we have to make sure that the graph formed by unstable leaves of negative singularities and elliptic singularities in $\Sigma_\tau^-(h_-)$ with $\tau\in I(h_-)$ is smooth but this can be achieved by modifications in neighbourhoods of elliptic singularities in $\Sigma_\tau^-(h_-)$.) 

While in the situation of Case B2.1 it was not possible in general to prevent the formation of non-trivial recurrent orbits or an infinite number of degenerate closed leaves now the fact that no leaf of the characteristic foliation can enter the surface $S$ through $\beta_t$ implies that we do not lose the Poincar{\'e}-Bendixon property at non-convex levels and only one degenerate closed leaf appeared. (At most one annulus in $\Sigma^-$ degenerates at a given non-convex level.) 

Our present goal is to isotope the contact structure such that $A'$  disappears completely.
 There are two possibilities:
\begin{enumerate}
\item Both boundary components of $A'$ meet one of the surfaces $\Sigma_\tau$ with $-1<\tau<1$ and the degenerate closed leaves of the characteristic foliations at these levels are parallel
\item Like (1), except that the degenerate closed leaves are anti-parallel.
\end{enumerate}
It will turn out that the second case contradicts the tightness of $\xi$. But first, we deal with the first two cases (which can be treated simultaneously) using an inductive procedure: 

There are two extreme situations, namely $\chi(S)=0$ (i.e. $S$ is an annulus) and $\chi(S)=\chi(\Sigma)$ (then $\Sigma\setminus S$ is an annulus) and the intermediate cases $-2\ge\chi(S)\ge\chi(S)+2$. 

The case $\chi(S)=0$ is straightforward: $\beta_\tau$ and $A'\cap\Sigma_\tau$ with $\tau\in I(h_-)$  bound an annulus $S_\tau\subset \Sigma_t$ such that both boundary curves are attractive closed curves (except in the boundary levels of $A'$).  After eliminating all superfluous negative/positive singularities in $S_t$ we obtain a family of repulsive closed curves separating the two boundary components. This allows us to eliminate $A'$ completely using \lemref{l:isotope sheets}. 
By this procedure we have reduced the number of non-convex levels. 

The case $\chi(S)=\chi(\Sigma)$ can be treated in the same fashion when one considers $S'=\Sigma_t\setminus\ring{S}$ or, in a more indirect fashion, inductively as the non-extremal cases. 

In order to treat the case when $\chi(S)$ is not $\chi(\Sigma)$ or $0$, we note that we may assume by induction (the induction starts with $\chi(S)=0$) that the lemma was already proved for surfaces with attractive Legendrian boundary of lower genus. This is possible since neither the lower or the upper basin of $A'\cap\Sigma_t$ can contain $\beta_t$. Thus we may cut $\Sigma\times[-1,1]$ along $A(\beta)\cap (\Sigma\times[-1,1])$. Then the pre-Lagrangian extension lemma can be applied to $A'$. Using \lemref{l:isotope sheets} $A'$ can be eliminated completely or moved out of $\Sigma\times[-1,1]$. We have thus reduced the number of non-convex levels in $[-1,1]$ and at each such level there is only a retrograde saddle-saddle-connection. 

As we shall see, we encounter only  case (1) from above when the contact structure is tight. Hence after finitely many steps we have eliminated all negative singularities in the closure of the basin of $\beta_t$ at non-convex levels without losing the Poincar{\'e}-Bendixon property at  non-convex levels and the non-convexity is again due only to retrograde saddle-saddle connections. The compactness of the basin at these levels is now easily arranged.  

We now show that the case (2) from above does not occur when $\xi$ is tight. Again this is by induction on the genus. We can cut $\Sigma\times[-1,1]$ along $A$ and we eliminate the negative singularities near both ends of $A'$ thus replacing the retrograde  saddle-saddle connections by a degenerate closed leaves of characteristic foliations. Then we can extend the pre-Lagrangian surface $A'$ to a pre-Lagrangian torus bounding a solid torus. Now we apply the pre-Lagrangian extension lemma to the basin of the attractive curves in $A'$ on the side opposite to $S$. Again this surface has lower genus and therefore we can find a pre-Lagrangian surface parallel to $A'$ which consists of repulsive closed curves in $\Sigma_t$.  But then the contact structure is overtwisted according to \lemref{l:overtwisted movie}. 

This shows that case (2) does not occur and after an isotopy of $\xi$ we may assume that the lower of $\beta_t$ does not contain a negative singular point at non-convex levels. Moreover, there are still only finitely many non-convex levels in $[-1,1]$ all of which correspond to retrograde saddle-saddle connections. The contact structure obtained after these isotopies is still denoted by $\xi_{PB}$.

{\bf Step 3:} We now construct the desired pre-Lagrangian extension of $A(\beta)$. For all $t$ so that $\Sigma_t(\xi_{PB})$ is not convex the boundary of the closure of the basin of $\beta_t$ does not contain a negative singularity. Let $V_t\subset\Sigma_t$ be a collar of $\beta_t$ lying in the basin (covered by $(Q_t=S^1\times[0,1],V_t=\emptyset,\alpha_t)$) such that $\Sigma_t(\xi_{PB})$ is transverse to $\partial V_t\setminus\beta_t$ and all leaves of the characteristic foliation entering $V_t$ accumulate on $\beta_t$. 

Fix a domain $F_t\subset\Sigma_t$ containing a neighbourhood of the basin of $\beta_t$ with $V_t$ removed such that \lemref{l:rel convex} can be applied to $F_t$. Such a neighbourhood exists because $\alpha(S^1\times\{1\}$) contains only positive singularities and no stable leaf of a positive hyperbolic singularity in $\alpha(S^1\times\{1\})$ comes from a negative singularity by construction.  We modify the contact structure on neighbourhoods of $F_t$ to obtain the attracting closed curves parallel to $\beta_t$ near levels where $\xi_{PB}$ is not convex. Once we have dealt with non-convex levels we apply \lemref{l:convex families} to obtain the desired contact structure $\widehat{\xi}$. 

In this last step we have used again that $\beta$ and hence also $\beta_t$ is non-separating (like in  \lemref{l:LeRP}). 
\end{proof}

\section{Transitive confoliations} \mlabel{s:transitive}

In this section we prove \thmref{t:unique} for transitive confoliations. Since a parametric version is not much more difficult we present that version. 

According to Gray's theorem every contact structure $\xi$ on a closed manifold $M$ has a $C^1$-neighbourhood so that every contact structure in that neighbourhood is isotopic to $\xi$. This follows from the fact that the contact condition is open in the $C^1$-topology. In particular, the contact structures interpolating between some contact structure $\xi'$ in the $C^1$-neighbourhood and $\xi$ can be chosen inside that neighbourhood.  V.~Colin has shown the following stability theorem for $C^0$-neighbourhoods. 
\begin{thm}[Colin, \cite{col}] \mlabel{t:colin stab}
Let $\xi$ be a contact structure on the closed $3$-manifold $M$. Then there is a $C^0$-neighbourhood $U$ of $\xi$ in the space of smooth plane fields so that every contact structure in $U$ is isotopic to $\xi$. 
\end{thm}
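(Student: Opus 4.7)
The plan is to adapt the strategy from Colin's original proof: reduce the global problem to uniqueness of tight contact structures on a 3-ball with prescribed boundary characteristic foliation, using \thmref{t:fill ball} as the key input. The main idea is to fix a sufficiently fine polyhedral decomposition of $M$ whose 2-skeleton can be made convex for $\xi$ and such that each 3-cell lies inside a Darboux chart, and then to show that for contact structures $C^0$-close to $\xi$ one can isotope the characteristic foliations on the 2-skeleton to agree with those of $\xi$, at which point the restrictions to each 3-cell become isotopic relative to the boundary.

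First I would choose a smooth triangulation (or more generally a polyhedral decomposition) $K$ of $M$ sufficiently fine so that each closed 3-cell $B_i$ is contained in a Darboux ball for $\xi$, in particular $\xi\eing{B_i}$ is tight. After a $C^\infty$-small perturbation of the 2-skeleton I can assume that every 2-face $F_j$ is convex for $\xi$ with Legendrian boundary and that the characteristic foliation $F_j(\xi)$ is of Morse-Smale type with dividing set $\Gamma_j$. The edges of $K$ can be taken Legendrian with controlled twisting, and the corner behaviour at vertices can be assumed generic. Since convexity is an open condition and the characteristic foliation on a fixed surface depends continuously on the plane field, there is a $C^0$-neighbourhood $U$ of $\xi$ such that for every smooth plane field $\xi' \in U$ each face $F_j$ remains transverse to $\xi'$ away from a small neighbourhood of the singular points of $F_j(\xi)$ and is still convex with respect to $\xi'$, with a dividing set isotopic to $\Gamma_j$.

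Next, for a contact structure $\xi'\in U$, I would apply Giroux's flexibility lemma (\lemref{l:giroux flex}) to each 2-face $F_j$ to produce an ambient isotopy, supported in a small neighbourhood of the 2-skeleton, after which the characteristic foliation $F_j(\xi')$ coincides with $F_j(\xi)$ for every $j$. Some care is needed here: since the faces meet along Legendrian edges, the flexibility isotopies must be compatible along edges and vertices, which is where the controlled extensions of the polyhedra (``ribbons'' around the 2-skeleton) come in; one works on a thickening of the 2-skeleton so that the modifications of the characteristic foliations on adjacent faces can be made to match on the common edges. After this step, the two contact structures $\xi$ and the isotoped $\xi'$ induce the same characteristic foliation on the entire boundary $\partial B_i$ of each 3-cell.

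Finally, since each $B_i$ is a 3-ball, $\xi\eing{B_i}$ is tight (being Darboux), and the restriction of the isotoped $\xi'$ to $B_i$ is automatically tight as well (because its characteristic foliation on $\partial B_i$ admits a taming function inherited from $\xi$), \thmref{t:fill ball} gives that $\xi\eing{B_i}$ and $\xi'\eing{B_i}$ are isotopic relative to the boundary. Assembling these ball-by-ball isotopies produces a global isotopy from $\xi'$ to $\xi$. The main obstacle I expect is the compatibility along the 1-skeleton: organizing the Giroux flexibility isotopies on the individual 2-faces into a globally coherent isotopy requires carefully thickening the 2-skeleton and propagating the modifications through a collar, which is precisely the technical step that the later sections of this paper will need to generalize (in the confoliated setting) by working with ribbons instead of the faces themselves.
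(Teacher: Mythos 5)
Your high-level plan (decompose $M$ into Darboux balls, match data on the two-skeleton, fill each ball via \thmref{t:fill ball}) is in the right spirit, but the implementation via convex faces and Giroux flexibility is not Colin's argument and contains a genuine gap. The central problem is that convexity of a fixed surface is a $C^1$-open condition on the contact structure, not a $C^0$-open one: the characteristic foliation $F_j(\xi')$ is $C^0$-close to $F_j(\xi)$ as a singular line field, but its dynamics are not. A $C^0$-small perturbation of $\xi$ can introduce or destroy closed leaves and saddle-saddle connections on $F_j$, so neither convexity nor the isotopy class of the dividing set $\Gamma_j$ persists in a $C^0$-neighbourhood; thus there is no $C^0$-neighbourhood $U$ with the stability property you assert at the beginning of your second paragraph, and \lemref{l:giroux flex} does not apply uniformly. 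You also acknowledge but do not resolve the compatibility of the face-by-face flexibility isotopies along edges and vertices; this cannot be fixed simply by thickening the two-skeleton, because Giroux flexibility moves the surface rather than the contact structure, so moving one face drags the adjacent cells with it.

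What Colin (and \secref{s:transitive} of this paper, which reproves and extends the argument) actually does is quite different: he builds a polyhedral decomposition that is \emph{adapted} to $\xi$ and a transverse line field $\II$ (cf.\ \defref{d:adapted poly} and \defref{d:adapted}), so that each $\partial P$ has exactly two elliptic singular points (the ``supporting vertices'', a source and a sink) and the characteristic foliation $\partial P(\xi)$ simply spirals between them with no closed orbits, the latter being guaranteed by tightness on the Darboux domain. This spiral picture, unlike a general convex face with its dividing set, \emph{is} $C^0$-stable among contact structures close to $\xi$. Rather than isotope faces to match characteristic foliations, one deforms the plane field directly on the two-skeleton (using ribbons and domains in $H(\xi)$ to keep the boundary monodromies strictly decreasing and graphical), produces a connecting family of contact structures by filling the interiors via \lemref{l:hor fill ball}, and then invokes Gray's theorem to convert the family into an isotopy. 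In short: the crucial object is the monotone spiraling of $\partial P(\xi)$ rather than a dividing set, and the isotopy comes from a path of contact structures via Gray, not from patching together Giroux flexibility isotopies of individual faces.
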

The family of contact structures constructed in the proof of this theorem does not necessarily stay in $U$. We now extend this theorem further to the case when $\xi$ is a transitive confoliation. 

\begin{thm} \mlabel{t:transitive}
Let $\xi$ be a transitive confoliation on a closed manifold $M$. Then there is a $C^0$-neighbourhood $U$ of $\xi$ such that the space of positive contact structures in $U$ is weakly contractible in the space of all contact structures on $M$. 
\end{thm}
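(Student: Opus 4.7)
The plan is to adapt Colin's proof of \thmref{t:colin stab} to the confoliated setting, replacing the use of standard Darboux balls by balls obtained from Legendrian ribbon thickenings, which is what transitivity allows us to construct.

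First I would fix a sufficiently fine polyhedral decomposition $\PP$ of $M$ whose $2$-skeleton is in generic position with respect to $\xi$ (and stays generic for all plane fields in a $C^0$-neighbourhood), so that on each $3$-cell $P$ the characteristic foliation $(\partial P)(\xi)$ has only finitely many non-degenerate singularities, no saddle connections, and admits a taming function in the sense of \thmref{t:fill ball}. Genericity also ensures that these properties are preserved if $\xi$ is replaced by any plane field in a small $C^0$-neighbourhood $U_0$ of $\xi$.

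Next, using transitivity, every cell $P$ can be connected to $H(\xi)$ by a Legendrian arc. The key modification of Colin's argument is the construction, for each $P$, of a ribbon extension $\widehat{P} \supset P$ obtained by attaching tubular neighbourhoods of such Legendrian arcs and of small Legendrian ribbons reaching into $H(\xi)$, so that $\widehat{P}$ is still a topological $3$-ball and, after a $C^0$-small perturbation of $\xi$ supported in $\widehat{P}\setminus P$, the perturbed confoliation is a genuine positive contact structure on $\widehat{P}$ (this is the ribbon perturbation technique of \cite{confol}). The perturbations on different $\widehat{P}$ are made compatible along shared faces, and all the transversality conditions on $\partial \widehat{P}(\xi)$ and taming data are open, so they persist on a $C^0$-neighbourhood $U\subset U_0$ of the perturbed $\xi$.

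Given a compact parameter space $S$ and a family $\xi_s,\, s\in S,$ of positive contact structures in $U$, I would then proceed in two steps. Step (a): apply a parametric Moser/Gray argument, using \lemref{l:convex families} and \lemref{l:rel convex}, to isotope the family $\xi_s$ so that the characteristic foliations $(\partial \widehat{P})(\xi_s)$ all agree with $(\partial \widehat{P})(\xi)$ simultaneously for $s\in S$; the genericity of the foliation on the $2$-skeleton and the persistence of taming functions make this step routine. Step (b): on each ball $\widehat{P}$ we now have a family of positive contact structures agreeing with $\xi\eing{\widehat{P}}$ on $\partial \widehat{P}$; since $\xi\eing{\widehat{P}}$ is tight (it bounds, together with its ribbon structure, an explicit taming function), the parametric form of \thmref{t:fill ball} yields a family of isotopies of $\widehat{P}$, relative to the boundary, carrying $\xi_s$ to $\xi$. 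These isotopies glue across the $2$-skeleton to give a homotopy from $\xi_s$ to the constant family $\xi$ inside the space of positive contact structures on $M$, proving weak contractibility of $U\cap\{\text{positive contact structures}\}$ in the space of all contact structures.

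The main obstacle is the ribbon extension in the second paragraph: one must simultaneously (i) keep each $\widehat{P}$ a ball, (ii) ensure the perturbation produces an actual contact structure on $\widehat{P}$ with tame boundary characteristic foliation, and (iii) arrange compatibility between the ribbons attached to adjacent cells so that the cell-by-cell isotopies in Step (b) glue. Essentially all the analytical content of the theorem is concentrated in constructing these ribbon thickenings in a way that is stable on a fixed $C^0$-neighbourhood and parametric in $s\in S$; once that is in place the remainder follows by combining Gray's theorem with \thmref{t:fill ball}.
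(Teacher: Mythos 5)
The proposal rests on a claim that cannot be correct as stated: you assert that ``after a $C^0$-small perturbation of $\xi$ supported in $\widehat{P}\setminus P$, the perturbed confoliation is a genuine positive contact structure on $\widehat{P}$.'' A perturbation supported outside $P$ leaves $\xi$ untouched on $P$; if $P$ meets the fully foliated part of $\xi$ (which is the interesting case, since on $H(\xi)$ nothing needs to be done), then the perturbed confoliation is still integrable somewhere on $P\subset \widehat{P}$ and hence not a contact structure on $\widehat{P}$. Consequently the premise of your Step (b), that the perturbed $\xi$ restricted to $\widehat{P}$ is tight and can serve as a target for \thmref{t:fill ball}, does not hold, and the first paragraph's claim that $\partial P(\xi)$ admits a taming function is also false in general: when $\xi$ is a confoliation the characteristic foliation $\partial P(\xi)$ can have closed leaves (bounding integral discs of $\xi$ inside $P$), which is precisely the obstruction that makes Colin's proof not carry over directly, and no genericity assumption removes it.

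The paper's ribbon technique is therefore conceptually different from what you describe: the ribbons and the domains $C(g)$ carved out of cylinders in $H(\xi)$ are not used to perturb $\xi$, but to modify the \emph{domains} $P$ to new balls $P^\sigma$, in a way that depends on the given contact structure $\xi_s$. Attaching a piece $C(g)$ with strictly decreasing monodromy $g$ uses the strict non-integrability of $\xi_s$ inside $H(\xi)$ to make the holonomy of $\partial P^\sigma(\xi_s)$ strictly decreasing, hence without closed leaves, while keeping the characteristic foliation graphical. The interpolation is then performed at the level of \emph{plane fields} on the $2$-skeleton of the modified decomposition (not contact structures globally), and the interiors are refilled by \lemref{l:hor fill ball} rather than by matching with a perturbed $\xi$. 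This sidesteps both the impossibility of making the confoliation contact on $P$ and the compatibility issue you would face when trying to glue cell-by-cell isotopies relative to boundaries that are no longer shared faces after ribbon attachment.

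Finally, even if one could first perturb $\xi$ globally to a contact structure $\xi'$ (which transitivity and \cite{confol} do allow), applying a stability argument around $\xi'$ only yields a $C^0$-ball around $\xi'$ of some radius $\delta'$; there is no a priori control guaranteeing $\delta'$ exceeds the size of the perturbation, so one does not automatically obtain a $C^0$-neighbourhood of the \emph{original} confoliation $\xi$. The whole point of the quantitative set-up in \secref{ss:find eps} is to choose $\eps$ in terms of the geometry of $\xi$, the polyhedral decomposition, the ribbons, and the cylinders, precisely to avoid this circularity.
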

The following two sections contain preliminaries for the proof of \thmref{t:transitive} which can be found in \secref{ss:transitive proof}. The structure of the proof is similar to Colin's proof of \thmref{t:colin stab}. Since this technique will be used later, we present it in a way that makes it amenable to further adaptation. We shall also use the following theorem of Varela \cite{varela}. 

\begin{thm} \mlabel{t:varela}
Let $\xi$ be a positive confoliation on $M$ which is somewhere non-integrable. Then there is a $C^0$-neighbourhood of $\xi$ so that every confoliation in that neighbourhood is somewhere non-integrable and positive.  
\end{thm}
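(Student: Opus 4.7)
The strategy is to detect the non-integrability of $\xi$ at a point by the non-triviality of parallel transport around a small loop, and to use that parallel transport depends continuously on the plane field in the $C^0$-topology. Pick a point $p\in H(\xi)$ and choose local coordinates $(x_1,x_2,t)$ on a cylindrical neighbourhood $V\simeq D^2\times[-a,a]$ of $p$ so that $\xi$ is transverse to the fibers of the projection $V\to D^2$ and complete as a connection there; after shrinking $V$ we may assume $\xi$ is a positive contact structure on all of $V$. Write $\xi=\ker(dt+\omega_\xi)$ with $\omega_\xi$ a $t$-dependent $1$-form on $D^2$, and fix a small round loop $\gamma=\partial\Delta\subset D^2$ enclosing the projection of $p$. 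Parallel transport of $\xi$ along $\gamma$ is defined by the ODE $\dot s=-\omega_\xi(\dot\gamma)$ and yields a holonomy map $h_\xi$ on a subinterval of $(-a,a)$. A Stokes-type computation (based on $\alpha\wedge d\alpha>0$ for $\alpha=dt+\omega_\xi$) shows that for $\Delta$ sufficiently small there is some $s_0$ with $h_\xi(s_0)<s_0$.

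Now choose a $C^0$-neighbourhood $U$ of $\xi$ in the space of plane fields so that every $\xi'\in U$ is still transverse to $\partial_t$ on $V$, which allows one to write $\xi'=\ker(dt+\omega_{\xi'})$ with $\omega_{\xi'}$ close to $\omega_\xi$ in $C^0$. Standard continuous dependence of ODE solutions on their coefficients then implies that $h_{\xi'}$ is $C^0$-close to $h_\xi$ on any fixed compact subinterval, so after shrinking $U$ we retain $h_{\xi'}(s_0)<s_0$ for every $\xi'\in U$. If $\xi'$ is moreover a positive confoliation, the parallel-transport characterisation of positivity recalled in \eqref{e:decreasing} gives $h_{\xi'}(s)\le s$ for all admissible $s$; combined with $h_{\xi'}(s_0)<s_0$ this forces $h_{\xi'}\neq\id$. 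An integrable plane field on $V$ transverse to $\partial_t$ would have trivial holonomy around the contractible loop $\gamma$, so $\xi'$ cannot be integrable throughout $V$, whence $H(\xi')\cap V\neq\emptyset$.

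The main obstacle is verifying that the chosen small loop $\gamma$ genuinely detects the non-integrability of $\xi$, i.e.\ the Stokes-type estimate producing $h_\xi(s_0)<s_0$; once this is in hand, the $C^0$-stability of the resulting strict inequality is a routine consequence of ODE perturbation. Working in a flow-box chart is essential because it replaces the pointwise differential condition $\alpha\wedge d\alpha>0$ by an integral condition on holonomy, which is robust under $C^0$-perturbations of the plane field.
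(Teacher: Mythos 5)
Your proof is correct, and it rests on the same underlying mechanism that the paper delegates to Varela's original argument without spelling out: the pointwise condition $\alpha\wedge d\alpha>0$ at a point of $H(\xi)$ is converted, via Stokes, into a strict drop of the parallel-transport holonomy around a small loop, and this integral quantity is manifestly $C^0$-stable in the plane field. The paper's one-sentence sketch speaks of characteristic foliations on the boundaries of a family of tubular neighbourhoods of a single transverse knot; your flow-box picture is the elementary local incarnation of this, since the cylinder $\gamma\times I$ is essentially such a boundary around a short transverse arc and the Poincar\'e return map of its characteristic foliation is exactly your $h_\xi$. So the route is parallel, but yours is self-contained and makes the role of \eqref{e:decreasing} explicit, which the paper leaves implicit.

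Two points deserve to be stated rather than left implicit. First, the theorem also asserts that every confoliation in the neighbourhood is \emph{positive}, not merely somewhere non-integrable; your strict inequality $h_{\xi'}(s_0)<s_0$ already excludes a negative confoliation, since the mirror of \eqref{e:decreasing} would force $h_{\xi'}(s)\ge s$, but you should say this. Second, the step ``an integrable plane field on $V$ transverse to $\partial_t$ has trivial holonomy around the contractible loop $\gamma$'' needs a one-line justification because $\gamma$ is fixed before $\xi'$ is chosen, so you cannot assume $\gamma$ sits inside a single foliation chart for $\xi'$: shrink $\gamma$ radially to a point and note that the endpoints of the lifts trace a continuous path inside $L\cap(\{0\}\times I)$, which is discrete because $L$ is transverse to $\partial_t$, hence the path is constant. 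With these two remarks your argument is complete.
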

Originally, this theorem was stated in \cite{varela} only for contact structures (and therefore there is no further reference to non-integrability). However, the proof uses only properties of characteristic foliations on the boundaries of a family of tubular neighbourhoods of a single knot transverse to $\xi$. It thus carries over immediately to yield \thmref{t:varela} since every open set of a contact domain, like $H(\xi)$, contains a transverse knot. 

\subsection{Adapted polyhedral decompositions} \mlabel{s:adapted poilydecomp}
Colin's proof of \thmref{t:colin stab} uses polyhedral decompositions which are adapted to $\xi$. We will make use of similar decompositions which we explain in this section.

\subsubsection{Darboux domains}

The original Darboux theorem for contact structures states that every positive contact structure is locally diffeomorphic to a domain in $(\R^3,\ker(dz+x\,dy))$. This is of course not true for confoliations, but we can still make the following definition.

\begin{defn} \mlabel{d:dd}
A pair $(P,V)$, where $P$ is a relatively compact set in $M$ and $V$ an open neighbourhood of $P$, is a {\em Darboux domain} if there is a bounded smooth function $f:\R^3\lra\R$ such that $\frac{\partial f}{\partial x}\ge 0$ and a confoliated embedding 
$$
\varphi : \left(V,\xi\eing{V}\right) \lra  (\R^3,\ker(dz+f(x,y,z)dy))
$$ 
so that the intersection of every flow line of the Legendrian vector field $\partial_x$ with the image of $V$ is connected. 


We say that a relatively compact set $P\subset M$ is a Darboux domain if there is a neighbourhood $V$ so that $(P,V)$ is a Darboux domain and $(x,y,z)$ are {\em Darboux coordinates}. 
\end{defn}
In particular, every subset of a Darboux domain is again a Darboux domain. The importance of this notion comes from the following stability property. 

\begin{lem} \mlabel{l:dd stable}
Let $(M,\xi)$ be a confoliated manifold and $P\subset V$ a Darboux domain. Then there is a $C^0$-neighbourhood $U$ of $\xi$ in the space of smooth plane fields so that $P$ is a Darboux domain for every  positive confoliation $\xi'$ in $U$.  
\end{lem}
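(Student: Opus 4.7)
The plan is to pull back the problem to Euclidean space via $\varphi$ and construct, for any confoliation $\xi'$ sufficiently $C^0$-close to $\xi$, a new Darboux chart for $\xi'$ by straightening a Legendrian vector field close to the reference field $\partial_x$. First I would shrink the setup: since $P$ is relatively compact, pick a relatively compact open $V_0$ with $\overline{P}\subset V_0\subset\overline{V_0}\subset V$, and work inside $\varphi(V_0)\subset\R^3$. On this relatively compact domain $f$ and its derivatives are bounded, and we may assume $\partial_x$ has a definite angle with the plane field $\ker(dz+f\,dy)$; both of these bounds will be stable under small $C^0$ perturbations.

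Next I would build the new Legendrian field. For $\xi'$ in a small enough $C^0$-neighbourhood $U$ of $\xi$, the plane field $\varphi_*\xi'$ on $\varphi(V_0)$ is $C^0$-close to $\ker(dz+f\,dy)$; in particular it is nowhere tangent to $\partial_x$, so the orthogonal projection $X'$ of $\partial_x$ onto $\varphi_*\xi'$ is a smooth Legendrian vector field that is $C^0$-close to $\partial_x$ and still transverse to the slices $\{x=\mathrm{const}\}$. A standard flow-box argument (parametrising by the time-$t$ flow of $X'$ starting from a left face $\{x=x_0\}\cap\varphi(V_0)$) produces a diffeomorphism $\psi$ of a neighbourhood of $\overline{\varphi(P)}$ which sends $X'$ to $\partial_{x'}$; since $X'$ is $C^0$-close to $\partial_x$, $\psi$ is $C^0$-close to the identity and so the image still contains $\varphi(P)$. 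Composing, $\varphi':=\psi\circ\varphi$ is a new embedding of some $V_0'\supset P$.

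Now I would read off the new defining function. In the $(x',y',z')$ coordinates, $\varphi'_*\xi'$ contains $\partial_{x'}$, so any defining form is $A\,dy'+B\,dz'$; $C^0$-closeness to $\ker(dz+f\,dy)$ makes $B$ close to $1$ and $A$ close to $f$, so on $\varphi'(V_0')$ we may normalise to $\alpha'=dz'+f'\,dy'$ with $f':=A/B$ bounded. A direct calculation gives $\alpha'\wedge d\alpha'=(\partial f'/\partial x')\,dx'\wedge dy'\wedge dz'$, and positivity of the confoliation $\xi'$ (which persists in a $C^0$-neighbourhood by \thmref{t:varela} applied to the contact region, and is automatic in the foliated part) forces $\partial f'/\partial x'\geq 0$. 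I would then extend $f'$ from the relatively compact set $\varphi'(V_0')$ to all of $\R^3$ as a bounded smooth function with $\partial f'/\partial x'\geq 0$, for instance by a cut-off on $x'$-slices that preserves monotonicity in $x'$.

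The only delicate point is the connectedness clause of \defref{d:dd}: every flow line of $\partial_{x'}$ must meet $\varphi'(V_0')$ in a connected arc. This is exactly where the $C^0$-smallness of the perturbation is used uniformly: since the flow lines of $X'$ are $C^0$-close on compact sets to those of $\partial_x$, and the original flow lines of $\partial_x$ met $\varphi(V)$ in connected arcs, one chooses $V_0$ with ``convex'' boundary in the $x$-direction relative to $\partial_x$ (e.g.\ a thin slab times a disc in $(y,z)$), so that for $\xi'\in U$ the same convexity persists and the intersection of each $\partial_{x'}$-line with $\varphi'(V_0')$ remains a single arc. Once this is set up, $(P,V_0')$ is a Darboux domain for $\xi'$ in the sense of \defref{d:dd} and the lemma follows.
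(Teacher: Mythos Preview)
Your approach is essentially the paper's: project $\partial_x$ onto $\xi'$ to obtain a nearby Legendrian field $X'$, flow from a transversal to build new coordinates in which $X'$ becomes $\partial_{x'}$, read off $f'$ from the defining form $dz'+f'\,dy'$, and extend $f'$ to $\R^3$. The paper projects along $\partial_z$ rather than orthogonally, uses a fixed transversal $D$ rather than a left face (which makes the connectedness clause automatic since $\varphi'$ is by construction defined on connected $X'$-flow segments), and does not invoke \thmref{t:varela}---positivity of $\xi'$ is a hypothesis of the lemma, not something to be established---but these are cosmetic differences.
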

\begin{proof}
Let $(P,V)$ satisfy the requirements of \defref{d:dd}. We fix a confoliated embedding $\varphi$ and a surface $D$ which meets every integral curve of $\partial_x$ in $V$ exactly once. In order to determine $U$ we fix an open neighbourhood of $V'\subset V$ of $P$ such that $\overline{V'}$ is compact. Then  $U$ is determined by the following requirements:
\begin{itemize}
\item[(i)] Every plane field in $U$ is transverse to $\varphi_*^{-1}(\partial_z)$ on $\varphi^{-1}(V')$.
\item[(ii)] For a smooth plane field $\xi'$ on $M$ let $X'$ be the projection of $\varphi_*^{-1}(\partial_x)$ along $\varphi_*^{-1}(\partial_z)$ to $\xi'$. We require that $X'$ is transverse to $V'\cap D$ and the flow lines of $X'$ starting at $V'\cap D$  are well defined as long as they stay in $V'$ and the image of $V'\cap D$ under the flow of $X'$ covers $P$.   
\end{itemize} 
The vector field $X'$ is smooth and therefore flow lines are uniquely defined and $U$ is open in the $C^0$-topology by standard theorems about the continuous dependence of solutions of ordinary differential equations on parameters (see chapter V in \cite{hartman}, for example). 

We still have to show that $P$ is a Darboux domain with respect to every positive  confoliation in $U$. We define $\varphi'=\varphi$ on $D\cap V'$. Using the flow of $X'$ we extend $\varphi'$ to a neighbourhood of $P$ so that oriented flow lines of $X'$ get mapped to oriented flow lines of $\partial_x$ in $\R^3$. The function $f'$ associated to $\xi'$ is then determined by $\varphi_*'(\xi')$. It satisfies  $\frac{\partial f'}{\partial x}\ge 0$ since $\xi'$ is a positive confoliation. 

We have defined $\varphi'$ on connected segments of flow lines of $X'$ starting at points of $V'\cap D$. Hence all requirements in \defref{d:dd} are satisfied except that $f$ is not yet defined on all of $\R^3$ but only on the image of $\varphi'$. But the construction allows one to choose an extension of $f'$ to $\R^3$ so that $dz+f'(x,y,z)dy$ defines a positive confoliation and $f'$ is bounded.  
\end{proof}

The neighbourhood $V'$ of $P$ so that $(P,V')$ is a Darboux domain depends on $\xi'\in U$. However, if we replace $P$ by a compact set $P'\subset V$ so that $P$ is contained in the interior then the above lemma shows that $(P,\ring{P'})$ is a Darboux domain for all $\xi'$ in a $C^0$-neighbourhood of $\xi$. 

According to \thmref{t:tight connection} every contact structure in $U$ is tight when restricted to $V$ where $(P,V)$ is a Darboux domain: The boundedness of the function $f$ implies that $\ker(dz+ f(x,y,z)dy)$ is a complete connection of the fibration $\R^3\lra\R^2$ given by $(x,y,z)\lmt (x,y)$.   
 
\begin{rem} \mlabel{r:Darboux for plane fields}
The notion of a Darboux domain can be extended to general smooth plane fields by omitting the requirement that $f(\cdot,y,z)$ is weakly monotone for all $(y,z)$. Then one can formulate the stability property for all smooth plane fields. 
\end{rem}

\subsubsection{Polyhedral decompositions adapted to $\xi$} \mlabel{s:adapted poly}

In this section we describe polyhedral decompositions adapted to a confoliation. Such decompositions are also used in \cite{col}.  Let $\xi$ be a positive cooriented confoliation on $M$ and $\II$ a line field transverse to $\xi$.

\begin{defn} \mlabel{d:transverse}
Given a polyhedron $P$ in $M$ we say that $\xi(x)$ is {\em transverse to} $\partial P$ in $x\in\partial P$ if
\begin{itemize}
\item $\xi(x)$ is transverse to a face of $P$ if $x$ is contained in the interior of that face,
\item $\xi(x)$ is transverse to all edges of $P$ whose closure contains $x$, and
\item if $x$ is a vertex, then for a germ of a surface $\Sigma_x$ tangent to $\xi(x)$ the set $(\Sigma_x\cap P)\setminus\{x\}$ is connected. If it is empty, then $x$ will be called {\em elliptic}. 
\end{itemize}
\end{defn}
Note that if $\xi(x)$ is not transverse to $P$ at a vertex $x$ of $P$, then $(\Sigma_x\cap P)\setminus\{x\}$ is not connected in general. For example, if this set has two connected components with non-empty interior, then $x$ should be thought of as a hyperbolic singularity. 
However, only the cases mentioned in \defref{d:transverse} will play a role in the following.


Let $P\subset (M,\xi)$ be a polyhedron in a confoliated manifold. If the boundary of a polyhedron $P$ is transverse to $\xi$, then one can define the characteristic foliation on $\partial P$ as follows. Since faces are assumed to be smooth they have a characteristic foliation. The characteristic foliation is oriented using the same conventions used for smooth surfaces. Where two faces meet along an edge we concatenate the corresponding oriented leaves. 
The leaves we obtain are piecewise smooth curves.  
At edges and non-supporting vertices the characteristic foliation is tangent to a pair of vectors, one of them tangent to one face adjacent to the edge or vertex while the other vector is tangent to the other face. 

The following notion (introduced by Thurston \cite{thurston}) will be applied to line fields and plane fields. We therefore formulate it in complete generality. 
\begin{defn} \mlabel{d:gen pos}
Let $\tau$ be a distribution of codimension $k$ on a $n$-manifold and $\R^n\supset P\hookrightarrow M$ an embedded polyhedron. Then $P$ is {\em in general position} with respect to $\tau$ if for all $x\in P$ and all $k$-subsimplices of $P$ the map 
$$
\R^n \lra \R^n/\tau_x = \tau^{\perp}_x
$$
restricted to the $k$-simplex is a diffeomorphism onto its image. 
\end{defn}
General position implies that $P$ is transverse to $\tau$ and it is a $C^0$-open condition. We will now state Thurston's jiggling lemma from \cite{thurston}. Note that this lemma uses triangulations (and not just polyhedral decompositions). Later $M$ will of course be a $3$-manifold, and $\tau=\xi$ a confoliation ($k=1$) or $\tau=\II$ a foliation of rank $1$ transverse to $\xi$ ($k=2$).   

\begin{lem}[Jiggling Lemma \cite{thurston}] \mlabel{l:jiggling}
Let $M$ be a compact manifold, $\mathcal{T}$ a triangulation and $\tau^{n-k}$ a continuous distribution of codimension $k$. Then there is a subdivision $\mathcal{T}'$ of $ \mathcal{T}$ such that after a small perturbation of the vertices one obtains a triangulation $\mathcal{T}''$ in general position with respect to $\tau$. 
\end{lem}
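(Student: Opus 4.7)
The plan is to reduce the problem to a local, nearly-linear setting where general position becomes a generic condition on the vertices of each simplex, and then to perturb all vertices at once by a small generic perturbation.

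First, I would fix a Riemannian metric on $M$, work in finitely many coordinate charts, and measure distances and angles with respect to Euclidean data in these charts. The key preparatory step is to subdivide $\mathcal{T}$ in a way which (a) drives the diameter of each simplex to zero, while (b) keeping the simplices ``fat'', i.e.\ within a bounded number of Euclidean similarity classes. This is the crucial subdivision property used by Thurston: iterated barycentric subdivision does not directly preserve fatness, but there is a refinement of it (the standard edgewise subdivision of the simplex) under which every simplex of $\mathcal{T}'$, read in a chart, belongs to one of finitely many shape classes. I would take $\mathcal{T}'$ to be a subdivision of this type, iterated enough times that on each simplex $\sigma$ of $\mathcal{T}'$ the plane distribution $\tau$ varies by at most some small angle $\varepsilon_0>0$ from a constant $k$-codimensional distribution $\tau_\sigma$ in the ambient chart; this uses only the uniform continuity of $\tau$ on the compact manifold $M$.

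Next I would analyse the linear model. For a simplex $\sigma\subset\R^n$ with vertices $v_0,\ldots,v_n$ and a constant codimension-$k$ distribution $\tau_0$, general position means that for every $k$-subsimplex $\sigma'$ of $\sigma$ the linear projection $\R^n\to\R^n/\tau_0$ restricts to an injection on $\sigma'$. Viewing the positions of the vertices as an element of $(\R^n)^{n+1}$ and fixing $\tau_0$, the set of configurations in general position is the complement of the zero set of finitely many non-trivial determinants, hence open and dense. By the fatness of the simplex shapes and compactness, there is a uniform $\delta>0$ so that for every shape class and every constant distribution $\tau_0$, a $\delta$-perturbation of the vertices of $\sigma$ suffices to reach a general position configuration that is stable under further perturbations of the distribution by at most $\varepsilon_0$. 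Combining this with step one, for each individual simplex $\sigma$ of $\mathcal{T}'$ we have a non-empty, open set of $\delta$-small vertex perturbations that make $\sigma$ in general position with respect to $\tau$ itself.

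Finally I would perform the perturbation globally. Order the vertices $w_1,\ldots,w_N$ of $\mathcal{T}'$ and consider the space $(\R^n)^N$ of $\delta$-small simultaneous perturbations (read in charts and interpolated suitably near chart overlaps). For each simplex $\sigma=[w_{i_0},\ldots,w_{i_n}]$ of $\mathcal{T}'$ and each of its $k$-subsimplices, the set of perturbations that fail to put that subsimplex in general position is a proper real-algebraic subvariety of $(\R^n)^N$, hence of measure zero and nowhere dense. Since $\mathcal{T}'$ has finitely many simplices, the union of these bad sets is still nowhere dense, so there exists an arbitrarily small perturbation of the vertices which simultaneously places every simplex of $\mathcal{T}'$ in general position with respect to $\tau$; call the resulting triangulation $\mathcal{T}''$.

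The main obstacle in this scheme is the shape-control step: a naive iterated barycentric subdivision produces simplices whose shapes degenerate, and the perturbation size needed to achieve general position then blows up, destroying the argument. The essential technical input is therefore the existence of a subdivision procedure under which only finitely many shape classes appear at all refinement levels, so that the perturbation size $\delta$ can be chosen uniformly in the refinement. Once this uniform fatness is in place, the transversality argument in the last paragraph is standard.
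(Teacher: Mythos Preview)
The paper does not prove this lemma; it is quoted from Thurston \cite{thurston} and simply cited. What the paper does provide, in the paragraphs immediately following the statement, is exactly the point you identify as the ``main obstacle'': it emphasizes that the subdivision scheme must be chosen so that iterated subdivision produces only finitely many simplex shapes up to rescaling and translation, and it describes two such schemes (Thurston's and Whitney's edgewise subdivision). Your outline is the standard argument, and your identification of the shape-control step as the essential technical input matches precisely what the paper singles out as the crucial property of the subdivision.

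One small remark: your final step perturbs all vertices simultaneously and argues that the bad set is a finite union of proper subvarieties. This is fine, but you should also note (as you implicitly do) that the perturbation must be small enough that the resulting complex $\mathcal{T}''$ is still a triangulation of $M$ (i.e., the perturbed simplices remain embedded and their incidence relations are preserved). This is again guaranteed by the uniform fatness bound together with a sufficiently small $\delta$.
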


The way simplices are subdivided is essential. A possible subdivision is due to Thurston \cite{thurston}. Another method was used by Whitney and is described in \cite{whitney} on p.~358 where each $n$-simplex is decomposed into $2^n$ simplices. In both cases the simplices obtained by subdivision depend on the ordering of the vertices of a simplex $P$ at least if $n\ge 3$. Whitney's method in dimension $3$ goes as follows:

Let $P\subset \R^3$ be a simplex and $p_0,p_1,p_2,p_3$ its vertices. Let $p_{ij}$ be the midpoint between $p_i$ and $p_j$ with $p_{ii}=p_i$. The first Whitney subdivision of $P$ consists of the following simplices (with an ordering of the vertices):
$$
\begin{array}{llll}
p_0 p_{01} p_{02} p_{03}  & p_1p_{01}p_{02}p_{03} &   p_1p_{12}p_{02}p_{03}  &   p_2p_{12}p_{02}p_{03} \\
p_1p_{12}p_{13}p_{03}     & p_2p_{12}p_{13}p_{03}  &  p_2p_{23}p_{13}p_{03}  &   p_3p_{23}p_{13}p_{03}.
\end{array}
$$

Both subdivision schemes have the property that consecutive subdivisions of a simplex yield only finitely many subsimplices of $\R^n$ up to rescaling and translation. An important consequence is the following: If $\TT''$ is obtained as in \lemref{l:jiggling} and is in general position, then all simplices obtained by further Whitney subdivisions of $\TT''$ are still in general position with respect to $\tau$. It is therefore possible to apply \lemref{l:jiggling} to a finite collection of distributions (with varying codimensions).

According to \lemref{l:jiggling} there is a triangulation such that every simplex is in general position with respect to a confoliation $\xi$ and a foliation $\II$ of rank $1$ transverse to it. If we start with a triangulation such that every simplex is contained in a Darboux domain, then we can ensure in addition that all simplices are contained in a Darboux domain and in general position with respect to the corresponding vector field $\partial_x$ (cf. \defref{d:dd}). 

Thus we have established the existence of a triangulation satisfying the requirements of the following definition. It refers to polyhedra rather than simplices because there is one condition we want to impose later on the decomposition of $M$ and which will require the consideration of polyhedra, not only simplices. 
\begin{defn}   \mlabel{d:adapted poly}
A decomposition of $M$ into polyhedra is {\em weakly adapted to} $\xi$ {\em and} $\II$ if each polyhedron $P$ of the decomposition has the following properties. 
\begin{itemize}
\item[(i)] $P$ is in general position with respect to $\xi$ and $\II$ and there are exactly two singular vertices $x_1^P$ and $x_2^P$ which are both elliptic.
\item[(ii)] $P$ is a Darboux domain in general position with respect to the vector field $\partial_x$ from \defref{d:dd} and $P$ is homeomorphic to a ball. 
\item[(iii)] For $i=1,2$ a neighbourhood of $x_i^P$ in $P$ is contained in the half space determined by the plane $\xi(x_i^P)$ in some coordinate chart near $x_i^P$. (This property is independent of the choice of a chart).
\item[(iv)] The faces where $\II$ enters $P$ form a disc in $\partial P$. Moreover the intersection of every leaf of $\II$ with $P$ is connected.
\end{itemize}
We say that $\xi(x_i^P), i\in\{1,2\}$,  {\em supports} $P$ and $x_i^P$ is a {\em supporting vertex} of $P$. If $P$ lies on the side of $\xi(x_i^P)$ determined by the coorientation of $\xi$, then $x_i^P$ is {\em negative}, otherwise this supporting vertex is {\em positive}. 
\end{defn} 
Later we will often say that a polyhedron/polyhedral decomposition is adapted to $\xi$ and implicitly require that it is also adapted to a fixed line field $\II$ transverse to $\xi$.

By requirement (i) of \defref{d:adapted poly} the characteristic foliation $\partial P(\xi)$ has exactly two singular points corresponding to the supporting vertices $x_1^P, x_2^P$, and both are elliptic. On a neighbourhood of $P$ the confoliation can be viewed as connection on a fiber bundle $(x,y,z)\lmt(x,y)$ determined by Darboux coordinates. Since $\xi$ is a positive confoliation $\partial P(\xi)$ is spiraling away from $x_i^P$ (again in the weak sense if $x_i^P\not\in H(\xi)$) if this vertex is positive and towards $x_i^P$ if it is negative. Thus positive vertices are sources of $\partial P(\xi)$ while negative supporting vertices are sinks. 

By the Poincar{\'e}-Bendixon theorem all limit sets of leaves of $\partial P(\xi)$ are singularities, closed cycles (passing through singularities) or closed leaves, because $P$ is adapted to $\xi$ there are no closed cycles.  By \thmref{t:tight connection} the restriction of $\xi$ to a neighbourhood of each polyhedron is tight. Hence $\partial P(\xi)$ has no closed orbits if $\xi$ is a contact structure. If $\xi$ is a positive confoliation and $\partial P(\xi)$ has a closed leaf, then this closed leaf bounds a disc $D$ tangent to $\xi$ inside $P$ and the holonomy of $\partial P(\xi)$ near $\partial D$ is weakly attractive on the side of the positive supporting vertex while it is weakly repelling on the other side.

\begin{defn} \mlabel{d:graphical}
Let $\xi$ be a plane field on a $3$-manifold $M$ and $\II$ a line field transverse to $\xi$. Let $P$ be a polyhedron weakly adapted to both $\xi$ and $\II$. A {\em spine} of $P$ is a path consisting of edges of $P$  connecting the supporting vertices of $P$ such that the orientations induced by $\xi$ match and the image of this path under $P\lra P/\II_x, x\in P$ lies in the interior of the image of $P$. 

$P$ is {\em graphical} if the projection of the leaves of the characteristic foliation on $\partial P$ which start and end on the spine and determine the holonomy of the characteristic foliation with respect to the spine project under each map $P\lra P/\II_x, x\in P$, to a curve without self intersections (expect maybe at the two endpoints on the projection of the spine when the holonomy has a fixed point).
\end{defn}

A spine exists when $P$ is weakly adapted to $\xi$ and $\II$. \figref{b:spine} on p.~\pageref{b:spine} shows the projection of a neighbourhood of a supporting vertex to $\II(x_i^P)^\perp$ with a spine and a few images of leaves of the characteristic foliation.  

\begin{defn} \mlabel{d:adapted}
A polyhedral decomposition of $M$ which is weakly adapted to $\xi$ and $\II$ is {\em adapted to} $\xi$ {\em and} $\II$ if it satisfies the following conditions.
\begin{itemize}
\item[(i)] All polyhedra are graphical. 
\item[(ii)] For all vertices $x$ of the polyhedral decomposition there is at most one polyhedron supported by $\xi(x)$. 
\end{itemize}
\end{defn}

Assume that $P$ is a simplex of the decomposition obtained so far and consider a sequence of simplices $P^i\subset P^{i-1}$ obtained by successively applying Whitney's subdivision to $P$. 
The characteristic foliation on $P_i$ converges (after rescaling $\left(P\subset\R^3,\xi\eing{P},\II\eing{P}\right)$ such that the diameter of the rescaled copy of $P_i$ is $1$) uniformly to the foliation induced by the constant plane field $\xi_x$ with $\{x\}=\bigcap_iP_i$. Since the condition that a simplex is graphical is a $C^0$-open condition we can achieve that all polyhedra of a triangulation are graphical if we choose the subdivision sufficiently fine. 

In order to ensure (ii) of \defref{d:adapted} we modify the triangulation as in \cite{col}. In this step, the triangulation is modified in neighbourhoods of supporting vertices. In an inductive process one adds/removes tetrahedra from polyhedra of the decomposition (here the triangulation is replaced by polyhedral decomposition). This is described in detail in  \cite{col}, we therefore only indicate the main idea in the following figure (cf. Figure 1 in \cite{col}). In \figref{b:suppvert}, $x$ is a supporting vertex of $P_0$ but not of $P_1$. Then a piece is removed from $P_0$ and added to $P_1$ and we obtain $P_0'$ and $P_1'$. Now  $x'$ supports $P_0'$ and no other polyhedron of the modified decomposition. 

\begin{figure}[h]
\begin{center}
\includegraphics[scale=0.8]{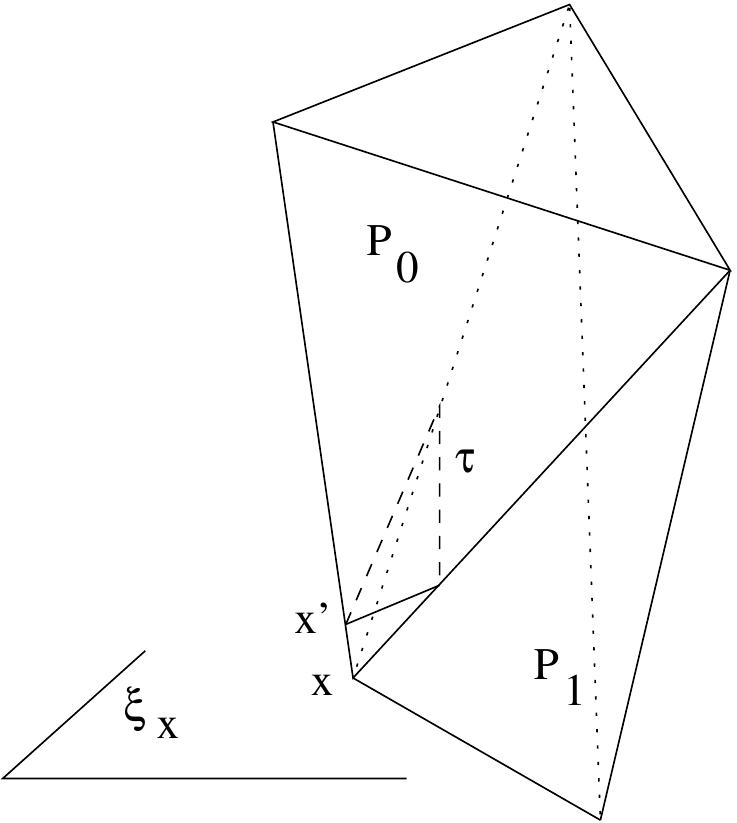}
\end{center}
\caption{Modification of polyhedra}\label{b:suppvert}
\end{figure}

Considering the various cases (one half space of $T_x\II$ is contained in $P$ or not) and choosing the segment $\tau$ close enough to other edges of $P$ one sees that the properties (i),\ldots,(iv) of \defref{d:adapted poly} can be preserved by this construction. Thus we have proved the following lemma (essentially due to Colin). 

\begin{lem} \mlabel{l:adapted triang} 
Let $\xi$ be a confoliation on a $3$-manifold and $\II$ a foliation of rank $1$ transverse to it.
Then there is a polyhedral decomposition of $M$ adapted to $\xi$ and $\II$. 
\end{lem}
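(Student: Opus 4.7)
The plan is to combine the jiggling lemma with Whitney subdivisions to secure the local conditions of \defref{d:adapted poly} and \defref{d:adapted}(i), and then use Colin's modification trick near supporting vertices to arrange \defref{d:adapted}(ii).

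First I would cover $M$ by finitely many Darboux neighbourhoods $V_1,\dots,V_N$ (which exist since $\xi$ is a confoliation and $\II$ is a transverse line field) and start with any smooth triangulation $\TT_0$ of $M$ whose simplices are subordinate to this cover, so that each simplex lies in some $V_j$ with Darboux coordinates $(x,y,z)$ and the associated Legendrian vector field $\partial_x$. Applying \lemref{l:jiggling} simultaneously to the three distributions $\xi$, $\II$ and $\R\partial_x$ (recall the remark that the jiggling procedure is compatible with finite collections of distributions because iterated Whitney subdivisions produce only finitely many similarity classes of simplices), and perturbing vertices slightly, I get a triangulation $\TT_1$ in general position with respect to all three. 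Along the way the perturbation of vertices ensures that no two supporting vertices have the same image, and one can ensure that each simplex has exactly two supporting vertices, both elliptic (the generic vertex lies off $\xi$ and therefore is not supporting, while precisely the highest and lowest vertices in the linear functional determined by $\xi$ are supporting).

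Next I would pass to a sufficiently fine Whitney subdivision $\TT_2$ of $\TT_1$. Since any subsimplex obtained by iterated Whitney subdivision remains in general position, conditions (i), (ii) and (iv) of \defref{d:adapted poly} are preserved. Rescaling a shrinking sequence of subsimplices $P^i \supset P^{i+1}$ converging to a point $x$, the confoliation $\xi\eing{P^i}$ and the line field $\II\eing{P^i}$ converge $C^0$ to the constant plane field $\xi(x)$ and the constant line $\II(x)$; in this limit model condition (iii) of \defref{d:adapted poly} (the support lying on one side) and graphicality of Definition \ref{d:graphical} are clearly satisfied, and both are $C^0$-open conditions, so for sufficiently fine subdivision all polyhedra of $\TT_2$ are graphical and satisfy \defref{d:adapted poly}(iii). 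Together with the Darboux stability of \lemref{l:dd stable}, each simplex is contained in a Darboux domain, hence homeomorphic to a ball, yielding \defref{d:adapted poly}(ii). At this stage $\TT_2$ is weakly adapted and satisfies \defref{d:adapted}(i).

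The remaining obstacle, and by far the most delicate, is \defref{d:adapted}(ii): at each vertex $x$ at most one adjacent polyhedron should be supported by $\xi(x)$. For this I would follow Colin's procedure from \cite{col} (illustrated in \figref{b:suppvert}): enumerate the vertices of $\TT_2$, and at each vertex $x$ that is simultaneously a supporting vertex of several adjacent polyhedra $P_0, P_1,\dots,P_k$, modify the decomposition inductively. For each $P_j$ with $j\ge 1$ for which $x$ is supporting, cut off a thin tetrahedral sliver from $P_0$ along an edge through $x$ and transport it onto $P_j$, producing new polyhedra $P_0'$ and $P_j'$; the sliver can be chosen so narrow (transverse to $\xi$ and to $\II$) that $P_0'$ and $P_j'$ remain Darboux domains in general position with respect to $\xi$, $\II$, and $\partial_x$, remain graphical, have exactly two elliptic supporting vertices (the modification moves the supporting vertex of $P_j$ off $x$ to a nearby vertex $x'$ while $P_0'$ still has $x$ as its supporting vertex), and still satisfy \defref{d:adapted poly}(iii),(iv). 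The two cases depending on whether one half-space of $\xi(x)$ is entirely contained in $P_0$ or not are handled by choosing the attached sliver on the appropriate side. Because the subdivision along the way is into polyhedra rather than simplices, we end up in the polyhedral rather than simplicial setting, as anticipated in \defref{d:adapted poly}. Iterating this over all vertices of $\TT_2$ finishes the construction, and the main difficulty lies in verifying, case by case, that the sliver-transfer preserves every local condition of \defref{d:adapted poly}; this is the combinatorial core of the argument and the place where the proof is genuinely inherited from \cite{col}.
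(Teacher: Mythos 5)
Your proposal is correct and follows the same route as the paper: jiggle a triangulation subordinate to Darboux charts to achieve general position simultaneously with respect to $\xi$, $\II$ and the Legendrian direction $\partial_x$, pass to a fine Whitney subdivision to secure graphicality and the remaining $C^0$-open conditions of Definition~\ref{d:adapted poly} from the linear limit model, and then invoke Colin's sliver-transfer modification near supporting vertices to arrange Definition~\ref{d:adapted}(ii). The paper likewise defers the detailed case analysis of the last step to \cite{col}, so your identification of that step as the "combinatorial core" inherited from Colin's work matches the structure of the original argument.
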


Now consider plane fields $\zeta$ which are sufficiently close to $\xi$ to ensure that $P$ is still adapted to $\zeta$ and $\II$. Moreover, we require that $\zeta$ is transverse to $\II$ and the characteristic foliation on $\partial P$ has decreasing holonomy (this is automatic if $\zeta$ is a contact structure). 

Under these circumstances, \thmref{t:fill ball} implies that there is a tight contact structure $\xi'$ on $P$, unique up to isotopy, such that $\partial P(\zeta)=\partial P(\xi')$. In our situation we want to keep $\xi'$ transverse to $\II$ and the purpose of the condition that $P$ is graphical is to ensure that such an extension $\xi'$ can be constructed quite easily. 

\begin{lem} \mlabel{l:hor fill ball} 
Let $\zeta$ be a plane field, $\II$ a line field transverse to $\zeta$ and $P$ a polyhedron adapted to $\zeta$ and $\II$ such that every leaf of $\partial P(\zeta)$ spirals from the positive supporting to the negative supporting vertex.
   
Then there is a contact structure $\xi'$ on $P$ transverse to $\II$ such that $\partial P(\zeta)=\partial P(\xi')$. It is unique up to homotopy through contact structures in that class. 
\end{lem}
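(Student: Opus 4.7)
The plan is to construct $\xi'$ by modifying the defining $1$-form of $\zeta$ within the Darboux coordinates provided by the hypothesis that $P$ is a Darboux domain, and to deduce uniqueness from a straight-line homotopy plus Gray's theorem.

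\textbf{Setup.} Since $P$ is a Darboux domain for $\zeta$ in the generalized sense of Remark~\ref{r:Darboux for plane fields}, fix coordinates $(x,y,z)$ on a neighborhood of $P$ in which $\zeta=\ker(dz+f(x,y,z)\,dy)$ for some smooth bounded function $f$. The Legendrian direction $\partial_x$ lies in $\zeta$ and meets $P$ in connected intervals. A direct computation gives
\[
(dz+g\,dy)\wedge d(dz+g\,dy)=g_x\,dx\wedge dy\wedge dz,
\]
so $\ker(dz+g\,dy)$ is a positive contact structure if and only if $g_x>0$. Every plane field of this shape contains $\partial_x$; at a point $p$ with $\II(p)=a\partial_x+b\partial_y+c\partial_z$, the condition $\II(p)\not\subset\ker(dz+g(p)\,dy)$ reads $c+g(p)b\neq 0$. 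Since $\II(p)\not\subset\zeta(p)$ by hypothesis, $\II$-transversality for $\ker(dz+g\,dy)$ is a $C^{0}$-open condition controlled by how far $g$ strays from the admissible region determined by $\II$.

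\textbf{Existence.} The geometric core is the monotonicity claim that along every oriented $\partial_x$-chord $I\subset P$ with entry point $p$ and exit point $q$ one has $f(q)>f(p)$. This is the analytic translation of the two hypotheses: $\partial P(\zeta)$ spirals from the positive to the negative supporting vertex, and $P$ is graphical relative to its spine. The increment $f(q)-f(p)$ records the net rotation of $\zeta$ along the Legendrian chord $I$; the graphical condition identifies this rotation with the holonomy of $\partial P(\zeta)$ read off along the spine in a monotone fashion, and the spiraling hypothesis fixes its sign. Granting the claim, choose $\tilde f$ with $\tilde f=f$ on $\partial P$ and $\tilde f_x>0$ on the interior, by a strictly monotone interpolation along each $\partial_x$-chord between its boundary values, keeping $\tilde f$ inside the admissible region from the Setup so that $\II$-transversality is preserved. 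Then $\xi':=\ker(dz+\tilde f\,dy)$ is a positive contact structure transverse to $\II$ with $\partial P(\xi')=\partial P(\zeta)$.

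\textbf{Uniqueness.} Given two contact structures $\xi'_0,\xi'_1$ as in the conclusion, after a small preliminary $\II$-transverse isotopy we may present both in the form $\ker(dz+\tilde f_i\,dy)$ with $\tilde f_0=\tilde f_1=f$ on $\partial P$ and $(\tilde f_i)_x>0$. The straight-line interpolation $\tilde f_s=(1-s)\tilde f_0+s\tilde f_1$ preserves $(\tilde f_s)_x>0$ and agreement with $f$ on $\partial P$, and by the admissibility property in the Setup it also remains $\II$-transverse throughout. Applying Gray's theorem (Theorem~\ref{t:gray}) relative to $\partial P$ to the resulting family $\xi'_s=\ker(dz+\tilde f_s\,dy)$ produces an isotopy through $\II$-transverse contact structures inducing the prescribed characteristic foliation on $\partial P$.

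\textbf{Main obstacle.} The crux is the monotonicity claim $f(q)>f(p)$ in the Existence step. Both ingredients are essential: the graphical condition ensures that the interior $\partial_x$-holonomy is controlled by the boundary data along the spine, while the spiraling hypothesis fixes its sign. Once this monotonicity is established, the rest of the argument reduces to a convex interpolation and Gray's theorem.
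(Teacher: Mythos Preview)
Your approach—replacing $f$ in the Darboux form $\ker(dz+f\,dy)$ by a strictly $x$-monotone interpolation with the same boundary values—is genuinely different from the paper's. The paper instead foliates $P$ by discs transverse to $\II$: it first builds a foliation of $\partial P$ by circles transverse to $\partial P(\zeta)$ (following a holonomy leaf from the spine almost all the way around, then closing up along the spine), and the graphical condition is used exactly as stated in \defref{d:graphical} to guarantee that these circles project to simple closed curves in $P/\II$, hence bound discs transverse to $\II$. The contact structure is then produced by a radial twist on each disc.

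The gap in your argument is precisely the step you flag as the main obstacle. The monotonicity claim $f(q)>f(p)$ along every $\partial_x$-chord is asserted, not proven, and the justification you offer does not go through: the graphical condition in \defref{d:graphical} concerns projections along $\II$, not along the Darboux direction $\partial_x$, so it does not identify the rotation of $\zeta$ along interior $\partial_x$-chords with the boundary holonomy read along the spine. Since $\zeta$ is a general plane field (not assumed to be a confoliation), $f_x$ has no sign, and there is no evident mechanism forcing the endpoint inequality $f(q)>f(p)$ from the spiraling of $\partial P(\zeta)$ alone; without it your existence construction collapses. A secondary gap appears in the uniqueness step: you assume that an arbitrary $\II$-transverse contact structure with the prescribed boundary characteristic foliation can, after an isotopy, be written as $\ker(dz+\tilde f\,dy)$, i.e., made to contain $\partial_x$. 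This is not automatic—$\partial_x$ is the Legendrian direction of $\zeta$, not of $\xi'_i$—and would itself require an argument of roughly the strength of the lemma.
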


\begin{proof}
Given $\zeta$, first construct a foliation on $\partial P$ by circles transverse to $\partial P(\zeta)$ (of course, $x_1^P,x_2^P$ are singular points of this foliation). This is possible since we assume that the characteristic foliation of $\zeta$ on $\partial P$ does not have closed leaves. In order to ensure that these circles bound discs which are also transverse to $\II$ we proceed as follows: Pick a spine of $P$ and starting at the spine choose the transverse curves very close to $\partial P(\zeta)$ until one returns to the spine. Then connect the endpoints of the arc constructed so far using an arc close to the spine (and therefore also transverse to $\zeta$). 

The resulting circles project to simple closed curves in $\zeta_x^\perp$ since we assumed that $P$ is graphical. Therefore the circles bound discs transverse to $\II$. In this way we obtain a foliation of $P$ by discs transverse to $\II$ and the boundary of each disc is transverse to the characteristic foliation on the boundary.  Now pick a curve transverse to the discs connecting $x_1^P$ and $x_2^P$. The intersection points of this arc with the discs serve as midpoints of the discs. Then a contact structure which is tangent to a radial line field on the discs is obtained by twisting the tangent plane around the radial line field starting at the center of each disc. (This can be done in such a way that a given  contact structure near the supporting vertices is extended.) 
\end{proof}


\subsection{Ribbons}

The proof of Colin's stability result \thmref{t:colin stab} does not carry over immediately to \thmref{t:transitive} because $\partial P(\xi)$ can have closed leaves for some polyhedron $P$ of the decomposition if $\xi$ is a confoliation while all leaves of the characteristic foliation pass from the source to the sink if $\xi$ is a contact structure. Characteristic foliations with the latter property are rather stable under $C^0$-perturbations among contact structures and this stability is used in Colin's proof of his stability theorem (\thmref{t:colin stab}). The goal of the construction presented in this section is to modify (depending on the choice of a nearby contact structure $\xi_s$) a given polyhedral decomposition which is adapted to $\xi$ and $\II$ in order to ensure that the characteristic foliation of $\xi_s$ on the boundary of modified polyhedra does not have closed orbits. 

Since the confoliation is transitive we could isotope the polyhedral decomposition so that 
\begin{itemize}
\item the interior of no isotoped polyhedron contains an integral disc with boundary on the polyhedron, and
\item all supporting vertices lie in the interior of $H(\xi)$. 
\end{itemize}
After this isotopy, Colin's proof of \thmref{t:colin stab} yields a proof of \thmref{t:transitive}.   

In order to have a proof of \thmref{t:transitive} which applies to more general - and more interesting - situations we formalize similar isotopies in terms of ribbons attached to $\partial P$ in the context of transitive confoliations. This will be adapted to the case of non-transitive confoliations later.

\subsubsection{Definitions}

Let $P$ be a polyhedron of the polyhedral decomposition adapted to $\xi$ and a fixed line field $\II$ transverse to $\xi$. 

\begin{defn} \mlabel{d:ribbon}
A {\em ribbon attached to} $P$ is a smooth embedding of a rectangle $\sigma\times[0,1]$ into $M$, where $\sigma=\sigma\times\{0\}$ is a compact interval in $\partial P$. We require that the embedding has the following properties:  

\begin{itemize}
\item[(i)] $\sigma\times\{0\}=\sigma$ is transverse to $\xi$ and $\sigma\times\{1\}\subset H(\xi)$. Moreover,
\begin{equation*} 
P\cap(\sigma\times[0,1])=\sigma. 
\end{equation*}
\item[(ii)]  The curves $\{z\}\times[0,1],z\in\sigma,$ are tangent to $X_\varphi:=\varphi^{-1}_*(\partial_x)$ where $\varphi$ denotes the embedding associated to the Darboux domain $(P,V)$ and they are transverse to $\partial P$. The ribbon $\sigma\times[0,1]$ and the curve $\sigma\times\{1\}$ are tangent to $\II$. 
\item[(iii)] There is an open neighbourhood $P\subset V$ so that $(P,V)$ is a Darboux domain and there is a surface $D\subset V$ intersecting each flow line of the vector field $\partial_x$ from \defref{d:dd} exactly once decomposing $\partial P$ into surfaces with boundary so that $\sigma$ does not meet $D\cap\partial P$.  
\item[(iv)] The ribbon is disjoint from the $1$-skeleton of the polyhedral decomposition except that $\sigma\times\{0\}$ may contain one supporting vertex of $P$. The intersection of a ribbon with the faces of polyhedra consists of arcs connecting the two  Legendrian curves which correspond to the endpoints of $\sigma$. 
\item[(v)] We further require that all ribbons are pairwise disjoint. 
\end{itemize}
\end{defn}
If several ribbons are attached to a polyhedron, then we use the same surface $D$ for all ribbons. 

On its way from the polyhedron $P$  to the contact zone $H(\xi)$ the ribbon  $\sigma\times[0,1]$ meets other parts of the $2$-skeleton of the polyhedral decomposition. Let $P^1$ be the first polyhedron the ribbon leaves. We view a copy of the remaining part of the ribbon which lies between $P$ and $\sigma\times\{1\}$ as a ribbon $\sigma^1\times[0,1]$ attached to $P^1$. (At this point we use (iv) from above which ensures that cutting a ribbon along an arc in the intersection of the faces with the ribbon decomposes the ribbon into two ribbons.) 

For later constructions it is useful to extend $\sigma^1\times[0,1] $ in the transverse direction, so we replace $\sigma^1\times[0,1]$ by a slightly larger ribbon which is still attached to $P^1$ and whose opposite end is still contained in $H(\xi)$. This extension is again denoted by $\sigma^1\times[0,1]$ (see \figref{b:ribbon}). We continue until we enter the polyhedron $P^C$ containing $\sigma\times\{1\}$ in its interior. At each step the ribbon that is attached to the polyhedra gets a little bit broader. However, in order to avoid problems with the $1$-skeleton, we do not allow that any of the ribbons $\sigma^1\times[0,1],\sigma^2\times[0,1],\ldots,\sigma^k\times[0,1]$ meets the $1$-skeleton of the polyhedral decomposition. We will sometimes denote the ribbon $\sigma\times[0,1]$ by $\sigma^0\times[0,1]$.

\begin{figure}[htb]
\begin{center}
\includegraphics[scale=0.8]{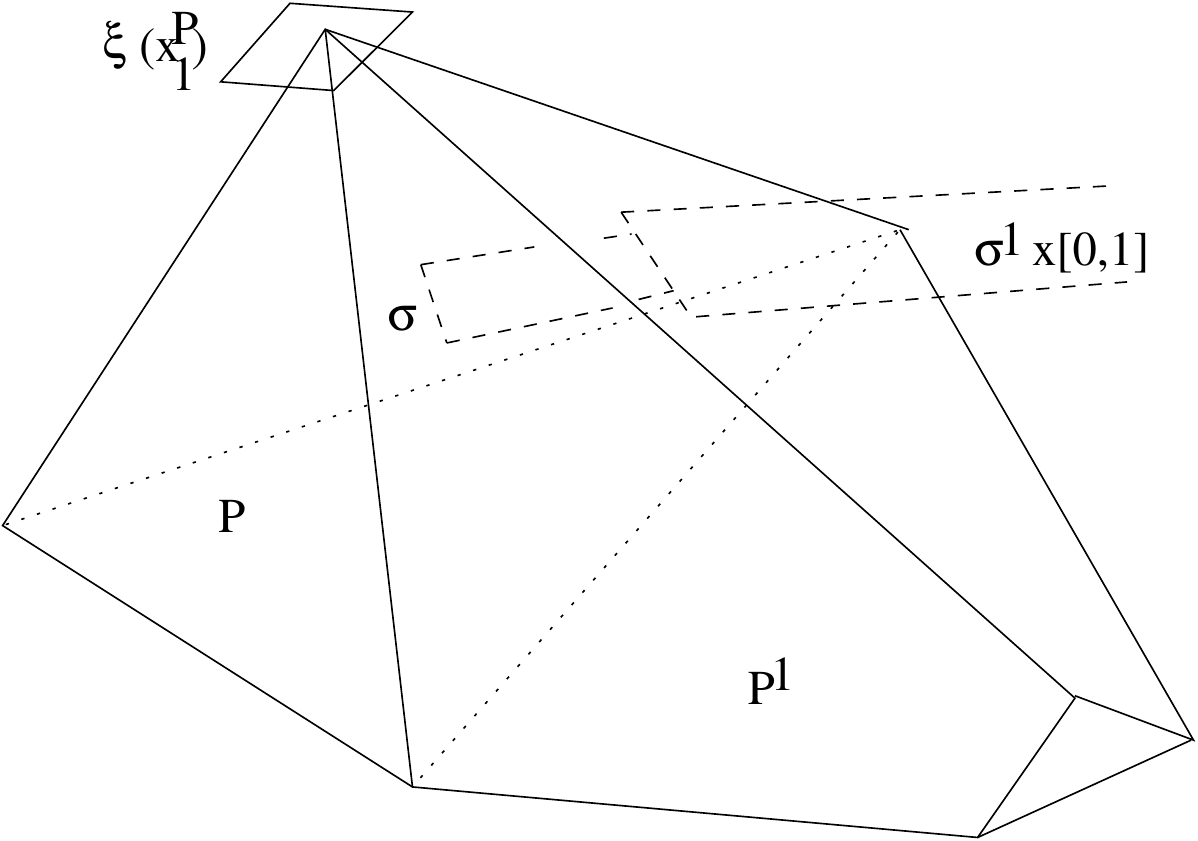}
\end{center}
\caption{The ribbons $\sigma\times[0,1]$ and $\sigma^1\times[0,1]$}\label{b:ribbon}
\end{figure}

Requirement (iii) in \defref{d:ribbon} is used in the proof of the following lemma.

\begin{lem} \mlabel{l:slit darboux}
Let $P$ be a polyhedron adapted to $\xi$ and $\II$.  Let $\sigma_j\times[0,1], j=1,\ldots,l$, are disjoint ribbons attached to $\partial P$, then 
$$
P\cup\bigcup_j(\sigma_j\times[0,1])
$$ 
is a Darboux domain in $M$. 
\end{lem}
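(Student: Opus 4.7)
The plan is to extend the given Darboux coordinates on a neighborhood $V \supset P$ outward along each ribbon by following the Legendrian vector field $X_\varphi$, thereby producing new Darboux coordinates on a neighborhood of $P \cup \bigcup_j (\sigma_j \times [0,1])$. First I would reparametrize $\R^3$ so that the separating surface $\varphi(D)$ from \defref{d:ribbon}(iii) lies in the plane $\{x = 0\}$ and the component of $\partial P \setminus D$ containing the attaching arcs $\sigma_j$ sits in $\{x > 0\}$. Each flowline segment $\{z\}\times[0,1]$ of a ribbon is Legendrian and tangent to $X_\varphi$ where the latter is defined, so extending $\varphi$ along the ribbons amounts to extending the first coordinate in the $+x$ direction past $\varphi(\partial P)$.

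The extension itself would proceed ribbon by ribbon. For each ribbon I would cover a small tubular neighborhood by finitely many local Darboux patches (available around any Legendrian arc in a confoliated manifold) and glue them to $\varphi|_V$ along the slice through $\sigma_j$. Properties (iv) and (v) of \defref{d:ribbon} guarantee that no two such patches are forced to overlap in $M$, and property (iii) keeps each ribbon on the $\{x > 0\}$ side of $\varphi(D)$. The outcome is a confoliated embedding $\varphi' : V' \to \R^3$ defined on a neighborhood $V'$ of $P \cup \bigcup_j (\sigma_j \times [0,1])$ whose image is $\varphi(V)$ together with several thin slabs extending it in the $+x$-direction.

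Next I would verify the two Darboux conditions. For the connectedness of $\partial_x$-orbit intersections with $\varphi'(V')$: any such orbit enters $\varphi'(V')$ through a point of $\varphi(D)$, traverses $\varphi(P)$, and either exits through a face of $P$ carrying no ribbon and is done, or exits through some $\sigma_j$ and continues along the slab $\varphi'(\sigma_j \times [0,1])$; condition (i) of \defref{d:ribbon} (namely $P \cap (\sigma_j \times [0,1]) = \sigma_j$) together with (v) prevents re-entry, so the intersection is a single arc. To obtain the function $f'$ on all of $\R^3$ with $\partial f'/\partial x \geq 0$ and bounded, I would keep $f$ outside the union of added slabs and on the slabs use the pullback of a contact form defining $\xi$ there, then extend smoothly to $\R^3$; positivity of the confoliation gives $\partial f'/\partial x \geq 0$ automatically, and boundedness is arranged by a standard cutoff outside a large compact set.

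The main obstacle is the bookkeeping in the ribbon-by-ribbon gluing: one must extend the Darboux chart across each of the intermediate polyhedra $P^1, P^2, \ldots, P^C$ that the ribbon traverses without introducing self-overlaps in the image or allowing an $\partial_x$-orbit to reenter $\varphi'(V')$ after it has left. This is exactly controlled by the geometric conditions placed on ribbons in \defref{d:ribbon}, especially the disjointness requirements (i), (iv), (v) and the placement of $\sigma_j$ on the $\{x > 0\}$-side of the separating disc $D$. Once these are used to straighten the ribbons into parallel $+x$-directed slabs in $\R^3$, the remainder reduces to standard extension arguments for Darboux charts in confoliated manifolds.
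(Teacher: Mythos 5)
Your proof is correct and follows the same route as the paper: one extends the given Darboux embedding outward along the ribbons, using the fact that the segments $\{z\}\times[0,1]$ are tangent to $X_\varphi$ (Definition~\ref{d:ribbon}(ii)) and that the separating surface $D$ from~(iii) keeps the attaching arcs away from $D\cap\partial P$ so the extension proceeds consistently in one $x$-direction, and then one checks the two defining properties of a Darboux domain (connectedness of $\partial_x$-orbit intersections via~(i) and~(v), and existence of a bounded $f'$ with $\partial f'/\partial x\ge 0$ via compactness of the ribbons). The paper's argument reduces to $l=1$ and simply asserts the extension of $\varphi|_P$ to a neighborhood of $P^\sigma$, invoking~(iii) and compactness; your version spells out the extension by covering the ribbons with local Darboux patches and gluing, which is more verbose but not a genuinely different mechanism. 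Two small imprecisions worth flagging: nothing in Definition~\ref{d:ribbon} forces all the attaching arcs $\sigma_j$ to lie in the \emph{same} component of $\partial P\setminus(D\cap\partial P)$, so one cannot in general arrange all slabs on the $\{x>0\}$ side — but the argument is symmetric and works equally well with some ribbons extending in $-x$; and a $\partial_x$-orbit need not enter $\varphi'(V')$ through $\varphi(D)$ — it may meet only a slab — though the connectedness still holds since each slab is convex in $x$ and conditions~(i),~(v) preclude re-entry into the rest of the domain.
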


\begin{proof}
Since only finitely many ribbons are attached it suffices to consider the case $l=1$.  By (iii) of \defref{d:ribbon} we can extend the embedding $\varphi\eing{P}$ to an embedding of $P^\sigma=P\cup_\sigma\sigma\times[0,1]$. We then extend the embedding $\varphi$ to an open neighbourhood of $P^\sigma$. Since $\sigma\times[0,1]$ is transverse to $\xi$ and compact, the function $f$ remains well defined and bounded. 
\end{proof}

Let $\sigma\times[0,1]$ be a ribbon attached to $P$ and let $\sigma^1\times[0,1],\ldots,\sigma^k\times[0,1]$ be the ribbons induced by $\sigma\times[0,1]$. 
At the end of the ribbon $\sigma^k\times[0,1]$ opposite to $P$ we fix a small cylinder $C$ contained in $H(\xi)$. In the following we specify some structures associated to such cylinders which will be useful in the sequel.  

Let $X$ be the $\xi$-Legendrian vector field from the proof of \lemref{l:slit darboux}. We fix a compact domain 
$C\simeq  I\times I \times I_{\sigma^{k}}$.  Here $I=[-1,1]$ and $I_{\sigma^k}$ are compact intervals tangent to $\II$. We view $C$ 
as  the total space of a fibration over the base $I\times I$, the bundle map being the projection onto the first two factors. We assume that the following conditions are satisfied. 
\begin{enumerate}
\item 
$C$ is contained in the support of $X$ and $\sigma^k\times\{1\}\subset C \subset H(\xi)$. 
\item The first factor in 
$C\simeq I\times I\times I_{\sigma^{k}}$ is tangent to $X$, the last factor is transverse to $\xi$ and tangent to ${\sigma^{k}}\times[0,1]$.  Each fiber of 
$C$ is either disjoint from the ribbon $\sigma^k\times[0,1]$ or the intersection of the fiber with the ribbon is connected and contained in the interior of the fiber.
\item The final assumption on 
$C$ is that the holonomy along the boundary of the base (with respect to both orientations) is defined for all points of $(\sigma^k\times[0,1])\cap\partial C$.  
The projection of $(\sigma^k\times[0,1])\cap\partial C$ 
to $[0,1]\times I$ is the base point for the definition of the holonomy.
\end{enumerate} 


We assume that $C$ 
is contained in a longer cylinder $\widehat{C}\subset H(\xi)$ so that the holonomy along the boundary of the base (with respect to both orientations) is defined for every point $x\in(\sigma^k\times[0,1])\cap \partial \widehat{C}$. Because that interval is compact, there is a number $\delta>0$ so that the difference between $x$ and both its preimage and image under the holonomy are separated by an interval whose length is at least $2\delta$. 

\begin{lem} \mlabel{l:domain}
Let $C=I\times I \times \R\lra I\times I$ be the projection and assume that the fibers are transverse to a positive contact structure $\xi$ such that the foliation corresponding to the first factor is Legendrian and the holonomy 
$h: \R\lra\R$ along $\partial (I\times I)$ is well defined with respect to a fixed base point in $\{-1\}\times I$. Then for every function $g:\R\lra\R$ with 
\begin{equation} \label{e:domain}
h\le g\le \id
\end{equation}
there is a domain $C(g)\subset C$ containing the fiber over the base point  so that the holonomy of the characteristic foliation on $C(g)$ is $g$. Moreover, if $h\le g_1\le g_2\le\id$, then $C(g_2)\subset C(g_1)$. 
\end{lem}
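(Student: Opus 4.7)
The plan is to exploit the fact that $\xi$, being a positive contact structure transverse to the fibers, is a complete connection for the projection $C\to I\times I$. I would work in Darboux-type coordinates $(x,y,t)$ adapted to the Legendrian first factor, so that $\xi=\ker(dt+f(x,y,t)\,dy)$ with $\partial_x f>0$. Parallel transport of the $t$-coordinate along a path $(x(s),y(s))$ in the base is governed by the ODE $\dot t=-f(x(s),y(s),t)\,\dot y$, and the given holonomy $h$ is simply its return map at $p_0$ around $\partial(I\times I)$.

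The key tool is a monotone family of inward-swept loops. For a smooth nonnegative function $\phi$ on $\partial(I\times I)$ vanishing near $p_0$, pulling the boundary inward by $\phi$ produces a loop $\gamma_\phi$ based at $p_0$ bounding a region $R_\phi\subset I\times I$; the holonomy along $\gamma_\phi$ defines a map $h_\phi\colon\R\to\R$. For $\phi\equiv 0$ one has $h_\phi=h$, while as $R_\phi$ collapses to $p_0$ we obtain $h_\phi\to\id$. The essential observation, which I would prove by a Stokes-type computation using the contact form (exploiting $\partial_x f>0$ to control the sign of the swept area), is that $h_\phi(t_0)$ depends monotonically on the enclosed region: shrinking $R_\phi$ pushes $h_\phi(t_0)$ upward towards $t_0$, and any value in $[h(t_0),t_0]$ is attained by some $\phi$. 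Given $g$ with $h\le g\le\id$, monotonicity and continuity let me select a $t_0$-dependent inward displacement $\phi(t_0)$ so that $h_{\phi(t_0)}(t_0)=g(t_0)$. I then define $C(g)\subset C$ by removing, at each level $t=t_0$, the slab of $C$ lying between $\partial(I\times I)$ and $\gamma_{\phi(t_0)}$.

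The main obstacle will be organising these $t_0$-wise trimmings into one smooth $3$-dimensional subdomain whose boundary foliation realises $g$ on the nose. For smoothness I would smooth $\phi(t_0)$ in the $t_0$-variable using the slack provided by the strict inequalities $h<g<\id$ on open subsets, and handle equality loci by a limiting argument that does not alter the underlying holonomy. For the holonomy claim, since the trimmed boundary of $C(g)$ is foliated by the leaves of the characteristic foliation along $\gamma_{\phi(t_0)}$ at level $t_0$, its return map at the $p_0$-fiber coincides with $t_0\mapsto h_{\phi(t_0)}(t_0)=g(t_0)$ by construction. The nesting property $C(g_2)\subset C(g_1)$ when $h\le g_1\le g_2\le\id$ is then automatic: a larger $g$ forces a larger displacement $\phi$ at each level, hence a larger trimmed region and a smaller remaining domain.
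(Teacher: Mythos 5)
Your intuition about the monotone dependence of boundary holonomy on the enclosed region is sound (it is the same positivity that the paper exploits), but the passage from the levelwise identity $h_{\phi(t_0)}(t_0)=g(t_0)$ to the claim that the characteristic foliation on $\partial C(g)$ has return map $g$ is a genuine gap. The lateral boundary $S=\bigcup_t \gamma_{\phi(t)}\times\{t\}$ of your trimmed domain is a surface in $C$, and a leaf of the characteristic foliation $S(\xi)$ starting at $(p_0,t_0)$ changes its fiber coordinate $t$ as it winds around; at each instant it must lie on $\gamma_{\phi(t)}\times\{t\}$ with $t$ \emph{varying}, so its projection to the base is a curve interpolating between the various loops $\gamma_{\phi(t)}$ over the range of $t$ attained by the leaf — not the fixed loop $\gamma_{\phi(t_0)}$. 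The return map of $S(\xi)$ at $t_0$ is therefore not $h_{\phi(t_0)}(t_0)$, and nothing forces it to equal $g(t_0)$. Closing the argument by a levelwise trim would require solving a fixed-point problem for the profile $\phi$ that couples all levels, which your sketch does not set up.

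The paper's proof cuts along characteristic leaves rather than along fixed-level loops, which dissolves the difficulty. Using the Legendrian vector field $X$ tangent to the first $I$-factor, the $X$-flow identifies $\partial_-C$ with $\partial_+C$; because $\xi$ twists positively along $X$ the transported characteristic foliations $\LL_-$ and $\LL_+$ are transverse on the identified faces. For each point $x$ in the fiber over the base point there is then a unique intersection of the relevant $\LL_+$- and $\LL_-$-leaves (existence from $h\le g\le\id$, uniqueness from transversality), and $C(g)$ is assembled from the segments of $\LL_+$-leaves up to those intersection points, swept through $C$ by the $X$-flow and augmented by $\partial_-C$. With this construction the boundary holonomy is $g$ essentially by inspection, and nestedness in $g$ is immediate. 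If you rebuild your trim so that it follows the front-face characteristic foliation instead of level sets, your Stokes-type monotonicity becomes the monotone motion of the intersection point in $g$, and your approach recovers the paper's.
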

\begin{proof}
Let $X$ denote the vector field tangent to the first factor of $C=I\times I\times \R$. We use the flow of $X$ to identify the front $\partial_+C=\{1\}\times I\times \R$ of $C$ with the back $\partial_- C=\{-1\}\times I \times\R$. Because $\xi$ is a contact structure the image $\LL_+$  of the characteristic foliation  on $\partial_+ C$ is transverse to the characteristic foliation $\LL_-$ on $\partial_-C$. We use the characteristic foliation on $\partial_-C$ and the flow of $X$ to identify all fibers with the fiber over the base point.

For $x$ in the fiber over the base point we move along the leaf of $\LL_+$ starting at the point in $\{(+1,+1)\}\times \R$ which corresponds to $x$ until this leaf intersects the leaf of $\LL_-$ coming from $\{(g(x),-1)\}\times\R\}$. Such an intersection point exists because of \eqref{e:domain} and is unique by transversality of $\LL_+$ and $\LL_-$. 

We now vary $x$ to obtain a domain in $\partial_+ C$ as the union of segments of $\LL_+$. Translating this domain inside $C$ using the flow of $X$ we obtain a domain in $C$. In order to make sure that points of this domain are connected to the fiber over the base point, we add $\partial_-C$ to the domain to obtain $C(g)$. This domain has the desired properties. The last  part of the statement is immediate from the construction. 
\end{proof}

\subsubsection{Attaching a full collection of ribbons}

\begin{defn} \mlabel{d:comp slice}
Given a polyhedral decomposition of $M$ adapted to $\xi$ we say that a pairwise disjoint collection $(\sigma_i\times[0,1])_{i=1,\ldots,l}$, of ribbons attached to polyhedra of the decomposition is {\em full} if for every polyhedron $P$ of the decomposition the following holds: After a spine $S_P$ of $P$ is fixed, for each segment of the characteristic foliation on $\partial P\setminus S_P$ respectively for each supporting vertex of $P$ there is $i\in\{1,\ldots,l\}$ so that $\sigma_i\times[0,1]$ is attached to $P$ and $\sigma_i$  intersects the segment of the characteristic foliation respectively $\sigma_i$ contains the supporting vertex. 
\end{defn}

\begin{lem} \mlabel{l:comp slice}
Every polyhedral decomposition adapted to a transitive confoliation $\xi$ and a line field $\II$ transverse to $\xi$ on a closed manifold admits a full collection of ribbons $\sigma_i\times[0,1], i=1,\ldots,l$, so that for every $i\in\{1,\ldots,l\}$ the arc $\sigma_i\times\{1\}$ is contained in $H(\xi)$ and disjoint from the $2$-skeleton of the polyhedral decomposition.
\end{lem}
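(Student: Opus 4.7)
The plan is to construct the full collection polyhedron by polyhedron. For each polyhedron $P$ we first select finitely many transversal arcs on $\partial P$ that witness every segment and every supporting vertex, and then use transitivity to extend each arc to a ribbon whose far end lies in $H(\xi)$.

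\textbf{Step 1 (transversal arcs on $\partial P$).} Fix a polyhedron $P$ and a spine $S_P$ joining its two supporting vertices $x_1^P,x_2^P$. Since $P$ is adapted, $\partial P$ is a topological sphere whose characteristic foliation has exactly these two (elliptic) singularities; by the Poincar\'e--Bendixson theorem applied to the disc $\partial P\setminus\mathring S_P$, every leaf is either an arc from $x_1^P$ to $x_2^P$ or a closed orbit bounding an integral disc inside $P$ (only possible in the confoliated case). The graphical condition guarantees that in the first case all such arcs project to embedded parallel curves in $\II(x)^{\perp}$. Consequently one can pick a finite, pairwise disjoint collection of compact arcs $\sigma_1^P,\dots,\sigma_{l_P}^P\subset\partial P$, transverse to $\xi$, with the following properties: one short arc through each supporting vertex; finitely many arcs crossing $\partial P\setminus\mathring S_P$ from one side of $S_P$ to the other in order to meet every arc-segment; and one transverse arc through each annular region of closed orbits. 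All $\sigma_i^P$ are placed in the interior of faces of $P$ except at the possibly singular endpoints, and disjoint from the rest of the $1$-skeleton.

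\textbf{Step 2 (extension to ribbons via transitivity).} Fix one such $\sigma=\sigma_i^P$. Using the Legendrian vector field $X_\varphi=\varphi_*^{-1}(\partial_x)$ of the Darboux chart of $P$, and the $\II$-direction for the transverse spread, we begin the ribbon $\sigma\times[0,\eps]$ leaving $P$ on the side prescribed by Definition~\ref{d:ribbon}. When this initial strip meets a face of a neighbouring polyhedron $P^1$ we re-interpret it as a ribbon $\sigma^1\times[0,1]$ attached to $P^1$ and continue, now tangent to $X_{\varphi^1}$, repeating inductively. Because $\xi$ is transitive, each point of $\sigma$ admits a Legendrian path to $H(\xi)$; by openness of $H(\xi)$ in $M$ and the continuous dependence of the (piecewise) Legendrian flow on its initial data, the set of starting points in $\sigma$ whose continuations reach $H(\xi)$ after crossing at most $N$ polyhedra is open. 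Compactness of $\sigma$, combined if necessary with a finer subdivision of $\sigma$ into finitely many sub-intervals on each of which one fixes a coherent choice of Legendrian continuations, produces a uniform $N(\sigma)$ and hence a well-defined ribbon $\sigma\times[0,1]$ whose top $\sigma\times\{1\}$ lies inside $H(\xi)$.

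\textbf{Step 3 (genericity, disjointness, and verification).} The $1$-skeleton of the decomposition has codimension $2$, so a $C^0$-small perturbation of the chosen Legendrian extensions — exploiting the freedom in the $\II$-thickness of each ribbon and in the precise lengths of the Legendrian flows — places every ribbon disjoint from the $1$-skeleton away from $\sigma_i\times\{0\}$, and renders all ribbons pairwise disjoint (we treat the ribbons in a fixed order and perturb each one in the complement of those already built, which is open and dense). Pushing the top end slightly into the interior of the polyhedron of $H(\xi)$ containing it guarantees that $\sigma_i\times\{1\}$ avoids the $2$-skeleton. The remaining conditions (i)--(iv) of Definition~\ref{d:ribbon} hold by construction: the base arc is transverse to $\xi$, the generating curves are tangent to $X_\varphi$ in $P$ (and to $X_{\varphi^j}$ inside each intermediate $P^j$), the ribbon is everywhere tangent to $\II$, the cross-section $D$ of the Darboux chart of $P$ provides the required slice, and each face-intersection is an arc between the two Legendrian boundaries.

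\textbf{Main obstacle.} The delicate step is Step~2: producing a globally defined ribbon that actually arrives in $H(\xi)$, \emph{uniformly} across all of $\sigma$. Transitivity yields the existence of a Legendrian escape path pointwise, but it is the interplay of the openness of this property with the compactness of $\sigma$ that upgrades pointwise existence to a continuous $2$-dimensional ribbon. Once this uniformity is in hand, the remaining conditions — disjointness from the $1$-skeleton in the intermediate polyhedra (where the prescribed Darboux structure of $P$ is no longer available) and pairwise disjointness of the full family of ribbons — are standard transversality consequences of the fact that the ribbons are $2$-dimensional surfaces tangent to $\II$ and therefore transverse to the characteristic foliations they traverse.
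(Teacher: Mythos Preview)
Your overall plan---select transversal arcs on each $\partial P$, extend them along Legendrian curves to $H(\xi)$, then arrange disjointness---matches the paper's. But two of your steps diverge from the paper in ways that matter.

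In Step~2 you work harder than necessary. The paper does not try to push the whole arc $\sigma$ into $H(\xi)$ uniformly by a Legendrian flow. Instead, for each supporting vertex (and similarly for each leaf segment) it first chooses a single Legendrian \emph{curve} from the relevant point of $\partial P$ to $H(\xi)$---this exists immediately by transitivity---and then thickens that curve in the $\II$-direction to obtain a ribbon. The attaching arc $\sigma$ is whatever this thickening traces on $\partial P$. So the ``uniformity across $\sigma$'' problem you isolate never arises.

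The genuine gap is Step~3. Your disjointness claim rests on the assertion that each new ribbon can be perturbed into the complement of the previously built ones because that complement is ``open and dense''. But a ribbon is a $2$-dimensional surface tangent to $\II$; two such surfaces in a $3$-manifold generically meet along curves tangent to $\II$, and neither the freedom in $\II$-thickness nor in the Legendrian flow length removes such an intersection by a $C^0$-small move. The paper handles this with a concrete, non-perturbative construction: process the ribbons in order, and when $\sigma_2\times[0,1]$ first meets $\sigma_1\times[0,1]$, \emph{split} $\sigma_2\times[0,1]$ at the intersection into two or three sub-ribbons and \emph{reroute} one of them to run parallel to $\sigma_1\times[0,1]$, which already terminates in $H(\xi)$. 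The rerouted pieces are again ribbons ending in $H(\xi)$, and iterating yields a pairwise disjoint full collection. This splitting-and-rerouting trick, not the extension to $H(\xi)$, is the actual main obstacle.
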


\begin{proof}
For each $P$ we choose a spine $S_P$ and a pair of Legendrian curves connecting the supporting vertices to $H(\xi)$ and we use a flow to extend these curves to obtain ribbons $\sigma_i\times[0,1],i=1,2$, tangent to $\II$ (it may be necessary to extend the ribbon a little to ensure that it meets $P$ in a segment.) We require that the Legendrian curves avoid the $1$-skeleton (except $x_i^P$) and are transverse to all faces of polyhedra in order to satisfy (iv) of \defref{d:ribbon}.

In the same way one obtains a collection of ribbons $\sigma_i,i=3,\ldots,l$, such that $\cup_i\left(\sigma_i\times\{0\}\right)$ intersects every segment of the characteristic foliation on $\partial P\setminus S_P$ and the ribbons satisfy the conditions in \defref{d:ribbon} individually.


However, the ribbons may still intersect each other and we can assume that they do so transversely. We remove the intersections inductively starting with one ribbon $\sigma_1\times[0,1]$.  Then if the second ribbon $\sigma_2\times[0,1]$ meets $\sigma_1\times[0,1]$ as we move along $\sigma_2\times[0,1]$, we split the second ribbon into two or three ribbons. One of these ribbons is replaced by a ribbon running parallel to $\sigma_1\times[0,1]$  while the other  parts follow the original ribbon $\sigma_2\times[0,1]$. In this way we obtain a full collection of ribbons which are pairwise disjoint (of course the ribbons $\sigma^j_i\times[0,1]$ induced by the attachment of one ribbon $\sigma_i\times[0,1]$ are not pairwise disjoint). 
\end{proof}

Let $\sigma\times[0,1]$ be a ribbon attached to a polyhedron $P$ and $g : \sigma\lra\sigma$ a decreasing map with compact support in the interior which corresponds to the monodromy of the boundary of a domain $C(g)$ in the cylinder containing $\sigma\times\{1\}$. Fix a tubular neighbourhood $N(\sigma)\subset \partial P$ whose fibers are segments of leaves of $P(\xi)$ such that $N(\sigma)$  is disjoint from the $1$-skeleton of the polyhedral decomposition. (If the ribbon contains a supporting vertex this tubular neighbourhood has to be chosen as indicated by the dotted lines in \figref{b:spine}). 

\begin{figure}[htb]
\begin{center}
\includegraphics[scale=0.7]{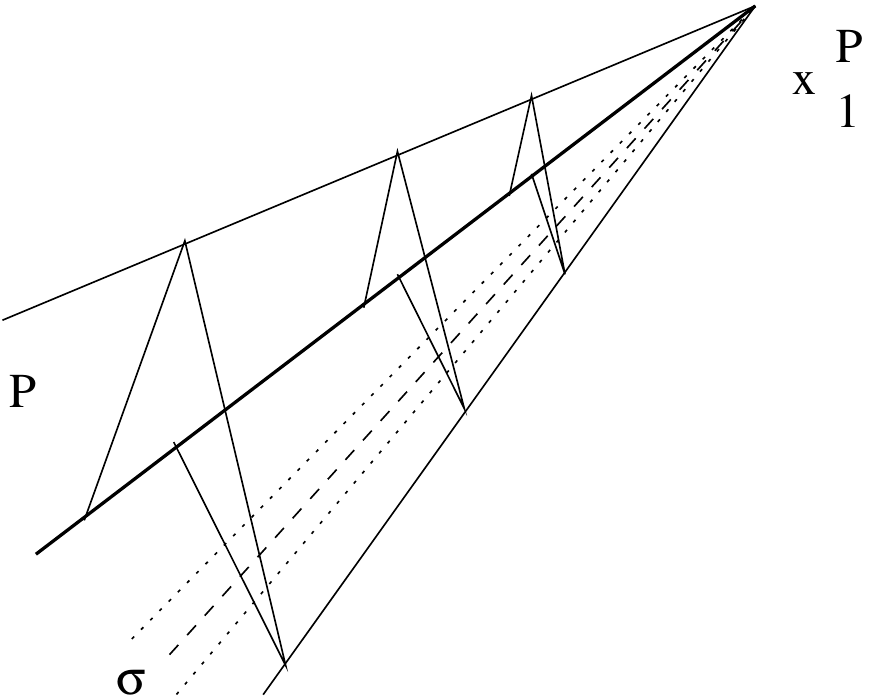}
\end{center}
\caption{Ribbon ending at a supporting vertex $x_1^P$ \newline projected to $\II(x_1^P)^\perp$}\label{b:spine}
\end{figure}

The union $P\cup\sigma\times[0,1]\cup C(g)$ can be thickened slightly to a domain with smooth boundary such that 
\begin{itemize}
\item the thickened domain is diffeomorphic to a ball, 
\item the characteristic foliation on the boundary has no singular points, and
\item the new boundary contains $\partial P\setminus N(\sigma)$.
\end{itemize}
We will not introduce any new notation for the thickening of $P\cup\sigma\times[0,1]\cup C(g)$.  
In order to identify $\partial P$ with $\partial (P\cup\sigma\times[0,1]\cup C(g))$ we extend the original ribbon $\sigma\times[0,1]$ to a family of ribbons which covers the thickening of  $\sigma\times[0,1]\cup C(g)$ such that all ribbons are tangent to $\II$. We then use a flow tangent to the characteristic foliation on the ribbons to push the new piece of the boundary to $N(\sigma)$ (this is shown in \figref{b:pushforw-xi}, the dashed curves correspond to the family of ribbons).

\begin{figure}[htb]
\begin{center}
\includegraphics[scale=0.7]{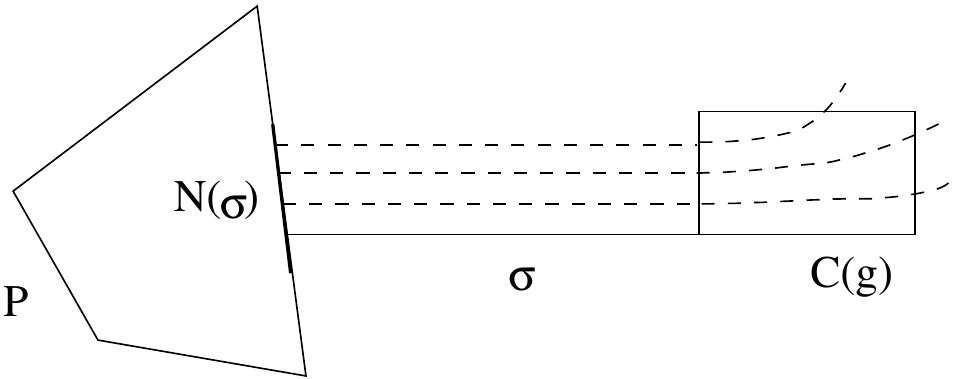}
\end{center}
\caption{Identifying $P$ and $P\cup\sigma\times[0,1]\cup C(g)$}\label{b:pushforw-xi}
\end{figure}

The push forward of $\xi$ under this isotopy remains transverse to $\II$ because it is tangent to a Legendrian vector field contained in surfaces tangent to $\II$. Now assume that $|g|$ is not too big and that 
$$
\mathrm{dist}\left(g(x),x_i^P\right)<k_{P,i}\cdot\mathrm{dist}\left(x,x_i^P\right)
$$
for a sufficiently small constant $k_{P,i}$ if $\sigma\times[0,1]$ contains a supporting vertex $x_i^P, i=1,2$.  Then the new characteristic foliation on $\partial P$ with respect to the push forward $\widehat{\xi}$ of $\xi$ is is still graphical. Since $g\le\id$, the new monodromy is decreasing by a larger amount than the original characteristic foliation.

However, since we have removed pieces of the polyhedra $P^1,\ldots, P^k$ which $\sigma\times[0,1]$ meets on its way to $H(\xi)$ the decrease in the monodromy of $\partial P$ is achieved at the expense of an increase of the monodromy in the monodromy of the boundary of other polyhedra. We now pick functions 
\begin{equation} \label{e:order g}
g_k\le \ldots \le g_1\le g\le\id
\end{equation} 
which can be realized as the holonomy of the characteristic foliation of a domain in $C$. Then \lemref{l:domain} implies $C(g_k)\supset\ldots\supset C(g_1)\supset C(g)$ and the thickenings can be chosen such that they satisfy the corresponding strict inclusion relations. The increase of the monodromy on boundaries of successive polyhedra is then outbalanced by the attachment of successive ribbons $\sigma^1\times[0,1],\ldots,\sigma^k\times[0,1]$ and domains $C(g_1),\ldots,C(g_k)$ to $P^1,\ldots,P^k$. (Later we will ask that the inequalities in \eqref{e:order g} between two functions are strict unless the functions both vanish.)

Thus the improvement of the holonomy on $\partial P,\partial P^1,\ldots,\partial P^k$ is achieved by removing pieces from the polyhedron $P^C $ containing the cylinder $C$. However, one can easily arrange that the monodromy on $\partial P^C$ with respect to $\widehat{\xi}$ is still decreasing by a definite amount when monodromy of the complement of $C(g_k)$ in $\partial C$ with $C(g_k)$ is decreasing by an amount which is bounded away from $0$. 


We attach all the ribbons together with the parts of the cylinders  obtained from \lemref{l:domain} to $P$.    For one  polyhedron $P$ we denote the result by $P^\sigma$. This is also meant to take into account the ribbons which are attached to other polyhedra and meet $P$.

\subsection{Proof of \thmref{t:transitive}} \mlabel{ss:transitive proof}

The proof of \thmref{t:transitive} has two main parts. We first determine the neighbourhood of $\xi$ and then we show that it has the desired properties. 

\subsubsection{Determining the neighbourhood of $\xi$ in \thmref{t:transitive}} \mlabel{ss:find eps}

Let $\xi$ be a transitive confoliation, $\II$ a foliation of  rank $1$ transverse to $\xi$. We fix a polyhedral decomposition of $M$ which is adapted to $\xi$ and $\II$ together with a full collection of ribbons $\sigma_i\times[0,1]$ and cylinders $\widehat{C}_i$ in $H(\xi)$. Let $\rho>0$ be such that the $3\rho$-ball around a supporting vertex meets only one ribbon, namely the ribbon $\sigma\times[0,1]$ containing the center of the ball, and the boundary of the ball meets $\sigma\times\{0\}$ in one point and the ribbon is transverse to the boundary. 

For each supporting vertex of a polyhedron $P$ we  add ribbons to the collection to ensure that every leaf of $\partial P(\xi)$ which meets $B_{3\rho}(x_i^P) \setminus B_{2\rho}(x_i^P)$ also intersects a ribbon.

When $\xi$ is replaced by a varying plane field $\zeta$, then the data associated to ribbons and Darboux domains vary as follows: 
\begin{itemize}
\item The vector fields defining the Legendrian curves on $\sigma_i^j\times\{t\},t\in[0,1],$ are replaced by their projection to $\zeta$ along $\II$. Hence the ribbons vary with $\zeta$ (with fixed initial conditions). Note that the ribbons $\sigma_i^j\times[0,1]$ which were induced by the original ribbons $\sigma_i\times[0,1]$ are now treated as being independent from each other. This is the reason why we required $\sigma_i^{j+1}\times[0,1]$ to be slightly broader than $\sigma_i^j\times[0,1]$.
\item The Legendrian vector fields $X_i$ on the cylinders $\widehat{C}_i$ where the ribbon $\sigma_i\times[0,1]$ ends are deformed in the same way. 

\end{itemize}
We do not introduce any new notation reflecting the variation of ribbons. 
The following conditions on $\eps>0$ ensure that the $\eps$-$C^0$-neigh\-bour\-hood $U_\eps(\xi)$ of $\xi$ has the stability property in \thmref{t:transitive}.
\begin{enumerate}
\item $\zeta$ is transverse to $\II$.
\item All flow lines of the deformed vector fields used in the above attachments and the associated constructions are well defined and have the same properties as before for all $\zeta$ which are $\eps$-$C^0$-close to $\xi$. In particular, $\sigma_i^{j}\times[0,1]$ meets $\sigma_i^{j+1}$ in the interior of this arcs.  
\item The polyhedral decomposition is adapted to $\zeta$ and $\II$ for all plane fields $\zeta$ which are $\eps$-close to $\xi$. In particular, the characteristic foliation remains graphical and each polyhedron with {\em all} ribbons and cylinders attached is contained in a Darboux domain 
for plane fields $\zeta$ which are $\eps$-$C^0$-close to $\xi$. 
\item If $\zeta$ is a confoliation, then it is positive (cf. \thmref{t:varela}). 
\item The holonomy of $\partial \widehat{C}_i(\zeta)$ respectively its inverse is either not defined or at least $\delta_C$-decreasing respectively increasing. 
\item For each polyhedron $P$ from the decomposition the characteristic foliation on the boundary has a monodromy which is not increasing by more than $\delta$ (we view the monodromy as a map from chosen spine of $P$ to itself). There is a positive constant $\kappa$ which depends only on $\xi$ and the geometry of the polyhedral decomposition near supporting vertices (in particular it depends on the angles between edges of polyhedra and angles between $\xi$ and edges) such that for each supporting vertex the monodromy of the characteristic foliation satisfies 
\begin{align*}
h_{P,i}(x)-x& <\kappa (x_i^P-x) & \textrm{if } &x_i^P\textrm{ is positive} \\ 
h_{P,i}(x)-x& <\kappa (x-x_i^P) & \textrm{if } &x_i^P\textrm{ is negative}
\end{align*}
on a $\rho$-ball around $x_i^P$. Here $\rho>0$ is so small that the $3\rho$-ball around $x_i^P$ intersects only one ribbon $\sigma\times[0,1]$ and $\sigma\times\{0\}$ meets the boundary of the $3\rho$-ball exactly once. The uppermost dotted line in \figref{b:support-attach} on p.~\pageref{b:support-attach} schematically depicts the graph of the upper bound for the monodromy of $\zeta$ where $\zeta$ is a smooth plane field $\eps$-close to $\xi$. (The horizontal axis is part of the spine of $P$ and the vertical axis measures the difference between a point on the spine and its image under the monodromy of the characteristic foliation of $\zeta$ on $\partial P$.)

\item Let $\sigma\times[0,1]$ be a ribbon which begins at a supporting vertex $x_i^P$. The constant $\kappa$ is so small that  after changing the characteristic foliation $\partial P(\zeta)$ in a conical neighbourhood (see \figref{b:spine}) of $\sigma\subset\partial P$ by a total amount of at most $2\kappa\mathrm{dist}\left(\cdot,x_i^P\right)$ the resulting characteristic foliation is still graphical. 
\item  Changing the characteristic foliation on $\partial P$ on the fixed neighbourhoods $N(\sigma^j_i)$ of $\sigma_{i}^j\times\{0\}$ (and hence outside of all $\rho$-balls around supporting vertices) so as to change the total monodromy by at most $2\delta$ one still obtains a graphical characteristic foliation on $\partial P$.
\item The contribution of a $\delta_C$-decreasing monodromy around $\partial \widehat{C}_i$ to the monodromy of the characteristic foliation on $\partial P$ turns that monodromy into a map which is at least $3\delta$-decreasing. (Here we use the deformed ribbons and their $\zeta$-characteristic foliations to compare the monodromy on a cylinder $\widehat{C}_i$ with that of a polyhedron which is connected to $\widehat{C}_i$ by the ribbon.)
\item The constants $\delta,\kappa$ satisfy
\begin{equation} \label{e:rhokappadelta}
\kappa\rho=\delta. 
\end{equation}
\end{enumerate}

This is a finite list of requirements which put restrictions on the $C^0$-distance of $\zeta$ from $\xi$. It can be summarized as follows: $\eps>0$ is chosen so small that ribbons and adapted polyhedral decompositions persist. The conditions on $\delta,\kappa,\delta_C$ ensure that the characteristic foliation on boundaries of polyhedra can be turned into foliations spiraling form the positive supporting vertex towards the negative supporting vertex after ribbons with suitable pieces of cylinders are attached to the polyhedra. As indicated by the extra conditions associated to supporting vertices more care is required near supporting vertices:  We will ensure that all plane fields are positive contact structures on (a priori unspecified) neighbourhoods of the supporting vertices. Furthermore, we require that characteristic foliations appearing in the constructions are such that all polyhedra can be filled by contact structures transverse to $\II$ provided that the holonomy of the characteristic foliation on the boundary is descending.

\subsubsection{Proof that the neighbourhood of $\xi$ has the desired property} \mlabel{s:transitive proof}

We will now prove that the $\eps$-neighbourhood of $\xi$ is weakly contractible. For future applications it is desirable to ensure that all contact structures/plane fields in the construction are transverse to $\II$. The proof without this control would be somewhat simpler (and the list of requirements in \secref{ss:find eps} would be shorter) and we could use \thmref{t:fill ball} instead of \lemref{l:hor fill ball}.

\begin{proof}[Proof of \thmref{t:transitive}]
We have to show that the $C^0$-neighbourhood of $\xi$ described in \secref{ss:find eps} has the following property: Given a compact family of contact structures $\xi_s,s\in S$, there is an extension of this family $\xi_s$ to a family of contact structures $\xi_{\widehat{s}}$ with 
$$
\widehat{s}=(s,t)\in \widehat{S}=S\times[0,2]/S\times\{2\}.
$$ 
Here we view $S$ as the subspace $S\times\{0\}$ of $\widehat{S}$.

The construction will be carried out in three main steps. In the first one, for $t\in[0,1]$, we will deal only with neighbourhoods of supporting vertices.  The second step takes care of the characteristic foliations on the  boundary of polyhedra. In both these steps we will be using plane fields which are not contact structures everywhere since we will only be interested in creating plane fields on $M$ together with adapted polyhedral decompositions such that the holonomy along the boundary of each polyhedron has no closed leaf. The conditions from \secref{ss:find eps} allow us to fill the interior of each polyhedra by contact structures transverse to $\II$. 
Together with Gray's theorem this concludes the proof. 

We start with the construction of the family $\xi_{\widehat{s}}$ around supporting vertices which is the most delicate step. Let $x_i^P$ be one of the supporting vertices of $P$. Recall that $P$ is a Darboux domain, so we are given a $\xi$-Legendrian vector field $X$ on $P$ and a surface $D\subset P$ intersecting every flow line of $X$ which meets $P$ exactly once. We denote the Darboux coordinates on $P$ by $(x,y,z)$ so that $X$ is the coordinate vector field of $x$.  By the conditions on $\eps$ the polyhedron $P$ is a Darboux domain for all $\xi_s, s\in S$, and the corresponding $\xi_s$-Legendrian vector fields $X_s$ vary continuously. The same is true for the characteristic foliations $D(\xi_s)$ and the Darboux coordinates $(x_s,y_s,z_s)$. The contact structure $\xi_s$ near $x_i^P$ is defined by $dz_s+f_s(x_s,y_s,z_s) dy_s$ with $\frac{\partial f_s}{\partial x_s}>0$.  

This data can be extended to families $X_{\widehat{s}}, f_{\widehat{s}}$ and  $(x_{\widehat{s}}, y_{\widehat{s}},z_{\widehat{s}})$ with $\widehat{s}=(s,t)$ with $t\in[0,1]$ defined on a neighbourhood of $x_i^P$ so that 
\begin{itemize}
\item $X_{\widehat{s}}$ is tangent to $\frac{\partial}{\partial x_{\widehat{s}}}$, 
\item $\frac{\partial f_{\widehat{s}}}{\partial x_{\widehat{s}}}>0$,  
\item $\xi_{\widehat{s}}$ is $\eps$-$C^0$-close to $\xi$, and 
\item $\xi_{(s,1)}$ does not depend on $s$. 
\end{itemize}
By compactness of $S$ we may choose the neighbourhood of $x_i^P$ to be a ball $V(x_i^P)$ which does not depend on $\widehat{s}$ and $V(x_i^P)$ is contained in the $\rho$-ball around the supporting vertex. 

We extend $\xi_{\widehat{s}}$ from $V(x_i^P)$ to a family of {\em smooth plane fields} $\zeta_{\widehat{s}}$ on $M$ such that 
\begin{itemize}
\item $\xi_s=\zeta_{(s,t)}$ outside of the $2\rho$-ball,
\item the restriction of $\zeta_{(s,t)}$ to the $\rho$-ball around $x^P_i$ is independent from $s$ when $t=1$, 
\item $\zeta_{\widehat{s}}$ remains $\eps$-$C^0$-close to $\xi$, and
\item the monodromy of the characteristic foliation of $\zeta_{\widehat{s}}$ on $\partial P$ is decreasing for all points of the spine whose distance from $x_i^P$ is at least $2\rho$.   
\end{itemize} 
Because $\zeta_{\widehat{s}}$ is only a plane field it may happen that the characteristic foliation on $\partial P$ has closed orbits in the $2\rho$-ball around $x_i^P$ (but not inside $V(x_i^P)$). This problem will be fixed by the attachment of a ribbon with a cylinder. 

For the ribbon $\sigma\times[0,1]$ containing $x_i^P$  we fix the following data. Let $I_{\sigma}$ be the reference fiber in the boundary of the cylinder where $\sigma\times[0,1]$ ends. We pick a collection of smooth functions $v_j: I_\sigma\lra (-2\delta,0], j=0,\ldots,k$, with the following properties.  
\begin{itemize}
\item $v_j$ has compact support in the interior of $I_\sigma$ unless $j=0$ (i.e. when one of the curves $(\partial\sigma)\times[0,1]$  starts at a supporting vertex of $P$).
\item $v_0\equiv \delta_C$ on all points which are connected to points in the $2\rho$-ball around $x_i^P$ by Legendrian curves in $\sigma\times[0,1]$. 
\item $-2\delta_C<v_k<v_{k-1}<\ldots <v_0\le 0$.  
\item $g_j:=\id+ v_j$ is a diffeomorphism of $I_\sigma$ for $j=0,\ldots,k$.
\end{itemize}
According to the conditions on $\eps$ it follows that when we attach the ribbon $\sigma\times[0,1]$ together with the domain $C(g_0)$ (more precisely an extension of $g_0$ with compact support in a slightly larger interval in the boundary of $C(g_0)$) to $P$ we  obtain a polyhedral complex 
such that the monodromy of the characteristic foliation is $2\delta$-decreasing for all points on the spine whose distance from $x_i^P$ is at most $2\rho$. 

Now replace $v_0$ by a function $\widehat{v}_0$ which has compact support in $I_\sigma$ such that 
 the holonomy of the characteristic foliation on the resulting polyhedral complex $P\cup \sigma\times[0,1]\cup C(\widehat{g}_0)$ is
\begin{itemize}
\item[(i)] strictly decreasing for all points in $V(x_i^P)$ (because $\zeta_{(s,t)}$ is a contact structure on $V(x_i^P)$ and $V(x_i^P)$ is a Darboux domain by construction),
\item[(ii)] at least $\delta$-decreasing for points in $B_{2\rho}(x_i^P)\setminus B_{\rho}(x_i^P)$,
\item[(iii)] at most $2\delta$-decreasing everywhere,
\item[(iv)] at least $\kappa\mathrm{dist}(x,x_i^P)$-decreasing on $B_\rho(x_i^P)\setminus V(x_i^P)$, and
\item[(v)] at most $2\kappa\mathrm{dist}(x,x_i^P)$-decreasing on $B_\rho(x_i^P)$.
\end{itemize}
Since $\widehat{v}_0$ has compact support, we can attach the ribbon $\sigma\times[0,1]$ and $C(\widehat{g}_0)$ to $P$ without having to worry about the supporting vertex.

We also replace $v_j,j>0$, by new functions so that $v_j<\widehat{v}_j<0$ on the support of  $v_j$ in order to ensure that the characteristic foliation of the pull back of $\zeta_{s,t}$ to the polyhedra remains graphical. 
(Of course we require $\widehat{v}_k\le\ldots\le\widehat{v}_1\le\widehat{v}_0\le 0$ with strict inequalities whenever one of the functions is not zero.) The conditions (ii) and (v) can be satisfied simultaneously by \eqref{e:rhokappadelta}.

Recall that the ribbons and the monodromy on the boundary of cylinders vary with $(s,t)$. Therefore we cannot choose the functions $\widehat{v}_j$ independently of $(s,t)$ but we let them vary with $(s,t)$ in such a way that for $t=1$ the monodromy of ribbons together with domains $C(\widehat{v}_j)$ (now viewed as a diffeomorphism of $\sigma_j\times\{0\}$) is independent from $s$ for $t=1$. 

Attaching the other ribbons $\sigma^j\times[0,1], j=1,\ldots,k$, together with $C(g_j)$ ensures that the holonomy of the characteristic foliation of $\zeta_{s,t}$ on the new polyhedra remains decreasing and the fact that the last polyhedron containing $C$ contains another cylinder which ensures that the holonomy of $P_C$ remains weakly decreasing after the pieces $C(g_1),\ldots,C(g_k)$ are removed from $C$ and added to other polyhedra. 

The following figure summarizes the situation near a supporting vertex.  The horizontal axis measures the distance of a point in the spine of $P$ from $x^P_i$ while the vertical axis corresponds to the displacement of a point by the holonomy of the characteristic foliation. The solid curve is a typical representative of the holonomy of $\zeta_{\widehat{s}}$ on $\partial P$ (before attachment of ribbons) while the dashed curve represents the effect of the attachment of $C(\widehat{g}_0)$ and the ribbon $\sigma\times[0,1]$. The dotted curves correspond to the conditions (i)--(v) and the thickened horizontal arc corresponds to points in $V(x^P_i)$. 
 
\begin{figure}[htb]
\begin{center}
\includegraphics[scale=0.7]{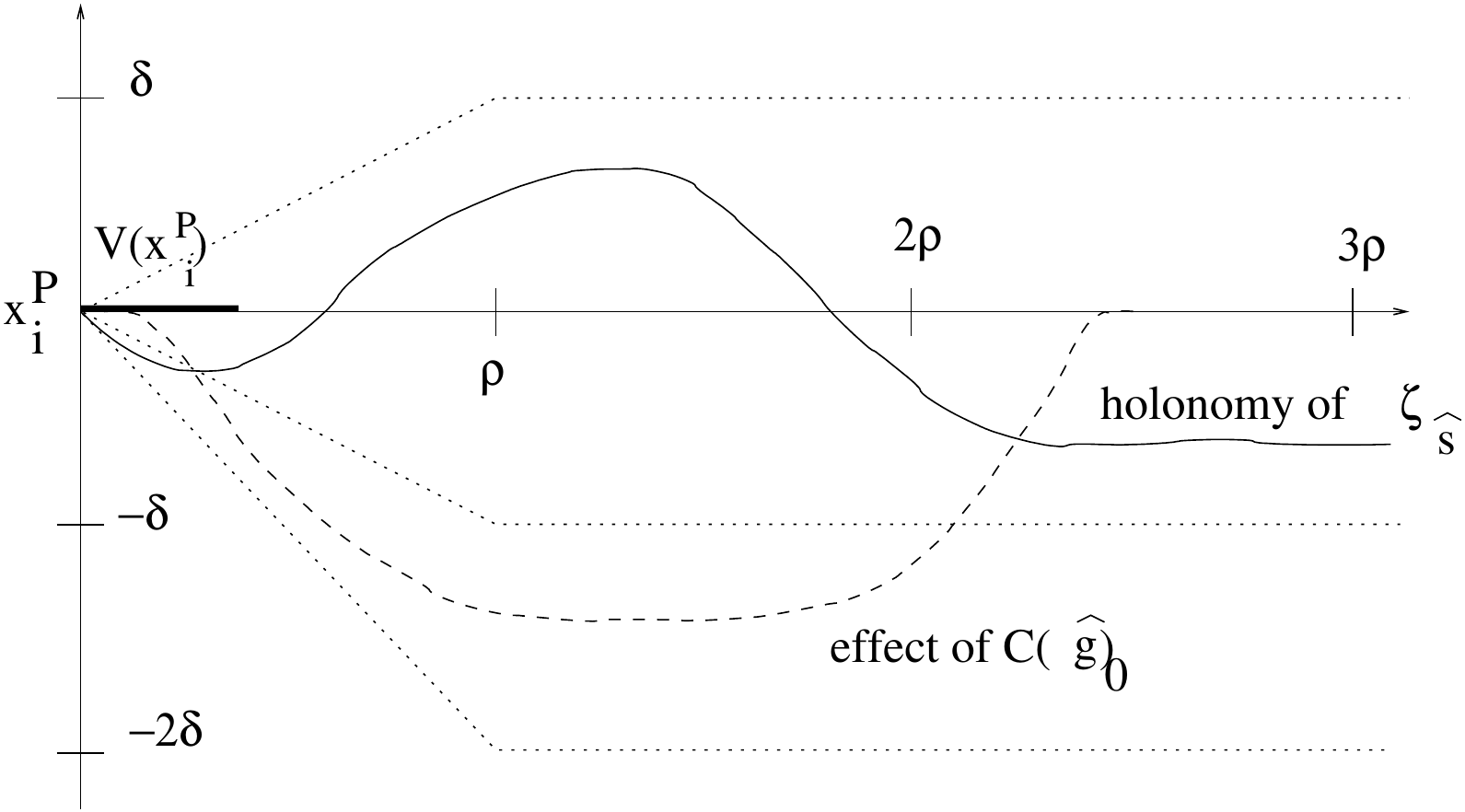}
\end{center}
\caption{Monodromy of $\zeta_{\widehat{s}}$ near supporting vertices} \label{b:support-attach}
\end{figure}

We treat all supporting vertices in the same fashion. The conditions (iii) and (v) imply that the characteristic foliation on $\partial P$ of the pull back of $\zeta_{\widehat{s}}$ remains graphical. Let $V=\cup_{i,P}V(x_i^P)$.

We next deform the plane field $\xi_{(s,1)}$ further keeping it constant on $V$. The deformation takes place only in a neighbourhood of the $2$-skeleton of the original polyhedral decomposition. We  choose the plane field $\zeta_{(s,t)}, s\in S,t\in[1,2],$ so that it remains $\eps$-close to $\xi$ and $\zeta_{(s,2)}$ does not depend on $s$ on the $2$-skeleton of the original decomposition while it coincides with $\xi_s$ near the enlarged cylinders in $H(\xi)$ where the ribbons end.  

According to our assumptions on $\eps$ we can attach ribbons and domains in $H(\xi)$ to all polyhedra such that the holonomy of the characteristic foliation on the boundary of the resulting polyhedra $P^\sigma$ is decreasing everywhere.  As before, we can achieve that the characteristic foliation on $P^\sigma$ does not depend on $s$ when $t=2$. We do not go through the details again since they already appeared in the above attachments of induced ribbons and we no longer have to worry about supporting vertices.

In this way we obtain a family of smooth plane fields $\zeta_{(s,t)}$ for $t\in[1,2]$ which is constant near supporting vertices when $t\ge 1$ and such that the characteristic foliation of $\zeta_{(s,t)}$ on the boundary of polyhedra with ribbons and domains attached to them do not depend on $s$ when $t=2$.




We now want to apply \lemref{l:hor fill ball}: From the pullback $\widehat{\zeta}_{\widehat{s}}$ of the smooth plane field $\zeta_{\widehat{s}}$ (i.e. the pullback of $\zeta_{\widehat{s}}$ by the flow of a Legendrian vector field which is guided by the ribbons attached to polyhedra) we will only remember the restriction of $\widehat{\zeta}_{\widehat{s}}$ 
\begin{itemize}
\item to the neighbourhoods $V(x_i^P)$ where $\zeta_{\widehat{s}}$ is a contact structure for all $\widehat{s}\in\widehat{S}$ and
\item to the $2$-skeleton of the polyhedral decomposition together with the attached ribbons and cylinders. 
\end{itemize}
For $t=0$ these characteristic foliations are induced by globally defined contact structures. In the deformations of the family of characteristic foliations we have ensured that the characteristic foliation on each polyhedron never has a closed leaf and is always graphical. 

By \lemref{l:hor fill ball} there is a family $\xi_{\widehat{s}}$ of contact structures on $M$ which coincides with $\widehat{\zeta}_{\widehat{s}}$ on 
\begin{itemize}
\item on the $2$-skeleton of the polyhedral decomposition, and 
\item on neighbourhoods of supporting vertices. 
\end{itemize} 
Moreover, $\xi_{\widehat{s}}$ can be chosen transverse to $\II$ for all $\widehat{s}\in\widehat{S}$. Since the boundary data provided by $\zeta_{(s,t)}$ is independent of $t$ for $t=2$, we can assume that $\xi_{(s,t)}$ has the same property. We have hence extended $\xi_s,s\in S$, to a family of contact structures $\xi_{\widehat{s}}$ with $\widehat{s}=(s,t)\in \widehat{S}$ and this finishes the proof. 
\end{proof}

\section{Exceptional minimal sets} \mlabel{s:sack}

The purpose of this section is to prove a parametric version of \thmref{t:unique} for confoliations (which have holonomy if they are foliations) such that either $\FF$ is a minimal foliation or every minimal set of $\FF$ is exceptional. 

\begin{thm} \mlabel{t:sack contract}
Let $\xi$ be a $C^2$-confoliation which has no closed leaf but is not a foliation without holonomy. Then there is a $C^0$-neighbourhood $U$ of $\xi$ such that the space of positive contact structures in $U$ is weakly contractible.
\end{thm}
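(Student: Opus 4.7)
The strategy mirrors that of \thmref{t:transitive}, with tubular neighbourhoods of curves with non-trivial linear holonomy replacing the contact region $H(\xi)$. Since $\xi$ has no closed leaves and is not a foliation without holonomy, Sacksteder's theorem (applied to the exceptional minimal sets, with its analogue for minimal foliations with holonomy) produces a finite collection $\gamma_1,\ldots,\gamma_m$ of closed curves tangent to $\xi$ with $h_{\gamma_i}'(0) \neq 1$ which together meet every minimal set of the fully foliated region. I choose pairwise disjoint solid-torus neighbourhoods $N_i \Subset \widehat{N}_i \simeq S^1 \times D^2$ of $\gamma_i$ for which $\xi$ is transverse to the meridional discs and such that the characteristic foliation $\partial \widehat{N}_i(\xi)$ consists of exactly two non-degenerate closed leaves parallel to $\gamma_i$, one attractive and one repulsive. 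Put $N := \bigcup_i N_i$ and $\widehat{N} := \bigcup_i \widehat{N}_i$, and shrink $U$ so that every $\zeta \in U$ is transverse to a fixed line field $\II$ transverse to $\xi$, remains transverse to the meridional fibration of each $\widehat{N}_i$, and induces a characteristic foliation on $\partial \widehat{N}_i$ of the same qualitative type. The lift of $\zeta\eing{\widehat{N}_i}$ to $\R \times D^2$ is then a complete connection, so \thmref{t:tight connection} shows that $\zeta\eing{\widehat{N}_i}$ is universally tight, with convex boundary whose dividing set consists of two curves each intersecting a meridian once.

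Given a compact family $\xi_s$, $s\in S$, of contact structures in $U$, I adapt the ribbon construction of \secref{ss:transitive proof} to $M \setminus \ring{N}$. Fix a polyhedral decomposition of $M \setminus \ring{\widehat{N}}$ adapted to $\xi$ and $\II$, and attach ribbons whose far ends terminate not in $H(\xi)$ but on annular collars of the attractive closed Legendrian leaves of $\partial \widehat{N}_i(\xi)$. Here the attractive leaf plays the role of $H(\xi)$: by wrapping a Legendrian arc sufficiently many times around $\gamma_i$ in the collar, one realizes any prescribed descending monodromy bounded by the collar width, providing a direct analogue of \lemref{l:domain}. The argument of \thmref{t:transitive} then goes through essentially verbatim and extends $\xi_s$, $s\in S$, to a family $\xi_{\widehat{s}}$, $\widehat{s}\in\widehat{S} = S \times [0,2]/S\times\{2\}$, that is constant in $s$ on $M \setminus \ring{N}$ at the cone point. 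On each solid torus $N_i$ one is then left with a family of tight contact structures with fixed convex boundary data and dividing curves of meridional intersection number $\pm 1$; the parametric form of \thmref{t:solid torus} contracts this family inside each $N_i$, completing the proof. When $\xi$ is a genuine confoliation (so $H(\xi) \neq \emptyset$), ribbons are allowed to terminate in either $H(\xi)$ or a Sacksteder tube, with no further modification.

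The main technical obstacle is producing the analogue of the cylinder reservoir of \secref{ss:transitive proof} near an attractive Legendrian leaf of $\partial \widehat{N}_i$. Unlike $H(\xi)$, which is an open contact domain of essentially unbounded ``capacity'', the annular collar of such a leaf supplies only a bounded amount of decreasing monodromy per wrap, dictated by the linear contraction rate of $h_{\gamma_i}$. One must therefore calibrate the $C^0$-size of $U$, the number of permitted wraps, and the widths of the collars to the modulus $|1 - h_{\gamma_i}'(0)|$, while ensuring the ribbons remain pairwise disjoint and avoid the $1$-skeleton as they wind around $\gamma_i$. A secondary verification, relevant in the confoliated case, is that the Sacksteder curves together with $H(\xi)$ give Legendrian access to every polyhedron of an adapted decomposition of $M \setminus \ring{N}$, so that a full collection of ribbons in the sense of \defref{d:comp slice} can always be built.
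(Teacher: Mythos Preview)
Your strategy matches the paper's, but two technical points need correction. First, $\xi$ cannot be transverse to closed meridional discs $\{*\}\times D^2$ (a non-singular line field on a disc would violate Poincar\'e--Hopf), and the lift to $\R\times D^2$ is not itself a complete connection in the sense of \thmref{t:tight connection}; the paper instead foliates a neighbourhood of each $\gamma_j$ by discs with two corners tangent to $\II$, on which transversality to $\II$ forces the characteristic foliation to consist of arcs passing between the two smooth boundary pieces, and this is what allows the extension to a complete connection on $\R^2\times\R$. The paper also does not assume that a nearby $\zeta$ has exactly two closed leaves on $\partial\widehat{N}_i$: it fixes one reference contact structure $\widetilde\xi$ (from the Eliashberg--Thurston approximation) with this property and arranges that the family agrees with $\widetilde\xi$ outside $\widehat{N}$ at the end of the ribbon phase, so that \thmref{t:solid torus} is applied with fixed boundary data.

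Second, your diagnosis of the monodromy obstacle is misdirected: no calibration of a finite number of wraps to $|1-h'_{\gamma_i}(0)|$ is needed, and indeed $C^0$-closeness gives no control over that derivative. Each ribbon is routed into its own fixed transverse annulus $\gamma_j\times\{*\}\times[-1,1]$ and extended semi-infinitely toward the attractive closed orbit of the perturbed contact structure; the computation \eqref{e:evol-alpha} shows that for any contact structure in $U$ the pulled-back characteristic foliation on the advancing front converges monotonically (hence uniformly, by Dini's theorem) to the vertical line field as the flow time grows. The analogue of \lemref{l:domain} then applies for flow time large enough, depending on the particular contact structure in the family, and disjointness of the extended ribbons is guaranteed simply by sending different ribbons into different annular slices.
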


\subsection{Facts about exceptional minimal sets} \mlabel{s:sack intro}

We first review the relevant definitions and results concerning Sacksteder curves.

\begin{thm}[Sacksteder, \cite{sack}] \mlabel{t:sack}
Let $\FF$ be a $C^2$-foliation and $N\subset M$ an exceptional minimal set. Then there is a leaf $L\subset N$ containing an embedded closed curve $\gamma$ such that the holonomy $h_\gamma : \tau\lra \tau$ along $\gamma$ satisfies 
$$
h'_{\gamma}(x)<1.
$$  
where $\tau$ is an interval transverse to $\FF$ containing $x\in\gamma$. 
\end{thm}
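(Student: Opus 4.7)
The plan is to prove Sacksteder's theorem by contradiction, using the $C^2$-regularity of the foliation together with minimality of $N$ to find a holonomy element whose derivative at a fixed point is strictly less than $1$. The key tool is a bounded distortion estimate for compositions of holonomy maps, which is where the $C^2$-hypothesis enters.

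First I would set up the holonomy pseudogroup. Choose a finite atlas of foliation charts such that each plaque meets $N$, and let $T$ be the (compact) disjoint union of transversals obtained from the charts. The holonomy transport along plaques generates a finitely generated pseudogroup $\Gamma$ of $C^2$ local diffeomorphisms of $T$. Since $N$ is an exceptional minimal set, $C := N \cap T$ is a Cantor set, and the action of $\Gamma$ on $C$ is minimal. The closed curves $\gamma$ appearing in the theorem correspond exactly to periodic points of $\Gamma$ in $C$, and their derivatives $h'_\gamma$ correspond to the multipliers $g'(x)$ at fixed points $g(x) = x$.

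The central technical step would be to prove a bounded distortion lemma: there is a constant $K > 0$ such that for every composition $g = g_{i_n} \circ \cdots \circ g_{i_1} \in \Gamma$ and every interval $J \subset T$ on which $g$ is defined, if the total length $\sum_{k=0}^{n} |g_{i_k} \circ \cdots \circ g_{i_1}(J)|$ of the intermediate images is bounded by some fixed constant $L$, then $\sup_J g'/\inf_J g' \le K$. This is obtained by writing $\log g' = \sum_k \log g'_{i_k}$ along the orbit and using the fact that each $\log g'_{i_k}$ is Lipschitz (since $g_{i_k}$ is $C^2$); the sum telescopes to something dominated by $\sum |g_{i_k}\circ\cdots(J)|$, which is bounded by $L$. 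This is exactly the point where $C^2$-regularity is essential — the analogous statement fails for $C^1$-foliations, as in Denjoy's counterexamples.

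With the distortion lemma in hand, I would argue as follows. Suppose for contradiction that $g'(x) \geq 1$ at every periodic point $x$ of $\Gamma$ in $C$. Choose a gap $I_0 \subset T \setminus C$ of nearly maximal length among all gaps; such a gap exists because gap lengths are summable and hence have a finite supremum. By minimality of the action on $C$, the two endpoints of $I_0$ (which lie in $C$) have orbits which return arbitrarily close to themselves, so one can find $g \in \Gamma$ defined on a neighborhood of $\overline{I_0}$ with $g(\overline{I_0})$ close to $\overline{I_0}$. Since $\Gamma$ permutes gaps, $g(I_0)$ is itself a gap; by the near-maximality of $|I_0|$ together with the bounded distortion estimate, the derivative of $g$ on $\overline{I_0}$ is bounded above and below. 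Iterating and using a pigeonhole/limiting argument (passing to a subsequence of longer and longer compositions whose orbits on $I_0$ stay within a compact region of $T$), one extracts a holonomy element $g_\infty$ with a fixed point $x_\infty \in C$ whose multiplier is strictly less than $1$, because the non-trivial Cantor structure forces strict contraction somewhere along the orbit. This contradicts the standing assumption, completing the proof.

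The main obstacle is the last step: to convert qualitative recurrence plus bounded distortion into genuine contraction at a periodic point, rather than merely non-expansion. The Cantor set structure of $C$ (in particular, the presence of gaps of many different scales) is what prevents the alternative of a uniformly neutral action and forces some holonomy element to be strictly contracting at a fixed point; making this precise is the technical heart of the argument.
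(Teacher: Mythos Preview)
The paper does not give its own proof of this theorem; it is stated with attribution to Sacksteder \cite{sack} and used as a black box (the paper only remarks afterwards that the proof carries over to confoliations). So there is nothing in the paper to compare your argument against.

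Your outline is the classical one: reduce to a finitely generated $C^2$ pseudogroup acting on a compact transversal, prove the Denjoy--Schwartz bounded distortion estimate using the $C^2$ hypothesis, and exploit the gap structure of the Cantor set $C=N\cap T$. That much is correct and is exactly how the theorem is proved in the standard references.

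The one place where your sketch is genuinely incomplete is the final paragraph. You describe ``passing to a subsequence of longer and longer compositions'' and ``extracting a holonomy element $g_\infty$'' by a limiting procedure. This is not how the argument runs, and as written it does not work: there is no compactness in $\Gamma$ that lets you extract a limit element, and a limit of pseudogroup elements need not lie in the pseudogroup. The actual mechanism is direct. One fixes a gap $I$ of (essentially) maximal length with endpoint $p\in C$, uses minimality to find a single element $g\in\Gamma$ with $g(p)$ very close to $p$ and with the intermediate orbit of $\overline{I}$ having bounded total length (so the distortion lemma applies to $g$ on a neighbourhood of $\overline{I}$). Since distinct gap-images are disjoint, $|g(I)|\le |I|$, and bounded distortion then forces $g'(p)$ to be close to $|g(I)|/|I|\le 1$. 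One then shows $g$ has an actual fixed point $x$ near $p$ (comparing $g$ to the identity on the short interval between $p$ and $g(p)$) and that $g'(x)<1$ strictly; the strictness comes from the fact that if $g'(x)=1$ everywhere one could propagate this via distortion to contradict the Cantor (nowhere dense) structure of $C$. No limiting construction is involved. You correctly flagged this step as the crux; the fix is to replace the extraction argument by the direct fixed-point argument just described.
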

A curve with the properties of $\gamma$ in this theorem will be referred to as {\em Sacksteder curve}. Going through the proof of \thmref{t:sack} one can easily verify that it is also valid for confoliations. 
 
Sacksteder's theorem is one of the instances where the $C^2$-hypothesis is used in an essential way. As it turns out the other place where $C^2$-smoothness is used, namely the theory of foliations without holonomy and the following observation about minimal foliations with holonomy, are also based on  \thmref{t:sack}. The proof of the following result from \cite{confol} can be found in \cite{pet}. 

\begin{thm}[Ghys] \mlabel{t:minimal sack}
Let $\FF$ be a $C^2$-foliation on $M$ such that $M$ is a minimal set and $\FF$ is not a foliation without holonomy. Then there is a Sacksteder curve tangent to $\FF$. 
\end{thm}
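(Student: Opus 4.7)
The plan is to reduce to Sacksteder's theorem (\thmref{t:sack}) by a careful analysis of the holonomy pseudogroup on a transversal, exploiting minimality together with the $C^2$ hypothesis.

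First, since $\FF$ has holonomy, I would pick a closed curve $\alpha$ in some leaf $L$ of $\FF$ whose holonomy germ $h_\alpha\colon(\tau,0)\to(\tau,0)$ on a short transversal $\tau$ through $p\in\alpha$ is not the identity. The easy subcase is $h_\alpha'(0)\neq 1$: then either $\alpha$ or $\alpha^{-1}$ is already a Sacksteder curve and we are done. So the substantive case is $h_\alpha'(0)=1$ but $h_\alpha\neq\id$; after replacing $\alpha$ by its inverse if necessary, we may assume there is an interval $(0,a)\subset\tau$ on which $h_\alpha(x)<x$, so $h_\alpha^n(x)\to 0$ for every $x\in(0,a)$.

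Next, I would use the minimality of $\FF$ on $M$, which translates into minimality of the holonomy pseudogroup $\mathcal{H}$ on $\tau$: the $\mathcal{H}$-orbit of $0$ is dense in $\tau$. Concretely, for any target point $y$ near $0$ (and in particular inside $(0,a)$), there is a pseudogroup element $g_y\in\mathcal{H}$, represented by a path in $L$ from $p$ to a nearby point, such that $g_y(0)=y$. The idea is then to form the element
\[
 \varphi_n := g_{y_n}^{-1}\circ h_\alpha^{k_n}\circ g_{y_n}\in\mathcal{H},
\]
for carefully chosen $y_n\to 0$ and iterates $k_n\to\infty$. This represents the holonomy along a loop obtained by concatenating the curve for $g_{y_n}$, a $k_n$-fold traversal of $\alpha$ based at the translated point, and the reverse of $g_{y_n}$. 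Each $\varphi_n$ has a fixed point near $0$; the goal is to exhibit a choice for which $\varphi_n'<1$ at that fixed point.

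The key — and the place I expect the real work to lie — is showing that some $\varphi_n$ is linearly contracting. Since $h_\alpha'(0)=1$, the linear contraction cannot come from the first-order data alone; instead it must be extracted from the higher-order behavior of $h_\alpha$ on $(0,a)$ using bounded-distortion control along the composition. This is exactly where the $C^2$ hypothesis enters, in the same way as in the proof of \thmref{t:sack}: the Schwarzian/Denjoy–Koksma-type distortion bounds valid for $C^2$ diffeomorphisms let one compare $\varphi_n'$ at its fixed point with a product of derivatives of $h_\alpha$ along an orbit segment converging to $0$. Because $h_\alpha(x)-x$ is strictly negative on $(0,a)$, one obtains, after choosing $y_n$ on the correct side of $0$ and $k_n$ large enough, an estimate $\varphi_n'(\text{fix})<1$. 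The corresponding concatenated loop in $L$ is the sought Sacksteder curve. The main obstacle is precisely this distortion estimate: the $C^2$ hypothesis is essential, and Denjoy-type counterexamples show that any merely $C^1$ version of the argument must fail.
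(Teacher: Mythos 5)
The paper does not prove this theorem; it is stated with a citation to Eliashberg--Thurston and Petronio's exposition. So I am comparing your proposal to the known argument in the literature rather than to anything in this paper.

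The overall strategy you outline---reduce to the case $h_\alpha'(0)=1$ with $h_\alpha$ a nontrivial parabolic contraction on $(0,a)$, then use minimality to place the fixed point inside the contracting regime and extract linear holonomy via a $C^2$ distortion estimate---is in the right spirit. But the specific pseudogroup element you build,
$\varphi_n = g_{y_n}^{-1}\circ h_\alpha^{k_n}\circ g_{y_n}$,
cannot work, and the failure is exact rather than a matter of estimates. The fixed points $x$ of $\varphi_n$ are precisely the points with $g_{y_n}(x)$ a fixed point of $h_\alpha^{k_n}$, and at such an $x$ the chain rule gives
$\varphi_n'(x) = \frac{1}{g_{y_n}'(x)}\cdot (h_\alpha^{k_n})'(g_{y_n}(x))\cdot g_{y_n}'(x) = (h_\alpha^{k_n})'\bigl(g_{y_n}(x)\bigr).$
Since the only fixed point of $h_\alpha$ near $0$ is $0$ and $(h_\alpha^{k_n})'(0)=1$, every $\varphi_n$ has derivative exactly $1$ at its fixed point near $0$. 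Conjugation preserves eigenvalues at fixed points, so no amount of $C^2$/Denjoy--Koksma distortion control can make a conjugate of a parabolic germ hyperbolic. This is the gap.

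What the Sacksteder/Hector/Ghys argument actually does is form a genuine composite, not a conjugate. One takes $g\in\mathcal{H}$ with $g(0)\in(0,a)$ (this is where minimality is used) and considers $f_n := h_\alpha^{n}\circ g$ for large $n$, defined on a small neighbourhood of $0$. Then $f_n(0)=h_\alpha^n(g(0))\to 0$, while $f_n'(0) = (h_\alpha^n)'(g(0))\cdot g'(0)$. The $C^2$ hypothesis enters in showing that $(h_\alpha^n)'(g(0))\to 0$ along the $h_\alpha$-orbit of $g(0)$, and in a bounded-distortion estimate that keeps a definite interval around $0$ inside the domain of $f_n$ and keeps $\sup|f_n'|$ small on it. Once $f_n$ maps such an interval into itself with $|f_n'|<1$, the contraction principle gives a hyperbolic fixed point, and the corresponding loop is the Sacksteder curve. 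The essential difference from your construction is that $h^n\circ g$ is not conjugate to a power of $h_\alpha$, so it can (and does) acquire a hyperbolic fixed point that $h_\alpha$ itself lacked.
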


From the fact that Sacksteder curves have non-trivial linear holonomy it follows that a Sacksteder curve cannot bound a compact subsurface of the leaf $L$ it is contained in (this holds for both orientations of $\gamma$). In order to see this  recall that 
\begin{align} \label{e:non-split}
\begin{split}
\pi_1(L) & \lra \R \\
\alpha & \lmt \log(h_\alpha'(0))
\end{split}
\end{align}
determines a cohomology class in $H^1(L,\R)$ (cf. \cite{cc} or \cite{confol}). Again this remains true when $L$ is a leaf of the fully foliated part of a confoliation. 

\subsection{Adapting definitions related to ribbons}
In the proof of \thmref{t:sack contract} we use the set up from the proof of \thmref{t:transitive}.   We describe the required changes in the following. 

Either $\xi$ is a foliation all of whose leaves are dense or all minimal sets of the fully foliated part of $\xi$ are exceptional. Fix a foliation $\II$ of rank $1$ transverse to $\xi$. Because $M$ is compact there are only finitely many exceptional minimal sets $N_1,\ldots, N_\kappa$ (this follows immediately from \thmref{t:sack}, cf. \cite{cc}). If $\xi$ is minimal we pick one Sacksteder curve $\gamma_1$, otherwise let $\gamma_1,\ldots,\gamma_\kappa$ be a collection of Sacksteder curves such that $\gamma_j\subset N_j$ for $j=1,\ldots,\kappa$. The curves $\gamma_j$ are contained in leaves $L_j$ of the fully foliated part of $\xi$. 

We choose a pairwise disjoint collection of tubular neighbourhoods of the Sacksteder curves $\gamma_1,\ldots,\gamma_\kappa$, each of these tubular neighbourhoods $N_j$ is diffeomorphic to $\gamma_j\times [-1,1]\times[-1,1]$ and the fibers of the projection 
$$
\pi_j : \gamma_j\times [-1,1]\times[-1,1] \lra \gamma_j\times [-1,1]\times\{0\}\subset L_j
$$
along the third factor are tangent to $\II$.  We fix smaller tubular neighbourhoods of $\gamma_j$ 
$$
\gamma_j\subset N_j\subset\widehat{N}_j\subset  \left(\gamma_j\times [-1,1]\times[-1,1]\right)
$$
and a polyhedral decomposition of $M$ adapted to $\xi$ and $\II$ such that the following conditions are satisfied.
\begin{itemize} 
\item[(i)] The characteristic foliation $\xi$ on $\partial N_j,\partial\widehat{N}_j$ has exactly two Reeb components.
\item[(ii)] A polyhedron which meets $N_j$ does not meet $\partial\widehat{N}_j$. 
\end{itemize}
Let 
\begin{align*}
N & :=\bigcup_{j=1}^\kappa N_j  &\widehat{N} & :=\bigcup_{j=1}^\kappa \widehat{N}_j \\
\end{align*}
We will use the following modified definition of the notion of a full collection of ribbons from the context of transitive confoliations to the present situation. 
\begin{defn} \mlabel{d:comp slice2}
A collection of ribbons $\sigma_i\times[0,1]$ is {\em full} if it satisfies the requirements of \defref{d:comp slice} with the following modifications: 
\begin{itemize}
\item No ribbon begins at a face contained in $N$. Moreover, no ribbon which enters $N$ leaves $N$ again. 
\item For all $i$, the segment $\sigma_i\times\{1\}$ is either contained in $H(\xi)$ or this segment has the following properties:
\begin{itemize}
\item $\sigma_i\times\{1\}$ is contained in the interior of $N_j$ and at the same time in a fiber of $\pi_j$ for some $j\in\{1,\ldots,\kappa\}$ so that $\sigma_i\times[0,1]$ is tangent to $\gamma_j\times\{*\}\times[-1,1]$.
\item The union of semi infinite segments of the characteristic foliation $\gamma_i\times\{*\}\times[-1,1]$ which point away from $\sigma_i\times[0,1]$ is an immersion of $\sigma_i\times[1,\infty)$ which accumulates on $\gamma_i\times\{*\}\times\{0\}$ and none of the semi infinite segments intersects $\sigma_i\times\{1\}$ twice.
\item The ribbons are tangent to $\II$. 
\item Different ribbons end in different annuli of the form $\gamma_j\times\{*\}\times[-1,1]$.
\end{itemize}
\end{itemize}
\end{defn}

\begin{lem} \mlabel{l:comp slice2}
Every foliation with holonomy and without compact leaves admits a complete collection of ribbons.
\end{lem}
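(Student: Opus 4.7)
The plan is to construct the ribbons polyhedron by polyhedron, routing each one along a leaf of $\FF$ until it enters some neighbourhood $N_j$ of a Sacksteder curve $\gamma_j$, at which point the attractive holonomy of $\gamma_j$ forces the desired accumulation. Since $\FF$ is a genuine foliation here, $H(\xi)=\emptyset$, so every ribbon must end of the second type in Definition~\ref{d:comp slice2}.

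For each polyhedron $P$, I fix a spine $S_P$ and pick one base point per segment of $\partial P(\xi)\setminus S_P$, plus the two supporting vertices. For a base point $p$ outside $N$, the leaf $L_p\ni p$ is noncompact (by hypothesis), so $\overline{L_p}$ contains a minimal set. Either $\FF$ is minimal, in which case $\overline{L_p}=M\supset\gamma_1$, or every minimal set is exceptional so $\overline{L_p}\supset N_j$ for some $j$; in both cases $\overline{L_p}$ contains the Sacksteder curve $\gamma_j$. I therefore choose a $\xi$-Legendrian curve starting at $p$ which runs along $L_p$ until it enters $\widehat{N}_j$ and then $N_j$, staying disjoint from the $1$-skeleton by the same perturbation used in the proof of \lemref{l:comp slice}. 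For a base point $p$ already lying in some $N_j$, I route the Legendrian curve inside $L_p\cap N_j$; the condition that ribbons entering $N$ do not leave $N$ is then automatic.

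Once inside $N_j$ on the appropriate side of $\gamma_j$ (the attractive side, determined by the sign of the derivative in \thmref{t:sack}), I arrange that the Legendrian curve winds indefinitely around $\gamma_j$ in the leaf, converging to $\gamma_j$ under iteration of the return map. I cut this curve at a suitable winding number and declare $\sigma_p\times\{1\}$ to be a short arc tangent to $\II$ lying inside a fibre of $\pi_j$, chosen so that the continuation of the Legendrian direction beyond $\sigma_p\times\{1\}$ accumulates on $\gamma_j$ and no future pass intersects $\sigma_p\times\{1\}$ twice. This last condition can be achieved by taking $\sigma_p\times\{1\}$ short enough and choosing the winding number large enough so that the contraction factor of the return map separates successive images of the transverse arc. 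Thickening the Legendrian curve in the $\II$-direction then gives the surface $\sigma_p\times[0,1]$ tangent to $\II$ with all the properties required by \defref{d:ribbon} and \defref{d:comp slice2}.

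The remaining issues are pairwise disjointness of the ribbons and the requirement that distinct ribbons end in distinct annuli of the form $\gamma_j\times\{*\}\times[-1,1]$. Since we are attaching only finitely many ribbons, the target level $\{*\}$ for each ribbon can be perturbed to a distinct value; the attractive holonomy provides an open set of admissible levels on which the accumulation condition still holds. Disjointness is arranged inductively exactly as in the proof of \lemref{l:comp slice}: after fixing one ribbon, any subsequent ribbon that crosses it is cut at the intersection and rerouted along a parallel copy of the first, using that ribbons are tangent to $\II$ and live in a tubular neighbourhood of a Legendrian curve. The main obstacle is keeping the delicate accumulation requirement of \defref{d:comp slice2} stable under these reroutings; this is where the strict contraction $h_\gamma'(0)<1$ from Sacksteder's theorem is essential, since it guarantees a full interval of good transversal heights rather than a single level, leaving room to wiggle.
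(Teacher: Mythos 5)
Your overall strategy — route the Legendrian rays of each ribbon along its leaf into $N_j$ (using density of leaves in the minimal set), then exploit the strict contraction $h_{\gamma_j}'(0)<1$ to make the continuation accumulate on $\gamma_j$, and finally arrange disjointness by the cut-and-reroute trick from \lemref{l:comp slice} — is essentially the argument of the paper. But you have missed the single new ingredient the paper singles out as not being automatic from the transitive case: Sacksteder curves cannot separate the leaf that contains them. This follows from \eqref{e:non-split}: a curve bounding in $L_j$ is null-homologous there, so the cohomology class $\alpha\mapsto\log h'_\alpha(0)$ vanishes on it, contradicting $h'_{\gamma_j}(0)<1$. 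The paper invokes this precisely to produce, for each base point in $(\widehat{N}\setminus N)\cap L_j$, a path tangent to $\FF$ into $N$ that avoids the annuli $\gamma_j\times I$ where the ribbons are supposed to land. Your proof simply asserts the routing exists; for ribbons whose attaching arc $\sigma$ hits $L_j$ itself (faces of polyhedra that meet $\widehat{N}_j\setminus N_j$), this routing is exactly the step that needs the non-separation property, and the "wiggle room from strict contraction'' you invoke at the end addresses a different issue (stability of the accumulation condition), not the topology of the routing inside $L_j$.

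Two smaller points. First, the case you consider where the base point $p$ already lies in some $N_j$ does not occur: \defref{d:comp slice2} explicitly requires that no ribbon begins at a face contained in $N$, and the polyhedral decomposition is chosen so that polyhedra meeting $N_j$ do not meet $\partial\widehat{N}_j$. Second, ``$\overline{L_p}\supset N_j$'' is a slip — the closure of a leaf contains the $j$-th exceptional minimal set, not its tubular neighbourhood $N_j$. What you need (and do correctly use) is only that $\overline{L_p}$ contains $\gamma_j$, so that a Legendrian arc in $L_p$ reaches $N_j$.
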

\begin{proof}
The proof of \lemref{l:comp slice} carries over almost immediately. 
Since curves with non-trivial holonomy cannot separate the leaf they are contained in (see \eqref{e:non-split} at the end of \secref{s:sack intro}) there is a path tangent to $\FF$ from every point in $(\widehat{N}\setminus N)\cap L_j, j=1,\ldots,\kappa,$ to $N$ without intersecting one of the annuli $\gamma_j\times I$. 
\end{proof}

\subsection{Determining the neighbourhood of $\FF$}  \mlabel{ss:exceptional neighbourhood} 

The characteristic foliations on the annuli $\gamma_j\times\{*\}\times[-1,1]$  described in \defref{d:comp slice2} are stable with respect to $C^0$-small perturbations of $\xi$. The characteristic foliation on these annuli may have more than one closed orbit after a small perturbation of $\xi$. However, what matters to us is that all leaves of the characteristic foliation which enter the annulus  stay in the annulus even after a $C^0$-small perturbation and this is  true for sufficiently small perturbations.

The ribbons which end in $H(\xi)$ are treated as in the proof of \secref{t:transitive}. In particular, we select cylinders $C_i$ as before and holonomy maps $g_i : \sigma_i=\sigma_i\simeq I_{\sigma_i}\times\{0\} \lra\sigma_i$ which will be geometrically realized as holonomies of the characteristic foliation induced by contact structures close to $\xi$ on parts of the cylinders $C_i$. When $\xi$ is not contact near $\sigma_i\times\{1\}$ for some particular $i$, then the fact that there are integral surfaces in small cylinders near $\sigma_i\times\{1\}$ implies that we cannot realize the holonomy $g_i$ with $g_i<\id$ at given prescribed points in the interior of $\sigma_i$.  However, it is possible to realize a given decreasing holonomy with compact support in $\sigma_i=\sigma_i\times\{0\}$ if $\xi'$ is a contact structure which is sufficiently close to $\xi$ by attaching a suitable domain to the modified ribbon.  

Here sufficiently close means the following: 
Let $Y$ be a vector field on $\gamma_j\times[-1,1]$ tangent to the first factor. We denote the $\xi$-horizontal lift of $Y$ by the same letter. Finally, we also pick a compact surface $\partial_-C_i$ transverse to $Y$ which contains $\sigma_i\times\{1\}$ in its interior and which is tangent to the fibers of $\pi_j$ such that the intersection with each fiber is a connected non-degenerate interval. The notation $\partial_-C$ is meant to indicate that this surface will play the same role as  the part of the boundary of $C$ in \lemref{l:domain} which was also denoted by $\partial_- C_i$ there. We require that $\partial_-C_i$ is disjoint from its images under the flow of $Y$ on $\gamma_j\times[-1,1]\times[-1,1]$ for positive times. 

If $\zeta$ is $C^0$-close enough to $\xi$, then there is a $\zeta$-horizontal lift $Y'$ of $Y$ on $\gamma_j\times[-1,1]\times[-1,1]$ along $\II$ such that the flow lines of $Y'$ starting at points inside of $\gamma_j\times[-1,1]\times[-1,1]$ are well defined for positive times and they accumulate on closed leaves of the characteristic foliation of $\zeta$ on $\gamma_j\times[-1,1]\times[-1,1]$ as $t\to\infty$. As before, we also modify the ribbon $\sigma_i\times[0,1]$. The result of this deformation is a ribbon with the same initial conditions as $\sigma_i\times[0,1]$ such that the end opposite to the polyhedron is still contained in $\partial_-C_i$. Let $\psi_\tau'$ be the semi flow of $Y'$ in $\gamma_j\times[-1,1]\times[-1,1]$. This flow is well defined  for $\tau\in[0,\infty)$ and all flow lines in the forward direction accumulate on a closed leaf of the $\zeta$-characteristic foliation on  $\gamma_j\times\{*\}\times[-1,1]$. 

There is one final requirement which will be used to show that certain contact structures on $\widehat{N}$ are tight. For each $\gamma_j\times[-1,1]\times[-1,1]$  we fix a family of discs tangent to the third factor (ie. tangent to $\II$) with two corners so that the union of these discs is yet another tubular neighbourhood of $\gamma_j$. Since all the discs have two  corners it makes sense to require that $\xi$ and all plane field $\zeta$ are transverse to all the discs with corners. The discs are chosen such that the characteristic foliation on the discs (this is the horizontal lift of the foliation on $\gamma_j\times[-1,1]$ given by the second factor) provides an identification of the smooth pieces of the boundary of the discs for all plane fields which are close enough to $\xi$.

The above constructions are possibile for plane fields which are $\eps_0$-close to $\xi$. Now pick $0<\eps<\eps_0$ so small that $\eps$ satisfies all the requirements from \secref{ss:find eps} for those ribbons which end in $H(\xi)\setminus N$.

Before we give the proof of the uniqueness theorem, we describe how to use the non-integrability of contact structures to modify monodromies of characteristic foliations. 

We require that $\zeta$ is so close to $\xi$ that the construction above can be carried out for all ribbons. Since we will ignore all ribbons which are attached to polyhedra which meet $N$, there are only finitely many ribbons. Hence for a sufficiently small number  $\eps_0>0$ the above modification of the ribbons is possible for every contact structure $\eps_0$-$C^0$-close to $\xi$. For $w>0$ let 
$$
C'_i(w) : =\bigcup_{\tau=0}^v\psi'_\tau(\partial_-C_i). 
$$
For $w>0$ this is homeomorphic to a ball. In analogy with the previous notation we set $\partial_+C'_i(w):=\psi_w'(\partial_-C_i)$. 

If $\xi'=\zeta$ is a positive contact structure, then the non-integrability of $\xi'$ can be used as follows: As $w\to\infty$, the pull back of the characteristic foliation on $\partial_+ C'_{i}(w)$ to $\partial_-C'_i:=\partial_- C_i$ converges in a monotone fashion (because $\xi'$ is a contact structure) - and therefore uniformly by Dini's theorem -  to the line field determined by the intersection of the annuli $\gamma_j\times\{*\}\times[-1,1]$ with $\partial_-C_i$ by \eqref{e:evol-alpha}. This follows from the computation discussed at the end of \secref{s:movies}.

By \lemref{l:domain} we can prescribe the holonomy $g_i: \sigma_i=\sigma_i\times\{0\}\lra\sigma_i\times\{0\}$ corresponding to the ribbon and an attached domain when $v$ is sufficiently large. The extended ribbons do not interfere with each other.

Again this works parametrically, i.e. the domain varies smoothly when the contact structures $\xi_s'$ depend continuously (again with the $C^0$-topology) on a parameter $s$ as long as $\xi_s'$ is sufficiently close to $\xi$. By the last requirement in \defref{d:comp slice2} we can ensure that all these domains are pairwise disjoint for all $w$ by choosing $\partial_-C_i$ sufficiently thin. 

We fix holonomy maps $g_i : \sigma_i\lra \sigma_i$ in the same way as in \secref{s:transitive}. As before we assume that $g_i$ has compact support in the interior of the ribbon except near ends which meet supporting vertices of the polyhedral decomposition. Again we require that every closed orbit of the characteristic foliation of the polyhedra outside of $N$ and every supporting vertex outside of $N$ intersects the support of one of the monodromies $g_i$.

\subsection{The proof of the stability theorem for confoliations with holonomy and without closed leaves} \mlabel{s:sack proof}

We shall construct a family of contact structures $\xi_{(s,t)}$ using ribbons and smoothings of polyhedra etc. Also, the domains we attach to ends of ribbons depend on $s$ and $t$. None of these dependencies will be reflected in the notation. 

\begin{proof}
Let $\xi_s$ be a family of contact structures with compact parameter space $S$ so that $\xi_s$ lies in the $\eps$-neighbourhood of $\xi$.  As before, we want to extend $\xi_s$ to a family  of contact structures $\xi_{\widehat{s}}$ with parameter space $\widehat{S}$. This will be done in three steps, so it is convenient to put $\widehat{S}:=S\times[0,3]/S\times\{3\}$. We denote elements of $\widehat{S}$ by $\widehat{s}=(s,t)$. 

For $t\in[0,2]$ we deal with the polyhedra which do not meet $N$. We want to obtain a family of contact structures $\xi_{(s,t)}$ so that the restriction to the complement of $N$ is independent from $s$ when $t=2$. In order to apply \thmref{t:solid torus} we also want to control the characteristic foliation on $\partial\widehat{N}$ when $t=2$. 

We fix a particular perturbation of $\xi$ into a contact structure: By the proof of the approximation theorem of Eliashberg and Thurston (\thmref{t:elth}) there is a contact structure - which we shall denote by $\widetilde{\xi}$ - in the $\eps$-neighbour\-hood of $\xi$ so that the characteristic foliation on each connected component of $\partial\widehat{N}$ has exactly two closed orbits on each connected component (one of them attractive, the other one repulsive). 

In the first two steps we construct a family of contact structures $\xi_{(s,t)}$ with $t\in[0,2]$ so that outside of $\widehat{N}$ we have $\xi_{(s,2)}=\widetilde{\xi}$. This is possible at the expense of losing some control over the contact structure $\xi_{(s,2)}$ inside $\widehat{N}$. However, we will show that $\xi_{(s,2)}$  is tight on $\widehat{N}$. Then we use \thmref{t:solid torus} to find a homotopy $\xi_{(s,t)}, t\in[2,3]$ so that in the end $\xi_{(s,3)}=\widetilde{\xi}$. This will conclude the proof of \thmref{t:sack contract}.

Now we turn to the first two steps. For $t\in[0,1]$ we construct a family of plane fields $\zeta_{(s,t)}, t\in[0,1]$ which coincides on a neighbourhood of supporting vertices in the complement of $N$ with $\widetilde{\xi}$ when $t=1$. In particular, it is then independent of $s$ near supporting vertices. This is done in the same way as in \secref{s:transitive} and as before  we do not create closed leaves on boundaries of polyhedra. 

To obtain a homotopy of contact structures  we fill polyhedra outside of $N$ using \lemref{l:hor fill ball}. 
For  the interior of $N$ we use the restriction of $\xi_s$ to the polyhedra pulled back by isotopies guided by ribbons as in \secref{s:transitive proof}.  In particular, we consider only ribbons starting at polyhedra which do not intersect $N$ and the attachment of $\sigma\times[0,1]$ to a polyhedron in the complement of $N$ induces only an a priori bounded number of ribbons $\sigma^j\times[0,1]$. In this way we obtain a family of contact structures $\xi_{(s,t)}$ which coincides with $\widetilde{\xi}$ near supporting vertices outside of $N$ for $t=1$.  

In the second step we move the characteristic foliation on the faces of polyhedra which do not intersect $N$ to the one induced by $\widetilde{\xi}$. In this way we obtain a plane fields $\zeta_{(s,t)}$ such that the characteristic foliation on the boundary of the polyhedra not intersecting $N$  coincides with the one induced by $\widetilde{\xi}$ (it is understood that the ribbons are attached to the polyhedra). Therefore we can apply \lemref{l:hor fill ball} in such a way that the contact structure $\widehat{\xi}_{s,t}$ obtained from the smooth plane fields $\widehat{\xi}_{s,t}$ coincides with $\xi_2$ for $t=2$ on the complement of $\widehat{N}$. The polyhedra which are not contained in $N$ are filled using \lemref{l:hor fill ball} while $N$ is filled with $\xi_s$. Hence for $t=2$ we can assume that the contact structure $\xi_{(s,t)}$ coincides with $\widetilde{\xi}$ outside of $\widehat{N}$.

On the interior of $\widehat{N}$ the contact structures $\xi_{(s,2)}$ depend on $s$ while the characteristic foliation on the boundary is constant. Once the following claim is proved we can use \thmref{t:solid torus} to extend the homotopy of contact structures by a family $\xi_{(s,t)}, t\in[2,3]$, so that $\widetilde{\xi}=\xi_{(s,3)}$ is independent from $s$.

{\bf Claim: $\xi_{(s,2)}$ is tight on $\widehat{N}$ for all $s$.}

We now use the tubular neighbourhoods of $\gamma_j$ which were formed as unions of discs with two corners and which contain $\widehat{N}_j$. By construction $\xi_{(s,2)}$ is transverse to $\II$ and therefore the characteristic foliation on the discs with corners consists by connected arcs which pass from one smooth piece of the boundary of the disc to the other. Thus $\xi_{(s,2)}$ can be extended to a contact structure on $\R^2\times\R$ such that the second factor corresponds to $\II$ and the extended contact structure is transverse to the second factor and defines a complete connection on $\R^2\times\R\lra\R^2$. Therefore $\xi_{(s,2)}$ is tight for all $s\in S$.
\end{proof}

\section{Uniqueness with closed leaves: Tori} \mlabel{s:tori}

The proof of \thmref{t:unique2} is easier when the only closed leaves of $\FF$ are stable tori. We therefore give this proof first before proceeding to leaves of higher genus in the next section. The main results discussed in \secref{s:fixing closed tori} apply to all orientable surfaces and will be used later, too.

\subsection{Fixing neighbourhoods of closed leaves} \mlabel{s:fixing closed tori}

The purpose of this section is to introduce part of the data we shall use to determine the neighbourhood $U$ of the confoliation $\xi$ in \thmref{t:unique}. 

In contrast to exceptional minimal sets, whose number is always finite, a foliation can have uncountably many compact leaves. However, according to a fundamental theorem of A.~Haefliger \cite{haefliger} the set of compact leaves of a foliation of codimension $1$ is a closed subset of $M$. This result does not need $C^2$-smoothness assumption. Moreover, if $\FF$ is coorientable, then the union of leaves of a given diffeomorphism type is compact. In our situation this implies that there is an integer $g_{max}$ so that the genus of a given closed leaf of $\FF$ is at most $g_{max}$. In order to give a more precise description of the union of closed leaves of $\FF$ we recall the following definition from \cite{bonfi}.

\begin{defn} \mlabel{d:equclass}
Let $\Sigma_0$ and $\Sigma_1$ be two closed leaves of $\xi$. These leaves are {\em equivalent} if there is an immersion 
$$
\psi : \Sigma\times[0,1] \lra M 
$$
with the following properties:
\begin{itemize}
\item[(i)] The restriction of $\psi$ to $\Sigma\times\{t\}$ is an embedding for all $t\in[0,1]$. 
\item[(ii)] $\psi(\Sigma\times\{0\})=\Sigma_0$ and $\psi(\Sigma\times\{1\})=\Sigma_1$.
\item[(iii)] For all $p\in\Sigma$ the curve $\psi(p,\cdot)$ is transverse to $\FF$.
\end{itemize}
\end{defn}
Clearly, equivalent leaves are diffeomorphic. A diffeomorphism is provided by the holonomy of the image of the foliation by the second factor on $\Sigma\times[0,1]$. \defref{d:equclass} has an obvious generalization to all foliations of codimension $1$. The closed leaves of the foliation on $S^1\times[-1,1]$ shown in \figref{b:equiv} (the $S^1$-factor is horizontal) which lie in the center of the figure are all equivalent while the other two closed leaves are not equivalent to any other closed leaf in the figure.

\begin{figure}[htb]
\begin{center}
\includegraphics[scale=0.9]{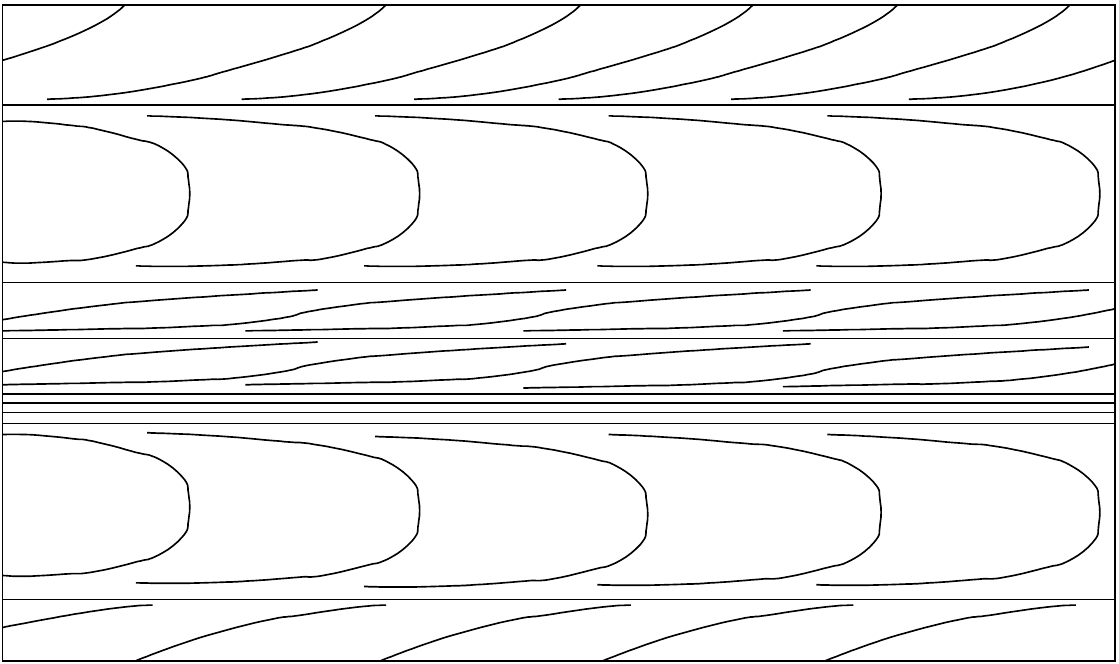}
\end{center}
\caption{Foliation on $S^1\times[-1,1]$ with three equivalence classes of closed leaves}\label{b:equiv}
\end{figure}

Haefliger's compactness theorem implies that there is only a finite number of equivalence classes of closed leaves \cite{bonfi}. Because $\FF$ is not a foliation without holonomy we can actually assume that $\psi$ is an embedding and extend it to a tubular neighbourhood for both $\Sigma_0$ and $\Sigma_1$. Using this terminology, the assumption (i') in \thmref{t:unique2} can be replaced by the slightly weaker requirement
\begin{itemize}
\item[(i'')] The union of all torus leaves is covered by a finite collection of embeddings  $\psi: T^2\times[0,1]\lra M$ as in \defref{d:equclass} each of which has attractive holonomy, i.e. there is a simple non-separating closed curve $\gamma\subset T^2$ so that the holonomy along $\psi(\gamma\times\{i\}) $ in the torus $T^2\times\{i\}$ is attractive on the side not contained in $\psi(T^2\times[0,1])$ for $i\in\{0,1\}$. 
\end{itemize}

The two upper equivalence classes of closed leaves in \figref{b:equiv} have attractive holonomy, the closed leaf at the bottom  does not.

Before we proceed with the proof of \thmref{t:unique2} let us recall the following result of M.~Hirsch \cite{hirsch}. This result is reproved as Theorem 3.f.1 in \cite{bonfi}.
\begin{thm} \mlabel{t:hirsch}
Let $\FF$ be a transversely coorientable foliation of codimension $1$ on $M$ and $L$ a closed leaf with Abelian fundamental group so that $\FF$ is not a foliation by fibers of a fibration $M\lra S^1$. Fix a tubular neighbourhood $N(L)$ of $L$. 
\begin{itemize}
\item[(i)] If $L$ has attractive holonomy along some simple closed curve $\gamma\subset L$, then there is a $C^0$-neighbourhood of $\FF$ in the space of plane fields such that every foliation in that neighbourhood has a closed leaf diffeomorphic to $L$ inside $N(L)$. 
\item[(ii)] If there is no curve in $L$ with attractive holonomy, then every $C^1$-neighbourhood of $\FF$ contains a foliation which has no closed leaf inside $N(L)$. 
\end{itemize}
\end{thm}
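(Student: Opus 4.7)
The plan is to analyze the holonomy of $\FF$ along $L$ and its behaviour under perturbations. I would fix an identification $N(L)\simeq L\times(-\eps,\eps)$ in which $\FF$ is transverse to the line field $\II$ tangent to the second factor; this transversality persists under sufficiently $C^0$-small perturbations $\FF'$, so every such $\FF'$ is still determined by a holonomy pseudogroup on the transversal $\tau=\{p_0\}\times(-\eps,\eps)$, although this pseudogroup is no longer a genuine representation of $\pi_1(L)$. Since $\pi_1(L)$ is abelian, $L$ is one of $S^2$, $\R P^2$, $T^2$, or the Klein bottle; the sphere and projective plane cases are handled by Reeb's local stability theorem, and the Klein bottle case reduces to the torus case via the orientation double cover, so I focus on $L=T^2$ with generators $a,b$ of $\pi_1(L)=\Z^2$.

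For part (i), write the attractive curve as $\gamma=n_1 a+n_2 b$ and set $h_\gamma=h_a^{n_1}h_b^{n_2}$. By hypothesis $h_\gamma(x)<x$ on $(0,\delta_0)$ and $h_\gamma(x)>x$ on $(-\delta_0,0)$, and for $\FF'$ sufficiently $C^0$-close this behaviour passes to the perturbed composition $h'_\gamma$, so $\mathrm{Fix}(h'_\gamma)$ is non-empty and arbitrarily close to $0$. To produce a common fixed point of the entire perturbed pseudogroup I would run a graph-transform argument on the space $\mathcal G$ of continuous sections $\sigma: L\to(-\delta,\delta)$: the foliation $\FF'$ defines a self-map $\mathcal T:\mathcal G\to\mathcal G$ whose fixed points are exactly the closed $\FF'$-leaves that project diffeomorphically onto $L$, and the strict attractiveness of $h_\gamma$ combined with the abelianity of $\pi_1(L)$ turns a sufficiently high iterate $\mathcal T^N$ into a $C^0$-contraction; Banach's fixed-point theorem then delivers a section $\sigma_*$ whose graph is a closed $\FF'$-leaf $S\subset N(L)$. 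Since $S$ carries closed curves representing both $a$ and $b$ and has genus at most $g(L)=1$, it is diffeomorphic to $L$. The main obstacle is the loss of commutativity of the perturbed holonomy: one must show that the non-commutativity of $\rho'(a)$ and $\rho'(b)$, which is small but non-zero, is absorbed by the strict attractiveness of $h'_\gamma$ when iterating $\mathcal T$; this requires careful bookkeeping on the universal cover $\widetilde L=\R^2$.

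For part (ii), the absence of an attractive simple closed curve forces the first-derivative character $\chi:\pi_1(L)\to\R$, $[\alpha]\mapsto\log h'_\alpha(0)$, to vanish identically, so every holonomy germ is tangent to the identity at $0$; combined with the $C^2$-hypothesis and a Kopell-type argument, this forces the holonomy group of $\FF$ along $L$ to be trivial on at least one side, so on that side $L$ lies in a one-parameter family of closed leaves. I would then destroy this family by a $C^1$-small perturbation: choose a closed $1$-form $\beta\in\Omega^1(L)$ with non-trivial cohomology class (possible because $H^1(L;\R)\ne 0$ in the remaining cases $L=T^2$ and the Klein bottle), pull it back to $N(L)$ along the projection, and replace a defining form $\alpha$ of $\FF$ by $\alpha+s\cdot\pi^*\beta$ for small $s>0$. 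No closed leaf of the resulting foliation near $L$ can exist, because the perturbed monodromy along some loop $[\alpha]$ is shifted by the non-zero period $s\int_\alpha\beta$, destroying all fixed points of the perturbed holonomy pseudogroup near $0$. The main obstacle is ruling out more pathological collar structures on the non-attractive side, for example an exceptional minimal set accumulating on $L$; this is where the non-fibration hypothesis is used, together with Kopell's lemma, to force the product-like collar needed for the perturbation argument.
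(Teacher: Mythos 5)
The paper does not prove this theorem: it is cited as a result of M.~Hirsch, with a reference to a reproof as Theorem~3.f.1 in Bonatti--Firmo~\cite{bonfi}. There is therefore no ``paper's own proof'' to compare against, so let me assess your argument on its own terms.

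For part~(i) your strategy---find a common fixed point of the perturbed holonomy pseudogroup by a fixed-point argument on the space of sections---is indeed the spirit of Hirsch's stability theorem. However, a Banach contraction argument cannot work under $C^0$-perturbation of the plane field: $C^0$-closeness of $\FF'$ to $\FF$ controls the holonomy maps $h'_\gamma$ only in the $C^0$-norm, so $h'_\gamma$ may be tangent to the identity at its fixed point, and then no iterate of your graph transform $\mathcal{T}$ is a contraction. The correct device here is compactness (Brouwer/Schauder, or a nested-interval argument): the attractive hypothesis gives a compact forward-invariant interval for $h'_\gamma$, hence a non-empty compact set of closed leaves, and the Abelian hypothesis is used to show that a suitable sub-pseudogroup preserves this set. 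You identified the failure of commutativity as ``the main obstacle'' but offered no mechanism to resolve it, and the tool you did propose (Banach) is the wrong one.

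Part~(ii) has two concrete errors. First, you claim that ``absence of an attractive curve $\ldots$ forces the holonomy group of $\FF$ along $L$ to be trivial on at least one side.'' This is false: take $h_a$ parabolic with $h_a(x)<x$ for all small $x\neq 0$ (so attracting on the side $x>0$ and repelling on $x<0$) and $h_b=\id$. No power of $a$ or $b$ has two-sided attractive holonomy, yet the holonomy is non-trivial on both sides; Kopell's lemma does not apply because $h_a'(0)=1$, and indeed the vanishing of the character $\chi$ (which you correctly derive) only rules out linear attraction, not parabolic holonomy. Second, the perturbation $\alpha\mapsto\alpha+s\,\pi^*\beta$ is in general not integrable: one computes $(\alpha+s\pi^*\beta)\wedge d(\alpha+s\pi^*\beta)=s\,\pi^*\beta\wedge d\alpha$, which is non-zero unless $d\alpha$ is proportional to $dt\wedge\beta$---essentially unless $\FF$ near $L$ is already a linear suspension along $\beta$. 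So you have not produced a foliation. Finally, even granting your one-sided triviality, the argument says nothing about destroying closed leaves on the side where the holonomy is non-trivial, and the theorem asserts \emph{no} closed leaf inside $N(L)$. A correct argument would work with the holonomy pseudogroup directly (not a defining $1$-form), use that $\chi\equiv 0$ together with the $C^2$ hypothesis to obtain a Szekeres/Kopell-type one-parameter normal form for the germs of all $h_\gamma$ simultaneously on each side, and then exhibit a $C^1$-small modification of the suspension (or of the foliated bundle structure transverse to a fixed line field) that translates every period by an amount destroying all common fixed points near $0$ on \emph{both} sides.
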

This theorem explains the terminology {\em stable/unstable} torus leaf and \thmref{t:unique2} is true only for stable torus leaves.  The case when $\FF$ is given by the fibers of a torus  fibration over $S^1$ is also understood: According to a theorem of J.~Plante \cite{pl} a fibration with torus fibers has a $C^0$-neighbourhood such that every foliation in that neighbourhood has a torus leaf close to an original fiber if and only if the map 
$$
\phi_{M} : H_1(T^2,\Z) \lra H_1(T^2,\Z)
$$
induced by the monodromy of the torus bundle does not have a positive real eigenvalue. However, it will turn out in \secref{s:torus bundles} that not only these fibrations have a neighbourhood with the properties described in \thmref{t:unique2}. Therefore the analogy between stable torus leaves and stably isotopic approximating contact structures is not perfect. 
\subsection{Determining the neighbourhood in the space of plane fields} \mlabel{s:determine U for T}

For the definition of the neighbourhood $U$ in \thmref{t:unique2} of the confoliation $\xi$ in the space of plane fields we proceed as in \secref{s:sack}. In order to simplify the presentation we assume that $\xi$ has a unique minimal set which is a closed torus leaf with attractive holonomy. How to treat the case when there are several minimal sets, either exceptional ones or other torus leaves, will then be clear.  Since we have already considered one situation where both $H(\xi)$ and the fully foliated set of $\xi$ are not empty we will assume from now on that $\xi=\FF$ is a foliation. As usual $\II$ is a line field transverse to $\xi$. 

Let $T\subset M$ be the unique torus leaf and $\gamma$ a simple closed curve in $T$ with attractive holonomy. 

There is a foliation $\GG$ on $T$ by simple closed curves such that $\gamma$ is a leaf and the holonomy along each leaf of $\GG$ is attractive (in the case of confoliations this is true by  \lemref{l:confol holonomy}).  
Now we fix a polyhedral decomposition of $M$ adapted to $\FF$ and $\II$ together with a pair of tubular neighbourhoods
\begin{equation} \label{e:N(T)}
\widehat{N}(T)\simeq T\times[-1,1]\supset N(T) \simeq T\times[-1/2,1/2]
\end{equation}
of $T$ such that the following conditions are satisfied:
\begin{itemize}
\item[(i)] The foliation $\II$ is tangent to the second factor in \eqref{e:N(T)} and the characteristic foliation on $\gamma\times[-1,1]$ has one closed orbit and all other leaves enter this annulus through its boundary and accumulate on $\gamma$ such that the characteristic foliation is transverse to $\partial N(T)\cap (\gamma\times[-1,1])$. The same requirement is supposed to hold for  the other leaves of $\GG$. 
\item[(ii)] $\partial N(T)$ and $\partial \widehat{N}(T)$ are both transverse to all faces and edges of the polyhedral decomposition. 
\item[(iii)] The characteristic foliations $\partial \widehat{N}(T)(\FF)$ and $\partial N(T)(\FF)$ contain no $2$-dimensional Reeb component. 
\item[(iv)] No polyhedron which intersects $\partial\widehat{N}(T)$ meets $\partial N(T)$.
\end{itemize}
We fix a full collection of ribbons for all faces of polyhedra which do not meet $N(T)$, i.e. we choose a finite collection of ribbons $\sigma_j\times[0,1]$ satisfying the following conditions:
\begin{itemize}
\item[(i)] The end of each ribbon which is not contained in the face of a polyhedron is contained in $N(T)$ and the ribbons are pairwise disjoint. Each closed leaf of the characteristic foliation of a polyhedron outside of $N(T)$ meets the attaching arc $\sigma_j$ of a ribbon. 
\item[(ii)] For each ribbon the projection of the part of the ribbon lying in $N(T)$  along $\II$ to the torus is contained in a single leaf of $\GG$.
\item[(iii)] The characteristic foliation on the annuli tangent to $\II$ and containing leaves of $\GG$ extends the ribbons to semi-infinite ribbons accumulating on a leaf of $\GG$. 
\end{itemize}
In the following we sometimes view $\GG$ as a foliation on $\partial\widehat{N}(T)$. Then we denote this foliation by $\widehat{\GG}$. 

We are now in a position to choose $\eps>0$ which determines the $C^0$-neighbourhood of $\FF$ in \thmref{t:unique2} in the present context (i.e. no closed leaves of higher genus). 
\begin{itemize}
\item $\widehat{\GG}$ remains transverse to $\zeta$ for every plane field $\zeta$ which is $\eps$-$C^0$-close to $\FF$.
\item $\zeta$ is transverse to $\II$.
\item The characteristic foliation of $\zeta$ on $\gamma'\times[-1,1]$ remains transverse to the boundary and inward pointing for all leaves $\gamma'$ of $\GG$. In particular, all leaves of $\gamma'\times[-1,1]$ entering through the boundary accumulate on a closed leaf. It is irrelevant to our discussion how many closed leaves this  characteristic foliation has or whether or not they are non-degenerate.
\item All ribbons can be lifted to ribbons adapted  to $\zeta$ while the ribbons still have the necessary properties (pairwise disjointness, ending in $N(T)$, etc.) explained in  \secref{ss:find eps} and \secref{ss:exceptional neighbourhood}. 
\end{itemize}

\subsection{The proof of \thmref{t:unique2} in the absence of closed leaves of higher genus} \mlabel{s:unique proof only tori}

It remains to show that with $\eps>0$ from the previous section the $\eps$-$C^0$-neighbourhood of $\FF$ has the desired properties. 

\begin{proof}[Proof of \thmref{t:unique2}]
We start with two contact structures  $\xi,\xi'$ which are $\eps$-close to $\FF$. By the procedure from \secref{s:transitive proof} and \secref{s:sack proof} we can connect $\xi$ to a contact structure $\widehat{\xi}$ which coincides with $\xi'$ on all polyhedra which do not meet $N(T)$. 

Using \lemref{l:hor fill ball} we can therefore ensure that the contact structure remains transverse to $\II$. After a $C^\infty$-small perturbation we may assume that $\partial\widehat{N}(T)$ is convex and non-singular. Then the dividing set on $\partial N(T)$ contains no homotopically trivial component.  (By \lemref{l:tight extension} $\widehat{\xi}$ and $\widehat{\xi}'$ are tight on $N(T)$.) 

Because the characteristic foliation of $\widehat{\xi}$ on $\partial\widehat{N}(T)$ is the same as the one induced by $\xi'$, there is no Reeb component in $\partial\widehat{N}(T)$. Now we apply \thmref{t:tight on TxI}. For this we have to check that for both contact structures $\widehat{\xi},\xi'$ there is a torus in the interior of $\widehat{N}(T)$ isotopic to $T$ and whose characteristic foliation is a foliation by closed leaves.

But this follows from the fact that for each leaf $\gamma'$ of $\GG$ the characteristic foliation on $\gamma'\times[-1,1]$ has a closed leaf in the interior. By the results from \secref{s:movies} the union of these closed Legendrian curves is an embedded torus with the desired properties. Thus $\widehat{\xi}$ and $\widehat{\xi}'$ are stably isotopic by \thmref{t:tight on TxI}.
\end{proof}

If $T$ is a torus leaf (stable or unstable) of a (con-)foliation $\FF$, then $\FF$ can be $C^0$-approximated by confoliations $\FF_n$ containing a domain foliated by tori. It is then easy to approximate $\FF_n$ by a contact structure with arbitrarily large Giroux torsion along $T$. 

Finally, let us mention two points where the above proof fails for unstable torus leaves. When $T$ is unstable, then
\begin{enumerate}
\item we are no longer sure that our ribbons can be extended to semi-infinite ribbons in $\widehat{N}(T)$ and
\item we can no longer guarantee that (iii) of \thmref{t:tight on TxI} is satisfied. 
\end{enumerate}
As indicated in the bottom part of \figref{b:un-stable} (on p.~\pageref{b:un-stable}) there may be sheets of the contact structure which connect the two boundary components of $\widehat{N}(T)$. If this happens then according to \cite{gi-bif}  there are infinitely many contact structures on $\widehat{N}(T)$ with vanishing Giroux torsion which are pairwise non-isotopic and still satisfy assumptions (i),(ii) of \thmref{t:tight on TxI}. For these contact structures, the sheets connect the two boundary components of $\partial\widehat{N}(T)$. The conditions on $\eps$ formulated in \secref{s:higher genus neighbourhood} ensure - among other things - that no sheet  of the contact structure $\eps$-$C^0$-close to $\FF$ will connect the boundary components of  a tubular neighbourhood of the closed leaf.

In \exref{ex:unstable torus} we show that in this case it may happen that any neighbourhood (it will turn out that we may even take a $C^\infty$-neighbourhood) of a foliation with unstable torus contains two positive contact structures which are not stably isotopic on $M$.

\section{Uniqueness in the presence of closed leaves with higher genus} \mlabel{s:higher}

In this section we shall complete the proof of \thmref{t:unique} for foliations with holonomy.

The previous sections have covered the situation when $\FF$ is a foliation (or a positive confoliation) which belong to one of the following classes:
\begin{itemize}
\item $\FF$ is a foliation such that every leaf is dense and there is holonomy.
\item $\FF$ is a (positive con-) foliation all of whose minimal sets are either exceptional or stable torus leaves.
\end{itemize}
In this section we deal with closed leaves of genus $g\ge 2$. As in the case of torus leaves the set of closed leaves of a fixed genus is not finite but at least it is compact. However, the discussion from the beginning of \secref{s:fixing closed tori} applies. In order to simplify the presentation, we assume that there is exactly one minimal set which is the closed leaf $\Sigma$. 

Of course, \thmref{t:hirsch} does not cover the case of higher genus surfaces. Actually it was shown by T.~Tsuboi \cite{tsuboi} that every $C^1$-neighbourhood of a foliation contains a foliation without closed leaves of higher genus.

\subsection{Geometry of surfaces of higher genus} 

Let $\Sigma$ be a closed surface of genus $\ge 2$. We fix a hyperbolic metric on $\Sigma$ and a universal covering $\HH^2\lra\Sigma$. On all surfaces covering $\Sigma$ we use the pulled back metric and $\partial \HH^2$ denotes the ideal boundary of the hyperbolic plane.
\begin{lem}\mlabel{l:Sigma ator}
There is a constant $K$ which depends only on the hyperbolic surface $\Sigma$ with the following property.

Let $\gamma_t,t=[0,1]$, be a family of homotopically essential simple closed curves and let $\widetilde{\gamma}_t$ be a lift of the isotopy to $\HH^2$. Then there is a pair of points $p_0\in\widetilde{\gamma}_0$ and $p_1\in\widetilde{\gamma}_1$ such that the distance between the points is smaller than $K$.  

If $\Sigma'\lra\Sigma$ is an Abelian covering, then the same constant can be used for $\Sigma'$. 
\end{lem}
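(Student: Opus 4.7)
The plan uses two ingredients: topological rigidity of the endpoints at infinity coming from the lift of the isotopy, and a hyperbolic-area estimate in the intermediate cylindrical cover.

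First, since $\widetilde{\gamma}_t$ is a continuous lift of the isotopy, each $\widetilde{\gamma}_t$ is setwise fixed by the same primitive hyperbolic element $\tau\in\pi_1(\Sigma)$: the stabilizer depends continuously on $t$ inside the discrete group $\pi_1(\Sigma)$, hence is constant. The two endpoints $a,b\in\partial \HH^2$ of every $\widetilde{\gamma}_t$ are the fixed points of $\tau$, so $\widetilde{\gamma}_0$ and $\widetilde{\gamma}_1$ are $\langle\tau\rangle$-invariant Jordan arcs with the same pair of endpoints at infinity. Let $\widetilde{\alpha}\subset\HH^2$ be the axis of $\tau$ and $\gamma^{\ast}:=\pi(\widetilde{\alpha})\subset\Sigma$ the geodesic in the common free homotopy class. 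In the cylindrical cover $\check{\Sigma}:=\HH^2/\langle\tau\rangle$ the projections $\check{\gamma}_0,\check{\gamma}_1$ are simple closed curves, both isotopic to the core geodesic $\check{\alpha}$.

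Next is the area estimate. After a small preliminary ambient isotopy of $\Sigma$ removing excess intersections of $\gamma_i$ with $\gamma^{\ast}$ (which one absorbs into the given isotopy), I may assume each $\gamma_i$ is disjoint from $\gamma^{\ast}$, so $\check{\gamma}_i$ and $\check{\alpha}$ cobound an embedded annulus $\check{A}_i\subset\check{\Sigma}$. Because $\check{\Sigma}\to\Sigma$ is a local isometry and $A_i\subset\Sigma$ is embedded,
\[
\int_{\check{\alpha}}\sinh f_i(\widetilde{q})\,d\widetilde{q}=\mathrm{area}(\check{A}_i)\le\mathrm{area}(\Sigma)=2\pi|\chi(\Sigma)|,
\]
where $f_i(\widetilde{q})$ is the width of $\check{A}_i$ along the geodesic perpendicular to $\check{\alpha}$ at $\widetilde{q}$. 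Since $\ell(\tau)\ge\mathrm{sys}(\Sigma)$, choosing $C$ with $\sinh C>4\pi|\chi(\Sigma)|/\mathrm{sys}(\Sigma)$ (a constant depending only on $\Sigma$), Markov's inequality gives $|\{\widetilde{q}\in\check{\alpha}:f_i(\widetilde{q})\le C\}|>\tfrac{1}{2}\ell(\tau)$ for $i=0,1$, so the two sets meet. At a common point $\widetilde{q}$ the perpendicular to $\widetilde{\alpha}$ through $\widetilde{q}$ crosses each $\widetilde{\gamma}_i$ within distance $C$, producing a pair $p_0,p_1$ with $d_{\HH^2}(p_0,p_1)\le 2C=:K$.

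For an Abelian cover $\Sigma'\to\Sigma$ the group $\pi_1(\Sigma')$ is normal in $\pi_1(\Sigma)$, so $\langle\tau\rangle\subset\pi_1(\Sigma')\subset\pi_1(\Sigma)$ and $\check{\Sigma}=\HH^2/\langle\tau\rangle$ covers both $\Sigma'$ and $\Sigma$. The embedded annulus $\check{A}_i\subset\check{\Sigma}$ still projects into $\Sigma$ with image of area at most $\mathrm{area}(\Sigma)$ because abelianity of $\pi_1(\Sigma)/\pi_1(\Sigma')$ lets one cut $\check{A}_i$ along a controlled set of $\pi_1$-translates to obtain an embedding into $\Sigma$; the same constant $K$ then applies. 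The main obstacle I foresee is making precise the preliminary isotopy-modification step while preserving the given lift data, together with the injectivity-up-to-cutting claim in the Abelian setting; in the degenerate case where $\gamma_i$ cannot be disentangled from $\gamma^{\ast}$ within the given isotopy, $\widetilde{\gamma}_0$ or $\widetilde{\gamma}_1$ already crosses $\widetilde{\alpha}$, and the same $\tau$-equivariant pigeonhole argument at the intersection points yields the bound directly.
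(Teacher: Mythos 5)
Your plan is a genuinely different route from the paper's: you try to derive the lemma from a Fermi-coordinate area estimate in the annular cover $\check{\Sigma}=\HH^2/\langle\tau\rangle$, whereas the paper fixes a filling collection of separating curves $a_1,\ldots,a_l$ and runs a ping-pong argument with the isometries $f_{\widetilde{a}_i}^{\pm N}$ that forces a far-away lift to self-intersect. The area idea is attractive (it would give an explicit constant in terms of $\mathrm{area}(\Sigma)$ and $\mathrm{sys}(\Sigma)$), but as written there is a genuine gap at its core.

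The estimate $\mathrm{area}(\check{A}_i)\le\mathrm{area}(\Sigma)$ requires the annulus $\check{A}_i$ to project \emph{injectively} to $\Sigma$, which holds when $\gamma_i$ and $\gamma^{\ast}$ actually cobound an embedded annulus in $\Sigma$, i.e.\ when they are disjoint in $\Sigma$. This is strictly stronger than $\check{\gamma}_i\cap\check{\alpha}=\emptyset$. If $\gamma_i$ and $\gamma^{\ast}$ intersect in $\Sigma$ (in inessential bigons) but the distinguished lifts $\check{\gamma}_i$, $\check{\alpha}$ happen to be disjoint in $\check{\Sigma}$, then for some $g\in\pi_1(\Sigma)\setminus\langle\tau\rangle$ the geodesic $g\widetilde{\alpha}$ can cross $\widetilde{\gamma}_i$ (the endpoints $\{ga,gb\}$ and $\{a,b\}$ are unlinked, so an even number of crossings is allowed), and then $g\widetilde{A}_i\cap\widetilde{A}_i\neq\emptyset$, so the projection of $\check{A}_i$ to $\Sigma$ fails to be injective and its area is not controlled by $\mathrm{area}(\Sigma)$. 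Your preliminary isotopy does remove these intersections, but it replaces $\gamma_i$ by some $\gamma_i'$, and the conclusion you then prove is about $\widetilde{\gamma}_i'$, not about $\widetilde{\gamma}_i$; without a uniform bound on how far that bigon-removing isotopy moves points (which you do not have), this does not yield the statement you need. Your fallback clause does not close this: any two isotopic simple closed curves on a closed surface \emph{can} be disentangled by an isotopy, so ``cannot be disentangled'' never occurs, and it is simply false that $\gamma_i\cap\gamma^{\ast}\neq\emptyset$ in $\Sigma$ forces $\widetilde{\gamma}_i\cap\widetilde{\alpha}\neq\emptyset$ in $\HH^2$ -- the intersections may lift to crossings of $\widetilde{\gamma}_i$ with \emph{other} translates of the axis. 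Finally the Abelian-cover paragraph rests on the unproved ``cut $\check{A}_i$ along a controlled set of $\pi_1$-translates'' and inherits the same gap. To rescue the area strategy you would need either a proof that the inner region $\{0\le\rho\le f_i(s)\}$ (below the first crossing) projects injectively, or a bounded-multiplicity estimate for the projection -- neither is supplied, and I do not see an elementary reason either should hold without the disjointness hypothesis.
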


The only interesting case is when $\widetilde{\gamma}_1$ lies entirely on one side of $\widetilde{\gamma}_0$ in $\HH^2$.  This will be the case in our applications of this lemma. The statement about Abelian coverings will be relevant only in the proof of \thmref{t:unique} when $\FF$ is a foliation without holonomy and all leaves are dense.

\begin{proof}[Proof of \lemref{l:Sigma ator}]
We will use some facts from the geometry of hyperbolic surfaces which can be found for example in the first chapters of \cite{fm}. 

Because $\gamma_0$ is a simple closed curve it is isotopic to a unique closed geodesic $\gamma$ which is also simple and non-trivial because $\gamma_0$ is not null-homotopic. We fix a lift $\widetilde{\gamma}$ of $\gamma$ in the universal covering. In order to prove the lemma we will show that there is a constant $K'$ and a point $\widetilde{x}\in\widetilde{\gamma}$ such that the lift of every curve isotopic to $\gamma$ with the same endpoints on the ideal boundary contains a point whose distance from $\widetilde{x}$ is at most $K'$. Then there is pair of points on $\widetilde{\gamma}_0$ and $\widetilde{\gamma}_1$ such that the distance between the two points is smaller than $K=2K'+\max\{\mathrm{length}(a_i)\}$. Here $a_1,\ldots,a_l\subset\Sigma$ is a collection of null-homologous homotopically essential simple closed curves such that the complement of the curves is a union of discs (as indicated in \figref{b:cutting}).

\begin{figure}[htb]
\begin{center}
\includegraphics[scale=0.75]{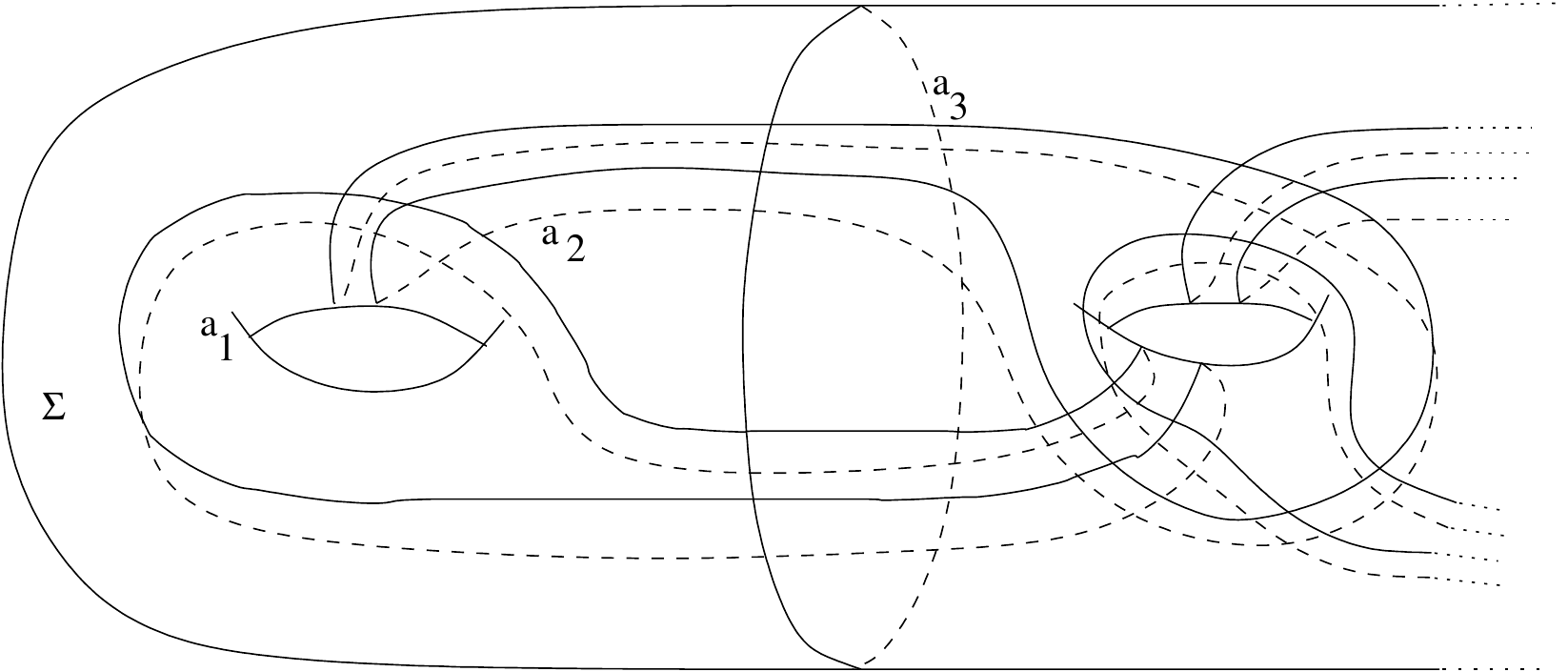}
\end{center}
\caption{Collection of null-homologous curves in $\Sigma$ }\label{b:cutting}
\end{figure}
We assume that the curves $a_i$ are geodesics. Let $B_R$ be a large disc in $\HH^2$ such that the restriction of the universal covering map to the disc is surjective. 

For each lift $\widetilde{a}_i$ of $a_i$ intersecting $B_R$ we pick connected neighbourhoods $\widetilde{J}_i^{\pm}\subset\partial\HH^2$ of the endpoints of $\widetilde{a}_i$ such that these neighbourhoods are pairwise disjoint. (There are only finitely many lifts of $a_i$ which intersect $B_R$.)  Given a lift $\widetilde{\alpha}$ of an oriented closed geodesic $\alpha$ in $\Sigma$ we denote the corresponding isometry of $\HH^2$ by $f_{\widetilde{\alpha}}$. 

For each $\eps>0$ there is a number $N=N(\eps)$ with the following property: If $\widetilde{\alpha}$ is a geodesic in $\HH^2$ which intersects $\widetilde{a}_i$ in a point $y\in B_R$ such that 
$$
\left|\measuredangle_y(\widetilde{\alpha},\widetilde{a}_i)\right|>\eps,
$$
then both endpoints of $f_{\widetilde{a}_i}^{\pm N}(\widetilde{\alpha})$ are contained in the same interval $\widetilde{J}_i^{\pm}$ (the thickened arcs in \figref{b:H22} correspond to two such intervals). In the following we choose $\eps>0$ close to zero (and the corresponding integer $N$) such that if a geodesic $\gamma$ intersects one curve $a_j$ and the angle at that intersection is smaller than $\eps$, then the absolute value of the angle at the intersection points of $\gamma$ and $\bigcup_{i\neq j} a_i$ which lie next to $x$ on $\gamma$ is bigger than 
$$
\frac{1}{2}\min\left\{\left|\measuredangle_y(a_i,a_j)\right|\,|\, y\in a_i\cap a_j \textrm{ and } i\neq j\right\}.
$$  

Pick two intersection points $x,x'$ of $\gamma$ with $a_1,\ldots, a_l$ which are either consecutive along $\gamma$ and the angle of the two geodesics at both intersection points is superior to $\eps$ or $x,x'$ are separated by exactly one other intersection of $\gamma$ with $\bigcup_ia_i$ where the angle between the two curves is smaller than $\eps$. 

Now consider preimages $\widetilde{x},\widetilde{x}'\in\widetilde{\gamma}$ of $x,x'$ such that there is either no or exactly one intersection point of the segment of $\widetilde{\gamma}$ between $\widetilde{x},\widetilde{x}'$ and lifts of the curves $a_i$. The distance between $\widetilde{x}$ and $\widetilde{x}'$ is bounded by $2K_1$ where $K_1$ is the maximal diameter  of the discs $\Sigma\setminus\bigcup_i a_i$. This bound is independent from $\gamma$.

Let $\widetilde{a}, \widetilde{a}'$ be lifts of two curves from our collection passing through $\widetilde{x},\widetilde{x}'$. Since our collection of curves is finite, we can consider 
$$
K' : = \max\left.\left\{\mathrm{dist}\left(y,f_{\widetilde{a}_i}^N(y)\right)\,\right|\,\mathrm{dist}(y,\widetilde{a}_i)\le 2K_1 \textrm{ and } \widetilde{a}_i\textrm{ is a lift of } a_i\right\}.
$$
This number depends only on the maximal displacement of the isometries associated to our system of curves but not on $\gamma$. The point $\widetilde{x}$ and the number $K'$ have the desired properties. 

In order to see this assume that $\widetilde{\gamma}'$ is the lift of a simple closed curve isotopic to $\gamma$ with the same endpoints in the ideal boundary of $\HH^2$ which does not meet the $K'$-ball around $\widetilde{x}$. Then, as indicated in \figref{b:H22}, some images of $\widetilde{\gamma}'$ under isometries $f_{\widetilde{a}_i}^{\pm N}$ intersect. But this contradicts the assumption that $\gamma'$ is simple. 
\begin{figure}[htb]
\begin{center}
\includegraphics[scale=0.7]{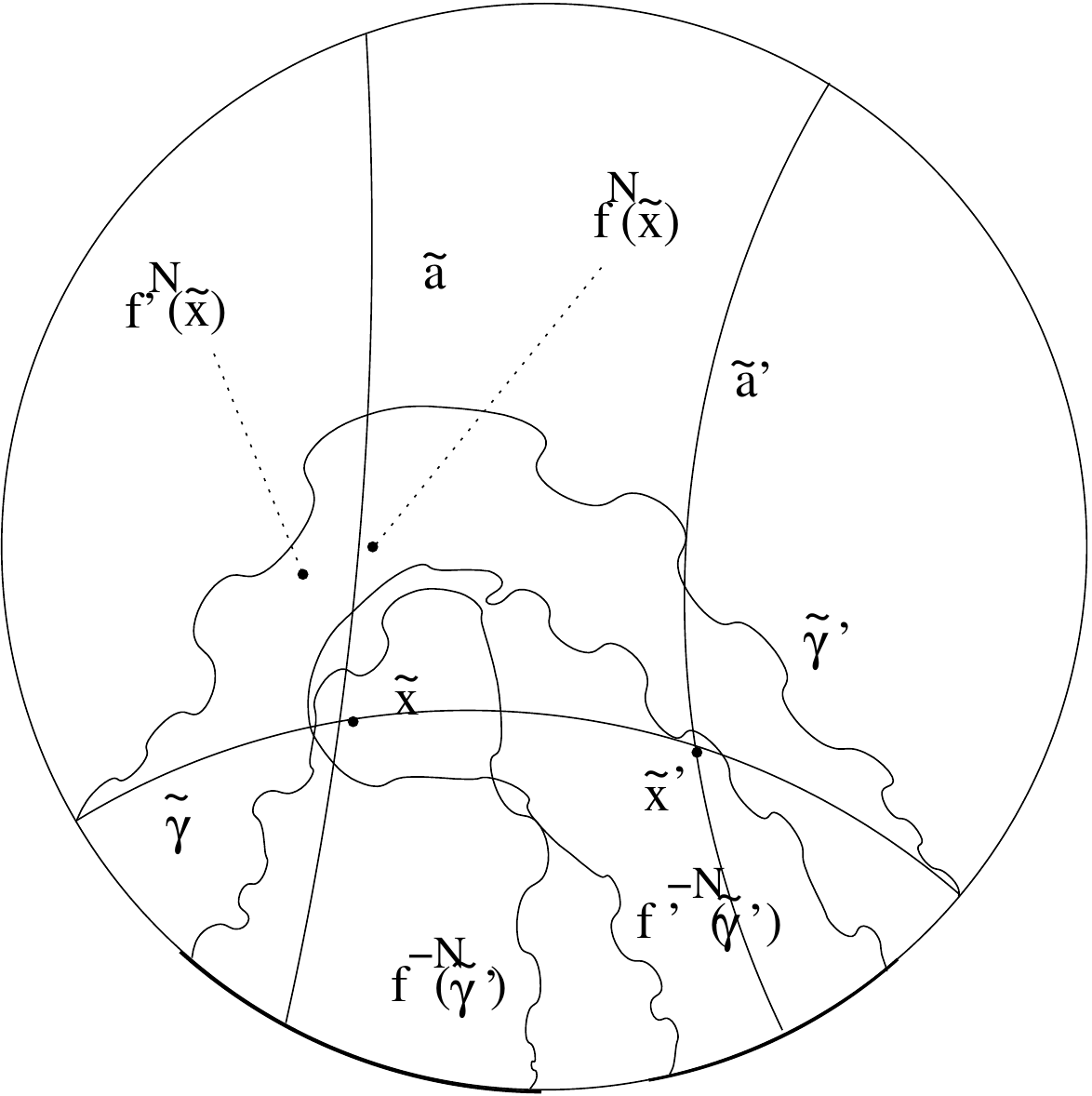}
\end{center}
\caption{Non-disjoint lifts of $\gamma'$ to $\HH^2$} \label{b:H22}
\end{figure}

If $\Sigma'\lra\Sigma$ is an Abelian covering of $\Sigma$, then we lift the collection of separating curves and the hyperbolic metric to the covering. Because the covering is Abelian the lifted curves are still  closed. The discs obtained by cutting $\Sigma'$ along all lifts of $a_1,\ldots,a_l$ are isometric to the discs obtained from $\Sigma$. Since $K$ is determined by this data we can use the same constant on all Abelian coverings of $\Sigma$.
\end{proof}

This lemma is  not true if  $\Sigma$ is a torus $T^2\simeq S^1\times S^1$ since one can use the flow along the first  circle direction to displace $\{1\}\times S^1$ from itself in such a way that the distance becomes unbounded when everything is lifted to the universal covering. The crucial point in the proof of \lemref{l:Sigma ator} is that lifts of isotopies of closed curves do not move the endpoints in the ideal boundary. 

In order to see that \lemref{l:Sigma ator} does not hold for homotopies (instead of isotopies) consider a closed geodesic $\gamma\subset \Sigma$ and a lift $\widetilde{\gamma}\subset\HH^2$. Now let $\widetilde{\gamma}_t$ be a family of curves in $\HH^2$ consisting of points whose distance $t$ from $\widetilde{\gamma}$ is $t$. This family of curves projects to a homotopy $\gamma_t$ of closed curves in $\Sigma$ and this homotopy violates the conclusion of \lemref{l:Sigma ator} since $t$ can be chosen arbitrarily large.

\subsection{Determining the neighbourhood in the space of plane fields} \mlabel{s:higher genus neighbourhood}

Let $\Sigma$ be a closed leaf of $\FF$ with genus $g\ge 2$. As usual we will assume that it is the unique minimal set of $\FF$ and we fix a foliation $\II$ of rank $1$ transverse to $\FF$. For a hyperbolic metric on $\Sigma$ we fix the constant $K$ from \lemref{l:Sigma ator}. 

We fix a tubular neighbourhood $\widehat{N}(\Sigma)=\Sigma\times[-2,2]$ such that the second factor is tangent to $\II$. If $\Sigma$ is not the only closed leaf then we consider a neighbourhood of an equivalence class of closed leaves. We require that all plane fields are transverse to $\II$.

Loosely speaking we ask that $\FF$ -horizontal lifts along geodesics of length $\le K+1$ starting in $\Sigma_{\pm 1}$ are defined $g$ times. More precisely, we suppose that the following procedure is possible.  Starting with $\Sigma_{-1}$ we obtain $2g+1$ levels 
\begin{equation} \label{e:levels}
-2<\delta_{-(g+1)}^-<\delta_{-g}^-<\ldots<\delta_0^{-}=-1<\ldots<\delta_{g+1}^-<0 
\end{equation}
in $(-2,0)$ as follows. We start with $\Sigma_{-1}$ and consider $\FF$-horizontal lifts of all geodesics in $\Sigma$ whose length is smaller than $K+1$ with initial point lying in $\Sigma_{-1}$. The space of such geodesics is compact, hence we can define
\begin{equation} \label{e:level2}
\delta_1^-=\max\left\{ t\in(-1,0)\left|\begin{array}{ll} \textrm{there is a geodesic of length }\le K+1\textrm{ whose} \\ \textrm{lift starting in } \Sigma\times\{\delta_{0}^-\}\textrm { ends in }\Sigma\times\{t\} \end{array} \right\}.\right. 
\end{equation}
Then we define $\delta_i^-$  inductively, the levels lying below $\Sigma_{-1}$ are determined by replacing $\max$ by $\min$ in \eqref{e:level2}. For the part of $\Sigma\times[-2,2]$ lying above $\Sigma_0$ we fix an analogous sequence 
\begin{align*}
0<\delta_{g+1}^+<\delta_{g}^+<\ldots<\delta^+_{0}=1<\ldots<\delta_{-(g+1)}^+<2.
\end{align*}
If $\Sigma_{\pm_1}$ are sufficiently close to $\Sigma_0$, then we can fix levels $\delta_i^\pm$ defined as above. 

This is used in one of our requirements for $\eps$, namely we ask that $\eps>0$ is so small that there is no geodesic in $\Sigma$ of length $\le K$ whose $\zeta$-horizontal lift connects $\Sigma\times\{\delta_i^-\}$ to $\Sigma\times\{\delta_{i+1}^-\}$ or $\Sigma\times\{\delta_{i-1}^-\}$ for $i=-g,\ldots,g$, where $\zeta$ is any smooth plane field whose $C^0$-distance to $\FF$ is smaller than $\eps$. Of course, we ask that $\eps$ satisfies the analogous requirement associated with respect to the levels $\delta^+_i$. Let 
\begin{equation} \label{e:delta}
\delta:=\frac{\min\{|\delta_{g+1}^-|,\delta_{g+1}^+\}}{2}.
\end{equation}
and $N(\Sigma)=\Sigma\times[-\delta,\delta]$. Inside $N(\Sigma)$ we choose yet another smaller neighbourhood $\widehat{N}'(\Sigma)$ and constants $\widetilde{\delta}^\pm_i$ and $\delta'$ which are analogous to the constants $\delta^\pm_i$ and $\delta$  (defined in \eqref{e:levels},\eqref{e:delta}). Finally let $N'(\Sigma)=\Sigma\times[-\delta',\delta']$. Using this data we obtain additional restrictions on $\eps$ analogous to those above.

Since $\Sigma$ is an isolated closed leaf, there are simple closed non-separating curves $\gamma_+,\gamma_-$ embedded in $\Sigma$ such that the holonomy along $\gamma_+$ respectively $\gamma_-$ is attractive on the side lying above respectively below $\Sigma$. Note that $\gamma_+,\gamma_-$ are not isotopic or disjoint in general. At least  we can choose both of  them non-separating because if the holonomy of $\Sigma$ is trivial on one side for all non-separating curves, then all leaves of $\FF$ in a neighbourhood of $\Sigma$ which meet the same side of the neighbourhood are compact and equivalent to $\Sigma$. But we assumed that $\Sigma$ is isolated. Hence there are annuli $A_{\pm}$ containing $\gamma_{\pm}$ in their interior such that 
\begin{itemize}
\item one boundary component (above $\Sigma$ for $A_+$, below $\Sigma$ for $A_-$) is transverse to $\FF$,
\item $A_\pm$ are both contained in the interior  $\Sigma\times[\delta,\delta]$, and
\item $A_\pm$ is tangent to $\II$. 
\end{itemize}
We pick product neighbourhoods of $A_{\pm}$ such that every annulus in that family has the same properties as the original annulus $A_\pm$.

As in \secref{s:determine U for T} there is another condition related to polyhedral decompositions and ribbons $\eps>0$ has to satisfy. We fix the following data on $M$:
\begin{itemize}
\item A polyhedral decomposition of $M$ adapted to $\FF$ and $\II$ such that no 
polyhedron which meets $\Sigma\times[\delta,\delta]$ meets the complement of $\Sigma\times(-2\delta,2\delta)$.
\item No polyhedron which meets the complement of $\Sigma\times(-2,2)$ meets $\Sigma\times\{\delta_{-(g+1)}^\pm\}$. 
\item A complete collection of ribbons $\sigma_i\times[0,1]$ for all polyhedra in the decomposition which meet the complement of $\Sigma\times[-2\delta,2\delta]$. The ribbons are assumed to be tangent to $\II$. 
The ends of the ribbons opposite to the faces lie in $\widehat{N}'(\Sigma)$, they are tangent to annuli parallel to $A_\pm$ and enter annuli near  $A_+$ respectively $A_-$ through the boundary component above respectively below $\Sigma$. 
\end{itemize}
We require that $\eps$ is such that all the properties of polyhedra and ribbons vary continuously  when the plane field varies in the $\eps$-$C^0$-ball around $\FF$ in the space of smooth plane fields. Moreover, the boundary components of the annuli $A_\pm$ which are transverse to $\FF$ are also transverse to $\zeta$ for all plane fields $\eps$-close to $\FF$. 

Comparing this with the torus case $\widehat{N}(\Sigma)=\Sigma\times[-2,2]$ will play the role of $\widehat{N}(T)$ and $N(\Sigma)=\Sigma\times[-\delta,\delta]$ will be the analogue of $N(T)$ (as indicated by the notation). Since we made no assumptions on the holonomy of $\Sigma$ we need one more ingredient since after the perturbation of $\FF$ into a contact structure we can't be sure that the ribbons ending in the annuli $A_\pm$ can still be extended to semi-infinite ribbons. (Thus if $\gamma_+=\gamma_-$ and this curve has attractive holonomy on both sides the neighbourhoods $\widehat{N}'(\Sigma)\supset N'(\Sigma)$ are not needed for the proof of \thmref{t:unique}).

\subsection{The proof of \thmref{t:unique} in the presence of closed leaves of higher genus} \mlabel{s:higher genus proof}

Let $\xi_0,\xi_2$ be two positive contact structures $\eps$-$C^0$-close to $\FF$. 
\begin{enumerate}
\item We isotope $\xi_0,\xi_2$ inside $N'(\Sigma)$ such that after the isotopy the annuli $A_\pm$ (and the annuli parallel to $A_\pm$ fixed above)  contain a closed leaf in their interior.  For this we show that the contact structure on $\widehat{N}'(\Sigma)\setminus N'(\Sigma)$ determines the contact structure on $\widehat{N}'(\Sigma)$ up to isotopy. The resulting contact structures are still transverse to $\II$ and they are still denoted by $\xi_0,\xi_2$. (This step is not needed if $\Sigma$ has attractive holonomy).
\item We construct a homotopy $\xi_s,s\in[0,1],$ of contact structures on $M$ such that $\xi_1=\xi_2$ outside of $\Sigma\times[-2\delta,2\delta]$ and such that the contact structures $\xi_1,\xi_2$ are tight on $\Sigma\times[\delta^-_{-(g+1)},\delta^+_{-(g+1)}]$. 
In this step we use the restrictions on $\eps$ related to polyhedra and ribbons.
\item Using the constraints on $\eps$ coming from the holonomy of $\FF$ near $\Sigma$, we show that the restrictions of $\xi_1$ and $\xi_2$ to $\widehat{N}(\Sigma)$ are isotopic relative to the boundary. For this we show that the contact structures are determined up to isotopy by their restriction to the smaller space $\widehat{N}(\Sigma) \setminus N(\Sigma).$ 
\end{enumerate}
This is slightly more complicated than in the case of torus leaves. If there is a curve in $\Sigma$ such that the holonomy of $\FF$ along that curve is attractive, then one can omit the first step and proceed as in \secref{s:tori}. 

For Step (1) and (3) we first show the following proposition (formulated for the pair $N(\Sigma)\subset \widehat{N}(\Sigma)$). 
\begin{prop} \mlabel{p:outside determines inside}
Let $\xi$ be a contact structure $\eps$-close to $\FF$ on $\Sigma\times([-2,2]\setminus (-\delta,\delta))$.

Up to isotopy there is a unique tight contact structure on $\Sigma\times[-2,2]$ which coincides with $\xi$ on $\Sigma\times([-2,2]\setminus(-\delta,\delta))$. 
\end{prop}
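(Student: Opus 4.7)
The plan is to reduce to the classification of tight contact structures on $\Sigma\times I$ provided by \thmref{t:Sigma class} and to show that the contact data on $\widehat{N}(\Sigma)\setminus N(\Sigma)$ pins down the relative Euler class and rules out any basic slice embedded inside $N(\Sigma)$. After a $C^\infty$-small perturbation both $\Sigma_{\pm\delta}$ are convex. Since the two candidate tight extensions $\xi_a,\xi_b$ agree on the complement they induce the same characteristic foliation and hence the same dividing set $\Gamma_{\pm\delta}$ on $\partial N(\Sigma)$. Because every contact structure $\eps$-close to $\FF$ is extremal ($|\langle e(\xi),[\Sigma]\rangle|=2g-2$), \corref{c:extremal} forces $\Gamma_{\pm\delta}$ to consist of pairs of parallel non-separating curves. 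Consequently \thmref{t:Sigma class} applies to $\xi_a|_{N(\Sigma)}$ and $\xi_b|_{N(\Sigma)}$: there are at most five isotopy classes of tight extensions, distinguished by the relative Euler class $\widetilde e$, and in the $\widetilde e=0$ case further by the presence/absence of an embedded basic slice $\llbracket\gamma,\gamma';\pm(\gamma-\gamma')\rrbracket$ (\propref{p:vanishing rel Euler}).

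Next I would produce a system of pre-Lagrangian annuli connecting $\Sigma_{-2}$ and $\Sigma_{+2}$. Each attractive closed leaf $\beta$ of $\Sigma_{-2}(\xi)$ lies in a sheet $A(\beta)$ whose piece in $\widehat{N}(\Sigma)\setminus N(\Sigma)$ is fixed. Applying \lemref{l:completing sheets} level by level, I would extend $A(\beta)$ through $N(\Sigma)$, with the crucial claim that it cannot turn around inside $\widehat{N}(\Sigma)$. For this I invoke \lemref{l:Sigma ator}: any two curves occurring along a connected sheet are pairwise isotopic non-separating curves in $\Sigma$, so their lifts to $\HH^2$ come within distance $K$. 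Hence the horizontal motion of $A(\beta)$ across any turn-around event would be realized by $\zeta$-horizontal lifts of geodesics of length at most $K$; but $\eps$ was chosen precisely so that such lifts never connect adjacent levels $\delta_i^\pm$ and $\delta_{i\pm1}^\pm$. Since there are $g+1$ such levels on each side, no sheet can accumulate the $t$-drift required to fold back before exiting $\widehat{N}(\Sigma)$, so every sheet emerging from $\Sigma_{-2}$ traverses $\widehat{N}(\Sigma)$ and ends at $\Sigma_{+2}$.

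With this family of through-going pre-Lagrangian annuli, the dividing curves $\Gamma_{-\delta}$ and $\Gamma_{\delta}$ are manifestly isotopic in $N(\Sigma)$ through the contact structure, so we are in the isotopic-dividing-set case of \thmref{t:Sigma class}. Moreover, the sheet-foliation decomposes $N(\Sigma)$ into thickened-torus and $\Sigma'\times[-\delta,\delta]$ pieces on each of which uniqueness follows from \thmref{t:tight on TxI} (combined with the extension lemmas of \secref{s:movies}) and an induction on genus, giving the vertically invariant tight model of \propref{p:vanishing rel Euler}. In particular no basic slice $\llbracket\gamma,\gamma';\pm(\gamma-\gamma')\rrbracket$ can embed in $N(\Sigma)$, since such an embedding demands a pair of dividing curves intersecting transversely once, incompatible with the parallel sheet structure. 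The main obstacle is exactly the non-turn-around argument of the second paragraph: the careful bookkeeping of the $2g+2$ auxiliary levels $\delta_i^\pm$ against the geometric constant $K$ from \lemref{l:Sigma ator} is what makes the whole argument work, and extending \lemref{l:completing sheets} compatibly through chains of degenerate closed leaves requires the same Poincaré–Bendixson and sheet-manipulation machinery from \secref{s:movies} used elsewhere in the paper.
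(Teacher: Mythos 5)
Your argument has the geometry of the sheets exactly backwards, and this breaks both the reduction step and the conclusion. You claim that the drift estimate from \lemref{l:Sigma ator} forces every sheet $A(\beta)$ with $\beta\subset\Sigma_{-1}$ (or $\Sigma_{-2}$) to traverse $\widehat{N}(\Sigma)$ and exit through $\Sigma_{+1}$. But the inequality \eqref{e:order} controlling the slope of a sheet along its attractive portion is a \emph{one-sided} bound on vertical progress, not a bound on horizontal displacement needed before turning around. What the choice of $\eps$ and the levels $\delta_i^\pm$ actually guarantees is \lemref{l:no passing}: a sheet emanating from level $\delta_i^\pm$ cannot reach the adjacent level $\delta_{i\pm 1}^\pm$ at all. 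In particular, no sheet touching $\Sigma_{\pm 1}$ enters $N(\Sigma)=\Sigma\times[-\delta,\delta]$ (this is stated explicitly at the start of \lemref{l:remove components}). Since the sheet is confined to a thin slab, \lemref{l:compactify by degen} and \remref{r:sheet prop} force it to terminate at a degenerate closed leaf and fold back: far from being forbidden, turn-around is mandatory. Moreover, hypothesis (i) of the pre-Lagrangian extension lemma \lemref{l:completing sheets} explicitly \emph{requires} that the maximal sheet containing $\beta$ not connect the two boundary components, so the very lemma you invoke to extend the sheet all the way through is inapplicable in that regime.

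This inversion propagates to your conclusion. Because $A(\beta_{-1})$ is properly embedded with both boundary components $-\beta_{-1}\cup\beta'_{-1}$ in $\Sigma_{-1}$, sitting on the side of $\beta_{-1}$ opposite to the coorientation, while $A(\beta_{+1})$ lies on the coorientation side of $\beta_{+1}$, one computes $\mathrm{PD}(\widetilde e(\xi))=-\beta_{+1}+\beta_{-1}$ (\eqref{e:which basic2}). When $\beta_{-1}$ and $\beta_{+1}$ are homologous this vanishes, and the argument via \lemref{l:bypass} and the Legendrian curve $\widehat\alpha_k$ identifies an embedded basic slice $\llbracket\beta_{-1},\beta';-\beta'+\beta_{-1}\rrbracket$ --- so by \propref{p:vanishing rel Euler} the contact structure is precisely \emph{not} the vertically invariant one. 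Your final assertion that no basic slice embeds is the wrong alternative: if it held, the extension would carry a properly embedded pre-Lagrangian torus through $N(\Sigma)$, which is exactly the scenario the choice of $\eps$ rules out. The correct strategy is to use trapping of sheets to make the relative Euler class computable from boundary data, not to manufacture through-going annuli.
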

Since the pair $N'(\Sigma)\subset \widehat{N}'(\Sigma)$ has analogous properties, \propref{p:outside determines inside} also holds for this pair:  A tight contact structure on $\widehat{N}'(\Sigma)$ which is $\eps$-close to $\FF$ is determined up to isotopy by its restriction to the collar $\widehat{N}'(\Sigma)\setminus N'(\Sigma)$. 

We postpone the proof of \propref{p:outside determines inside} and explain first how it implies \thmref{t:unique}. In order to finish Step (1) it suffices to extend the contact structure on $\widehat{N}'(\Sigma)\setminus N'(\Sigma)$ in such way that $A_\pm\cap  N'(\Sigma)$ contain closed leaves. This can be done as in \exref{ex:periodic cont}. One can also use Proposition 6.2. of \cite{hkm}.  

Then Step (2) works as in the case of torus leaves or the case of Sacksteder curves. Finally, \propref{p:outside determines inside} finishes the proof of \thmref{t:unique} for confoliations which are not foliations without holonomy.

Let us summarize the main differences between the case of torus leaves and leaves of higher genus before we prove \propref{p:outside determines inside}:
\begin{enumerate}
\item The relative Euler class essentially determines the tight contact structure  on $\Sigma\times [-1,1]$ up to isotopy if $\Sigma$ is not a torus. 
\item If $\Sigma=T$ is a torus, then we cannot change the contact structure on $N'(T)\subset \widehat{N}'(T)$ in order to ensure the existence of closed leaves of the characteristic foliation on annuli transverse to $T$.
\item The last problem occurs for contact structures which have sheets connecting the two boundary components of $\widehat{N}(T)$. If $T$ does not have attractive holonomy we cannot prevent this by reducing $\eps$ since there is no analogue of \lemref{l:Sigma ator} for tori.  
\end{enumerate}

In order to prove \propref{p:outside determines inside} we want to apply the results outlined in \secref{s:Sigma class}. We need to arrange that the dividing sets on $\Sigma_{-1}$ and $\Sigma_{+1}$ have exactly two connected components which are  non-separating. This is done in \secref{s:clean boundary}. In \secref{s:det contact} we determine the relative Euler class and if the relative Euler class vanishes we determine which basic slice embeds into $\Sigma\times[-2,2]$ as stated in \propref{p:vanishing rel Euler}. 

\subsubsection{Correcting the boundary of the neighbourhood of the closed leaf} \mlabel{s:clean boundary}

In this section we explain how to find a domain in $\widehat{N}(\Sigma)$ which contains $N(\Sigma)$ and satisfies the assumptions of  \thmref{t:Sigma class}. 

\begin{lem} \mlabel{l:no passing}
The constant $\eps>0$ and the levels $\delta_i^{\pm}, i=-(g+1),\ldots,g+1,$ have the following property: Let $\xi$ be a contact structure $\eps$-close to $\FF$, $i=-g,\ldots,g,$ and $\beta\subset \Sigma
\{\delta_i^{\pm}\}$  a closed attractive leaf of the characteristic foliation. Then the sheet $A(\beta)$ does not meet $\Sigma\times\{\delta_{i+1}^{\pm}\}$ or $\Sigma\times\{\delta_{i-1}^{\pm}\}$  
\end{lem}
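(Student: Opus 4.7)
The plan is to argue by contradiction. Suppose that, in addition to intersecting $\Sigma \times \{\delta_i^\pm\}$, the sheet $A(\beta)$ also intersects $\Sigma \times \{\delta_{i+1}^\pm\}$; the case of $\Sigma \times \{\delta_{i-1}^\pm\}$ is symmetric. I would first extract the relevant geometric data from the sheet. By \lemref{l:compactify by degen} together with \propref{p:sheets}, the component of $A(\beta) \cap (\Sigma \times [\delta_i^\pm, \delta_{i+1}^\pm])$ containing $\beta$ is transverse to each level $\Sigma_t$ and projects along $\II$ submersively onto a strip $B \subset \Sigma$; the projections $\overline{\beta}_t := \mathrm{pr}(A(\beta) \cap \Sigma_t)$ are essential simple closed curves forming a monotone isotopy which sweeps out $B$ as $t$ varies between the two levels. (They are essential because near $\Sigma$ the contact structure is tight on tubular neighbourhoods of leaves of $\FF$, so null-homotopic attractive closed curves are excluded.)

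Lifting to the universal cover, let $\tilde B \subset \HH^2$ be a component of the preimage of $B$, foliated by lifts $\tilde{\overline{\beta}}_t$. By \lemref{l:Sigma ator} applied to the essential isotopy $\overline{\beta}_t$, these lifts can be chosen so that there exist points $\tilde x_0 \in \tilde{\overline{\beta}}_{\delta_i^\pm}$ and $\tilde x_1 \in \tilde{\overline{\beta}}_{\delta_{i+1}^\pm}$ with hyperbolic distance $d(\tilde x_0, \tilde x_1) \leq K$. The hyperbolic geodesic between them projects to a geodesic $\sigma \subset \Sigma$ of length $\leq K$. The key comparison is the slope ordering \eqref{e:order} from \propref{p:sheets}: at every point of the attractive part of $A(\beta)$ the tangent plane $T_pA$ lies between $\xi_p$ and $T_p\Sigma_t$ in $\mathbb{P}(T_pM/\dot\beta)$, so $A$ is strictly less steep than $\xi$. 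Writing $A$ locally as the graph $t = h(x)$ over $\tilde B$, this means that along any curve in $\tilde B$ the $\xi$-horizontal lift accumulates $t$-change at a rate at least that of $h$; since $h(\tilde x_0) = \delta_i^\pm$ and $h(\tilde x_1) = \delta_{i+1}^\pm$, the $\xi$-horizontal lift of such a curve starting at the point of $\beta \cap \Sigma_{\delta_i^\pm}$ above $\tilde x_0$ arrives at a point with $t$-coordinate at least $\delta_{i+1}^\pm$. This contradicts the defining property of $\eps$ in \secref{s:higher genus neighbourhood}, which forbids exactly this behaviour for any geodesic of length $\leq K$.

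The main obstacle, and the step that requires care, is the last one: the hyperbolic geodesic between $\tilde x_0$ and $\tilde x_1$ produced by \lemref{l:Sigma ator} need not lie entirely inside $\tilde B$, so the height-function comparison cannot be applied to $\sigma$ without further work. The resolution should come from exploiting the product structure of $\tilde B$ as an isotopy strip between $\tilde{\overline{\beta}}_{\delta_i^\pm}$ and $\tilde{\overline{\beta}}_{\delta_{i+1}^\pm}$ together with the closeness of $\tilde x_0$ and $\tilde x_1$: one replaces $\sigma$ by a path of the same or nearly the same length obtained by deforming the hyperbolic geodesic along the foliation of $\tilde B$, so that the perturbed path remains inside $\tilde B$ while only modifying its length by an amount absorbable into the slack between $K$ and $K+1$ built into the choice of the levels $\delta_j^\pm$.
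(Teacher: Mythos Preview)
Your approach is the same as the paper's: project the sheet along $\II$ to obtain an isotopy of essential simple closed curves in $\Sigma$, lift to $\HH^2$, invoke \lemref{l:Sigma ator} to produce a geodesic $\widetilde{\gamma}_\tau$ of length $<K$ between points on the lifts $\widetilde{\beta}_0$ and $\widetilde{\beta}_\tau$, and then use the ordering \eqref{e:order} to deduce that the $\xi$-horizontal lift of this geodesic rises at least as fast as the sheet, contradicting the choice of $\eps$. The paper frames the last step on the vertical strip $\widetilde{\Gamma}=\widetilde{\gamma}_\tau\times[-2,2]$: the leaf of $\widetilde{\Gamma}(\widetilde{\xi})$ through the starting point lies above the curve $A(\beta)\cap\widetilde{\Gamma}$, hence that leaf also reaches $\Sigma\times\{\delta_{i\pm 1}^{\pm}\}$. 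This is your height-function comparison in different language.

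You are right to flag the issue of the geodesic possibly leaving $\tilde B$; the paper does not comment on it either. Your proposed fix, however, does not work as stated: deforming $\widetilde{\gamma}_\tau$ into $\tilde B$ produces a curve that is no longer a geodesic, whereas the $\eps$-condition in \secref{s:higher genus neighbourhood} is phrased for geodesics, so the $K$ versus $K{+}1$ slack (which is there only to guarantee that a positive $\eps$ exists, since $\FF$-horizontal lifts are path-independent) is not the relevant tool. A cleaner remedy is available: replace $\widetilde{\gamma}_\tau$ by its sub-arc from the last point where it meets $\widetilde{\beta}_0$ to the first subsequent point where it meets $\widetilde{\beta}_\tau$. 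This sub-arc is again a geodesic, of length $<K$, whose interior meets neither boundary curve, so it lies in the closure of $\tilde B$ and your height-function comparison goes through verbatim.
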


\begin{proof}
We consider the case $\beta\subset\Sigma_{-1}$. As shown in \secref{s:sheets} there is an embedded annulus $A(\beta)$ in $\Sigma\times[-1,1]$ so that $\beta\subset\partial A(\beta)$ and  $A(\beta)$ is foliated by closed Legendrian curves parallel to $\beta$ which are contained in one of the surfaces $\Sigma_t, t>-1$. 
The sheet $A(\beta)$ has the following properties: 
\begin{itemize}
\item[(i)] $A(\beta)$ is transverse to both $\II$ and $\xi$. 
\item[(ii)] When a connected component $A(\beta)\cap \Sigma_t$ is an attractive closed leaf, then at each point $p$ of that leaf $\xi$ is steeper than the tangent space of $A(\beta)$ at that point (cf. \eqref{e:order} on p.~\pageref{e:order}).
\item[(iii)] The projection of the closed Legendrian curves foliating $A(\beta)$ to $\Sigma$ along $\II$ provide an isotopy of simple closed curves $\beta_\tau\subset\Sigma$  starting with the curve $\beta=\beta_0$. We lift this isotopy of curves to an isotopy $\widetilde{\beta}_\tau$ with fixed points on the ideal boundary of the universal covering $\HH^2\lra \Sigma$. 
\end{itemize}
By \lemref{l:Sigma ator} $\widetilde{\beta}_\tau$ contains a point which is connected to a point in $\widetilde{\beta}_0$ by a geodesic $\widetilde{\gamma}_\tau$ which is shorter than the constant $K$ from \lemref{l:Sigma ator}. Consider the characteristic foliation of the lifted contact structure $\widetilde{\xi}$ on $\widetilde\Sigma\times[-1,1]$ on the surface $\widetilde{\Gamma}=\widetilde{\gamma}_\tau\times[-1,1]$. Let $\widetilde{\xi},\widetilde{\FF}$ denote the lifts of $\xi,\FF$ to the universal covering $\HH^2\times[-2,2]$ of $\Sigma\times[-2,2]$. 

If $A(\beta)$ meets $\Sigma\times\{\delta_1^-\}$, then by the discussion in \secref{s:sheets} (in particular \eqref{e:order})  the above properties of $A(\beta)$ imply that the leaf of the characteristic foliation on  $\widetilde{\gamma}_\tau\times[-2,2]$ lies {\em above} the intersection of $A(\beta)$ with $\widetilde{\gamma}_\tau\times[-2,2]$. Therefore, if $A(\beta)$ meets $\Sigma\times\{\delta_1^-\}$ then so does the leaf of $\widetilde{\Gamma}(\widetilde{\xi})$ which starts at $\beta_0$. But this is excluded by the choice of $\eps$. This proves the lemma.
\end{proof}
\begin{lem} \mlabel{l:remove separating}
There is a convex surface $\Sigma'$ in $\Sigma\times[\delta_{-g+1}^-,\delta_{g-1}^-]$ transverse to $\II$ such that the dividing set on $\Sigma'$ has no separating component.
\end{lem}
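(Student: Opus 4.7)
The plan is to locate, among the $2g-1$ candidate levels $\Sigma_{\delta_i^-}$ with $|i|\le g-1$, one whose dividing set has no separating component. The argument is by contradiction, exploiting \lemref{l:no passing} to rule out too many separating dividing curves at well-separated levels.

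\textbf{Setup.} By \lemref{l:convex families} and a $C^\infty$-small perturbation of $\xi$ supported in small collars, I may assume that $\Sigma_{\delta_i^-}$ is convex for every $|i|\le g-1$. Since $\xi$ is transverse to $\II$, \lemref{l:tight extension} shows the restriction of $\xi$ to a neighbourhood of the whole slab is (universally) tight; together with the extremal condition $\langle e(\xi),[\Sigma]\rangle=2-2g$ and \corref{c:extremal}, this forces each $\Sigma_{\delta_i^-}^-$ to be a non-empty union of annuli, each carrying a unique attractive closed leaf $\beta$ of the characteristic foliation.

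\textbf{Pigeonhole.} Suppose for contradiction that at each level $\delta_i^-$ with $|i|\le g-1$ there is a parallel pair of dividing curves that separates $\Sigma$; let $\beta_i$ be the attractive closed leaf in the annulus of $\Sigma_{\delta_i^-}^-$ between them. By \propref{p:sheets} each $\beta_i$ is contained in a sheet $A_i$ transverse to $\II$, and \lemref{l:no passing} confines $A_i$ to $\Sigma\times(\delta_{i-1}^-,\delta_{i+1}^-)$. Restrict to the $g$ indices $i\in\{-(g-1),-(g-3),\ldots,g-1\}$, whose pairwise separation is at least two: the sheets $A_i$ then lie in pairwise disjoint sub-slabs, and projecting along $\II$ yields $g$ separating simple closed curves $\widetilde{\gamma}_i\subset\Sigma$. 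After a small transverse perturbation these can be assumed to be in general position; a closed orientable surface of genus $g$ carries at most $g-1$ pairwise disjoint, pairwise non-isotopic essential separating simple closed curves, so either two of the $\widetilde{\gamma}_i$ are isotopic in $\Sigma$, or a pair intersects transversely.

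\textbf{The main obstacle.} The hardest step I anticipate is ruling out the case where $\widetilde{\gamma}_i\simeq\widetilde{\gamma}_j$ in $\Sigma$ for some $i<j$ with $j-i\ge 2$. Here $\beta_i,\beta_j$ are isotopic in $\Sigma$ and the two disjoint sheets $A_i,A_j$ are parallel. Applying the pre-Lagrangian extension lemma \lemref{l:completing sheets} in the intermediate sub-slab $\Sigma\times[\delta_i^-,\delta_j^-]$, I extend $A_i$ upward through the repulsive closed leaves created along the boundary of its basin. Either this extension connects to $A_j$, producing a single sheet stretching across a slab strictly wider than $\delta_{i+1}^- - \delta_i^-$ and contradicting \lemref{l:no passing}, or it produces a sheet bounding a solid torus together with a repulsive closed leaf in some intermediate $\Sigma_t(\xi)$, in which case \lemref{l:overtwisted movie} furnishes an overtwisted disc, contradicting tightness.

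\textbf{Conclusion.} In either case we obtain a contradiction, so some $\Sigma_{\delta_i^-}$ with $|i|\le g-1$ has a dividing set with no separating component; this surface is convex by construction and transverse to $\II$, so it has all the properties required of $\Sigma'$.
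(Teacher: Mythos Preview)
Your approach is genuinely different from the paper's, but it has two gaps that I do not see how to close.

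First, the pigeonhole step is incomplete. You produce $g$ separating simple closed curves $\widetilde\gamma_i\subset\Sigma$ (the projections of the $\beta_i$ along $\II$) and observe that either two are isotopic or two intersect. You then treat only the isotopic case. But there is nothing preventing the $\widetilde\gamma_i$ from intersecting: the $\beta_i$ live on different level surfaces, and their projections to $\Sigma$ have no reason to be disjoint. Two intersecting separating curves on $\Sigma$ is not a contradiction of anything, so this branch is simply unhandled.

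Second, and more seriously, your use of the pre-Lagrangian extension lemma is not legitimate here. Both \lemref{l:parallel} and \lemref{l:completing sheets} require that $A\cap\Sigma_t$ be a \emph{non-separating} curve (hypothesis (ii) of \lemref{l:parallel}; this is used several times in its proof, e.g.\ in the Legendrian realization step and in Case B2.2). You are trying to apply it to the separating curves $\beta_i$, so the lemma is not available. Even setting this aside, \lemref{l:completing sheets} does not extend a sheet ``upward'' to meet another level: it produces a sheet both of whose boundary components lie on the \emph{same} surface $\Sigma_{-1}$ (namely $\partial\widehat A(\beta)=-\beta\cup\beta'$). So neither alternative in your dichotomy (connecting to $A_j$, or bounding a solid torus) follows from it. Finally, after such an isotopy the contact structure need no longer be $\eps$-close to $\FF$, so invoking \lemref{l:no passing} for the new sheet is not justified.

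The paper's argument is constructive rather than by contradiction. Starting from $\Sigma_{-1}$ it chooses a separating attractive leaf $\beta$ whose sheet $A(\beta)$ reaches the highest level $t_{\max}(\beta)$ among separating leaves in its isotopy class, and replaces part of $\Sigma_{-1}$ by the sheet together with a piece of $\Sigma_{t_{\max}}$. By \lemref{l:no passing} this stays within one $\delta$-level. Iterating (with some back-and-forth governed by orientations of degenerate orbits) removes all separating dividing curves in the isotopy class of $\beta$; one then moves to the next class. Since disjoint separating curves on $\Sigma$ fall into at most $g-1$ isotopy classes, the resulting surface stays in $\Sigma\times[\delta^-_{-g+1},\delta^-_{g-1}]$. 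This argument never invokes tightness or the pre-Lagrangian extension lemma; the latter is reserved for the subsequent \lemref{l:remove components}, where the remaining curves are already non-separating.
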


\begin{proof} 
After a $C^\infty$-small perturbation of $\xi$ we may assume that $\Sigma_{-1}$ is convex. 
Assume that the dividing set on $\Sigma_{-1}$ has separating components. Then there is a separating attractive closed leaf of $\Sigma_{-1}$. Let $A(\beta)$ be the sheet of the movie $(\Sigma\times[-1,1],\xi)$ whose boundary contains $\beta$.  Then $A(\beta)$ is an annulus and we put
$$
t_{max}(\beta)=\max\{t\in[-1,1]\,|\,A(\beta)\cap \Sigma_t\neq\emptyset\}.
$$
This is the highest level the sheet $A(\beta)$ reaches. Since the movie has no negative singularities $A(\beta)\cap\Sigma_{t_{max}(\beta)}$ is a degenerate closed curve $\widehat{\beta}$. 
 
We assume that $A(\beta)$ has the property that $t_{max}:=t_{max}(A(\beta))$ is maximal among the levels $t_{max}(\widetilde{\beta})$  for the finite number of separating attractive closed leaves $\widetilde{\beta}$  of $\Sigma_{-1}(\xi)$ isotopic to $\beta$. We can also arrange that the sheet $A(\beta)$ is as simple as possible, i.e. there is no degenerate closed orbit on $A(\beta)$ between $\beta$ and $\widehat{\beta}$. By \lemref{l:no passing} 
$$
t_{max}(A(\beta))<\delta_{1}^-.
$$ 

We now replace $\Sigma_{-1}$ by a smooth surface $\Sigma'$ transverse to $\II$ and close to,  but never below the surface consisting of
\begin{itemize}
\item[(i)] the connected component  of $\Sigma_{-1}\setminus\beta$ on the side of $\beta$ determined by the coorientation of $\beta$ inside $\Sigma_{-1}$,
\item[(ii)] the part of the sheet $A(\beta)$ which lies between $\beta$ and $\widehat{\beta}$, and 
\item[(iii)] the connected component $\widehat{\Sigma}$ of $\Sigma_{t_{max}}\setminus \widehat{\beta}$ on the repelling side of $\widehat{\beta}$. 
\end{itemize}
If $\widehat{\Sigma}$ contains an attractive closed curve isotopic to $\widehat{\beta}$, then we pick the curve closest to $\widehat{\beta}$ and denote it by $\beta'$. There are two possibilities:
Either $\beta$ is parallel to $\beta'$ or not. 

In the first case, the sheet $A(\beta')$ is part of a sheet containing $A(\beta)$. This contradicts the definition of $t_{max}(\beta)$. Hence $\beta'$ is anti-parallel to $\widehat{\beta}$. Notice that the sheet $A(\beta')$ does not intersect $\Sigma_{-1}$ since this would contradict the maximality of $t_{max}(\beta)$. We proceed by replacing $\Sigma'$ by a surface $\Sigma''$ consisting of 
\begin{itemize}
\item[(i)] the part of $\Sigma'$ lying on the side of $\beta'$ which contains $\widehat{\beta}$,
\item[(ii)] the part of the sheet of $A(\beta')$ below $\Sigma'$ and $\Sigma_{-1}$ and the degenerate closed curve $\widehat{\beta}'$ which is contained in $t_{min}(\beta')\in (-1,t_{max})$ (the definition of this number is by now obvious), and
\item[(iii)] the part of $\Sigma_{t_{min}(\beta')}$ on the side of $\widehat{\beta}'$ which is determined by the coorientation of $\widehat{\beta}'$ in $\Sigma_{t_{min}(\beta')}$. 
\end{itemize}

Now we repeat this process. The next attractive closed curve we encounter in the process is part of a sheet containing $A(\beta)$. Therefore we cannot pass the level $\Sigma_{t_{max}}$ in the next step and the same applies to all steps that follow it. This procedure terminates since otherwise we would find a degenerate closed curve isotopic to $\beta$ which is the limit of degenerate closed curves isotopic to $\beta$. 
The resulting surface will be called $\Sigma'$. By construction, $\Sigma'$ has no attractive closed curves isotopic to $\beta$ which  lie in the part of $\Sigma'\setminus\widehat{\beta}$ not containing $\beta$.

The remaining attractive closed curves parallel to $\beta$ are easy to eliminate since the sheets such curves can be completed to sheets which are properly embedded, i.e. both of whose boundary components are contained in $\Sigma_{-1}$. The attractive closed curves which are anti-parallel to $\beta$ are dealt with in the same way explained above. This step does not interfere with what we have achieved already since it all happens in the half of $\Sigma'$ which is contained in $\Sigma_{-1}$. 

We have removed all separating attractive closed curves which are parallel or anti-parallel to $\beta$ and we want to eliminate the remaining separating attractive closed curves. 
We proceed with the non-separating curve $\widetilde{\beta}$  which, together with $\beta$, bounds a subsurface containing no other attractive separating leaf. Now $\widetilde{\beta}$ can be treated essentially in the same way as $\beta$, except that we leave the part of the surface on the side of $\widetilde{\beta}$ which contains $\beta$ as it is. Depending on whether the other side of $\widetilde{\beta}$ coincides with the side determined by the coorientation of $\xi$ or not, the surface is shifted towards higher or lower levels. 

\figref{b:rem-degen} shows this schematically. The isotoped surface is thickened. Only parts of sheets where the corresponding curve in $\Sigma_t$ is attractive and neighbourhoods of degenerate closed curves are shown. \begin{figure}[htb]
\begin{center}
\includegraphics[scale=0.9]{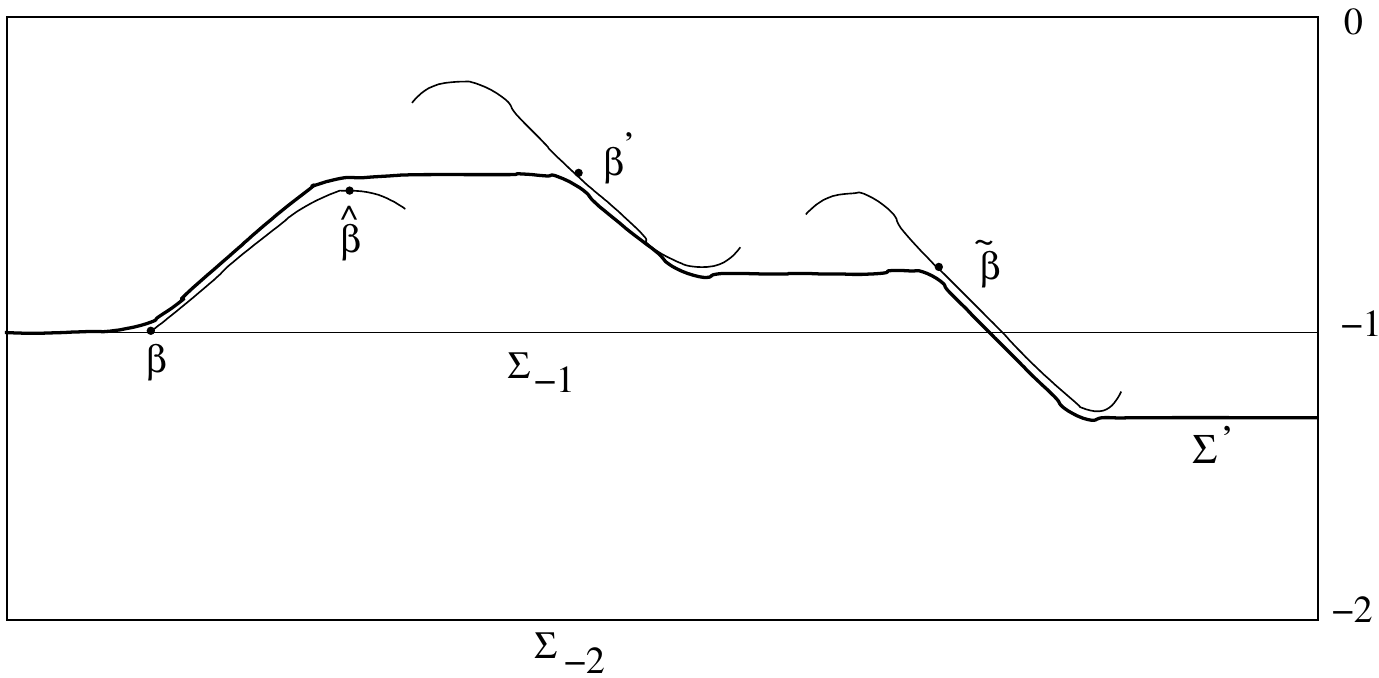}
\end{center}
\caption{Elimination of separating curves}\label{b:rem-degen}
\end{figure}

Since a pairwise disjoint collection of separating homotopically essential simple closed curves contains at most $g-1$ (where $g$ is the genus of $\Sigma$) isotopy classes of non-oriented curves this process stops after a finite number of steps and we have found the desired surface. By our choice of $\eps$ the resulting surface is contained in $\Sigma\times[\delta_{-g+1}^-,\delta_{g-1}^-]$.
\end{proof}

After the elimination of all separating attractive curves we obtain a convex surface $\Sigma'$ in $\Sigma\times[\delta_{-g+1}^-,\delta_{g-1}^-]$ such that no dividing curve is separating. The procedure from the proof of \lemref{l:remove separating} allows us prove that the contact structures are tight. 
\begin{prop} \mlabel{p:tight xi1}
The restrictions of $\xi_1$ and $\xi_2$ to $\Sigma\times[\delta^-_{-g},\delta^+_{-g}]$ are tight.
\end{prop}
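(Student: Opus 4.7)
My plan is to combine Lemma~\ref{l:remove separating} with the tightness criterion for contact structures transverse to $\II$ from Lemma~\ref{l:tight extension}. First, applying Lemma~\ref{l:remove separating} produces a convex surface $\Sigma'\subset\Sigma\times[\delta^-_{-g+1},\delta^-_{g-1}]$ whose dividing set consists only of non-separating simple closed curves. The entirely symmetric construction applied to the upper half of $\widehat{N}(\Sigma)$, starting from a generic convex perturbation of $\Sigma_{+1}$ and pushing across at most $g-1$ level sheets, yields a parallel convex surface $\Sigma''\subset\Sigma\times[\delta^+_{g-1},\delta^+_{-g+1}]$ with the same property. Since $g(\Sigma)\ge 2$, every non-separating simple closed curve on $\Sigma$ is homotopically essential and fails to bound a disc, so no component of either dividing set bounds a disc.

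By the choice of the neighbourhood $U$ the contact structure $\xi_1$ is transverse to $\II$ throughout $\widehat{N}(\Sigma)$, in particular on the region $R$ cobounded by $\Sigma'$ and $\Sigma''$. Thus the hypotheses of Lemma~\ref{l:tight extension} are met on $R$, and we conclude that $\xi_1|_R$ is universally tight; the argument for $\xi_2$ is identical.

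To upgrade tightness from $R$ to the larger slab $\Sigma\times[\delta^-_{-g},\delta^+_{-g}]$ I would rerun the construction with boundary starting surfaces translated outward. After a $C^\infty$-small perturbation to make $\Sigma\times\{\delta^-_{-g}\}$ convex, I apply the iterative sheet-elimination procedure of Lemma~\ref{l:remove separating} to it; the non-crossing property of Lemma~\ref{l:no passing} (which applies at every starting level $\delta^-_i$ with $i\in\{-g,\dots,g\}$) together with the choice of $\eps$ confines each modification to a single $\delta$-level shift, so that after at most $g-1$ steps the resulting convex surface $\widetilde{\Sigma}'$ lies within $\Sigma\times[\delta^-_{-g},\delta^-_{-1}]$ and still has only non-separating dividing curves. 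A completely analogous $\widetilde{\Sigma}''$ is produced very close to $\Sigma\times\{\delta^+_{-g}\}$. A second application of Lemma~\ref{l:tight extension} to the region between $\widetilde{\Sigma}'$ and $\widetilde{\Sigma}''$ shows this region is universally tight, and since it fills $\Sigma\times[\delta^-_{-g},\delta^+_{-g}]$ up to arbitrarily thin collars one obtains the claimed tightness by a direct inclusion, using that any overtwisted disc would lie in a compact subset and so sit inside this universally tight region.

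The main obstacle is controlling the drift of the convex surface produced by Lemma~\ref{l:remove separating} when the starting level is $\delta^-_{-g}$: one must prevent the iterative elimination of separating dividing curves from pushing the surface downward past $\delta^-_{-(g+1)}$, beyond which the sheet-confinement of Lemma~\ref{l:no passing} ceases to apply. Resolving this amounts to checking that at each step of the proof of Lemma~\ref{l:remove separating} the direction of the shift can be chosen monotonically inward (toward $\Sigma_0$); this is a direct consequence of the coorientation conventions used to select the sheets $A(\beta)$ and their highest levels $t_{max}(\beta)$, provided one starts from a surface on which attractive separating dividing curves are cooriented into the interior of $\widehat{N}(\Sigma)$—a condition that is automatic for a $C^\infty$-small convex perturbation of $\Sigma\times\{\delta^-_{-g}\}$.
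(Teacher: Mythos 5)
Your route is genuinely different from the paper's, and your final step has a gap. The paper does not need a convex surface with no separating dividing curves at all: Lemma~\ref{l:tight extension} only requires that no dividing curve \emph{bounds a disc}, which is strictly weaker. The paper perturbs the actual boundary surfaces $\Sigma\times\{\delta^\pm_{-g}\}$ to be convex and rules out a disc-bounding dividing curve by contradiction: if one existed, Remark~\ref{r:disc bounding tight} would give an attractive closed orbit $\beta$ bounding a disc, and iterating the step from the proof of Lemma~\ref{l:remove separating} inside that disc eventually yields a disc on some level surface $\Sigma_t$ whose characteristic foliation points inward along the boundary but has no closed orbits — which forces a negative singularity, impossible for $\xi$ $\eps$-close to $\FF$. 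Lemma~\ref{l:tight extension} then applies to the slab itself, in one stroke.

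Two concrete problems with your version. First, your surfaces $\Sigma'$, $\Sigma''$ (and later $\widetilde{\Sigma}'$, $\widetilde{\Sigma}''$) lie strictly in the interior of the slab, and the claim that ``any overtwisted disc would lie in a compact subset and so sit inside this universally tight region'' does not follow: an overtwisted disc is a fixed compact subset of $\Sigma\times[\delta^-_{-g},\delta^+_{-g}]$ and shrinking the collars does not move it off the boundary, so it need not land in any single one of your tight regions. Second, and more fundamentally, the ``monotonically inward'' shift that you invoke to control drift cannot simply be imposed: in the proof of Lemma~\ref{l:remove separating} the shift direction is dictated by the coorientation of the attractive closed leaf being eliminated (``Depending on whether the other side of $\widetilde{\beta}$ coincides with the side determined by the coorientation of $\xi$ or not, the surface is shifted towards higher or lower levels''), so starting at $\delta^-_{-g}$ the procedure can push the surface below $\delta^-_{-(g+1)}$, outside the range on which Lemma~\ref{l:no passing} and the choice of $\eps$ were established. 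The paper's contradiction argument sidesteps both issues by never moving the boundary surfaces at all.
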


\begin{proof}
By construction, $\xi_1,\xi_2$ are transverse to $\II$ on $\Sigma\times[\delta_{-g}^-,\delta_{-g}^+]$. 
After a $C^\infty$-small perturbation we may assume that $\Sigma\times\{\delta^{\pm}_{-g}\}$ is convex. We want to apply \lemref{l:tight extension}. For this we have to show that no connected component of the dividing set bounds a disc. Assume that there is a component of the dividing set which bounds a disc. By \remref{r:disc bounding tight} there is a closed attractive orbit $\beta$ bounding a disc. Let $A(\beta)$ be the corresponding sheet. 

After going through the procedure in the proof of \lemref{l:remove separating} we find a disc on a surface $\Sigma_t$ in $\Sigma\times[-2,2]$ such that this surface contains no closed orbits although the characteristic foliations point inwards along the boundary. But this requires the presence of a negative singularity. However if $\xi$ is $\eps$-close to $\FF$, then there are no such singularities. Hence \lemref{l:tight extension} proves the claim. 
\end{proof}

We are now in a position to eliminate all but one pair of dividing curves from the surface $\Sigma'$  obtained in \lemref{l:remove separating}.


 \begin{lem} \mlabel{l:remove components}
 Let $\Sigma\subset\Sigma\times[\delta_{-g+1}^-,\delta_{g-1}^-]$  be a convex surface transverse to $\II$ such that all dividing curves are non-separating. Then $\Sigma$ is isotopic to a convex surface $\Sigma'$ in $\Sigma\times[\delta_{-g}^-,\delta_{g}^-]$ 
 \end{lem}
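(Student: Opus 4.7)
The plan is to successively reduce the number of components of the dividing set $\Gamma$ of $\Sigma$ via controlled isotopies staying inside $\Sigma\times[\delta_{-g}^-,\delta_g^-]$, until only two components remain. By the extremal condition \eqref{e:max Euler} and the non-separating hypothesis, $\Sigma^-$ decomposes into a finite disjoint union of annuli $A_1,\ldots,A_k$ with $k\le g$, each bounded by two parallel copies of a non-separating curve $\gamma_i$ and containing a non-degenerate attractive closed leaf $\beta_i$ of $\Sigma(\xi)$ (after a $C^\infty$-small perturbation, justified by \lemref{l:pb} and the surrounding discussion). I argue by induction on $k$; the base case $k=1$ is trivial.

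For each $\beta_i$ the sheet $A(\beta_i)\subset\Sigma\times[-2,2]$ is, by \propref{p:sheets}, transverse to $\II$ and foliated by Legendrian circles projecting under $\mathrm{pr}:\Sigma\times[-2,2]\lra\Sigma$ to an isotopy of $\beta_i$; by \lemref{l:no passing} it does not cross the levels $\Sigma\times\{\delta_{\pm 1}^-\}$, so each induction step will cost at most one level of displacement in the $\II$-direction. The induction step proceeds in two parts. First, if two of the $\gamma_i,\gamma_j$ lie in distinct isotopy classes in $\Sigma$, I use the Legendrian realization principle (\lemref{l:LeRP}) on a nearby convex perturbation of $\Sigma$ to realize a non-isolating arc joining the corresponding annuli of $\Sigma^-$ as a Legendrian curve, then apply a bypass modification (\defref{d:bypass}, \figref{b:switch}) coming from a retrograde saddle-saddle connection on a nearby non-convex level (cf. \lemref{l:switch}) to alter the isotopy class of $\gamma_j$ so that it matches that of $\gamma_i$. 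Once all $\gamma_i$ are pairwise isotopic, I fix two adjacent annuli $A_i,A_j$ in the common isotopy class; by tightness (\propref{p:tight xi1}) together with \lemref{l:compactify by degen}, one of the sheets, say $A(\beta_i)$, must close at a degenerate attractive leaf $\widehat\beta\subset\Sigma_{t_\star}$ in the interior of $\Sigma\times[\delta_{-1}^-,\delta_1^-]$. Imitating the pushout construction in the proof of \lemref{l:remove separating} (see also \figref{b:rem-degen}), I replace $\Sigma$ by the smoothing of the surface obtained by cutting $\Sigma$ along one boundary curve of $A_i$, inserting the sheet $A(\beta_i)$, and closing up with the portion of $\Sigma_{t_\star}$ on the side of $\widehat\beta$ opposite to $\beta_i$. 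The new surface is convex, remains in $\Sigma\times[\delta_{-1}^-,\delta_1^-]$, and its dividing set has $2(k-1)$ non-separating components.

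The main obstacle is controlling the cumulative $\II$-displacement across all $k-1\le g-1$ induction steps and ensuring that the property of having only non-separating dividing curves is preserved by the bypass and pushout operations. Here the $2(g+1)+1$ levels $\delta_{-(g+1)}^\pm,\ldots,\delta_{g+1}^\pm$ provide the quantitative budget: each of the at most $g-1$ reductions consumes at most one additional level, so the final convex surface $\Sigma'$ with exactly two non-separating dividing curves fits inside $\Sigma\times[\delta_{-g}^-,\delta_g^-]$. The hardest technical point will be excluding the possibility that the closing leaf $\widehat\beta$ is anti-parallel to $\beta_i$, since this would create a new separating dividing component on the modified surface; tightness together with \lemref{l:overtwisted movie} will rule this out, exactly as in the treatment of case (2) in Step 2 of the proof of \lemref{l:parallel}.
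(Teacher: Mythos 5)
Your proposal diverges substantially from the paper's argument, and several of its steps rest on operations that are not available in this setting.

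The paper's proof is much more direct: one picks the attractive closed leaf $\beta\subset\Sigma(\xi)$ with \emph{minimal} $t_{\max}(\beta)$; uses \lemref{l:LeRP} (legitimate because $\beta$ is non-separating and there is at least one other attractive closed orbit) to introduce a \emph{repulsive} closed leaf $\beta'$ parallel to $\beta$ on the side opposite to the coorientation, with all other leaves accumulating elsewhere; then applies the pre-Lagrangian extension lemma (\lemref{l:completing sheets}) to obtain a properly embedded sheet $A'(\beta)$ with $\partial A'(\beta)=-\beta\cup\beta'\subset\Sigma$. Replacing the annulus between $\beta$ and $\beta'$ in $\Sigma$ by this sheet and pushing off slightly deletes one pair of dividing curves without creating new ones, and this is iterated. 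Your proposal never invokes \lemref{l:completing sheets}, which is the key tool here.

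The first step you propose -- normalizing all $\gamma_i$ to a common isotopy class via bypass attachments coming ``from a retrograde saddle-saddle connection on a nearby non-convex level'' -- cannot be carried out. The contact structures in question are transverse to $\II$, so by the standing hypothesis the characteristic foliations on the level surfaces $\Sigma_t$ have \emph{only positive} singularities; a retrograde saddle-saddle connection (\defref{d:retrograde}) by definition involves a negative hyperbolic singularity, so there simply are none. More fundamentally, a bypass effecting a prescribed change of the dividing set must actually exist in the fixed contact manifold; you cannot summon one, and you give no existence argument. In any case the normalization step is unnecessary -- the pre-Lagrangian extension lemma operates one pair of dividing curves at a time and makes no hypothesis on the isotopy classes of the other pairs.

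Your level bookkeeping does not close. The hypothesis puts the starting surface in $\Sigma\times[\delta_{-g+1}^-,\delta_{g-1}^-]$ and the conclusion asks for $\Sigma'$ in $\Sigma\times[\delta_{-g}^-,\delta_g^-]$ -- a budget of \emph{one} additional level on each side. Your estimate of ``at most $g-1$ reductions, each consuming at most one additional level'' exhausts this budget already for $g\ge 3$. The reason the paper's iteration stays within the one-level envelope is precisely the choice of $\beta$ with minimal $t_{\max}(\beta)$, so that each replacement sheet stays below the other sheets and the displaced surface does not drift upward step by step. Your proposal omits this minimality choice entirely. (The asserted bound $k\le g$ on the number of annuli in $\Sigma^-$ is also not justified; disjoint non-separating curves in a genus $g$ surface may come in arbitrarily many parallel copies.)

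Finally, the pushout you describe -- ``cutting $\Sigma$ along one boundary curve of $A_i$, inserting the sheet $A(\beta_i)$, and closing up with the portion of $\Sigma_{t_\star}$ on the side of $\widehat\beta$ opposite to $\beta_i$'' -- is geometrically modelled on the separating case of \lemref{l:remove separating}, where $\Sigma_{t_\star}\setminus\widehat\beta$ is disconnected. Here $\widehat\beta$ is non-separating, so $\Sigma_{t_\star}\setminus\widehat\beta$ is connected and there is no meaningful ``side'' with which to close up. This is exactly the obstruction that the pre-Lagrangian extension lemma is designed to circumvent by producing a sheet with \emph{both} boundary components already in $\Sigma$; once that is in hand, the anti-parallel worry you raise does not arise, because the orientations of $-\beta\cup\beta'$ are prescribed by the extension lemma itself.
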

 
 \begin{proof}
 We assume that there are at least four dividing curves, all of which are non-separating. Fix an attractive closed leaf $\beta$ in $\Sigma(\xi)$ so that $t_{max}(\beta)$ is minimal among the finitely many attractive closed leaves in $\Sigma_{-1}$. Since $\beta$ is non-separating and there is another attractive closed orbit, we can use the theory of convex surfaces to change the characteristic foliation on $\Sigma$  so that
 \begin{itemize}
 \item there is a repulsive closed leaf $\beta'$ parallel to $\beta$ on the side of $\beta$ opposite to the side determined by the coorientation of $\xi$, and
 \item all leaves of the characteristic foliation which do not lie in the annulus bounded by $\beta$ and $\beta'$ accumulate on an attractive closed curve different from $\beta$.
\end{itemize}
This can be done without changing any of the sheets containing closed attractive leaves of $\Sigma(\xi)$. 

By our assumptions on $\eps$ and the choice of neighbourhoods of the closed leaf $\Sigma$ the sheet $A(\beta)$ containing $A(\beta)$ does not enter $\Sigma\times[-\delta,\delta]$. We choose an identification of the region bounded by $\Sigma$ and $\Sigma_{-\delta}$ with $\Sigma\times[-1,-\delta]$ so that 
\begin{itemize}
\item $\Sigma$ corresponds to $\Sigma_{-1}$,
\item the foliation corresponding to the second factor is tangent to $\II$, and
\item the Legendrian foliation on the sheets containing closed attractive curves of $\Sigma(\xi)$ is tangent to the level surfaces of the product decomposition $\Sigma\times[-1,-\delta]$. 
\end{itemize}
Now we apply the pre-Lagrangian extension lemma (\lemref{l:completing sheets}) to $\beta$ relative to the sheets containing other closed leaves of $\Sigma_{-1}(\xi)$. We obtain a properly embedded sheet $A'(\beta)$ of a contact structure $\xi'$ isotopic to $\xi$ so that $A'(\beta)$ connects $\beta$ to $\beta'$. 
We now replace $\Sigma_{-1}$ by a surface $\Sigma'$ close to the union of 
\begin{itemize}
\item[(i)] the sheet $A'(\beta)$ and 
\item[(ii)] $\Sigma_{-1}$ with the annulus bounded by $\beta'$ and $\beta$ removed 
\end{itemize}
If $\Sigma'$ is sufficiently close to this union and lies above it, then no new closed curves/negative singular points have appeared on $\Sigma'$ and we have eliminated one pair of dividing curves. This process can be iterated as long as there are at least two pairs of parallel dividing curves.
 \end{proof}
 

\subsubsection{Identification of the contact structure} \mlabel{s:det contact}

By the lemmas from the previous section, we find a domain $N(\Sigma_0)$  diffeomorphic to $\Sigma\times[-1,1]$ inside $M$ such that the boundary has the following properties:
\begin{itemize}
\item It is convex and contained inside $\Sigma\times([-2,2]\setminus[-\delta,\delta])$. 
\item One boundary component lies above $\Sigma_0$ while the other boundary component lies below $\Sigma_0$. 
\item The dividing set of the characteristic foliation of $\xi$ consists of two non-separating closed curves on each boundary component. 
\end{itemize}
Since the contact structures $\xi_1,\xi_2$ are tight on $N(\Sigma_0)$ 
we can  apply \thmref{t:Sigma class} to determine the isotopy class of $\xi$. By our assumptions on $\eps$, both contact structures have the following property: The sheet containing an attractive closed leaf of the characteristic foliation on a boundary component of $N(\Sigma_0)$ does not enter $\Sigma\times[-\delta,\delta]$. 

From now on we identify $N(\Sigma_0)$ with $\Sigma\times[-1,1]$ in such a way that sheets containing attractive closed leaves of the characteristic foliation on the boundary are preserved and nothing changes on $\Sigma\times[-\delta,\delta]$.

First we determine the relative Euler class of a contact structure $\xi$ on $\Sigma\times[-1,1]$. 
For this we apply \lemref{l:completing sheets} to $A(\beta_{\pm 1})$ where $\beta_{\pm}$ is the unique closed attractive leaf of $\Sigma_{\pm1}(\xi)$. 

We obtain a boundary elementary contact structure such that the boundary component $\beta'_{-1}$ of $A(\beta_{-1})$ which is different from $\beta_{-1}$ lies on the side of $\beta_{-1}$ opposite to the coorientation of $\beta_{-1}$ determined by the coorientation of $\xi$. For $A(\beta_{+1})$ the situation is opposite. 

The map $\varphi$ in the following lemma is an automorphism of the surface isotopic to a left-handed Dehn twist along $\beta_{-1}$.

\begin{lem} \mlabel{l:bypass}
Let $\xi$ be a boundary elementary tight contact structure on $\Sigma\times[-1,1]$ with the properties from the previous paragraphs. Let $\alpha\subset\Sigma$ be a simple closed oriented curve with $\alpha\cdot\beta_{-1}=1$.  Then for sufficiently large $k>0$ there is an annulus $S(\alpha_k)$ with primped Legendrian boundary isotopic to $\alpha_k:=\varphi^k(\alpha)$ connecting  $\Sigma_{-1}$ to $\Sigma_{+1}$ so that there is a Legendrian curve $\widehat{\alpha}_k$ in the interior of $S(\alpha_k)$ containing no singular point of $S(\alpha_k)(\xi)$. 
\end{lem}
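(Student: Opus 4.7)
The plan is to construct the Legendrian curve $\widehat{\alpha}_k$ first using the pre-Lagrangian structure of $A(\beta_{-1})$, and then to realize it as an interior curve of a convex annulus $S(\alpha_k)$ with the required boundary.

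First I would exploit the pre-Lagrangian structure of $A(\beta_{-1})$: since this properly embedded annulus (with both boundary components on $\Sigma_{-1}$) is foliated by closed Legendrian curves parallel to $\beta_{-1}$, pick any such leaf $\beta^L$ lying in the interior of $M$. Using the Legendrian realization principle (\lemref{l:LeRP}), choose a Legendrian representative $\alpha^L$ of $\alpha$ inside a suitable convex surface meeting a neighborhood of $\beta^L$ transversely. At the single transverse intersection of $\alpha^L$ and $\beta^L$, perform a Legendrian ``connect sum'' in a Darboux chart with $k$ parallel copies of $\beta^L$ to obtain a closed Legendrian curve $\widehat{\alpha}_k$ in $\Sigma\times[-1,1]$ in the homology class $[\alpha]+k[\beta_{-1}]=[\alpha_k]$. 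Each such Legendrian connect sum is a standard local operation, carried out inside a contact Darboux ball.

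Next, I would realize $\alpha_k$ as primped Legendrian curves $\alpha_k^\pm\subset\Sigma_{\pm 1}$ using the Legendrian realization principle. Since $\Sigma$ has genus $\ge 2$, the isotopy class (rel boundary) of embedded annuli in $\Sigma\times[-1,1]$ with boundary $\alpha_k^-\cup\alpha_k^+$ is unique. Among these, a specific representative $S(\alpha_k)$ can be built as an annular extension of $\widehat{\alpha}_k$: start with a small tubular neighborhood of $\widehat{\alpha}_k$ and extend it via a Legendrian-compatible isotopy to reach the desired boundary on $\Sigma_{\pm 1}$. The crucial framing calculation is that, for $k$ large, the contact framing of $\widehat{\alpha}_k$ is sufficiently negative relative to the framing induced by $S(\alpha_k)$: each wraparound along $\beta^L$ contributes $-1$ to the framing difference, as can be seen from \propref{p:sheets} which pins down the slopes of contact planes on attractive pre-Lagrangian sheets. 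This framing matching ensures that a compatible embedded $S(\alpha_k)$ exists. After a $C^\infty$-small perturbation using Giroux's flexibility (\lemref{l:giroux flex}) applied inside the resulting convex annulus, $\widehat{\alpha}_k$ can be arranged to sit on $S(\alpha_k)$ as a closed leaf of the characteristic foliation $S(\alpha_k)(\xi)$, hence free of singular points; primpedness of $\partial S(\alpha_k)$ is preserved since all modifications take place in the interior.

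The main obstacle lies in making the framing calculation precise and ensuring global embeddedness of $S(\alpha_k)$: one must verify that the Legendrian connect sum construction produces enough negative twisting to admit an embedded annular extension, and that no spurious intersections of $S(\alpha_k)$ with itself or with other sheets arise. This analysis relies on both the extremality condition on $\xi$ (which, together with the choice of $\eps$, prevents the sheets $A(\beta_{\pm 1})$ from connecting $\Sigma_{-1}$ to $\Sigma_{+1}$) and on the boundary elementary hypothesis, which guarantees that the local picture around $\widehat{\alpha}_k$ is controlled by the pre-Lagrangian structure of $A(\beta_{-1})$.
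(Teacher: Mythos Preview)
Your approach differs substantially from the paper's, and there is a genuine gap in the framing step. You aim to make the contact framing of $\widehat{\alpha}_k$ ``sufficiently negative'' relative to the $S(\alpha_k)$-framing, but what the conclusion actually requires is that this twisting be \emph{exactly zero}: a Legendrian curve on a surface carries no singular points of the characteristic foliation precisely when the contact framing agrees with the surface framing. If each connect sum with a copy of $\beta^L$ really decreased the twisting by one, then for large $k$ you would obtain a curve with many singularities on $S(\alpha_k)$, not none. Your Legendrian connect-sum construction does not come with a mechanism that pins the twisting to zero, and the subsequent appeal to \lemref{l:giroux flex} cannot repair this, since that lemma moves curves within a fixed dividing-set class and does not change twisting numbers.

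The paper avoids this difficulty by constructing $\widehat{\alpha}_k$ directly as a leaf of the characteristic foliation on a surface $\Sigma'$ isotopic to $\Sigma_0$. Concretely, one replaces the annulus in $\Sigma_{-1}$ bounded by $\partial A(\beta_{-1})$ with the pre-Lagrangian annulus $A(\beta_{-1})$ itself, smooths, and pushes the resulting surface $\Sigma''$ slightly into the interior. By \lemref{l:gi-birth-death} the family of closed Legendrian curves on the pre-Lagrangian part disappears under the push-off, and the two arcs of $\alpha$ outside the annulus get reconnected by leaves that spiral around; the number of turns grows without bound as the push-off time tends to zero, producing a closed leaf $\widehat{\alpha}_k$ isotopic to $\varphi^k(\alpha)$ for any sufficiently large $k$. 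Because $\widehat{\alpha}_k$ now sits on a surface isotopic to $\Sigma$, its twisting relative to the $\Sigma'$-framing is zero, and since $S(\alpha_k)$ can be chosen to contain a neighbourhood of $\widehat{\alpha}_k$ in $\Sigma'$, the same holds for the $S(\alpha_k)$-framing. This is the missing geometric idea: the Dehn twisting and the zero-twisting condition come for free from the movie picture, rather than from an a posteriori framing count.
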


\begin{proof}
We start with $\Sigma_{-1}$ and arrange that $\partial A(\beta_{-1})\subset\Sigma_{-1}$ consists of two circles of singularities (negative along $\beta_{-1}$ and positive at the other end of $A(\beta_{-1})$). Using \lemref{l:convex families}, we can moreover arrange that $\alpha$ is a Legendrian curve in $\Sigma_{-1}$ which intersects the annulus bounded by $\partial A(\beta_{-1})$ in a single arc so that  $\alpha$ is primped (cf. \secref{s:Sigma class}) and the only negative singularity on $\alpha$ is $\alpha\cap\beta_{-1}$. 

Now consider the smooth surface $\Sigma''$ obtained from $\Sigma_{-1}$ after replacing the annulus bounded by $\partial A(\beta_{-1})$ in $\Sigma_{-1}$ by $A(\beta)$ and then smoothing out the non-smooth points.  Using a vector field $X$ transverse to the surface, we push $\Sigma''$ into the interior of $\Sigma\times[-1,1]$. We may assume that $X$ is a contact vector field outside of a small neighbourhood of $\partial A(\beta_{-1})$. Note that the part of $\Sigma''$ contained in $A(\beta_{-1})$ is foliated by closed leaves of the characteristic foliation, so $\Sigma''$ is certainly not convex. The closed Legendrian curve close to $\beta$ is repelling while the closed Legendrian curve at the opposite end of $A(\beta_{-1})$ is repulsive.

By \lemref{l:gi-birth-death} the collection of closed Legendrian curves forming on $\Sigma''$ curves disappears as we push this surface into $N(\Sigma_0)$ using the flow of $X$ and leaves of the characteristic foliation close to $\beta_{-1}$ get connected to leaves of the characteristic foliation on the opposite side of $A(\beta_{-1})$. If the flow runs for an appropriate time, the characteristic foliation on the pushed-off surface  connects the two arcs of $\alpha$ which lie in the part of the surface further away from $A(\beta)$. (The sequence of instances where this happens has $0$ as an accumulation point.) 

For an appropriate push-off we obtain a surface $\Sigma'$ containing a closed curve isotopic to a  curve obtained from $\alpha$ by applying a sufficiently high power of a left-handed Dehn twist $\varphi$  along $\beta$.  The annulus $\beta$ is now obtained by isotoping $\Sigma_{\pm 1}$ such that there is a Legendrian curve $\widehat{\alpha}_k$ isotopic to  $\varphi^k(\alpha)$ which intersects $\beta_{-1}$ exactly once. 

The annulus $S(\alpha_k)$ is then obtained by picking an annulus bounding the Legendrian curves isotopic to $\alpha_k$ in $\Sigma_{\pm 1}$ and containing $\widehat{\alpha}_k$. Since the twisting of $\xi$ along $\widehat{\alpha}_k$ is zero with respect to the framing determined by $\Sigma'$ this twisting vanishes also with respect to the framing determined by $S(\alpha_k)$. Therefore we can eliminate the singular points of $S(\alpha_k)(\xi)$ which lie on $\widehat{\alpha}_k$. 
\end{proof}

We now decompose $\Sigma\times[-1,1]$ as follows. Start with $\Sigma'$ and modify this surface using the pre-Lagrangian extension lemma in a similar way as in the proof of \lemref{l:remove components} to reduce the number of dividing curves to $2$ without introducing any new ones. For this recall the following facts.
\begin{itemize}
\item All dividing curves intersect $\beta_{-1}$ at least once and always with the same sign and thus there are no null-homologous dividing curves in $\Sigma'$.
\item From the way we obtained $\Sigma'$ it follows that we may assume that the domain between $\Sigma'$ and $\Sigma_{-1}$ does not contain negative singularities except those along $\beta_{-1}\subset\Sigma_{-1}$ (these singularities lie on the Legendrian curve $\partial S(\alpha_k)$.)   
\end{itemize} 
The resulting surface is called $\widehat{\Sigma}$ and the dividing set on this surface is such that $\alpha_k$ is isotopic to a curve disjoint from the dividing set (which consists of two parallel copies of the curve $\gamma$).

Hence if $\widehat{S}$ is an annulus in the domain bounded by $\widehat{\Sigma}$ and $\Sigma_{-1}$, whose boundary is isotopic to $\alpha_k$, then 
\begin{equation} \label{e:which basic}
\langle\widetilde{e}(\xi),\widehat{S}\rangle = -1 =  (\pm \gamma+\beta_{-1} )\cdot \alpha_k = - \alpha\cdot\beta_{-1}
\end{equation}
if we use the coorientation of $\beta_{-1}$ in order to orient $\widehat{S}$. In other words $\widehat{S}$ is oriented so that $-\alpha_k$ is part of the oriented boundary of $\widehat{S}$. This means that we have determined the sign in front of $\beta_{-1}$ in the expression of the Poincar{\'e} dual of $\widetilde{e}(\xi)$ in \eqref{e:sum}. The coefficient in front of $\beta_{+1}$ is determined in the same way looking at the other boundary component $\Sigma_{+1}$. Since the pre-Lagrangian annulus $A(\beta_{+1})$ now lies on the side of $\beta_{+1}$ determined by the coorientation of $\xi$, we get a minus sign. Hence
\begin{equation} \label{e:which basic2}
\mathrm{PD}\left(\widetilde{e}(\xi)\right)= -\beta_{+1}+\beta_{-1}.
\end{equation}

Thus if $\beta_{-1}$ and $\beta_{+1}$ are not homologous, then we have determined the contact structure up to isotopy because by \thmref{t:Sigma class} it is determined by the relative Euler class. 

If $\beta_{-1}$ and $\beta_{+1}$ are homologous, then \eqref{e:which basic2} implies $\widetilde{e}(\xi)=0$ and we have to study which basic slice admits a contact embedding into $\Sigma\times[-1,1]$ such that one boundary components gets mapped to $\Sigma_{-1}$ in an orientation preserving fashion. If $\widehat{\alpha}_k$ is attractive, then we have already found a basic slice. If $\alpha_k$ is repulsive, then after folding we obtain a surface with $4$ dividing curves. After removing the dividing curves which do not come from this folding procedure we end up again with a basic slice
$$
\llbracket \beta_{-1},\beta'; -\beta'+\beta_{-1} \rrbracket
$$
with $\beta'$ isotopic to $\widehat{\alpha}_k$. (The folding procedure provides us with a pre-Lagrangian annulus below $\Sigma'$ which lies on the side of $\alpha_k$ determined by the coorientation of $\xi$).  

Therefore the contact structure on $\Sigma\times[-1,1]$ is completely determined by the contact structure on the complement of $\Sigma\times[-\delta,\delta]$. This completes the proof of \propref{p:outside determines inside} and with it the proof of \thmref{t:unique} for all confoliations except foliations without holonomy.

\section{Foliations without holonomy} \mlabel{s:without holonomy}

The proof of \thmref{t:unique} is almost finished, what is left open is the case of foliations without holonomy which will be discussed in this section. We shall make use of the structure theory of foliations without holonomy which was developed in particular by R.~Sacksteder and S.~Novikov.  The following theorem can be found in \cite{cc} (recall that  we assume that $M$ is closed). 
\begin{thm} \mlabel{t:fibered or dense}
Let $\FF$ be a $C^2$-foliation without holonomy on $M$. Either every leaf of $\FF$ is dense or the leaves of $\FF$ are the fibers of a fibration $M\lra S^1$. 
\end{thm}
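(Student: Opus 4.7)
The plan is a dichotomy based on whether $\FF$ has a closed leaf, combined with Sacksteder's theorem and the Reeb stability theorem adapted to the trivial-holonomy situation.

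First, I would rule out exceptional minimal sets. Suppose $N\subset M$ were an exceptional minimal set of $\FF$. Since $\FF$ is a $C^2$-foliation, \thmref{t:sack} produces a Sacksteder curve $\gamma\subset N$ tangent to $\FF$ whose linear holonomy satisfies $h_\gamma'(0)<1$. This contradicts the hypothesis that $\FF$ is a foliation without holonomy (\defref{d:holonomy}). Hence no exceptional minimal set exists, and every minimal set is either a closed leaf or all of $M$.

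Next I would split into the two cases. If $M$ itself is the only minimal set, then every leaf is dense by the description of minimal sets recalled after \defref{d:min set}, which is the first alternative in the theorem. Otherwise some minimal set is a compact leaf $L$. Because the holonomy along every closed loop in $L$ is trivial, the strong form of the Reeb stability theorem (compare \thmref{t:Reeb} and Proposition~1.3.7 of \cite{confol}) provides a saturated product neighborhood $L\times(-\eps,\eps)$ of $L$ foliated by parallel copies $L\times\{t\}$, each of which is therefore a compact leaf diffeomorphic to $L$.

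Now I would show that all leaves are compact. Let $\CC\subset M$ be the union of all compact leaves of $\FF$. The product neighborhood above shows that $\CC$ is open. By Haefliger's compactness theorem (cited in \secref{s:fixing closed tori}) the set of compact leaves of a fixed diffeomorphism type is closed in $M$, and the local product structure combined with the absence of holonomy forces every leaf near $L$ to be diffeomorphic to $L$; the same reasoning applies near any compact leaf, so $\CC$ is a union of open subsets, each consisting of leaves of one fixed diffeomorphism type. Because $M$ is connected and these open subsets are pairwise disjoint, only one of them can be non-empty, so every leaf in $\CC$ is diffeomorphic to $L$. Finally, a limit of leaves diffeomorphic to $L$ has a product neighborhood by absence of holonomy, so $\CC$ is closed. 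Since $\CC$ is non-empty, open, and closed, $\CC=M$.

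It remains to build the fibration. The quotient space $B:=M/\FF$ is Hausdorff because every leaf is closed and each has a product neighborhood, so locally $B$ looks like $(-\eps,\eps)$. Thus $B$ is a compact connected $1$-manifold without boundary, hence diffeomorphic to $S^1$, and the quotient map $M\to B\simeq S^1$ is a locally trivial fiber bundle with fiber $L$. This realizes $\FF$ as the foliation by fibers of a fibration $M\lra S^1$, giving the second alternative. The only place where a genuine obstacle arises is the verification that $\CC$ is both open and closed; this rests crucially on the combination of no holonomy with $C^2$-smoothness used in Sacksteder's theorem and Reeb stability.
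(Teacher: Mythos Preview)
The paper does not actually prove this theorem; it simply quotes it from \cite{cc} (Candel--Conlon). Your sketch is essentially the standard argument one finds there: rule out exceptional minimal sets via Sacksteder, then either $M$ is minimal or there is a compact leaf, and in the latter case trivial holonomy plus generalized Reeb stability gives local product structure, from which an open--closed argument shows all leaves are compact and the leaf space is $S^1$.

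Two small comments on your write-up. First, the Reeb stability you need is the version for compact leaves with trivial (or finite) holonomy group, not the spherical-leaf version stated as \thmref{t:Reeb} in this paper; you should cite the general form directly (it is in \cite{cc}). Second, your closedness argument for $\CC$ is slightly tangled: you invoke Haefliger earlier to say the set of compact leaves of a fixed type is closed, but then your final sentence ``a limit of leaves diffeomorphic to $L$ has a product neighborhood by absence of holonomy'' presupposes the limit leaf is already compact. Cleaner is: Haefliger's theorem gives directly that the union of all compact leaves is closed, hence $\CC$ is closed; openness comes from Reeb stability as you wrote; connectedness of $M$ finishes it. The diffeomorphism-type bookkeeping is then automatic from the product neighborhoods.
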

In particular, a foliation without holonomy is automatically taut since non-compact leaves in closed manifolds always have a closed transversal. In the following we consider only such neighbourhoods of $\FF$ which are so small that every contact structure in that neighbourhood is automatically universally tight. Such a neighbourhood exists by \thmref{t:taut tight}. 

We shall deal first with the case that every leaf is closed and we will determine precisely which torus fibrations satisfy the conclusion of \thmref{t:unique2}. Then we will finally prove \thmref{t:unique} for foliations without holonomy all of whose leaves are dense.

\subsection{Every leaf of $\FF$ is a torus} \mlabel{s:torus bundles}

Let $\mathrm{pr} : M \lra S^1$ be an orientable torus fibration over the circle. The diffeomorphism type of $M$ is determined by the action of the monodromy of the fibration on the homology of a fiber
\begin{align*}
\phi_M : H_1(T^2,\Z)\lra H_1(T^2,\Z).
\end{align*}
Thus we may assume that $M=T^2\times\R/\simeq$ with $(x,t)\simeq (Ax,t+1)$ for $A\in\mathrm{Sl}(2,\Z)$ since $M$ is orientable.

There are infinitely many isotopy classes of positive contact structures on $M$: On the universal cover $\R^2\times\R$ of $M$ consider the $1$-form 
$$
\cos(\varphi(t))dx_1-\sin(\varphi(t))dx_2
$$ 
where $\varphi$ is a strictly increasing function. According to \cite{gi-inf} for each integer $n\ge 0$ one can chose $\varphi$ such that the corresponding contact structure on $\R^3$ is invariant under the action of $\pi_1(M)$ and 
\begin{equation*}
2n\pi<\sup_{t\in\R}(\varphi(t+1)-\varphi(t))\le 2(n+1)\pi.
\end{equation*}
The resulting contact structures do not depend on the particular choice of $\varphi$ but the induced contact structures $\xi_n$ on $M$ are not isotopic for different integers $n$.  

The universally tight contact structures on $M$ have been classified by K.~Honda \cite{honda2} and E.~Giroux \cite{gi-bif, gi-inf}. For our purposes the following statement of their results is sufficient:
\begin{thm} \mlabel{t:tight torus bundles}
If $\left|\mathrm{tr}(\phi_M)\right|\neq 2$, then all positive universally tight contact structures on $M$ are isotopic to one of the contact structures $\xi_n$ defined above. 
\end{thm}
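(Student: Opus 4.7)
The plan is to recognise any positive universally tight contact structure $\xi$ on $M$ as isotopic to one of the explicit models $\xi_n$ by producing an isotopy that transforms $\xi$ into a contact structure transverse to the fibration $\mathrm{pr} : M \lra S^1$ and then reading off the integer $n$ from the total twisting along a transversal. Lifting to the universal cover $\R^2 \times \R$ and using equivariance under $(x,t) \mapsto (Ax, t+1)$ with $A = \phi_M$, a transverse $\pi_1(M)$-invariant contact structure can be brought into the normal form $\ker(\cos(\varphi(t))dx_1 - \sin(\varphi(t))dx_2)$ for some strictly increasing $\varphi$ whose slope $\varphi(t) \bmod \pi$ is equivariant with respect to the action of $A$ on $\R\mathrm{P}^1$. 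By \thmref{t:tight connection} any such contact structure is automatically universally tight, and two such normal forms yielding the same value of $n$ can be connected by a path of admissible $\varphi$'s, which by Gray's theorem (\thmref{t:gray}) gives an isotopy between them.

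The first step would be to produce a pre-Lagrangian torus $T_\xi \subset M$ isotopic to a fiber of $\mathrm{pr}$. Consider a smooth one-parameter family of tori sweeping out $M$ from one lift of a fiber to the next, and study the associated movie of characteristic foliations. Using \lemref{l:compactify by degen} together with the sheet-extension arguments of \secref{s:prop sheets}, one finds a sheet in this movie which, being compact and closed, must be a pre-Lagrangian torus. The hypothesis $|\mathrm{tr}(\phi_M)| \neq 2$ is crucial here: the induced linear action of $\phi_M$ on $H_1(T^2;\R) \cong \R^2$ is either elliptic or hyperbolic, so it has no fixed direction in $\R\mathrm{P}^1$. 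Consequently the slope of $T_\xi$ is forced, up to a controlled choice, to be the fiber slope, rather than some exceptional invariant eigen-slope that would lead to the exotic contact structures of Giroux in the parabolic case. Cutting $M$ along $T_\xi$ produces $T^2 \times [0,1]$ with convex boundary, to which \thmref{t:tight on TxI} applies and identifies the contact structure up to isotopy from the Giroux torsion together with the boundary identification induced by $\phi_M$.

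The second step is to leverage \thmref{t:tight on TxI} to push $\xi$ through an isotopy that simultaneously introduces a family of parallel pre-Lagrangian tori isotopic to $T_\xi$, thereby giving the transverse normal form above. The total twisting $\sup_{t}(\varphi(t+1) - \varphi(t))$ is a well-defined invariant of $\xi$ up to isotopy (it counts, essentially, the Giroux torsion along the fiber direction plus the base amount of rotation needed for the equivariance), and it coincides with the invariant of the model $\xi_n$ characterising it among the $\xi_n$. The main obstacle in this plan is the non-parametric step of locating the pre-Lagrangian torus in the correct isotopy class; the entire argument breaks down precisely when $|\mathrm{tr}(\phi_M)| = 2$, where $\phi_M$ preserves a line in $\R^2$ and one obtains additional universally tight contact structures carrying pre-Lagrangian tori of an eigen-slope distinct from the fiber slope. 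Overcoming this obstacle in the non-parabolic case is exactly where the hypothesis is used, and once the pre-Lagrangian fiber-parallel torus is produced, the rest of the argument reduces to the thickened torus classification already recorded in \thmref{t:tight on TxI}.
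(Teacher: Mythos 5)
The paper does not prove Theorem~\ref{t:tight torus bundles}: it is quoted as a known classification result of Honda \cite{honda2} and Giroux \cite{gi-bif, gi-inf}, and the text immediately preceding it makes this explicit. So there is no ``paper's proof'' to compare against; what can be compared is your sketch against the strategy that the paper itself uses for the neighbouring result (the $\mathrm{tr}(\phi_M)=-2$ case of Theorem~\ref{t:torus bundles}), which is the same style of argument one would use to prove the cited theorem. Your end game --- cut along a pre-Lagrangian torus isotopic to a fiber, apply Theorem~\ref{t:tight on TxI}, and read off the isotopy class from the twisting --- is the right shape.

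However, there are two genuine gaps. First, the central mechanism you propose for producing the pre-Lagrangian torus is inverted. You say the sheet-extension machinery (Lemma~\ref{l:compactify by degen}, Section~\ref{s:prop sheets}) yields a sheet in the movie $T_t(\xi)$ that closes up to a compact pre-Lagrangian torus, and that the hypothesis $|\mathrm{tr}(\phi_M)|\neq 2$ forces its slope. But a sheet that meets every fiber $T_t$ would be a torus wrapping the $S^1$ direction, and the slope of its intersection with $T_t$ must be fixed by $\phi_M$ as $t$ runs around $S^1$. When $|\mathrm{tr}(\phi_M)|\neq 2$ there is \emph{no} $\phi_M$-invariant rational slope (elliptic: no real eigenvalue; hyperbolic: the eigendirections are irrational). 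So the hypothesis is used to show that such a sheet \emph{cannot exist} --- not that one does. Once wraparound sheets are excluded, one cuts along a $C^\infty$-perturbed convex fiber $T_0$ to obtain $T^2\times[0,1]$; it is then Theorem~\ref{t:tight on TxI} that hands you the interior pre-Lagrangian torus $T'$ isotopic to $T_0$. That $T'$ is in general \emph{not} a sheet of the movie on fibers of $M$, so your mechanism could not have found it directly. (Compare the paper's own argument for $\mathrm{tr}(A)=-2$: it shows two wraparound sheets coming from $\beta$ and $\beta'$ cannot coexist, reduces the dividing curves, and only then invokes Theorem~\ref{t:tight on TxI}.) Second, your first sentence quietly assumes that an arbitrary universally tight $\xi$ can be isotoped transverse to $\mathrm{pr}$ and further normalised to $\ker(\cos\varphi(t)\,dx_1-\sin\varphi(t)\,dx_2)$. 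That is a substantial step, not a remark: general transverse contact structures do not have linear characteristic foliations on fibers, and establishing the normal form is precisely where much of the Giroux/Honda work lies. As written, your argument is circular at this point, since you invoke a normal form whose existence is roughly equivalent to the conclusion you are trying to reach.
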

This implies that the conclusion of \thmref{t:unique2} also holds for torus bundles over $S^1$ whose monodromy is elliptic or hyperbolic. If the monodromy $A\in\mathrm{Sl}(2,\Z)$ satisfies $\mathrm{tr}(A)=2$, then this is not the case as will be shown in \exref{ex:tr2}.

\begin{thm} \mlabel{t:torus bundles}
Let $\FF$ be the foliation defined by a torus bundle $\mathrm{pr} : M\lra S^1$. Then there is a $C^0$-neighbourhood of $\FF$ in the space of plane fields such that all positive contact structures in that neighbourhood are stably isotopic if and only if $+1$ is not an eigenvalue of $A$. 
\end{thm}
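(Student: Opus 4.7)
The proof splits at the hypothesis $\mathrm{tr}(A)=2$, which (since $\det A=1$) is equivalent to $+1$ being an eigenvalue of $A$. In each direction, the key is to convert statements about universally tight contact structures on $M$ into statements about the discrete invariant counted by the integer $n$ appearing in the definition of $\xi_n$, and then to relate changes in this invariant to insertion of standard Giroux torsion layers along a pre-Lagrangian torus fiber.

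\textbf{The "if" direction.} Assume $\mathrm{tr}(A)\neq 2$. Since $\FF$ is taut, \thmref{t:taut tight} provides a $C^0$-neighbourhood $U_0$ of $\FF$ in which every positive contact structure is universally tight, so it suffices to show that the universally tight positive contact structures close to $\FF$ are all stably equivalent. When $|\mathrm{tr}(A)|\neq 2$, \thmref{t:tight torus bundles} identifies every such contact structure with some $\xi_n$ up to isotopy. A torus fiber of $\mathrm{pr}$ can be made pre-Lagrangian for every $\xi_n$ (the defining $1$-form restricts to a closed form on fibers), and inserting a standard Giroux torsion $k$-layer along such a fiber transforms $\xi_n$ into $\xi_{n+k}$; hence all the $\xi_n$ are stably equivalent with respect to a single pre-Lagrangian torus. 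The remaining case $\mathrm{tr}(A)=-2$ is handled by invoking the analogous classification of universally tight contact structures on parabolic torus bundles with eigenvalue $-1$ due to Honda \cite{honda2} and Giroux \cite{gi-inf}, which again yields a discrete list of isotopy classes on which insertion of a Giroux torsion layer along a fiber acts transitively.

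\textbf{The "only if" direction.} Assume $\mathrm{tr}(A)=2$, so $\phi_M$ fixes a primitive class $\alpha\in H_1(T^2,\Z)$. The plan is to exhibit, inside any prescribed $C^0$-neighbourhood $U$ of $\FF$, a pair of positive contact structures which are not stably equivalent with respect to any finite collection of pre-Lagrangian tori. Because $\phi_M$ has $+1$ as a positive real eigenvalue, Plante's theorem implies that $\FF$ admits foliated perturbations which may have torus leaves in some prescribed transverse position and avoid them elsewhere; via \thmref{t:elth} each such foliation is itself $C^0$-approximated by a positive contact structure, and by concentrating Giroux torsion near pairs of non-isotopic families of such perturbed tori one produces contact structures whose difference is captured by an invariant insensitive to insertion of finitely many $T^2\times[0,1]$-layers along pre-Lagrangian tori. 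The detailed construction, realizing this distinction explicitly, is carried out in \exref{ex:tr2}.

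\textbf{Main obstacle.} The most delicate part of the argument is the parabolic subcase $\mathrm{tr}(A)=-2$ of the "if" direction: the classification of \thmref{t:tight torus bundles} is stated only for $|\mathrm{tr}(A)|\neq 2$, and the obvious attempt to pass to an orientation double cover fails because $\mathrm{tr}(A^2)=\mathrm{tr}(A)^2-2=2$ lands in precisely the excluded case on the cover. The finer classification of universally tight contact structures on these parabolic bundles therefore has to be invoked directly, and one must check that the finitely many isotopy classes close to $\FF$ are all connected by insertion of a single Giroux torsion layer along a fiber, so that they become stably equivalent downstairs on $M$ even though the naïve argument on the double cover breaks down.
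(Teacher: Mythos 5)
Your decomposition by $\mathrm{tr}(A)$, the appeal to \thmref{t:tight torus bundles} for $|\mathrm{tr}(A)|\neq 2$, and the reference to \exref{ex:tr2} for the converse all match the paper; the remark that a fiber torus is pre-Lagrangian for each $\xi_n$ and that torsion insertion along it takes $\xi_n$ to $\xi_{n+k}$ is correct. The gap is the parabolic case $\mathrm{tr}(A)=-2$. You propose to invoke the finer Honda--Giroux classification of universally tight contact structures on such parabolic bundles and then ``check that the finitely many isotopy classes close to $\FF$ are all connected by torsion insertion along a fiber.'' But that check is the substance of the argument, and you do not carry it out; in particular you would need to exclude, in every sufficiently small $C^0$-neighbourhood of $\FF$, universally tight contact structures (analogous to the $\eta_{\eps,m,p}$ of \exref{ex:tr2}) whose pre-Lagrangian torus is not isotopic to a fiber.

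The paper does not invoke the finer classification at all; it argues directly using sheets of the movie of characteristic foliations. After perturbing to make a fiber $T_0$ convex, take an attractive closed leaf $\beta$ of $T_0(\xi)$ and its sheet $A(\beta)$. If $A(\beta)$ traversed all of $\overline{M\setminus T_0}$, the monodromy (whose only eigenvalue is $-1$) would force a second attractive closed leaf $\beta'$ in $T_0(\xi)$ anti-parallel to $\beta$. On the infinite cyclic covering $\widehat{M}\lra M$ the lifted sheets $\widehat{A}(\beta)$ and $\widehat{A}(\beta')$ would have to intersect if both had unbounded extent in the fibration direction, which is forbidden by \remref{r:sheet prop} and the absence of negative singularities; so one of them is bounded, the dividing set on a fiber can be reduced as in \secref{s:clean boundary}, and one finds a fiber-isotopic torus $T'$ with non-singular characteristic foliation without closed orbits. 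Cutting along $T'$ and applying \thmref{t:tight on TxI} concludes. Your observation that the double cover fails because $\mathrm{tr}(A^2)=2$ is correct but incidental, since the paper never attempts that reduction.
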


\begin{proof}
If the monodromy is elliptic or hyperbolic, then $+1$ is not an eigenvalue of $A$ and  \thmref{t:tight torus bundles} proves the claim. The case $\mathrm{tr}(A)=+2$ will be treated in \exref{ex:tr2} below. Thus we are left with the case when $\mathrm{tr}(A)=-2$ in which the classification of universally tight contact structures is slightly more complicated and we have to prove our claim more directly:

Let $U$ be the neighbourhood of $\FF$ determined by the requirement that all plane fields on $U$ are transverse to the   line field $\partial_t$ on $M$. \thmref{t:tight connection} implies that all contact structures in $U$ are universally tight. 

Consider a positive contact structure $\xi$ in $U$ and consider the characteristic foliations on the fibers of $M$. After a $C^\infty$-small perturbation of $\xi$ we may assume that the fiber $T_0=\mathrm{pr}^{-1}(0)$ is convex. Let $\beta$ be an attractive closed leaf of $T_0(\xi)$ and $A(\beta)$ the sheet containing $\beta$. 

Assume that $A(\beta)$ connects the two boundary components of $\overline{M\setminus T_0}$. Then $T(\xi)$ has another closed attractive leaf $\beta'$  isotopic to $\beta$ with the orientation reversed (because $-1$ is the only eigenvalue of the monodromy $A$). Now consider the cyclic  covering $\widehat{M}$ of $M$ given by   

$$
\xymatrix{
\widehat{M} \ar[r]^{\widehat{\mathrm{pr}}} \ar[d]& \R\ar[d] \\
M \ar[r]^{\mathrm{pr}} & S^1.      }
$$
Let $\widehat{T}=\widehat{\mathrm{pr}}^{-1}(0)$ and consider the maximal sheets $\widehat{A}(\beta),\widehat{A}(\beta')$ containing lifts of $\beta,\beta'\subset\widehat{T}$. Let
\begin{align*} 
\widehat{t}(\widehat{A}(\beta)) & = \sup\left\{\widehat{t}\in\R\,\left|   \widehat{A}(\beta)\cap\widehat{\mathrm{pr}}^{-1}(\widehat{t})\neq\emptyset\right\}\right. \\
\widehat{t}(\widehat{A}(\beta')) & = \sup\left\{\widehat{t}\in\R\,\left|   \widehat{A}(\beta')\cap\widehat{\mathrm{pr}}^{-1}(\widehat{t})\neq\emptyset\right\}\right..
\end{align*}
One of these numbers is finite because otherwise the sheets $\widehat{A}(\beta)$ and $\widehat{A}(\beta')$ would intersect. This is impossible unless they actually coincide, but this impossible by  \remref{r:sheet prop} and the fact that the characteristic foliations on the fibers of $M\lra S^1$ have no negative singularities. Hence we can reduce the number of dividing curves on $T$ as in \secref{s:clean boundary} and following \cite{gi-bif} we find an embedded torus $T'\subset M$ isotopic to the fiber such that $T'(\xi)$ has no singularities and no closed orbits.  

The classification of universally tight contact structures on $T^2\times[0,1]$ (cf. \thmref{t:tight on TxI}) such that the characteristic foliation on the boundary is of the same type as $T'(\xi)$  implies the claim. 
\end{proof}

\subsection{Every leaf of $\FF$ is compact and has genus $g\ge 2$} \mlabel{s:without hol closed}

Fix a hyperbolic metric on a fiber $\Sigma_0$ and let $K>0$ be the constant from \lemref{l:Sigma ator}. We identify a foliated tubular neighbourhood of $\Sigma_0$ with $\Sigma\times(-\delta,\delta)$ and a foliation $\II$ transverse to the leaves of $\FF$ such that $\II$ is tangent to the fibers of $\Sigma\times(-\delta,\delta)\lra\Sigma$.  

We assume that $\eps>0$  satisfies the following condition: 

For every path  $\gamma$ of length at most $K+1$ and $i=-g+1,\ldots,g-1$ and every smooth plane field $\zeta$ which is $\eps$-$C^0$-close to $\FF$, the $\zeta$-horizontal lift of $\gamma$ with starting point in $\Sigma_{i\delta/g}$ does not meet $\Sigma_{(i-1)\delta/g}$ or $\Sigma_{(i+1)\delta/g}$. Moreover, we require that $\eps$ is so small that every contact structure $\eps$-close to $\FF$ is tight (such an $\eps$ exists because $\FF$ is taut).

The first three steps are the same as those used in the proof of \propref{p:outside determines inside}.  We do not go through the details again but here are the steps: Let $\xi_0$ and $\xi_2$ be two contact structures $\eps$-close to $\FF$. 
\begin{enumerate}
\item After a $C^\infty$-small perturbation of $\xi_0$ and $\xi_2$ we may assume that $\Sigma_0$ is convex for both surfaces. 
\item After a finite number of isotopies  isotopies of the surface $\Sigma_0$ we obtain a surface $\widehat{\Sigma}^{(0)}$ respectively $\widehat{\Sigma}^{(2)}$ which is convex with respect to $\xi_0$ respectively $\xi_2$ in $\Sigma\times(-(g-1)\delta/g,(g-1)\delta/g)$and whose dividing set has no separating component. (In this step we use \lemref{l:remove separating}.)
\item After sufficiently many applications of \lemref{l:remove components}  we end up with surfaces $\Sigma^{(0)}, \Sigma^{(2)}\subset(-\delta,\delta)$ whose dividing set consists of exactly two non-separating closed curves which bound a single annulus.\end{enumerate}

Now cut $M$ along $\Sigma^{(0)}$. The resulting manifold is diffeomorphic to $\Sigma\times[0,1]$ and the boundary satisfies the assumptions of \thmref{t:Sigma class}. Moreover we have shown that the contact structure on $\Sigma\times[0,1]$ is determined by the orientations of the attractive closed leaves of $\Sigma^{(0)}(\xi)$ (see \lemref{l:bypass} and the discussion following it):

Let $\gamma_0$ denote   the attractive closed leaf of $\Sigma^{(0)}(\xi_0)$. The relative Euler class of $\xi_0$ on the cut open manifold is Poincar{\'e}-dual to $\phi_*(\gamma_0)-\gamma_0$  where $\phi : \Sigma^{(0)}\lra \Sigma^{(0)}$ denotes the monodromy of the fibration. If the relative Euler class vanishes, then the contact structure is still determined by embedding properties of basic slices. The remaining problem is that the dividing sets of $\Sigma^{(0)}(\xi_0)$ and $\Sigma^{(2)}(\xi_2)$ are not isotopic in general. 

But as in the first step of the proof of \thmref{t:unique} in \secref{s:higher genus proof} we can find a contact structure $\xi_1$ isotopic to $\xi_2$ and a convex surface $\Sigma_{\gamma^{(2)}}$ in $(M\setminus\Sigma^{(0)},\xi_0)$  transverse to $\II$ such that the dividing set on $\Sigma_{\gamma^{(2)}}$ is isotopic to $\Sigma^{(2)}(\xi_2)$. For this we use that the part of the curve complex of $\Sigma$ whose vertices are non-separating is connected, cf. Proposition 3.3 in \cite{hkm} and the construction from \exref{ex:periodic cont}.  After another isotopy which does not affect the sheet in $(M\setminus\Sigma^{(0)},\xi_1)$ which contains the unique attractive closed leaf in $\Sigma_{\gamma^{(2)}}$ (note that this leaf is oriented parallel to the unique closed attractive leaf of $\Sigma^{(2)}(\xi_2)$). As in \secref{s:higher genus proof} this implies that 
$$
(M\setminus\Sigma^{(2)},\xi_2) \textrm{ and } (M\setminus\Sigma_{\gamma^{(2)}},\xi_1)
$$   
are isotopic after the surfaces $\Sigma^{(2)}$ and $\Sigma_{\gamma^{(2)}}$ are identified using the leaves of $\II$.

\subsection{Every leaf of $\FF$ is dense} \mlabel{s:facts without}


First, we review standard facts from the theory of foliations one without holonomy and codimension one and explain how a general foliation without holonomy can be understood in terms of a foliated fibre bundle on a manifold of higher dimension. This uses results of Novikov \cite{nov} and our presentation follows \cite{mortsu} closely. We will always assume that the underlying manifold is closed and that the foliation is at least $C^2$-smooth. We fix a simple closed curve $C$ transverse to $\FF$ (such a curve exists because there are non-compact leaves). 
\begin{thm}[Sacksteder \cite{sack}] \mlabel{t:sacksteder without}
If $\FF$ is a $C^2$-foliation without holonomy then there is a $C^0$-flow 
\begin{equation} \label{e:sf}
\psi : M\times\R \lra M 
\end{equation}
which preserves the leaves of $\FF$ and acts transitively on the leaves. The flow can be chosen tangent to any previously fixed foliation $\LL$ of rank $1$ transverse to $\FF$. Given a closed curve transverse to $\FF$ we can choose $\psi$ such that $C$ becomes a flow line. 
\end{thm}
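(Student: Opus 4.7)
The plan is to reduce the statement to the existence of a continuous holonomy-invariant transverse measure for $\FF$, and then to build $\psi$ as a suitably normalized flow along $\LL$.

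First I would invoke Sacksteder's structure theorem for $C^2$-foliations without holonomy on closed manifolds: such a foliation carries a continuous holonomy-invariant transverse measure $\mu$, or equivalently, there is a homeomorphism $\Phi:M\to M$ and a continuous closed nowhere-zero $1$-form $\omega$ such that $\Phi_*(\FF)=\ker\omega$. It is precisely here that the $C^2$-hypothesis enters (in the same way as in \thmref{t:sack} and \thmref{t:minimal sack}); after this step the $C^2$-smoothness plays no further role and the objects we construct will in general only be $C^0$. By \thmref{t:fibered or dense} every leaf is either a fiber of a fibration $M\to S^1$ (in which case the flow can be built by hand from the fiber bundle structure) or every leaf is dense; I will concentrate on the latter case.

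Next I construct the flow. If $\LL$ is given, pick any continuous nowhere-zero vector field $X$ tangent to $\LL$ and transverse to $\FF$. Using the transverse measure $\mu$, normalize $X$ to a continuous vector field $\widetilde{X}$ tangent to $\LL$ by declaring $\omega(\widetilde{X})\equiv 1$ (that is, $\widetilde{X}=X/\omega(X)$); this is well defined and non-vanishing because $X$ is transverse to $\FF=\ker\omega$. Let $\psi_t$ be the (only continuous) flow of $\widetilde{X}$. Since $\omega(\widetilde{X})\equiv 1$ and $\omega$ is closed, following $\widetilde{X}$ along the leaves of $\LL$ produces isometries between transversals of $\FF$ with respect to $\mu$, so $\psi_t$ maps leaves of $\FF$ to leaves of $\FF$. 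Transitivity on the leaf space follows from density: the induced map $\R\to M/\FF$ is the projection onto a quotient $\R/\Gamma$ where $\Gamma\subset\R$ is the group of periods of $\omega$, which is surjective (and in the dense-leaf case $\Gamma$ is dense in $\R$).

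For the closed-curve part, given $C$ transverse to $\FF$ I would choose the transverse rank-$1$ foliation $\LL$ to have $C$ as one of its leaves. This is possible because $C$ has a tubular neighbourhood foliated by $C$-parallel transversals, which extends to a global rank-$1$ foliation transverse to $\FF$. Applying the previous step to this $\LL$ yields a flow whose orbit through any point of $C$ is exactly $C$, and which closes up after time $\int_C\omega>0$.

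The main obstacle is really the first step, Sacksteder's construction of the transverse invariant measure $\mu$ from the $C^2$-hypothesis and the absence of holonomy; after that, the construction of $\psi$ is essentially a normalization along $\LL$. A secondary but minor point is regularity: $\mu$ (equivalently $\omega$) is only continuous, so the best we can expect for $\psi$ is a $C^0$-flow, which is why the theorem is stated in that generality.
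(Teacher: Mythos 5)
The paper does not actually prove this theorem; it records only a one-line sketch (build $\psi$ from a holonomy-invariant transverse measure $\mu$ so that $\mu(\psi([0,s],x))=s$) and refers to \cite{sack} and to \cite{cc}, and your proposal is a correct fleshing-out of exactly that route: existence of $\mu$ (where $C^2$ enters), normalization along $\LL$, and choice of $\LL$ through $C$ for the last clause. One caution: the phrase ``$\omega(\widetilde X)\equiv 1$ and $d\omega=0$, hence $\psi_t$ preserves $\FF$'' reads like a Lie-derivative computation, which is not available at $C^0$ regularity; the argument that actually closes the gap is the measure-theoretic one you hint at, namely that the holonomy of $\FF$ preserves $\mu$ on $\LL$-transversals while the flow moves at unit $\mu$-speed, so time-$t$ flow and holonomy commute and hence leaves go to leaves.
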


In order to prove this theorem, one establishes the existence of a  holonomy invariant transverse measure $\mu$. The relationship between $\mu$ and $\psi$ is $\mu(\psi([0,s],x))=s$. The transverse measure determines a group homomorphism 
\begin{align*}
\varphi_\mu : \pi_1(M) & \lra \R \\
[\gamma] & \lmt \int_{S^1} \gamma^*(d\mu) .
\end{align*}
The image $\mathcal{P}(\mu)$ of $\varphi_\mu$ is called the group of periods of $\FF$. It  consists of those times $s$ where  $\psi(s,\cdot)$ maps one (or equivalently every) leaf to itself. The leaves of $\FF$ are fibers of a fibration if and only if $\mathcal{P}(\mu)$ is discrete. 


Let $\widetilde{M}\lra M$ be the universal covering and $\widetilde{\FF}$ the induced foliation, $L$ a leaf of $\FF$ and $\widetilde{L}\lra L$ the universal covering of $L$. Since $M$ is taut, $\widetilde{L}$ is a leaf of $\widetilde{\FF}$ and lifting $\psi$ to $\widetilde{M}$ we obtain a diffeomorphism 
$$
\widetilde{\psi} : \widetilde{L}\times\R  \lra \widetilde{M}.
$$
We denote the projection onto the second factor of $\widetilde{L}\times \R$ by $\pi$.  We get a representation 
\begin{align} \label{e:rep G} 
\begin{split}
q: \pi_1(M) = \mathrm{Deck}(\widetilde{M}) & \lra \mathrm{Diff}_+^2(\R) \\
\alpha &  \lmt \left( x\lmt \pi\left((p_0,x)\cdot\alpha^{-1}\right)\right).
\end{split}
\end{align} 
Here $p_0$ is a base point in $\widetilde{L}$ and we abbreviate $\{p_0\}\times\R\subset\widetilde{L}\times\R$ by $\R$. Novikov proves the following facts about $q$:
\begin{itemize}
\item[(i)] If $q(\alpha)$ has a fixed point, then $q(\alpha)=\mathrm{id}$.
\item[(ii)] The image $G$ of $q$ is an Abelian group (it is obviously free and finitely generated).
\end{itemize}
The action of $\pi_1(M)$ on $\widetilde{M}$ by deck transformations and by $q$ on $\R$ determines a foliated $\R$-bundle 
\begin{equation} \label{e:fol bundle}
E=\widetilde{M}\times\R/\pi_1(M).
\end{equation} 
We denote the induced foliation on $E$ by $\FF_E$. By the definition of $q$ the embedding
\begin{align*}
\widetilde{\sigma} : \widetilde{M} & \lra \widetilde{M}\times\R\\
x & \lmt (x,\pi(x)) 
\end{align*}
is $\pi_1(M)$-equivariant and the resulting embedding $\sigma: M \lra E$ is transverse to $\FF_E$ and $\FF=\sigma^*\FF_E$. 

The element $q(C)=f_0$ is non-trivial and $f_0(x)>x$. Then according to \cite{stern} we may (smoothly) reparameterize $\R$ so that $f_0(x)=x+1$. Because the other elements of $G$ commute with $f_0$ we have that $f(x+1)=f(x)+1$ for all $f\in G$. Therefore the elements of $G$ have the same properties as lifts of orientation preserving $C^2$-diffeomorphisms of the circle $S^1=\R/\Z$ (we call such diffeomorphisms $1$-periodic). 

The outline we have given is somewhat misleading since Novikov's result can be used to prove Sacksteder's theorem. This is explained in chapter 9 of \cite{cc} where the representation \eqref{e:rep G} is obtained without using \thmref{t:sacksteder without} and the holonomy invariant transverse measure is then obtained by an averaging procedure.

\subsubsection{$1$-periodic diffeomorphisms of $\R$}

Let $\phi : \R \lra\R$ be a $1$-periodic homeomorphism of $\R$, we denote the group consisting of such diffeomorphisms by $\mathrm{Diff}^1(\R)$. It is well known (and explained in many places, e.g. \cite{godb}) that the sequence $\frac{\phi^n-id}{n}$ converges uniformly to a constant $\tau(\phi)$. We call this number the {\em translation number} of $\phi$. The fractional  part of this number is the rotation number $\rho\in\R/\Z$ of $\varphi$ when $\phi$ is a lift of $\varphi  : S^1\lra S^1$ to the universal covering $\R\lra S^1=\R/\Z$. All the following statements are consequences of well-known properties of the rotation number. 

Obviously, if $r_\alpha(x)=x+\alpha$ then $\tau(r_\alpha)=\alpha$. The rotation number depends continuously on $\phi$ with respect to the uniform topology and if $\phi_1,\phi_2$ commute, then $\tau(\phi_1\circ\phi_2)=\tau(\phi_1)+\tau(\phi_2)$. 

The following theorem is just a translation of a fundamental result in the theory of circle diffeomorphisms to the context of $1$-periodic diffeomorphisms of $\R$.

\begin{thm}[Denjoy] \mlabel{t:denjoy}
If the translation number $\tau$ of $\phi$ is irrational and $\phi$ is $C^2$-smooth, then there is a $1$-periodic homeomorphism $h$ of $\R$ such that $h\circ\varphi\circ h^{-1}=r_\tau$. The centralizer of $r_\tau$  in the group of $1$-periodic homeomorphisms of $\R$ consists of all translations of $\R$. 
\end{thm}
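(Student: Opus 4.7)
The plan is to follow Denjoy's classical strategy, adapted to the $1$-periodic setting (which is literally the lift-to-$\R$ of the classical circle-diffeomorphism setting). The first step is to reduce the conjugacy statement to the assertion that $\phi$ has \emph{no wandering intervals}, i.e.\ no non-degenerate open interval $I$ with $\phi^n(I)$, $n\in\Z$, pairwise disjoint modulo $1$. Granting this, the cyclic order of the orbit $\{\phi^n(x_0)\bmod 1\}_{n\in\Z}$ agrees with the cyclic order of $\{n\tau\bmod 1\}_{n\in\Z}$: this is a well-known combinatorial consequence of $\tau(\phi)=\tau$ and of $\phi$ having no periodic points (the latter follows from irrationality of $\tau$). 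I would then fix $x_0\in\R$ and define
\begin{equation*}
h(\phi^n(x_0)+k):=n\tau+k,\qquad n,k\in\Z.
\end{equation*}
The agreement of cyclic orders makes $h$ monotone on its domain, and the absence of wandering intervals makes this domain dense in $\R$. Hence $h$ extends uniquely to an order-preserving homeomorphism of $\R$. The relation $h\circ\phi=r_\tau\circ h$ and the identity $h(x+1)=h(x)+1$ are built into the construction.

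The main obstacle is the absence of wandering intervals, and this is precisely where the $C^2$-hypothesis enters. Suppose for contradiction $I$ is a wandering interval and let $V$ be the total variation of $\log\phi'$ over one period. For any points $x,y$ in an interval $J$, the chain-rule identity $\log(\phi^m)'(z)=\sum_{j=0}^{m-1}\log\phi'(\phi^j z)$ combined with disjointness of the iterates $\phi^j(J)$ gives the Koebe-type distortion bound
\begin{equation*}
\bigl|\log(\phi^m)'(x)-\log(\phi^m)'(y)\bigr|\le V\sum_{j=0}^{m-1}\mathrm{length}\bigl(\phi^j(J)\bigr).
\end{equation*}
I would then invoke the Denjoy combinatorial lemma: using the denominators $q_n$ of the continued-fraction convergents of $\tau$, one can produce, for each $n$, a sequence of iterates of $I$ whose lengths sum to at most $2$ (because the $\phi^{q_n}(I)$'s together with the $\phi^{-q_{n-1}}(I)$'s cover each point of the orbit boundedly many times), so the right-hand side above stays bounded while the left-hand side, applied to suitable $x,y$ chosen near the boundary of $I$ and its nearest iterates, is forced to grow. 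A telescoping mean-value argument then produces a point where $(\phi^m)'$ is arbitrarily large and another where it is arbitrarily small, contradicting $\int_0^1(\phi^m)'\,dx=1$. This is the delicate part of the argument; everything before and after is essentially soft.

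For the centralizer statement, suppose $g\in\mathrm{Diff}^1(\R)$ commutes with $r_\tau$. Then $g(x+\tau)=g(x)+\tau$, and $1$-periodicity of $g$ gives $g(x+1)=g(x)+1$. Hence the continuous function $\delta(x):=g(x)-x$ is invariant under the additive subgroup $\Z+\tau\Z\subset\R$, which is dense because $\tau$ is irrational. By continuity $\delta$ is constant, so $g$ is a translation. Conversely every translation commutes with $r_\tau$, so the centralizer is exactly $\{r_\alpha\mid\alpha\in\R\}$, which finishes the proof.
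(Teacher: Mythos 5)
Your proof supplies an argument that the paper itself does not give: in the paper this statement is presented purely as ``a translation of a fundamental result in the theory of circle diffeomorphisms'' to the $1$-periodic setting, i.e.\ Denjoy's theorem is cited, not reproved. What you wrote is the classical Denjoy proof and it is essentially correct: the reduction to absence of wandering intervals, the construction of $h$ on the orbit via the Poincar{\'e} cyclic-order equivalence and its extension by density, the distortion estimate plus a continued-fraction combinatorial lemma to rule out wandering intervals, and the centralizer computation via density of $\Z+\tau\Z$ are all the standard ingredients, and the last part is complete and clean as stated.

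One slip worth flagging: you set $V$ to be the total variation of $\log\phi'$, but the inequality you then write,
\begin{equation*}
\bigl|\log(\phi^m)'(x)-\log(\phi^m)'(y)\bigr|\le V\sum_{j=0}^{m-1}\mathrm{length}\bigl(\phi^j(J)\bigr),
\end{equation*}
is the Lipschitz form of the distortion bound, valid when $V$ is a Lipschitz constant for $\log\phi'$ (which $C^2$ does provide). If $V$ is taken to be the total variation, the disjointness of the $\phi^j(J)$ gives the cleaner and sharper estimate $\bigl|\log(\phi^m)'(x)-\log(\phi^m)'(y)\bigr|\le V$ directly, without the sum of lengths, and that is the form that lets the argument go through at $C^{1+\mathrm{bv}}$ regularity rather than only $C^{1+\mathrm{Lip}}$. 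Either version suffices under the $C^2$-hypothesis of the theorem, so this is a naming inconsistency rather than a gap; you should just pick one of the two meanings for $V$ and use it consistently. The description of how the left-hand side is ``forced to grow'' is also somewhat compressed --- the usual route is to bound $\log(\phi^{q_n})'(x)+\log(\phi^{-q_n})'(x)$ using the combinatorial covering lemma and then derive a contradiction from $|\phi^{\pm q_n}(I)|\to 0$ --- but you explicitly mark this as the delicate step, and the strategy you describe is the right one.
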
 
In particular, the conjugating homeomorphism $h$ is unique up to composition with a translation. Moreover, note that if $\phi_1,\phi_2,\ldots,\phi_{n}$ are pairwise commuting homeomorphisms and one of them is conjugate to a translation, then one can use the same conjugating homeomorphism in order to conjugate $\phi_1,\ldots,\phi_{n}$ to translations simultaneously.

\subsubsection{Diophantine approximations} \mlabel{s:Diophant}

The theory of diffeomorphisms of the circle has strong connections to the theory of Diophantine approximations \cite{herman}. However, our use of the following theorem from the theory of Diophantine approximations is much more modest. A reference for the following result is p. 27 of \cite{schmidt}.\begin{thm}[Dirichlet] \mlabel{t:Diophant}
Let $\alpha_1,\ldots,\alpha_n$ be real numbers so that at least one of them is irrational. Then there are infinitely many $n+1$-tuples $(q,p_1,\ldots,p_n)$ such that $\mathrm{gcd}(q,p_1,\ldots,p_n)=1$ and 
\begin{equation} \label{e:irr}
\left|\alpha_i-\frac{p_i}{q}\right|<\frac{1}{q^{1+1/n}}.
\end{equation}
\end{thm}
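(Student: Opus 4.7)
The plan is to prove Dirichlet's theorem by the classical pigeonhole (``box'') principle, and then use the irrationality hypothesis to bootstrap from one solution to infinitely many.

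First I would fix a positive integer $N$ and consider the $N+1$ points
$$P_q = \bigl(\{q\alpha_1\},\ldots,\{q\alpha_n\}\bigr)\in[0,1)^n,\qquad q=0,1,\ldots,N,$$
where $\{x\}$ denotes the fractional part. Partition the half-open unit cube $[0,1)^n$ into $N$ axis-aligned boxes of side $N^{-1/n}$ (up to a routine adjustment so that $N^{1/n}$ is an integer, one may replace $N$ by the largest $m^n\le N$; this only costs a constant and does not affect the argument). By the pigeonhole principle two distinct indices $0\le q_1<q_2\le N$ satisfy $P_{q_1}$ and $P_{q_2}$ in the same box, so setting $q:=q_2-q_1\in\{1,\ldots,N\}$ and letting $p_i$ be the nearest integer to $q\alpha_i$, one obtains
$$|q\alpha_i-p_i|<N^{-1/n}\quad\text{for }i=1,\ldots,n,$$
hence
$$\left|\alpha_i-\frac{p_i}{q}\right|<\frac{1}{qN^{1/n}}\le\frac{1}{q^{1+1/n}}.$$
If $d=\gcd(q,p_1,\ldots,p_n)>1$ we replace the tuple by $(q/d,p_1/d,\ldots,p_n/d)$; the bound is only strengthened since $(q/d)^{1+1/n}\le q^{1+1/n}$, so the reduced tuple still satisfies the required inequality.

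Next I would promote ``at least one solution'' to ``infinitely many solutions'' using that some $\alpha_{i_0}$ is irrational. Suppose, for contradiction, that only finitely many tuples $(q,p_1,\ldots,p_n)$ satisfy the conclusion. Then the finite set of values
$$\bigl\{\,|q\alpha_{i_0}-p_{i_0}|\;:\;(q,p_1,\ldots,p_n)\text{ is one of them}\bigr\}$$
consists of strictly positive numbers, because $\alpha_{i_0}\notin\mathbb{Q}$; let $\eta>0$ be their minimum. Choose $N$ large enough that $N^{-1/n}<\eta$ and apply the pigeonhole construction above with this $N$. The new tuple produced satisfies $|q\alpha_{i_0}-p_{i_0}|<N^{-1/n}<\eta$, so it cannot be any of the old ones, contradicting the assumed finiteness.

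The main technical point is really the bookkeeping of the gcd reduction (checking that the inequality is preserved) and the small nuisance of arranging $N$ to be a perfect $n$-th power when carving the cube into $N$ equal boxes; the pigeonhole argument itself is entirely routine. I would expect no serious obstacle: the statement is the standard simultaneous version of Dirichlet's approximation theorem, and the irrationality assumption is used only in the very last step to certify that the newly produced tuple is different from all previously listed ones.
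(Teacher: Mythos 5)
The paper does not prove this theorem at all; it cites it as a classical result (Dirichlet's simultaneous approximation theorem) and refers the reader to p.~27 of Schmidt's \emph{Diophantine approximation}. So there is no paper-internal argument to compare against, and what you have written must stand on its own.

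Your pigeonhole proof is correct and is in fact the standard textbook proof (the one in Schmidt). Two small points of bookkeeping deserve attention, though neither is a gap. First, carving $[0,1)^n$ into exactly $N$ axis-parallel boxes of side $N^{-1/n}$ literally makes sense only when $N$ is a perfect $n$-th power; the cleaner device (and the one most sources use) is to fix an auxiliary integer $Q$, consider $q\in\{0,1,\ldots,Q^n\}$ and $Q^n$ boxes of side $1/Q$, producing $1\le q\le Q^n$ with $|q\alpha_i-p_i|<1/Q$ and hence $|\alpha_i-p_i/q|<1/(qQ)\le q^{-1-1/n}$ since $Q\ge q^{1/n}$. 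Your ``replace $N$ by the largest $m^n\le N$'' is the same device once one also replaces the index range $\{0,\ldots,N\}$ by $\{0,\ldots,m^n\}$ — worth saying explicitly, since otherwise the inequality $m\ge q^{1/n}$ is not guaranteed for $q$ close to $N$. Second, in the infinitude step it is worth noting that after gcd-reduction the quantity $|q'\alpha_{i_0}-p'_{i_0}| = |q\alpha_{i_0}-p_{i_0}|/d$ only decreases, so the comparison with $\eta$ still certifies the new tuple is distinct from the finitely many old ones. With those clarifications the argument is complete and matches the cited source in spirit and method.
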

The Thue-Siegel-Roth theorem states that if $\alpha$ is a real algebraic number and $\delta>0$, then
$$
\left|\alpha-\frac{p}{q}\right|<\frac{1}{q^{2+\delta}}
$$
has only finitely many solutions $p,q$ where $p,q$ are coprime integers. Therefore one cannot expect to improve the exponent in \eqref{e:irr}.

It is not known to the author whether one can arrange $\mathrm{gcd}(q,p_i)\in\{1,q\}$ for all $i$, maybe at the expense of replacing the function $q^{-1-1/n}$ in \eqref{e:irr} by another function $f(q)$ such that $qf(q)\to 0$ as $q\to\infty$. This would allow us to reduce the case of minimal foliations without holonomy to the previous case. i.e. we could find a fibration such that $\FF$  lies in a neighbourhood of the fibration in which the uniqueness theorem holds. 

\subsubsection{The uniqueness theorem for minimal foliations without holonomy}

The following fact is used in the proof of \thmref{t:elth}: Every foliation without holonomy can be $C^0$-approximated by a fibration. This is explained in Section 1.2.2 of \cite{confol} and later we shall find particular sequences of fibrations converging to $\FF$. We fix one fibration transverse to the flow lines of $\psi$ (cf. p.~10 of \cite{confol}) and a fiber $\Sigma_0$.  There are two cases:

{\bf The fibers have genus $\le 1$:}
If $\Sigma_0$ has genus $0$, then the foliation $\FF$ is a foliation by spheres and there is an $\eps$-neighbourhood of $\FF$ which does not contain any contact structure. When the fibers have genus $1$, then the leaves of $\FF$ are either all cylinders or they are all planes since the leaves of $\FF$ cover the fibers of the fibration. These are the cases excluded in \thmref{t:unique} and as we will explain in \secref{s:T3} there are infinitely many contact structures with vanishing Giroux torsion in any neighbourhood of a linear foliation by planes or cylinders on $T^3$. 

{\bf The fibers have genus $\ge 2$:}
Our first goal is to find a very good approximation of the foliation by a fibration using \thmref{t:Diophant}. For future reference we fix a hyperbolic metric on $\Sigma_0$.

Let $f_0,\ldots,f_{n}$ be generators of $G$ (the image of the map defined in \eqref{e:rep G}). As explained above, $f_i\circ f_j=f_j\circ f_i$ and we may assume $f_0(x)=x+1$. Because no map $f_i$ has a fixed point and all these maps are $1$-periodic they are conjugate to translations (either because they are all lifts of periodic circle diffeomorphisms, or because of \thmref{t:denjoy}). Since we assumed that $\FF$ is minimal, there is one generator, say $f_1$, with irrational translation number.  

The representation $q$ is conjugate to a representation $q'$ with $(q'(\alpha))(x)=x+\tau(q(\alpha))$ via a $1$-periodic homeomorphism $h$ of $\R$. We consider the foliated bundle induced by this representation. Using \thmref{t:Diophant} we approximate the numbers $\tau_i:=\tau(f_i), i=1,\ldots,n$, by rational numbers (recall that $\tau(f_1)$ is irrational). 

If $q$ is big enough, an approximation $p_1/q,\ldots,p_n/q$ of $\tau_1,\ldots,\tau_n$  from \thmref{t:Diophant} automatically satisfies the relations between $\tau_1,\ldots,\tau_n$ which come from relations in $\pi_1(M)$ not yet reflected by the fact that $G$ is Abelian. Fix a handle decomposition of $M$. We can $C^0$-approximate the foliation $\FF_E$ on $E\lra M$ on the preimage of $1$-handles by a foliation whose monodromy along a curve in $M$ is $h'\circ\tau_{p_i/q}\circ h'^{-1}$ when the original monodromy along the curve was $f_i$ (here $h'$ is a diffeomorphism sufficiently close to the homeomorphism $h$).  

Since the maps $h'\circ\tau_{p_i/q}\circ h'^{-1}$ satisfy the relations in $\pi_1(M)$ coming from the $2$-handles we can extend the approximating foliation to the union of preimages of the $2$-handles. The extension over $3$-handles is no problem since a $1$-periodic foliation on $S^2\times \R$ transverse to the $\R$-fibers is a product foliation by the Reeb stability theorem (\thmref{t:Reeb}). We obtain a sequence of foliations $\FF_q$ on $E$. The extensions can be chosen so that the tangent spaces of the foliations $\FF_q$ converge uniformly to the original foliation $\FF_E$ on $E$ as $q\to\infty$. 

By construction, the foliations $\FF_q$ are proper, ie. the leaves of $\FF_q$ are properly embedded submanifolds. Hence the pullback of $\FF_q$ under $\sigma$ is a foliation by compact leaves as soon as $\sigma$ is transverse to $\FF_q$. This happens when $q$ is sufficiently large. 

So far we have only showed that $\FF$ can be approximated by foliations all of whose leaves are compact. Such foliations of codimension $1$ define fibrations over the circle. We will now use \eqref{e:irr}. Because  $\mathrm{gcd}(q,p_1,\ldots,p_n)=1$ the distance between two distinct points of a leaf of $\FF_q$ which lie on the same $\R$-fiber can be chosen in the interval  $[1/2q,2/q]$ (the factor $2$ accounts for the additional approximations e.g. of $h$). Because $(p_1/q,\ldots,p_n/q)$ satisfy \eqref{e:irr}, $\FF_q$ can be chosen such that the angle between leaves of $\FF_q$ and $\FF_E$ is bounded by a constant proportional to $1/q^{1+1/n}$. 

When $q$ is chosen big enough, then the angle between $\FF$ and the fibers of $\sigma^*\FF_q$ is bounded by a constant proportional to $1/q^{1+1/n}$ while the distance of two points on a fiber along leaves of the Sacksteder flow decreases linearly. 

If $q$ is sufficiently large, then the distance between the endpoints of a curve in the fiber whose length is smaller than $K+1$ and the endpoint of the $\FF$-horizontal lift along curves of the Sacksteder flow does never intersect {\em all} fibers of $\FF_q$. We say that $\FF_q$ is {\em well-approximating}. The fibers of the fibration $\sigma^*\FF_q$ of $M$ are Abelian coverings of a $\Sigma_0$.
 
{\bf Fixing the neighbourhood of $\FF$:} We choose $\eps>0$ so that 
\begin{itemize}
\item every plane field which is $\eps$-close to $\FF$ is transverse to the flow lines of the Sacksteder flow, and 
\item for every geodesic of length $\le K$ in a fiber of a well-approximating fibration the $\zeta$-horizontal lift with starting point in that fiber does not meet the same fiber so that the geodesic and its lift do not form a null-homotopic closed curve. 
\end{itemize}

We now show that the constant $\eps$ has the desired property. Let $\xi_0,\xi_2$ be $C^\infty$-generic positive contact structures $\eps$-close to $\FF$. We consider the movie of characteristic foliations on the fibers of the well-approximating foliation. From the condition on $\eps$ it follows that no sheet containing an attractive closed curve of the characteristic foliation of a fiber can be a closed torus in $M$. Therefore, when $\Sigma$ is a fiber of the well-approximating fibration so that it is a convex surface with respect to $\xi_0$ then we can isotope $\Sigma$ along the flow lines of the Sacksteder flow and thereby reduce the dividing set to a single non-separating pair using \lemref{l:remove separating} and \lemref{l:remove components}. The fact that one sheet which arises in the constructions of these two lemmas may hit the surface we are about to isotope more than once does not represent a problem since several pieces of the surface can be isotoped at the same time. (From the definition of the sheets it follows that two sheets either coincide or are disjoint.)

We now proceed as in the proof of \thmref{t:unique} when $\FF$ is a fibration over the circle. Using the construction from \exref{ex:periodic cont} together with the classification of tight contact structures we conclude that $\xi_0$ is isotopic to $\xi_2$.

\section{Applications and examples} \mlabel{s:appex}

In this section we apply \thmref{t:unique} to prove results about the topology of the space of taut foliations. Moreover, we give a few examples of approximations of foliations by contact structures where the foliation violates the assumptions of \thmref{t:unique} and \thmref{t:unique2} and every neighbourhood of the foliation contains contact structures which are not (stably) isotopic.

\subsection{Homotopies  through atoral foliations} \mlabel{s:homotopies}

\begin{defn} \mlabel{d:atoral}
A foliation $\FF$ is {\em atoral} if there is no torus leaf, not every leaf is a plane and not every leaf is a cylinder. 
\end{defn} 
In other words, atoral foliations are just those foliations which satisfy the assumptions of \thmref{t:unique}. This definition, like the following, makes sense for positive confoliations. However, in the next two sections we shall focus on foliations. According to a result from \cite{goodman}, a foliation on a closed $3$-manifold without torus leaves is taut. On the class of atoroidal manifolds, {\em atoral} foliations coincides with {\em taut} foliations. 

\begin{defn} \mlabel{d:close}
A contact structure $\xi$ {\em approximates} a foliation $\FF$ if every $C^0$-neighbourhood of $\FF$ contains a contact structure isotopic to $\xi$. 
\end{defn}
\thmref{t:unique} then just says that there is a unique positive contact structure approximating $\FF$ whenever $\FF$ is atoral. We have the following simple consequence:

\begin{thm} \mlabel{t:atoral homotopy}
Let $\FF_t, t\in [0,1]$, be a $C^0$-continuous family of atoral $C^2$-foliations. Then the positive contact structures $\xi_0$ respectively $\xi_1$ approximating $\FF_0$ respectively $\FF_1$ are isotopic. 
\end{thm}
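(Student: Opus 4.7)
The plan is to combine the uniqueness statement (\thmref{t:unique}) with the approximation theorem (\thmref{t:elth}) by a compactness argument on the parameter interval $[0,1]$.

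First I would invoke \thmref{t:unique} at each parameter value: for every $t\in[0,1]$ there is a $C^0$-neighbourhood $U_t$ of $\FF_t$ in the space of plane fields together with a positive contact structure $\xi_t\in U_t$ such that any two positive contact structures in $U_t$ are isotopic to $\xi_t$. Using the $C^0$-continuity of $t\mapsto\FF_t$, for each $t$ choose an open subinterval $J_t\subset[0,1]$ containing $t$ such that $\FF_s\in U_t$ for all $s\in J_t$. By compactness of $[0,1]$, finitely many of these intervals $J_{t_1},\dots,J_{t_n}$ cover $[0,1]$; after reordering we may assume $0=t_1<t_2<\cdots<t_n=1$ and that the intervals are arranged so that $J_{t_i}\cap J_{t_{i+1}}\neq\emptyset$ for every $i$.

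Next I would use \thmref{t:elth} to choose, for each $i=1,\dots,n-1$, a parameter $s_i\in J_{t_i}\cap J_{t_{i+1}}$ and a positive contact structure $\eta_i$ in the (non-empty) open neighbourhood $U_{t_i}\cap U_{t_{i+1}}$ of $\FF_{s_i}$; this is possible because $\FF_{s_i}$ is atoral, in particular it is not the foliation by spheres on $S^2\times S^1$, so every $C^0$-neighbourhood of $\FF_{s_i}$ contains a positive contact structure. Since $\eta_i\in U_{t_i}$, \thmref{t:unique} gives $\eta_i\simeq\xi_{t_i}$, and since $\eta_i\in U_{t_{i+1}}$ it gives $\eta_i\simeq\xi_{t_{i+1}}$; hence $\xi_{t_i}\simeq\xi_{t_{i+1}}$ for every $i$. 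Chaining these isotopies yields $\xi_{t_1}\simeq \xi_{t_n}$. Finally, any positive contact structure $\xi_0$ approximating $\FF_0=\FF_{t_1}$ lies in $U_{t_1}$ (by definition of approximation, every $C^0$-neighbourhood of $\FF_0$ contains a contact structure isotopic to $\xi_0$, so in particular one in $U_{t_1}$, which is then isotopic to $\xi_{t_1}$), hence $\xi_0\simeq\xi_{t_1}$, and analogously $\xi_1\simeq\xi_{t_n}$, giving $\xi_0\simeq\xi_1$.

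There is really no hard step: the whole argument is a finite Lebesgue-type chaining made possible by the strong conclusion of \thmref{t:unique} and the existence assertion of \thmref{t:elth}. The only mild subtlety worth flagging is that one must ensure the approximating contact structures $\eta_i$ lie in the \emph{intersection} $U_{t_i}\cap U_{t_{i+1}}$, which is automatic because that intersection is itself an open $C^0$-neighbourhood of $\FF_{s_i}$ and \thmref{t:elth} produces contact structures in every $C^0$-neighbourhood of an atoral foliation.
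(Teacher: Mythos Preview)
Your argument is correct and is essentially the same as the paper's own proof: both invoke \thmref{t:unique} at each $t$ to get neighbourhoods $U_t$, use compactness of $[0,1]$ to extract a finite chain of overlapping neighbourhoods, apply \thmref{t:elth} to produce a contact structure in each overlap, and then concatenate the resulting isotopies. Your write-up is slightly more explicit about the interval cover and about linking the approximating $\xi_0,\xi_1$ to the endpoints of the chain via \defref{d:close}, but the idea is identical.
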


\begin{proof}
By \thmref{t:unique} for each $t\in [0,1]$ there is a $C^0$-neighbourhood $U_t$ of $\FF_t$ in the space of plane fields so that all positive contact structures in $U_t$ are pairwise isotopic. By compactness we can cover the path $\FF_t$ by finitely many $C^0$-open sets $U_0,\ldots,U_N$ such that $U_i\cap U_{i+1}$ contains a foliation from the family $\FF_t$. According to \thmref{t:elth} there is a positive contact structure $\xi_i'$ in $U_i\cap  U_{i+1}$ since this is a $C^0$-open neighbourhood of a foliation. By the choice of $U_i$ 
$$
\xi_0\simeq\xi_0'\simeq\xi_1'\simeq\ldots\simeq \xi_{N-1}'\simeq\xi_1. 
$$
\end{proof}
We obtain an obstruction for two foliations $\FF_0$ and $\FF_1$ being homotopic through taut foliations when the underlying manifold is atoroidal: If the positive contact structures approximating the foliations are not isotopic, then there is no homotopy through taut foliations connecting $\FF_0$ and $\FF_1$.
This is of interest because of the following {\em h}-principle due to H.~Eynard (\cite{eynard}, building on \cite{larcanche}) reduces the question when two taut foliations are homotopic through foliations to a purely homotopy theoretic problem.
\begin{thm} \mlabel{t:eynard}
Two taut foliations $\FF_0$ and $\FF_1$ on $3$-manifolds are homotopic through foliations if and only if the corresponding plane fields are homotopic. 
\end{thm}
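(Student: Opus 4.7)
The ``only if'' direction is trivial: any $C^0$-continuous family of foliations $\FF_t$ is in particular a $C^0$-continuous family of plane fields, so the plane fields are automatically homotopic. All the content is in the converse.

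For the ``if'' direction my plan would be to set up an $h$-principle of Gromov-Larcanche type, as follows. First I would reduce to the case of foliations that are in sufficiently nice normal form: after a preliminary isotopy (using the taut hypothesis together with standard results about spines and open book decompositions adapted to a taut foliation) I can assume both $\FF_0$ and $\FF_1$ contain a ``spine'' — a collection of closed transversals that hit every leaf — and that outside a neighbourhood of the spine the foliations look like fibrations over arcs. The point of this normal form is to localize: the genuinely three-dimensional part of the integrability condition becomes concentrated in tubular neighbourhoods of a $1$-complex.

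Next, given a homotopy $\xi_t$ of plane fields from $T\FF_0$ to $T\FF_1$, I would try to realize $\xi_t$ by an actual family of plane fields each of which is already integrable on the complement of the (thickened) spine — this part is purely obstruction-theoretic and only uses that the foliations restricted to the complement of the spine are fibrations, so they can be deformed within each component by a straight-line homotopy in a suitable space of distributions tangent to a product structure. The remaining, genuinely hard step is to interpolate the foliations on the solid-torus neighbourhoods of the transversals. Larcanche's construction does this by inserting spiralling ``turbulization'' pieces and then cancelling them in a controlled way. One can allow Reeb components to appear temporarily along the homotopy (the theorem only asserts a homotopy through foliations, not through taut ones), which provides the flexibility needed to match up the boundary data coming from the fibered pieces.

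The main obstacle is precisely this last interpolation on the solid tori: one has to keep the plane field integrable as the foliation on the boundary torus is rotated arbitrarily, and the only known way to do this is by inserting Reeb components whose cores and meridional discs have to be followed carefully to ensure the result really is a continuous family of $C^0$ foliations. A secondary difficulty is smoothness class: Larcanche's original argument produces $C^0$ homotopies of $C^\infty$ foliations, and upgrading to a genuinely smooth family (which is not needed here) would require additional care. Once the local interpolation is in place, patching it with the straight-line homotopy on the fibered complement and verifying the matching on the overlap gives the desired homotopy of foliations realizing the prescribed homotopy class of plane fields.
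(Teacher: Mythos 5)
This theorem is not proved in the paper: it is stated as a citation to Eynard's thesis \cite{eynard} (building on \cite{larcanche}), and the only comment the paper makes on the proof is that ``the first step \ldots is the introduction of Reeb components.'' Your sketch is broadly consistent with that hint and with the Larcanche--Eynard strategy (decompose the manifold, interpolate on handlebody or solid-torus pieces by turbulization, accept Reeb components en route), so as a roadmap it points in the right direction.

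However, there is a genuine gap in the step you describe as ``purely obstruction-theoretic.'' You claim that on the complement of the thickened spine — where the foliations look like fibrations over arcs — the prescribed homotopy of plane fields can be realized by a ``straight-line homotopy in a suitable space of distributions tangent to a product structure.'' Integrability is a closed condition, not a convex one: even among plane fields transverse to the second factor of $\Sigma\times I$, the linear interpolation $(1-t)\alpha_0 + t\alpha_1$ between two integrable $1$-forms is generally not integrable (one has $\alpha\ww d\alpha\neq 0$ for intermediate $t$). Making this step work is in fact a substantial part of Larcanche's and Eynard's arguments; they do not get it for free from fibered normal form, and the interpolation on the complement has to be just as carefully engineered as the interpolation on the solid tori. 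So the division of labour in your sketch — ``trivial on the fibered part, hard only on the spine'' — mislocates the difficulty. You would need to replace the straight-line argument with an actual parametrized construction of integrable deformations (e.g.\ via holonomy-preserving reparametrizations and spiralling modifications) on the fibered part as well.
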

The first step in the proof of this theorem is the introduction of Reeb components. 

The following example shows that one can indeed use \thmref{t:atoral homotopy} to show that a pair of taut foliations is not homotopic through foliations without Reeb components foliations (also, they are not homotopic through taut foliations)  although they are homotopic through foliations. The example therefore shows that the introduction of Reeb components in Eynard's proof of \thmref{t:eynard} is necessary. To best of the knowledge of the author this is the first example of this kind. 

More information concerning the question which contact structures appear in neighbourhoods of foliations can be found in J.~Bowden's preprint \cite{bojo}.

\begin{ex} \mlabel{ex:2311}
We consider the Brieskorn homology sphere
$$
M=\Sigma(2,3,11)=\{(x,y,z)\in \mathbb{C}^3\,|\, x^2+y^3+z^{11}=0\}\cap S^5.
$$  
This manifold is a Seifert fibered space over $S^2$ with three singular fibers.  In terms of Seifert invariants this manifold is often denoted by $M(\frac{1}{2},-\frac{1}{3},-\frac{2}{11})$ (cf. \cite{gs}). Since $M$ is a homology sphere it does not fiber over $S^1$. 

 The tight contact structures on $M$ were classified by Ghiggini and Sch{\"o}n\-en\-berger in \cite{gs}. They showed that this manifold carries exactly two positive tight contact structures up to isotopy.  From the surgery description in in Section 4.1.4. of \cite{gs} of the two contact structures it follows that if $\xi$ is a tight contact structure on $M$, then $\overline{\xi}$ (this is $\xi$ with its orientation reversed) represents the other isotopy class of tight contact structures. 

In \cite{jn} (together with \cite{naimi}) it is shown that $M$ admits a smooth foliation $\FF$ transverse to the fibers. (The conjecture formulated on p.~398 of \cite{jn} is proved in \cite{naimi} and the Seifert fibered space we are considering satisfies the condition formulated in \cite{jn} with $(a=3,m=5)$). Note that since $M$ is a homology sphere, no taut foliation on $M$ has a closed leaf. In particular, there are no torus leaves and since $M$ does not fiber over $S^1$ there are no smooth foliations without holonomy. Hence $\FF$ satisfies the conditions of \thmref{t:unique}. 

Now let $\xi$ be a contact structure in a sufficiently small $C^0$-neighbourhood of $\FF$.  Notice that $\overline{\xi}$ approximates $\overline{\FF}$. By \thmref{t:atoral homotopy} the foliations $\FF$ and $\overline{\FF}$ are not homotopic through foliations without torus leaves. Since $M$ is a homology sphere this is equivalent to saying that $\FF$ and $\overline{\FF}$ are not homotopic through taut foliations. 

This is non-trivial since $\FF$ and $\overline{\FF}$ are homotopic as plane fields as can be shown using the invariants from \cite{gompf} which form a complete invariant for oriented plane fields on $3$-manifolds up to homotopy. These invariants are: 
\begin{itemize}
\item The Euler class $e(\xi)\in H^2(M,\Z)$.
\item A rational number $\theta(\xi)$ defined by 
\begin{equation} \label{e:theta}
\theta(\xi)= \left(\mathrm{PD}\left(c_1(X,J)\right)\right)^2-2\chi(X)-3\sigma(X)\in \Q
\end{equation}
if $e(\xi)$ is a torsion class. Here $(X,J)$ is a $4$-manifold oriented by an almost complex structure $J$ with signature $\sigma(X)$ and Euler characteristic $\chi(X)$ such that $M=\partial X$ as oriented manifolds, $J(\xi)=\xi$ and $J$ induces the original orientation of $\xi$.  
\item An element $\Theta_f(\xi)\in\Z/(2d)$, where $d$ is the divisibility of the Euler class and $f$ is a framing of a curve representing the Poincar{\'e} dual of $e(\xi)$, if $e(\xi)$ is not a torsion class. 
\end{itemize}
If the plane field is a contact structure given by a Legendrian surgery diagram, then these invariants can be computed effectively.

In the case at hand it is clear that the Euler class of $e(\xi)=-e(\overline{\xi})$ vanishes since $M$ is a homology sphere. It follows from \eqref{e:theta} and $c_1(X,-J)=-c_1(X,J)$ 
that $\theta(\xi)=\theta(\overline{\xi})$. Hence $\xi$ and $\overline{\xi}$ are homotopic as oriented plane fields and the same is true for $\FF$ and $\overline{\FF}$. So by \thmref{t:eynard} the foliations $\FF$ and $\overline{\FF}$ are homotopic through foliations but by \thmref{t:atoral homotopy} this homotopy has to contain torus leaves. One can easily show that every 
foliation without Reeb components on $M$ is taut. 

\end{ex}

\begin{ex} \mlabel{ex:gy}
The work of Ghys and Sergiescu \cite{ghys} provides another class of pairs of foliations which are not homotopic through atoral foliations. These foliations are the stable and unstable foliations $\FF^s,\FF^u$ of Anosov flows on $T^2$-bundles over $S^1$ which are suspensions of $A\in\mathrm{Gl}(2,\Z)$ such that $|\mathrm{tr}(A)|>2$. The reason why these foliations are not homotopic through atoral foliations is the fact that atoral foliations on suspensions of orientation preserving Anosov diffeomorphisms of $T^2$ admit a classification up to diffeomorphism: Each smooth atoral foliation is smoothly equivalent to $\FF^s$ or $\FF^u$ \cite{ghys}. In many instances these two types of foliations are not even diffeomorphic. It is obvious from \cite{confol, mit} that both foliations have isotopic approximating contact structures and that the corresponding plane fields are homotopic (through positive confoliations). Note that  $\FF^s$ and $\FF^u$ are both homotopic to the foliation by the torus fibers of the fibration if $\mathrm{tr}(A)>2$. The homotopy is given by 
$$
\alpha_s = sdt+(1-s)\lambda^{-t}\beta_\lambda
$$ 
where $s\in[0,1]$, $\beta_\lambda$ is a $1$-form on $T^2$ with constant coefficients such that $A^*(\beta_\lambda)=\lambda\beta_\lambda$. The coordinate on the base circle is denoted by $t$. This eigenvalue $\lambda$ is positive since $\mathrm{tr}(A)>2$. Thus $\FF^u$ and $\FF^s$ are even homotopic through taut foliations.
\end{ex}
We conclude from the last example that there are foliations which are not homotopic through atoral foliations despite of the fact that the approximating contact structures are isotopic. 

\subsection{Parabolic torus fibrations and linear foliations on $T^3$} \mlabel{s:T3}

In this section we discuss foliations given by fibers of particular torus fibrations over $S^1$ and we show that there are foliations with the property that every isotopy class of contact structures has positive $C^0$-distance from the foliation. Clearly, these foliations have to belong to the classes where \thmref{t:unique} does not apply.

\begin{ex} \mlabel{ex:tr2}
Let $M$ be the total space of a torus bundle over the circle whose monodromy $\phi_M$ satisfies $\mathrm{tr}(\phi_M)=+2$ (cf. \secref{s:torus bundles} for the notation). According to \cite{hector} these are exactly those orientable manifolds which admit $C^2$-foliations all of whose leaves are cylinders.  We may assume that $\phi_M=A=\left(\begin{array}{cc} 1 & 0 \\ k & 1 \end{array}\right)$ with $k\in \Z$. We view $M$ as  fibration over $T^2$ with the bundle projection 
\begin{align*}
M &\lra T^2 \\ 
(x_1,x_2,t) & \lmt (x_1,t). 
\end{align*}
and typical fiber $S^1$. Let $\FF_0$ denote the foliation by tori defined by $dt$ on $M$. Thus the leaves are the fibers of $M\lra S^1$. Let $\xi_{\eps,m}$ be the contact structure defined by 
\begin{equation} \label{e:xi2}
dt+\eps\big((|k|+1)\cos(2\pi mt)dx_1-\sin(2\pi mt)(dx_2-ktdx_1)\big)
\end{equation}
with $m$ a positive integer and $\eps>0$. This $1$-form is a contact form on $M$ and as $\eps\to0$ the corresponding plane field converges to $\FF_0$. The contact structures $\xi_{\eps,m}$ are distinguished by their Giroux torsion. 

However, there are other contact structures in every neighbourhood of $\FF_0$: Let $\eta_{\eps,m}$ be the positive contact structure defined by the $1$-form 
\begin{equation*}
\alpha_{\eps,m} = dx_1+\eps\left(\sin(2\pi mx_1)(dx_2-ktdx_1) + |k+1|\cos(2\pi mx_1) dt\right)
\end{equation*}
where $m$ is a positive integer and $\eps>0$ (these contact forms are taken from \cite{gi-inf}). Then $\eta_{\eps,m}$ converges to the foliation $\FF_1$ defined by $dx_1$ on $M$. Now consider the following automorphism of $M$ 
\begin{align*}
\psi_p : M & \lra M \\
(x_1,x_2,t) & \lmt \left(x_1+pt, x_2+\frac{kp}{2}t(t+1), t\right).
\end{align*}
This map covers the $p$-fold Dehn twist of $T^2$ given by $(x_1,t)\lmt (x_1+pt,t)$. We consider the foliations defined by
\begin{align*}
\psi_p^*(dx_1) & = dx_1+pdt.
\end{align*}
As $p\to \infty$ these foliations converge to $\FF_0$. Hence the contact structures $\eta_{\eps,m,p}$ defined by $\psi^*_p(\alpha_{\eps,m})$ form a sequence of positive contact structures converging to $\FF_0$. It is shown in \cite{gi-inf} that the contact structures $\eta_{\eps,m,p}$ and $\eta_{\eps',m',p'}$ are isotopic if and only if $m=m'$ and $p=p'$. Moreover, they are not isotopic to any of the contact structures $\xi_{\eps,m}$ defined by \eqref{e:xi2}.  
\end{ex}

A foliation $\FF$  on $T^3$ is {\em linear} if $\FF=\ker(\beta=a\,dx+b\,dy+c\,dz)$ with $a,b,c\in\R$. In the following we establish restrictions on the contact structures lying in a given $C^0$-neighbourhood of $\FF$. For this we first recall the classification of tight contact structures on $T^3=\R^3/\Z^3$ (with corresponding coordinates $x,y,z$ and oriented by the volume form $dx\ww dy\ww dz$).

\begin{thm}[Kanda, Giroux] \mlabel{t:T3}
A positive tight contact structure on $T^3$ is diffeomorphic to 
\begin{equation} \label{e:T3contact}
\xi_m=\ker(\alpha_m= \cos(2\pi mz)dx-\sin(2\pi mz)dy)
\end{equation}
for a unique $m\in\{1,2,\ldots\}$. Two tight contact structures $\xi,\xi'$ are isotopic if and only if there are contactomorphisms 
\begin{align*}
\psi : (T^3,\xi) & \lra (T^3,\xi_m)\\
\psi' : (T^3,\xi') & \lra (T^3,\xi_{m'}) 
\end{align*}
such that $m=m'$ and the pre-Lagrangian tori $\psi^{-1}(\{z=z_0\}),\psi'^{-1}(\{z=z_0'\})$ are isotopic.
\end{thm}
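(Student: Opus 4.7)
The strategy is to reduce the classification on $T^3$ to the classification on $T^2\times[-1,1]$ provided by \thmref{t:tight on TxI} by cutting along a suitable pre-Lagrangian torus. First, for the existence of the diffeomorphism to some $\xi_m$, I would start with a tight contact structure $\xi$ on $T^3$ and use standard convex surface theory to perturb a fiber $T_0=\{z=0\}$ so that it becomes convex. Since $T^3$ is tight, by Giroux's criterion no component of the dividing set of $T_0$ bounds a disc, so the dividing set consists of an even number $2k$ of parallel essential curves on $T_0$. After a diffeomorphism of $T^3$ isotopic to the identity (using the $\mathrm{SL}(2,\Z)$-action on $T^2$-fibers) one may assume these dividing curves have slope $\infty$, i.e., are isotopic to $\{*\}\times S^1\times\{0\}$. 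One may then apply \lemref{l:giroux flex} to arrange the characteristic foliation on $T_0$ to have two non-degenerate attractive closed leaves parallel to the dividing set, together with sibling convex tori above and below.

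Second, I would cut $T^3$ along $T_0$ to obtain $T^2\times[-1,1]$ with convex boundary whose characteristic foliations have the form required in \thmref{t:tight on TxI}. The tightness of $\xi$ (hence of $\xi|_{T^2\times[-1,1]}$) together with the fact that the only homology class of essential closed curves present in the dividing set is fixed means there is a well-defined non-negative integer, the Giroux torsion of $\xi$, equal to the maximal $k$ such that $(T^2\times[0,1],\xi_k)$ embeds contactomorphically into $(T^3,\xi)$. Using the model contact structures $\xi_m$ and \thmref{t:tight on TxI} (applied with the appropriate $n_{\pm 1}=1$ and the torsion parameter $k=m-1$), I would then show that $\xi|_{T^3\setminus T_0}$ is contactomorphic relative to the boundary to the complement of a pre-Lagrangian torus in $(T^3,\xi_m)$ with $m$ equal to the Giroux torsion plus one. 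Gluing back along $T_0$ produces a diffeomorphism $(T^3,\xi)\to(T^3,\xi_m)$. Uniqueness of $m\in\{1,2,\ldots\}$ is immediate from the fact that Giroux torsion is a contactomorphism invariant (\defref{d:torsion}) and distinguishes the $\xi_m$.

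Third, for the isotopy classification: suppose $\xi$ and $\xi'$ are tight on $T^3$ with contactomorphisms $\psi,\psi'$ to $\xi_m,\xi_{m'}$. If $\xi\simeq\xi'$ are isotopic, then in particular they are contactomorphic, so $m=m'$. The ambiguity in the isotopy class comes entirely from the choice of which pre-Lagrangian torus in $(T^3,\xi_m)$ one pulls back. Since any two pre-Lagrangian tori of the form $\{z=z_0\}$ and $\{z=z_0'\}$ in $(T^3,\xi_m)$ are contactly isotopic (by the flow of $\partial_z$ composed with an appropriate reparametrization), the isotopy class of $\xi$ depends only on the isotopy class of $\psi^{-1}(\{z=z_0\})$ in $T^3$. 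Conversely, if $\psi^{-1}(\{z=z_0\})$ and $\psi'^{-1}(\{z=z_0'\})$ are isotopic in $T^3$, one can pre-compose $\psi'$ with an ambient diffeomorphism moving one torus to the other, then apply Gray's theorem to the resulting one-parameter family of tight contact structures on $T^2\times[-1,1]$ obtained by cutting (using \thmref{t:tight on TxI} applied in a parametric form).

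The main obstacle is the existence step: arranging that after cutting along a convex torus one lands exactly in the situation of \thmref{t:tight on TxI}, and verifying that the Giroux torsion is indeed realized by a sub-contact-manifold of the form $(T^2\times[0,1],\xi_k)$. This requires combining Giroux's criterion (to rule out homotopically trivial dividing curves), an $\mathrm{SL}(2,\Z)$-normalization to fix the slope, and the existence of a pre-Lagrangian torus with linear foliation in the interior, which itself follows from \remref{r:sheet prop} and condition (iii) of \thmref{t:tight on TxI}. Once these are in place, the rest of the argument is bookkeeping with Gray's theorem.
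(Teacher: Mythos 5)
The paper does not actually prove this theorem; it is stated with attribution to Kanda \cite{ka} and Giroux \cite{gi-inf} and invoked as a known classification result, so there is no in-paper proof to compare against. That said, your proposal deserves scrutiny on its own terms, and it has a genuine gap precisely at the step you flag as the ``main obstacle.''

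The central problem is that \thmref{t:tight on TxI} is, as the paper itself warns, only a restricted excerpt of Giroux's Theorem~4.4: it classifies tight contact structures on $T^2\times[-1,1]$ \emph{subject to} the extra hypotheses (ii) (no negative singularities in the movie) and (iii) (existence of an interior pre-Lagrangian torus with linear characteristic foliation). Cutting an arbitrary tight $\xi$ on $T^3$ along a convexified fiber does \emph{not} hand you a contact structure on $T^2\times[-1,1]$ known to satisfy (ii) and (iii); those conditions constrain the interior behaviour and are not consequences of tightness alone. In fact the hard content of the Kanda/Giroux theorem is exactly to rule out the alternatives --- e.g.\ to show that no ``virtually overtwisted but tight'' phenomenon appears, that Giroux torsion is finite, and that a pre-Lagrangian torus can always be found in the interior. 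Your proposal invokes \remref{r:sheet prop} and condition (iii) of \thmref{t:tight on TxI} to deduce the existence of the pre-Lagrangian torus, but (iii) is a \emph{hypothesis} of that theorem, not something it supplies; so the argument is circular at the crucial point. To close the gap one would need either Kanda's twisting-number analysis, or the sheet-based argument the paper uses in the parabolic case ($\mathrm{tr}(A)=-2$) in the proof of \thmref{t:torus bundles}: one observes that a sheet connecting both boundary components of the cut-open $T^2\times I$ would, upon gluing, wrap around $T^3$ and self-intersect, which by \remref{r:sheet prop} is impossible because two distinct maximal sheets are disjoint and the characteristic foliations have no negative singularities. That is the argument you need to make explicit; it is not a bookkeeping step.

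Two smaller issues. First, in the isotopy direction you wave at ``\thmref{t:tight on TxI} applied in a parametric form,'' but the paper states that theorem only unparametrically, and producing the required path of contact structures on $T^2\times[-1,1]$ (rel boundary) before Gray can be invoked is itself a piece of work (this is where one would actually use Lemma~2.7 of \cite{gi-bif}, i.e.\ \lemref{l:convex families}, after exhibiting a continuous family of dividing sets). Second, reducing the dividing set on the convexified fiber to two curves of a fixed slope requires a folding/bypass argument (essentially \lemref{l:remove components} in the torus setting); simply citing \lemref{l:giroux flex} only controls the characteristic foliation for a \emph{fixed} dividing set and cannot reduce the number of dividing curves. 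Neither of these is fatal to the overall strategy, but both need to be spelled out.
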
 

This theorem implies in particular, that every oriented tight contact structure $\xi$ on $T^3$ is isotopic to $\overline{\xi}$ since $\xi_m$ is isotopic to $\overline{\xi}_m$ via the isotopy 
\begin{align*}
h_s : T^3 & \lra T^3 \\
   (x,y,z) & \lmt (x,y,z+\pi s/m)  
\end{align*}
with $s\in[0,1]$. Moreover we can associate a pre-Lagrangian torus $T_{\xi}$ which is well defined up to isotopy to an isotopy class of  contact structures $\xi$. Using this we will establish the following result:

\begin{prop} \mlabel{p:T3}
Let $\FF$ be a linear foliation on $T^3$ and $\xi$ a tight contact structure. 
\begin{itemize}
\item[(a)] If $\FF$ is not a foliation by closed leaves, then there is a neighbourhood $U_\xi$ of $\FF$ which does not contain any contact structure isotopic to $\xi$.  
\item[(b)] If $\FF$ is a foliation by closed leaves, then every $C^0$-neighbourhood of $\FF$ contains a contact structure isotopic to $\xi$ if and only if $T_{\xi}$ is isotopic to a leaf of $\FF$.  
\end{itemize}
\end{prop}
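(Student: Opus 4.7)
My plan is to combine the classification \thmref{t:T3} with a direct pre-Lagrangian argument. For the \emph{if} direction of (b), suppose $\FF$ is a foliation by closed tori in homology class $[\Sigma]$ and $T_\xi\simeq\Sigma$. I pick a linear automorphism $\phi\in\mathrm{SL}(3,\Z)$ of $T^3$ mapping $\FF$ to $\ker(dz)$ and $T_\xi$ to the class of $\{z=z_0\}$. The contact forms $\alpha_{m,\eps}$ from \exref{ex:T3} define tight contact structures that $C^0$-converge to $\ker(dz)$ as $\eps\to 0$; by \thmref{t:T3} they are isotopic to the standard $\xi_m$ (with $m$ one more than the Giroux torsion of $\xi$), since they have Giroux torsion $m-1$ and admit $\{z=z_0\}$ as a pre-Lagrangian torus. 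Pulling back by $\phi^{-1}$ yields contact structures in the isotopy class of $\xi$ converging to $\FF$ in $C^0$.

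For part (a) and the \emph{only if} direction of (b), assume a sequence $\xi_n$ of contact structures isotopic to $\xi$ converges to $\FF=\ker(\alpha)$ in $C^0$. By \thmref{t:T3} each $\xi_n$ carries a pre-Lagrangian torus $T_n$ in the isotopy class of $T_\xi$. After an ambient isotopy close to the identity, I can arrange $T_n$ to sit in a small tubular neighbourhood of a fixed linear torus $T$ spanned by primitive integer vectors $e_1,e_2\in\Z^3$, and I choose contact forms $\alpha_n\to\alpha$ uniformly with $\alpha_n|_{T_n}$ closed. Since the characteristic foliation on $T_n$ consists of closed Legendrians, the cohomology class $[\alpha_n|_{T_n}]\in H^1(T_n,\R)=\R^2$ is a real multiple of a primitive integer class; taking the limit, $(\alpha(e_1),\alpha(e_2))$ is a real multiple of such a class, equivalently there is a nonzero integer vector $v\in\mathrm{span}(e_1,e_2)\cap\ker\alpha$.

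The three cases for $\FF$ are then distinguished by arithmetic. For $\FF$ by planes, the $\Q$-linear independence of $a,b,c$ gives $\ker\alpha\cap\Z^3=\{0\}$, contradicting $v\ne 0$. For $\FF$ by closed tori with leaf class $[\Sigma]$, the constraint forces $\mathrm{span}(e_1,e_2)=\ker\alpha$, i.e.\ $[T_\xi]=\pm[\Sigma]$. The intermediate cylinder case, where $v$ must be a nonzero multiple of the unique primitive leaf direction $u$, does not immediately produce a contradiction from this arithmetic alone, but leaves are cylinders rather than tori so the proposition still asserts that no approximation exists.

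The main technical obstacle I anticipate is two-fold: in the closed-tori case, strengthening ``$v$ exists'' to ``$\mathrm{span}(e_1,e_2)=\ker\alpha$'', and in the cylinder case, ruling out the remaining pre-Lagrangian configurations. Both will be addressed by invoking the Giroux torsion of $\xi$: for $\xi\simeq\xi_m$ with $m\ge 2$, the $m-1$ full twists of $\xi_n$ transverse to $T_n$, combined with the $C^0$-closeness of $\xi_n$ to the constant plane $\ker\alpha$, force the transverse direction of $T_n$ to align with the coorientation of $\FF$. This alignment is possible only when $\FF$ is by closed tori and $T_\xi\simeq\Sigma$, completing both parts. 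The marginal case $m=1$ is handled separately, either via direct verification that the unique tight contact structure $\xi_1$ admits contactomorphisms realizing every pre-Lagrangian torus class when $\FF$ is by tori, or by reducing to the $m\ge 2$ argument after embedding a tube of positive Giroux torsion.
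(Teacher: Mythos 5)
Your \emph{if} direction of (b) is fine, and it is essentially how one should produce the approximating sequence. But the main \emph{only if}/(a) argument has a real gap at the step you flag as ``taking the limit,'' and the Giroux-torsion fallback does not repair it.

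The problem is that a limit of real multiples of primitive integer classes need not be a real multiple of a primitive class. You write $[\alpha_n|_{T_n}]=\lambda_n p_n$ with $p_n\in\Z^2$ primitive and conclude that the limit $(\alpha(e_1),\alpha(e_2))$ is itself a real multiple of some fixed primitive class, hence there is a nonzero $v\in\mathrm{span}(e_1,e_2)\cap\ker\alpha\cap\Z^3$. This step is false unless the sequence $p_n$ is bounded, and in the cases where you want a contradiction it is precisely \emph{unbounded}: the Legendrian foliation on $T_n$ must align with the $\FF$-tangent direction on $T$ as $\xi_n\to\FF$, and when that direction is irrational in $H_1(T;\R)\cong\R^2$ the only way for rational directions $p_n$ to converge to it is with $\|p_n\|\to\infty$, with $\lambda_n\to 0$ and $\lambda_np_n$ converging to an irrational line. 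Nothing is contradicted. This is not a cosmetic issue you can patch; the class $p_n$ is genuinely \emph{not} a contact invariant of $\xi_n$ even within a fixed pre-Lagrangian torus isotopy class --- already in the model $\xi_m$, as $z_0$ varies the Legendrian slope on $\{z=z_0\}$ sweeps through every primitive class. So the arithmetic constraint you want to derive from the pre-Lagrangian torus alone simply is not there, not even in the plane case.

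The Giroux-torsion backstop does not save this: the alignment heuristic for $m\ge 2$ is not worked out, and for $m=1$ your proposed ``direct verification that $\xi_1$ admits contactomorphisms realizing every pre-Lagrangian torus class'' is wrong --- by \thmref{t:T3} the isotopy class of the pre-Lagrangian torus is a nontrivial invariant for each fixed $m$, including $m=1$, so tight contact structures with torsion $0$ and different $T_\xi$ are genuinely non-isotopic.

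The paper circumvents exactly this difficulty by not looking at $T_\xi$ alone but by testing $\xi$ against fixed auxiliary data: a transverse torus foliation $\GG$ and vertical tori $T_i$ over linear curves $\gamma_i\subset T^2$ transverse to $\FF$. The key step is a claim, proved with the movie/sheet machinery, that transversality of $\xi$ to $\GG$, to the $S^1$-fibers, and to the $\GG$-horizontal lifts of $\gamma_i$ forces $T_z(\xi)$ to have closed orbits at every level, and the resulting sheet of attractive/repulsive closed leaves cuts $T_i$ along closed Legendrian curves $\widehat{\gamma}_i$ whose slope in $T_i$ equals the slope of $T_\xi'\cap T_i$ for the linear representative $T_\xi'$ of the pre-Lagrangian class. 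That slope is a \emph{fixed} rational number determined by $[T_\xi]$ and $\gamma_i$, whereas $C^0$-closeness of $\xi$ to $\FF$ forces $\widehat{\gamma}_i$ to approximate the $\FF$-characteristic direction on $T_i$, whose slope is irrational whenever $\FF$ has non-closed leaves (after a suitable choice of $\gamma_1,\gamma_2$). This pins down a contradiction that your unconstrained $p_n$ cannot. Any correct proof along your lines would need a comparable mechanism that ties a homological slope of a Legendrian object to the fixed torus class, rather than to the variable Legendrian direction on a pre-Lagrangian torus.
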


\begin{proof} 
Assume first that $\FF$ is a foliation by planes or cylinders and pick coordinates such that $\FF$ is defined by $\beta=dz+adx+bdy$. We consider $\FF$ and contact structures close to $\FF$ as connections of 
\begin{align*}
T^3 & \lra T^2 \\
(x,y,z) & \lmt (x,y).
\end{align*}

We fix a linear foliation $\GG$ by tori transverse to $\FF$ (lets say $\GG=\ker(dz)$) and two linear curves $\gamma_1$ and $\gamma_2$ in a leaf of $\GG$ so that both of these curves are transverse to $\FF$. In order to prove the claim we first show the following statement.

{\bf Claim:} Let $\xi$ be a contact structure satisfying the following conditions. 
\begin{itemize}
\item[(i)] $\xi$ is transverse to a foliation $\GG$ by tori transverse to $\FF$. 
\item[(ii)] $\xi$ is transverse to the fibers of $T^3\lra T^2$ and to all $\GG$-horizontal lifts of $\gamma_1$ and $\gamma_2$.
\end{itemize}
Then there are $\xi$-Legendrian curves  $\widehat{\gamma}_i$ embedded  in $T_i=\{(x,y,z)\,|\,(x,y)\in\gamma_i\}$  whose slope is the slope of the curve obtained by intersecting the linear torus isotopic to the pre-Lagrangian tori of $\xi$ and $T_i$. 
\medskip

Since these curves are $\xi$-horizontal lifts of linear curves in $\{z=z_0\}$ their slope is controlled by $\FF$ and the $C^0$-distance between $\xi$ and $\FF$. 
Now if one of the $\FF$-horizontal curves has irrational slope, then for every isotopy class of tori there is a neighbourhood of $\FF$ such that the fixed torus cannot be the pre-Lagrangian torus of a contact structure in that neighbourhood. Hence the claim above proves both (i) and (ii) of the proposition. 
\medskip

We now prove the claim: Given $\xi$ as above we consider the movie $T_z(\xi)$ on the leaves of $\GG$. The condition that $\xi$ is transverse to all $\GG$-horizontal copies of $\gamma_i$ together with \thmref{t:T3} ensures that the characteristic foliation of $\xi$ on the torus $T_z$ is never linear. (If it were, then $T_z$ would be a pre-Lagrangian torus and the slope of the characteristic foliation on $T_{z'}$ would have to make at least one full twist as $z'\in S^1$ varies.) Since there are no singular points the characteristic foliation has several parallel closed leaves and other leaves accumulating on closed leaves. The slope of the closed leaves of $T_z(\xi)$ is independent from $z$.

Let $T_z(\xi)$ be a generic leaf of $\GG$. Because attractive and repulsive leaves on $T_z$ alternate, the manifold consisting of the closed leaves of $T_z(\xi), z\in S^1$ has at least two connected components $T,T'$. We assume that the region bounded by $T$ and $T'$ contains no other connected component of the surface formed by closed leaves of $T_z(\xi)$. Now consider the intersection of $T_1$ with $T$ and $T'$. This intersection consists of transverse curves on $T_i$ such that the characteristic foliation either points into or out of the annulus bounded by these two curves (the annulus is the intersection of the region between $T$ and $T'$ with $T_i$) but the behavior is the same along both transverse curves. Since there are no singular points on $T_i$, there has to be a closed leaf in that annulus. This is the closed leaf we have been seeking. 
\end{proof} 
A similar argument should show that foliations by cylinders on torus bundles over the circle as in \exref{ex:tr2} (defined by $dx_1+adt$ with $a\in\R\setminus\Q$ in the coordinates used in \exref{ex:tr2}) have analogous properties as the foliations by planes considered in \propref{p:T3}.    
\begin{cor}
Let $\FF$ be a linear foliation on $T^3$ with non-compact leaves. Then $\FF$ cannot be deformed to a contact structure, i.e. there is no family of plane fields $\xi_s$ with $\xi_0=\FF$ and $\xi_s$ a contact structure for $s>0$.
\end{cor}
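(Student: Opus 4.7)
The plan is to combine Proposition~\ref{p:T3}(a) with the $C^0$-stability properties of contact structures (Theorem~\ref{t:colin stab}) and the tautness of linear foliations with non-compact leaves. Suppose for contradiction that such a family $\xi_s$, $s\in[0,1]$, exists; interpret continuity of the family in the $C^0$-topology, so that $\xi_s\to\FF$ in $C^0$ as $s\to 0^+$. Any linear foliation on $T^3$ with non-compact leaves has no compact leaves at all, and in particular no Reeb components; it is taut (an explicit closed transversal tangent to a direction not in $\ker(\FF)$ hits every leaf). Hence by Theorem~\ref{t:taut tight} there is a $C^0$-neighbourhood $V$ of $\FF$ in the space of plane fields such that every contact structure in $V$ is universally tight. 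By continuity there exists $s_1>0$ with $\xi_s\in V$ for all $s\in(0,s_1]$; in particular each $\xi_s$ is tight for such $s$.

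Next, I would show that the isotopy class of $\xi_s$ is locally constant in $s$ on the interval $(0,s_1]$. This is exactly the content of Colin's stability theorem (Theorem~\ref{t:colin stab}): for any $s_0\in(0,s_1]$ there is a $C^0$-neighbourhood $W_{s_0}$ of $\xi_{s_0}$ in the space of plane fields in which every contact structure is isotopic to $\xi_{s_0}$, and by $C^0$-continuity of the family there is $\delta>0$ with $\xi_s\in W_{s_0}$ for $|s-s_0|<\delta$. Since $(0,s_1]$ is connected, all the contact structures $\xi_s$, $s\in(0,s_1]$, are isotopic to a single tight contact structure $\xi:=\xi_{s_1}$.

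Now I invoke Proposition~\ref{p:T3}(a): since $\FF$ is a linear foliation on $T^3$ with non-compact leaves and $\xi$ is tight, there is a $C^0$-neighbourhood $U_\xi$ of $\FF$ that contains no contact structure isotopic to $\xi$. But $\xi_s\to\FF$ in $C^0$, so $\xi_s\in U_\xi$ for all sufficiently small $s>0$, while simultaneously $\xi_s\simeq\xi$. This contradiction rules out the existence of the deformation.

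There are no significant obstacles: the proof is essentially an assembly of results already at hand. The only small point worth double-checking is that the hypothesis ``family of plane fields'' really does provide $C^0$-convergence $\xi_s\to\FF$ (which is the natural and weakest interpretation), so that both the application of Theorem~\ref{t:taut tight} (inclusion in $V$ for small $s$) and of Proposition~\ref{p:T3}(a) (inclusion in $U_\xi$ for small $s$) are legitimate. Everything else is immediate from the quoted theorems.
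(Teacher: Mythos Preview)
Your proof is correct and follows essentially the same route as the paper: show that all the $\xi_s$ for $s>0$ are isotopic to a single tight contact structure, then invoke Proposition~\ref{p:T3}(a) to get a contradiction. The paper is slightly more economical: since $\xi_s$ is a contact structure for every $s\in(0,1]$, Gray's theorem (\thmref{t:gray}) already gives that all these $\xi_s$ are isotopic, so there is no need for Colin's $C^0$-stability result. Your use of \thmref{t:colin stab} is not wrong---it handles the case where the family is only $C^0$-continuous in $s$---but for a smooth deformation Gray suffices. Your explicit verification of tightness via \thmref{t:taut tight} is a point the paper leaves implicit but which is indeed needed to apply Proposition~\ref{p:T3}(a).
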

\begin{proof}
If $\xi_s$ is a deformation of $\FF$ into a contact structure then Grays theorem implies that every neighbourhood of $\FF$ contains a contact structure isotopic to $\xi_1$. But according to \propref{p:T3} a sufficiently small neighbourhood of $\FF$ does not contain a contact structure isotopic to $\xi_1$.  
\end{proof}

A different type of example for this phenomenon was found much earlier by J.~Etnyre \cite{etnyreCk}. His example is slightly  different since it refers to Giroux torsion rather than pre-Lagrangian tori but both examples make essential use of pre-La\-grang\-ian  tori. It is natural to ask to whether all atoral foliations can be deformed into contact structures. 

Finally, we give an example of a foliation on $T^3$ with an unstable torus leaf such that every $C^\infty$-neighbourhood contains positive contact structures which are not stably isotopic. 

\begin{ex}\mlabel{ex:unstable torus}
Let $Z=f(z)\partial_z$ be a smooth vector field on $S^1=\R/\Z$ with $f(z)>0$ for $z\neq 0$ such that $f(z)=z^2$ on a neighbourhood of $0$. We denote the flow of $Z$ by $\varphi_t$. Mapping the two generators of $\pi_1(T^2)$ to $\varphi_t$ and $\varphi_{t'}$ with $0<t<t'$ we obtain a foliation on $T^3$ transverse to the fibers of $T^3\lra T^2$ and the torus $z=0$ is the only minimal set of $\FF_0$. By construction this torus is unstable. 

In order to show that this example has the desired properties we proceed as follows: Approximate $Z$ by $\widetilde{Z}=\widetilde{f}(z)\partial_z$ such that $\widetilde{f}(z)>0$ for all $z\in S^1$. We denote the flow of $\widetilde{Z}$ by $\widetilde{\varphi}$. Replacing $\varphi_t,\varphi_{t'}$ in the representation $\pi_1(T^2)\lra\mathrm{Diff}_+(S^1)$ by $\widetilde{\varphi}_t,\widetilde{\varphi}_t'$ we replace  $\FF$ by $\widetilde{\FF}$. If the rotation numbers $\widetilde{\rho},\widetilde{\rho}'$ of $\widetilde{\varphi}_t,\widetilde{\varphi}_t'$ are rationally independent the foliation $\widetilde{\FF}$ is a foliation by planes. Moreover, if there are integers $(c,d)$ with $c>0$ and $d>2$ such that 
\begin{equation} \label{e:dio}
\left|\widetilde{\rho}-\frac{p}{q}\right|>\frac{c}{q^{d}} 
\end{equation}
for all $q\in\N^+$ and $p\in \Z$, then according to Herman's thesis \cite{herman} $\widetilde{\FF}$ is smoothly conjugate to one of the linear foliations discussed in \propref{p:T3}. Note that $\widetilde{\varphi}_t$ and $\widetilde{\varphi}_{t'}$ commute.  A more explicit construction could avoid the use of Herman's work. The numbers satisfying the Diophantine condition \eqref{e:dio} are dense. Therefore we can approximate $\FF$ by foliations $\widetilde{\FF}_n$ all of whose leaves are planes and every neighbourhood of $\FF_n$ contains non-isotopic positive contact structures with vanishing Giroux torsion. 
\end{ex}

\subsection{Further applications} \mlabel{s:further} 
V.~Colin showed in \cite{col2} that foliations without Reeb components can be approximated by tight contact structures and asks whether or not this is true for {\em every} contact structure in a sufficiently small neighbourhood of the foliation. \thmref{t:unique2} together with the gluing results from \cite{col-glue} provide the following partial answer to this question.
\begin{prop} \mlabel{p:tight approx}
Let $\FF$ be a $C^2$-foliation without Reeb components such that all torus leaves have attractive holonomy. Then $\FF$ has a neighbourhood such that all contact structures in that neighbourhood are universally tight. 
\end{prop}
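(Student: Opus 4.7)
The plan is to combine \thmref{t:unique2} (which expresses every close-enough contact structure as a Giroux-torsion modification of a fixed one along pre-Lagrangian tori isotopic to torus leaves of $\FF$) with the gluing theorem of Colin from \cite{col-glue} (which propagates universal tightness across incompressible pre-Lagrangian tori). Colin's $C^0$-approximation result in \cite{col2} will supply at least one universally tight contact structure in the neighbourhood we construct, and the stable equivalence given by \thmref{t:unique2} will then push universal tightness to every other contact structure in that neighbourhood.

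First I would dispose of the edge cases in which the hypotheses (ii), (iii) of \thmref{t:unique} fail. When every leaf of $\FF$ is a plane, $M \simeq T^3$ by Rosenberg's theorem; when every leaf is a cylinder, $M$ is a parabolic torus bundle over $S^1$ by Hector's theorem. In both situations $\FF$ admits a closed transversal meeting every leaf, so $\FF$ is taut and \thmref{t:taut tight} directly provides a $C^0$-neighbourhood in which every positive contact structure is universally tight. In what follows we therefore assume the hypotheses of \thmref{t:unique2} are met.

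Let $U$ be a neighbourhood of $\FF$ as provided by \thmref{t:unique2}: any two positive contact structures $\xi', \xi''$ in $U$ become isotopic after insertion of Giroux torsion layers $\ker(\cos(2\pi k(t+t_0))dx_1 - \sin(2\pi k(t+t_0))dx_2)$ along finitely many pre-Lagrangian tori $T_1,\ldots,T_n$ isotopic to torus leaves of $\FF$. Shrinking $U$ if necessary, use Colin's approximation \cite{col2} (together with \thmref{t:taut tight} in the taut case, or the argument in \cite{col2} more generally) to arrange that $U$ contains a universally tight contact structure $\xi_0$. Because $\FF$ has no Reeb components, Novikov's theorem forces every torus leaf of $\FF$ to be $\pi_1$-injective in $M$, so each $T_i$ is incompressible. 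Given any $\xi \in U$, pass to the common stabilization $\xi^\ast$ obtained by inserting the prescribed torsion layers into both $\xi$ and $\xi_0$. The torsion layers are themselves universally tight, and the gluing theorem of \cite{col-glue} shows that cut-and-paste of universally tight pieces along incompressible pre-Lagrangian tori is universally tight; applied to $\xi_0$ this gives that $\xi^\ast$ is universally tight, and hence so is the isotopic stabilization of $\xi$.

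The remaining step, and the main obstacle, is passing from universal tightness of the stabilized $\xi^\ast$ back to $\xi$ itself. The idea is that Giroux-torsion insertion along an incompressible pre-Lagrangian torus preserves overtwistedness: if $\tilde\xi$ carried an overtwisted disc on the universal cover, one can use Eliashberg's classification of overtwisted contact structures (\thmref{t:ot class}) together with the local, $C^0$-open nature of overtwistedness to arrange, up to isotopy, that the disc survives in the complement of the insertion region and hence persists in $\widetilde{\xi^\ast}$, contradicting universal tightness of $\xi^\ast$. Contrapositively, universal tightness descends from $\xi^\ast$ to $\xi$. The delicate point to verify carefully is precisely this persistence claim across the insertion, which I expect to handle by a further application of the cut-and-paste machinery of \cite{col-glue} in the universal cover.
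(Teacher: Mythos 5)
Your high-level strategy --- combine \thmref{t:unique2}, Colin's approximation result from \cite{col2}, and the gluing theory of \cite{col-glue} --- is exactly the paper's approach (the paper merely points to \thmref{t:unique2} and \cite{col-glue} without writing out the argument), and your treatment of the edge cases via tautness and \thmref{t:taut tight}, as well as your appeal to Novikov for incompressibility of the torus leaves, are both correct and necessary.

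However, the argument you propose for the final step is not valid, and the way you present it obscures the fact that you already hold the right tool. Eliashberg's \thmref{t:ot class} classifies overtwisted contact structures up to isotopy by their homotopy class as plane fields; it gives you no control over where the overtwisted disc lies, and you cannot in general push an overtwisted disc off a prescribed region. A concrete reason to be skeptical: perform a Lutz twist on $(\R^3, \xi_{\mathrm{std}})$ supported in the slab $\{|z| \le 1\}$; the result is overtwisted, yet every overtwisted disc must meet the slab, since the complement embeds contactly in the original tight $\R^3$. This is precisely the configuration you face in the universal cover after inserting torsion, so the persistence you want is false in general. The robust argument is what you gesture at only in your last sentence and should be the main one: cut $(M,\xi^\ast)$ along the boundary tori of the inserted torsion layers. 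The pieces outside those layers coincide with the pieces of $(M,\xi)$ cut along the incompressible pre-Lagrangian tori $T_i$, and universal tightness of $\xi^\ast$ passes to them since they sit $\pi_1$-injectively inside $M$. Now reglue those pieces along the $T_i$, discarding the torsion layers; Colin's gluing theorem from \cite{col-glue} applies directly to this gluing along incompressible pre-Lagrangian tori and yields that $(M,\xi)$ is universally tight, with no detour through overtwisted-disc persistence.
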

As in \secref{s:tori} the assumption on the holonomy of the tori can be weakened slightly. However, our methods do not suffice to remove the stability condition for torus leaves completely and the question whether every contact structure in a sufficiently small $C^0$-neighbourhood of a foliation without Reeb components is tight remains open.
 
Recall the following theorem from \cite{rigflex}: 
\begin{thm} \mlabel{t:star approx}
Let $(M,\xi)$ be a confoliation admitting an overtwisted star. Then $\xi$ can be $C^0$-approximated by overtwisted contact structures. 
\end{thm}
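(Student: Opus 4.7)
The plan is to locally perturb $\xi$ into a contact structure in small tubular neighbourhoods of the virtual vertex curves $\gamma_v$ of the overtwisted star and then close up the Legendrian polygon into a genuine embedded overtwisted disc. Each $\gamma_v$ is a closed Legendrian curve contained in a leaf of the fully foliated part of $\xi$ and satisfies $\gamma_v \cap H(\xi) = \emptyset$. First I would use the Eliashberg--Thurston perturbation (cf. the proof of \thmref{t:elth} in \cite{confol}) applied in disjoint arbitrarily small tubular neighbourhoods $U_v$ of the $\gamma_v$'s to produce a positive contact structure $\xi'$ which agrees with $\xi$ outside $\bigcup_v U_v$ and is $C^0$-close to $\xi$. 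After a $C^0$-small isotopy of $\Sigma$ supported in $\bigcup_v U_v$, each $\gamma_v$ (or a closed curve $C^0$-close to it) becomes a non-degenerate attractive closed Legendrian orbit of the characteristic foliation $\Sigma(\xi')$.

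Next, the Legendrian arcs of the star $\alpha(\partial D^2 \setminus V)$ meet each $U_v$ only near $\gamma_v$ and have $\omega$-limit set $\gamma_v$, so for $\xi'$ sufficiently $C^0$-close to $\xi$ they are approximately Legendrian for $\xi'$; a further $C^0$-small isotopy of $\Sigma$ supported away from the $U_v$'s makes them genuinely Legendrian for $\xi'$, still accumulating on the attractive closed orbits $\gamma_v$. I would then truncate each Legendrian arc just before it enters $U_v$ and join consecutive truncation endpoints near a virtual vertex $v$ by a short arc running along $\gamma_v$ itself (which is Legendrian). This yields a piecewise Legendrian closed curve $\widehat{c}$. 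Since $\alpha$ is an embedding on the interior of $D^2$ and exactly two boundary arcs approach each virtual vertex, $\widehat{c}$ bounds an embedded disc $\widehat{D} \subset \Sigma$ which is $C^0$-close to $\alpha(D^2)$.

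To see that $\widehat{D}$ is an overtwisted disc for $\xi'$, observe that its interior singularities coincide with the interior singularities of the star, since $\xi' = \xi$ outside $\bigcup_v U_v$ and the modification is $C^0$-small, and hence all have one sign; the boundary singularities on the truncated pieces of $\alpha(\partial D^2 \setminus V)$ have the opposite sign; and by choosing the connecting arcs along each $\gamma_v$ short and generic we may assume they contain no singularities of $\widehat{D}(\xi')$. Consequently all boundary singularities of $\widehat{D}(\xi')$ share a common sign opposite to that of the interior ones. The main obstacle is the first step: producing a local $C^0$-small perturbation of the confoliation near each $\gamma_v$ that is contact and makes $\gamma_v$ attractive on a $C^0$-nearby isotoped surface requires a careful localization of the Eliashberg--Thurston construction, and in particular the injection of non-integrability must preserve (or at least not reverse) the signs of the singularities of $\Sigma(\xi)$ already appearing in the star. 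The condition $\gamma_v \cap H(\xi) = \emptyset$ is crucial to ensure the localized perturbation does not interfere with pre-existing contact regions; a minor additional subtlety, arising because $\alpha$ is only an immersion at the virtual vertices, is that the truncations and connectors must be chosen compatibly to guarantee $\widehat{D}$ is embedded.
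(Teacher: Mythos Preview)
The paper does not give its own proof of this statement: \thmref{t:star approx} is quoted from \cite{rigflex} and used as a black box. So there is no in-paper argument to compare against; I will simply evaluate your sketch.

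There is a genuine gap in your first step. You write that a local Eliashberg--Thurston perturbation in tubular neighbourhoods $U_v$ of the $\gamma_v$ produces ``a positive contact structure $\xi'$ which agrees with $\xi$ outside $\bigcup_v U_v$''. This is impossible in general: the fully foliated set of $\xi$ can be much larger than $\bigcup_v\gamma_v$ (indeed it contains entire leaves through each $\gamma_v$, and possibly other minimal sets with no relation to the star), so a plane field agreeing with $\xi$ outside $\bigcup_v U_v$ is still only a confoliation. What you have actually produced, at best, is a $C^0$-close confoliation $\xi'$ with $H(\xi')\supset\bigcup_v U_v$ and an overtwisted disc $\widehat D$ for $\xi'$. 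You then still owe the global approximation of $\xi'$ by a genuine contact structure $\xi''$ together with an argument that $\widehat D$ (or a nearby disc) remains overtwisted for $\xi''$. That last persistence step is exactly where the content lies and is not automatic: under a further $C^0$-perturbation $\partial\widehat D$ ceases to be Legendrian, the characteristic foliation on $\widehat D$ changes, and one must control the signs of any new singularities created near $\partial\widehat D$.

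A second, smaller issue: you assume each $\gamma_v$ is a single closed Legendrian curve. The definition only says $\gamma_v$ is the $\omega$-limit set of two leaves of $\Sigma(\xi)$ and is not a singularity; on a surface of positive genus this could in principle be a cycle through hyperbolic points or a non-trivially recurrent set, so your ``tubular neighbourhood of $\gamma_v$'' and the subsequent ``short arc running along $\gamma_v$'' need justification or a preliminary reduction. Once these two points are addressed your overall strategy---close up the star to an honest overtwisted disc by making the $\gamma_v$ attractive, then globalize---is the natural one and is essentially what is carried out in \cite{rigflex}.
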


Together with the uniqueness result \thmref{t:unique} has the following consequence for confoliations which are not s-tight. 

\begin{cor}
Let $M$ be a closed manifold $\xi$ be a confoliation without torus leaves which is not $s$-tight. Then there is a $C^0$-neighbourhood of $\xi$ so that every contact structure in that neighbourhood is overtwisted.  
\end{cor}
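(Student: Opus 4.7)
The strategy is to combine the uniqueness statement \thmref{t:unique} with the approximation result \thmref{t:star approx}. First I would verify that \thmref{t:unique} applies to $\xi$. Condition (i) is hypothesized. Conditions (ii) and (iii) are excluded because foliations by planes (on $T^3$) and foliations by cylinders (on parabolic torus bundles) are both taut and, upon passage to the universal cover, become complete connections of a trivial $\R$-bundle; \thmref{t:tight connection} shows they are universally tight and admit no overtwisted star, hence are s-tight. Since $\xi$ is assumed not s-tight, $\xi$ is neither of these foliations. \thmref{t:unique} therefore supplies a $C^0$-neighbourhood $U$ of $\xi$ in the space of plane fields and a distinguished contact structure $\xi_0\in U$ such that every positive contact structure in $U$ is isotopic to $\xi_0$.

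Next I would use non-s-tightness to produce at least one overtwisted contact structure inside $U$. By unpacking \defref{d:otstar} and the definition of tightness for confoliations, non-s-tightness means either that $\xi$ admits an overtwisted star outright, or that $\xi$ fails to be tight, in which case an overtwisted disc $D$ of $\xi$ without a compensating integral disc promotes to an overtwisted star with empty virtual-vertex set $V=\emptyset$: choose $\Sigma$ to be a small closed surface in $M$ containing $D$ and arrange the interior sign-condition of \defref{d:otstar} by applying \lemref{l:char fol det} or a $C^\infty$-small perturbation of $\Sigma$ inside $M$. In either case, \thmref{t:star approx} now yields a sequence of overtwisted contact structures converging to $\xi$ in the $C^0$-topology, and after shrinking $U$ one such approximation $\xi'$ lies inside $U$.

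To conclude, by \thmref{t:unique} every positive contact structure in $U$ is isotopic to $\xi'$. Since overtwistedness is an isotopy invariant—this is the content of the classification \thmref{t:ot class} of Eliashberg—every positive contact structure in $U$ is overtwisted, as claimed.

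The main obstacle is the case where $\xi$ is not tight but does not a priori admit an overtwisted star in the strict sense of \defref{d:otstar}: one must realize the bare overtwisted disc as a Legendrian polygon inside a closed embedded surface $\Sigma\subset M$, compatibly with the sign convention for interior singularities. If that promotion turns out to be delicate, an alternative route is to argue directly that a non-tight confoliation is $C^0$-approximable by overtwisted contact structures by perturbing inside a product neighbourhood of the overtwisted disc so that the disc survives—an argument presumably contained in \cite{rigflex}, from which \thmref{t:star approx} itself is borrowed. Either way, once this existence of a single overtwisted approximation in $U$ is secured, the rigidity supplied by \thmref{t:unique} turns it into rigidity for the whole neighbourhood.
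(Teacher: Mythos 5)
Your proof correctly reconstructs the paper's intended argument, which is precisely the juxtaposition of \thmref{t:star approx} with \thmref{t:unique}: the former supplies an overtwisted contact structure in any $C^0$-neighbourhood, the latter forces every positive contact structure in a small enough neighbourhood to be isotopic to it, and your observation that conditions (ii) and (iii) of \thmref{t:unique} are automatic because foliations by planes or cylinders are s-tight (being taut and universally tight with no overtwisted stars) fills the one small verification the paper leaves implicit. The one soft spot is the ``promotion'' of a bare overtwisted disc to an overtwisted star by a perturbation of the ambient surface and an appeal to \lemref{l:char fol det} — that lemma concerns contact structures, and for a fixed confoliation a surface perturbation alone does not repair the sign condition on interior singularities of the characteristic foliation — but you correctly flag this and your alternative route (perturb $\xi$ directly to a nearby contact structure keeping the disc overtwisted, which is what \cite{rigflex} does) is the right way to close it.
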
 
It is not clear whether or not the conclusion of the corollary also holds in the presence of incompressible torus leaves.


The techniques developed in the \secref{s:higher} and \secref{s:prelag extend}  have further applications. For example, combining Lemma 2.17 of \cite{gi-bif} with the pre-Lagrangian extension lemma, the construction from \exref{ex:periodic cont} and the methods from \secref{s:higher} one can prove the following slight extension of \thmref{t:Sigma class}.
\begin{thm} \mlabel{t:more Sigma class}
Let $\Sigma$ be a surface of genus $\ge 2$ and $\Gamma_0$ respectively $\Gamma_1$ sets consisting of $n_0>0$ respectively $n_1>0$  pairs of simple closed curves in $\Sigma$ such that each pair of curves bounds an annulus in $\Sigma$, all curves in $\Gamma_0$ and $\Gamma_1$ are non-separating and the annuli bounded by pairs of curves in $\Gamma_0$ and $\Gamma_1$ are pairwise non-isotopic. 

Then there are $2^{n_0+n_1}$ isotopy classes of tight contact structures on $\Sigma\times[0,1]$ such that $\Sigma_i$ is convex and divided by $\Gamma_i$ for $i=0,1$. The contact structures are distinguished by their relative Euler class and they are all universally tight. 
\end{thm}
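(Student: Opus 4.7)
The plan is to combine the existence construction of Example \ref{ex:periodic cont}, the pre-Lagrangian extension lemma (\lemref{l:completing sheets}), and the Honda--Kazez--Mati\'c classification (\thmref{t:Sigma class}) to reduce the multi-pair case to an inductive stacking of the ``base case.'' The idea is that each of the $n_0+n_1$ pairs of parallel dividing curves carries one binary invariant — the orientation of its unique attractive closed leaf relative to the coorientation of $\xi$ — and the $2^{n_0+n_1}$ sign tuples are in bijection with the isotopy classes.

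First, for existence I would produce representatives of each sign tuple by adapting Example \ref{ex:periodic cont}. Order the annular regions $A^i_0\subset\Sigma_0$ bounded by the pairs in $\Gamma_0$ (and similarly $A^j_1$ for $\Gamma_1$). For each annulus choose a non-separating core curve $\gamma^i_0$ and $\gamma^j_1$ and an orientation sign $\varepsilon_i,\delta_j\in\{\pm1\}$. Using the cookbook of Example \ref{ex:periodic cont} one assembles a contact structure $\xi_{\varepsilon,\delta}$ on $\Sigma\times[0,1]$ that is transverse to the $\II$-direction, has convex boundary divided by $\Gamma_i$, and whose unique attractive closed leaf inside each $A^i_0$, $A^j_1$ carries the prescribed orientation. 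No dividing curve bounds a disc by hypothesis, so \lemref{l:tight extension} applies and these contact structures are universally tight, which simultaneously settles the last clause of the theorem. Their relative Euler classes can be separated by testing against the homology classes $[\alpha^i_0\times I],[\alpha^j_1\times I]\in H_2(N,\partial N;\Z)$ where $\alpha^i_0,\alpha^j_1$ are arcs hitting only the $i$-th (resp.\ $j$-th) pair transversely and primped with respect to the rest of the dividing set; the computation in \eqref{e:rel euler} then reads $\pm 1$ depending on the sign attached to that single pair, exactly as in the base case of \thmref{t:Sigma class}, so the $2^{n_0+n_1}$ structures are pairwise non-isotopic.

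For completeness I would argue by induction on $n_0+n_1$, the case $n_0=n_1=1$ being \thmref{t:Sigma class}; the non-isotopy hypothesis rules out the ``$\Gamma_0\simeq\Gamma_1$'' variant and leaves exactly $4=2^{1+1}$ classes, matching the inductive base. Given a tight $\xi$ with the prescribed boundary data and $n_0>1$, pick an attractive closed leaf $\beta\subset\Sigma_0(\xi)$ sitting in an outermost annulus $A^1_0$ of $\Gamma_0$ and a parallel repulsive leaf $\beta'$ in the adjacent annulus; after arranging boundary-elementariness as in the arguments of \secref{s:clean boundary}, apply \lemref{l:completing sheets} to extend the sheet $A(\beta)$ to a properly embedded pre-Lagrangian annulus with boundary $-\beta\cup\beta'$. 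Cutting $N$ along a convex push-off of this annulus (possibly combined with a partner on the $\Sigma_1$-side provided by Lemma~2.17 of \cite{gi-bif} when $A(\beta)$ would otherwise fail to reach $\Sigma_1$) produces two contact manifolds, each with one fewer dividing pair on the modified boundary, to which the inductive hypothesis applies. The sum formula \eqref{e:sum} for the relative Euler class then identifies $\xi$ with one of the $\xi_{\varepsilon,\delta}$. The main obstacle in this step is the familiar one from \secref{s:higher}: ensuring that the pre-Lagrangian extension does not destroy the other pairs or introduce a bypass connecting two boundary components in a way that would create a fifth isotopy class as in the second half of \thmref{t:Sigma class}. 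The pairwise non-isotopy hypothesis on the annuli in $\Gamma_0\cup\Gamma_1$ is exactly what forces $A(\beta)$ to remain disjoint (up to isotopy) from the other pairs' sheets, and hence keeps the outermost reduction well-defined, which is why the count $2^{n_0+n_1}$ and not $(2^{n_0}+1)(2^{n_1}+1)$ is the right answer.
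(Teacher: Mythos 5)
Your proposal follows exactly the outline the paper itself sketches for \thmref{t:more Sigma class} (\exref{ex:periodic cont} for existence, \lemref{l:tight extension} for universal tightness, \thmref{t:Sigma class} as base case, \lemref{l:completing sheets} for the reduction), and since the paper does not supply any details you are filling in a genuine gap in the literature. But the central inductive step has a real problem: \lemref{l:completing sheets} (and the underlying \lemref{l:parallel}, whose hypothesis~(ii) requires that \emph{all} sheets meeting $\Sigma_{\pm 1}^-$ connect the two boundary components) cannot be invoked in the multi-pair setting. Hypothesis~(iv) of \lemref{l:completing sheets} demands that for every attractive closed leaf $\alpha$ other than $\beta$, the maximal sheet $A(\alpha)$ be properly embedded from $\Sigma_{-1}$ to $\Sigma_{+1}$; yet your theorem's own assumption that the annuli are pairwise non-isotopic forces the opposite conclusion for $n_0>1$: the sheets emanating from the remaining $n_0-1$ attractive leaves on $\Sigma_0$ \emph{cannot} reach $\Sigma_1$ (such a sheet would produce an isotopy between non-isotopic annuli), so they are all ``tents.'' You invoke ``boundary-elementariness as in \secref{s:clean boundary}'' to prepare, but that section operates after the dividing set has been pared down to a single pair, which is precisely what you are trying to establish; it does not manufacture the connectedness that (iv) demands. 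What is actually needed is an extension of \lemref{l:parallel} in which the companion sheets are permitted to be bounded tents disjoint from the annulus $S_t$ in hypothesis~(iii) — processed, say, in order of the maximal level they reach, and pushed out of the way as in \lemref{l:isotope sheets} before applying the base case. That modified lemma is plausibly what the paper's appeal to ``Lemma~2.17 of \cite{gi-bif}'' and the ``methods from \secref{s:higher}'' is meant to supply, but your writeup treats it as already available.

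A secondary concern affecting both your argument and the theorem as stated: the per-pair signs $\varepsilon_i,\delta_j$ determine $\mathrm{PD}(\widetilde{e}(\xi)) = \sum_i\varepsilon_i\gamma^i_0 + \sum_j\delta_j\gamma^j_1\in H_1(\Sigma;\Z)$, and for this to take $2^{n_0+n_1}$ distinct values one needs the core classes to admit no nontrivial $\pm 1$ relation. The hypotheses (non-separating, disjoint within each $\Gamma_i$, pairwise non-isotopic annuli) do not force this; the three boundary curves of a pants decomposition of a genus~$2$ surface already satisfy $\gamma_1+\gamma_2+\gamma_3=0$ and qualify as a valid $\Gamma_0$ with $n_0=3$. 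In that situation the total relative Euler class fails to separate the sign tuples $(+,+,+)$ and $(-,-,-)$, so the last clause of the theorem should either be interpreted as the finer, per-annulus slice invariant (via \eqref{e:sum} applied to a convex decomposition, which is more than the cohomology class), or the isotopy count itself needs justification by a different invariant. Your appeal to ``testing against $[\alpha^i_0\times I]$'' silently presupposes that the total class remembers these evaluations, which is exactly what fails under such a relation.
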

This author believes that it is possible to give a more complete classification of extremal contact structures on $\Sigma\times[0,1]$  and hopes to elaborate on this in another paper. 

\end{document}